\documentclass[9pt]{amsart}
\usepackage{amsmath}
\usepackage{amsxtra}
\usepackage{amscd}
\usepackage{amsthm}
\usepackage{amsfonts}
\usepackage{amssymb}
\usepackage{eucal}
\usepackage[all]{xy}
\usepackage{graphicx}
\usepackage[usenames]{color}

\usepackage[hidelinks]{hyperref}
\usepackage[normalem]{ulem}

\newtheorem{cor}[subsubsection]{Corollary}
\newtheorem{lem}[subsubsection]{Lemma}
\newtheorem{prop}[subsubsection]{Proposition}

\newtheorem{thmconstr}[subsubsection]{Theorem-Construction}

\newtheorem{conj}[subsubsection]{Conjecture}
\newtheorem{thm}[subsubsection]{Theorem}

\theoremstyle{remark}
\newtheorem{rem}[subsubsection]{Remark}

\theoremstyle{definition}

\theoremstyle{remark}

\newcommand{\thmref}[1]{Theorem~\ref{#1}}

\newcommand{\secref}[1]{Sect.~\ref{#1}}
\newcommand{\lemref}[1]{Lemma~\ref{#1}}
\newcommand{\propref}[1]{Proposition~\ref{#1}}
\newcommand{\corref}[1]{Corollary~\ref{#1}}
\newcommand{\conjref}[1]{Conjecture~\ref{#1}}

\numberwithin{equation}{section}

\newcommand{\nc}{\newcommand}
\nc{\renc}{\renewcommand}
\nc{\ssec}{\subsection}
\nc{\sssec}{\subsubsection}
\nc{\on}{\operatorname}

\nc{\ips}{{\iota_P^{(S)}}}
\nc{\ipms}{{\iota_{P^-}^{(S)}}}
\nc{\sfpps}{{\sfp_P^{(S)}}}
\nc{\sfppms}{{\sfp_{P^-}^{(S)}}}

\nc\ol{\overline}
\nc\ul{\underline}
\nc\wt{\widetilde}
\nc\tboxtimes{\wt{\boxtimes}}
\nc\tstar{\wt{\star}}
\nc{\alp}{\alpha}

\nc{\ZZ}{{\mathbb Z}}
\nc{\NN}{{\mathbb N}}
\nc{\OO}{{\mathbb O}}
\renc{\SS}{{\mathbb S}}
\nc{\DD}{{\mathbb D}}
\nc{\GG}{{\mathbb G}}

\nc{\Fq}{{\mathbb F}_q}
\nc{\Fqb}{\ol{\mathbb F}_q}
\nc{\Ql}{{\mathbb Q}_\ell}
\nc{\Qlb}{{\ol{\mathbb Q}_\ell}}
\nc{\id}{\text{id}}
\nc\X{\mathcal X}

\nc{\red}{\on{red}}
\nc{\Ho}{\on{Ho}}
\nc{\Hom}{\on{Hom}}
\nc{\coef}{\on{coef}}
\nc{\Lie}{\on{Lie}}
\nc{\Loc}{\on{Loc}}
\nc{\Pic}{\on{Pic}}
\nc{\Bun}{\on{Bun}}
\nc{\IC}{\on{IC}}
\nc{\Aut}{\on{Aut}}
\nc{\rk}{\on{rk}}
\nc{\Sh}{\on{Sh}}
\nc{\Perv}{\on{Perv}}
\nc{\pos}{{\on{pos}}}
\nc{\Conv}{\on{Conv}}
\nc{\Sph}{\on{Sph}}
\nc{\Sym}{\on{Sym}}
\nc{\BunBb}{\overline{\Bun}_B}
\nc{\BunNb}{\overline{\Bun}_N}
\nc{\BunTb}{\overline{\Bun}_T}
\nc{\BunBbm}{\overline{\Bun}_{B^-}}
\nc{\BunBbel}{\overline{\Bun}_{B,el}}
\nc{\BunBbmel}{\overline{\Bun}_{B^-,el}}
\nc{\Buno}{\overset{o}{\Bun}}
\nc{\BunPb}{{\overline{\Bun}_P}}
\nc{\BunBM}{\Bun_{B(M)}}
\nc{\BunBMb}{\overline{\Bun}_{B(M)}}
\nc{\BunPbw}{{\widetilde{\Bun}_P}}
\nc{\BunBP}{\widetilde{\Bun}_{B,P}}
\nc{\GUb}{\overline{G/U}}
\nc{\GUPb}{\overline{G/U(P)}}

\nc{\Hhom}{\underline{\on{Hom}}}
\nc\syminfty{\on{Sym}^{\infty}}
\nc\lal{\ol{\lambda}}
\nc\xl{\ol{x}}
\nc\thl{\ol{\theta}}
\nc\nul{\ol{\nu}}
\nc\mul{\ol{\mu}}
\nc\Sum\Sigma
\nc{\oX}{\overset{o}{X}{}}
\nc{\hl}{\overset{\leftarrow}h{}}
\nc{\hr}{\overset{\rightarrow}h{}}
\nc{\M}{{\mathcal M}}
\nc{\N}{{\mathcal N}}
\nc{\F}{{\mathcal F}}
\nc{\D}{{\mathcal D}}
\nc{\Q}{{\mathcal Q}}
\nc{\Y}{{\mathcal Y}}
\nc{\G}{{\mathcal G}}
\nc{\E}{{\mathcal E}}
\nc{\CalC}{{\mathcal C}}
\nc\Dh{\widehat{\D}}

\nc{\C}{{\mathcal C}}
\nc{\K}{{\mathcal K}}
\renewcommand{\H}{{\mathcal H}}

\nc{\T}{{\mathcal T}}
\nc{\V}{{\mathcal V}}
\renc{\P}{{\mathcal P}}
\nc{\A}{{\mathcal A}}
\nc{\B}{{\mathcal B}}
\nc{\U}{{\mathcal U}}

\nc{\Gr}{{\on{Gr}}}

\nc{\frn}{{\check{\mathfrak u}(P)}}

\nc{\fC}{\mathfrak C}
\nc{\fO}{\mathfrak O}
\nc{\fT}{\mathfrak T}
\nc{\p}{\mathfrak p}
\nc{\q}{\mathfrak q}
\nc\f{{\mathfrak f}}

\nc{\qo}{{\mathfrak q}}
\nc{\po}{{\mathfrak p}}
\nc{\s}{{\mathfrak s}}
\nc\w{\text{w}}

\renewcommand{\mod}{{\on{-mod}}}

\nc\mathi\iota
\nc\Spec{\on{Spec}}
\nc\Proj{\on{Proj}}
\nc\Mod{\on{Mod}}
\nc{\tw}{\widetilde{\mathfrak t}}
\nc{\pw}{\widetilde{\mathfrak p}}
\nc{\qw}{\widetilde{\mathfrak q}}
\nc{\jw}{\widetilde j}

\nc{\grb}{\overline{\Gr}}
\nc{\I}{\mathcal I}

\nc{\lambdach}{{\check\lambda}}
\nc{\Lambdach}{{\check\Lambda}{}}
\nc{\much}{{\check\mu}}
\nc{\omegach}{{\check\omega}}
\nc{\nuch}{{\check\nu}}
\nc{\etach}{{\check\eta}}
\nc{\alphach}{{\check\alpha}}
\nc{\boblvtach}{{\check\boblvta}}
\nc{\rhoch}{{\check\rho}}
\nc{\ch}{{\check h}}

\nc{\Hb}{\overline{\H}}

\nc{\BA}{{\mathbb{A}}}
\nc{\BC}{{\mathbb{C}}}
\nc{\BE}{{\mathbb{E}}}
\nc{\BF}{{\mathbb{F}}}
\nc{\BG}{{\mathbb{G}}}
\nc{\BM}{{\mathbb{M}}}
\nc{\BO}{{\mathbb{O}}}
\nc{\BD}{{\mathbb{D}}}
\nc{\BN}{{\mathbb{N}}}
\nc{\BP}{{\mathbb{P}}}
\nc{\BQ}{{\mathbb{Q}}}
\nc{\BR}{{\mathbb{R}}}
\nc{\BZ}{{\mathbb{Z}}}
\nc{\BS}{{\mathbb{S}}}
\nc{\Deep}{{\bf{deep}}}
\nc{\deep}{deep}

\nc{\CA}{{\mathcal{A}}}
\nc{\CB}{{\mathcal{B}}}

\nc{\CE}{{\mathcal{E}}}
\nc{\CF}{{\mathcal{F}}}
\nc{\CH}{{\mathcal{H}}}

\nc{\CL}{{\mathcal{L}}}
\nc{\CC}{{\mathcal{C}}}
\nc{\CG}{{\mathcal{G}}}
\nc{\CalD}{{\mathcal{D}}}
\nc{\CM}{{\mathcal{M}}}
\nc{\CN}{{\mathcal{N}}}
\nc{\CK}{{\mathcal{K}}}
\nc{\CO}{{\mathcal{O}}}
\nc{\CP}{{\mathcal{P}}}
\nc{\CQ}{{\mathcal{Q}}}
\nc{\CR}{{\mathcal{R}}}
\nc{\CS}{{\mathcal{S}}}
\nc{\CT}{{\mathcal{T}}}
\nc{\CU}{{\mathcal{U}}}
\nc{\CV}{{\mathcal{V}}}
\nc{\CW}{{\mathcal{W}}}
\nc{\CX}{{\mathcal{X}}}
\nc{\CY}{{\mathcal{Y}}}
\nc{\CZ}{{\mathcal{Z}}}
\nc{\CI}{{\mathcal{I}}}

\nc{\csM}{{\check{\mathcal A}}{}}
\nc{\oM}{{\overset{\circ}{\mathcal M}}{}}
\nc{\obM}{{\overset{\circ}{\mathbf M}}{}}
\nc{\oCA}{{\overset{\circ}{\mathcal A}}{}}
\nc{\obA}{{\overset{\circ}{\mathbf A}}{}}
\nc{\ooM}{{\overset{\circ}{M}}{}}
\nc{\osM}{{\overset{\circ}{\mathsf M}}{}}
\nc{\vM}{{\overset{\bullet}{\mathcal M}}{}}
\nc{\nM}{{\underset{\bullet}{\mathcal M}}{}}
\nc{\oD}{{\overset{\circ}{\mathcal D}}{}}
\nc{\obD}{{\overset{\circ}{\mathbf D}}{}}
\nc{\oA}{{\overset{\circ}{\mathbb A}}{}}
\nc{\op}{{\overset{\bullet}{\mathbf p}}{}}
\nc{\cp}{{\overset{\circ}{\mathbf p}}{}}
\nc{\oU}{{\overset{\bullet}{\mathcal U}}{}}
\nc{\oZ}{{\overset{\circ}{\mathcal Z}}{}}
\nc{\ofZ}{{\overset{\circ}{\mathfrak Z}}{}}
\nc{\oF}{{\overset{\circ}{\fF}}}

\nc{\fa}{{\mathfrak{a}}}
\nc{\fb}{{\mathfrak{b}}}
\nc{\fd}{{\mathfrak{d}}}
\nc{\ff}{{\mathfrak{f}}}
\nc{\fg}{{\mathfrak{g}}}
\nc{\fgl}{{\mathfrak{gl}}}
\nc{\fh}{{\mathfrak{h}}}
\nc{\fj}{{\mathfrak{j}}}
\nc{\fl}{{\mathfrak{l}}}
\nc{\fm}{{\mathfrak{m}}}
\nc{\fn}{{\mathfrak{n}}}
\nc{\fu}{{\mathfrak{u}}}
\nc{\fp}{{\mathfrak{p}}}
\nc{\fr}{{\mathfrak{r}}}
\nc{\fs}{{\mathfrak{s}}}
\nc{\ft}{{\mathfrak{t}}}
\nc{\fz}{{\mathfrak{z}}}
\nc{\fsl}{{\mathfrak{sl}}}
\nc{\hsl}{{\widehat{\mathfrak{sl}}}}
\nc{\hgl}{{\widehat{\mathfrak{gl}}}}
\nc{\hg}{{\widehat{\mathfrak{g}}}}
\nc{\chg}{{\widehat{\mathfrak{g}}}{}^\vee}
\nc{\hn}{{\widehat{\mathfrak{n}}}}
\nc{\chn}{{\widehat{\mathfrak{n}}}{}^\vee}

\nc{\fA}{{\mathfrak{A}}}
\nc{\fB}{{\mathfrak{B}}}
\nc{\fD}{{\mathfrak{D}}}
\nc{\fE}{{\mathfrak{E}}}
\nc{\fF}{{\mathfrak{F}}}
\nc{\fG}{{\mathfrak{G}}}
\nc{\fK}{{\mathfrak{K}}}
\nc{\fL}{{\mathfrak{L}}}
\nc{\fM}{{\mathfrak{M}}}
\nc{\fN}{{\mathfrak{N}}}
\nc{\fP}{{\mathfrak{P}}}
\nc{\fU}{{\mathfrak{U}}}
\nc{\fV}{{\mathfrak{V}}}
\nc{\fZ}{{\mathfrak{Z}}}

\nc{\ba}{{\mathbf{a}}}
\nc{\bb}{{\mathbf{b}}}
\nc{\bc}{{\mathbf{c}}}
\nc{\bd}{{\mathbf{d}}}
\nc{\bbf}{{\mathbf{f}}}
\nc{\be}{{\mathbf{e}}}
\nc{\bi}{{\mathbf{i}}}
\nc{\bj}{{\mathbf{j}}}
\nc{\bm}{{\mathbf{m}}}
\nc{\bn}{{\mathbf{n}}}
\nc{\bo}{{\mathbf{o}}}
\nc{\bp}{{\mathbf{p}}}
\nc{\bq}{{\mathbf{q}}}
\nc{\bu}{{\mathbf{u}}}
\nc{\bv}{{\mathbf{v}}}
\nc{\bx}{{\mathbf{x}}}
\nc{\bs}{{\mathbf{s}}}
\nc{\by}{{\mathbf{y}}}
\nc{\bw}{{\mathbf{w}}}
\nc{\bA}{{\mathbf{A}}}
\nc{\bK}{{\mathbf{K}}}
\nc{\bB}{{\mathbf{B}}}
\nc{\bC}{{\mathbf{C}}}
\nc{\bG}{{\mathbf{G}}}
\nc{\bD}{{\mathbf{D}}}
\nc{\bE}{{\mathbf{E}}}
\nc{\bH}{{\mathbf{H}}}
\nc{\bL}{{\mathbf{L}}}
\nc{\bM}{{\mathbf{M}}}
\nc{\bN}{{\mathbf{N}}}
\nc{\bO}{{\mathbf{O}}}
\nc{\bQ}{{\mathbf{Q}}}
\nc{\bV}{{\mathbf{V}}}
\nc{\bW}{{\mathbf{W}}}
\nc{\bX}{{\mathbf{X}}}
\nc{\bZ}{{\mathbf{Z}}}
\nc{\bS}{{\mathbf{S}}}

\nc{\sA}{{\mathsf{A}}}
\nc{\sB}{{\mathsf{B}}}
\nc{\sC}{{\mathsf{C}}}
\nc{\sD}{{\mathsf{D}}}
\nc{\sF}{{\mathsf{F}}}
\nc{\sG}{{\mathsf{G}}}
\nc{\sH}{{\mathsf{H}}}
\nc{\sK}{{\mathsf{K}}}
\nc{\sM}{{\mathsf{M}}}
\nc{\sO}{{\mathsf{O}}}
\nc{\sW}{{\mathsf{W}}}
\nc{\sQ}{{\mathsf{Q}}}
\nc{\sP}{{\mathsf{P}}}
\nc{\sR}{{\mathsf{R}}}
\nc{\sZ}{{\mathsf{Z}}}
\nc{\sfp}{{\mathsf{p}}}
\nc{\sfq}{{\mathsf{q}}}
\nc{\sfj}{{\mathsf{j}}}
\nc{\sr}{{\mathsf{r}}}
\nc{\bk}{{\mathsf{k}}}
\nc{\sg}{{\mathsf{g}}}
\nc{\sff}{{\mathsf{f}}}
\nc{\sfb}{{\mathsf{b}}}
\nc{\sfc}{{\mathsf{c}}}
\nc{\ssfe}{{\mathsf{e}}}
\nc{\sd}{{\mathsf{d}}}

\nc{\BK}{{\bar{K}}}

\nc{\tA}{{\widetilde{\mathbf{A}}}}
\nc{\tB}{{\widetilde{\mathcal{B}}}}
\nc{\tg}{{\widetilde{\mathfrak{g}}}}
\nc{\tG}{{\widetilde{G}}}
\nc{\TM}{{\widetilde{\mathbb{M}}}{}}
\nc{\tO}{{\widetilde{\mathsf{O}}}{}}
\nc{\tU}{{\widetilde{\mathfrak{U}}}{}}
\nc{\TZ}{{\tilde{Z}}}
\nc{\tx}{{\tilde{x}}}
\nc{\tbv}{{\tilde{\bv}}}
\nc{\tfP}{{\widetilde{\mathfrak{P}}}{}}
\nc{\tz}{{\tilde{\zeta}}}
\nc{\tmu}{{\tilde{\mu}}}

\nc{\urho}{\underline{\rho}}
\nc{\uB}{\underline{B}}
\nc{\uC}{{\underline{\mathbb{C}}}}
\nc{\ui}{\underline{i}}
\nc{\uj}{\underline{j}}
\nc{\ofP}{{\overline{\mathfrak{P}}}}
\nc{\oB}{{\overline{\mathcal{B}}}}
\nc{\og}{{\overline{\mathfrak{g}}}}
\nc{\oI}{{\overline{I}}}

\nc{\eps}{\varepsilon}
\nc{\hrho}{{\hat{\rho}}}

\nc{\one}{{\mathbf{1}}}
\nc{\two}{{\mathbf{t}}}

\nc{\Rep}{{\mathop{\operatorname{\rm Rep}}}}
\nc{\Tot}{{\mathop{\operatorname{\rm Tot}}}}
\nc{\Ker}{{\mathop{\operatorname{\rm Ker}}}}
\nc{\im}{{\mathop{\operatorname{\rm Im}}}}
\nc{\Hilb}{{\mathop{\operatorname{\rm Hilb}}}}
\nc{\End}{{\mathop{\operatorname{\rm End}}}}
\nc{\Ext}{{\mathop{\operatorname{\rm Ext}}}}
\nc{\CHom}{{\mathop{\operatorname{{\mathcal{H}}\it om}}}}
\nc{\CEnd}{{\mathop{\operatorname{{\mathcal{E}}\it nd}}}}
\nc{\GL}{{\mathop{\operatorname{\rm GL}}}}
\nc{\gr}{{\mathop{\operatorname{\rm gr}}}}
\nc{\HN}{{\mathop{\operatorname{\rm HN}}}}
\nc{\Id}{{\mathop{\operatorname{\rm Id}}}}
\nc{\de}{{\mathop{\operatorname{\rm def}}}}
\nc{\length}{{\mathop{\operatorname{\rm length}}}}
\nc{\supp}{{\mathop{\operatorname{\rm supp}}}}

\nc{\Cliff}{{\mathsf{Cliff}}}
\nc{\Fl}{\on{Fl}}
\nc{\Fib}{{\mathsf{Fib}}}
\nc{\Coh}{{\on{Coh}}}
\nc{\QCoh}{{\on{QCoh}}}
\nc{\IndCoh}{{\on{IndCoh}}}
\nc{\FCoh}{{\mathsf{FCoh}}}

\nc{\reg}{{\text{\rm reg}}}

\nc{\cplus}{{\mathbf{C}_+}}
\nc{\cminus}{{\mathbf{C}_-}}
\nc{\cthree}{{\mathbf{C}_\bullet}}
\nc{\Qbar}{{\bar{Q}}}
\nc\Eis{\on{Eis}}
\nc\Eisb{\ol\Eis{}}
\nc\Eisr{\on{Eis}^{rat}{}}
\nc\wh{\widehat}
\nc{\Def}{\on{Def_{\check{\fb}}(E)}}
\nc{\barZ}{\overline{Z}{}}
\nc{\barbarZ}{\overline{\barZ}{}}
\nc{\barpi}{\overline\pi}
\nc{\barbarpi}{\overline\barpi}
\nc{\barpip}{\overline\pi{}^+}
\nc{\barpim}{\overline\pi{}^-}

\nc{\fq}{\mathfrak q}

\nc{\fqb}{\ol{\sfq}{}}
\nc{\fpb}{\ol{\sfp}{}}
\nc{\fpr}{{\sfp^{rat}}{}}
\nc{\fqr}{{\sfq^{rat}}{}}

\nc{\hattimes}{\wh\otimes}

\nc{\bh}{{\bar{h}}}
\nc{\bOmega}{{\overline{\Omega(\check \fn)}}}

\nc{\seq}[1]{\stackrel{#1}{\sim}}

\nc{\cT}{{\check{T}}}
\nc{\cG}{{\check{G}}}
\nc{\cM}{{\check{M}}}
\nc{\cB}{{\check{B}}}

\nc{\ct}{{\check{\mathfrak t}}}
\nc{\cg}{{\check{\fg}}}
\nc{\cb}{{\check{\fb}}}
\nc{\cn}{{\check{\fn}}}

\nc{\cLambda}{{\check\Lambda}}

\nc{\cla}{{\check\lambda}}
\nc{\cmu}{{\check\mu}}
\nc{\cnu}{{\check\nu}}
\nc{\ceta}{{\check\eta}}

\nc{\DefbE}{{\on{Def}_{\cB}(E_\cT)}}

\nc{\imathb}{{\ol{\imath}}}
\nc{\rlr}{\overset{\longrightarrow}{\underset{\longrightarrow}\longleftarrow}}

\nc{\oBun}{\overset{\circ}\Bun}
\nc{\LocSys}{\on{LocSys}}
\nc{\BunBbb}{\ol{\ol{Bun}}_B}
\nc{\BunBr}{\Bun_B^{rat}}
\nc{\BunBrsg}{\Bun_B^{rat,\on{s.g.}}}
\nc{\BunBrp}{\Bun_B^{rat,polar}}
\nc{\BunBrpbg}{\Bun_B^{rat,polar,\on{b.g.}}}
\nc{\BunBrpsg}{\Bun_B^{rat,polar,\on{s.g.}}}
\nc{\BunTrp}{\Bun_T^{rat,polar}}
\nc{\BunTrpbg}{\Bun_T^{rat,polar,\on{b.g.}}}
\nc{\BunTrpsg}{\Bun_T^{rat,polar,\on{s.g.}}}
\nc{\BunNr}{\Bun_N^{rat}}
\nc{\BunNre}{\Bun_N^{enh,rat}}
\nc{\BunTr}{\Bun_T^{rat}}
\nc{\Vect}{\on{Vect}}
\nc{\Whit}{\on{Whit}}
\nc{\CTb}{\ol{\on{CT}}}
\nc{\Ran}{\on{Ran}}
\nc{\CTr}{\on{CT}^{rat}{}}
\nc\jmathr{\jmath^{rat}{}}
\nc{\ux}{\underline{x}}
\nc{\clambda}{{\check\lambda}}
\nc{\calpha}{{\check\alpha}}
\nc{\bind}{{\mathbf{ind}}}
\nc{\bcoind}{{\mathbf{coind}}}
\nc{\bRes}{{\mathbf{Res}}}
\nc{\boblv}{{\mathbf{oblv}}}
\nc{\btriv}{{\mathbf{triv}}}
\nc{\free}{{\mathbf{free}}}
\nc{\ox}{{\overline{x}}}
\nc{\cLa}{\check{\Lambda}}
\nc{\StinftyCat}{\on{DGCat}}
\nc{\inftyCat}{\infty\on{-Cat}}
\nc{\inftygroup}{\infty\on{-Grpd}}
\nc{\Dmod}{\on{D-mod}}
\nc{\CMaps}{{\mathcal Maps}}
\nc{\Maps}{\on{Maps}}
\nc{\affSch}{\on{Sch}^{\on{aff}}}
\nc{\Sch}{\on{Sch}}
\nc{\dr}{{\on{dR}}}
\nc{\oCF}{\overset{\circ}\CF}
\nc{\oCY}{\overset{\circ}\CY}
\nc{\opi}{\overset{\circ}\pi}
\nc{\leqG}{\underset{G}\leq}
\nc{\leqM}{\underset{M}\leq}
\nc{\leqGad}{\underset{G_{ad}}\leq}
\nc{\leqMad}{\underset{M_{ad}}\leq}
\nc{\Tr}{\on{Tr}}
\nc{\Frob}{\on{Frob}}
\nc{\DGCat}{\on{DGCat}}
\nc{\tDGCat}{2\on{-DGCat}}
\nc{\ev}{\on{ev}}
\nc{\mmod}{\on{-}\mathbf{mod}}
\nc{\commod}{\on{-}\mathbf{comod}}
\nc{\sotimes}{\overset{!}\otimes}
\nc{\Shv}{\on{Shv}}
\nc{\bShv}{\mathbf{Shv}}
\nc{\Spc}{\on{Spc}}
\nc{\LS}{\on{LS}}
\nc{\Res}{\on{Res}}
\nc{\bDelta}{{\mathbf{\Delta}}}
\nc{\bMaps}{{\mathbf{Maps}}}
\nc{\cD}{\mathcal D}
\nc{\ocD}{\overset{\circ}\cD}
\nc{\ppart}{(\!(u)\!)}
\nc{\qqart}{[\![u]\!]}
\nc{\oCU}{\overset{\circ}{\CU}}
\nc{\Ind}{\on{Ind}}
\nc{\coInd}{\on{coInd}}
\nc{\binv}{\mathbf{inv}}
\nc{\bcoinv}{\mathbf{coinv}}
\nc{\inclmon}{\mathbf{incl}}
\nc{\bmon}{\mathbf{mon}}
\nc{\AGCat}{\on{AGCat}}
\nc{\tCat}{2\on{-Cat}}
\nc{\Cat}{\on{Cat}}
\nc{\Morita}{\on{Morita}}
\nc{\tAGCat}{2\on{-AGCat}}
\nc{\mmmod}{\on{-}\mathfrak{mod}}
\nc{\oblv}{\on{oblv}}
\nc{\triv}{\on{triv}}
\nc{\inv}{\on{inv}}
\nc{\coinv}{\on{coinv}}
\nc{\ind}{\on{ind}}
\nc{\coind}{\on{coind}}
\renc{\Frob}{{\on{Frob}}}
\nc{\bemb}{\mathbf{emb}}
\nc{\Av}{\on{Av}}
\nc{\uHom}{\underline{\Hom}}
\nc{\sFunct}{\mathsf{Funct}}
\nc{\sfunct}{\mathsf{funct}}
\nc{\bjacq}{\mathbf{jacq}}
\nc{\BMaps}{{\mathbb M}aps}

\begin{document}


\title[Applications of (higher) categorical trace II: Deligne-Lusztig theory]
{Applications of (higher) categorical trace II: \\
Deligne-Lusztig theory}

\dedicatory{To George Lusztig}

\author{D.~Gaitsgory, N.~Rozenblyum and Y.~Varshavsky}

\begin{abstract}
We use the formalism of the (2-category) $\AGCat$, developed in \cite{GRV}, and the operation
of higher categorical trace to (re)derive a number of results in the Deligne-Lusztig theory.
\end{abstract}

\date{\today}

\maketitle

\tableofcontents

\section*{Introduction}

\ssec{What is this paper about?}

Let $G$ be a connected reductive group over the ground field $k$. For the applications in this paper we take $k=\ol\BF_q$,
the algebraic closure of a finite field $\BF_q$.

\sssec{}

In \cite{GRV} we introduced a (symmetric monoidal) $(\infty,2)$-category $\AGCat$, which is a version
of the (usual) $(\infty,2)$-category $\DGCat$ of DG categories (with coefficients $\ol\BQ_\ell$) that is
better compatible with the theory of $\ell$-adic sheaves.

\medskip

Namely, the assignment
$$(X\in \Sch)\mapsto (\ul\Shv(X)\in \AGCat)$$
is \emph{strictly} symmetric monoidal, i.e., the functor
\begin{equation} \label{e:cat Kunneth}
\ul\Shv(X_1)\otimes \ul\Shv(X_2)\to \ul\Shv(X_1\times X_2)
\end{equation}
is an equivalence (by design). This is in contrast with
$$(X\in \Sch)\mapsto (\Shv(X)\in \DGCat),$$
for which the functor
$$\Shv(X_1)\otimes \Shv(X_2)\to \Shv(X_1\times X_2)$$
is only fully faithful (and it never an equivalence unless one of the schemes is a union of points).

\sssec{}

The category $\AGCat$ is a formal device (its definition has no algebro-geometric or categorical content).
Yet, it is useful.

\medskip

In this paper, we will apply $\AGCat$ to develop a theory of
categorical representations of $G$ and to give conceptual proofs of a number of results about character sheaves and
Deligne-Lusztig functors. We establish the independence of the Deligne-Lusztig functor of the parabolic \emph{at the level characters}
(which seems to have been known in full generality).

\sssec{}

Here is how $\AGCat$ is used in this paper:

\medskip

Given an algebraic group $G$, we define a 2-category $G\mmod$ of \emph{categorical representations of $G$},
where by a categorical representation we will mean one on an object of $\AGCat$ (rather than a more naive notion,
on which the action would be on an object of $\DGCat$).

\medskip

A typical example of an object of $G\mmod$ is $\ul\Shv(\CY)$, where $\CY$ is a scheme (or, more generally, an arbitrary prestack)
acted on by $G$.

\ssec{Categorical representations: general theory} \label{ss:gen th}

\sssec{}

The fact that \eqref{e:cat Kunneth} is an equivalence implies that $G\mmod$ is a very well-behaved object.
Here are the three main tools that we will use to develop the theory:

\medskip

\begin{itemize}

\item The fact that for an endomorphism $\phi$ of $G$, the ($2$-categorical) trace $\Tr(\phi,G\mmod)\in \AGCat$
identifies with $\ul\Shv(G/\on{Ad}_\phi(G))$, where $\on{Ad}_\phi(G)$ refers the action of $G$ on itself by
$\phi$-twisted conjugacy. The two main cases of interest are when $\phi=\on{id}$ and $\phi=\Frob$;

\medskip

\item The induction functor
$$\bind_P:M\mmod\to G\mmod,$$
where $G$ is reductive, $P\subset G$ is a parabolic, and $M$ is the Levi quotient of $P$;

\medskip

\item For a pair $(P_1,P_2)$ of \emph{associated parabolics}, a natural transformation
\begin{equation} \label{e:intertw Intro}
\bind_{P_1}\overset{I_{P_1,P_2}}\to \bind_{P_2},
\end{equation}
which we call \emph{the intertwining functor}.

\end{itemize}

\medskip

These three pieces of structure combine to produce some interesting, albeit well-known, constructions
in geometric representation theory.

\sssec{}

For $\phi=\on{id}$, we have
$$\Tr(\on{Id},G\mmod)\simeq \ul\Shv(G/\on{Ad}(G));$$
there is no surprise here.

\medskip

Now, we can consider
$$\Tr(\on{Id},\bind_P):\Tr(\on{Id},M\mmod)\to \Tr(\on{Id},G\mmod).$$

Thus, we obtain a functor
\begin{equation} \label{e:Tr Id ind}
\ul\Shv(M/\on{Ad}(M))\simeq \Tr(\on{Id},M\mmod)\overset{\Tr(\on{Id},\bind_P)}\longrightarrow  \Tr(\on{Id},G\mmod)\simeq \ul\Shv(G/\on{Ad}(G)).
\end{equation}

We show that \eqref{e:Tr Id ind} identifies with the \emph{Grothendieck-Springer} functor
$$\on{GrothSpr}_P:\ul\Shv(M/\on{Ad}(M)) \to \ul\Shv(G/\on{Ad}(G)),$$
i.e., the functor given by pull-push along the diagram
\begin{equation} \label{e:Groth diag intro}
\CD
P/\on{Ad}(P) @>>> G/\on{Ad}(G) \\
@VVV \\
M/\on{Ad}(M).
\endCD
\end{equation}

\sssec{}

For $\phi=\Frob$, by Lang's theorem, we can identify
$$G/\on{Ad}_\Frob(G)\simeq \on{pt}/G(\BF_q),$$
and hence
$$\ul\Shv(G/\on{Ad}_\Frob(G))\simeq \ul\Shv(\on{pt}/G(\BF_q)).$$

\medskip

Note now that the functor
\begin{equation} \label{e:finite grp Intro}
\bi(\Shv(\on{pt}/G(\BF_q)))\to \ul\Shv(\on{pt}/G(\BF_q))
\end{equation}
is an equivalence (see \lemref{l:finite grp is restr}), where $\bi$ is the tautological embeddings $\DGCat\to \DGCat$
(see \cite[Sect. 2.4.1]{GRV}).

\medskip

Hence,
\begin{equation} \label{e:Tr Frob Intro}
\Tr(\Frob,G\mmod)\simeq \ul\Shv(G/\on{Ad}_\Frob(G))\simeq \ul\Rep(G(\BF_q)),
\end{equation}
where
$$\ul\Rep(G(\BF_q)):=\bi(\Rep(G(\BF_q))).$$

\sssec{}

We consider the ``double trace" isomorphism
$$\Tr(\on{id},\Rep(G(\BF_q))\simeq \Tr(\on{id},\Tr(\Frob,G\mmod)),$$
which expands to
\begin{multline} \label{e:double Trace Intro}
\sFunct(G(\BF_q)/\on{Ad}(G(\F_q)),\ol\BQ_\ell)\simeq
\Tr(\on{id},\Rep(G(\BF_q))\simeq
\Tr(\on{id},\Tr(\Frob,G\mmod))\simeq \\
\simeq \Tr(\Frob,\Tr(\on{id},G\mmod))\simeq \Tr(\Frob,\ul\Shv(G/\on{Ad}(G))\simeq
\sFunct((G/\on{Ad}(G))(\BF_q),\ol\BQ_\ell),
\end{multline}
and we show that it equals the tautological identification.

\sssec{}

For a Levi subgroup $M$ defined over $\BF_q$ and a parabolic
$$M\hookrightarrow P\twoheadrightarrow M$$
(but $P$ is not necessarily defined over $\BF_q$), the natural transformation
$I_{P,\Frob(P)}$ gives rise to a natural transformation
$$\bind_P\circ \Frob_M\to \Frob_G\circ \bind_P,$$
and hence to a functor
$$\Tr(\Frob,\bind_P):\Tr(\Frob_M,M\mmod)\to \Tr(\Frob_G,G\mmod).$$

Thus we obtain a functor
\begin{equation} \label{e:Tr Frob ind Intro}
\ul\Rep(M(\BF_q))\simeq \Tr(\Frob_M,M\mmod) \overset{\Tr(\Frob,\bind_P)}\longrightarrow
\Tr(\Frob_G,G\mmod) \simeq \ul\Rep(G(\BF_q)).
\end{equation}

We identify \eqref{e:Tr Frob ind Intro} with the functor, induced by the Deligne-Lusztig functor introduced in \cite{DL,Lu1} (see also \cite{DM}) 
$$\on{DL}_P:\Rep(M(\BF_q))\to \Rep(G(\BF_q)).$$

\sssec{}

However, this is as far as we could take the theory with the above tools.

\ssec{Categorical representations: the spectrally finite subcategory} \label{ss:spec fin Intro}

Now, the theory gets richer once we replace the entire $G\mmod$
by its full subcategory that we denote by
$$G\mmod^{\on{spec.fin}}\subset G\mmod;$$
we refer the reader to \secref{ss:restr Intro} for the definition and to
\secref{ss:restr} for complete details.

\sssec{}

We show that the (full) subcategory
$$\Tr(\on{id},G\mmod^{\on{spec.fin}})\subset \Tr(\on{id},G\mmod)$$
identifies with the subcategory of \emph{character sheaves}
$$\ul\Shv^{\on{ch}}(G/\on{Ad}_\Frob(G))\subset \ul\Shv(G/\on{Ad}_\Frob(G)).$$

This essentially a paraphrase of a result of D.~Ben-Zvi and D.~Nadler.

\sssec{}

We also show that the (full) subcategory
$$\Tr(\Frob,G\mmod^{\on{spec.fin}})\subset \Tr(\Frob,G\mmod)$$
is \emph{entire category} $\ul\Rep(G(\BF_q))$; this a paraphrase of a result of A.~Eteve.

\sssec{}

We use this to show that the ``double trace" isomorphism \eqref{e:double Trace Intro} restricts to an isomorphism
\begin{multline*}
\sFunct(G(\BF_q)/\on{Ad}(G(\F_q)),\ol\BQ_\ell)\simeq \Tr(\on{id},\Rep(G(\BF_q))\simeq
\Tr(\on{id},\Tr(\Frob,G\mmod^{\on{spec.fin}}))\simeq \\
\simeq \Tr(\Frob,\Tr(\on{id},G\mmod^{\on{spec.fin}})),
\end{multline*}
leading to the conclusion that
\begin{equation} \label{e:Tr isom Intro}
\Tr(\Frob,\Shv^{\on{ch}}(G/\on{Ad}(G)))\simeq \sFunct((G/\on{Ad}(G))(\BF_q),\ol\BQ_\ell).
\end{equation}

Note that in the above formula, we are considering the object $\Shv^{\on{ch}}(G/\on{Ad}(G))\in \DGCat$ rather than
$\ul\Shv^{\on{ch}}(G/\on{Ad}(G))\in \AGCat$. One can view \eqref{e:Tr isom Intro} as a counterpart for character
sheaves of the Trace Conjecture from \cite{AGKRRV1}.

\begin{rem}

The passage
$$\ul\Shv^{\on{ch}}(G/\on{Ad}(G))\rightsquigarrow \Shv^{\on{ch}}(G/\on{Ad}(G))$$ above is part of the general
phenomenon: as we shall see in \thmref{t:restr main}, the passage
$$G\mmod \rightsquigarrow G\mmod^{\on{spec.fin}}$$
gets rid of the extra ``algebro-geometric" directions inherent to objects of $\AGCat$, see \secref{sss:restr Intro}.

\end{rem}

\sssec{}

The functor $\bind_P$ restricts to a functor
$$\bind^{\on{spec.fin}}_P:M\mmod^{\on{spec.fin}}\to G\mmod^{\on{spec.fin}},$$
and the natural transformation $I_{P_1,P_2}$ restricts to a natural transformation
restricts to a natural transformation
\begin{equation} \label{e:intertw restr Intro}
\bind^{\on{spec.fin}}_{P_1}\overset{I^{\on{spec.fin}}_{P_1,P_2}}\to \bind^{\on{spec.fin}}_{P_2}.
\end{equation}

Here is, however, the fact that really distinguishes the spectrally finite situation from the
general one: the natural transformation \eqref{e:intertw restr Intro} is an isomorphism.

\medskip

This implies, among the rest, that $I^{\on{spec.fin}}_{P_1,P_2}$ induces an isomorphism
between the Grothendieck-Springer functors
$$\on{GrothSpr}^{\on{ch}}_{P_1}:\ul\Shv^{\on{ch}}(M/\on{Ad}(M)) \to \ul\Shv^{\on{ch}}(G/\on{Ad}(G))$$
and
$$\on{GrothSpr}^{\on{ch}}_{P_2}:\ul\Shv^{\on{ch}}(M/\on{Ad}(M)) \to \ul\Shv^{\on{ch}}(G/\on{Ad}(G)),$$
to be denoted
\begin{equation} \label{e:funct eq Intro}
\on{GrothSpr}^{\on{ch}}_{P_1} \overset{i_{P_1,P_2}}\simeq \on{GrothSpr}^{\on{ch}}_{P_2}.
\end{equation}

We will refer to \eqref{e:funct eq Intro} as the \emph{functional equation} for the  Grothendieck-Springer functors.
We perceive it as an analog of the functional equation for geometric Eisenstein series in the geometric
Langlands theory, see \cite{BG}.

\begin{rem}

As was shown by S.~Gunningham, an analog of the functional equation \eqref{e:funct eq Intro} would be \emph{false}
if we did not restrict ourselves to character sheaves.

\end{rem}

\sssec{}

A naive attempt to extend the functional equation to the Deligne-Lustig functor fails (for some obvious reasons,
see Remark \ref{r:DL parab}). And indeed, it cannot hold, since it is known that $\on{DL}_P$, viewed as
a functor, \emph{does} depend on the choice of $P$.

\medskip

However, it was conjectured in \cite{Lu1} and established in many cases in \cite{DL,Lus,BM} (see a summary in \cite[0.1]{Lu3} or \cite[Chapter 9]{DM}) that the map
\begin{equation} \label{e:ch DL}
\on{ch}(\on{DL}_P): \sFunct((M/\on{Ad}(M))(\BF_q))\to \sFunct((G/\on{Ad}(G))(\BF_q),\ol\BQ_\ell)
\end{equation}
is \emph{independent} of the choice of $P$.

\medskip

This independence is one of the two main new results of this paper; it appears as \thmref{t:indep}. We outline
the proof in the next subsection.

\ssec{Independence of Deligne-Lusztig characters of the parabolic} \label{ss:indep Intro}

\sssec{}

The first step in the proof is the double trace isomorphism, but applied to \emph{to the functor} $\bind^{\on{spec.fin}}_P$.

\medskip

First, using the intertwining functor $I_{P,\Frob(P)}$ we construct an isomorphism
$$\Frob_{G/\on{Ad}(G)}\circ \on{GrothSpr}^{\on{ch}}_P\simeq \on{GrothSpr}^{\on{ch}}_P\circ \Frob_{M/\on{Ad}(M)},$$
to be denoted $\beta_P$. This identification induces a map
$$\Tr(\Frob_{M/\on{Ad}(M)},\ul\Shv^{\on{ch}}(M/\on{Ad}(M)))\to \Tr(\Frob_{G/\on{Ad}(G)},\ul\Shv^{\on{ch}}(G/\on{Ad}(G))),$$
to be denoted $\Tr(\beta_P,\on{GrothSpr}_P)$.

\medskip

The double trace isomorphism says that the map \eqref{e:ch DL} equals the map
\begin{multline}  \label{e:Frob Groth Intro}
\sFunct((M/\on{Ad}(M))(\BF_q)) \overset{\text{\eqref{e:Tr isom Intro}}}\simeq \Tr(\Frob_{M/\on{Ad}(M)},\ul\Shv^{\on{ch}}(M/\on{Ad}(M)))
\overset{\Tr(\beta_P,\on{GrothSpr}^{\on{ch}}_P)}\longrightarrow \\
\simeq  \Tr(\Frob_{G/\on{Ad}(G)},\ul\Shv^{\on{ch}}(G/\on{Ad}(G)))  \overset{\text{\eqref{e:Tr isom Intro}}}\simeq \sFunct((G/\on{Ad}(G))(\BF_q)).
\end{multline}

This assertion is the second main new result of this paper; we state it as \corref{c:ind and Frob restr}.

\begin{rem}

The equality of \eqref{e:ch DL} and \eqref{e:Frob Groth Intro} for $q\gg 0$ is the main result of \cite{Lu3}; it is also conjectured in 
{\it loc.cit.} that the assumption that $q\gg 0$ can be removed.

\medskip

A particular case of the equality of \eqref{e:ch DL} and \eqref{e:Frob Groth Intro} is the following statement established in
\cite{Lus}:

\medskip

Let $M=T$ be a torus, and let $\chi$ be a 1-dimensional character sheaf on $T$ equipped with a Weil structure.
On the one hand, we can form the Grothendieck-Springer sheaf $\on{GrothSpr}_B(\chi)$. We equip it with a Weil
structure, by first defining on the regular semi-simple locus of $G$ (using the $W$-action and the Weil structure on $\chi$),
and then applying the Goresky-MacPherson extension.

\medskip

On the one hand, we consider the function
$$\sfunct(\on{GrothSpr}_B(\chi))\in \sFunct((G/\on{Ad}(G))(\BF_q)),$$
obtained by taking pointwise traces of Frobenius on $\on{GrothSpr}_B(\chi)$ with respect
to the above Weil structure.

\medskip

On the other hand, we consider the function
$$\on{ch}(\on{DL}_B(\chi^0))\in \sFunct(G(\BF_q)/\on{Ad}(G(\BF_q)),\ol\BQ_\ell).$$
where $\chi^0$ is the character of $T(\BF_q)$, obtained by taking pointwise traces of Frobenius on $\chi$
(i.e., $\chi^0=\sfunct(\chi)$).

\medskip

Then the equality of \eqref{e:ch DL} and \eqref{e:Frob Groth Intro} in particular implies that
\begin{equation} \label{e:char Groth Intro}
\sfunct((\on{GrothSpr}_B(\chi))=\on{ch}(\on{DL}_B(\chi^0))
\end{equation}
as elements of
$$\sFunct((G/\on{Ad}(G))(\BF_q),\ol\BQ_\ell)=\sFunct(G(\BF_q)/\on{Ad}(G(\BF_q)),\ol\BQ_\ell).$$

Note that the usual proof of \eqref{e:char Groth Intro} (see, e.g., \cite[Theorem 2.3.1]{Laum}, which follows \cite{Lus}) uses
some non-trivial computations involving Jordan decomposition.
By contrast, our proof is a diagram chase (once we accept the formula for local terms in the Grothendieck-Lefschetz fixed point
theorem, as generalized in \cite{GaVa}).

\end{rem}

\sssec{}

Thus, in order to show that the map \eqref{e:ch DL} is independent of $P$, it suffices to show that the map \eqref{e:Frob Groth Intro} is.

\medskip

Let $P_1$ and $P_2$ be a pair of parabolics containing $M$. We already know that thanks to the functional equation
\eqref{e:funct eq Intro}, the functors $\on{GrothSpr}^{\on{ch}}_{P_1}$ and $\on{GrothSpr}^{\on{ch}}_{P_2}$ are
isomorphic.

\medskip

Therefore, in order to show that the maps \eqref{e:ch DL} for $P_1$ and $P_2$ are equal, it suffices to show that
$i_{P_1,P_2}$ intertwines the isomorphisms $\beta_{P_1}$ and $\beta_{P_2}$, i.e., that the diagram
$$
\CD
\Frob_{G/\on{Ad}(G)}\circ \on{GrothSpr}^{\on{ch}}_{P_1} @>{\beta_{P_1}}>> \on{GrothSpr}^{\on{ch}}_{P_1}\circ \Frob_{M/\on{Ad}(M)} \\
@V{i_{P_1,P_2}}VV @VV{i_{P_1,P_2}}V \\
\Frob_{G/\on{Ad}(G)}\circ \on{GrothSpr}^{\on{ch}}_{P_2} @>{\beta_{P_2}}>> \on{GrothSpr}^{\on{ch}}_{P_2}\circ \Frob_{M/\on{Ad}(M)}
\endCD
$$
commutes.

\medskip \label{sss:trans Intro}

Unwinding the construction of $\beta_P$, we obtain that we have prove the following statement, which does not involve the Frobenius map:

\medskip

For a triple of parabolics $P_1,P_2,P_3$ containing $M$, we have
\begin{equation} \label{e:trans Intro}
i_{P_2,P_3}\circ i_{P_1,P_2}\simeq i_{P_1,P_3}
\end{equation}
as natural transformations
$$\on{GrothSpr}^{\on{ch}}_{P_1} \to \on{GrothSpr}^{\on{ch}}_{P_3}.$$

\medskip

The fact that \eqref{e:trans Intro} holds is our \thmref{t:intertwiner trans}. Let us comment on how it is proved.

\sssec{}

First, there is the easy case, namely, when the relative positions of $(P_1,P_2)$ and $(P_2,P_3)$ add up.
In this case, we have
\begin{equation} \label{e:add up Intro}
I_{P_2,P_3}\circ I_{P_1,P_2}\simeq I_{P_1,P_3}
\end{equation}
as natural transformations
$$\bind_{P_1}\to \bind_{P_3},$$
and \eqref{e:trans Intro} is obtained by taking $\Tr(\on{Id},-)$ on \eqref{e:add up Intro}.

\sssec{}

Thus, by induction on the distance between $P_1$ and $P_2$, we reduce the verification of \eqref{e:trans Intro} to the case when
$P_3=P_1$. I.e., we need to show that
\begin{equation} \label{e:involut Intro}
i_{P_2,P_1}\circ i_{P_1,P_2}\simeq \on{id}_{\on{GrothSpr}^{\on{ch}}_{P_1}}.
\end{equation}

Note that it is \emph{not at all} true that
\begin{equation} \label{e:square Intertw}
I_{P_2,P_1}\circ I_{P_1,P_2}
\end{equation}
is the identity endomorphism of $\bind_{P_1}$. In fact, this endomorphism is closely related to the Serre functor,
see \secref{sss:Serre intro}.

\medskip

Yet, it so happens that the non-triviality of \eqref{e:square Intertw} gets swallowed by the operation $\Tr(\on{Id},-)$.
We prove this in \thmref{t:involut}.

\sssec{}

We find it curious that manipulations involving a 3-category (in our case $\tAGCat$) can be used to prove a non-trivial
equality of ($\ell$-adic) numbers (which is what the independence of \eqref{e:ch DL} of the parabolic is). The proof essentially boils
down to checking commutativity of huge diagrams, but after packaging them so as to make it humanly possible.

\medskip

At its most complex, one needs to show that two certain 3-morphisms (in a 3-category) are equal. These 3-morphisms arise
by taking units and counits of adjunctions of some 1-morphisms (these are 2-morphisms), then taking \emph{their}
adjoints, and considering 3-morphisms that are units and counits of these adjunctions. An example of this
is \corref{c:2-dual}, which stands behind the equality of \eqref{e:ch DL} and
\eqref{e:Frob Groth Intro}.

\medskip

A related strategy, but one that occurs at one categorical level lower (i.e., one works with the 2-category $\DGCat$
rather than with the 3-category $\tAGCat$) was realized in \cite{KP} for the proof of the Riemann-Roch theorem.

\ssec{What else is done in this paper?} \label{ss:else}

We will now explain how the 3-categorical perspective allows one to reprove a number
of results about character sheaves.

\sssec{} \label{sss:ind 2-adj Intro}

Consider again the functor
$$\bind_P:M\mmod\to G\mmod.$$

It admits a right adjoint, called the Jacquet functor
$$M\mmod\leftarrow G\mmod:\bjacq_P.$$

Moreover, this adjunction has the following property: the unit and counit natural transformations
\begin{equation} \label{e:unit and counit Intro}
\on{unit}:\on{Id}_{M\mmod}\to \bjacq_P\circ \bind_P \text{ and } \on{counit}:\bind_P\circ \bjacq_P\to \on{Id}_{G\mmod}
\end{equation}
themselves admit right adjoints.

\medskip

Passing to these right adjoints, we obtain the natural transformations
$$\on{unit}^R:\bjacq_P\circ \bind_P\to \on{Id}_{M\mmod} \text{ and } \on{counit}^R:\on{Id}_{G\mmod}\to \bind_P\circ \bjacq_P,$$
which make $\bjacq_P$ into a \emph{left} adjoint of $\bind_P$.

\medskip

Thus, we obtain that the $(\bind_P,\bjacq_P)$-adjunction is \emph{ambidexterous}.

\sssec{}

However, an even more interesting thing happens when we pass to the spec.fin subcategries. In this case, the natural transformations
\begin{equation} \label{e:unit R Intro}
\on{unit}^R:\bjacq^{\on{spec.fin}}_P\circ \bind^{\on{spec.fin}}_P\to \on{Id}_{M\mmod^{\on{spec.fin}}}
\end{equation}
and
\begin{equation} \label{e:counit R Intro}
\on{counit}^R:\on{Id}_{G\mmod^{\on{spec.fin}}}\to \bind^{\on{spec.fin}}_P\circ \bjacq^{\on{spec.fin}}_P
\end{equation}
\emph{themselves} admit right adjoints.

\sssec{}

The existence of these right adjoints has a consequence for the Grothendieck-Springer functor.
Let
$$\fr_P:\ul\Shv(G/\on{Ad}(G))\to \ul\Shv(M/\on{Ad}(M))$$
be the right adjoint of $\on{GrothSpr}_P$. Explicitly, $\fr_P$ is obtained by pull-push along the diagram transpose to \eqref{e:Groth diag intro}.

\medskip

Denote
$$\fr_P^{\on{ch}}:=\fr_P|_{\ul\Shv^{\on{ch}}(G/\on{Ad}(G))}.$$

\medskip

The fact that the natural transformations
\eqref{e:unit R Intro} and \eqref{e:counit R Intro} admit right adjoints implies that
$$\Tr(\on{id},\bjacq^{\on{spec.fin}}_P)\simeq \fr_P^{\on{ch}}$$
is a \emph{left} adjoint of
$$\Tr(\on{id},\bind^{\on{spec.fin}}_P)=\on{GrothSpr}_P^{\on{ch}}.$$

I.e., the $(\on{GrothSpr}^{\on{ch}}_P,\fr^{\on{ch}}_P)$-adjunction is also ambidexterous.

\medskip

Using the fact that the functor $\on{GrothSpr}^{\on{ch}}_P$ commutes with Verdier duality, the above implies that
the functor $\fr^{\on{ch}}_P$ also commutes with Verdier duality. The latter fact was, however, known thanks to
\cite{CY,GIT}.

\sssec{}

Let us return to the natural transformations \eqref{e:unit R Intro} and \eqref{e:counit R Intro}. By passing to \emph{their}
right adjoints, we obtain \emph{another} adjunction
\begin{equation} \label{e:new adj intro}
\bind^{\on{spec.fin}}_P \leftrightarrow \bjacq^{\on{spec.fin}}_P.
\end{equation}

This adjunction differs from the original $(\bind^{\on{spec.fin}}_P,\bjacq^{\on{spec.fin}}_P)$-adjunction by
an automorphism of $\bind^{\on{spec.fin}}_P$, which we call the \emph{Serre functor}. Effectively, the Serre
functor is an endomorphism of
$$\ul\Shv(G/N(P))^{\on{spec.fin}}\in (G\times M)\mmod;$$
we denote it by $\on{Se}_{G/N(P)}$.

\sssec{} \label{sss:Serre intro}

Our \thmref{t:2-Cat Serre} identifies $\on{Se}_{G/N(P)}$ explicitly. Namely, it says that
\begin{equation} \label{e:Serre Intro}
\on{Se}_{G/N(P)} \simeq I^{\on{spec.fin}}_{P^-,P}\circ I^{\on{spec.fin}}_{P,P^-},
\end{equation}
where $P^-$ is the opposite parabolic.

\sssec{}

We use \eqref{e:Serre Intro} to give an alternative prove of the fact (also established in \cite{CY,GIT}) that
the Harish-Chandra transform
$$\Shv(G/\on{Ad}(G)) \overset{\on{HC}_P}\to \Shv((N(P)\backslash G/N(P))/\on{Ad}(M)),$$
followed by the intertwining functor
$$\on{Id}\otimes I^!_{P,P^-}: \Shv((N(P)\backslash G/N(P))/\on{Ad}(M))\to  \Shv((N(P)\backslash G/N(P^-))/\on{Ad}(M))$$
also commutes with Verdier duality, when restricted to
$$\Shv^{\on{ch}}(G/\on{Ad}(G))\subset \Shv(G/\on{Ad}(G)),$$
see \corref{c:HC is ambi bis}.

\medskip

In the above formulas, $\on{Id}\otimes\on{HC}_P$ is given by !-pull and *-push along the diagram
$$
\CD
G/\on{Ad}(P) @>>> (N(P)\backslash G/N(P))/\on{Ad}(M) \\
@VVV \\
G/\on{Ad}(G),
\endCD
$$
and $I^!_{P,P^-}$ is given by *-pull and !-push along the diagram
$$
\CD
(N(P)\backslash G)/\on{Ad}(M)  @>>> (N(P)\backslash G/N(P^-))/\on{Ad}(M)  \\
@VVV \\
(N(P)\backslash G/N(P))/\on{Ad}(M).
\endCD
$$

\ssec{Monodromicity and restricted categorical representations} \label{ss:restr Intro}

As was mentioned above, the more interesting results of this paper require that we work with the
subcategory
$$G\mmod^{\on{spec.fin}}\subset G\mmod.$$

We will now explain its definition.

\sssec{}

Let us first take $G=T$ to be the torus. We shall say that an an object $\ul\bC\in T\mmod$
is \emph{monodromic} if it is generated by its $(T,\chi)$-invariants, where $\chi$ runs over
all possible character sheaves on $T$.

\medskip

We let
$$T\mmod^{\on{spec.fin}}\subset T\mmod$$
consist of monodromic objects.

\sssec{}

For a general reductive $G$, we let $G\mmod^{\on{spec.fin}}$ to be the full subcategory consisting of objects
for which
$$\binv_N(\ul\bC)\in T\mmod$$
is monodromic. In the above formula, $\binv_N$ is the functor of invariants with respect to $N\subset B$.

\sssec{Example}

Consder
$$\ul\Shv(G/B)\in G\mmod.$$

This object belongs to $G\mmod^{\on{spec.fin}}$.

\medskip

Consider now
$$\ul\Shv(G/N)\in G\mmod.$$

This object does \emph{not} belong to $G\mmod^{\on{spec.fin}}$.

\sssec{}

Of course, one needs to prove something in order to make the above definition useful. The key fact is that the functor
$$\binv_N:G\mmod\to \AGCat,$$
which naturally upgrades to a functor
\begin{equation} \label{e:Hecke Intro}
\binv_N^{\on{enh}}:G\mmod\to \ul\CH\mmod(\AGCat), \quad \ul\CH:=\ul\Shv(N\backslash G/N),
\end{equation}
with the latter functor being an equivalence, according to a theorem of \cite{BGO} (reproved in the present paper as \thmref{t:G mod via Hecke}).

\medskip

Thus, the knowledge of $\binv_N(\ul\bC)$ (equipped with an action of $\ul\CH$) fully recovers $\ul\bC$.

\sssec{}

Let
$$\ul\CH^{\on{mon}}\subset \ul\CH$$
be the full subcategory that consists of $T$-monodromic objects (with respect to the action of $T$ on $N\backslash G/N$ from
the left/right, or equivalently, both sides). The above embedding admits a right adjoint. Moreover, the object $\ul\CH^{\on{mon}}\in \AGCat$
carries a unique structure associative algebra for which the above right adjoint
$$\ul\CH\to \ul\CH^{\on{mon}}$$
is a homomorphism.

\medskip

We can identify
$$G\mmod^{\on{spec.fin}}\simeq \ul\CH^{\on{mon}}\mmod(\AGCat).$$

\sssec{} \label{sss:restr Intro}

One of the key points of the spec.fin theory is that it effectively takes place in the usual world of DG categories,
rather than $\AGCat$:

\medskip

Let $$\bi:\DGCat\to \AGCat$$ be the tautological embedding (see \cite[Sect. 2.4.1]{GRV}). The point is
that $\ul\CH^{\on{mon}}$ (unlike $\ul\CH$!) lies in the essential image of this functor, i.e.,
$$\ul\CH^{\on{mon}}=\bi(\CH^{\on{mon}}),\quad \CH^{\on{mon}}:=\Shv(N\backslash G/N)^{\on{mon}}.$$

\medskip

Set
$$G\mmod^{\on{restr}}:=\CH^{\on{mon}}\mod(\DGCat)\in \tDGCat.$$

We obtain that
\begin{equation} \label{e:ext DGCat AGCat Intro}
G\mmod^{\on{spec.fin}}\simeq \AGCat\underset{\DGCat}\otimes G\mmod^{\on{restr}}.
\end{equation}

I.e., $G\mmod^{\on{spec.fin}}$ is the extension of scalars of a 2-category that is a module over
the usual $\DGCat$.

\medskip

This implies among other things that
\begin{equation} \label{e:ch shv restr}
\ul\Shv^{\on{ch}}(G/\on{Ad}(G))\simeq \bi(\Shv^{\on{ch}}(G/\on{Ad}(G))).
\end{equation}

\sssec{}

The equivalence \eqref{e:ext DGCat AGCat Intro} means that the entire restricted/spectrally finite theory
can be rewritten without ever mentioning $\AGCat$: one can just work with the 2-category
$$\CH^{\on{mon}}\mod(\DGCat),$$
and one can transcribe all the proofs in its terms.

\medskip

The advantage of using $\AGCat$ is that it gives a clearer geometric picture, which suggests
natural proofs of many results.

\medskip

A vague analogy is:
to think about representations of a p-adic group as such vs as a collection of modules for
Hecke algebras.

\ssec{The hidden motivation for writing this paper}

One of the key motivations for writing this paper is that it is supposed to be a
prelude to the case
when instead of the usual reductive group $G$, one considers its loop group $\fL(G)$.

\sssec{}

One can define the object
$$\fL(G)\mmod\in \tDGCat$$
in a way completely parallel to $G\mmod$.

\medskip

We have a well-defined object
$$\ul\Shv(\fL(G)/\on{Ad}(\fL(G))\in \AGCat$$
and an identification
$$\Tr(\on{Id},\fL(G)\mmod)\simeq \ul\Shv(\fL(G)/\on{Ad}(\fL(G)).$$

\sssec{}

Note, however, that Lang's theorem fails for $\fL(G)$. The quotient
$$\fL(F)/\on{Ad}_\Frob(\fL(G))=:\on{Isoc}_G$$
is the stack of isocrystals, extensively studied recently by X.~Zhu in \cite{Zhu}.
One can think of $\on{Isoc}_G$ as an algebro-geometric incarnation of the Fargues-Scholze $\Bun_G(\on{FFCurve})$.

\medskip

As in \eqref{e:Tr Frob Intro}, we have
$$\Tr(\Frob,\fL(G)\mmod)\simeq \ul\Shv(\fL(F)/\on{Ad}_\Frob(\fL(G)))=\ul\Shv(\on{Isoc}_G).$$

\sssec{}

Here are some basic questions that one wants to ask in the loop case:

\medskip

\begin{enumerate}

\item Is it true that the loop analog of \eqref{e:finite grp Intro}, i.e., the functor
$$\bi(\Shv(\on{Isoc}_G))\to \ul\Shv(\on{Isoc}_G),$$
is an equivalence?

\medskip

\item What is the subcategory $\fL(G)\mmod^{\on{spec.fin}}\subset \fL(G)\mmod$, so that
$$\fL(G)\mmod^{\on{spec.fin}}\simeq \AGCat\underset{\DGCat}\otimes \fL(G)\mmod^{\on{restr}}$$
for $\fL(G)\mmod^{\on{restr}}\in \tDGCat$?

\medskip

\item Is it true that an analog of Eteve's theorem holds, i.e., the (fully faithful) functor
$$\Tr(\Frob,\fL(G)\mmod^{\on{spec.fin}})\to \Tr(\Frob,\fL(G)\mmod)\simeq \ul\Shv(\on{Isoc}_G)$$
is an equivalence?

\medskip

\item What is the definition of
$$\ul\Shv^{\on{ch}}(\fL(G)/\on{Ad}(\fL(G))\subset \ul\Shv(\fL(G)/\on{Ad}(\fL(G))?$$

\medskip

\item Is it true that
$$\Tr(\on{Id}, \fL(G)\mmod^{\on{spec.fin}})=\ul\Shv^{\on{ch}}(\fL(G)/\on{Ad}(\fL(G))$$
as subcategories of
$$\Tr(\on{Id}, \fL(G)\mmod)\simeq \ul\Shv(\fL(G)/\on{Ad}(\fL(G)),$$
respectively.

\end{enumerate}

\sssec{}

Here are some tentative answers:

\medskip

First, it seems that Question (1) can be settled affirmatively using the results of \cite{Zhu}.

\medskip

Question (2) is far more complicated than in the finite-dimensional case, because we do not have
anything resembling the equivalence of \eqref{e:Hecke Intro}.

\medskip

Rather, a promising approach seems to characterize $\fL(G)\mmod^{\on{spec.fin}}$ using a (geometric analog)
of the Fintzen-Yu classification (or its refinement, recently found by \cite{DVY}).

\medskip

As to Question (3), it does seem plausible that the method of proof of Eteve's theorem (see \thmref{t:Frob restr})
is applicable and gives the desired result.

\medskip

Finally, for Questions (4) and (5) the best we can do is cheat and \emph{define}
$$\ul\Shv^{\on{ch}}(\fL(G)/\on{Ad}(\fL(G)):=\Tr(\on{Id}, \fL(G)\mmod^{\on{spec.fin}}).$$

\ssec{Structure of the paper}

We now briefly describe the contents of this paper section-by-section.

\sssec{}

In \secref{s:Trace} we explain how the formalism of trace plays out in the 2- and 3-categorical settings:

\medskip

In the case of 1-categories, given a symmetric monoidal category $\bO$, a dualizable object $\bo\in \bO$ and an
endomorphism $F$ of $\bO$, one attaches to it an endomorphism
$\Tr(F,\bo)$ of the unit object $\one_\bO$.

\medskip

This gets more interesting for 2-categories: given two pairs $(\bo_1,F_1)$ and $(\bo_2,F_2)$ as above and a 1-morphism
$t:\bo_1\to \bo_2$ that \emph{weakly intertwines $F_1$ and $F_2$}, i.e, we have a natural transformation
$$t\circ F_1\overset{\alpha}\to F_2\circ t,$$
we obtain a map
$$\Tr(\alpha,t):\Tr(F_1,\bo_1)\to \Tr(F_2,\bo_2),$$
\emph{provided that $t$ is adjointable}\footnote{In this paper ``adjointable" means ``admits a right adjoint".}, a notion that makes
sense for a 1-morphism in a 2-category.

\medskip

And this gets even richer (and more mind-twisting) for 3-categories.

\medskip

This section summarizes the results about trace manipulations that we will apply in later sections in
representation-theoretic situations.

\sssec{}

In \secref{s:Cat Rep} we introduce the notion of categorical representation of an algebraic group. These
will form an object, denoted $G\mmod$ in the ambient 3-category that we denote $\tAGCat$
(morally, 2-categories, tensored over $\AGCat$).

\medskip

We study basic concepts such as dualizability and adjointability of 1-morphisms.

\medskip

One of the observations that will play a key role in this paper is that the 1-morphism
$$\bind_P:M\mmod\to G\mmod$$
is 2-adjointable, see \secref{sss:ind 2-adj Intro}.

\sssec{}

In \secref{s:Tr Cat Rep} we perform the basic trace calculations in $G\mmod$ and for
1-morphisms
$$G_1\mmod\to G_2\mmod.$$

In particular, we prove the calculations described in \secref{ss:gen th}.

\sssec{}

In \secref{s:mon} we introduce the notion of monodromic action an algebraic group
on an object of $\AGCat$. The actual case of interest is when the group in question
is a torus, but we do this for any algebraic group, just because this is a natural generality.

\medskip

We start by introducing the subcategory
$$\Shv(G)^{\on{mon}}\subset \Shv(G),$$
which is the subcategory generated by 1-dimensional character sheaves. Having this definition,
one can define the monodromic subcategory
$$\Shv(\CY)^{\on{mon}}\subset \Shv(\CY)$$
for any prestack $\CY$ acted on by $G$. This, in turn, leads to the definition of
$\ul\Shv(G)^{\on{mon}}$
which is a \emph{Hopf} algebra object in $\AGCat$. That said, the object $\ul\Shv(G)^{\on{mon}}$
has the property that the canonical map
$$\bi(\Shv(G)^{\on{mon}})\to \ul\Shv(G)^{\on{mon}}$$
is an isomorphism.

\medskip

By definition,
\begin{equation} \label{e:mon Intro}
G\mmod^{\on{mon}}\subset G\mmod
\end{equation}
is a full subcategory, consisting of objects, on which the co-action of $\ul\Shv(G)$ factors through
$\ul\Shv(G)^{\on{mon}}$. We show that the embedding \eqref{e:mon Intro} has very favorable
properties (for example, it is 2-adjointable).

\medskip

We also introduce a related notion:
$$\Shv(G)^{\on{q-mon}}\subset \Shv(G),$$
and its descendants
$$\Shv(\CY)^{\on{q-mon}}\subset \Shv(\CY),\,\, \ul\Shv(G)^{\on{q-mon}}\subset \ul\Shv(G),\,\, G\mmod^{\on{q-mon}}\subset G\mmod$$
(when $G$ is a torus, there is no difference between the two).

\medskip

The reason for discussing the quasi-monodromic versions is the following. They provide an answer to the following
question: do there exists objects in $G\mmod$, whose underlying object of $\AGCat$ lies in the essential image
of $\bi:\DGCat\to \AGCat$? The answer is that such objects exactly correspond to quasi-monodromic actions.

\sssec{}

In \secref{s:restr}, we introduce the subcategories
$$G\mmod^{\on{restr}}\subset G\mmod^{\on{spec.fin}}\subset G\mmod$$
and establish the properties described in Sects. \ref{ss:spec fin Intro} and \ref{ss:restr Intro} above.

\sssec{}

In \secref{s:char shv} we review the theory of character sheaves, and then recast from the perspective
of $G\mmod$.

\sssec{}

In \secref{s:trans} we prove the transitivity property of the functional equation, mentioned in
\secref{sss:trans Intro}.

\sssec{}

Finally, in \secref{s:DL} we assemble all the ingredients and prove the independence of
Deligne-Lusztig characters of the parabolic, as outlined in \secref{ss:indep Intro}.

\sssec{}

In \secref{s:shv on gr} we supply the proofs for some technical statements from \secref{s:mon}.

\ssec{Conventions and notation}

\sssec{}

Our conventions and notations follow those adopted in \cite{GRV}.

\sssec{Category theory} \label{sss:cat}

We will be working in the context of higher category theory. When we say ``category"
we always mean an $\infty$-category.

\medskip

We let $\Spc$ denote the category of \emph{spaces}, a.k.a. \emph{groupoids}.

\medskip

Given a category $\bC$, and $\bc_1,\bc_2\in \bC$, we let
$$\Maps_\bC(\bc_1,\bc_2)\in \Spc$$
denote the corresponding space of morphisms.

\medskip

For a category $\bC$, we let $\bC^{\on{grpd}}$ denote the space of its objects
(i.e., the groupoid obtained from $\bC$ by discarding non-invertible morphisms).

\sssec{}

We will work also with $(\infty,2)$ and $(\infty,3)$-categories. We adopt the inductive definition:

\medskip

Having defined the (symmetric) monoidal $(\infty,1)$-category $n\on{-Cat}$
of $(\infty,n)$-categories, we define the (symmetric) monoidal $(\infty,1)$-category $(n+1)\on{-Cat}$
to consist of $(\infty,1)$-categories enriched over $n\on{-Cat}$. One then shows that
$n\on{-Cat}$ itself upgrades to an object of $(n+1)\on{-Cat}$.

\medskip

Given $\bC\in n\on{-Cat}$ and $n'\leq n$, we let
$$\bC^{n'\on{-Cat}}\in n'\on{-Cat}$$
be the object obtained by discarding non-invertible $n''$-morphisms for $n'<n''\leq n$.

\medskip

Following the same conventions as in \secref{sss:cat}, in most cases, we will simply say
``$n$-category" when we mean an $(\infty,n)$-category. 

\sssec{}

For an $(\infty,2)$-category $\fO$ and $\bC_1,\bC_2\in \fO$, we let
$$\bMaps_{\fO}(\bC_1,\bC_2)$$
the resulting $(\infty,1)$-\emph{category} of morphisms, so that
$$\bMaps_{\fO}(\bC_1,\bC_2)^{\on{grpd}}\simeq \Maps_{\fO}(\bC_1,\bC_2).$$

\medskip

For an $(\infty,3)$-category $\BE$ and $\fO_1,\fO_2\in \BE$, we let
$$\BMaps_{\BE}(\fO_1,\fO_2)$$
the resulting $(\infty,2)$-\emph{category} of morphisms, so that
$$\BMaps_{\BE}(\fO_1,\fO_2)^{1\on{-Cat}}\simeq \bMaps_{\BE}(\fO_1,\fO_2).$$

\sssec{An identity crisis}

When writing this paper we faced a certain dilemma regarding the usage of symbols $\on{Id}$ vs $\on{id}$.
To the extent possible, we have tried to adhere to the following conventions:

\medskip

For an object $\bc$ in an $(\infty,1)$-category $\bC$, we denote by $\on{id}_\bc\in \Maps_\bC(\bc,\bc)$
its identity endomorphism. And we denote by $\on{Id}_\bC$ the identity endofunctor of $\bC$.

\medskip

More generally, for a $(\infty,2)$-category $\fO$, and an object $\bC\in \fO$ we denote by $\on{Id}_\bC$ the identity
1-morphism from $\bC$ to itself. For a 1-morphism $\phi:\bC_1\to \bC_2$ in $\fO$, we let
$\on{id}_\phi$ denote the identity 2-endomorphism of $\phi$.

\medskip

Now, this runs into a clash when we are dealing with an ambient $(\infty,3)$-category $\BE$. For a pair of
objects $\fO_1,\fO_2\in \BE$ and a 1-morphism $\Phi:\fO_1\to \fO_2$ we will sometimes denote the identity
endomorphism of $\Phi$ by $\on{id}_\Phi$ and sometimes by $\on{Id}_\Phi$, depending on the context:

\medskip

We will use the former, when we want to \emph{de-emphasize} the 3-categorical aspect of $\BE$
(i.e., when for the local discussion, non-invertible 3-morphisms do not play a big role). And we will use the latter
when $\fO_1$ and $\fO_2$ are fixed, and our focus is the $(\infty,2)$-category $\BMaps_\BE(\fO_1,\fO_2)$,
in which $\Phi$ is an object.

\sssec{Higher algebra}

We will do \emph{higher algebra} in vector spaces over the field of coefficients, which
in this paper we take to be $\ol\BQ_\ell$.

\medskip

We let $\DGCat$ denote the $(\infty,2)$-category of $\ol\BQ_\ell$-linear DG categories,
defined as in \cite[Chapter 1, Sect. 10]{GR}.

\medskip

We view $\DGCat$ as equipped with the symmetric monoidal structure, given by the
Lurie tensor product. The unit object for this structure is $\Vect$,
the category of (chain complexes) $\ol\BQ_\ell$-vector spaces.

\medskip

For a DG category $\bC$ and $\bc_1,\bc_2\in \bC$, we let
$$\CHom_\bC(\bc_1,\bc_2)\in \Vect$$
denote the corresponding object. We have
$$\Maps_{\bC}(\bc_1,\bc_2)=\tau^{\leq 0}(\CHom_\bC(\bc_1,\bc_2)),$$
where we view the right-hand side as a connective spectrum.

\sssec{Algebraic geometry}

Throughout this paper we will be working with the category of (classical, separated) schemes of finite type
over a ground field $k$, assumed algebraically closed. We denote this category by $\Sch$.
We will not need derived algebraic geometry over $k$ for this paper.

\medskip

Whenever we talk about the Frobenius endomorphism, we will assume that $k=\ol\BF_q$, but the scheme
$X$ in question is defined over $\BF_q$, so that the geometric Frobenius $\Frob_X$ on $X$ is defined.

\medskip

By a prestack we shall mean a (classical) prestack locally of finite type over $k$
(see \cite[Chapter 2, Sect. 1.3.6]{GR} for what this means). We denote the corresponding category
by $\on{PreStk}$.

\sssec{}

We will work with the sheaf theory of $\ell$-adic sheaves
$$X\mapsto \Shv(X),\quad X\in \Sch$$
as defined in \cite[Sect. 1.1]{AGKRRV1}.

\medskip

The above assignment extends to prestacks by the procedure of right Kan extension.

\sssec{Groups}

We let $G$ be a linear algebraic group over $k$. For most of the paper, we will assume that $G$
is is connected and reductive.

\ssec{Acknowledgements} We dedicate this paper to G.~Lusztig for creating this area of mathematics.

\medskip

We wish to thank R.~Bezrukavnikov, C.~Chan, G.~Dhillon, A.~Eteve, K.~Lin, S.~Raskin, W.~Reeves and P.~Scholze
for some very helpful discussions.

\medskip

The research of N.R. is supported by an NSERC Discovery Grant (RGPIN-2025-0696).
The research of Y.V. was partially supported by the ISF grants 2091/21 and 2889/25.

\section{Higher categorical trace} \label{s:Trace}

The usual operation of trace takes a (dualizable!) object $\bo$ in a symmetric monoidal category $\bO$,
and attaches to an endomorphism $F$ of $\bO$ an endomorphism $\Tr(F,\bo)$ of $\one_\bO$. Now,
when $\bO$ is a symmetric monoidal 2-category $\bO$, the trace operation interacts in a non-trivial
way with 1-morphisms in $\bO$. And this interaction becomes significantly richer when $\bC$ is a 3-category.

\medskip

We will describe these interactions in the present section.

\ssec{Trace in 1-categories}

\sssec{} \label{sss:usual trace}

Let $\bO$ be a symmetric monoidal category. Let $\bL(\bO)$ denote the (symmetric monoidal) groupoid
consisting of pairs $(\bO,F)$, where $\bo$ is a \emph{dualizable} object of $\bO$
and $F$ is its endomorphism.

\medskip

The trace construction is a (symmetric monoidal) functor
\begin{equation} \label{e:trace functor}
\Tr:\bL(\bO)\to \End(\one_\bO)=:\Omega(\bO).
\end{equation}

\sssec{}

Explicitly, \eqref{e:trace functor} sends $F\in \End(\bo)$ to the composition
\begin{equation} \label{e:trace functor expl}
\one_\bO\overset{\on{u}_\bo}\to \bo\otimes \bo^\vee \overset{F\otimes \on{id}}\longrightarrow
\bo\otimes \bo^\vee \overset{\on{ev}_\bo}\to \bo.
\end{equation}

\sssec{}

Here are the basic properties of the trace construction:

\begin{itemize}

\item Trace is invariant under duality, i.e.,
\begin{equation} \label{e:Tr of dual}
\Tr(F,\bo)\simeq \Tr(F^\vee,\bo^\vee),
\end{equation}
where $F^\vee$ denotes the dual morphism. Indeed,
$$\Tr(F,\bo)=\on{ev}_\bo\circ (F\otimes \on{id})(\on{u}_\bo)\simeq
\on{ev}_\bo\circ (\on{id}\otimes F^\vee)(\on{u}_\bo)= \Tr(F^\vee,\bo^\vee);$$

\item Trace is cyclic, i.e., for $\bo_1\overset{F_{1,2}}\to \bo_2$ and $\bo_2\overset{F_{2,1}}\to \bo_1$, we have
$$\Tr(\bo_1,F_{2,1}\circ F_{1,2})\simeq \Tr(\bo_2,F_{1,2}\circ F_{2,1}).$$

Indeed, both sides are obtained by applying $\on{ev}_{\bo_1}\otimes \on{ev}_{\bo_2}$ to
$$\sigma_{1,3}\left((F_{1,2}\otimes \on{id})(\on{u}_{\bo_1})\otimes (F_{2,1}\otimes \on{id})(\on{u}_{\bo_2})\right)\in
\Maps(\one_\bO,\bo_1\otimes \bo_1^\vee\otimes \bo_2\otimes \bo_2^\vee),$$
where $\sigma_{1,3}$ is the transposition of factors.

\end{itemize}

\ssec{Trace in 2-categories}

\sssec{}

Let now $\bO$ be a 2-category. In this case, we will view $\Omega(\bO)$ as a \emph{category}
(rather than groupoid).

\medskip

Let $\on{Arr}(\bO)$ be the category whose objects are
\begin{equation} \label{e:obj of arr}
\bo_1\overset{t}\to \bo_2,
\end{equation}
and whose morphisms are commutative diagrams
\begin{equation} \label{e:2-morph}
\xy
(0,0)*+{\bo_1}="A";
(20,0)*+{\bo_2}="C";
(0,-20)*+{\bo'_1}="B";
(20,-20)*+{\bo'_2.}="D";
{\ar@{->}_{F_1} "A";"B"};
{\ar@{->}^{t} "A";"C"};
{\ar@{->}^{t'} "B";"D"};
{\ar@{->}^{F_2} "C";"D"};
{\ar@{=>}^\alpha "B";"C"};
\endxy
\end{equation}

If $\bO$ is symmetric monoidal, then $\on{Arr}(\bO)$ also acquires a symmetric monoidal structure.

\sssec{}

Note that in a 2-category, we can talk about a 1-morphism being adjointable
(here and in the sequel, unless specified otherwise, by ``adjointable" we will mean
``admits a right adjoint").

\begin{lem} \label{l:dualizability}
An object \eqref{e:obj of arr} is dualizable if and only if $\bo_1,\bo_2$ are dualizable
as objects of $\bO^{1\on{-Cat}}$ and $t$ is adjointable. Moreover, in this case, the dual of \eqref{e:obj of arr}
is given by
$$\bo^\vee_1\overset{(t^R)^\vee}\to \bo_2^\vee.$$
\end{lem}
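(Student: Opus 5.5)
The plan is to prove both directions by hand, building duality data in $\on{Arr}(\bO)$ explicitly for sufficiency and extracting it for necessity. The monoidal structure on $\on{Arr}(\bO)$ is objectwise, and its unit is $\one_\bO\overset{\on{Id}}\to \one_\bO$.

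\emph{Sufficiency.} Suppose $\bo_1,\bo_2$ are dualizable and $t$ admits a right adjoint $t^R$. We must produce two morphisms in $\on{Arr}(\bO)$:
\begin{itemize}
\item a unit from $\one\overset{\on{Id}}\to\one$ to $\bo_1\otimes\bo_1^\vee\overset{t\otimes (t^R)^\vee}\longrightarrow \bo_2\otimes\bo_2^\vee$;
\item an evaluation from $\bo_1^\vee\otimes\bo_1\overset{(t^R)^\vee\otimes t}\longrightarrow \bo_2^\vee\otimes\bo_2$ to $\one\overset{\on{Id}}\to\one$.
\end{itemize}
By \eqref{e:2-morph}, each is a pair of 1-morphisms (the units $\on{u}_{\bo_1},\on{u}_{\bo_2}$, respectively the evaluations $\on{ev}_{\bo_1},\on{ev}_{\bo_2}$) together with a 2-morphism filling the square.

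For the unit, the required 2-morphism is
$$(t\otimes (t^R)^\vee)\circ \on{u}_{\bo_1}\to \on{u}_{\bo_2}.$$
Using $(\on{id}\otimes (t^R)^\vee)\circ\on{u}_{\bo_1}\simeq (t^R\otimes\on{id})\circ \on{u}_{\bo_2}$, the source identifies with $(t\circ t^R\otimes \on{id})\circ\on{u}_{\bo_2}$. The 2-morphism is then the counit $t\circ t^R\to\on{Id}$ applied in the first factor. Dually, the evaluation square uses $\on{ev}_{\bo_1}\to \on{ev}_{\bo_2}\circ((t^R)^\vee\otimes t)\simeq \on{ev}_{\bo_1}\circ(\on{id}\otimes t^R\circ t)$, and this map is induced by the unit $\on{Id}\to t^R\circ t$.

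It remains to verify the zig-zag identities. On the underlying 1-morphisms $\bo_1,\bo_2$ they are those of the given dualities. On the 2-cell level, the composite 2-morphism pastes together the unit and counit of $(t,t^R)$ (transported through duality isomorphisms). It therefore reduces to the triangle identity $t\to t t^R t\to t$ being the identity. This bookkeeping, tracking the 2-cells coherently through the duality isomorphisms, is the main technical step, but it is formal.

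\emph{Necessity.} Since the forgetful functors $\on{Arr}(\bO)\to\bO^{1\on{-Cat}}$ (source and target) are symmetric monoidal, they preserve dualizability, so $\bo_1,\bo_2$ are dualizable. Let the dual of $(\bo_1\overset{t}\to\bo_2)$ be $(\bo_1^\vee\overset{s}\to\bo_2^\vee)$, where the projections identify the source and target with $\bo_1^\vee,\bo_2^\vee$. The unit and evaluation 2-cells have the shapes
$$(t\otimes s)\circ\on{u}_{\bo_1}\to\on{u}_{\bo_2}, \qquad \on{ev}_{\bo_1}\to\on{ev}_{\bo_2}\circ(s\otimes t).$$
Set $t':=s^\vee:\bo_2\to\bo_1$. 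Rewriting as above, these become 2-cells $t\circ t'\to\on{Id}_{\bo_2}$ and $\on{Id}_{\bo_1}\to t'\circ t$. The zig-zag identities for the duality in $\on{Arr}(\bO)$ translate exactly into the triangle identities, so $t'\simeq t^R$ and $s\simeq (t^R)^\vee$. This also proves the stated formula for the dual.

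The main obstacle is the identification, in both directions, of the zig-zag 2-cells with the triangle composites. To handle it cleanly, I would work in the 2-category via the mate correspondence: 2-cells $(t\otimes s)\circ\on{u}_{\bo_1}\to \on{u}_{\bo_2}$ correspond to 2-cells $t\circ s^\vee\to \on{Id}$, compatibly with composition.
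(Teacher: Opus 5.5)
Your proposal is correct and follows the paper's proof. For the ``only if'' direction, the paper likewise uses the symmetric monoidal projections to see that $\bo_1,\bo_2$ are dualizable and that the dual has the form $\bo_1^\vee\overset{s}\to\bo_2^\vee$. It then rewrites $(t\otimes s)\circ\on{u}_{\bo_1}\simeq((t\circ s^\vee)\otimes\on{Id})\circ\on{u}_{\bo_2}$ to extract $t\circ s^\vee\to\on{Id}$ (and dually $\on{Id}\to s^\vee\circ t$), and leaves the triangle identities as a routine check, just as you do.

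Your ``if'' direction, which the paper dismisses as tautological here, is exactly the duality datum the paper writes down later in the proof of \propref{p:2-dual}.
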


\begin{proof}

We will prove the (slightly less tautological) ``only" if direction.
The functors
$$\on{Arr}(\bO)\to \bO^{1\on{-Cat}}$$
that send \eqref{e:obj of arr} to $\bo_1$ and $\bo_2$, respectively, are symmetric monoidal.
Hence, if \eqref{e:obj of arr} is dualizable, then so are $\bo_1$ and $\bo_2$, and the dual has
the shape
$$\bo^\vee_1\overset{s}\to \bo^\vee_2,$$
for some 1-morphism $s$. We will show that $s^\vee:\bo_2\to \bo_1$ is a right adjoint of $t$.

\medskip

Indeed, consider the unit of the duality between $\bo_1\overset{t}\to \bo_2$ and $\bo^\vee_1\overset{s}\to \bo^\vee_2$;
it is represented by a diagram
$$
\xy
(0,0)*+{\one_\bO}="A";
(20,0)*+{\one_\bO}="C";
(0,-20)*+{\bo_1\otimes \bo_1^\vee}="B";
(20,-20)*+{\bo_2\otimes \bo_2^\vee.}="D";
{\ar@{->}_{\on{u}_{\bo_1}} "A";"B"};
{\ar@{->}^{\on{Id}} "A";"C"};
{\ar@{->}^{t\otimes s} "B";"D"};
{\ar@{->}^{\on{u}_{\bo_2}} "C";"D"};
{\ar@{=>}^\alpha "B";"C"};
\endxy
$$

The 2-morphism $\alpha$ is thus a map
$$(t\otimes s)\circ \on{u}_{\bo_1}\to \on{u}_{\bo_2}.$$

Rewriting
$$(t\otimes s)\circ \on{u}_{\bo_1}\simeq ((t\circ s^\vee)\otimes \on{Id}_{\bo^\vee_2})\circ \on{u}_{\bo_2},$$
we obtain a map
$$((t\circ s^\vee)\otimes \on{Id}_{\bo^\vee_2})\circ \on{u}_{\bo_2}\to \on{u}_{\bo_2},$$
or, equivalently, a map
$$t\circ s^\vee \to \on{Id}_{\bo_2}.$$

Similarly, by considering the counit of the adjunction,
one obtains a map
$$\on{Id}_{\bo_1}\to s^\vee\otimes t,$$
and one readily checks that these two maps satisfy the adjunction axioms.

\end{proof}

\sssec{}

Note that
$$\Omega(\one_{\on{Arr}(\bO)})\simeq \on{Funct}([0,1],\Omega(\bO))^{\on{grpd}}.$$

In particular, applying the trace construction from \secref{sss:usual trace} to $\on{Arr}(\bO)$ and using
\lemref{l:dualizability}, we obtain the following construction:

\medskip

Let us be given a diagram
\begin{equation} \label{e:basic diag}
\xy
(0,0)*+{\bo_1}="A";
(20,0)*+{\bo_2}="C";
(0,-20)*+{\bo_1}="B";
(20,-20)*+{\bo_2,}="D";
{\ar@{->}_{F_1} "A";"B"};
{\ar@{->}^{t} "A";"C"};
{\ar@{->}^{t} "B";"D"};
{\ar@{->}^{F_2} "C";"D"};
{\ar@{=>}^\alpha "B";"C"};
\endxy
\end{equation}
where $\bo_1$ and $\bo_2$ are dualizable as objects of $\bO^{1\on{-Cat}}$, and $t$ is adjointable.
Then the 2-morphism $\alpha$ induces a map
\begin{equation} \label{e:2-categ trace}
\Tr(\alpha,t):\Tr(F_1,\bo_1) \to \Tr(F_2,\bo_2)
\end{equation}
in the \emph{category} $\Omega(\bO)$.

\sssec{}

Explicitly, $\Tr(\alpha,t)$ is the 2-morphism equal to the composition
\begin{equation}\label{e:2-trace}
\xy
(0,0)*+{\one_\bO}="A";
(0,-20)*+{\one_\bO}="B";
(30,0)*+{\bo_1\otimes \bo_1^\vee}="C";
(30,-20)*+{\bo_2\otimes \bo_2^\vee}="D";
(60,0)*+{\bo_1\otimes \bo_1^\vee}="E";
(60,-20)*+{\bo_2\otimes \bo_2^\vee}="F";
(90,0)*+{\one_\bO}="G";
(90,-20)*+{\one_\bO}="H";
{\ar@{->}_{\on{Id}} "A";"B"};
{\ar@{->}^{\on{Id}} "G";"H"};
{\ar@{->}^{t\otimes (t^R)^\vee} "C";"D"};
{\ar@{->}^{t\otimes (t^R)^\vee} "E";"F"};
{\ar@{->}^{F_1\otimes \on{Id}} "C";"E"};
{\ar@{->}^{F_2\otimes \on{Id}} "D";"F"};
{\ar@{->}^{\on{u}_{\bo_1}} "A";"C"};
{\ar@{->}^{\on{u}_{\bo_2}} "B";"D"};
{\ar@{->}^{\on{ev}_{\bo_1}} "E";"G"};
{\ar@{->}^{\on{ev}_{\bo_2}} "F";"H"};
{\ar@{=>} "C";"B"};
{\ar@{=>}^{\alpha\otimes \on{Id}} "E";"D"};
{\ar@{=>} "G";"F"};
\endxy
\end{equation}

\sssec{}

Equivalently,  $\Tr(\alpha,t)$ is the following composition
\begin{multline} \label{e:2-cat Tr}
\Tr(F_1,\bo_1)\overset{\text{unit of the adj}}\longrightarrow \Tr(F_1\circ t^R\circ t,\bo_1)
\overset{\text{cyclicity of trace}}\longrightarrow \Tr(t\circ F_1\circ t^R,\bo_2) \overset{\alpha}\to \\
\to \Tr(F_2\circ t \circ t^R,\bo_2) \overset{\text{counit of the adj}}\longrightarrow \Tr(F_2,\bo_2),
\end{multline}
where the arrows are induced by the (obvious) map
$$\Maps_{\End(\bo)}(F',F'') \to \Maps_{\End(\one_\bO)}(\Tr(F',\bO),\Tr(F'',\bO)),$$
for a dualizable object $\bo\in \bO$ and $F',F''\in \End(\bo)$.

\sssec{}

A particular case of the above construction is when $\bo_1=\bo=\bo_2$, $t=\on{Id}$, and $\alpha$ amounts to a 2-morphism
$$F_1\overset{\alpha}\Rightarrow F_2.$$

In this case $\alpha$ induces a map
$$\Tr(F_1,\bo)\overset{\Tr(\alpha,\on{Id})}\longrightarrow \Tr(F_2,\bo).$$

\sssec{} \label{sss:trace and duality}

Given a diagram \eqref{e:basic diag}, consider the diagram
\begin{equation} \label{e:basic diag dual}
\xy
(0,0)*+{\bo^\vee_1}="A";
(20,0)*+{\bo^\vee_2}="C";
(0,-20)*+{\bo^\vee_1}="B";
(20,-20)*+{\bo^\vee_2,}="D";
{\ar@{->}_{F^\vee_1} "A";"B"};
{\ar@{->}^{(t^R)^\vee} "A";"C"};
{\ar@{->}^{(t^R)^\vee} "B";"D"};
{\ar@{->}^{F^\vee_2} "C";"D"};
{\ar@{=>}^{\alpha'} "B";"C"};
\endxy
\end{equation}
where 
$$(t^R)^\vee\circ F_1^\vee \overset{\alpha'}\to F_2^\vee\circ (t^R)^\vee$$
is obtained by duality from 
$$F_1\circ t^R\overset{\alpha^{\on{BC}}}\to t^R\circ F_2,$$
where $\alpha^{\on{BC}}$ is obtained\footnote{Here and in the sequel, ``BC" stands for Beck-Chevalley.}
from $\alpha$ by adjunction. 

\medskip

Note that $(t^R)^\vee$ is adjointable, namely, its right adjoint is $t^\vee$. So the map
$$\Tr(F^\vee_1,\bo^\vee_1) \overset{\Tr(\alpha',(t^R)^\vee)}\longrightarrow \Tr(F^\vee_2,\bo^\vee_2)$$
is defined.

\medskip

Unwinding the constructions, we obtain that the diagram
$$
\CD
\Tr(F_1,\bo_1) @>{\Tr(\alpha,t)}>> \Tr(F_2,\bo_2)  \\
@V{\text{\eqref{e:Tr of dual}}}V{\sim}V @V{\text{\eqref{e:Tr of dual}}}VV \\
\Tr(F^\vee_1,\bo^\vee_1) @>{\Tr(\alpha',(t^R)^\vee)}>> \Tr(F^\vee_2,\bo^\vee_2)
\endCD
$$
commutes. 

\sssec{} \label{sss:trace and comp}

More generally, for an index category $I$, we can consider the (symmetric monoidal) 1-category
$$\on{Funct}_{\on{right-lax}}(I,\bO)$$
and a symmetric functor
\begin{equation} \label{e:trace and comp}
\bL(\on{Funct}_{\on{right-lax}}(I,\bO))\overset{\Tr}\to \Omega(\one_{\on{Funct}_{\on{right-lax}}(I,\bO)})\simeq
\on{Funct}(I,\Omega(\bO))^{\on{grpd}}.
\end{equation}

Furthermore, the functors \eqref{e:trace and comp} are functorial with respect to $I$, in the sense that
we obtain a natural transformation between two functors
$$\on{Cat}\to \on{Grpd}^{\on{SymMon}}.$$

\sssec{} \label{sss:L for 2}

Let $\bL(\bO)$ be the symmetric monoidal category, whose underlying groupoid is $\bL(\bO^{1-\on{Cat}})$, and
where 1-morphisms are given by diagrams \eqref{e:basic diag} with $t$ adjointable.

\medskip

The functoriality of \eqref{e:trace and comp} with respect to the index category implies that the trace construction
extends to a symmetric monoidal functor
\begin{equation} \label{e:2-cat Tr funct}
\bL(\bO)\to \Omega(\bO).
\end{equation}

\ssec{The notion of 2-dualizability} \label{ss:2-dualizable}

\sssec{}

Let $\bO$ be a symmetric monoidal 2-category. Recall that an object $\bo\in \bO$ is said to be 2-dualizable if
the following conditions hold:

\begin{itemize}

\item $\bo$ is dualizable as an object of $\bO^{1\on{-Cat}}$;

\item The unit and counit 1-morphisms
$$\on{u}_\bo:\one_\bO\to \bo\otimes \bo^\vee \text{ and } \on{ev}_\bo:\bo\otimes \bo^\vee \to \one_\bO$$
are adjointable.

\end{itemize}

\sssec{Example}

Let $\bO=\DGCat$, so that $\bo$ is a DG category $\bC$. Assume that $\bC$ is compactly generated
(in particular, it is dualizable as an object of $\DGCat^{1\on{-Cat}}$).

\medskip

Then $\bC$ is 2-dualizable if and
only if it is \emph{smooth} and \emph{proper}.

\begin{rem}

Passing to the right adjoints of the unit and counit of the duality between $\bo$ and $\bo^\vee$, we obtain a
\emph{different} datum of duality. The discrepancy between the two is an automorphism of $\bo$, called
\emph{Serre functor}, and denoted $\on{Se}_\bo$.

\end{rem}

\sssec{}

Let $\bO$ be as above, and let $\bo\in \bO$ be a 2-dualizable object. Let $F$ be an adjointable 1-endomorphsim of
$\bo$. We have:

\begin{lem} \label{l:Trace dualizable}
Under the above circumstances, the object $\Tr(F,\bo)\in \Omega(\bO)$ is dualizable with dual
$$\Tr(F^R,\bo)\simeq \Tr((F^R)^\vee,\bo^\vee).$$
\end{lem}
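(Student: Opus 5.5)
The plan is to exhibit explicit unit and counit maps
$$\one_{\Omega(\bO)}\to \Tr(F,\bo)\otimes \Tr(F^R,\bo) \quad\text{and}\quad \Tr(F^R,\bo)\otimes \Tr(F,\bo)\to \one_{\Omega(\bO)}$$
in the monoidal category $\Omega(\bO)=\End(\one_\bO)$, and then check the triangle identities. Here $\one_{\Omega(\bO)}=\on{Id}_{\one_\bO}$, and the monoidal structure on $\Omega(\bO)$ is composition (equivalently, tensor product).

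First, I will rewrite the trace as a composition of 1-morphisms. Write $\Tr(F,\bo)=\on{ev}_\bo\circ (F\otimes \on{Id})\circ \on{u}_\bo$. Since $\bo$ is 2-dualizable, $\on{u}_\bo$ and $\on{ev}_\bo$ admit right adjoints, and $F$ admits a right adjoint $F^R$ by assumption. Hence $\Tr(F,\bo)$, viewed as a 1-morphism $\one_\bO\to\one_\bO$ in $\bO$, admits a right adjoint
$$\Tr(F,\bo)^R\simeq \on{u}_\bo^R\circ (F^R\otimes \on{Id})\circ \on{ev}_\bo^R.$$

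Second, I will use the general fact that in $\Omega(\bO)$, endomorphisms of the unit with the composition monoidal structure (which is symmetric via Eckmann–Hilton), an object $x$ that is adjointable as a 1-morphism $\one_\bO\to\one_\bO$ is dualizable, with dual $x^R$. The unit and counit of the adjunction $x\dashv x^R$ serve as the coevaluation $\on{Id}\to x^R\circ x$ and the evaluation $x\circ x^R\to \on{Id}$, and the triangle identities are exactly the adjunction axioms. This yields dualizability with dual $\Tr(F,\bo)^R$.

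The remaining step, and the main obstacle, is identifying $\on{u}_\bo^R\circ (F^R\otimes \on{Id})\circ \on{ev}_\bo^R$ with $\Tr(F^R,\bo)$, or equivalently with $\Tr((F^R)^\vee,\bo^\vee)$. By the preceding Remark, $(\on{ev}_\bo^R,\on{u}_\bo^R)$ is another duality datum between $\bo^\vee$ and $\bo$ (with roles swapped), differing from the standard one by the Serre functor $\on{Se}_\bo$. So the expression above is the trace of $F^R$ computed with respect to the swapped duality datum $\bo^\vee\otimes\bo$. Trace is independent of the choice of duality datum, since any two data differ by a unique compatible isomorphism, and the Serre twists cancel, appearing once in the unit and once inverted in the counit. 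Together with the symmetry swapping the factors, this identifies the expression with $\Tr((F^R)^\vee,\bo^\vee)$, which equals $\Tr(F^R,\bo)$ by \eqref{e:Tr of dual}. The care needed is in tracking the symmetry and the dual morphism, so that one does not inadvertently land on $\Tr(F^R\circ\on{Se}_\bo,\bo)$. I would write the computation with explicit string diagrams to confirm that the Serre twists cancel.
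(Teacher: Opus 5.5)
Your argument is correct, but it takes a different route from the paper.

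\textbf{The paper's route.} The paper never forms the right adjoint of $\Tr(F,\bo)$. It upgrades $\on{u}_\bo$ and $\on{ev}_\bo$ to 1-morphisms in $\bL(\bO)$ between $(\one_\bO,\on{Id})$ and $(\bo\otimes\bo^\vee,F^R\otimes F^\vee)$. The accompanying 2-morphisms $\alpha_1,\alpha_2$ are built from the unit and counit of $F\dashv F^R$, together with the identifications $(F\otimes\on{Id})\circ\on{u}_\bo\simeq(\on{Id}\otimes F^\vee)\circ\on{u}_\bo$ and $\on{ev}_\bo\circ(F\otimes\on{Id})\simeq\on{ev}_\bo\circ(\on{Id}\otimes F^\vee)$. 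It then applies the symmetric monoidal trace functor \eqref{e:2-cat Tr funct}. This directly yields a duality between $\Tr(F^R,\bo)$ and $\Tr(F^\vee,\bo^\vee)\simeq\Tr(F,\bo)$, with no Serre functor and no change of duality datum.

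\textbf{Your route.} You get dualizability almost for free. A composite of adjointable 1-morphisms is adjointable, and by the Eckmann--Hilton principle invoked in the Example after \propref{p:2-dual}, an adjointable endomorphism of $\one_\bO$ is a dualizable object of $\Omega(\bO)$ whose dual is its right adjoint; the triangle identities are just the adjunction axioms. You pay for this with the identification step.

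That step is sound, and easier than you fear. After applying the symmetry, $(\on{ev}_\bo^R,\on{u}_\bo^R)$ is simply another duality datum for $\bo$ with dual $\bo^\vee$. So $\on{u}_\bo^R\circ(F^R\otimes\on{Id})\circ\on{ev}_\bo^R$ is $\Tr(F^R,\bo)$ computed with that datum. Contractibility of the space of duality data then gives the isomorphism without tracking any Serre twist, and there is no need to detour through $\Tr((F^R)^\vee,\bo^\vee)$.

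\textbf{What the paper's version buys.} The unit and counit come out as traces of explicit squares in $\bL(\bO)$. This is exactly what the proof of \thmref{t:2 traces} uses: there these squares are concatenated with the one defining $\Tr(\beta^{\on{BC}},F_1)$ in \eqref{e:rotated pre iterated trace}. With your description, you would first have to match the adjunction unit and counit of $\Tr(F,\bo)$ with those traced squares before that argument could be run.
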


\begin{rem}

One can show that if $\bo_1$ and $\bo_2$ are 2-dualizable, and $t:\bo_1\to \bo_2$ admits a right adjoint,
then $t^R$ is also admits a right adjoint. In fact,
$$(t^R)^R\simeq \on{Se}_{\bo_1}\circ t\circ \on{Se}^{-1}_{\bo_2}.$$

\end{rem}

\begin{proof}[Proof of \lemref{l:Trace dualizable}]

We rewrite $\Tr(F,\bo)$ as $\Tr(F^\vee,\bo^\vee)$. Now the assertion of the lemma
follows the fact that the functor
$$\Tr:\bL(\bO)\to \Omega(\bO)$$
is symmetric monoidal and
compatible with compositions of adjointable 1-morphisms (see \eqref{e:2-cat Tr funct}), so that the duality data for $\bo$ and $\bo^\vee$
induces one on $\Tr(F^R,\bo)$ and $\Tr(F^\vee,\bo^\vee)$ via
the diagrams
$$
\xy
(0,0)*+{\one_\bO}="A";
(20,0)*+{\bo\otimes \bo^\vee}="C";
(0,-20)*+{\one_\bO}="B";
(20,-20)*+{\bo\otimes \bo^\vee}="D";
{\ar@{->}_{\on{Id}} "A";"B"};
{\ar@{->}^{\on{u}_\bo} "A";"C"};
{\ar@{->}^{\on{u}_\bo} "B";"D"};
{\ar@{->}^{F^R\otimes F^\vee} "C";"D"};
{\ar@{=>}^{\alpha_1} "B";"C"};
\endxy
$$
and
$$
\xy
(20,0)*+{\one_\bO}="A";
(0,0)*+{\bo\otimes \bo^\vee}="C";
(20,-20)*+{\one_\bO,}="B";
(0,-20)*+{\bo\otimes \bo^\vee}="D";
{\ar@{->}^{\on{Id}} "A";"B"};
{\ar@{<-}_{\on{ev}_\bo} "A";"C"};
{\ar@{<-}^{\on{ev}_\bo} "B";"D"};
{\ar@{->}_{F^R\otimes F^\vee} "C";"D"};
{\ar@{<=}_{\alpha_2} "A";"D"};
\endxy
$$
respectively, where:

\begin{itemize}

\item $\alpha_1$ is the 2-morphism
\begin{multline*}
\on{u}_\bo \to ((F^R\circ F)\otimes \on{Id})(\on{u}_\bo)\simeq
(F^R\otimes \on{Id})\circ (F\otimes \on{Id})(\on{u}_\bo)\simeq \\
\simeq (F^R\otimes \on{Id})\circ (\on{Id}\otimes F^\vee)(\on{u}_\bo)\simeq
(F^R\otimes F^\vee)(\on{u}_\bo);
\end{multline*}

\item $\alpha_2$ is the 2-morphism
$$\on{ev}_\bo\circ (F^R\otimes F^\vee)\simeq \on{ev}_\bo\circ ((F\circ F^R)\otimes \on{Id})\to \on{ev}_\bo.$$

\end{itemize}

\end{proof}

\sssec{} \label{sss:setting for iterated trace}

Let $\bo$ be again a 2-dualizable object, and let $F_1$ and $F_2$ be two adjointable endomorphisms of $\bo$
that (lax) commute with each other, i.e., we are given a $2$-morphism
$$F_2\circ F_1 \overset{\beta}{\to} F_1\circ F_2.$$
By adjunction, this also gives a $2$-morphism
$$F_1 \circ F_2^R\overset{\beta^{\on{BC}}}{\to} F_2^R \circ F_1,$$
where the superscript $\on{BC}$ stands for Beck-Chevalley, see \secref{sss:trace and duality}. 

\medskip

The construction \eqref{e:2-categ trace} defines maps
\begin{equation} \label{e:map on traces}
\Tr(\beta,F_2):\Tr(F_1,\bo)\to \Tr(F_1,\bo) \text{ and }
\Tr(\beta^{\on{BC}},F_1):\Tr(F_2^R,\bo)\to \Tr(F_2^R,\bo)
\end{equation}
in the category $\Omega(\bO)$.

\medskip

By \lemref{l:Trace dualizable}, the objects
$$\Tr(F_1,\bo) \text{ and } \Tr(F_2^R,\bo)$$
of $\Omega(\bO)$ are dualizable, so it makes sense to consider the objects
\begin{equation} \label{e:two traces}
\Tr(\Tr(\beta,F_2),\Tr(F_1,\bo)) \text{ and } \Tr(\Tr(\beta^{\on{BC}},F_1),\Tr(F_2^R,\bo))
\end{equation}
in
$$\Omega^2(\bO):=\Omega(\Omega(\bO)).$$

We have the following fundamental result:
\begin{thm}[\cite{BN1}] \label{t:2 traces}
The objects \eqref{e:two traces} are canonically isomorphic.
\end{thm}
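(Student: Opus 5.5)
We will deduce \thmref{t:2 traces} from the symmetric monoidal functoriality of the 2-categorical trace, \eqref{e:2-cat Tr funct}, together with cyclicity and duality invariance of the ordinary trace \eqref{e:Tr of dual}, applied one categorical level down, in $\Omega(\bO)$. The guiding idea is that both objects in \eqref{e:two traces} are two different ways of evaluating one and the same ``toroidal'' trace: the datum $(\bo,F_1,F_2,\beta)$ is a pair of lax-commuting endomorphisms, and one can take the trace first in the $F_1$-direction or first in the $F_2$-direction.

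First we rewrite the left-hand side. The pair $(F_2,\beta)$ is a 1-endomorphism of the object $(\bo,F_1)$ in $\bL(\bO)$ (a diagram \eqref{e:basic diag} with $t=F_2$ adjointable). Because $\bo$ is 2-dualizable, we argue that $(\bo,F_1)$ is itself dualizable in the symmetric monoidal category $\bL(\bO)$. Its dual is $(\bo^\vee,F_1^\vee)$, with unit and counit given by $\on{u}_\bo$ and $\on{ev}_\bo$; these lie in $\bL(\bO)$ precisely because they are adjointable, and the 2-morphisms are the canonical ones. Since $\Tr:\bL(\bO)\to\Omega(\bO)$ is symmetric monoidal, it preserves duality data, which recovers \lemref{l:Trace dualizable}. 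It also preserves traces of endomorphisms, so that
$$\Tr(\Tr(\beta,F_2),\Tr(F_1,\bo))\simeq \Tr\big(\text{trace of }(F_2,\beta)\text{ on }(\bo,F_1)\text{ in }\bL(\bO)\big),$$
where the inner trace is an endomorphism of $\one_{\bL(\bO)}=(\one_\bO,\on{Id})$. Such an endomorphism is a pair consisting of the 1-morphism $\Tr(F_2,\bo)$ together with a 2-endomorphism of it, and $\Tr$ of that pair is its image in $\Omega^2(\bO)$.

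Next we compute that pair explicitly, using \eqref{e:trace functor expl} applied in $\bL(\bO)$. The underlying 1-morphism is $\on{ev}_\bo\circ(F_2\otimes\on{Id})\circ\on{u}_\bo=\Tr(F_2,\bo)$. The 2-morphism is assembled from $\beta$ and from the mates of the structure 2-cells of $\on{u}_\bo$ and $\on{ev}_\bo$ with respect to $F_1$. Taking $\Tr(-,\one)$ of this pair then yields a ``double trace'' expression. This expression is a pasting diagram built out of $\on{u}_\bo,\on{ev}_\bo$, their right adjoints $\on{u}_\bo^R,\on{ev}_\bo^R$, $F_1,F_2$, and $\beta$.

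We then run the same analysis for the right-hand side, with the roles of the two directions swapped. Here the object is $(\bo,F_2^R)$, or equivalently, via \eqref{e:Tr of dual} and \secref{sss:trace and duality}, $(\bo^\vee,(F_2^R)^\vee)$, and the endomorphism is $(F_1,\beta^{\on{BC}})$. The required identification will come from two facts. The first is the duality compatibility of \secref{sss:trace and duality}, which converts $\beta$ into $\beta^{\on{BC}}$ and $F_2$ into $(F_2^R)^\vee$. The second is cyclicity of the trace in $\Omega(\bO)$, which lets us move the factor $\on{ev}_\bo^R$ past $\on{u}_\bo$ (the 2-dualizability adjunctions). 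Both sides should thereby reduce to a single canonical pasting built from the same cells.

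The main obstacle is this final matching of pasting diagrams. The unit and counit 2-cells of the adjunctions $\on{u}_\bo\dashv\on{u}_\bo^R$ and $\on{ev}_\bo\dashv\on{ev}_\bo^R$ enter the two sides in different orders, and one must check that the triangle identities reconcile them. My first attempt would be to avoid a direct diagram chase by proving the stronger, coherent statement instead. The plan is to show that the assignment $(\bo,F_1,F_2,\beta)\mapsto$ double trace is a symmetric monoidal functor out of a category of lax-commuting pairs. This functor should factor through an evident symmetry that swaps the two directions up to passage to adjoints (in the spirit of \cite{BN1}, using the cobordism-type picture of a torus). The isomorphism would then be the image of that symmetry, which reduces the verification to the universal 2-dualizable case.
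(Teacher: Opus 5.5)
\textbf{There is a genuine gap.} Your basic mechanism is the right one, and it is what the paper uses: realize a double trace as the image, under the symmetric monoidal functor $\Tr:\bL(\bO)\to\Omega(\bO)$ of \eqref{e:2-cat Tr funct}, of an ordinary trace computed inside the $1$-category $\bL(\bO)$. As proposed, though, the argument has two real problems.

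First, $(\bo^\vee,F_1^\vee)$ is not dual to $(\bo,F_1)$ in $\bL(\bO)$. To make $\on{u}_\bo$ a $1$-morphism $(\one_\bO,\on{Id})\to(\bo\otimes\bo^\vee,F_1\otimes F_1^\vee)$ you need a $2$-cell $\on{u}_\bo\Rightarrow(F_1\otimes F_1^\vee)\circ\on{u}_\bo\simeq((F_1\circ F_1)\otimes\on{Id})\circ\on{u}_\bo$, and there is no such cell. Were there one, $\Tr$ would exhibit $\Tr(F_1,\bo)$ as dual to $\Tr(F_1^\vee,\bo^\vee)\simeq\Tr(F_1,\bo)$, whereas by \lemref{l:Trace dualizable} its dual is $\Tr(F_1^R,\bo)$. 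The duality that actually exists (this is the content of the proof of \lemref{l:Trace dualizable}) pairs $(\bo,F^R)$ with $(\bo^\vee,F^\vee)$, via the $2$-cells $\alpha_1,\alpha_2$ built from the unit and counit of $F\dashv F^R$. So $(\bo,F_1)$ is the wrong object to start from; its dual would require a left adjoint of $F_1$.

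Second, and more seriously, you never identify the $2$-endomorphism produced by the inner trace. Computing both sides separately then leaves you comparing the trace of a $2$-endomorphism of one $1$-morphism of $\Omega(\bO)$ with the trace of a $2$-endomorphism of another. That is a statement of exactly the same shape as the theorem. Your proposed way out is not an argument: the coherent double-trace functor, the swap symmetry and the torus-picture reduction to a universal $2$-dualizable case only point to cobordism-hypothesis-type machinery, and no such universal case is constructed. The cyclicity manoeuvre with $\on{ev}_\bo^R$ is also not needed anywhere.

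The missing observation is that one computation suffices, because the inner trace for one side \emph{is} the data defining the other side.
\begin{itemize}
\item Take the object $(\bo,F_2^R)$ with dual $(\bo^\vee,F_2^\vee)$ as above, and its endomorphism $(F_1,\beta^{\on{BC}})$ in $\bL(\bO)$. This is legitimate: $\on{u}_\bo$ and $\on{ev}_\bo$ are adjointable by $2$-dualizability, and $F_1\otimes\on{Id}$ is adjointable because $F_1$ is.
\item Its trace in $\bL(\bO)$ is the composite of $(\on{u}_\bo,\alpha_1)$, $(F_1\otimes\on{Id},\beta^{\on{BC}}\otimes\on{id})$ and $(\on{ev}_\bo,\alpha_2)$, i.e.\ the diagram \eqref{e:rotated pre iterated trace}.
\item That diagram is obtained from the diagram \eqref{e:pre iterated trace} defining $\Tr(\beta,F_2)$ by passing to right adjoints along the vertical arrows, $F_2\otimes(F_2^R)^\vee\rightsquigarrow F_2^R\otimes F_2^\vee$, and flipping. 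Under this operation the middle cell $\beta\otimes\on{id}$ becomes $\beta^{\on{BC}}\otimes\on{id}$, and the two outer cells become $\alpha_1,\alpha_2$.
\item Mates are compatible with pasting, and the outermost vertical arrows are identities. Hence the composite endomorphism of $(\one_\bO,\on{Id})$ is the pair $(\Tr(F_1,\bo),\Tr(\beta,F_2))$.
\item Now apply $\Tr$, which is compatible with composition. On the three-term decomposition it yields $\on{ev}_{\Tr(F_2^R,\bo)}\circ(\Tr(\beta^{\on{BC}},F_1)\otimes\on{id})\circ\on{u}_{\Tr(F_2^R,\bo)}=\Tr(\Tr(\beta^{\on{BC}},F_1),\Tr(F_2^R,\bo))$.
\item On the composite it yields $\Tr(\Tr(\beta,F_2),\Tr(F_1,\bo))$, since for an endomorphism $(t,\alpha)$ of $(\one_\bO,\on{Id})$ the map $\Tr(\alpha,t)$ is the ordinary trace of $\alpha$ on $t$.
\end{itemize}
This is the paper's proof, and it involves no matching of two independently computed pasting diagrams.
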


\begin{proof}

Consider the diagram defining the morphism $\Tr(\beta, F_2)$:
\begin{equation}\label{e:pre iterated trace}
\xy
(0,0)*+{\one_\bO}="A";
(0,-20)*+{\one_\bO}="B";
(30,0)*+{\bo\otimes \bo^\vee}="C";
(30,-20)*+{\bo\otimes \bo^\vee}="D";
(60,0)*+{\bo\otimes \bo^\vee}="E";
(60,-20)*+{\bo\otimes \bo^\vee}="F";
(90,0)*+{\one_\bO}="G";
(90,-20)*+{\one_\bO}="H";
{\ar@{->}_{\on{Id}} "A";"B"};
{\ar@{->}^{\on{Id}} "G";"H"};
{\ar@{->}^{F_2\otimes (F_2^R)^\vee} "C";"D"};
{\ar@{->}^{F_2\otimes (F_2^R)^\vee} "E";"F"};
{\ar@{->}^{F_1\otimes \on{Id}} "C";"E"};
{\ar@{->}^{F_1\otimes \on{Id}} "D";"F"};
{\ar@{->}^{\on{u}_{\bo}} "A";"C"};
{\ar@{->}^{\on{u}_{\bo}} "B";"D"};
{\ar@{->}^{\on{ev}_{\bo}} "E";"G"};
{\ar@{->}^{\on{ev}_{\bo}} "F";"H"};
{\ar@{=>} "C";"B"};
{\ar@{=>}^{\beta\otimes \on{Id}} "E";"D"};
{\ar@{=>} "G";"F"};
\endxy
\end{equation}
The outer square of this diagram is
\begin{equation}\label{e:outer pre iterated trace}
\xy
(0,0)*+{\one_{\bO}}="A";
(30,0)*+{\one_{\bO}}="C";
(0,-20)*+{\one_{\bO}}="B";
(30,-20)*+{\one_{\bO}.}="D";
{\ar@{->}_{\on{Id}} "A";"B"};
{\ar@{->}^{\Tr(F_1, \bo)} "A";"C"};
{\ar@{->}^{\Tr(F_1, \bo)} "B";"D"};
{\ar@{->}^{\on{Id}} "C";"D"};
{\ar@{<=}^{\Tr(\beta, F_2)} "B";"C"};
\endxy
\end{equation}
Now, we perform the following maneuver on diagram \eqref{e:pre iterated trace}: first pass to right adjoints along the vertical arrows and then
we flip the resulting diagram around the horizontal axis.  This produces the following diagram:

\begin{equation}\label{e:rotated pre iterated trace}
\xy
(0,0)*+{\one_\bO}="A";
(0,-20)*+{\one_\bO}="B";
(30,0)*+{\bo\otimes \bo^\vee}="C";
(30,-20)*+{\bo\otimes \bo^\vee}="D";
(60,0)*+{\bo\otimes \bo^\vee}="E";
(60,-20)*+{\bo\otimes \bo^\vee}="F";
(90,0)*+{\one_\bO}="G";
(90,-20)*+{\one_\bO}="H";
{\ar@{->}_{\on{Id}} "A";"B"};
{\ar@{->}^{\on{Id}} "G";"H"};
{\ar@{->}^{F_2^R\otimes F_2^\vee} "C";"D"};
{\ar@{->}^{F_2^R\otimes F_2^\vee} "E";"F"};
{\ar@{->}^{F_1\otimes \on{Id}} "C";"E"};
{\ar@{->}^{F_1\otimes \on{Id}} "D";"F"};
{\ar@{->}^{\on{u}_{\bo}} "A";"C"};
{\ar@{->}^{\on{u}_{\bo}} "B";"D"};
{\ar@{->}^{\on{ev}_{\bo}} "E";"G"};
{\ar@{->}^{\on{ev}_{\bo}} "F";"H"};
{\ar@{<=} "C";"B"};
{\ar@{<=}^{\beta^{\on{BC}}\otimes \on{Id}} "E";"D"};
{\ar@{<=} "G";"F"};
\endxy
\end{equation}
where the leftmost and rightmost 2-morphisms are the ones in \lemref{l:Trace dualizable} defining the duality between $\Tr(F_2^R, \bo)$ and $\Tr(F_2^{\vee}, \bo^{\vee})$.

\medskip

By the hypothesis on $\bo$ and $F_1, F_2$, we can read the diagram \eqref{e:rotated pre iterated trace} from left to right as a sequence of 3 composable
1-morphisms in $\bL(\bO)$.

\medskip

On the one hand, applying the trace functor \eqref{e:2-cat Tr funct} to this sequence, we obtain
\begin{multline}\label{e:outer pre-trace}
\one_{\Omega(\bO)}  \overset{\on{u}_{\Tr((F_2)^R, \bo)}}\longrightarrow \Tr((F_2)^R, \bo) \otimes \Tr(F_2^\vee, \bo^{\vee})
\overset{\Tr(\beta^{\on{BC}}, F_1) \otimes \on{Id}}\longrightarrow \\
\to \Tr((F_2)^R, \bo) \otimes \Tr(F_2^\vee, \bo^{\vee}) \overset{\on{ev}_{\Tr((F_2)^R, \bo)}}\longrightarrow \one_{\Omega(\bO)},
\end{multline}
%
whose composite is by definition
$$ \Tr(\Tr(\beta^{\on{BC}}, F_1), \Tr(F_2^R, \bo)) \in \Omega^2(\bO).$$

\medskip

On the other hand, the outer square of \eqref{e:rotated pre iterated trace} is obtained from \eqref{e:outer pre iterated trace} by passing to right adjoints along the vertical morphisms and flipping the resulting diagram around the horizontal axis, which gives
\begin{equation}\label{e:outer rotated pre iterated trace}
\xy
(0,0)*+{\one_{\bO}}="A";
(30,0)*+{\one_{\bO}}="C";
(0,-20)*+{\one_{\bO}}="B";
(30,-20)*+{\one_{\bO}.}="D";
{\ar@{->}_{\on{Id}} "A";"B"};
{\ar@{->}^{\Tr(F_1, \bo)} "A";"C"};
{\ar@{->}^{\Tr(F_1, \bo)} "B";"D"};
{\ar@{->}^{\on{Id}} "C";"D"};
{\ar@{=>}^{\Tr(\beta, F_2)} "B";"C"};
\endxy
\end{equation}
Now, applying the trace functor \eqref{e:2-cat Tr funct} to \eqref{e:outer rotated pre iterated trace}, we obtain
$$ \Tr(\Tr(\beta, F_2), \Tr(F_1, \bo)) \in \Omega^2(\bO),$$
giving the desired isomorphism.

\end{proof}

\sssec{}

Suppose now that in the setting of \secref{sss:setting for iterated trace}, $F_2=\on{Id}$ and $\beta$ is the identity endomorphism of $F_1$.
Note that in this case $\beta^{\on{BC}}$ is also the identity endomorphism of $F_1$.

\medskip

In this case \thmref{t:2 traces} implies:

\begin{cor}  \label{c:2 traces}
Let $F$ be an adjointable endomorphism of a 2-dualizable object.
We have a canonical isomorphism
\begin{equation} \label{e:2 traces}
\Tr(\on{Id},\Tr(F,\bo)) \simeq \Tr(\Tr(\on{id}, F), \Tr(\on{Id}, \bo)).
\end{equation}
\end{cor}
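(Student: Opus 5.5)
We specialize \thmref{t:2 traces} to $F_1:=F$, $F_2:=\on{Id}_\bo$, and $\beta:=\on{id}_F$, regarded as a 2-morphism $\on{Id}\circ F\to F\circ \on{Id}$. The hypotheses are met: $\bo$ is 2-dualizable, $F$ is adjointable by assumption, and $\on{Id}_\bo$ is adjointable with $\on{Id}^R=\on{Id}$. The theorem then gives a canonical isomorphism
$$\Tr(\Tr(\beta,\on{Id}),\Tr(F,\bo))\simeq \Tr(\Tr(\beta^{\on{BC}},F),\Tr(\on{Id},\bo)).$$
We identify each side with the corresponding side of \eqref{e:2 traces}.

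For the left-hand side, take the construction \eqref{e:2-categ trace} with $t=\on{Id}$ and $\alpha=\on{id}_F$. Using the description \eqref{e:2-cat Tr}, the unit and counit of the adjunction $\on{Id}\dashv\on{Id}$ are identities. The cyclicity step is the trivial one, and the $\alpha$-step is $\on{id}$. Hence $\Tr(\on{id}_F,\on{Id})=\on{id}_{\Tr(F,\bo)}$, which is the identity 1-morphism $\on{Id}$ of the object $\Tr(F,\bo)\in\Omega(\bO)$. This object is dualizable by \lemref{l:Trace dualizable}, so the left-hand side is $\Tr(\on{Id},\Tr(F,\bo))$.

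For the right-hand side, first note that $\Tr(F_2^R,\bo)=\Tr(\on{Id},\bo)$. Next, the Beck--Chevalley transform of $\on{id}_F$ with respect to the adjunction $(\on{Id},\on{Id})$ is the composite
$$F\to F\circ\on{Id}\circ\on{Id}\to \on{Id}\circ F\circ\on{Id}\to \on{Id}\circ F,$$
built from the identity unit, the 2-morphism $\beta$, and the identity counit. It is therefore $\on{id}_F$, viewed now as a 2-morphism $F\circ\on{Id}\to\on{Id}\circ F$. So $\Tr(\beta^{\on{BC}},F)=\Tr(\on{id},F)$ is exactly the endomorphism of $\Tr(\on{Id},\bo)$ appearing in \eqref{e:2 traces}, and the right-hand side is $\Tr(\Tr(\on{id},F),\Tr(\on{Id},\bo))$.

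The only point needing care is coherence: each "identity" above must be identified canonically, not just up to some unspecified isomorphism. For the unit and counit of $\on{Id}\dashv\on{Id}$ this holds because the identity adjunction is the unit of composition of adjoint pairs. That is precisely the functoriality of \eqref{e:2-cat Tr funct}, which sends identity 1-morphisms of $\bL(\bO)$ to identities. Consequently both identifications are canonical, and composing them with the isomorphism from \thmref{t:2 traces} yields \eqref{e:2 traces}.
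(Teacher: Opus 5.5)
Your proposal is correct and follows the paper's argument: specialize \thmref{t:2 traces} to $F_1=F$, $F_2=\on{Id}$ and $\beta=\on{id}_F$, and note that $\beta^{\on{BC}}$ is again the identity, so the two sides of the theorem become the two sides of \eqref{e:2 traces}. Your extra checks — that $\Tr(\on{id}_F,\on{Id})$ is the identity endomorphism of $\Tr(F,\bo)$, and that the identifications are coherent via the functoriality \eqref{e:2-cat Tr funct} — spell out what the paper leaves implicit.
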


\sssec{Example}

Let $\bC$ be a smooth and proper category, and let $F$ be its endofunctor that preserves
compactness. From \lemref{l:Trace dualizable},
we obtain that $\Tr(F,\bC)\in \Vect$ is dualizable, i.e., finite-dimensional.

\medskip

From \eqref{t:2 traces} we obtain
$$\dim(\Tr(F,\bC))=\Tr(F,\Tr(\on{Id},\bC))$$
as elements of $\ol\BQ_\ell$.

\ssec{The category of arrows for a 3-category}

We now increase the level of complexity and start tackling the 3-categorical situation.

\sssec{} \label{sss:Arr for 2}

Let now $\bO$ be a 3-category. In this case, we will perceive $\Omega(\bO)$ as a 2-category.

\medskip

Let $\on{Arr}(\bO)$ be the following 2-category. We let
$$(\on{Arr}(\bO))^{1-\on{Cat}}:=\on{Arr}(\bO^{2-\on{Cat}}).$$

For a pair of objects
$$\bo_1\overset{t}\to \bo_2 \text{ and } \bo'_1\overset{t'}\to \bo'_2$$
and a pair of morphisms between them
$$
\xy
(0,0)*+{\bo_1}="A";
(20,0)*+{\bo_2}="C";
(0,-20)*+{\bo'_1}="B";
(20,-20)*+{\bo'_2}="D";
{\ar@{->}_{F_1} "A";"B"};
{\ar@{->}^{t} "A";"C"};
{\ar@{->}^{t'} "B";"D"};
{\ar@{->}^{F_2} "C";"D"};
{\ar@{=>}^{\alpha} "B";"C"};
\endxy
$$
and
$$
\xy
(0,0)*+{\bo_1}="A";
(20,0)*+{\bo_2}="C";
(0,-20)*+{\bo'_1}="B";
(20,-20)*+{\bo'_2,}="D";
{\ar@{->}_{\wt{F}_1} "A";"B"};
{\ar@{->}^{t} "A";"C"};
{\ar@{->}^{t'} "B";"D"};
{\ar@{->}^{\wt{F}_2} "C";"D"};
{\ar@{=>}^{\wt\alpha} "B";"C"};
\endxy
$$
a 2-morphism
$$(F_1,F_2,\alpha) \Rightarrow (\wt{F}_1,\wt{F}_2,\wt\alpha)$$
is the datum of
$$(\beta_1:F_1 \Rightarrow \wt{F}_1,\,\,\beta_2:F_2 \Rightarrow \wt{F}_2,\,\, \gamma),$$
where $\gamma$ is a 3-morphism\footnote{Note the direction of the arrow $\gamma$!}
$$\wt\alpha\circ t'(\beta_1) \Rrightarrow \beta_2(t) \circ \alpha,$$
i.e., a 2-morphism in the diagram

$$
\xy
(0,0)*+{t'\circ F_1}="A";
(0,-20)*+{t'\circ \wt{F}_1}="C";
(20,0)*+{F_2\circ t}="B";
(20,-20)*+{\wt{F}_2\circ t.}="D";
{\ar@{->}^{\alpha} "A";"B"};
{\ar@{->}_{t'(\beta_1)} "A";"C"};
{\ar@{->}^{\beta_2(t)} "B";"D"};
{\ar@{->}_{\wt\alpha} "C";"D"};
{\ar@{<=}_\gamma "B";"C"};
\endxy
$$

\sssec{}

We observe:

\begin{lem} \label{l:adj in Arr}
A 1-morphism
\begin{equation} \label{e:adj sq}
\xy
(0,0)*+{\bo_1}="A";
(20,0)*+{\bo_2}="B";
(0,-20)*+{\bo'_1}="C";
(20,-20)*+{\bo'_2}="D";
{\ar@{->}^{F_1} "A";"B"};
{\ar@{->}^{t} "A";"C"};
{\ar@{->}^{t'} "B";"D"};
{\ar@{->}_{F_2} "C";"D"};
{\ar@{=>}^{\alpha} "B";"C"};
\endxy
\end{equation}
in $\on{Arr}(\bO)$ is adjointable if and only if the following conditions hold:

\begin{itemize}

\item $F_1$ and $F_2$ are adjointable in $\bO^{1\on{-Cat}}$;

\item The 2-morphism $\alpha$ is adjointable.

\end{itemize}

\noindent In this case, the right adjoint of \eqref{e:adj sq} is given by
\begin{equation} \label{e:adj sq adj}
\xy
(0,0)*+{\bo'_1}="A";
(30,0)*+{\bo'_2}="C";
(0,-30)*+{\bo_1}="B";
(30,-30)*+{\bo_2,}="D";
{\ar@{->}_{F^R_1} "A";"B"};
{\ar@{->}^{t'} "A";"C"};
{\ar@{->}_{t} "B";"D"};
{\ar@{->}^{F^R_2} "C";"D"};
{\ar@{=>}^{(\alpha^R)^{\on{BC}}} "B";"C"};
\endxy
\end{equation}
where $(-)^{\on{BC}}$ is as in \secref{sss:trace and duality}. 
\end{lem}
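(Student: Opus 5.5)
We argue as in \lemref{l:dualizability}, splitting into the two directions and checking the triangle identities in $\on{Arr}(\bO)$ componentwise. Note that the 1-morphism \eqref{e:adj sq} is now drawn with $t,t'$ as the ``objects'' of $\on{Arr}(\bO)$ and $F_1,F_2$ as the components of the morphism. The 2-morphisms of $\on{Arr}(\bO)$ are triples $(\beta_1,\beta_2,\gamma)$, and composition is done componentwise, with the $\gamma$'s pasted.

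For the ``only if'' direction, consider the two evident 2-functors $\on{Arr}(\bO)\to \bO^{2\on{-Cat}}$ remembering the source and the target. Both preserve adjunctions, so any right adjoint of \eqref{e:adj sq} has components $F_1^R$ and $F_2^R$. Write the right adjoint as $(F_1^R,F_2^R,\delta)$ with $\delta: t\circ F_1^R\Rightarrow F_2^R\circ t'$, and let the unit and counit be $(\eta_1,\eta_2,\gamma_\eta)$ and $(\epsilon_1,\epsilon_2,\gamma_\epsilon)$. We then mate $\delta$ to a 2-morphism
$$\delta^{\on{BC}}: F_2\circ t\to t'\circ F_1,$$
that is, $F_2 t\to F_2 t F_1F_1^R\ldots$. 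The precise composite is $F_2 t \overset{\eta_1}\to F_2 t F_1^R F_1\overset{\delta}\to F_2F_2^R t' F_1\overset{\epsilon_2}\to t'F_1$, which goes in the direction opposite to $\alpha$. The 3-morphisms $\gamma_\eta$ and $\gamma_\epsilon$ then unwind, after pasting with the mate isomorphisms, into a unit $\on{id}\Rrightarrow \delta^{\on{BC}}\circ\alpha$ and a counit $\alpha\circ\delta^{\on{BC}}\Rrightarrow\on{id}$. The triangle identities for these follow from the 3-morphism components of the triangle identities in $\on{Arr}(\bO)$. This shows $\alpha^R\simeq \delta^{\on{BC}}$, and hence $\delta\simeq(\alpha^R)^{\on{BC}}$, which is the formula \eqref{e:adj sq adj}.

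For the ``if'' direction, we set $\delta:=(\alpha^R)^{\on{BC}}$ and write down the unit as $(\eta_1,\eta_2,\gamma_\eta)$. Here $\gamma_\eta$ is obtained from the unit $\on{id}\Rrightarrow\alpha^R\circ\alpha$ by pasting with the standard triangle-identity 3-isomorphisms for the $F_i$-adjunctions. The counit $(\epsilon_1,\epsilon_2,\gamma_\epsilon)$ is built analogously from the counit of $\alpha\dashv\alpha^R$.

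The triangle identities are checked componentwise. On the $F_1$ and $F_2$ components they are the usual ones. On the 3-morphism components they reduce to the triangle identities of $(\alpha,\alpha^R)$, together with the fact that mating is an equivalence of the relevant hom-categories, compatible with pasting.

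The main obstacle is bookkeeping in this last step. One must verify that the pasted 3-morphism components compose to the identity, which requires the coherence of mates (the ``doctrinal adjunction'' of Kelly) one level up. My approach is to reduce to the universal case: identify 2-morphisms $(F_1,F_2,\alpha)\Rightarrow(\wt F_1,\wt F_2,\wt\alpha)$ in $\on{Arr}(\bO)$ with data in a lax comma construction. Adjointability of a 1-morphism in a 2-category is detected by the representability of the functor $\bMaps(-,\cdot)$ composed with it. This converts the claim into the statement that $\alpha$ has a right adjoint in the category of 2-morphisms, a 1-categorical computation in $\bMaps_\bO(\bo_1,\bo'_2)$.
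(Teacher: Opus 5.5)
There is one concrete error, in your ``if'' construction: the unit and counit of $(\alpha,\alpha^R)$ are attached to the wrong components. As you state it, $\gamma_\eta$ cannot be built. Otherwise your ``if'' direction follows the paper's route: the unit and counit of the big adjunction are assembled from $\eta_i,\epsilon_i$ and the adjunction for $\alpha$. Your ``only if'' argument via the mate $\delta^{\on{BC}}$ is a correct supplement that the paper does not write out.

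The paper orients 3-cells in $\on{Arr}(\bO)$ as $\gamma:\wt\alpha\circ t'(\beta_1)\Rrightarrow\beta_2(t)\circ\alpha$. With that orientation, the 3-cell of the unit must be
\[
\gamma_\eta:\ F_2^R(\alpha)\circ\delta(F_1)\circ t(\eta_1)\Rrightarrow\eta_2(t),
\]
a 3-morphism between 2-morphisms $t\Rightarrow F_2^R\circ F_2\circ t$. Substitute $\delta=(\alpha^R)^{\on{BC}}$ and use the interchange law and the triangle identity for $F_1$. The source then becomes $F_2^R(\alpha\circ\alpha^R)\circ\eta_2(t)$. Equivalently, after transposing along $F_2\dashv F_2^R$, $\gamma_\eta$ is a 3-morphism $\alpha\circ\alpha^R\Rrightarrow\on{id}_{F_2\circ t}$. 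So $\gamma_\eta$ has to come from the \emph{counit} of $(\alpha,\alpha^R)$. This is exactly the paper's reduction to a map from $F_2\circ t\overset{\alpha^R}\to t'\circ F_1\overset{\alpha}\to F_2\circ t$ to the identity.

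The unit $\on{id}_{t'\circ F_1}\Rrightarrow\alpha^R\circ\alpha$ is a cell on the other 1-morphism, $t'\circ F_1$. No pasting with triangle isomorphisms of the $F_i$ converts it into such a map. Symmetrically, $\gamma_\epsilon:t'(\epsilon_1)\Rrightarrow\epsilon_2(t')\circ F_2(\delta)\circ\alpha(F_1^R)$ reduces, via the triangle identity for $F_2$ (or transposition along $F_1\dashv F_1^R$), to $\on{id}_{t'\circ F_1}\Rrightarrow\alpha^R\circ\alpha$. So the counit of the big adjunction is the one built from the unit of $(\alpha,\alpha^R)$.

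Use the same matching in your ``only if'' paragraph: $\gamma_\eta$ unwinds to the counit $\alpha\circ\delta^{\on{BC}}\Rrightarrow\on{id}$, and $\gamma_\epsilon$ to the unit $\on{id}\Rrightarrow\delta^{\on{BC}}\circ\alpha$. Your conclusion $\delta^{\on{BC}}\simeq\alpha^R$ is unaffected. Once the two are swapped, your ``if'' direction is the paper's proof.

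The paper does not verify the triangle identities either, so your last paragraph is not needed for parity. If you do pursue it, note that adjointability of $\alpha$ is a question in the 2-category $\BMaps_\bO(\bo_1,\bo'_2)$, which keeps the non-invertible 3-morphisms. It is not a 1-categorical computation in $\bMaps_\bO(\bo_1,\bo'_2)$.
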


\begin{proof}

We will prove the more difficult direction, i.e., that if the conditions of the lemma hold, then then
\eqref{e:adj sq} is adjointable. We will construct the unit of the adjunction. The counit is constructed similarly.

\medskip

Consider the composite 1-morphisms in $\on{Arr}(\bO)$:
\begin{equation} \label{e:comp diag}
\xy
(0,0)*+{\bo_1}="A";
(20,0)*+{\bo_2}="C";
(0,-20)*+{\bo'_1}="B";
(20,-20)*+{\bo'_2}="D";
(0,-40)*+{\bo_1}="E";
(20,-40)*+{\bo_2,}="F";
{\ar@{->}_{F_1} "A";"B"};
{\ar@{->}^{t} "A";"C"};
{\ar@{->}^{t'} "B";"D"};
{\ar@{->}^{F_2} "C";"D"};
{\ar@{=>}^{\alpha} "B";"C"};
{\ar@{->}^{t} "E";"F"};
{\ar@{->}_{F^R_1} "B";"E"};
{\ar@{->}^{F^R_2}  "D";"F"};
{\ar@{=>}^{\wt\alpha} "E";"D"};
\endxy
\end{equation}
as a 1-morphism in $\on{Arr}(\bO)$.

\medskip

We need to construct a 2-morphism to \eqref{e:comp diag} from
$$
\xy
(0,0)*+{\bo_1}="A";
(20,0)*+{\bo_2}="C";
(0,-20)*+{\bo_1}="B";
(20,-20)*+{\bo_2}="D";
{\ar@{->}_{\on{Id}} "A";"B"};
{\ar@{->}^{t} "A";"C"};
{\ar@{->}^{t} "B";"D"};
{\ar@{->}^{\on{Id}} "C";"D"};
{\ar@{=>}^{\on{id}} "B";"C"};
\endxy
$$

The 2-morphisms $\on{u}_{F_i}$
$$\on{Id}\to F^R_1 \circ F_1 \text{ and } \on{Id}\to F^R_2 \circ F_2$$
are the units of the $(F_i,F_i^R)$-adjunctions.

\medskip

It remains to supply a 3-morphism
from the composition
\begin{multline} \label{e:comp 2-morph}
t \overset{\on{u}_{F_1}}\longrightarrow t\circ F_1^R \circ F_1
\overset{\on{u}_{F_2}}\longrightarrow F_2^R \circ F_2 \circ t \circ F_1^R \circ F_1
\overset{\alpha^R}\longrightarrow \\
\to F_2^R \circ t'\circ F_1 \circ F_1^R \circ F_1 \overset{\on{ev}_{F_1}}\longrightarrow
F_2^R \circ t'\circ F_1 \overset{\alpha}\to F_2^R \circ F_2 \circ t
\end{multline}
to
$$t \overset{\on{u}_{F_2}}\longrightarrow F_2^R \circ F_2 \circ t.$$

\medskip

We rewrite \eqref{e:comp 2-morph} as
\begin{multline*}
t \overset{\on{u}_{F_2}}\longrightarrow F_2^R \circ F_2 \circ t
\overset{\on{u}_{F_1}}\longrightarrow F_2^R \circ F_2 \circ t \circ F_1^R \circ F_1
\overset{\alpha^R}\longrightarrow \\
\to F_2^R \circ t'\circ F_1 \circ F_1^R \circ F_1 \overset{\on{ev}_{F_1}}\longrightarrow
F_2^R \circ t'\circ F_1 \overset{\alpha}\to F_2^R \circ F_2 \circ t.
\end{multline*}

\medskip

Hence, it suffices to construct a map from
\begin{equation} \label{e:f2 t}
F_2 \circ t
\overset{\on{u}_{F_1}}\longrightarrow F_2 \circ t \circ F_1^R \circ F_1
\overset{\alpha^R}\longrightarrow
t'\circ F_1 \circ F_1^R \circ F_1 \overset{\on{ev}_{F_1}}\longrightarrow
t'\circ F_1 \overset{\alpha}\to F_2 \circ t
\end{equation}
to the identity endomorphism of $F_2 \circ t$.

\medskip

We rewrite \eqref{e:f2 t} as
$$F_2 \circ t \overset{\alpha^R}\longrightarrow t'\circ F_1 \overset{\on{u}_{F_1}}\longrightarrow
t'\circ F_1 \circ F_1^R \circ F_1 \overset{\on{ev}_{F_1}}\longrightarrow
t'\circ F_1 \overset{\alpha}\to F_2 \circ t,$$
which, by the adjunction axiom is the same as
$$F_2 \circ t \overset{\alpha^R}\longrightarrow t'\circ F_1 \overset{\alpha}\to F_2 \circ t,$$
which maps to the identity endomorphism via the counit of the $(\alpha,\alpha^R)$-adjunction.

\end{proof}

\ssec{Trace in the 3-categorical setting}

In this subsection we let $\bO$ be a symmetric monoidal 3-category.

\sssec{} \label{sss:3 funct Tr}

Let $\bO$ be a symmetric monoidal 3-category.  In this case $\on{Arr}(\bO)$ also acquires a symmetric monoidal structure
(as a 2-category).

\medskip

Note that we have a canonical equivalence
\begin{equation} \label{e:Arr Omega}
\Omega(\on{Arr}(\bO)) \simeq \on{Arr}(\Omega(\bO))
\end{equation}
as symmetric monoidal 2-categories.

\medskip

Applying \eqref{e:2-cat Tr funct}, we obtain a functor
$$\bL(\on{Arr}(\bO))\overset{\Tr}\to \Omega(\on{Arr}(\bO))\simeq \on{Arr}(\Omega(\bO)),$$
where we view $\bL(\on{Arr}(\bO))$ as a 1-category, see \secref{sss:L for 2}.

\sssec{}

In particular, we obtain the following construction. Let us be given two dualizable objects
$$\bo_1\overset{t}\to \bo_2 \text{ and } \bo'_1\overset{t'}\to \bo_2'$$
of $\on{Arr}(\bO)$, equipped with the following data:

\begin{itemize}

\item An \emph{adjointable} 1-morphism
$$
\xy
(0,0)*+{\bo_1}="A";
(20,0)*+{\bo_2}="C";
(0,-20)*+{\bo'_1}="B";
(20,-20)*+{\bo'_2}="D";
{\ar@{->}_{G_1} "A";"B"};
{\ar@{->}^{t} "A";"C"};
{\ar@{->}^{t'} "B";"D"};
{\ar@{->}^{G_2} "C";"D"};
{\ar@{=>}^\delta "B";"C"};
\endxy
$$
(i.e., by \lemref{l:adj in Arr}, $G_1$ and $G_2$ are adjointable 1-morphisms and $\delta$ is an adjointable 2-morphism);

\medskip

\item An endomorphism
$$
\xy
(0,0)*+{\bo_1}="A";
(20,0)*+{\bo_2}="C";
(0,-20)*+{\bo_1}="B";
(20,-20)*+{\bo_2;}="D";
{\ar@{->}_{F_1} "A";"B"};
{\ar@{->}^{t} "A";"C"};
{\ar@{->}^{t} "B";"D"};
{\ar@{->}^{F_2} "C";"D"};
{\ar@{=>}^\alpha "B";"C"};
\endxy
$$

\item An endomorphism
$$
\xy
(0,0)*+{\bo'_1}="A";
(20,0)*+{\bo'_2}="C";
(0,-20)*+{\bo'_1}="B";
(20,-20)*+{\bo'_2,\;}="D";
{\ar@{->}_{F'_1} "A";"B"};
{\ar@{->}^{t'} "A";"C"};
{\ar@{->}^{t'} "B";"D"};
{\ar@{->}^{F'_2} "C";"D"};
{\ar@{=>}^{\alpha'} "B";"C"};
\endxy
$$

\item A 2-morphism
\begin{equation} \label{e:2-morphism G}
(G_1,G_2,\delta)\circ (F_1,F_2,\alpha)\to (F'_1,F'_2,\alpha')\circ (G_1,G_2,\delta)
\end{equation}
in $\on{Arr}(\bO)$.

\end{itemize}

Then we obtain a map
\begin{equation} \label{e:induced map on Tr}
\Tr((F_1,F_2,\alpha),\bo_1\overset{t}\to \bo_2)\to \Tr((F'_1,F'_2,\alpha'),\bo'_1\overset{t'}\to \bo_2')
\end{equation}
on $\Omega(\on{Arr}(\bO))$.

\medskip

Let us decipher what the above means.

\sssec{}

First, the data of \eqref{e:2-morphism G} amounts to:

\begin{itemize}

\item A 2-morphism $G_1\circ F_1 \overset{\beta_1}\to F'_1\circ G_1$;

\item A 2-morphism $G_2\circ F_2 \overset{\beta_2}\to F'_2\circ G_2$;

\item A 3-morphism $\delta\circ \alpha'\circ \beta_1 \overset{\gamma}\Rightarrow \beta_2\circ \alpha\circ \delta$,
where the two sides are 2-morphisms
$$t'\circ G_1\circ F_1 \to F'_2\circ G_2\circ t.$$

\end{itemize}

\sssec{}

Under the identification \eqref{e:Arr Omega}, the objects
$$\Tr((F_1,F_2,\alpha),\bo_1\overset{t}\to \bo_2) \text{ and } \Tr((F'_1,F'_2,\alpha'),\bo'_1\overset{t'}\to \bo_2')$$
correspond to the objects
$$\Tr(F_1,\bo_1) \overset{\Tr(\alpha,t)}\longrightarrow \Tr(F_2,\bo_2) \text{ and }
\Tr(F'_1,\bo'_1) \overset{\Tr(\alpha',t')}\longrightarrow \Tr(F'_2,\bo'_2)$$
of $\on{Arr}(\Omega(\bO))$, respectively.

\medskip

The map \eqref{e:induced map on Tr} amounts to a diagram
\begin{equation} \label{e:induced map on Tr explained}
\xy
(0,0)*+{\Tr(F_1,\bo_1)}="A";
(40,0)*+{ \Tr(F_2,\bo_2)}="C";
(0,-20)*+{\Tr(F'_1,\bo'_1)}="B";
(40,-20)*+{\Tr(F'_2,\bo'_2),}="D";
{\ar@{->}_{\Tr(\beta_1,G_1)} "A";"B"};
{\ar@{->}^{\Tr(\alpha,t)} "A";"C"};
{\ar@{->}^{\Tr(\alpha',t')} "B";"D"};
{\ar@{->}^{\Tr(\beta_2,G_2)} "C";"D"};
{\ar@{=>}^{\Tr(\gamma,\delta)} "B";"C"};
\endxy
\end{equation}
where the new piece of data as compared to the 2-categorical trace is the 3-morphism that in the above diagram
we denote $\Tr(\gamma,\delta)$.

\sssec{} \label{sss:higher funct}

Consider the particular case, when
$$\bo'_1=\bo_1,\,\,\bo'_2=\bo_2,\,\, G_1=\on{Id},\,\, G_2=\on{Id},\,\, F_1=F'_1,\,\, F_2=F'_2,\,\,
\beta_1=\on{Id},\,\, \beta_2=\on{Id}.$$

In this case:

\begin{itemize}

\item $\delta$ is an \emph{adjointable} 2-morphism $t'\Rightarrow t$;

\medskip

\item $\gamma$ is a 3-morphism $\delta\circ \alpha'\Rightarrow \alpha\circ \delta$, where the two sides are 2-morphisms
$$t'\circ F_1\to F_2\circ t.$$

\end{itemize}

In this case, we obtain a 3-morphism
\begin{equation} \label{e:induced map on Tr explained particular}
\Tr(\alpha',t') \overset{\Tr(\gamma,\delta)}\longrightarrow \Tr(\alpha,t),
\end{equation}
where the two sides are 2-morphisms
$$\Tr(F_1,\bo_1) \to \Tr(F_2,\bo_2).$$

\ssec{Functoriality of the 3-categorical trace}

We continue to assume that $\bO$ is a symmetric monoidal 3-category.

\sssec{}

As in \secref{sss:trace and comp}, instead of $\on{Arr}(\bO)$, for an index category $I$, we can consider the symmetric monoidal 2-category
$$\on{Funct}(I,\bO)_{\on{right-lax}},$$
where
$$(\on{Funct}(I,\bO)_{\on{right-lax}})^{1-\on{Cat}}=\on{Funct}(I,\bO^{2-\on{Cat}})_{\on{right-lax}},$$
and where 2-morphisms are defined as in \secref{sss:Arr for 2}.

\medskip

Parallel to \secref{e:Arr Omega}, we can identify
\begin{equation} \label{e:Arr Omega I}
\Omega(\on{Funct}(I,\bO)_{\on{right-lax}})\simeq \on{Funct}(I,\Omega(\bO))_{\on{right-lax}}.
\end{equation}

\sssec{} \label{sss:trace and comp 3}

Applying \eqref{e:2-cat Tr funct} to $\on{Funct}(I,\bO)_{\on{right-lax}}$, we obtain a symmetric monoidal functor
\begin{equation} \label{e:2-cat Tr funct I}
\bL(\on{Funct}(I,\bO)_{\on{right-lax}}) \overset{\Tr}\to \Omega(\on{Funct}(I,\bO)_{\on{right-lax}})\simeq \on{Funct}(I,\Omega(\bO))_{\on{right-lax}}.
\end{equation}

Moreover, the functors \eqref{e:2-cat Tr funct I} are themselves functorial in $I$ in the sense that they give rise to a natural
transformation between two functors
$$\on{Cat}\to \on{Cat}^{\on{SymMon}}.$$

\sssec{} \label{sss:L for 3}

Let $\bL(\bO)$ be the symmetric monoidal 2-category, whose underlying 1-category is $\bL(\bO^{2-\on{Cat}})$ (see \secref{sss:L for 2}),
and whose 2-morphisms are given by the data in \secref{sss:higher funct}\footnote{Note that in the formulas below we
notationally swap $t$ and $t'$.} with $\delta$ adjointable.

\medskip

I.e., a 2-morphism between the 1-morphisms
$$
\xy
(0,0)*+{\bo_1}="A";
(20,0)*+{\bo_2}="C";
(0,-20)*+{\bo_1}="B";
(20,-20)*+{\bo_2;}="D";
{\ar@{->}_{F_1} "A";"B"};
{\ar@{->}^{t} "A";"C"};
{\ar@{->}^{t} "B";"D"};
{\ar@{->}^{F_2} "C";"D"};
{\ar@{=>}^\alpha "B";"C"};
\endxy
$$
and
$$
\xy
(0,0)*+{\bo_1}="A";
(20,0)*+{\bo_2}="C";
(0,-20)*+{\bo_1}="B";
(20,-20)*+{\bo_2;}="D";
{\ar@{->}_{F_1} "A";"B"};
{\ar@{->}^{t'} "A";"C"};
{\ar@{->}^{t'} "B";"D"};
{\ar@{->}^{F_2} "C";"D"};
{\ar@{=>}^{\alpha'} "B";"C"};
\endxy
$$
is a datum of:

\begin{itemize}

\item An \emph{adjointable} 2-morphism $t\overset{\delta}\to t'$;

\item A 3-morphism
$$\delta \circ \alpha \overset{\gamma}\to \alpha'\circ \delta,$$
where the two sides are 2-morphisms
$$t\circ F_1\to F_2\circ t'.$$

\end{itemize}

\sssec{}

The functoriality in \secref{sss:trace and comp 3} implies that the trace construction extends to a symmetric monoidal functor
between 2-categories
\begin{equation} \label{e:3-cat Tr funct}
\bL(\bO) \to \Omega(\bO).
\end{equation}

\ssec{The notion of 2-adjointable morphism}

Let $\bO$ be a 3-category.  In this subsection, we will introduce the notion of \emph{2-adjointable morphism},
which will play a key role in the main body of the paper.


\sssec{}

We give the following definition:

\medskip

Let
\begin{equation} \label{e:2b 2d}
t:\bo_1\to \bo_2
\end{equation}
be 1-morphism in $\bO$. We will say that $t$ is \emph{2-adjointable} if

\begin{itemize}

\item $t$ is adjointable;

\item The unit and counit 2-morphisms
$$\on{Id}\to t^R\circ t \text{ and } t\circ t^R\to \on{Id}$$
are adjointable.

\end{itemize}

\sssec{}

We claim:

\begin{lem} \label{l:ambi}
If $t$ is 2-adjointable, then the adjunction $(t,t^R)$ is ambidexterous.
\end{lem}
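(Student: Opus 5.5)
The plan is to show that the right adjoints of the unit and counit of the $(t,t^R)$-adjunction exhibit $t^R$ as a \emph{left} adjoint of $t$. Write $\eta:\on{Id}_{\bo_1}\to t^R\circ t$ and $\epsilon:t\circ t^R\to \on{Id}_{\bo_2}$ for the unit and counit, and use 2-adjointability of $t$ to form the 2-morphisms
$$\eta^R:t^R\circ t\to \on{Id}_{\bo_1},\qquad \epsilon^R:\on{Id}_{\bo_2}\to t\circ t^R.$$
These have exactly the shape of a counit and a unit for an adjunction $(t^R,t)$: the unit of such an adjunction should go $\on{Id}_{\bo_2}\to t\circ t^R$, and the counit should go $t^R\circ t\to \on{Id}_{\bo_1}$. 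It therefore remains to verify the two triangle identities, namely that the composites
$$t\overset{\epsilon^R\circ \on{id}_t}\longrightarrow t\circ t^R\circ t\overset{\on{id}_t\circ\eta^R}\longrightarrow t$$
and
$$t^R\overset{\on{id}_{t^R}\circ \epsilon^R}\longrightarrow t^R\circ t\circ t^R\overset{\eta^R\circ \on{id}_{t^R}}\longrightarrow t^R$$
are (equivalent to) identity 2-morphisms.

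The key observation is that passing to right adjoints of 2-morphisms is compatible with whiskering by 1-morphisms and with vertical composition, and is contravariant with respect to the latter. Precisely, in the 2-category $\bMaps_\bO(\bo,\bo')$ the operation $\phi\mapsto\phi^R$ is a functor to the opposite 2-category on the subcategory of adjointable 2-morphisms. Whiskering by a fixed 1-morphism is a 2-functor, so it sends adjunctions to adjunctions, and hence $(\on{id}_t\circ\eta)^R\simeq \on{id}_t\circ \eta^R$ and $(\epsilon\circ\on{id}_t)^R\simeq \epsilon^R\circ\on{id}_t$. Since right adjoints of composites are composites of right adjoints in reverse order, the first composite above is the right adjoint of
$$t\overset{\on{id}_t\circ\eta}\longrightarrow t\circ t^R\circ t\overset{\epsilon\circ\on{id}_t}\longrightarrow t.$$
By the triangle identity for $(t,t^R)$, this composite is $\on{id}_t$, and the right adjoint of an identity is an identity. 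The second triangle identity follows in the same way from the other triangle identity for $(t,t^R)$.

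The main obstacle is coherence. In an $(\infty,3)$-setting, the triangle identities hold only up to specified invertible 3-morphisms, and an adjunction is determined by the unit together with one triangle identity (or by the homotopy-category data). So the approach is to work in the homotopy 2-category of $\bO$, where the argument above is literally correct, and then invoke the fact that adjointness of 1-morphisms in a 2-category is detected there. Concretely, this means the adjunction $(t^R,t)$ can be checked in $\on{Ho}_2(\bO)$, and the invertible 3-morphisms implementing the triangle identities are obtained by transporting the ones for $(t,t^R)$ along the (contravariant) right-adjoint functor. With this, $t^R$ is both a right and a left adjoint of $t$, i.e., the adjunction is ambidexterous.
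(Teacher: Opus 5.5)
Your proposal is correct and is the same argument as the paper's, which simply observes that the right adjoints $\eta^R$ and $\epsilon^R$ of the unit and counit realize $t^R$ as a left adjoint of $t$. Your verification of the triangle identities spells out what the paper leaves implicit: you take right adjoints of the triangle identities for $(t,t^R)$, using that $(-)^R$ is compatible with whiskering and reverses vertical composition.
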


\begin{proof}

The right adjoints of the unit and the counit of the $(t,t^R)$-adjunction
realize $t^R$ as the left adjoint of $t$.

\end{proof}

\sssec{}

Assume now that $\bO$ carries a symmetric monoidal structure. We will now
explore how the notion of 2-adjointability interacts with the trace construction.

\sssec{}  \label{sss:BC 2 morph}

Let us be given an endomorphism of \eqref{e:2b 2d} as an object of $\on{Arr}(\bO)$,
represented by diagram
\begin{equation} \label{e:basic diag again}
\xy
(0,0)*+{\bo_1}="A";
(20,0)*+{\bo_2}="C";
(0,-20)*+{\bo_1}="B";
(20,-20)*+{\bo_2.}="D";
{\ar@{->}_{F_1} "A";"B"};
{\ar@{->}^{t} "A";"C"};
{\ar@{->}^{t} "B";"D"};
{\ar@{->}^{F_2} "C";"D"};
{\ar@{=>}^\alpha "B";"C"};
\endxy
\end{equation}

\medskip

Let $\alpha^{\on{BC}}$ denote the resulting 
2-morphism
$$F_1\circ t^R\to  t^R\circ F_2.$$

\medskip

Assume that $\alpha^{\on{BC}}$ admits a right adjoint.
 Then we obtain an endomorphism of the object
$$t^R:\bo_2\to \bo_1\in \on{Arr}(\bO)$$ represented by
\begin{equation} \label{e:basic diag adj}
\xy
(0,0)*+{\bo_2}="A";
(25,0)*+{\bo_1}="C";
(0,-20)*+{\bo_2}="B";
(25,-20)*+{\bo_1.}="D";
{\ar@{->}_{F_2} "A";"B"};
{\ar@{->}^{t^R} "A";"C"};
{\ar@{->}^{t^R} "B";"D"};
{\ar@{->}^{F_1} "C";"D"};
{\ar@{=>}^{(\alpha^{\on{BC}})^R} "B";"C"};
\endxy
\end{equation}

\medskip

Assume that $t$ is 2-adjointable. Then by \lemref{l:ambi}, $t^R$ is adjointable, and hence 
the diagram \eqref{e:basic diag adj} gives rise to a 2-morphism
\begin{equation} \label{e:Tr adj}
\Tr(F_2,\bo_2) \overset{\Tr((\alpha^{\on{BC}})^R,t^R)}\longrightarrow \Tr(F_1,\bo_1).
\end{equation}

\medskip

We claim:

\begin{prop} \label{p:adj Tr}
Under the above circumstances, the 1-morphism \eqref{e:Tr adj} is a right adjoint of
\begin{equation} \label{e:Tr funct again}
\Tr(\alpha,t):\Tr(F_1,\bo_1) \to \Tr(F_2,\bo_2).
\end{equation}
\end{prop}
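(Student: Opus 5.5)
The plan is to use the functoriality of the 3-categorical trace, \eqref{e:3-cat Tr funct}. This is a functor of 2-categories $\bL(\bO)\to \Omega(\bO)$, so it sends adjunctions in $\bL(\bO)$ to adjunctions in $\Omega(\bO)$. It therefore suffices to show two things inside the 2-category $\bL(\bO)$. First, the 1-morphism $(\bo_1,F_1)\to(\bo_2,F_2)$ given by $(t,\alpha)$ should admit a right adjoint. Second, that right adjoint should be the 1-morphism given by $(t^R,(\alpha^{\on{BC}})^R)$. Applying $\Tr$ then exhibits $\Tr((\alpha^{\on{BC}})^R,t^R)$ as right adjoint to $\Tr(\alpha,t)$.

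To build the adjunction in $\bL(\bO)$, I will construct the unit and counit as 2-morphisms in $\bL(\bO)$, in the sense of \secref{sss:L for 3}. The composite $(t^R,(\alpha^{\on{BC}})^R)\circ(t,\alpha)$ is the endomorphism of $(\bo_1,F_1)$ with underlying 1-morphism $t^R\circ t$. Its 2-cell is $t^R\circ t\circ F_1\to F_1\circ t^R\circ t$, obtained by pasting $\alpha$ with $(\alpha^{\on{BC}})^R$.

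The unit consists of the unit $\delta=\on{u}:\on{Id}\to t^R\circ t$ together with a 3-morphism $\gamma$ comparing $\delta$ whiskered with $\on{id}_{F_1}$ against $\delta$ whiskered with the pasted 2-cell. Two inputs are needed here.
\begin{itemize}
\item The admissibility condition of \secref{sss:L for 3} requires $\delta$ to be adjointable. This holds precisely because $t$ is 2-adjointable.
\item The 3-morphism $\gamma$ will be produced by the mate calculus. The pasting of $\alpha$ with $(\alpha^{\on{BC}})^R$, precomposed with $\on{u}$, is compared with $\on{u}$ using the counit $\alpha^{\on{BC}}\circ(\alpha^{\on{BC}})^R\to\on{id}$ or the unit $\on{id}\to(\alpha^{\on{BC}})^R\circ\alpha^{\on{BC}}$, whichever matches the direction of $\gamma$. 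The remaining step is to use that $\alpha^{\on{BC}}$ is by definition built from $\alpha$ and the unit/counit of $(t,t^R)$, and then apply the triangle identities.
\end{itemize}
The counit is built symmetrically, from $\on{ev}:t\circ t^R\to\on{Id}$ (again adjointable by 2-adjointability) and the unit or counit of the $(\alpha^{\on{BC}},(\alpha^{\on{BC}})^R)$-adjunction.

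This step parallels the proof of \lemref{l:adj in Arr}, one categorical level shifted. There the horizontal 1-morphisms were $F_i$ and the square was $\alpha$; here the horizontal direction is $t$ and the vertical direction is the fixed $F_i$. Indeed, the statement can be viewed as an instance of \lemref{l:adj in Arr} in a suitable (transposed) arrow category, and I would try to phrase it that way to reuse the computation \eqref{e:comp 2-morph}.

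The main obstacle is verifying the triangle identities for this unit and counit in $\bL(\bO)$. On 2-morphisms they follow from those of $(t,t^R)$. On 3-morphisms one must check that the composite of the two $\gamma$'s reduces to the identity 3-morphism, which uses the triangle identities for both the $(\alpha^{\on{BC}},(\alpha^{\on{BC}})^R)$-adjunction and the $(t,t^R)$-adjunction. A cleaner route is to observe that an adjunction in a 2-category is determined up to contractible choice once a candidate unit exhibits the universal property. So it suffices to check that the unit induces an equivalence on the relevant mapping categories in $\bL(\bO)$. This reduces, level-wise, to the universal properties of $t^R$ and $(\alpha^{\on{BC}})^R$, which I expect to be a direct, if tedious, verification.
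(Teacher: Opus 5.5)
Your proposal is correct and takes the same route as the paper. The paper likewise uses the functor \eqref{e:3-cat Tr funct} to reduce to building an adjunction in $\bL(\bO)$: the unit has $\delta$ equal to the unit $\on{Id}\to t^R\circ t$, which is admissible because $t$ is 2-adjointable, and its 3-morphism $\gamma$ corresponds under the $(\alpha^{\on{BC}},(\alpha^{\on{BC}})^R)$-adjunction to the canonical isomorphism $\gamma'$ coming from the definition of $\alpha^{\on{BC}}$, while the counit is said to be "constructed similarly" and the triangle identities, which you discuss, are not checked explicitly in the paper.
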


\sssec{Example} \label{sss:BC adj}

Here is a typical situation, when the conditions of \propref{p:adj Tr} are satisfied: this happens when $\alpha$ is an isomorphism
and satisfies the \emph{right Beck-Chevalley condition}, i.e., $\alpha^{\on{BC}}$ is also an isomorphism.

\medskip

The latter is, in turn, the case whenever $F_1$ and $F_2$ are both invertible.

\sssec{Proof of \propref{p:adj Tr}}

We will construct the unit of the adjunction; the counit is constructed similarly.

\medskip

By \eqref{e:3-cat Tr funct}, it suffices to construct a 2-morphism in $\bL(\bO)$ from the 1-morphism represented
$$
\xy
(0,0)*+{\bo_1}="A";
(20,0)*+{\bo_1}="C";
(0,-20)*+{\bo_1}="B";
(20,-20)*+{\bo_1}="D";
{\ar@{->}_{F_1} "A";"B"};
{\ar@{->}^{\on{Id}} "A";"C"};
{\ar@{->}^{\on{Id}} "B";"D"};
{\ar@{->}^{F_1} "C";"D"};
{\ar@{=>}^{\on{id}} "B";"C"};
\endxy
$$
to the composition of 1-morphisms
$$
\xy
(0,0)*+{\bo_1}="A";
(25,0)*+{\bo_2}="C";
(50,0)*+{\bo_1}="E";
(0,-20)*+{\bo_1}="B";
(25,-20)*+{\bo_2}="D";
(50,-20)*+{\bo_1}="F";
{\ar@{->}_{F_1} "A";"B"};
{\ar@{->}_{F_1} "E";"F"};
{\ar@{->}^{t} "A";"C"};
{\ar@{->}^{t^R} "C";"E"};
{\ar@{->}^{t} "B";"D"};
{\ar@{->}^{t^R} "D";"F"};
{\ar@{->}_{F_2} "C";"D"};
{\ar@{=>}^\alpha "B";"C"};
{\ar@{=>}^{(\alpha^{\on{BC}})^R} "D";"E"};
\endxy
$$

In terms of \secref{sss:L for 3}, the corresponding data is given by:

\begin{itemize}

\item The unit of the $(t,t^R)$-adjunction $\on{Id}\to t^R\circ t$;

\item The 3-morphism $\gamma$ in the diagram
\begin{equation} \label{e:3 adj}
\xy
(0,0)*+{F_1}="A";
(60,0)*+{F_1}="B";
(0,-20)*+{t^R\circ t\circ F_1}="C";
(30,-20)*+{t^R\circ F_2\circ t}="D";
(60,-20)*+{F_1\circ t^R\circ t,}="E";
{\ar@{->}_{\on{id}} "A";"B"};
{\ar@{->}_{\on{unit}} "A";"C"};
{\ar@{->}_{\on{unit}} "B";"E"};
{\ar@{->}^{\alpha} "C";"D"};
{\ar@{->}^{(\alpha^{\on{BC}})^R} "D";"E"};
{\ar@{=>}^{\gamma} "B";"C"};
\endxy
\end{equation}
constructed below.

\end{itemize}

\medskip

By the $(\alpha^{\on{BC}},(\alpha^{\on{BC}})^R)$-adjunction, the datum of $\gamma$ is equivalent to the datum
of a 3-morphism $\gamma'$ in the diagram
\begin{equation} \label{e:3 adj bis}
\xy
(0,0)*+{F_1}="A";
(30,0)*+{F_1\circ t^R\circ t}="B";
(0,-20)*+{t^R\circ t\circ F_1}="C";
(30,-20)*+{t^R\circ F_2\circ t.}="D";
{\ar@{->}_{\on{unit}} "A";"B"};
{\ar@{->}_{\on{unit}} "A";"C"};
{\ar@{->}^{\alpha} "C";"D"};
{\ar@{->}^{\alpha^{\on{BC}}} "B";"D"};
{\ar@{=>}^{\gamma'} "B";"C"};
\endxy
\end{equation}

However, by the definition of $\alpha^{\on{BC}}$, the diagram of 1-morphisms in \eqref{e:3 adj bis} actually commutes,
and we take $\gamma'$ to be the natural isomorphism.

\qed[\propref{p:adj Tr}]

\sssec{}

Let us return to the setting of \secref{sss:BC adj}: i.e., $t$ is 2-adjointable, $\alpha$ is an isomorphism, and so is $\alpha^{\on{BC}}$. 
Assume now that $t$ is fully faithful, i.e. the unit of the adjunction
$$\on{Id}\to t^R\circ t$$
is an isomorphism.

\medskip

We claim:

\begin{cor} \label{c:Tr ff}
Under the above circumstances, the unit of the adjunction of \propref{p:adj Tr} is an isomorphism;
in particular, the 2-morphism \eqref{e:Tr funct again} is fully faithful.
\end{cor}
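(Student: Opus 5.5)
The plan is to unwind the construction of the unit in the proof of \propref{p:adj Tr} and check that each ingredient is invertible. By \eqref{e:3-cat Tr funct}, the unit
$$\Tr(\on{id},\on{Id}_{\bo_1})=\on{Id}_{\Tr(F_1,\bo_1)}\to \Tr((\alpha^{\on{BC}})^R,t^R)\circ \Tr(\alpha,t)$$
is the image under the trace functor $\bL(\bO)\to \Omega(\bO)$ of a 2-morphism in $\bL(\bO)$. By \secref{sss:L for 3}, this 2-morphism is given by two pieces of data. The first is the adjointable 2-morphism $\delta=\on{unit}:\on{Id}\to t^R\circ t$. The second is the 3-morphism $\gamma$ of \eqref{e:3 adj}.

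The key observation is that a functor of 2-categories sends 2-isomorphisms to 2-isomorphisms. It therefore suffices to show that the 2-morphism $(\delta,\gamma)$ in $\bL(\bO)$ is invertible, and then to see what that requires of $\delta$ and $\gamma$.

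First, $\delta$ is invertible, because $t$ is fully faithful. Its inverse $\delta^{-1}$ is in particular adjointable, being its own two-sided adjoint. Second, I will show that $\gamma$ is an isomorphism. Since $\alpha$ and $\alpha^{\on{BC}}$ are isomorphisms, the right adjoint $(\alpha^{\on{BC}})^R$ is the inverse $(\alpha^{\on{BC}})^{-1}$. The counit of the $(\alpha^{\on{BC}},(\alpha^{\on{BC}})^R)$-adjunction is then invertible. The 3-morphism $\gamma$ is obtained from the isomorphism $\gamma'$ in \eqref{e:3 adj bis} by composing with this unit or counit, so $\gamma$ is an isomorphism as well.

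The 2-morphism $(\delta^{-1},\gamma^{-1})$, with $\gamma^{-1}$ suitably whiskered by $\delta^{-1}$, then defines an inverse of $(\delta,\gamma)$ in $\bL(\bO)$. Applying \eqref{e:3-cat Tr funct} shows that the unit $\Tr(\on{id},\on{unit})$ is an isomorphism. For a 1-morphism admitting a right adjoint, having an invertible unit is exactly what it means to be fully faithful. This gives the second assertion of the corollary.

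The step needing the most care is checking that the composition law for 2-morphisms in $\bL(\bO)$ is the evident one: $\delta$'s compose, and $\gamma$'s paste. Granting this, a pair consisting of an invertible $\delta$ and an invertible $\gamma$ is invertible. I would verify it directly from the description of 2-morphisms in $\on{Arr}(\bO)$ in \secref{sss:Arr for 2}, specialized to $\on{Funct}([1],\bO)_{\on{right-lax}}$. There, inverting $(\beta_1,\beta_2,\gamma)$ when $\beta_1,\beta_2,\gamma$ are all invertible is a routine pasting argument.
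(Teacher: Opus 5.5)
Your proposal is correct and follows the same route as the paper. The unit $\on{Id}\to t^R\circ t$ is invertible by full faithfulness, so what remains is that the 3-morphism $\gamma$ of \eqref{e:3 adj} is an isomorphism; the paper dismisses this as following ``from the construction'', and you spell it out via $(\alpha^{\on{BC}})^R\simeq(\alpha^{\on{BC}})^{-1}$, together with the invertibility of the 2-morphism $(\delta,\gamma)$ in $\bL(\bO)$, hence of its trace, which the paper leaves implicit.
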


\begin{proof}

We need to show that under the conditions of the corollary, the 3-morphism \eqref{e:3 adj}
is an isomorphism. However, this follows from the construction.

\end{proof}

\ssec{Interaction of 2-adjointable morphisms and 2-dualizability}

In this subsection we will again use the notion of 2-adjointable morphism, but for a different purpose.

\medskip

We continue to
assume that $\bO$ is a symmetric monoidal 3-category.

\sssec{}

The following is an extension of \lemref{l:dualizability}:

\begin{prop} \label{p:2-dual}
An object $t:\bo_1\to \bo_2$ of $\on{Arr}(\bO)$ is 2-dualizable if and only if the following conditions
hold:

\begin{itemize}

\item $\bo_1,\bo_2$ are 2-dualizable as objects of $\bO^{2-\on{Cat}}$;

\item $t$ is 2-adjointable.

\end{itemize}
\end{prop}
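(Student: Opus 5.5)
By \lemref{l:dualizability}, applied to the underlying 2-category $\on{Arr}(\bO)^{1\on{-Cat}}$, the object $t:\bo_1\to\bo_2$ is dualizable if and only if $\bo_1,\bo_2$ are dualizable in $\bO^{1\on{-Cat}}$ and $t$ is adjointable. In that case the dual is $(t^R)^\vee:\bo_1^\vee\to\bo_2^\vee$. So the content of the proposition is to compare adjointability of the unit and counit of this duality, as 1-morphisms of $\on{Arr}(\bO)$, with the remaining conditions. We control this using \lemref{l:adj in Arr}.

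The first step is to write the unit and counit explicitly as squares. The unit is the 1-morphism of $\on{Arr}(\bO)$ given by the square with horizontal arrows $\on{Id}:\one\to\one$ and $t\otimes(t^R)^\vee$, vertical arrows $\on{u}_{\bo_1}$ and $\on{u}_{\bo_2}$, and 2-cell
$$\alpha_{\on{u}}:(t\otimes (t^R)^\vee)\circ\on{u}_{\bo_1}\simeq ((t\circ t^R)\otimes\on{Id})\circ \on{u}_{\bo_2}\to \on{u}_{\bo_2}.$$
As in the proof of \lemref{l:dualizability}, this 2-cell is the counit $t\circ t^R\to\on{Id}$, tensored with $\on{Id}$ and whiskered by $\on{u}_{\bo_2}$. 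Similarly, the counit square has vertical arrows $\on{ev}_{\bo_i}$. Its 2-cell $\on{ev}_{\bo_1}\to \on{ev}_{\bo_2}\circ(t\otimes(t^R)^\vee)\simeq \on{ev}_{\bo_1}\circ((t^R\circ t)\otimes\on{Id})$ is the unit $\on{Id}\to t^R\circ t$, whiskered by $\on{ev}_{\bo_1}$. By \lemref{l:adj in Arr}, the unit square is adjointable if and only if $\on{u}_{\bo_1},\on{u}_{\bo_2}$ are adjointable and $\alpha_{\on{u}}$ is adjointable, and likewise for the counit square.

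For the "if" direction: 2-dualizability of $\bo_i$ gives adjointability of $\on{u}_{\bo_i}$ and $\on{ev}_{\bo_i}$. 2-adjointability of $t$ gives right adjoints to $t\circ t^R\to\on{Id}$ and $\on{Id}\to t^R\circ t$. Whiskering and tensoring with identity 1-morphisms preserves adjunctions of 2-morphisms, so the two 2-cells above are adjointable, and we conclude.

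For the "only if" direction, the projections $\on{Arr}(\bO)\to\bO$ onto source and target are symmetric monoidal 2-functors. They preserve duality data and adjunctions, so $\bo_1,\bo_2$ are 2-dualizable. It remains to extract right adjoints for the unit and counit of $(t,t^R)$ from the adjointability of the whiskered 2-cells. This is the main obstacle: whiskering by $\on{u}_{\bo_2}$ does not obviously reflect adjointability. The plan is to use the duality equivalence
$$\bMaps(\bo_2,\bo_2)\simeq \bMaps(\one,\bo_2\otimes\bo_2^\vee),\qquad F\mapsto (F\otimes\on{Id})\circ\on{u}_{\bo_2}.$$
This is an equivalence of categories, and indeed of 2-categories, with inverse given by composing with $\on{ev}_{\bo_2}$ and using the zigzag identities. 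Under it, the whiskered counit $\alpha_{\on{u}}$ corresponds to the counit $t\circ t^R\to\on{Id}$ itself. Equivalences of 2-categories preserve and reflect adjointability of 1-morphisms, so the counit of $(t,t^R)$ is adjointable. The same argument with $\bMaps(\bo_1,\bo_1)\simeq\bMaps(\bo_1\otimes\bo_1^\vee,\one)$, via $F\mapsto \on{ev}_{\bo_1}\circ(F\otimes\on{Id})$, treats the unit. Hence $t$ is 2-adjointable.
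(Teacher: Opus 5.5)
Your proof is correct. The ``if'' direction is exactly the paper's argument: identify the dual as $(t^R)^\vee:\bo_1^\vee\to\bo_2^\vee$ via \lemref{l:dualizability}, note that the 2-cells of the unit and counit squares come from the counit and unit of the $(t,t^R)$-adjunction by whiskering and tensoring with identities, and conclude with \lemref{l:adj in Arr}.

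The paper proves only this direction, so your ``only if'' argument is a genuine addition, and it is sound. The projections to the source and target show that $\bo_1,\bo_2$ are 2-dualizable. The duality equivalences $\BMaps_\bO(\bo_2,\bo_2)\simeq\BMaps_\bO(\one,\bo_2\otimes\bo_2^\vee)$ and $\BMaps_\bO(\bo_1,\bo_1)\simeq\BMaps_\bO(\bo_1\otimes\bo_1^\vee,\one)$ then reflect adjointability of the whiskered 2-cells back to the counit and unit of $(t,t^R)$.

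Two small points. First, this direction relies on the ``only if'' half of \lemref{l:adj in Arr}, which the paper states but does not prove. Second, the equivalences you need are between the mapping 2-categories $\BMaps_\bO(-,-)$, not the 1-categories $\bMaps$, since adjointability of a 2-morphism of $\bO$ is a statement in the former.
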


\sssec{Example}

Let $\bo_1=\bo_2=\one_\bO$, so that we can think of $t$
as an object of $\Omega(\bO)$. Then $t$ is 2-adjointable as a 1-morphism in $\bO$ if and only if
$t$ is 2-dualizable as an object of $\Omega(\bO)$.

\medskip

Indeed, this follows from the fact that the composition and the tensor product operations in $\Omega(\bO)$
coincide.

\begin{proof}[Proof of \propref{p:2-dual}]

We will give a construction in one direction. Namely, we will show that
$$(t:\bo_1\to \bo_2)\in \on{Arr}(\bO)$$ is 2-dualizable
assuming that the conditions of the proposition hold.

\medskip

By \lemref{l:dualizability}, the dual object is given by
$$(t^R)^\vee:\bo_1^\vee\to \bo_2^\vee,$$
and the unit and counit for the duality are given by
\begin{equation} \label{e:unit}
\xy
(0,0)*+{\one_\bO}="A";
(30,0)*+{\one_\bO}="C";
(0,-20)*+{\bo_1\otimes \bo_1^\vee}="B";
(30,-20)*+{\bo_2\otimes \bo_2^\vee}="D";
{\ar@{->}_{\on{u}_{\bo_1}} "A";"B"};
{\ar@{->}^{\on{Id}} "A";"C"};
{\ar@{->}^{t\otimes (t^R)^\vee} "B";"D"};
{\ar@{->}^{\on{u}_{\bo_2}} "C";"D"};
{\ar@{=>}^{\alpha'} "B";"C"};
\endxy
\end{equation}
and
\begin{equation} \label{e:counit}
\xy
(0,0)*+{\bo_1\otimes \bo_1^\vee}="A";
(30,0)*+{\bo_1\otimes \bo_1^\vee}="C";
(0,-20)*+{\one_\bO}="B";
(30,-20)*+{\one_\bO,}="D";
{\ar@{->}_{\on{ev}_{\bo_1}} "A";"B"};
{\ar@{->}^{\on{Id}} "B";"D"};
{\ar@{->}^{t\otimes (t^R)^\vee} "A";"C"};
{\ar@{->}^{\on{ev}_{\bo_2}} "C";"D"};
{\ar@{=>}^{\alpha''} "B";"C"};
\endxy
\end{equation}
where $\alpha'$ and $\alpha''$ are given by
$$(t\otimes (t^R)^\vee)\circ \on{u}_{\bo_1}\simeq
((t\circ t^R)\otimes \on{Id})\circ \on{u}_{\bo_2} \overset{\gamma'\otimes \on{id}}\longrightarrow \on{u}_{\bo_2}$$
and
$$\on{ev}_{\bo_1} \overset{\gamma''\otimes \on{id}}\longrightarrow  \on{ev}_{\bo_1}\circ ((t^R\circ t)\otimes \on{Id})\simeq
\on{ev}_{\bo_2}\circ (t\otimes (t^R)^\vee),$$
where $\gamma'$ and $\gamma''$ are the counit and unit 2-morphisms for the $(t,t^R)$-adjunction, respectively.

\medskip

We need to show that \eqref{e:unit} and \eqref{e:counit} are adjointable 1-morphisms in $\on{Arr}(\bO)$. However,
this follows from \lemref{l:adj in Arr}:

\medskip

Indeed, $\on{ev}_{\bo_i}$ and $\on{u}_{\bo_i}$ are adjointable, by assumptions. Now, the 
2-morphisms $\alpha'$ and $\alpha''$ are obtained from the 2-morphisms $\gamma'$ and $\gamma''$,
respectively, while the latter are adjointabile by assumption.

\end{proof}

\sssec{} \label{sss:2 endos}

Let
$$t:\bo_1\to \bo_2$$
be an object of $\on{Arr}(\bO)$, endowed with an endomorhism $F$.

\medskip

By definition, this datum consists of a diagram
$$
\xy
(0,0)*+{\bo_1}="A";
(20,0)*+{\bo_2}="C";
(0,-20)*+{\bo_1}="B";
(20,-20)*+{\bo_2;}="D";
{\ar@{->}_{F_1} "A";"B"};
{\ar@{->}^{t} "A";"C"};
{\ar@{->}^{t} "B";"D"};
{\ar@{->}^{F_2} "C";"D"};
{\ar@{=>}^{\alpha} "B";"C"};
\endxy
$$

\sssec{} \label{sss:cond for adj}

Assume now that:

\begin{itemize}

\item $(t:\bo_1\to \bo_2)$ is 2-dualizable, i.e., by \propref{p:2-dual}: 

\begin{itemize}

\item $\bo_1,\bo_2$ are 2-dualizable;

\item $t$ is 2-adjointable;

\end{itemize}

\item $F$ is adjointable, i.e., by \lemref{l:adj in Arr}, 

\begin{itemize}

\item $F_1,F_2$ are adjointable;

\item The 2-morphism $\alpha$ is adjointable.

\end{itemize}

\end{itemize}

\medskip

In this case, we can form the diagrams
$$
\xy
(0,0)*+{\Tr(F_1,\bo_1)}="A";
(40,0)*+{\Tr(F_2,\bo_2)}="C";
(0,-20)*+{\Tr(F_1,\bo_1)}="B";
(40,-20)*+{\Tr(F_2,\bo_2)}="D";
{\ar@{->}^{\Tr(\alpha,t)} "A";"C"};
{\ar@{->}_{\on{Id}} "A";"B"};
{\ar@{->}_{\Tr(\alpha,t)} "B";"D"};
{\ar@{->}^{\on{Id}} "C";"D"};
{\ar@{=>}^{\on{id}} "B";"C"};
\endxy
$$
and
$$
\xy
(0,0)*+{\Tr(\on{Id},\bo_1)}="A";
(40,0)*+{\Tr(\on{Id},\bo_2)}="C";
(0,-20)*+{\Tr(\on{Id},\bo_1)}="B";
(40,-20)*+{\Tr(\on{Id},\bo_2).}="D";
{\ar@{->}^{\Tr(\on{id},t)} "A";"C"};
{\ar@{->}_{\Tr(\on{id},F_1)} "A";"B"};
{\ar@{->}_{\Tr(\on{id},t)} "B";"D"};
{\ar@{->}^{\Tr(\on{id},F_2)} "C";"D"};
{\ar@{=>}^\alpha "B";"C"};
\endxy
$$

Furthermore, applying the functor \eqref{e:3-cat Tr funct}, and using \lemref{l:Trace dualizable} and \propref{p:2-dual}, we obtain that:

\medskip

\begin{itemize}

\item The objects $\Tr(F_1,\bo_1),\Tr(\on{Id},\bo_1),\Tr(F_2,\bo_2),\Tr(\on{Id},\bo_2) \in \Omega(\bO)$
are dualizable;

\medskip

\item The 2-morphisms $\Tr(\on{id},t)$ and $\Tr(\alpha,t)$ (i.e., 1-morphisms in $\Omega(\bO)$)
are adjointable.

\end{itemize}

\medskip

Hence, we have well-defined morphisms in $\Omega^2(\bO)$:
\begin{equation} \label{e:2 traces 3-categ'}
\Tr(\on{Id},\Tr(F_1,\bo_1)) \overset{t}\to \Tr(\on{Id},\Tr(F_2,\bo_2))
\end{equation}
and
\begin{equation} \label{e:2 traces 3-categ''}
\Tr(\Tr(\on{id},F_1),\Tr(\on{Id},\bo_1)) \overset{t}\to \Tr(\Tr(\on{id},F_2),\Tr(\on{Id},\bo_2)).
\end{equation}

\sssec{}

Finally, applying \corref{c:2 traces} to the object
$$(\bo_1\overset{t}\to \bo_2)\in \on{Arr}(\bO),$$
equipped with the endomorphisms given by $(F,\alpha)$ and $(\on{Id},\on{id})$, we obtain:

\begin{cor} \label{c:2-dual}
The following diagram in $\Omega^2(\bO)$ commutes:
$$
\CD
\Tr(\on{Id},\Tr(F_1,\bo_1))  @>{t}>>  \Tr(\on{Id},\Tr(F_2,\bo_2)) \\
@V{\text{\corref{c:2 traces}}}V{\sim}V  @V{\sim}V{\text{\corref{c:2 traces}}}V \\
\Tr(\Tr(\on{id},F_1),\Tr(\on{Id},\bo_1)) @>{t}>> \Tr(\Tr(\on{id},F_2),\Tr(\on{Id},\bo_2)).
\endCD
$$
\end{cor}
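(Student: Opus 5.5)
The plan is to deduce the statement from \corref{c:2 traces}, applied not in $\bO$ but in the symmetric monoidal 3-category $\on{Arr}(\bO)$. There we take the object $\ul{t}:=(\bo_1\overset{t}\to \bo_2)$ with endomorphism $\ul{F}:=(F_1,F_2,\alpha)$. By \propref{p:2-dual} and the assumptions of \secref{sss:cond for adj}, $\ul{t}$ is 2-dualizable. By \lemref{l:adj in Arr}, $\ul{F}$ is adjointable. Hence \corref{c:2 traces} applies and yields a canonical isomorphism
$$\Tr(\on{Id}_{\ul t},\Tr(\ul F,\ul t))\simeq \Tr(\Tr(\on{id},\ul F),\Tr(\on{Id},\ul t))$$
in $\Omega^2(\on{Arr}(\bO))$.

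The key step is to unwind both sides using the identification \eqref{e:Arr Omega}, applied twice:
$$\Omega^2(\on{Arr}(\bO))\simeq \Omega(\on{Arr}(\Omega(\bO)))\simeq \on{Arr}(\Omega^2(\bO)).$$
The trace functor \eqref{e:3-cat Tr funct} is compatible with these identifications. Indeed, it is built from \eqref{e:2-cat Tr funct I} with $I=[0,1]$, which is functorial in $I$, so evaluating at the endpoints $0,1$ is compatible with taking traces. Consequently $\Tr(\ul F,\ul t)\in \Omega(\on{Arr}(\bO))\simeq \on{Arr}(\Omega(\bO))$ is the arrow $\Tr(\alpha,t):\Tr(F_1,\bo_1)\to \Tr(F_2,\bo_2)$. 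Likewise $\Tr(\on{Id},\ul t)$ is the arrow $\Tr(\on{id},t)$, and $\Tr(\on{id},\ul F)$ is the endomorphism of that arrow in $\on{Arr}(\Omega(\bO))$ given by the square $(\Tr(\on{id},F_1),\Tr(\on{id},F_2),\Tr(\alpha\text{-induced 3-morphism}))$ displayed in \secref{sss:cond for adj}. Applying the outer trace once more and using \lemref{l:dualizability} for $\on{Arr}(\Omega(\bO))$, each side becomes an object of $\on{Arr}(\Omega^2(\bO))$:

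1. the left-hand side is the arrow \eqref{e:2 traces 3-categ'};
2. the right-hand side is the arrow \eqref{e:2 traces 3-categ''}.

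An isomorphism between objects of $\on{Arr}(\Omega^2(\bO))$ is exactly a commutative square. It remains to check that the components of this square at $0$ and $1$ are the isomorphisms of \corref{c:2 traces} for $\bo_1$ and $\bo_2$. This again follows from functoriality in $I$, now applied to the two inclusions $\{0\},\{1\}\to[0,1]$. The isomorphism of \thmref{t:2 traces} is constructed purely from the trace functor and from passing to adjoints, and both operations commute with evaluation at the endpoints.

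I expect the main obstacle to be this compatibility of the construction in \thmref{t:2 traces} with restriction along $\{i\}\to[0,1]$. The construction passes to right adjoints along the vertical arrows and then flips the diagram. So one must check that the right adjoints in $\on{Arr}(\bO)$ described by \lemref{l:adj in Arr}, namely the Beck–Chevalley-type 2-morphisms $(\alpha^R)^{\on{BC}}$, restrict at each endpoint to the ordinary right adjoints. This holds because the evaluation functors $\on{Arr}(\bO)\to\bO$ are symmetric monoidal functors of 3-categories and therefore preserve adjunctions and duality data.
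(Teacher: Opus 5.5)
Your proposal is correct and follows the paper's argument. The paper obtains the corollary by applying \corref{c:2 traces} to the object $(\bo_1\overset{t}\to \bo_2)$ of $\on{Arr}(\bO)$ with the endomorphism $(F_1,F_2,\alpha)$, with hypotheses from \propref{p:2-dual} and \lemref{l:adj in Arr}, and reads the resulting isomorphism through \eqref{e:Arr Omega} as a commutative square; your check that the endpoint components are the isomorphisms of \corref{c:2 traces} for $\bo_1$ and $\bo_2$ is left implicit there, and note that the paper treats $\on{Arr}(\bO)$ as a symmetric monoidal 2-category (the setting of \corref{c:2 traces}), not a 3-category.
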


\section{Categorical representations} \label{s:Cat Rep}

In this section we define the main object of study of this paper--the notion
of categorical representation of a group.

\ssec{The 3-category of 2-DG categories}

In this subsection we will be referring to the terminology and notations from \cite[Sect. A.2]{GRV}.

\medskip

Our goal here
is to define the ambient 3-category, to be denoted $\tAGCat$, in which (the 2-category of) representations of
a given algebraic group will be an object.

\sssec{}

For a regular cardinal $\kappa$, let $\Cat^{\on{pres}}_\kappa$ be as in \cite[Sect. A.2.5]{GRV}. 
Let
\begin{equation} \label{e:DGCat}
\DGCat_\kappa\in \Cat^{\on{pres}}_\kappa
\end{equation}
be as in \cite[Sect. A.2.8]{GRV}.

\medskip

We regard $\Cat^{\on{pres}}_\kappa$ is a symmetric monoidal category with respect to the Lurie tensor product. 
The object \eqref{e:DGCat} has a natural structure of symmetric monoidal algebra in it.

\medskip

In particular, $\DGCat_\kappa$ acquires a structure of symmetric monoidal category. 

\medskip

Since $\DGCat_\kappa$ is closed as a monoidal category, the assignment
$$\bC_1,\bC_2\in \DGCat \mapsto \uHom_{\DGCat_\kappa}(\bC_1,\bC_2)\in \DGCat_\kappa \mapsto \uHom_{\DGCat}(\bC_1,\bC_2)\in \Cat$$
makes $\DGCat_\kappa$ into a (symmetric monoidal) $(\infty,2)$-category.

\sssec{}

We set
$$\tDGCat_\kappa:=\DGCat_\kappa\mod(\Cat^{\on{pres}}_\kappa)\in \Cat_{\on{large}}.$$

Recall that in \cite[Sect. A.2.8]{GRV}, we define 
$$\DGCat:=\underset{\kappa}{\on{colim}}\, \DGCat_\kappa,$$
where the colimit is taken in $\Cat_{\on{large}}$. 

\medskip

Similarly, set 
$$\tDGCat:=\underset{\kappa}{\on{colim}}\, \tDGCat_\kappa\in \Cat_{\on{large}}.$$

This is a (closed) symmetric monoidal category. Its unit is $\DGCat$, viewed as a module over itself.

\medskip

For a pair of objects $\fO_1,\fO_2\in \tDGCat$, we let
$$\uHom_{\tDGCat}(\fO_1,\fO_2)\in \tDGCat$$
the corresponding internal Hom object.

\sssec{Example}

Let $\bA$ be an associative algebra object in $\DGCat$. We will associate to it the object
$$\bA\mod(\DGCat)\in \tDGCat.$$

\medskip

For a pair of algebras $\bA_1,\bA_2$, we have
$$\uHom_{\tDGCat}(\bA_1\mod(\DGCat),\bA_2\mod(\DGCat))\simeq
(\bA_1^{\on{rev}}\otimes \bA_2)\mod(\DGCat).$$

\sssec{}

For an object $\fO\in \tDGCat$, we can consider the underlying large category (which we denote by the same symbol $\fO$).
For a pair of objects $\bC_1,\bC_2\in \fO$ we have a well-defined
\emph{relative internal Hom}
$$\ul\Hom_{\fO,\DGCat}(\bC_1,\bC_2)\in \DGCat,$$
characterized by the property that for $\bC\in \DGCat$
$$\Maps_{\DGCat}(\bC,\ul\Hom_{\fO,\DGCat}(\bC_1,\bC_2)) \simeq \Maps_{\fO}(\bC\otimes \bC_1,\bC_2)$$
as objects in $\on{Spc}$.

\sssec{}

The assignment
$$\bC_1,\bC_2\in \fO \mapsto \uHom_{\fO,\DGCat}(\bC_1,\bC_2)\in \DGCat \mapsto \uHom_{\fO,\DGCat}(\bC_1,\bC_2)\in \Cat$$
upgrades $\fO$ to an $(\infty,2)$-category, i.e., we obtain a forgetful functor
$$\tDGCat\to \tCat.$$

\medskip

Now, the assignment
$$\fO_1,\fO_2\in \tDGCat \mapsto \uHom_{\tDGCat}(\fO_1,\fO_2)\in \tDGCat  \mapsto  \uHom_{\tDGCat}(\fO_1,\fO_2)\in \tCat$$
makes $\tDGCat$ into a (closed, symmetric monoidal) $(\infty,3)$-category.

\sssec{}

Recall now (see \cite[Sect. 2.3.2]{GRV}) the commutative algebra object
$$\AGCat \in \Cat_{\on{large}}.$$
It is equipped with a homomorphism (of commutative algebra objects in $\Cat_{\on{large}}$)
$$\bi:\DGCat\to \AGCat,$$
see \cite[Sect. 2.4.1]{GRV}.

\medskip

By its construction, $\AGCat$ naturally upgrades to a commutative algebra object in $\tDGCat$, and $\bi$
upgrades to a homomorphism of commutative algebras in $\tDGCat$, supplying $\AGCat$ with a unit.

%
%
%
%
%

\sssec{}

Finally, set
$$\tAGCat\simeq \AGCat\mod(\tDGCat).$$


\sssec{}

The structure of (closed, symmetric monoidal) $(\infty,3)$-category on $\tDGCat$ induces one on $\tAGCat$.
Its unit object is $\AGCat$, viewed as a module over itself.

\sssec{Example}

Let $\ul\bA$ be an associative algebra object in $\AGCat$. We will associate to it the object
$$\ul\bA\mod(\AGCat)\in \tAGCat.$$

\medskip

For a pair of algebras $\ul\bA_1,\ul\bA_2$, we have
$$\uHom_{\tAGCat}(\ul\bA_1\mod(\AGCat),\ul\bA_2\mod(\AGCat))\simeq
(\ul\bA_1^{\on{rev}}\otimes \ul\bA_2)\mod(\AGCat).$$

\sssec{} \label{sss:mmod}

For an object $\ul\fO\in \tAGCat$, we can consider the underlying object of $\tDGCat$, which we denote by the same symbol $\ul\fO$.
For a pair of objects $\ul\bC_1,\ul\bC_2\in \ul\fO$ we have a well-defined
\emph{relative internal Hom}
$$\ul\Hom_{\ul\fO,\AGCat}(\ul\bC_1,\ul\bC_2)\in \AGCat,$$
characterized by the property that for $\ul\bC\in \AGCat$
$$\uHom_{\AGCat,\DGCat}(\ul\bC,\ul\Hom_{\ul\fO,\AGCat}(\ul\bC_1,\ul\bC_2)) \simeq \Maps_{\ul\fO,\DGCat}(\ul\bC\otimes \ul\bC_1,\ul\bC_2)$$
as objects of $\DGCat$.

\sssec{Notational conventions}

In the rest of the paper, we will (do our best to) adhere to the following conventions:

\medskip

For a monoidal $(\infty,1)$-category $\bO$ and $A\in \on{AssocAlg}(\bO)$ we will denote by
$$A\mod(\bO)$$
the usual $(\infty,1)$-category of $A$-modules in $\bO$.

\medskip

For a monoidal $(\infty,2)$-category $\fO$ and $\bA\in \on{AssocAlg}(\fO)$ we will denote by
$$\bA\mmod(\fO)$$
the $(\infty,2)$-category of $\bA$-modules in $\fO$. I.e., we use boldface ($\mmod$ vs $\mod$)
to emphasize the level of categoricity.

\medskip

In addition, we will differentiate objects of $\AGCat$ from objects of $\DGCat$ by using the underline
($\ul\bC$ vs $\bC$) for the former.

\sssec{The tensoring up operation}

We have a pair of adjoint functors
\begin{equation} \label{e:DGCat  AGCat adj}
\tDGCat\rightleftarrows \tAGCat,
\end{equation}
given by restriction and tensoring up, respectively.

\medskip

In particular, for $\fE\in \tDGCat$, we can consider the object
\begin{equation} \label{e:ten AGCat}
\AGCat\underset{\DGCat}\otimes \fE\in \tAGCat,
\end{equation}
and further
$$\Res^{\AGCat}_{\DGCat}(\AGCat\underset{\DGCat}\otimes \fE)\in \tDGCat,$$
which, by a slight abuse of notation, we denote by the same symbol \eqref{e:ten AGCat}.

\medskip

The unit of the \eqref{e:DGCat  AGCat adj} gives rise to a functor
\begin{equation} \label{e:to ten AGCat}
(\bi\otimes \on{Id}_{\fE}):\fE\to \AGCat\underset{\DGCat}\otimes \fE\in \tAGCat
\end{equation}
in $\tDGCat$.

\medskip

Note, however, that the functor $\bi:\DGCat\to \AGCat$ is fully faithful and admits
a $\DGCat$-linear right adjoint, denoted $\be$, see \cite[Sect. 2.4.1]{GRV}. This implies
that the functor \eqref{e:to ten AGCat} is fully faithful and also admits a right adjoint,
namely,
$$\fE\leftarrow \AGCat\underset{\DGCat}\otimes \fE:(\be\otimes \on{Id}_{\fE}).$$

Moreover, by \cite[Corollary 4.5.6]{GRV}, the above functor \eqref{e:to ten AGCat}
is \emph{2-adjointable}. This implies that the
$$(\bi\otimes \on{Id}_{\fE})\leftrightarrow (\be\otimes \on{Id}_{\fE})$$
adjunction is actually ambidexterous.

\ssec{The notion of categorical representation}

\sssec{}

Recall that to any $X\in \Sch$ we assign an object
\begin{equation} \label{e:shv under}
\ul\Shv(X)\in \AGCat.
\end{equation}

Since any $X$ naturally upgrades to a cocommutative coalgebra object in $\Sch$,
the object $\ul\Shv(X)$ is naturally a cocommutative coalgebra in $\AGCat$.

\begin{rem}

Note that the latter uses the fact that the functor
$$\ul\Shv(-):\Sch\to \AGCat$$ is \emph{strictly} symmetric
monoidal. This is the key difference between $\ul\Shv(-)$ and
the functor
$$\Shv:\Sch\to \DGCat,$$
which is only right-lax symmetric monoidal.

\end{rem}

\sssec{}

Let now $G$ be an algebraic group, i.e., a group-object in $\Sch$. We obtain that
$\ul\Shv(G)$ carries a structure of \emph{cocommutative} Hopf algebra in $\AGCat$.

\sssec{}

Recall also that for every $X\in \Sch$, the object \eqref{e:shv under} is canonically
self-dual.

\medskip

In particular, by duality, $\ul\Shv(G)$ obtains a structure of \emph{commutative} Hopf algebra.
In order to avoid confusion, we will notationally differentiate the two as follows:

\begin{itemize}

\item We will keep the symbol $\ul\Shv(G)$ for the (commutative) Hopf algebra structure, in which the
\emph{comultiplication} is induced by the functor $\on{mult}^!$;

\medskip

\item We will use the symbol $\ul\Shv(G)_{\on{co}}$ for the (cocommutative) Hopf algebra structure, in which the
\emph{multiplication} is induced by the functor $\on{mult}_*$.

\end{itemize}

Above and in the rest of the paper,
$$\on{mult}:G\times G\to G$$
is the group structure on $G$.

\medskip

Note that the functors $\on{mult}^!$ and $\on{mult}_*$ are each others' duals, as they should be.

\sssec{}

Set
$$G\mmod:=\ul\Shv(G)_{\on{co}}\mmod(\AGCat)\in \tAGCat.$$

Equivalently,
$$G\mmod\simeq \ul\Shv(G)\commod(\AGCat).$$

\medskip

For $\ul\bC\in G\mmod$, we will denote by
$$\on{act}:\ul\Shv(G)_{\on{co}}\otimes \ul\bC\to \ul\bC$$
the corresponding 1-morphism that encodes the module structure.

%

\sssec{Example}

Let $\CY$ be a prestack equipped with an action of $G$. Then pullback with respect to the action map
$G\times \CY\to \CY$ makes $\ul\Shv(\CY)$ an object of $G\mmod$, thought of as $\ul\Shv(G)\commod(\AGCat)$.

\medskip

The action of $\ul\Shv(G)_{\on{co}}$ is given by pushforward along the (schematic) map $G\times \CY\to \CY$.

\medskip

Let $f:\CY_1\to \CY_2$ be a map between prestacks acted on by $G$. The functor
$$f^!:\ul\Shv(\CY_2)\to \ul\Shv(\CY_1)$$
naturally upgrades to a 1-morphism in $G\mmod$.

\medskip

Assume now that $\CY_1$ and $\CY_2$ are \emph{AG tame} (see \cite[Sect. 3.4.7]{GRV}). In this
case we have the 1-morphism
$$f_\blacktriangle:\ul\Shv(\CY_1)\to \ul\Shv(\CY_1)$$
(see \cite[Sect. 3.7.3]{GRV}), which also naturally upgrades to a 1-morphism in $G\mmod$.

\medskip

In the sequel, given a diagram of AG tame prestacks acted on by $G$
$$
\CD
\CY_{1,2} @>{h}>> \CY_2 \\
@V{v}VV \\
\CY_1,
\endCD
$$
we will refer to the 1-morphism $h_\blacktriangle\circ v^!$ in $G\mmod$
as ``pull-push" along this diagram.

\sssec{}

Let us write down explicitly the data involved in specifying an object $\ul\bC\in G\mmod$ (from the comodule perspective).

\medskip

First, $\ul\bC$ is an object of $\AGCat$, i.e., an assignment
$$(X\in \Sch)\rightsquigarrow (\ul\bC(X)\in \DGCat),$$
with the functorialities specified in \cite[Sect. 0.1.4]{GRV}.

\medskip

The action of $G$ is given by a map
$$\ul\bC\overset{\on{coact}}\to \ul\bC\otimes \ul\Shv(G),$$
equipped with a data of associativity.

\medskip

The above coaction functor amounts to a collection of functors
\begin{equation} \label{e:coact X}
\ul\bC(X) \overset{\on{coact}_X}\to (\ul\bC\otimes \ul\Shv(G))(X)\simeq \ul\bC(X\times G),
\end{equation}
(functorial in $X$), and the associativity is encoded by the commutativity of the diagrams
\begin{equation} \label{e:assoc X}
\CD
\ul\bC(X)  @>{\on{coact}_X}>> \ul\bC(X\times G) \\
@V{\on{coact}_X}VV @VV{\on{mult}^!}V \\
\ul\bC(X\times G)  @>{\on{coact}_{X\times G}}>> \ul\bC(X\times G\times G),
\endCD
\end{equation}
and diagrams that involve higher powers of $G$ for the data of higher coherence.

\sssec{}

The structure on $\ul\Shv(G)$ (resp., $\ul\Shv(G)_{\on{co}}$) of
cocommutative (resp., commutative) Hopf algebra induces a symmetric monoidal
structure on $G\mmod$, compatible with the forgetful functor
$$\boblv_G:G\mmod\to \AGCat.$$

The unit object in $G\mmod$ is $\ul\Vect$, equipped with the trivial action.

\sssec{}

The functor $\boblv_G$ admits a left and a right adjoint. The left adjoint is given by
$$\ul\bD\mapsto \ul\Shv(G)_{\on{co}}\otimes \ul\bD\in \ul\Shv(G)_{\on{co}}\mmod(\AGCat)=G\mmod.$$

The right adjoint is given by
$$\ul\bD\mapsto \bD\otimes \ul\Shv(G)\in \ul\Shv(G)\commod(\AGCat)=G\mmod.$$

In particular, we obtain that this adjunction is \emph{ambidexterous}.

\sssec{}

Let $G_1$ and $G_2$ be a pair of groups. Since
$$\ul\Shv(G_1)\otimes \ul\Shv(G_2)\simeq \ul\Shv(G_1\times G_2)$$
as cocommutative Hopf algebras, we obtain that the natural functor
\begin{equation} \label{e:prod groups reps}
G_1\mmod\otimes G_2\mmod\to (G_1\times G_2)\mmod
\end{equation}
is an equivalence in $\tAGCat$.

\sssec{}

One can ask the following question: are there (interesting) examples of objects of $G\mmod$, such that
the underlying object of $\AGCat$ is \emph{restricted}, i.e., lies in the essential image of the functor
$$\bi:\DGCat\to \AGCat?$$

Note that the data of such an object amounts, in particular, to $\bC\in \DGCat$, equipped with a coaction functor
\begin{equation} \label{e:coact naive}
\bC\overset{\on{coact}}\to \bC\otimes \Shv(G),
\end{equation}
which makes the following diagram commute:
\begin{equation} \label{e:assoc X naive}
\CD
\bC  & @>{\on{coact}}>> & \bC\otimes \Shv(G) \\
@V{\on{coact}}VV & & @VV{\on{mult}^!}V \\
\bC\otimes \Shv(G)  @>{\on{coact}\otimes \on{Id}}>> \bC\otimes \Shv(G)\otimes \Shv(G) @>{\on{Id}\otimes (\boxtimes)}>> \bC\otimes \Shv(G\times G).
\endCD
\end{equation}

\medskip

When $G$ is connected, we will (almost) classify such objects in \secref{ss:mon 1}.

\sssec{}

We end this subsection with the following observation:

\begin{lem} \label{l:adj in G mod}
Let $\ul\bC_1\to \ul\bC_2$ be a 1-morphism in $G\mmod$. Then it admits a right/left
adjoint if and only if the induced 1-morphism
$$\boblv_G(\ul\bC_1)\to \boblv_G(\ul\bC_2)$$
does.
\end{lem}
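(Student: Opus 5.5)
The "only if" direction is immediate: $\boblv_G$ is a functor of 2-categories, so it carries adjunctions (unit, counit, triangle identities) to adjunctions. The content is the "if" direction. Here the plan is to show that the right (resp.\ left) adjoint in $\AGCat$ of the underlying 1-morphism $\phi:\boblv_G(\ul\bC_1)\to\boblv_G(\ul\bC_2)$ acquires a canonical structure of a 1-morphism in $G\mmod$. We also need the unit and counit to upgrade to 2-morphisms in $G\mmod$.

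\emph{Step 1: set-up.} Write
$$G\mmod\simeq\ul\Shv(G)_{\on{co}}\mmod(\AGCat).$$
In this model a 1-morphism consists of $\phi$ together with an isomorphism
$$\phi\circ\on{act}_1\simeq\on{act}_2\circ(\on{Id}\otimes\phi)$$
and higher coherences. It is convenient to pass to the bar-resolution picture. Modules for $\ul\bA:=\ul\Shv(G)_{\on{co}}$ are algebras for the monad $\ul\bA\otimes-$ on $\AGCat$. A 1-morphism of modules is then a 1-morphism in $\AGCat$ that strictly (i.e., up to coherent isomorphism) intertwines the two actions.

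\emph{Step 2: the right adjoint.} Consider the Beck--Chevalley map
$$\on{Id}\otimes\phi^R\circ\ldots:\ \on{act}_1\circ(\on{Id}_{\ul\bA}\otimes\phi^R)\to\phi^R\circ\on{act}_2,$$
obtained by adjunction from the structure isomorphism for $\phi$. The key claim is that this map is an isomorphism. Given the claim, $\phi^R$ is a 1-morphism of $\ul\bA$-modules. The higher coherences are transported by the same mate construction, since mates are functorial for pasting. The unit and counit are then module 2-morphisms automatically, because mates of compatible squares are compatible.

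To prove the claim, I would use that the action is invertible in the following sense. The shearing map
$$\ul\bA\otimes\ul\bC\to\ul\bA\otimes\ul\bC,\qquad (g,c)\mapsto(g,gc),$$
is an equivalence, since $G$ is a group (antipode). This shearing intertwines $\on{act}$ with $\on{pr}_2$ composed with the counit of $\ul\bA$. Equivalently, in the comodule picture one has $\on{coact}$ followed by shearing. Under this identification, the Beck--Chevalley map for $\on{act}$ becomes the Beck--Chevalley map for $\on{Id}_{\ul\bA}\otimes\phi$ against the trivial structure. That map is tautologically an isomorphism, because $(\on{Id}\otimes\phi)^R=\on{Id}\otimes\phi^R$ in $\AGCat$ (the tensor product is functorial in 1- and 2-morphisms).

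\emph{Step 3: the left adjoint.} The left adjoint is treated identically, using the mate in the other direction. Alternatively, one can apply the right-adjoint case in the opposite 2-category, or use duality: $\ul\Shv(G)$ is self-dual, so $G\mmod$ is closed under passing to dual objects.

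\emph{Main obstacle.} The main obstacle I expect is Step 2: checking that the Beck--Chevalley map is invertible, and that the coherences are preserved. The route I would try first is the shearing trick. It reduces everything to the statement that $\on{Id}\otimes(-)$ preserves adjunctions, which holds because $\AGCat$ is a symmetric monoidal 2-category.
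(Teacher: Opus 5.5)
Your proposal is correct, but it argues more globally than the paper does.

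\textbf{The paper's route.} It also takes the right-lax structure on $F^R$ for granted and only checks that it is strict, but it does so pointwise. For each scheme $X$ and $X$-point $g$ of $G$, it writes down an explicit inverse of $\alpha_g:(g\cdot(-))\circ F^R\to F^R\circ(g\cdot(-))$ built from $\alpha_{g^{-1}}$.

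\textbf{Your route.} You work directly with the structure 2-morphism $\on{act}_1\circ(\on{Id}\otimes\phi^R)\to\phi^R\circ\on{act}_2$. Writing $\on{act}_i\simeq(\epsilon\otimes\on{Id})\circ\on{sh}_i$, with $\epsilon$ the counit, its mate is the pasting of two mates. The first is the mate of the shearing square. It is invertible because $\on{sh}_1,\on{sh}_2$ are equivalences; this is the same formal fact as in \secref{sss:BC adj}, and it is where the antipode, i.e.\ the group property, enters. The second is the mate of the counit square, which is invertible by 2-functoriality of $\otimes$.

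\textbf{How they relate.} The two arguments are closer than they look. Evaluated on $X$, your $\on{sh}$ is the action of the tautological $(G\times X)$-point of $G$ on $\ul\bC(G\times X)$. The map $\on{act}$ is $\on{sh}$ followed by pushforward along $G\times X\to X$. Each $\alpha_g$ is recovered from the universal one by restricting along $(g,\on{id}):X\to G\times X$. So yours is the paper's argument at the universal point, with invertibility coming from mate calculus rather than an explicit formula.

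\textbf{What each buys.} Your route is purely formal: it works for modules over any Hopf algebra in a monoidal 2-category. It puts all group-theoretic input on the strict side, namely the compatibility of $\phi$ with $\on{sh}_i^{\pm 1}$. It also makes explicit the passage from the lax structure map to the family $\{\alpha_g\}$, which the paper treats as evident. The paper's route buys brevity and a concrete inverse.

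\textbf{A caveat on your Step 3.} The duality alternative only works when $\ul\bC_1$ and $\ul\bC_2$ are dualizable in $\AGCat$, and the lemma does not assume this. Rely instead on the mate in the other direction, or on the 2-categorical opposite of $\AGCat$, where $\ul\Shv(G)_{\on{co}}$ is still a Hopf algebra.
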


\begin{proof}

We will give a proof for right adjoints; the proof for left adjoints is similar.
Suppose that $F:\ul\bC_1\to \ul\bC_2$ is such that
$$\ul\bC_1\leftarrow \ul\bC_2:F^R$$
(of the underlying objects of $\AGCat$) exists. Then a priori, $F^R$ has a structure of \emph{right-lax} compatibility with
the action of $\ul\Shv(G)_{\on{co}}$. But we claim that this structure is automatically strict.

\medskip

This claim amounts to the following. Let $X$ be a scheme and let $g$ be an $X$-point
of $G$. This point gives rise to an automorphism
$$g\cdot (-):\bC_i(X)\to \bC_i(X), \quad i=1,2.$$
The right-lax compatibility structure says that we have a natural transformation
$$(g\cdot (-))\circ F^R \overset{\alpha_g}\to F^R\circ (g\cdot (-)).$$
We need to show that $\alpha_g$ is invertible.

\medskip

We will write down the inverse of $\alpha_g$ explicitly. Namely, it is given by
\begin{multline*}
F^R\circ (g\cdot (-)) \simeq (g\cdot (-)) \circ (g^{-1}\cdot (-)) \circ F^R \circ (g\cdot (-))
\overset{\alpha_{g^{-1}}}\to \\
\to (g\cdot (-)) \circ F^R\circ (g^{-1}\cdot (-)) \circ (g\cdot (-)) \simeq
(g\cdot (-)) \circ F^R.
\end{multline*}

\end{proof}

\begin{rem}

Note that for the validity of \lemref{l:adj in G mod}, it is essential that $G$ is a group rather than a monoid.

\end{rem}

\ssec{Invariants and coinvariants}

In this subsection we start investigating the basic structures on $G\mmod$, by which we mean that we start probing
it by 1-morphisms from other (simpler) objects of $\tAGCat$.

\sssec{}

Let
$$\btriv_G:\AGCat\to G\mmod$$
be the the functor of trivial representation.

\medskip

This functor admits a right and a left adjoint, denoted $\binv_G$ and $\bcoinv_G$, respectively.
They are explicitly computed as follows:

\begin{itemize}

\item $$\binv_G(\ul\bC)\simeq  \ul\bC\overset{\ul\Shv(G)}\otimes \ul\Vect:=
\on{Tot}(\ul\bC\otimes \ul\Shv(G^\bullet)),$$ where $\ul\bC\otimes \ul\Shv(G^\bullet)$ is the \emph{cosimplicial} object
of $\AGCat$ that encodes the $\ul\Shv(G)$-coaction on $\ul\bC$.

\medskip

Note that since limits in $\AGCat$ are computed value-wise, we have
$$\binv_G(\ul\bC)(X)\simeq \on{Tot}(\ul\bC(X\times G^\bullet)),$$
where the latter totalization is taken in $\DGCat$.

\bigskip

\item $$\bcoinv_G(\ul\bC)\simeq \ul\bC\underset{\ul\Shv(G)_{\on{co}}}\otimes \ul\Vect\simeq
|\ul\bC\otimes \ul\Shv(G^\bullet)_{\on{co}}|,$$ where $\ul\bC\otimes \ul\Shv(G^\bullet)_{\on{co}}$ is the \emph{simplicial} object
of $\AGCat$ that encodes the $\ul\Shv(G)_{\on{co}}$-action on $\ul\bC$.

\medskip

Note that since colimits on $\AGCat$ are also computed value-wise, we have
$$\bcoinv_G(\ul\bC)(X)\simeq |\ul\bC(X\times G^\bullet)|,$$
where the latter geometric realization is taken in $\DGCat$.

\end{itemize}

\sssec{Example}

Let $\CY$ be a prestack with an action of $G$, and consider the corresponding object
$$\ul\Shv(\CY)\in G\mmod.$$

Unwinding, we obtain that
\begin{equation} \label{e:inv as quot}
\binv_G(\ul\Shv(\CY))\simeq \ul\Shv(\CY/G).
\end{equation}

\sssec{}

For a pair of objects $\ul\bC_1,\ul\bC_2\in G\mmod$, we can form the objects
$$\ul\Hom_{\AGCat}(\ul\bC_1,\ul\bC_2) \text{ and }
\ul\bC_1\otimes \ul\bC_2$$
in $\AGCat$, and both naturally upgrade to objects of $G\mmod$ using the Hopf algebra structure.

\medskip

\noindent{\it Notational convention}: In the above formulas, by a slight abuse of notation, we write $\ul\bC$ instead of $\boblv_G(\ul\bC)$,
and we will do so in the rest of the paper unless this is likely to cause a confusion.

\medskip

Unwinding, we obtain:
\begin{equation} \label{e:inv Funct}
\binv_G(\ul\Hom_{\AGCat}(\ul\bC_1,\ul\bC_2))
\simeq \uHom_{G\mmod,\AGCat}(\ul\bC_1,\ul\bC_2)
\end{equation}
(where $\uHom_{G\mmod,\AGCat}(-,-)$ denotes the internal Hom in $G\mmod$ relative to $\AGCat$,
see \cite[Sect. 1.1.2]{GRV}), and
\begin{equation} \label{e:coinv ten}
\bcoinv_G(\ul\bC_1\otimes \ul\bC_2)\simeq
\ul\bC_1\underset{\ul\Shv(G)_{\on{co}}}\otimes \ul\bC_2.
\end{equation}

\sssec{}

Consider the forgetful functor (in $\AGCat$):\footnote{We draw the reader's attention to the distinction between the notations
$\boblv_G$ and $\oblv_G$: the former is a functor between 2-categories (that forgets the categorical $G$-action); the former
is a functor between 1-categories (that forgets the structure of $G$-equivariance, which is a generalization of the notion of usual
$G$-action on an object of a category).}
$$\oblv_G:\binv_G(\ul\bC)\to \ul\bC.$$

\medskip

It is easy to see that it admits a right adjoint (in $\AGCat$), to be denoted $\Av^G_*$.

\medskip

Explicitly, the composition of $\Av^G_*$ with $\oblv_G$ is the comonad on $\ul\bC$, given by
$$\ul\bC \simeq \ul\Vect\otimes \ul\bC\to  \ul\Shv(G)_{\on{co}}\otimes \ul\bC \overset{\on{act}}\to \ul\bC,$$
where the second arrow is the 1-morphism induced by
$$\ul\Vect\to \ul\Shv(G)_{\on{co}}\simeq \ul\Shv(G) \, \Leftrightarrow \, \Vect\to \Shv(G),$$
given by the constant sheaf $\on{const}_G\in \Shv(G)$.

\begin{rem}

One can view the $(\oblv_G,\Av^G_*)$-adjunction as follows:

\medskip

We have an adjunction
\begin{equation} \label{e:triv adj}
\ul\Vect \underset{\on{C}^\cdot(G,-)}{\overset{\on{const}_G}\rightleftarrows} \ul\Shv(G)
\end{equation}
in $G\mmod$, and the adjunction
$$\binv_G(\ul\bC) \underset{\Av^G_*}{\overset{\oblv_G}\rightleftarrows} \ul\bC$$
by taking
$$\ul\Hom_{G\mmod,\AGCat}(-,\ul\bC)$$
from the two sides of \eqref{e:triv adj}.

\end{rem}

\sssec{}

The above 1-morphism
$$\Av^G_*:\ul\bC\to \binv_G(\ul\bC)$$
is $G$-invariant. Hence, it canonically factors as
$$\ul\bC\to \bcoinv_G(\ul\bC) \to \binv_G(\ul\bC).$$

In particular, we obtain a natural transformation
\begin{equation} \label{e:coinv to inv}
\bcoinv_G \to \binv_G
\end{equation}
as functors $G\mmod\to \AGCat$.

\sssec{}

We claim:

\begin{prop} \label{p:coinv to inv}
The natural transformation \eqref{e:coinv to inv} is an isomorphism.
\end{prop}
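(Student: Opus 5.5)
Both functors commute with the forgetful functor to $\AGCat$ and are computed by (co)simplicial objects built from $\ul\Shv(G^\bullet)$. The plan is therefore to reduce to a universal case, and then to a computation about $\ul\Shv(G)$ alone.

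First, I would reduce to a single object. Note that $\bcoinv_G(\ul\bC)\simeq \ul\Vect\underset{\ul\Shv(G)_{\on{co}}}\otimes \ul\bC$ and $\binv_G(\ul\bC)\simeq \uHom_{G\mmod,\AGCat}(\ul\Vect,\ul\bC)$. Hence the natural transformation \eqref{e:coinv to inv} is induced by a single $(\ul\Shv(G)_{\on{co}},\ul\Shv(G)_{\on{co}})$-bimodule map. Concretely, it suffices to construct an identification
$$\ul\Vect^{\vee,G}\simeq \ul\Vect$$
of $\ul\Shv(G)_{\on{co}}$-modules, where the left-hand side is the dual of the trivial module. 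One must also check that it matches the map induced by $\Av^G_*$. Equivalently, I would show that $\ul\Vect$, viewed as an object of $G\mmod$, is dualizable relative to $\AGCat$ with dual itself, with duality data given by $\on{const}_G$ and $\on{C}^\cdot(G,-)$ from \eqref{e:triv adj}. Then $\uHom(\ul\Vect,-)\simeq \ul\Vect\otimes_{\ul\Shv(G)_{\on{co}}}(-)$, and the canonical comparison map is the one built from $\Av^G_*$.

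Second, I would pass to a computation over points. Limits and colimits in $\AGCat$ are computed value-wise, so $\binv_G(\ul\bC)(X)=\on{Tot}(\ul\bC(X\times G^\bullet))$ and $\bcoinv_G(\ul\bC)(X)=|\ul\bC(X\times G^\bullet)|$. Both are formed in $\DGCat$ from the same (co)simplicial diagram, whose face maps are $\on{mult}^!$, $\on{act}^!$ and projection pullbacks in one direction and their adjoints in the other. The key input would be that each pullback functor $p^!$ appearing here (projections and action maps, all smooth up to shift) has a left adjoint $p_!$ satisfying base change. Granting this, the standard argument gives that the geometric realization in $\DGCat$ of the simplicial diagram of left adjoints is the totalization of the cosimplicial diagram (colimits along left adjoints equal limits along right adjoints). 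This produces an equivalence $|\ul\bC(X\times G^\bullet)|\simeq \on{Tot}(\ul\bC(X\times G^\bullet))$.

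Third, I would identify this abstract equivalence with \eqref{e:coinv to inv}. The tautological functor $\ul\bC\to \bcoinv_G(\ul\bC)$ corresponds, under the above equivalence, to the left adjoint of $\oblv_G$. One uses smoothness of $G$: $p_!\simeq p_*[2\dim G]$ up to Tate twist. So the left adjoint differs from $\Av^G_*$ by a shift and twist (for connected $G$ this is $\on{C}^\cdot(G)$-linear algebra, i.e.\ the usual $\Av_!\simeq \Av_*[2\dim G]$). After correcting the normalization, the two maps agree, and the transformation is an isomorphism.

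The main obstacle is the second step in the $\AGCat$ setting. One needs the adjoints $p_!$ to exist and commute with the coaction uniformly in $X$, compatibly with the functorialities of \cite[Sect. 0.1.4]{GRV}. Alternatively, one could appeal to the ambidexterity of $\boblv_G$ proven above: its left and right adjoints agree, via $\ul\Shv(G)_{\on{co}}\otimes(-)\simeq (-)\otimes\ul\Shv(G)$. Applying $\ul\Hom_{G\mmod,\AGCat}(-,\ul\bC)$ and $(-)\otimes_{\ul\Shv(G)_{\on{co}}}\ul\bC$ to the bar resolution of $\ul\Vect$ by free modules then reduces the claim to the free case $\ul\bC=\ul\Shv(G)\otimes\ul\bD$. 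In that case both sides are $\ul\bD$, and the map is the isomorphism $\on{C}^\cdot(G,\on{const}_G)$-normalized. I would try this free-module reduction first, since it only uses dualizability of $\ul\Shv(G)$ in $\AGCat$.
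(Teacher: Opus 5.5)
Your main route fails at the point where you identify the simplicial diagram computing $\bcoinv_G$ with the diagram of left adjoints of the coface maps.

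The face maps of $|\ul\bC(X\times G^\bullet)|$ are the action of $\ul\Shv(G)_{\on{co}}$, i.e.\ the \emph{duals} of the coaction maps under the Verdier self-duality of $\ul\Shv(G)$. For instance, the last face map is $\on{id}\otimes p_*$, because $\ul\Vect$ is a module via the counit $p_*$ of $\ul\Shv(G)_{\on{co}}$, which is dual to the unit $p^!$ of $\ul\Shv(G)$. Geometrically these are $*$-pushforwards along smooth maps. Up to the shift $[-2\dim G]$ they are the \emph{right} adjoints of the coface maps, not the left adjoints $p_!$ that you use. So \cite[Chapter 1, Proposition 2.5.7]{GR}, applied to the $p_!$'s, identifies $\on{Tot}(\ul\bC(X\times G^\bullet))$ with a realization along $!$-pushforwards, and that is not $\bcoinv_G(\ul\bC)(X)$.

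The bridge you propose, $p_!\simeq p_*[2\dim G]$ ``by smoothness'', is false. Smoothness gives $p^!\simeq p^*[2\dim G]$, whereas $p_!\to p_*$ is an isomorphism only for proper maps. Already for $G=\BG_m$ one has $p_!(\delta_1)\simeq p_*(\delta_1)$ but $p_!(\on{const}_G)\simeq p_*(\on{const}_G)[-1]$ (up to Tate twists), so no uniform shift works.

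Several other steps fail for related reasons:
\begin{itemize}
\item There is no left adjoint $\Av^G_!$ of $\oblv_G$ on all of $G\mmod$. \propref{p:simple inv} produces one only on monodromic objects, and with shift $\dim(G_{\on{red}})$, not $2\dim G$.
\item The compatibility you call the main obstacle is not a technicality; it is false. Take $X=\BG_m\overset{j}\hookrightarrow\BA^1$ and the constant sheaf on the diagonal of $\BG_m\times\BG_m$. Applying $p_!$ and $j_*$ in the two orders gives $j_!\ol\BQ_\ell$ and $j_*\ol\BQ_\ell$. So $p_!$ does not commute with $*$-pushforward in the auxiliary variable, and it is not a 1-morphism of $\AGCat$.
\item Any level-wise shift is incompatible with the degeneracies. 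This is why the paper first passes to semi-(co)simplicial diagrams, a step you omit.
\end{itemize}

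The free-module fallback does not help. Resolving $\ul\Vect$ by free modules and applying $\ul\Hom_{G\mmod,\AGCat}(-,\ul\bC)$ and $(-)\underset{\ul\Shv(G)_{\on{co}}}\otimes\ul\bC$ simply reproduces the cobar and bar complexes. Their term-wise agreement says nothing about how a totalization compares with a geometric realization. Resolving $\ul\bC$ instead would require $\binv_G$ to commute with geometric realizations, which is \corref{c:inv colimits}, and that is deduced from this very proposition. Dualizability of $\ul\Shv(G)$ alone cannot turn a limit into a colimit. Your first step is only a reformulation: \eqref{e:triv adj} is not module-duality data.

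The paper's proof has the same architecture as your main route but never introduces $p_!$. After passing to semi-(co)simplicial diagrams, it uses the level-wise shift $[-2\cdot\bullet\cdot\dim(G)]$ to match the coaction diagram with the action diagram by passing to adjoints. It then applies \cite[Chapter 1, Proposition 2.5.7]{GR} and checks that the resulting functor is \eqref{e:coinv to inv}. Everything therefore hinges on comparing the action maps with adjoints of the coaction maps, and the handedness matters. For a smooth map $f$ of relative dimension $d$, the adjunction available is $f^![-2d]\simeq f^*\dashv f_*$, which makes the coaction maps the \emph{left} adjoints of the action maps. But \cite[Chapter 1, Proposition 2.5.7]{GR} computes the realization of a diagram of left adjoints as the totalization of their right adjoints. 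Whatever comparison you set up has to respect this, and replacing $p_*$ by $p_!$ does not achieve it.
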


\begin{proof}

When calculating
$$ \on{Tot}(\ul\bC(X\times G^\bullet)) \text{ and } |\ul\bC(X\times G^\bullet)|,$$
we can replace the cosimplicial limit (resp., simplicial colimit) by the corresponding
semi-cosimplicial (resp., semi-simplicial) one:
\begin{equation} \label{e:tot and geom}
\on{Tot}(\ul\bC(X\times G^\bullet))^{\on{semi}} \text{ and } |\ul\bC(X\times G^\bullet)|^{\on{semi}}.
\end{equation}

Note now that the functor of cohomological shift by $[-2\cdot \bullet \cdot \dim(G)]$ gives rise to an isomorphism
between the semi-cosimplicial category $\ul\bC(X\times G^\bullet)$ in the left-hand side of \eqref{e:tot and geom}
with one obtained from the semi-simplicial category $\ul\bC(X\times G^\bullet)$ in the right-hand side of \eqref{e:tot and geom}
by passing to right adjoints.

\medskip

By \cite[Chapter 1, Proposition 2.5.7]{GR}, this gives rise to \emph{an} equivalence between the two sides of \eqref{e:tot and geom}.
However, unwinding the definitions, it is easy to see that the resulting functor $\leftarrow$ in \eqref{e:tot and geom}
is the value of \eqref{e:coinv to inv} on $X$.

\end{proof}

\sssec{}

As a corollary, we obtain:

\begin{cor} \label{c:inv colimits}
The functor
$$\binv_G:G\mmod\to \AGCat$$
commutes with colimits and with tensors by objects of $\AGCat$.
\end{cor}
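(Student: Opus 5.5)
By \propref{p:coinv to inv} we have $\binv_G\simeq \bcoinv_G$ as functors $G\mmod\to \AGCat$. So it suffices to prove both assertions for $\bcoinv_G$ and then transport them along this isomorphism.

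For colimits, recall that $\bcoinv_G$ was defined as the left adjoint of $\btriv_G:\AGCat\to G\mmod$. A left adjoint commutes with all colimits that exist, so $\bcoinv_G$ commutes with colimits. The same conclusion also follows from the explicit formula
$$\bcoinv_G(\ul\bC)(X)\simeq |\ul\bC(X\times G^\bullet)|.$$
Colimits in $G\mmod$ are computed underlying in $\AGCat$ (because $\boblv_G$ has both adjoints), colimits in $\AGCat$ are computed value-wise, and geometric realizations commute with colimits.

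For tensors by $\ul\bD\in\AGCat$, we use the formula
$$\bcoinv_G(\ul\bC)\simeq \ul\bC\underset{\ul\Shv(G)_{\on{co}}}\otimes \ul\Vect\simeq |\ul\bC\otimes \ul\Shv(G^\bullet)_{\on{co}}|.$$
Here $\ul\bD\otimes\ul\bC$ carries the action on the second factor. The tensor product in $\AGCat$ commutes with colimits in each variable, since $\AGCat$ is closed symmetric monoidal. This gives
$$\bcoinv_G(\ul\bD\otimes\ul\bC)\simeq |\ul\bD\otimes\ul\bC\otimes \ul\Shv(G^\bullet)_{\on{co}}|\simeq \ul\bD\otimes \bcoinv_G(\ul\bC).$$

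It remains to check that the canonical map $\ul\bD\otimes\binv_G(\ul\bC)\to \binv_G(\ul\bD\otimes\ul\bC)$ agrees with the transported isomorphism. This map comes from $\binv_G$ being right adjoint to the $\AGCat$-linear functor $\btriv_G$. The point is that the natural transformation \eqref{e:coinv to inv} is compatible with the $\AGCat$-module structures: it was constructed from $\Av^G_*$, which is given by action against $\on{const}_G$ and is therefore $\AGCat$-linear. Given this compatibility, the lax-linearity map for $\binv_G$ is identified with the strict linearity isomorphism for $\bcoinv_G$, and so it is invertible.

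I expect this compatibility check to be the only step requiring care, and it reduces to unwinding definitions.
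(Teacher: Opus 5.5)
Your proposal is correct and follows the paper's argument. The paper states the corollary as an immediate consequence of \propref{p:coinv to inv}: once $\binv_G\simeq \bcoinv_G$, the functor $\bcoinv_G$ is a left adjoint given by the relative tensor product $\ul\bC\underset{\ul\Shv(G)_{\on{co}}}\otimes \ul\Vect$, so it visibly commutes with colimits and with tensoring by objects of $\AGCat$. Your additional check, that the lax linearity map of $\binv_G$ matches the transported isomorphism, is a point of care the paper leaves unstated.
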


\ssec{Dualilzability for categorical representations}

Recall that our ability to perform the trace construction necessitated the dualizability
assumption on objects.

\medskip

In this subsection we will see what this condition means for $G\mmod$
(at the 3-categorical and 2-categorical levels).

\sssec{} \label{sss:G-mod dualizable}

First, we claim that
$$G\mmod\in \tAGCat$$
is dualizable and self-dual:
\begin{equation} \label{e:G mod self-dual}
(G\mmod)^\vee\simeq G\mmod.
\end{equation}

\medskip

Indeed, this is the case for objects of the form
$$A\mod(\bO)\in \bO\mmod,$$
where:

\begin{itemize}

\item $\bO$ is a symmetric monoidal category, so $\bO\mmod$ is a symmetric monoidal 2-category;

\item $A$ is an associative algebra on $\bO$, equipped with an
anti-involution.\footnote{A priori, the dual is $A^{\on{rev}}\mod$, but the anti-involution
provides an identification $A\simeq A^{\on{rev}}$ (here and elsewhere, the superscript ``rev" means
``algebra with reversed multiplication").}

\end{itemize}

\medskip

In our case $\bO:=\AGCat$ and $A:=\ul\Shv(G)_{\on{co}}$, with the anti-involution provided by the antipode.

\sssec{}

Let $H$ be a cocommutative Hopf algebra in a symmetric monoidal category $\bO$, so that
$H\mod(\bO)$ acquires a symmetric monoidal structure compatible with the forgetful functor
$$\boblv_H: H\mod(\bO)\to \bO.$$

\medskip

Note that in this situation, an object of $H\mod(\bO)$ is dualizable if and only if the underlying
object $\bo\in \bO$ is dualizable, and the dual is given by $\bo^\vee$ with the natural \emph{right}
action, which we turn into a left action using the antipode on $H$.

\sssec{}

In particular, we obtain that an object of $G\mmod$ is dualizable if and only if the underlying
object $\ul\bC\in \AGCat$ is dualizable, and the dual is given by $\ul\bC^\vee$ with the
natural action.

\medskip

In particular, we obtain that if $\CY$ is an \emph{AG tame} prestack (see \cite[Sect. 3.4.7]{GRV}) equipped with
an action of $G$, then the object
$$\ul\Shv(\CY)\in G\mmod$$
is dualizable and self-dual (by \cite[Theorem 3.6.2 and Sect. 3.5.11]{GRV}).

\medskip

In particular, this is the case whenever $\CY$ is a \emph{pseudo-scheme} (in particular, an ind-scheme)
(see \cite[Lemma 3.4.5]{GRV}).

\medskip

Crucially for us, this is the case when $\CY$ is an algebraic stack that is locally a quotient of a scheme
by an algebraic group: this follows from \cite[Proposition 4.6.4]{GRV} and \cite[Theorem F.2.8]{AGKRRV1}.

\medskip

In fact, according to \cite[Conjecture F.2.7]{AGKRRV1}, we expect this to be the case for any quasi-compact algebraic stack with a schematic diagonal.

\sssec{}

Recall that an object $M$ of the category of the form $A\mod(\bO)$ (here $A$ is an associative algebra in some symmetric monoidal category $\bO$)
is said to be \emph{totally $\bO$-compact} (a.k.a dualizable as an $A$-module, see \cite[Lemma B.4.4(ii)]{GRV})
if there exists an object $M^\vee\in A^{\on{rev}}\mod(\bO)$ and
an isomorphism
$$\ul\Hom_{A\mod(\bO),\bO}(M,-) \simeq M^\vee\underset{A}\otimes -$$
as functors $A\mod(\bO)\to \bO$, where $\ul\Hom_{A\mod(\bO),\bO}(-,-)$ denotes the functor of \emph{internal Hom in $A\mod(\bO)$ relative to $\bO$},
see \cite[Sect. 1.1.2]{GRV}.

\medskip

In this case, we call $M^\vee$ the dual of $M$ as an $A$-module.

\sssec{}

Take $\bO=\AGCat$ and $A=\ul\Shv(G)_{\on{co}}$. We claim:

\begin{prop} \label{p:two senses of dual}
An object
$$\ul\bC\in G\mmod=\ul\Shv(G)_{\on{co}}\mmod(\AGCat)$$
is totally compact if and only if the object of $\AGCat$ underlying $\ul\bC$
is dualizable. Moreover, in this case, the dual of $\ul\bC$ as an object of
$$(\ul\Shv(G)_{\on{co}})^{\on{rev}}\mmod(\AGCat)\simeq \ul\Shv(G)_{\on{co}}\mmod(\AGCat)\simeq G\mmod$$
identifies with $\ul\bC^\vee$ with the natural action.
\end{prop}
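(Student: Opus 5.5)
We prove both directions by making the internal Hom relative to $\AGCat$ explicit through invariants and then passing from invariants to coinvariants via \propref{p:coinv to inv}.

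\emph{Step 1 (dualizable implies totally compact).} Assume the underlying object $\ul\bC\in \AGCat$ is dualizable with dual $\ul\bC^\vee$, and equip $\ul\bC^\vee$ with the induced action; we regard it as a right $\ul\Shv(G)_{\on{co}}$-module via the antipode. For $\ul\bC'\in G\mmod$, equation \eqref{e:inv Funct} gives
$$\ul\Hom_{G\mmod,\AGCat}(\ul\bC,\ul\bC')\simeq \binv_G(\ul\Hom_{\AGCat}(\ul\bC,\ul\bC'))\simeq \binv_G(\ul\bC^\vee\otimes \ul\bC'),$$
where the second isomorphism is $G$-equivariant for the diagonal action, because the action on the internal Hom is built from the Hopf structure. \propref{p:coinv to inv} replaces $\binv_G$ by $\bcoinv_G$, and \eqref{e:coinv ten} identifies $\bcoinv_G(\ul\bC^\vee\otimes \ul\bC')$ with $\ul\bC^\vee\underset{\ul\Shv(G)_{\on{co}}}\otimes \ul\bC'$. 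All of these isomorphisms are natural in $\ul\bC'$, so together they exhibit $\ul\bC$ as totally compact, with dual $\ul\bC^\vee$ carrying its natural action. This also proves the ``moreover'' clause.

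\emph{Step 2 (totally compact implies dualizable).} Suppose $\ul\bC$ is totally compact, with dual $M^\vee$. We test against the free modules $\ul\Shv(G)_{\on{co}}\otimes \ul\bD$ for $\ul\bD\in \AGCat$. Since $\boblv_G$ has a left adjoint given by the free module (and the adjunction is ambidexterous), we get
$$\ul\Hom_{G\mmod,\AGCat}(\ul\bC,\ul\Shv(G)_{\on{co}}\otimes \ul\bD)\simeq \ul\Hom_{\AGCat}(\ul\bC,\ul\bD).$$
Here we use that the free module is also cofree, i.e.\ the right adjoint $\ul\bD\mapsto \ul\bD\otimes\ul\Shv(G)$. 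On the other side,
$$M^\vee\underset{\ul\Shv(G)_{\on{co}}}\otimes(\ul\Shv(G)_{\on{co}}\otimes \ul\bD)\simeq \boblv(M^\vee)\otimes \ul\bD.$$
Hence $\ul\Hom_{\AGCat}(\ul\bC,-)\simeq \boblv(M^\vee)\otimes -$ naturally in $\ul\bD$, which is exactly the statement that $\ul\bC$ is dualizable in $\AGCat$ with dual $\boblv(M^\vee)$. To pass from this identification of functors to actual duality data, we invoke the standard criterion \cite[Lemma B.4.4]{GRV}. By Step 1, $M^\vee$ is then identified with $\ul\bC^\vee$ together with its action.

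\emph{Main obstacle.} The hardest part is the compatibility in Step 1. We have to check that the composite isomorphism
$$\binv_G(\ul\bC^\vee\otimes-)\simeq \bcoinv_G(\ul\bC^\vee\otimes-)$$
is compatible with the $\AGCat$-linear structure, so that it really is an isomorphism of functors $G\mmod\to\AGCat$ and not merely objectwise. We would handle this with \corref{c:inv colimits}: $\binv_G$ commutes with tensors by objects of $\AGCat$. We also need to check that the identification $\ul\Hom_{\AGCat}(\ul\bC,\ul\bC')\simeq\ul\bC^\vee\otimes\ul\bC'$ respects the diagonal action, which requires unwinding the antipode conventions.
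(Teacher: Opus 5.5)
Your proposal is correct and follows essentially the same route as the paper. The ``dualizable $\Rightarrow$ totally compact'' direction is the chain \eqref{e:inv Funct}, \propref{p:coinv to inv}, \eqref{e:coinv ten}, and the converse tests against the (co)free modules $\ul\Shv(G)\otimes \ul\bD$ to obtain $\ul\Hom_{\AGCat}(\ul\bC,-)\simeq M^\vee\otimes -$; the $\AGCat$-linearity point you flag is left implicit in the paper, and handling it via \corref{c:inv colimits} is fine, since that corollary depends only on \propref{p:coinv to inv} and so involves no circularity.
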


\begin{proof}

One direction is easy: if $\ul\bC\in G\mmod$ is dualizable as a $\ul\Shv(G)_{\on{co}}$-module with dual $\ul\bC'$, then for any
$\ul\bD\in \AGCat$ and
$$\ul\Shv(G)\otimes \ul\bD\in G\mmod,$$
we have
$$\uHom_{G\mmod,\AGCat}(\ul\bC,\ul\Shv(G)\otimes \ul\bD) \simeq \ul\Hom_{\AGCat}(\ul\bC,\ul\bD)$$
and
$$\ul\bC'\underset{\ul\Shv(G)_{\on{co}}}\otimes (\ul\Shv(G)\otimes \ul\bD)\simeq \ul\bC'\otimes \ul\bD,$$
so we obtain an equivalence
$$\ul\Hom_{\AGCat}(\ul\bC,\ul\bD) \simeq \ul\bC'\otimes \ul\bD,$$
functorial in $\ul\bD$. This implies that $\ul\bC$ is dualizable as an object of $\AGCat$ with dual $\ul\bC'$.

\medskip

Vice versa, assume that $\ul\bC$ is dualizable as an object of $\AGCat$. For $\ul\bC'\in G\mmod$, we write
\begin{multline*}
\uHom_{G\mmod,\AGCat}(\ul\bC,\ul\bC') \overset{\text{\eqref{e:inv Funct}}}\simeq
\binv_G(\ul\Hom_{\AGCat}(\ul\bC,\ul\bC')) \overset{\text{\propref{p:coinv to inv}}}\simeq \\
\simeq \bcoinv_G(\ul\Hom_{\AGCat}(\ul\bC,\ul\bC')) \simeq \bcoinv_G(\ul\bC^\vee\otimes \ul\bC')
\overset{\text{\eqref{e:coinv ten}}}\simeq \ul\bC^\vee\underset{\ul\Shv(G)_{\on{co}}}\otimes \ul\bC',
\end{multline*}
i.e., $\ul\bC$ is indeed dualizable as a $\ul\Shv(G)_{\on{co}}$-module with dual $\ul\bC^\vee$.

\end{proof}

\sssec{}

We now claim:

\begin{cor} \label{c:inv dualizable}
Let $\ul\bC\in G\mmod$ be dualizable (in the sense of the symmetric monoidal structure). Then $\binv_G(\ul\bC)\in \AGCat$ is dualizable.
\end{cor}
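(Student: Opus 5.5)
The plan is to exhibit an explicit dual of $\binv_G(\ul\bC)$ by using total compactness from \propref{p:two senses of dual}. Since $\ul\bC$ is dualizable in $G\mmod$, its underlying object of $\AGCat$ is dualizable. By \propref{p:two senses of dual}, $\ul\bC$ is then totally compact as a $\ul\Shv(G)_{\on{co}}$-module, with dual $\ul\bC^\vee$ carrying the natural action. The candidate dual of $\binv_G(\ul\bC)$ is $\binv_G(\ul\bC^\vee)$.

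First I will rewrite both invariants as relative tensor products. By \propref{p:coinv to inv} and \eqref{e:coinv ten},
$$\binv_G(\ul\bC)\simeq \bcoinv_G(\ul\bC)\simeq \ul\Vect\underset{\ul\Shv(G)_{\on{co}}}\otimes \ul\bC,$$
and similarly for $\ul\bC^\vee$. For any $\ul\bD\in\AGCat$, I will then compute $\ul\Hom_{\AGCat}(\binv_G(\ul\bC),\ul\bD)$. Since $\binv_G$ is right adjoint to $\btriv_G$ and also (via \propref{p:coinv to inv}) left adjoint to $\btriv_G$, this equals
$$\uHom_{G\mmod,\AGCat}(\ul\bC,\btriv_G(\ul\bD)).$$
By total compactness, this is
$$\ul\bC^\vee\underset{\ul\Shv(G)_{\on{co}}}\otimes \btriv_G(\ul\bD)\simeq \bcoinv_G(\ul\bC^\vee)\otimes\ul\bD\simeq \binv_G(\ul\bC^\vee)\otimes \ul\bD.$$
In the second step, \corref{c:inv colimits} (or directly the fact that coinvariants commute with tensoring by objects of $\AGCat$) lets $\ul\bD$ pass out of the relative tensor product.

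This gives an equivalence $\ul\Hom_{\AGCat}(\binv_G(\ul\bC),-)\simeq \binv_G(\ul\bC^\vee)\otimes -$, functorial in $\ul\bD$. That is precisely the characterization of dualizability in $\AGCat$ with dual $\binv_G(\ul\bC^\vee)$, the same criterion used in the first half of the proof of \propref{p:two senses of dual}.

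The main obstacle is the adjunction step. I must make sure that $\bcoinv_G$, being the left adjoint of $\btriv_G$, matches the relative internal Hom at the level of $\AGCat$-enriched Homs and not merely mapping spaces. This is where I would use the explicit formulas $\binv_G(\ul\bC)(X)=\on{Tot}(\ul\bC(X\times G^\bullet))$, together with \eqref{e:inv Funct} applied with target $\btriv_G(\ul\bD)$.

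Concretely, $\binv_G(\ul\Hom(\ul\bC,\ul\bD))$ with $\ul\bD$ carrying the trivial action is $\ul\Hom(\bcoinv_G(\ul\bC),\ul\bD)$, because internal Hom converts the colimit defining $\bcoinv_G$ into the limit defining $\binv_G$. The remaining identifications are then formal.
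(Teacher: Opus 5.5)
Your proof is correct and follows the paper's route. The paper combines \propref{p:coinv to inv} and \propref{p:two senses of dual} with the general fact, cited from \cite[Lemma B.4.4]{GRV}, that coinvariants of a totally compact module are dualizable. Your computation of $\ul\Hom_{\AGCat}(\bcoinv_G(\ul\bC),\ul\bD)$ via the $(\bcoinv_G,\btriv_G)$-adjunction and total compactness is an inline proof of that cited fact, with the small bonus of identifying the dual explicitly as $\binv_G(\ul\bC^\vee)$.
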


\begin{proof}

Follows from Propositions \ref{p:coinv to inv} and \ref{p:two senses of dual}, using
the fact that the functor $\bcoinv_G$
sends totally compact objects to dualizable ones (see, e.g., \cite[Lemma B.4.4]{GRV}).

\end{proof}

\ssec{Adjoints of 1-morphisms}

Recall that the trace operation in a 2-category is functorial with respect to 1-morphisms that are
\emph{adjointable}.

\medskip

In this subsection we will see how the adjointability condition plays out for 1-morphisms
$$G_1\mmod\to G_2\mmod.$$

\sssec{} \label{sss:bimodules as morphisms}

Let $G_1$ and $G_2$ be a pair of groups, and recall the equivalence \eqref{e:prod groups reps}.
We obtain that
\begin{multline*}
\uHom_{\tAGCat}(G_1\mmod,G_2\mmod) \simeq (G_1\mmod)^\vee\otimes G_2\mmod \simeq \\
\simeq G_1\mmod\otimes G_2\mmod \simeq (G_1\times G_2)\mmod
\end{multline*}
as objects of $\tAGCat$.

\medskip

In particular, given
$$\bM_{1,2}\in (G_1\times G_2)\mmod,$$
we obtain a functor, to be denoted
$$T_{\bM_{1,2}}:G_1\mmod\to G_2\mmod.$$

\sssec{}

Recall that for three of algebras $A_1,A_2,A_3$ in a symmetric monoidal category $\bO$, under the identifications
$$\ul\Hom_{\bO\mmod}(A_i\mod(\bO),A_j\mod(\bO))\simeq (A_i^{\on{rev}}\otimes A_j)\mmod(\bO),$$
the composition functor
\begin{multline*}
\ul\Hom_{\bO\mmod}(A_1\mod(\bO),A_2\mod(\bO))\times \ul\Hom_{\bO\mmod}(A_2\mod(\bO),A_2\mod(\bO))\to \\
\to \ul\Hom_{\bO\mmod}(A_1\mod(\bO),A_3\mod(\bO))
\end{multline*}
corresponds to
$$(A_1^{\on{rev}}\otimes A_2)\mmod(\bO)\times (A_2^{\on{rev}}\otimes A_3)\mmod(\bO)\to
(A_1^{\on{rev}}\otimes A_3)\mmod(\bO), \quad M_{1,2},M_{2,3} \mapsto M_{2,3}\underset{A_2}\otimes M_{1,2}.$$

\medskip

Hence, for
$$\bM_{1,2}\in (G_1\times G_2)\mmod \text{  and } \bM_{2,3}\in (G_2\times G_3)\mmod,$$
we have
$$T_{\bM_{2,3}}\circ T_{\bM_{1,2}}\simeq T_{\bM_{1,3}},$$
where
$$\bM_{1,3}=\bM_{1,2}\underset{\ul\Shv(G_2)_{\on{co}}}\otimes \bM_{2,3}\simeq
\bcoinv_{\Delta(G_2)}(\bM_{1,2}\otimes \bM_{2,3})\in (G_1\times G_3)\mmod.$$

\medskip

For example, let $\bM_{1,2}=\ul\Shv(\CY_{1,2})$ and $\bM_{2,3}=\ul\Shv(\CY_{2,3})$, where $\CY_{1,2}$ (resp., $\CY_{2,3}$)
is a prestack acted on by $G_1\times G_2$ (resp., $G_2\times G_3$), and let us assume that one of these prestaks is AG tame.

\medskip

Then $\bM_{1,3}\simeq \ul\Shv(\CY_{1,3})$ for
$$\CY_{1,3}=(\CY_{1,2}\times \CY_{2,3})/\Delta(G_2).$$

Indeed, this follows from
\begin{multline} \label{e:nice comp}
\bcoinv_{G_2}(\ul\Shv(\CY_{1,2})\otimes \ul\Shv(\CY_{2,3})) \overset{\text{\propref{p:coinv to inv}}}\simeq
\binv_{G_2}(\ul\Shv(\CY_{1,2})\otimes \ul\Shv(\CY_{2,3})) \overset{\text{\cite[Cor. 3.6.7]{GRV}}}\simeq  \\
\simeq \binv_{G_2}(\ul\Shv(\CY_{1,2}\times \CY_{2,3})) \overset{\text{\eqref{e:inv as quot}}}\simeq
\ul\Shv((\CY_{1,2}\times \CY_{2,3})/\Delta(G_2)).
\end{multline}

\sssec{} \label{sss:when adjntbl}

For $\bM_{1,2}$ as above, the functor $T_{\bM_{1,2}}$  is adjointable if and only if $\bM_{1,2}$ is totally compact as an object of $G_2\mmod$. However, according to
\propref{p:two senses of dual} this is equivalent to $\bM_{1,2}$ being dualizable as an object of $\AGCat$.

\medskip

Moreover, in this case
the adjoint functor is given by $\bM_{1,2}^\vee$, viewed as an object of $(G_1\times G_2)\mmod$.

\sssec{} \label{sss:when adjntbl expl}

Explicitly, the unit of the adjunction is the functor
$$\ul\Shv(G_1)\to \binv_{G_2}(\bM_{1,2}\otimes \bM_{2,1})\simeq \bM_{1,2}\underset{\ul\Shv(G_2)_{\on{co}}}\otimes \bM_{2,1},$$
where the action of $G_1$ on the unit of the duality
$$\ul\Vect\to \bM_{1,2}\otimes \bM_{2,1},$$
which automatically factors via
$$\ul\Vect\to \binv_{G_2}(\bM_{1,2}\otimes \bM_{2,1}).$$

\medskip

The counit is described similarly.

\sssec{} \label{sss:adj geom prel}

In particular, we obtain that if
$$\bM_{1,2}=\ul\Shv(\CY_{1,2}),$$
where $\CY_{1,2}$ is an AG tame prestack acted on by $G_1\times G_2$, then the corresponding functor
$$T_{\CY_{1,2}}:=T_{\bM_{1,2}}$$
is adjointable.

\medskip

Moreover,
$$(T_{\CY_{1,2}})^R\simeq T_{\CY_{2,1}},$$
where $\CY_{2,1}$ is the same as $\CY_{1,2}$ as a prestack,
viewed as acted on by $G_2\times G_1$.

\sssec{} \label{sss:adj geom}

Let us describe explicitly the adjunction data for the 1-morphisms $T_{\CY_{1,2}}$ and $T_{\CY_{2,1}}$ above.
Namely, applying \cite[Sect. 3.7]{GRV} and using \secref{sss:when adjntbl expl}, we obtain the following:

\medskip

The unit of the adjunction is given by pull-push along the diagram
$$
\CD
G_1\times (\CY_{1,2}/G_2) @>{h}>> (\CY_{1,2}\times \CY_{2,1})/\Delta(G_2) \\
@V{v}VV \\
G_1,
\endCD
$$
where:

\begin{itemize}

\item $\ul\Shv(G_1)$ is viewed as an object in $(G_1\times G_1)\mmod$ giving rise to the identity endofunctor of $G_1\mmod$;

\medskip

%

\item We view $G_1\times (\CY_{1,2}/G_2)$ as a prestack acted on by $G_1\times G_1$ by
$$(g'_1,g''_1)\cdot (g_1,y)=(g'_1\cdot g_1\cdot (g''_1)^{-1},g''_1\cdot y);$$

\medskip

\item The map $v$ is the projection on the $G_1$ factor;

\medskip

\item The map $h$ is $(g_1,y)\mapsto (g_1\cdot y,y)$.

\end{itemize}

\medskip

The counit of the adjunction is defined similarly using the diagram
$$
\CD
G_2\times (\CY_{2,1}/G_1) @>>> G_2 \\
@VVV \\
(\CY_{2,1}\times \CY_{1,2})/\Delta(G_1).
\endCD
$$

\sssec{}

Let us return to the data of self-duality of $G\mmod$ as an object of $\tAGCat$. By construction, the unit and counit
are given by $\ul\Shv(G)$, viewed as an object of
$$G\mmod\otimes G\mmod\simeq (G\times G)\mmod.$$

Since $\ul\Shv(G)\in \AGCat$ is dualizable, we obtain:

\begin{cor} \label{c:G-mod 2-dual}
The object $G\mmod\in \tAGCat$ is 2-dualizable.
\end{cor}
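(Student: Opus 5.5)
We need to show $G\mmod$ is 2-dualizable in $\tAGCat$. By the definition in \secref{ss:2-dualizable}, this means two things: $G\mmod$ is dualizable in the underlying 2-category, and the unit and counit of the duality are adjointable 1-morphisms. The first point is already settled in \secref{sss:G-mod dualizable}: $G\mmod$ is self-dual via \eqref{e:G mod self-dual}. So the work is to identify the unit and counit as 1-morphisms of the form $T_{\bM}$ and then apply the adjointability criterion of \secref{sss:when adjntbl}.

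For the unit, I would write
$$\on{u}:\AGCat\to G\mmod\otimes G\mmod\simeq (G\times G)\mmod$$
and note that $\AGCat\simeq \{1\}\mmod$ for the trivial group. Under \secref{sss:bimodules as morphisms}, with $G_1=\{1\}$ and $G_2=G\times G$, this 1-morphism is $T_{\bM}$ with $\bM=\ul\Shv(G)$, carrying the action of $G\times G$ by left and right translations. Concretely, it is the image of $\ul\Vect$ under the unit, which is the regular bimodule $\ul\Shv(G)_{\on{co}}$ viewed as an $(A,A^{\on{rev}})$-bimodule, with $A^{\on{rev}}\simeq A$ identified via the antipode.

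The counit is handled the same way. Take $G_1=G\times G$ and $G_2=\{1\}$; the counit $(G\times G)\mmod\to \AGCat$ is then $T_{\bM'}$ with $\bM'=\ul\Shv(G)$ again. Unwinding, it is the functor
$$\ul\bC\mapsto \ul\bC\underset{\ul\Shv(G\times G)_{\on{co}}}\otimes \ul\Shv(G)\simeq \bcoinv_{\Delta G}(\ul\bC).$$

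By \secref{sss:when adjntbl}, each $T_{\bM}$ is adjointable exactly when $\bM$ is dualizable as an object of $\AGCat$, which reduces to \propref{p:two senses of dual}. Here $\bM=\ul\Shv(G)$, and $G$ is a scheme, hence AG tame, so $\ul\Shv(G)$ is dualizable (indeed self-dual) in $\AGCat$. One could also go through \secref{sss:adj geom prel} with $\CY=G$.

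The main obstacle I expect is matching the abstract duality data for $A\mod(\bO)$ with $A$ carrying an anti-involution to the geometric bimodule $\ul\Shv(G)$ with its $G\times G$-action. I would check this directly: the unit of the self-duality of $A\mod$ is given by $A$ as a bimodule, and transporting the right action to a left action via the antipode gives exactly the translation action of $G\times G$ on $G$ by $(g_1,g_2)\cdot g=g_1 g g_2^{-1}$. Once that identification is in place, the argument is complete.
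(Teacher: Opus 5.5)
Your proposal is correct and follows the paper's argument: the paper notes that the unit and counit of the self-duality of $G\mmod$ are given by $\ul\Shv(G)\in (G\times G)\mmod$, and that $\ul\Shv(G)$ is dualizable in $\AGCat$, so both are adjointable by \secref{sss:when adjntbl}. Your explicit check of the antipode identification just spells out what the paper calls ``by construction.''
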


\begin{rem}

The description of the adjoints in \secref{sss:adj geom prel} implies that the Serre functor on $G\mmod$
is canonically isomorphic to the identity. I.e., $G\mmod$ is a \emph{Calabi-Yau} object of $\tAGCat$.

\end{rem}

\ssec{Patterns of 2-adjointability for categorical representations}

\sssec{}

Let us now be given a pair of groups $G_1,G_2$ and a prestack $\CY$ acted on by $G_1\times G_2$. Consider the corresponding 1-morphism
\begin{equation} \label{e:stack 2-adj}
T_\CY:\ul\Shv(G_1)\mmod\to \ul\Shv(G_2)\mmod.
\end{equation}

We would like to give sufficient conditions for \eqref{e:stack 2-adj} to be 2-adjointable.

\sssec{} \label{sss:mock-proper}

We will say that a map
$$f:\CY_1\to \CY_2$$
between AG tame prestacks is \emph{mock-proper}, if the corresponding 1-morphism
$$f_\blacktriangle:\ul\Shv(\CY_1)\to \ul\Shv(\CY_2)$$
is adjointable (in $\AGCat$).

\medskip

We will say that an AG tame prestack $\CY$ is \emph{mock-proper} if its map to $\on{pt}$
is mock-proper.

\medskip

It is easy to see that if $f$ is mock-proper, then so is the map
$$f\times \on{id}:\CY_1\times \CZ\to \CY_2\times \CZ$$
for any AG tame prestack $\CZ$ (recall that the class of AG tame prestacks is closed under products, see
\cite[Proposition 3.6.11]{GRV}).

\medskip

Also, it is clear that a composition of mock-proper maps is mock-proper.

\sssec{Examples}

\begin{itemize}

\item A proper schematic map between AG tame prestacks is mock-proper.

\medskip

\item

For any algebraic group $G$, the stack $\on{pt}/G$ is mock proper.

\medskip

\item Combining the previous two examples, we obtain that if $Y$ is a proper scheme and
$G$ is an algebraic group acting on $Y$, then $Y/G$ is mock-proper.

\medskip

\item The stack $\BA^n/\BG_m$ is mock-proper, where $\BG_m$ acts on $\BA^n$ by dilations.

\medskip

\item A cotruncative open embedding (see \cite[Sect. 3]{DrGa2}) is mock-proper.

\end{itemize}

\sssec{} \label{sss:mock-proper schemes}

One can show that if a map $f:\CY_1\to \CY_2$ between schemes is
mock-proper, them it is actually proper.

\medskip

Indeed, it follows from \cite[Theorem B.6.3(a) and Proposition B.4.4(b)]{AGKRRV2} that if $f$ is mock-proper,
then $f_!\to f_*$ is an isomorphism. However, the latter implies that $f$ is proper.

\sssec{}

We will say that a map
$$f:\CY_1\to \CY_2$$
between prestacks is \emph{mock-smooth}, if the corresponding 1-morphism
$$f^!:\ul\Shv(\CY_2)\to \ul\Shv(\CY_1)$$
is adjointable (in $\AGCat$).

\medskip

Remarks pertaining to the notion of mock-properness in \secref{sss:mock-proper}
are equally applicable to the notion of mock-smoothness.

\sssec{Examples}

\begin{itemize}

\item A smooth schematic map is mock-smooth.

\medskip

\item Let $\CY$ be a smooth quasi-compact algebraic stack. Then it is mock-smooth if and
only if it is \emph{safe}\footnote{``Safe" means that any constructible sheaf on $\CY$ is compact,
see \cite[Sect. 9]{DrGa1}.} (see \cite[Sect. 1.3(d)]{GaVa}).

\medskip

\item A truncative locally closed embedding (see \cite[Sect. 3]{DrGa2}) is mock-smooth.

\medskip

\item

In particular, the map
$$\on{pt}/\BG_m\to \BA^n/\BG_m$$
is mock-smooth.

\end{itemize}

\sssec{} \label{sss:mock-smooth schemes}

One can show that a map $f:\CY_1\to \CY_2$ between schemes is mock-smooth if and only if
the constant sheaf of $\CY_1$ is ULA with respect to $f$.

\sssec{}

The following assertion results from the description of the unit and counit of the adjunction in \secref{sss:adj geom}:

\begin{prop} \label{p:stack 2-adj} Assume that:

\smallskip

\noindent{\em(i)} The action map $G_1\times (\CY/G_2)\to (\CY\times \CY)/G_2$ is mock-proper;

\smallskip

\noindent{\em(ii)} The stack $\CY/G_1$ is mock-proper;

\smallskip

\noindent{\em(iii)} The action map $G_2\times (\CY/G_1)\to (\CY\times \CY)/G_1$ is mock-smooth;

\medskip

\noindent{\em(iv)} The stack $\CY/G_2$ is mock-smooth.

\medskip

Then the 1-morphism \eqref{e:stack 2-adj} is 2-adjointable.

\end{prop}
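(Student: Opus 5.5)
We need to show: $T_\CY$ is adjointable, and the unit and counit 2-morphisms admit right adjoints.

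First, $\ul\Shv(\CY)$ is dualizable in $\AGCat$ because $\CY$ is AG tame (implicit in the hypotheses, since mock-properness/smoothness is defined for AG tame prestacks). By \secref{sss:adj geom prel}, $T_\CY$ is adjointable with right adjoint $T_{\CY^{\on{op}}}$, where $\CY^{\on{op}}$ denotes $\CY$ with the roles of $G_1$ and $G_2$ swapped.

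Next we analyze the unit. By \secref{sss:adj geom}, the unit $\on{Id}_{G_1\mmod}\to T_{\CY}^R\circ T_\CY$ is a 1-morphism in $(G_1\times G_1)\mmod$. It goes from $\ul\Shv(G_1)$ to $\ul\Shv((\CY\times\CY)/\Delta(G_2))$, and is given by $h_\blacktriangle\circ v^!$ along the correspondence
\[G_1\leftarrow G_1\times(\CY/G_2)\rightarrow (\CY\times\CY)/\Delta(G_2).\]
By \lemref{l:adj in G mod}, adjointability as a 2-morphism in $\tAGCat$ (i.e. as a 1-morphism in $(G_1\times G_1)\mmod$) can be checked after applying $\boblv$ to $\AGCat$. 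So it suffices that $v^!$ and $h_\blacktriangle$ be adjointable in $\AGCat$.

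The map $h$ is precisely the action map of (i), hence mock-proper, so $h_\blacktriangle$ is adjointable. The map $v$ is the base change of $\CY/G_2\to\on{pt}$ along $G_1\to \on{pt}$, i.e. $\on{id}_{G_1}\times(\CY/G_2\to\on{pt})$. By (iv) and the stability of mock-smoothness under products with an AG tame prestack (\secref{sss:mock-proper}, applied to mock-smooth maps), $v^!$ is adjointable. Composites of adjointable morphisms are adjointable, so the unit is adjointable.

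The counit is treated symmetrically, using the correspondence
\[(\CY\times\CY)/\Delta(G_1)\leftarrow G_2\times(\CY/G_1)\rightarrow G_2.\]
Here the pullback leg is the action map of (iii), which is mock-smooth, so its $!$-pullback is adjointable. The pushforward leg is $\on{id}_{G_2}\times(\CY/G_1\to\on{pt})$, which is mock-proper by (ii) and stability under products, so its $\blacktriangle$-pushforward is adjointable.

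The main obstacle is bookkeeping rather than mathematics. One must verify that the explicit description in \secref{sss:adj geom} really identifies the unit and counit with these pull-push functors as 1-morphisms in $(G_i\times G_i)\mmod$, compatibly with the equivariance structures. One must also justify that an adjoint in $\AGCat$ of a $G$-equivariant functor is automatically strictly equivariant, which is \lemref{l:adj in G mod} applied to the group $G_1\times G_1$ (respectively $G_2\times G_2$). The remaining point is that the relevant quotient prestacks are AG tame, so that $\blacktriangle$-pushforwards are defined. This follows from closure of AG tame prestacks under products and quotients as in \cite{GRV}.
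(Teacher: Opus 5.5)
Your proposal is correct and follows the paper's own (one-line) argument: it reads off the unit and counit as pull-push functors from the explicit description in \secref{sss:adj geom}. Hypotheses (iv) and (i) then make the two legs of the unit adjointable, (iii) and (ii) do the same for the counit, and \lemref{l:adj in G mod} reduces adjointability in $(G_i\times G_i)\mmod$ to adjointability in $\AGCat$.
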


\sssec{}

Combining Propositions \ref{p:stack 2-adj} and \ref{p:2-dual} with \corref{c:G-mod 2-dual}, we obtain:

\begin{cor} \label{c:stack 2-adj}
Let $\CY$ be a prestack equipped with an action of $G_1\times G_2$, satisfying
the assumptions of \propref{p:stack 2-adj}. Then the object of
$\on{Arr}(\tAGCat)$ given by \eqref{e:stack 2-adj} is 2-dualizable.
\end{cor}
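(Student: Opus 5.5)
The plan is to apply \propref{p:2-dual} directly to the object $T_\CY:G_1\mmod\to G_2\mmod$ of $\on{Arr}(\tAGCat)$. That proposition reduces 2-dualizability of an arrow $t:\bo_1\to\bo_2$ to two inputs. First, the source and target must be 2-dualizable as objects of $\bO^{2\on{-Cat}}$. Second, $t$ must be 2-adjointable. So the proof will consist of checking these two conditions and then citing the direction of \propref{p:2-dual} whose construction was given, i.e., the ``if'' direction.

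For the first input, we take $\bO=\tAGCat$, $\bo_1=G_1\mmod$ and $\bo_2=G_2\mmod$. \corref{c:G-mod 2-dual} says each $G_i\mmod$ is 2-dualizable. The argument there was that the unit and counit of the self-duality \eqref{e:G mod self-dual} are given by $\ul\Shv(G_i)\in (G_i\times G_i)\mmod$. Since $\ul\Shv(G_i)$ is dualizable in $\AGCat$, the corresponding 1-morphisms are adjointable by \secref{sss:when adjntbl}. This is exactly the 2-dualizability required in \propref{p:2-dual}.

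For the second input, \propref{p:stack 2-adj} applies under hypotheses (i)--(iv), which are assumed in the corollary. It gives that $T_\CY$ is adjointable, with right adjoint $T_{\CY_{2,1}}$ by \secref{sss:adj geom prel}. It also gives that the unit and counit 2-morphisms of \secref{sss:adj geom} are themselves adjointable. Mock-properness is used for the $h_\blacktriangle$ legs and mock-smoothness for the $v^!$ legs of the pull-push diagrams; $h$ is the action map and $v$ is the projection. This is precisely the definition of 2-adjointability.

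With both inputs in place, \propref{p:2-dual} gives that $T_\CY$ is 2-dualizable in $\on{Arr}(\tAGCat)$. The dual is $(T_\CY^R)^\vee$, and the unit and counit are those of \eqref{e:unit} and \eqref{e:counit}. There is no real obstacle beyond one bookkeeping check. The 2-dualizability used in \propref{p:2-dual} for $\bo_i$ concerns adjointability of $\on{u}_{\bo_i}$ and $\on{ev}_{\bo_i}$ as 1-morphisms in $\tAGCat$. That is what \corref{c:G-mod 2-dual} provides, so the hypotheses match.
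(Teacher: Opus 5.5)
Your proposal is correct and matches the paper's argument, which combines \propref{p:stack 2-adj} (2-adjointability of $T_\CY$) and \propref{p:2-dual} with \corref{c:G-mod 2-dual} (2-dualizability of each $G_i\mmod$). One small slip in your parenthetical gloss: for the counit the roles of the legs are swapped. There the $\blacktriangle$-pushforward is along the projection to $G_2$ (mock-proper by (ii)), and the $!$-pullback is along the action map (mock-smooth by (iii)). Since you only cite \propref{p:stack 2-adj}, this does not affect the proof.
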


\sssec{A non-example}

Let us take $G_1=1$ and $G_2=G$, so we are dealing with a 1-morphism
$$\one_{\AGCat}\mmod\to \ul\Shv(G)\mmod$$
given by a prestack $\CY$.

\medskip

Assume that $\CY=Y$ is a smooth separated scheme and $G$ is connected.
We claim that the conditions of \propref{p:stack 2-adj} are \emph{never} satisfied unless $Y$ is a union of points
and $G$ is unipotent (which is not an interesting case).

\medskip

Indeed, condition (ii) implies that $Y$ is proper (see \secref{sss:mock-proper schemes}).
Condition (iii) implies that the map
$$G\times Y\to Y\times Y$$
is ULA, which implies that $G$ acts transitively on each connected component of $Y$,

\medskip

Hence, (ii) and (iii) together imply that each connected component of $Y$ is of the form
$G/P$, where $P\subset G$ is a parabolic
subgroup. Now, condition (iv) says that $\on{pt}/P$ is mock-smooth, and hence safe.
This implies that $P$ is unipotent, which in turn implies that $P=G$ is unipotent.

\sssec{The key example} \label{sss:prince series}

Let us take $G_2=G$ to be a connected reductive group, and $G_1=M=P/N(P)$ to be the Levi
quotient of a parabolic $P$. Take
$$Y:=G/N(P),$$
with the natural action of $G\times M$.

\medskip

We claim that this $Y$ satisfies the conditions of \propref{p:stack 2-adj}.

\medskip

Condition (i) holds because the map in question is schematic and proper. Indeed, the map
$$M \times Y \to Y\times Y$$
is a closed embedding; this is an expression of the fact that $M$ acts freely on $Y$ with quotient $G/P$.

\medskip

Condition (ii) holds because $Y/M=G/P$ is proper.

\medskip

Condition (iii) holds because the map in question is schematic and smooth. Indeed, the map
$$G\times Y\to Y\times Y$$
is smooth because the action of $G$ on $Y$ is transitive.

\medskip

Finally, condition (iv) holds because $Y/G=\on{pt}/N(P)$ is smooth and safe, hence it is mock-smooth.

\sssec{}

Thus, by \corref{c:stack 2-adj}, we conclude that for $Y=G/N(P)$, the resulting object of $\on{Arr}(\tAGCat)$
is 2-dualizable.

\medskip

We will apply to this example the paradigm of \secref{sss:2 endos} and in particular
\corref{c:2-dual} in order to obtain Deligne-Lusztig representations, and compute their
traces.

\section{Traces of categorical representations}  \label{s:Tr Cat Rep}

In this section we will be concerned with calculating traces (in the ambient 3-category $\tAGCat$) of the objects
of the form $G\mmod$, equipped with endomorphisms (and between morphisms between these traces).

\medskip

Note that the above traces are objects of
$$\Omega(\tAGCat)=\End_{\tAGCat}(\one_{\tAGCat})\simeq \End_{\tAGCat}(\AGCat)\simeq \AGCat.$$

\ssec{The trace on \texorpdfstring{$G\mmod$}{Gmod}}

\sssec{}

Let us be given a prestack $\CX$ acted on by $G\times G$, and thereby giving rise to a 1-morphism
$$F_\CX: G\mmod\to G\mmod,$$
see \secref{sss:bimodules as morphisms}.

\medskip

We claim:

\begin{prop} \label{p:Tr on categ rep}
The object
$$\Tr(F_\CX,G\mmod)\in \AGCat$$
identifies canonically with
$$\ul\Shv(\CX/\Delta(G)),$$
where $G$ acts on $\CX$ via the diagonal embedding $G\to G\times G$.
\end{prop}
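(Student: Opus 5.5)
We compute the trace directly from the definition \eqref{e:trace functor expl}, using the explicit self-duality of $G\mmod$ from \secref{sss:G-mod dualizable} and the composition rule for bimodules from \secref{sss:bimodules as morphisms}. Under the identification
$$\uHom_{\tAGCat}(G\mmod,G\mmod)\simeq (G\times G)\mmod,$$
the unit $\on{u}_{G\mmod}:\AGCat\to G\mmod\otimes G\mmod$ and the counit $\on{ev}_{G\mmod}:G\mmod\otimes G\mmod\to \AGCat$ are both given by $\ul\Shv(G)$, viewed as a $(G\times G)$-representation via left and right multiplication. The endomorphism $F_\CX\otimes \on{Id}$ of $G\mmod\otimes G\mmod\simeq (G\times G)\mmod$ is given by the $(G^2\times G^2)$-prestack $\CX\times G$.

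Each composition is computed by \eqref{e:nice comp}: composing $T_{\CY_{1,2}}$ with $T_{\CY_{2,3}}$ gives $T_{(\CY_{1,2}\times\CY_{2,3})/\Delta(G_2)}$, provided one factor is AG tame. Here $G$, $\CX\times G$ and their quotients are harmless. The steps are as follows.

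First, $(F_\CX\otimes\on{Id})\circ \on{u}$ corresponds to $(G\times \CX\times G)/(G\times G)$, a prestack with a $G\times G$ action. This simplifies to $\CX$, with $G\times G$ acting through the twisted action that encodes passing to the dual: the second copy of $G$ acts through the antipode, and the identification $(G\mmod)^\vee\simeq G\mmod$ uses that antipode.

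Second, we postcompose with $\on{ev}$, which is given by $\ul\Shv(G)$ with $G\times G$ acting on both sides. This yields
$$\ul\Shv\big((\CX\times G)/(G\times G)\big)\simeq \ul\Shv(\CX/\Delta(G)),$$
where the last step takes the quotient by the free action on the $G$ factor. Equivalently, the composite $\AGCat\to\AGCat$ is $\bcoinv_{G\times G}(\ul\Shv(\CX)\otimes\ul\Shv(G))$. By \propref{p:coinv to inv} and \eqref{e:inv as quot}, this identifies with $\binv_{G\times G}(\ul\Shv(\CX\times G))\simeq \ul\Shv((\CX\times G)/(G\times G))$.

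The main obstacle is the bookkeeping of the actions. One must check that, after the antipode twists coming from the self-duality of $G\mmod$, the $G\times G$ acting on $\CX\times G$ is $(g_1,g_2)\cdot(x,h)=((g_1,g_2)x,\,g_1hg_2^{-1})$, up to conventions. With this action, the slice $h=1$ identifies the quotient with $\CX$ modulo the diagonal $\Delta(G)$, rather than an antidiagonal or twisted diagonal. I would verify this first on $\CX=G$ with the bimodule action, where the answer must be $\ul\Shv(G/\on{Ad}(G))$. This also requires the Künneth-type identification $\ul\Shv(\CY_1)\otimes\ul\Shv(\CY_2)\simeq\ul\Shv(\CY_1\times\CY_2)$ for the prestacks involved (\cite[Cor. 3.6.7]{GRV}), which is where AG tameness is used. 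For a general $\CX$ I would require, or arrange via $G$ being tame, that one tensor factor is AG tame.
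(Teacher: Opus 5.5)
Your proposal is correct and follows essentially the same route as the paper: express the trace as coinvariants of a bimodule, then pass to the quotient stack via \propref{p:coinv to inv} and \eqref{e:inv as quot}. The only difference is packaging. The paper invokes the Hopf-algebra formula $\Tr(F_M,H\mod(\bO))\simeq M\underset{H}\otimes \one_\bO$, with $H$ acting on $M$ diagonally after an antipode twist, which lands directly on $\bcoinv_{\Delta(G)}(\ul\Shv(\CX))$. You instead compose the unit, $F_\CX\otimes\on{Id}$ and the counit explicitly to obtain $\bcoinv_{G\times G}(\ul\Shv(\CX)\otimes\ul\Shv(G))$, and then reduce to the diagonal geometrically via the slice $h=1$; tameness is automatic here since one factor is always $G$.
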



\begin{proof}

Let $H$ be a commutative Hopf algebra in a symmetric monoidal category $\bO$. Let us view $H\mod(\bO)$
as an object of the symmetric monoidal (2-)category $\bO\mmod$. Let $M$ be an $H$-bimodule, and let
$$F_M:H\mod(\bO)\to H\mod(\bO)$$
be the corresponding functor, viewed as a (1-)morphism in $\bO\mmod$. Then
\begin{equation} \label{e:Tr for Hopf}
\Tr(F_M,H\mod)\simeq M\underset{H}\otimes \one_\bO,
\end{equation}
where:

\begin{itemize}

\item $H$ acts on $M$ via the diagonal map $H\otimes H$, followed by the antipode on one of the factors;

\item $\one_\bO$ is viewed as a trivial $H$-module.

\end{itemize}

\medskip

Applying this to $\bO=\AGCat$ and $H=\ul\Shv(G)_{\on{co}}$, we obtain that
$$\Tr(F_\CX,G\mmod) \simeq \bcoinv_{\Delta(G)}(\ul\Shv(\CX)).$$

Combining with \propref{p:coinv to inv}, we rewrite the latter expression as
$$\binv_{\Delta(G)}(\ul\Shv(\CX))\overset{\text{\eqref{e:inv as quot}}}\simeq \ul\Shv(\CX/\Delta(G)).$$

\end{proof}

\sssec{}

Let $\phi$ be an endomorphism of $G$ as an algebraic group. Take $\CY:=G_\phi$, which equals $G$ as a scheme,
with right copy of $G$ acting by right multiplication, and the left copy acting by left multiplication precomposed
with $\phi$.

\medskip

We will denote the resulting endofunctor of $G\mmod$ by $F_\phi$. We can view it as follows:

\begin{itemize}

\item When we think of $G\mmod$ as $\ul\Shv(G)_{\on{co}}\mmod$, $F_\phi$ is given by precomposing
the action with the endomorphism of $\ul\Shv(G)_{\on{co}}$ given by $\phi_*$;

\medskip

\item When we think of $G\mmod$ as $\ul\Shv(G)\commod$, $F_\phi$ is given by composing
the coaction with the endomorphism of $\ul\Shv(G)$ given by $\phi^!$.

\end{itemize}

\medskip

Note that the resulting diagonal action of $G$ on $G_\phi$ is the action of $G$ on itself by $\phi$-twisted
conjugacy:
$$\on{Ad}_\phi(g)(g_1):=\phi(g)\cdot g_1\cdot g^{-1}.$$

Hence, in this particular case of \propref{p:Tr on categ rep} says:

\begin{cor} \label{c:Tr endo}
$$\Tr(F_\phi,G\mmod)\simeq \ul\Shv(G/\on{Ad}_\phi(G)).$$
\end{cor}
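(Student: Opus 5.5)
The plan is to deduce \corref{c:Tr endo} directly from \propref{p:Tr on categ rep} applied to $\CX:=G_\phi$. Two things need checking: that $F_\phi$, as defined, is the 1-morphism $F_\CX$ of \secref{sss:bimodules as morphisms} for this $\CX$, and that the diagonal action of $G$ on $G_\phi$ is $\phi$-twisted conjugacy.

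First, under the identification
$$\uHom_{\tAGCat}(G\mmod,G\mmod)\simeq (G\times G)\mmod,$$
the endofunctor obtained by precomposing the $\ul\Shv(G)_{\on{co}}$-action with $\phi_*$ corresponds to $\ul\Shv(G)$ viewed as a bimodule. Here the right copy of $\ul\Shv(G)_{\on{co}}$ acts by convolution (i.e. $\on{mult}_*$), and the left copy acts through $\phi_*$ followed by convolution. Since pushforward along $G\times G\to G$, $(g,x)\mapsto \phi(g)x$, is exactly the action of $\ul\Shv(G)_{\on{co}}$ on $\ul\Shv(G_\phi)$ for the stated left action, this bimodule is $\ul\Shv(G_\phi)$. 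Hence $F_\phi\simeq F_{G_\phi}$.

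Second, \propref{p:Tr on categ rep} gives $\Tr(F_\phi,G\mmod)\simeq \ul\Shv(G_\phi/\Delta(G))$. It remains to compute the diagonal action. The two copies act via $(g_1,g_2)\cdot x=\phi(g_1)\,x\,g_2^{-1}$, where inverting is how the right multiplication becomes a left action. Along $\Delta$ this gives $g\cdot x=\phi(g)\,x\,g^{-1}=\on{Ad}_\phi(g)(x)$. So $G_\phi/\Delta(G)=G/\on{Ad}_\phi(G)$, which is the claim.

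The only genuine subtlety is a convention. In \eqref{e:Tr for Hopf}, one factor of the diagonal $H$-action is twisted by the antipode, and this twist must match the choice of which side carries $\phi$. If it lands on the other side, one gets $x\mapsto g\,x\,\phi(g)^{-1}$ instead. That action is identified with $\on{Ad}_\phi$ by the $G$-equivariant isomorphism $x\mapsto x^{-1}$, so the resulting quotient stacks are canonically isomorphic either way, and the statement holds regardless. No dualizability or tameness issues arise, because $G$ is a scheme and hence AG tame.
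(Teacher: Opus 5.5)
Your proposal is correct and is essentially the paper's argument: the corollary is obtained by applying \propref{p:Tr on categ rep} to $\CX=G_\phi$ and noting that the diagonal action of $G$ on $G_\phi$ is $g\cdot x=\phi(g)\,x\,g^{-1}$, i.e., $\on{Ad}_\phi$. Your first step is not needed, since the paper defines $F_\phi$ as the 1-morphism attached to the $(G\times G)$-scheme $G_\phi$, and your remark on the antipode convention is harmless, since it is the same inversion identification the paper uses in \secref{sss:dual phi}.
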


\sssec{}

Let us specialize to the case when $\phi=\on{id}$. We obtain:

\begin{cor} \label{c:Tr Id}
$$\Tr(\on{Id},G\mmod)\simeq \ul\Shv(G/\on{Ad}(G)).$$
\end{cor}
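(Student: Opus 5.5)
This is the special case $\phi=\on{id}$ of \corref{c:Tr endo}, so the plan is to apply that corollary and identify the geometry.

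\emph{Step 1: the endofunctor.} Take $\phi=\on{id}_G$. Then $G_\phi=G$, with the left copy of $G$ acting by left multiplication and the right copy by right multiplication. I will check that the corresponding endofunctor $F_{\on{id}}$ of $G\mmod$ is $\on{Id}_{G\mmod}$. Under the identification
$$\uHom_{\tAGCat}(G\mmod,G\mmod)\simeq (G\times G)\mmod$$
of \secref{sss:bimodules as morphisms}, the identity 1-morphism corresponds to the regular bimodule $\ul\Shv(G)$. This is the object used for the self-duality data of $G\mmod$, and it is also $\ul\Shv(G)$ viewed in $(G_1\times G_1)\mmod$ as in \secref{sss:adj geom}. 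Concretely, $\bM\underset{\ul\Shv(G)_{\on{co}}}\otimes \ul\Shv(G)_{\on{co}}\simeq \bM$. Equivalently, in the module description of the bullet points preceding \corref{c:Tr endo}, precomposing the action with $(\on{id})_*=\on{Id}$ gives the identity.

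\emph{Step 2: the quotient.} The diagonal action of $G$ on $G_{\on{id}}$ is $\on{Ad}_{\on{id}}(g)(g_1)=g\cdot g_1\cdot g^{-1}$, which is the adjoint action. So $G/\on{Ad}_{\on{id}}(G)=G/\on{Ad}(G)$.

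\emph{Conclusion.} Substituting into \corref{c:Tr endo} gives
$$\Tr(\on{Id},G\mmod)\simeq \ul\Shv(G/\on{Ad}(G)).$$
The trace is defined because $G\mmod$ is dualizable by \secref{sss:G-mod dualizable}.

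The only point needing care is the coherent identification in Step 1 of $F_{\on{id}}$ with the identity 1-morphism. This reduces to the fact that the regular bimodule $A$ represents $\on{Id}_{A\mod}$ under $\uHom(A\mod,A\mod)\simeq (A^{\on{rev}}\otimes A)\mod$, with $A^{\on{rev}}\simeq A$ supplied by the antipode. This is a standard fact, so I do not expect a real obstacle.
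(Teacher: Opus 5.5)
Your proposal is correct and follows the paper's route: the paper obtains this corollary simply by specializing \corref{c:Tr endo} to $\phi=\on{id}$. Your extra checks that $F_{\on{id}}$ is the identity 1-morphism and that $\on{Ad}_{\on{id}}$ is the conjugation action are exactly the unwinding that this specialization implicitly uses.
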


\sssec{}

Let now the ground field $k$ be $\ol\BF_q$, and assume that $G$ is defined over $\BF_q$, so that
it carries an action of the geometric Frobenius, denoted $\Frob$. We denote the endofunctor
$F_{\Frob}$ of $G\mmod$ simply by $\Frob$.

\medskip

From \corref{c:Tr endo} we obtain:
$$\Tr(\Frob,G\mmod)\simeq \ul\Shv(G/\on{Ad}_{\on{Frob}}(G)).$$

\sssec{}

Assume now that $G$ is connected. In this case, Lang's theorem implies that the $\on{Ad}_{\on{Frob}}(G)$-action is
transitive, and
$$G/\on{Ad}_{\on{Frob}}(G)\simeq \on{pt}/G(\BF_q).$$

We now observe:

\begin{lem} \label{l:finite grp is restr}
For a finite group $\Gamma$, the canonical map
$$\bi(\Rep(\Gamma))\to \ul\Shv(\on{pt}/\Gamma)$$
is an isomorphism.
\end{lem}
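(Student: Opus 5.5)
We realize $\ul\Shv(\on{pt}/\Gamma)$ as the invariants $\binv_\Gamma(\ul\Vect)$ of the trivial categorical representation, and we show that this invariants construction commutes with $\bi$.

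First, by \eqref{e:inv as quot} applied to $\CY=\on{pt}$ with the trivial $\Gamma$-action,
$$\ul\Shv(\on{pt}/\Gamma)\simeq \binv_\Gamma(\ul\Vect)\simeq \on{Tot}(\ul\Shv(\Gamma^\bullet)).$$
Since limits in $\AGCat$ are computed value-wise, on a test scheme $X$ this reads
$$\ul\Shv(\on{pt}/\Gamma)(X)\simeq \on{Tot}(\Shv(X\times \Gamma^\bullet)).$$

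Next, $\Gamma^n$ is a finite disjoint union of points. Hence $\Shv(X\times\Gamma^n)\simeq \Shv(X)\otimes \Vect^{\Gamma^n}$ tautologically. So the above cosimplicial DG category is $\Shv(X)\otimes C^\bullet$, where $C^\bullet:=\Vect^{\Gamma^\bullet}=\Shv(\Gamma^\bullet)$ is the cosimplicial category whose totalization is $\Rep(\Gamma)=\Shv(\on{pt}/\Gamma)$.

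On the other hand, $\bi(\Rep(\Gamma))(X)=\Shv(X)\otimes \Rep(\Gamma)$, by the definition of $\bi$ in \cite[Sect. 2.4.1]{GRV}. The canonical map of the lemma is, value-wise, the comparison map
$$\Shv(X)\otimes \on{Tot}(C^\bullet)\to \on{Tot}(\Shv(X)\otimes C^\bullet).$$

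The main point, and the expected obstacle, is that $\Shv(X)\otimes -$ need not commute with totalizations. To handle it, we use the argument of \propref{p:coinv to inv}. The categories $C^n$ are finite products of $\Vect$, and the coface maps are given by $!$-pullbacks along maps of finite sets. These admit left adjoints, namely pushforwards, which are also continuous. Therefore, by \cite[Chapter 1, Proposition 2.5.7]{GR}, $\on{Tot}(C^\bullet)$ identifies with the geometric realization of the simplicial diagram of left adjoints, $\Rep(\Gamma)\simeq |C_\bullet|$. The same holds after applying $\Shv(X)\otimes-$, since the adjoints are tensored with the identity. Geometric realizations commute with $\Shv(X)\otimes -$, because the Lurie tensor product preserves colimits in each variable. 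Hence the comparison map is an equivalence. Equivalently, one can use \propref{p:coinv to inv} for the finite group $\Gamma$ directly, which identifies $\binv_\Gamma(\ul\Vect)$ with $\bcoinv_\Gamma(\ul\Vect)$; the latter is a colimit and visibly equals $\bi(\bcoinv_\Gamma(\Vect))=\bi(\Rep(\Gamma))$, because $\bi$ commutes with colimits.

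Finally, we check that the equivalence so obtained is the canonical map of the lemma. This should follow from a routine unwinding of the definitions, together with compatibility with the functoriality in $X$.
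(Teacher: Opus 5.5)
Your argument is correct, but it follows a different route from the paper's.

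The paper works at a fixed test scheme $X$. There the functor $\Rep(\Gamma)\otimes\Shv(X)\to\Shv((\on{pt}/\Gamma)\times X)$ is fully faithful a priori. Essential surjectivity then follows from the canonical isotypic decomposition $\CF\simeq\bigoplus_\rho \rho\otimes\CF_\rho$ of an arbitrary object, which relies on the semisimplicity of $\ol\BQ_\ell[\Gamma]$.

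Your argument is formal instead. Every term of the \v{C}ech nerve $\Gamma^\bullet$ is a finite set, so each $\ul\Shv(\Gamma^n)$ is already restricted. The transition $!$-pullbacks have continuous left adjoints, namely the finite pushforwards. Hence, by \cite[Chapter 1, Proposition 2.5.7]{GR}, the totalization is also a geometric realization, and geometric realizations commute with $\Shv(X)\otimes -$ (equivalently, with $\bi$). This is the same mechanism as in \propref{p:coinv to inv}.

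What your route buys:
\begin{itemize}
\item It gives full faithfulness and essential surjectivity in one stroke, rather than taking full faithfulness as given.
\item It never uses semisimplicity, so it would also work for coefficients in which $|\Gamma|$ is not invertible.
\end{itemize}

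What it costs:
\begin{itemize}
\item You must check that the equivalence you build is the canonical comparison map. This check is routine: $\Shv(X)\otimes -$ preserves adjunctions, so the right adjoints of the tensored insertion functors are the tensored evaluation functors.
\item It depends on $\Gamma$ being finite, so that the terms $\ul\Shv(\Gamma^n)$ are restricted. In particular it does not extend to stacks such as $\on{pt}/G$ with $G$ positive-dimensional, which the paper handles by a separate argument.
\end{itemize}

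Once full faithfulness is granted, the paper's proof is a two-line observation.
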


\begin{proof}

We have to show that for a scheme $X$, the (fully faithful) functor
$$\Rep(\Gamma)\otimes \Shv(X)\to \Shv((\on{pt}/\Gamma) \times X)$$
is an equivalence.

\medskip

Note, however, that every object $\CF\in \Shv((\on{pt}/\Gamma) \times X)$ splits canonically
as
$$\underset{\rho}\oplus\, (\rho\otimes \CF_\rho),$$
where $\rho$ runs over the set of isomorphism classes of irreducible representations of $\Gamma$.

\medskip

This makes the assertion manifest.\footnote{In \corref{c:restricted stack} we will show that a generalization of \lemref{l:finite grp is restr}
holds for any algebraic stack that has finitely many isomorphism classes of points.}

\end{proof}

\sssec{}

Summarizing, we obtain:

\begin{cor} \label{c:Tr Frob}
Let $G$ be connected. Then
$$\Tr(\Frob,G\mmod)\simeq \bi(\Rep(G(\BF_q)):=\ul\Rep(G(\BF_q)).$$
\end{cor}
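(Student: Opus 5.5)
The corollary is obtained by chaining three identifications already available in the text. First, \corref{c:Tr endo} with $\phi=\Frob$ gives
$$\Tr(\Frob,G\mmod)\simeq \ul\Shv(G/\on{Ad}_{\Frob}(G)).$$
Second, since $G$ is connected, Lang's theorem says that the Lang map $g\mapsto \Frob(g)\cdot g^{-1}$ is surjective (indeed a finite étale Galois covering with group $G(\BF_q)$). Hence the $\on{Ad}_\Frob$-action of $G$ on itself is transitive, and the stabilizer of the point $1\in G$ is
$$\{g\mid \Frob(g)g^{-1}=1\}=G(\BF_q).$$
This yields an isomorphism of algebraic stacks $\on{pt}/G(\BF_q)\overset{\sim}\to G/\on{Ad}_\Frob(G)$, induced by the inclusion of the point $1$. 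Applying $\ul\Shv(-)$ therefore gives
$$\ul\Shv(G/\on{Ad}_\Frob(G))\simeq \ul\Shv(\on{pt}/G(\BF_q)).$$

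Third, \lemref{l:finite grp is restr}, applied to the finite group $\Gamma=G(\BF_q)$, identifies $\ul\Shv(\on{pt}/G(\BF_q))$ with $\bi(\Rep(G(\BF_q)))$. Composing these three isomorphisms gives the statement.

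The only step requiring any care is the second. The map $G/G(\BF_q)\to G$ is an isomorphism of schemes, and one must pass from this to an isomorphism of quotient stacks. To do so, I would write $G/\on{Ad}_\Frob(G)$ as the quotient of $G$ by a transitive action, so that it is the classifying stack of the stabilizer group scheme. That stabilizer is the finite étale group $\Frob$-fixed points, which is the constant group $G(\BF_q)$ because $k=\ol\BF_q$.

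I also note that no AG-tameness issues arise. Both stacks are quotients of a scheme by an algebraic group, and the identification is an isomorphism of prestacks, so $\ul\Shv$ transports it directly.
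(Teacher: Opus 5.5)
Your proposal is correct and is exactly the paper's argument: you combine \corref{c:Tr endo} for $\phi=\Frob$, Lang's theorem giving $G/\on{Ad}_{\Frob}(G)\simeq \on{pt}/G(\BF_q)$, and \lemref{l:finite grp is restr} for $\Gamma=G(\BF_q)$. The paper states the Lang step without detail; your explanation of it, via transitivity of the twisted-conjugation action and the stabilizer of $1$ being the constant finite \'etale group $G(\BF_q)$, is correct.
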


\ssec{Morphisms between traces}

\sssec{}

Let us now be given a pair of groups $G_1$ and $G_2$, each equipped with a 1-endomorphism of
$F_{\CX_i}:\ul\Shv(G_i)\mmod$, given by a $(G_i\times G_i)$-prestack $\CX_i$. Let us also be given a 1-morphism
$$T_\CY:\ul\Shv(G_1)\mmod\to \ul\Shv(G_2)\mmod,$$ given by a prestack $\CY$ equipped with an action of $G_1\times G_2$.
Let us assume that $\CY$ is AG tame, which implies that $T_\CY$ is adjointable.

\medskip

Finally, let us be given a 2-morphism in $\tAGCat$
$$T_\CY\circ F_{\CX_1}\overset{\alpha_\CZ}\Rightarrow F_{\CX_2}\circ T_\CY,$$
given by pull-push along to a stack $\CZ$ acted on by $G_1\times G_2$ and equipped with
$(G_1\times G_2)$-equivariant maps
$$
\CD
\CZ @>>> \CY\overset{G_2}\times \CX_2\\
@VVV \\
\CX_1\overset{G_1}\times \CY.
\endCD
$$

\sssec{}

We wish to calculate
$$\Tr(\alpha_\CZ,T_\CY): \Tr(F_{\CX_1},\ul\Shv(G_1)\mmod)\to \Tr(F_{\CX_2},\ul\Shv(G_2)\mmod),$$
which is a 1-morphism in $\AGCat$
$$\ul\Shv(\CX_1/\Delta(G_1))\to \ul\Shv(\CX_2/\Delta(G_2)).$$

\sssec{}

Let
$\wt\CZ_1$ denote the fiber product
$$\CZ\underset{(\CX_1\times \CY)/\Delta(G_1)}\times (\CX_1\times \CY).$$
It is acted on by $G_1\times G_1\times G_2$. Consider the action on it of $G_1\times G_2$,
where $G_1$ acts via its diagonal embedding into $G_1\times G_1$.

\medskip

Define the $(G_1\times G_2)$-stack $\wt\CZ_2$ similarly. We have naturally defined
$(G_1\times G_2)$-equivariant maps
$$\wt\CZ_1 \to \CZ\times \CY \leftarrow \wt\CZ_2,$$
where each factor of $G_1\times G_2$ acts on $\CZ\times \CY$ diagonally.

\medskip

Finally, set
$$\CW:=\wt\CZ_1\underset{\CZ\times \CY}\times \wt\CZ_2.$$

This is a stack, equipped with an action of $G_1\times G_2$. The projection
$$\CW\to \wt\CZ_i\to \CX_i$$
ia $G_i$-equivariant (for the diagonal action of $G_i$ on $\CX_i$).

\sssec{}

We claim:

\begin{prop} \label{p:map of traces stacks}
Under the above circumstances, the 1-morphism $$\ul\Shv(\CX_1/\Delta(G_1))\to \ul\Shv(\CX_2/\Delta(G_2))$$ corresponding to
$\Tr(\alpha_\CZ,T_\CY)$ is given by pull-push along the diagram
$$
\CD
\CW/(G_1\times G_2) @>>> \CX_2/\Delta(G_2) \\
@VVV \\
\CX_1/\Delta(G_1).
\endCD
$$
\end{prop}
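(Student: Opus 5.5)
We will compute $\Tr(\alpha_\CZ,T_\CY)$ directly from the explicit description \eqref{e:2-trace}: it is the composite of the unit $\on{u}_{G_1\mmod}$, then the 2-morphism $\alpha_\CZ\otimes \on{Id}$ sandwiched between the 1-morphisms $T_\CY\otimes (T_\CY^R)^\vee$, and finally $\on{ev}_{G_2\mmod}$, together with the two end 2-morphisms built from the unit and counit of the $(T_\CY,T_\CY^R)$-adjunction. Every 1-morphism in this diagram is between objects of the form $(G\times G')\mmod$ and is given by a prestack with a group action, as in \secref{sss:bimodules as morphisms}. Every 2-morphism is given by pull-push along a correspondence of such prestacks. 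The unit and counit are made explicit in \secref{sss:adj geom}, and $\alpha_\CZ$ by hypothesis. So the plan is to translate each piece into geometry and then compose correspondences.

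The first step is to observe, as in the proof of \propref{p:Tr on categ rep}, that a composite $\AGCat\to \AGCat$ through $\one\to G\mmod\otimes G\mmod\to\cdots\to \one$ given by a $(G\times G)$-prestack $\CX$ produces $\ul\Shv(\CX/\Delta(G))$. Here we use \eqref{e:nice comp}, \propref{p:coinv to inv} and \eqref{e:inv as quot} to turn relative tensor products into quotients. Applying this to the four 1-morphism composites in the form \eqref{e:2-cat Tr}:
\begin{itemize}
\item $\Tr(F_{\CX_1})$ gives $\CX_1/\Delta(G_1)$;
\item $\Tr(F_{\CX_1}\circ T^R_\CY\circ T_\CY)$ gives $(\CX_1\times \CY\times\CY)/(G_1\times G_2)$ with the appropriate diagonal actions;
\item $\Tr(T_\CY\circ F_{\CX_1}\circ T^R_\CY)$ gives the same prestack, which is the cyclicity isomorphism;
\item $\Tr(F_{\CX_2}\circ T_\CY\circ T_\CY^R)$ gives $(\CY\times\CX_2\times\CY)/(G_1\times G_2)$.
\end{itemize}

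The second step is to show that $\Tr(\on{id},-)$ applied to a 2-morphism given by pull-push along a correspondence yields pull-push along the quotient of that correspondence. This is the compatibility of $\ul\Shv$ of quotients with the formation of $\blacktriangle$-pushforward and $!$-pullback, from \cite[Sect. 3.7]{GRV}, together with base change. We then list the three resulting correspondences:
\begin{itemize}
\item the unit, from the diagram in \secref{sss:adj geom}, gives a correspondence whose middle term, after quotienting, is $(\CX_1\times\CY)/(G_1\times G_2)$, mapping to $\CX_1/\Delta(G_1)$ and to $(\CX_1\times\CY\times\CY)/(G_1\times G_2)$ diagonally in $\CY$;
\item $\alpha_\CZ$ gives the correspondence through $(\CZ\times\CY)/(G_1\times G_2)$;
\item the counit gives the correspondence through $(\CY\times\CX_2)/(G_1\times G_2)\to \CX_2/\Delta(G_2)$.
\end{itemize}

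The final step composes the three correspondences by base change. The composite's apex is the iterated fiber product of the three apexes over the intermediate stacks. This should be recognized as $\CW/(G_1\times G_2)$: the fiber product of the unit and $\alpha_\CZ$ apexes reproduces $\wt\CZ_1/(G_1\times G_2)$, and that of $\alpha_\CZ$ and the counit reproduces $\wt\CZ_2/(G_1\times G_2)$, both glued over $(\CZ\times \CY)/(G_1\times G_2)$. We expect the main obstacle here to be bookkeeping. We must check that the various diagonal actions match. We must also check that the intermediate prestacks are AG tame, so that base change for $(-)_\blacktriangle$ and $(-)^!$ holds at each stage. We would handle this by reducing to the case where all stacks are quotients of schemes by groups, where \cite[Sect. 3.7]{GRV} applies.
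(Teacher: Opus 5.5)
Your plan is the paper's proof: the paper simply unwinds \eqref{e:2-cat Tr} using the explicit unit and counit of \secref{sss:adj geom} and composes the resulting correspondences, which is exactly what you propose. One bookkeeping correction: each intermediate trace is a quotient by one group per node of the cycle, so $\Tr(F_{\CX_1}\circ T^R_\CY\circ T_\CY)$ is $(\CX_1\times\CY\times\CY)/(G_1\times G_1\times G_2)\simeq((\CX_1\overset{G_1}\times\CY)\times\CY)/(G_1\times G_2)$, and on the $\CX_2$ side the group is $G_1\times G_2\times G_2$, not $G_1\times G_2$; with that reading, your fiber products do give $\wt\CZ_1/(G_1\times G_2)$, $\wt\CZ_2/(G_1\times G_2)$, and finally $\CW/(G_1\times G_2)$.
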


\begin{proof}

Follows by applying \eqref{e:2-cat Tr} using the calculation of the right adjoints to $F_{\CX_i}$ given by
\secref{sss:adj geom}.

\end{proof}

\sssec{}

Recall that $G\mmod$ is self-dual as an object of $\tAGCat$. Moreover, the unit and counit for this self-duality are
adjointable.

\medskip

Let $\phi$ be an endomorphism of $G$. By \lemref{l:Trace dualizable}, we obtain that
$$\Tr(F_\phi,G\mmod)\simeq \ul\Shv(G/\on{Ad}_\phi(G))$$
is dualizable, and its dual identifies with
$$\Tr((F_\phi)^R,G\mmod).$$

\sssec{} \label{sss:dual phi}

Note that, by \secref{sss:adj geom prel}, the endomorphism $(F_{\phi})^R$ is represented by $\ul\Shv((G_{\phi})^R)$,
where $(G_\phi)^R$ is a copy of $G$, on which:

\begin{itemize}

\item The left action of $G$ is given by left multiplication;

\item The right action of $G$ is given by right multiplication, precomposed with $\phi$.

\end{itemize}

Hence,
$$\Tr((F_{\phi})^R,G\mmod)\simeq \ul\Shv(G/\on{Ad}'_{\phi}(G)),$$
where
$$\on{Ad}'_{\phi}(g)(g_1)=g\cdot g_1\cdot \phi(g^{-1}).$$

Note, however, that the inversion operation on $G$ identifies
$$G/\on{Ad}'_{\phi}(G)\simeq G/\on{Ad}_{\phi}(G),$$
and hence we obtain:
$$\Tr((F_{\phi})^R,G\mmod)\simeq \ul\Shv(G/\on{Ad}_{\phi}(G)).$$

\sssec{}

Unwinding, from \propref{p:map of traces stacks}, we obtain:

\begin{cor} \label{c:Verdier}
The resulting identification
$$\ul\Shv(G/\on{Ad}_\phi(G))^\vee \simeq \Tr(F_\phi,G\mmod)^\vee\simeq \Tr((F_{\phi})^R,G\mmod) \simeq  \ul\Shv(G/\on{Ad}_{\phi}(G))$$
is the (Verdier duality) identification of \cite[Sect. 3.7]{GRV}.
\end{cor}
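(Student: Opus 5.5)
The plan is to make the duality datum from \lemref{l:Trace dualizable} explicit in geometric terms and to match it with the unit and counit of Verdier self-duality of $\ul\Shv(G/\on{Ad}_\phi(G))$ from \cite[Sect. 3.7]{GRV}.

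Recall how the proof of \lemref{l:Trace dualizable} produces the duality between $\Tr(F_\phi,G\mmod)$ and $\Tr((F_\phi)^R,G\mmod)$. One first rewrites $\Tr(F_\phi,G\mmod)$ as $\Tr(F_\phi^\vee,(G\mmod)^\vee)$. The unit and counit are then the images, under the symmetric monoidal trace functor \eqref{e:2-cat Tr funct}, of two squares. The unit square is $(\on{u}_{G\mmod},\alpha_1)$, viewed as a 1-morphism in $\bL(\tAGCat)$ from $(\one,\on{Id})$ to $(G\mmod\otimes G\mmod,F_\phi^R\otimes F_\phi^\vee)$. The counit square is $(\on{ev}_{G\mmod},\alpha_2)$.

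So the unit of the duality equals $\Tr(\alpha_1,\on{u}_{G\mmod})$, a 1-morphism
$$\ul\Vect\to \Tr(F_\phi^R\otimes F_\phi^\vee,G\mmod\otimes G\mmod)\simeq \ul\Shv(G/\on{Ad}_\phi(G))\otimes \ul\Shv(G/\on{Ad}_\phi(G)).$$
Similarly, the counit is $\Tr(\alpha_2,\on{ev}_{G\mmod})$. Both are computed by \propref{p:map of traces stacks} applied to the groups $G_1=1$ and $G_2=G\times G$ (respectively in the reverse direction). In this case $\CY=G$ with its $G\times G$ action, which is the self-duality datum \secref{sss:G-mod dualizable}. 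The 2-morphisms $\alpha_1$ and $\alpha_2$ are built from the unit and counit of the $(F_\phi,F_\phi^R)$-adjunction, which are given geometrically by the pull-push diagrams in \secref{sss:adj geom}.

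The main step is then to compute the stack $\CW$ of \propref{p:map of traces stacks} in this situation. I expect the answer to be $\CW/(G\times G)\simeq G/\on{Ad}_\phi(G)$. The map to $(G/\on{Ad}_\phi(G))\times(G/\on{Ad}'_\phi(G))$ should be the graph of the inversion identification from \secref{sss:dual phi}, i.e. the diagonal after composing with inversion, and the map to $\on{pt}$ should be the projection. I will verify this by unwinding the fiber products $\wt\CZ_1$, $\wt\CZ_2$ and $\CZ\times\CY$. Each step is a quotient of $G$, $G\times G$ or $G\times G\times G$ by free actions, and these collapse to a single copy of $G$ modulo twisted conjugation. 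This yields the unit as pull-push $\ul\Vect\to\ul\Shv(G/\on{Ad}_\phi(G)\times G/\on{Ad}_\phi(G))$ along $\on{pt}\leftarrow G/\on{Ad}_\phi(G)\overset{\Delta}\to (G/\on{Ad}_\phi(G))^2$. Symmetrically, the counit is pull-push along $(G/\on{Ad}_\phi(G))^2\overset{\Delta}\leftarrow G/\on{Ad}_\phi(G)\to \on{pt}$. These are exactly the unit $\Delta_\blacktriangle\circ p^!$ and counit $p_\blacktriangle\circ\Delta^!$ defining the Verdier self-duality in \cite[Sect. 3.7]{GRV}.

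The main obstacle is bookkeeping rather than mathematics. One must check that the 2-morphisms $\alpha_1$ and $\alpha_2$, which involve both the adjunction (co)units and the identification $(F_\phi\otimes\on{Id})(\on{u})\simeq(\on{Id}\otimes F_\phi^\vee)(\on{u})$, really produce the diagonal correspondence and not a twist of it. In particular, the inversion used to identify $G/\on{Ad}'_\phi(G)$ with $G/\on{Ad}_\phi(G)$ must cancel against the antipode entering the self-duality of $G\mmod$. I would first do the computation for $\phi=\on{id}$ and $G$ trivial as a sanity check, then track the inversion through the general case. Finally, note that matching the unit alone suffices: a duality datum is determined by its unit, so once the units agree the counits agree automatically.
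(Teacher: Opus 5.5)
Your proposal is correct and is essentially the paper's argument: the paper also proves this by unwinding the duality datum of \lemref{l:Trace dualizable} (the traces of the squares built from $\on{u}_{G\mmod}$, $\on{ev}_{G\mmod}$ and the (co)unit of the $(F_\phi,F_\phi^R)$-adjunction) via \propref{p:map of traces stacks}. That unwinding identifies the resulting unit, once the inversion identifications (cf.\ \eqref{e:inversion} and \secref{sss:dual phi}) are taken into account, with the diagonal correspondence defining Verdier self-duality, and your remark that the unit alone determines the identification of the dual correctly halves the bookkeeping.
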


\sssec{}

Consider now the 1-morphism
$$\bcoinv_G:G\mmod \to \AGCat.$$
It is given by $\CY=\on{pt}$, viewed as a prestack acted on by $G$.

\medskip

Let $\phi$ be an endomorphism of $G$. We have a tautological natural transformation
$$\bcoinv_G\circ F_\phi\overset{\alpha}\to \bcoinv_G:$$
\begin{equation} \label{e:Tr coinv diag}
\xy
(0,0)*+{G\mmod}="A";
(60,0)*+{\AGCat}="C";
(0,-20)*+{G\mmod}="B";
(60,-20)*+{\AGCat,}="D";
{\ar@{->}_{F_\phi} "A";"B"};
{\ar@{->}^{\bcoinv_G} "A";"C"};
{\ar@{->}_{\bcoinv_G} "B";"D"};
{\ar@{->}^{\on{Id}} "C";"D"};
{\ar@{=>}^\alpha "B";"C"};
\endxy
\end{equation}

\medskip

Consider the resulting functor
\begin{equation} \label{e:Tr coinv}
\ul\Shv(G/\on{Ad}_\phi(G))\simeq \Tr(\phi,G\mmod) \overset{\Tr(\alpha,\binv_G)}\longrightarrow \Tr(\on{Id},\AGCat)\simeq \ul\Vect.
\end{equation}

Unwinding, from \propref{p:map of traces stacks}, we obtain:

\begin{cor} \label{c:Tr coinv}
The functor \eqref{e:Tr coinv} identifies with
$$\on{C}^\cdot_\blacktriangle(G/\on{Ad}_\phi(G),-):\ul\Shv(G/\on{Ad}_\phi(G))\to \ul\Vect.$$
\end{cor}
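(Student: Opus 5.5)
We apply \propref{p:map of traces stacks} to the data at hand. We take $G_1=G$ with $\CX_1=G_\phi$, so that $F_{\CX_1}=F_\phi$. We take $G_2=\{1\}$ with $\CX_2=\on{pt}$, so that $F_{\CX_2}=\on{Id}_{\AGCat}$. Finally we take $\CY=\on{pt}$ with the trivial $G$-action, so that $T_\CY=\bcoinv_G$; this $\CY$ is AG tame, so $T_\CY$ is adjointable.

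First we identify the prestack $\CZ$ realizing $\alpha$. We have
$$\CX_1\overset{G}\times \CY\simeq G_\phi/G\simeq \on{pt}$$
(the right action of $G$ on $G_\phi$ is free and transitive) and $\CY\overset{G_2}\times\CX_2=\on{pt}$. So the tautological transformation $\alpha$ is pull-push along the correspondence $\CZ=\on{pt}\to\on{pt}$, with the trivial $(G\times 1)$-action. Before using this, we will check that this correspondence indeed recovers $\alpha$. This amounts to the observation that precomposing the action with $\phi_*$ does not change coinvariants, because the counit $\ul\Shv(G)_{\on{co}}\to\ul\Vect$ is $\phi_*$-invariant.

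Next we compute the auxiliary prestacks. We have
$$\wt\CZ_1=\CZ\underset{(\CX_1\times\CY)/\Delta(G)}\times(\CX_1\times\CY)=\on{pt}\underset{G_\phi/G}\times G_\phi\simeq G,$$
with $G$ acting through $\on{Ad}_\phi$: the diagonal action of $G$ on $\CX_1\times\CY=G_\phi$ is the $\phi$-twisted conjugation. The second prestack is $\wt\CZ_2=\on{pt}$, and $\CZ\times\CY=\on{pt}$. Hence $\CW=\wt\CZ_1=G$, and the correspondence of \propref{p:map of traces stacks} becomes
$$\on{pt}\leftarrow G/\on{Ad}_\phi(G)\overset{\on{id}}\to G/\on{Ad}_\phi(G).$$
Pull-push along it is the $!$-pullback along the identity followed by $\blacktriangle$-pushforward to the point. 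This is exactly $\on{C}^\cdot_\blacktriangle(G/\on{Ad}_\phi(G),-)$.

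The step I expect to be the main obstacle is bookkeeping rather than geometry. We have to make sure the identification $\Tr(F_\phi,G\mmod)\simeq\ul\Shv(G/\on{Ad}_\phi(G))$ of \corref{c:Tr endo} is the same identification used implicitly in \propref{p:map of traces stacks}. Both go through \propref{p:coinv to inv} and \eqref{e:inv as quot}, so this should be automatic. We must also make sure that the right adjoint of $\bcoinv_G$ used in \eqref{e:2-cat Tr}, namely $T_{\on{pt}}$ read in the opposite direction per \secref{sss:adj geom prel}, contributes no extra twist. A possible twist of this kind would be a cohomological shift coming from the identification $\bcoinv\simeq\binv$. If such a shift appears, I would fix it by tracking the unit and counit of \secref{sss:adj geom} explicitly for $\CY=\on{pt}$. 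There the unit is pull-push along $G\times\on{pt}/G\to\on{pt}/G$, and the shifts cancel between unit and counit.
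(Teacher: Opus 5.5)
Your strategy is the paper's own: the corollary is obtained there simply by unwinding \propref{p:map of traces stacks} with $G_2=1$, $\CX_2=\on{pt}$ and $\CY=\on{pt}$. Your final correspondence is also the right one. The identification of $\CZ$, however, is wrong as written.

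\emph{The slip.} In $\CX_1\overset{G}\times\CY$ the copy of $G$ being contracted is the \emph{target} copy of $F_\phi$, the one matched with the source of $T_\CY=\bcoinv_G$. On $G_\phi$ that copy acts by $x\mapsto \phi(g)\cdot x$, not by the free right multiplication. The right-multiplication copy is the source copy; that is exactly what makes $F_\phi(\ul\bC)$ the restriction of $\ul\bC$ along $\phi_*$. So $\CX_1\overset{G}\times\CY$ is the quotient of $G$ by the $\phi$-twisted left action. This is a point when $\phi$ is an automorphism, and for $\phi=\Frob$ it is a point up to an infinitesimal stabilizer invisible to $\ul\Shv$. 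It is not a point in general: for $\phi$ trivial it is $G\times\on{pt}/G$. Correspondingly, your claim that precomposing with $\phi_*$ does not change coinvariants holds only when $\phi_*$ is invertible. In general one only has the tautological map $\bcoinv_G\circ F_\phi\to\bcoinv_G$, and $\CZ=\on{pt}$ is not a valid choice.

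\emph{The repair.} It is immediate and leaves the rest of your computation intact. Take $\CZ:=\CX_1\overset{G}\times\CY$, mapping to itself by the identity and to $\CY\overset{G_2}\times\CX_2=\on{pt}$ by the projection; pull-push along this realizes $\alpha$. Then:
\begin{itemize}
\item $\wt\CZ_1=\CX_1\times\CY=G_\phi$, with the diagonal $\on{Ad}_\phi$-action;
\item $\wt\CZ_2=\CZ=\CZ\times\CY$, hence $\CW=G_\phi$.
\end{itemize}
\propref{p:map of traces stacks} now gives $!$-pullback along the identity of $G/\on{Ad}_\phi(G)$ followed by $\blacktriangle$-pushforward to $\on{pt}$, i.e.\ $\on{C}^\cdot_\blacktriangle(G/\on{Ad}_\phi(G),-)$, for every $\phi$.

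Your worry about a cohomological shift is unnecessary. \propref{p:coinv to inv} gives a canonical isomorphism with no shift; the shift $[-2\cdot\bullet\cdot\dim(G)]$ appears only inside its proof. The adjunction data of \secref{sss:adj geom} are already built into \propref{p:map of traces stacks}, so there is nothing further to track.
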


\ssec{The Grothendieck-Springer and Jacquet functors}

In this subsection we take $G$ to be a connected reductive group.

\sssec{}

Let $P\subset G$ be a parabolic subgroup, and let $M$ be the Levi quotient of $P$. Let us denote by
$$\bind_P:\ul\Shv(M)\to \ul\Shv(G)$$
the 1-morphism given by the $(G\times M)$-scheme $G/N(P)$.

\sssec{}

Consider the corresponding 1-morphism
\begin{equation} \label{e:GrothSpr}
\ul\Shv(M/\on{Ad}(M))\simeq \Tr(\on{Id},\ul\Shv(M)) \overset{\Tr(\on{id},\bind_P)}\longrightarrow \Tr(\on{Id},\ul\Shv(G)) \simeq
\ul\Shv(G/\on{Ad}(G)).
\end{equation}

We claim:

\begin{prop} \label{p:Springer functor}
The 1-morphism \eqref{e:GrothSpr} identifies canonically with the Grothendieck-Springer functor
$$\on{GrothSpr}_P:\ul\Shv(M/\on{Ad}(M))\to \ul\Shv(G/\on{Ad}(G)),$$
given by pull-push along the diagram
$$
\CD
P/\on{Ad}(P) @>{\sfp}>> G/\on{Ad}(G) \\
@V{\sfq}VV \\
M/\on{Ad}(M).
\endCD
$$
\end{prop}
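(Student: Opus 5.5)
We apply \propref{p:map of traces stacks} in the special case $G_1=M$, $G_2=G$, $\CX_1=M$ (the identity bimodule, so $F_{\CX_1}=\on{Id}$), $\CX_2=G$, $\CY=G/N(P)$, and $\alpha_\CZ=\on{id}$. For the latter, $\CZ$ is the common value
$$\CX_1\overset{M}\times \CY\simeq \CY\simeq \CY\overset{G}\times \CX_2,$$
i.e. $\CZ=G/N(P)$ with both maps the tautological isomorphisms. By \corref{c:Tr Id}, the source and target of $\Tr(\on{id},\bind_P)$ are $\ul\Shv(M/\on{Ad}(M))$ and $\ul\Shv(G/\on{Ad}(G))$. \propref{p:map of traces stacks} then says that \eqref{e:GrothSpr} is pull-push along
$$M/\on{Ad}(M)\leftarrow \CW/(M\times G)\to G/\on{Ad}(G).$$
What remains is to identify $\CW/(M\times G)$ with $P/\on{Ad}(P)$, compatibly with the two maps $\sfq$ and $\sfp$.

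Next we compute $\CW$. Since $\CX_1\times\CY\to(\CX_1\times \CY)/\Delta(M)$ is an $M$-torsor, we get $\wt\CZ_1\simeq M\times G/N(P)$, with points $(m,y)$ mapping to $(my,y)\in\CZ\times\CY$. Similarly, $\wt\CZ_2\simeq G\times G/N(P)$, with points $(g,y)$ mapping to $(y,g^{-1}y)$ or $(gy,y)$, depending on conventions. Hence
$$\CW=\{(m,g,y)\in M\times G\times G/N(P)\;:\; g\cdot y = y\cdot m\},$$
with $M\times G$ acting by simultaneous conjugation on $(m,g)$ and translation on $y$.

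Now take the quotient. The action of $G$ on $y\in G/N(P)$ is transitive with stabilizer $N(P)$, and the resulting $M$-action on $G/N(P)$ is free with quotient $G/P$. Fixing $y=1\cdot N(P)$ reduces $G\times M$ to the stabilizer of the base point, which is $P$ embedded via $p\mapsto (p,\bar p)$. The condition $gN(P)=m^{-1}$-twisted base point then forces $g\in P$ with image $m$ (up to inverse conventions) in $M$. Thus
$$\CW/(M\times G)\simeq \{(g,m): g\in P,\ \bar g=m\}/\on{Ad}(P)\simeq P/\on{Ad}(P).$$
The projection to $\CX_1/\Delta(M)$ is $g\mapsto \bar g$, i.e. $\sfq$. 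The projection to $\CX_2/\Delta(G)$ is the inclusion, i.e. $\sfp$.

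The main obstacle is bookkeeping rather than substance. One has to check that the adjoint $T_{\CY}^R$ and the unit and counit used in \eqref{e:2-cat Tr} (via \secref{sss:adj geom}) match the fiber products defining $\wt\CZ_i$ and $\CW$ with consistent left/right conventions. In particular, the inversions coming from identifying $\CX_i$ with the identity bimodule via the antipode must cancel, so that the final map is $g\mapsto \bar g$ and not $g\mapsto \bar g^{-1}$. If a stray inversion appears, we would absorb it using the inversion isomorphism $G/\on{Ad}(G)\simeq G/\on{Ad}(G)$, as in \secref{sss:dual phi}. We also need the pull-push along $\CW/(M\times G)$ to use the same functors ($!$-pullback and $\blacktriangle$-pushforward) as in the definition of $\on{GrothSpr}_P$. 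This holds because the quotient identification above is an isomorphism of stacks, and because $P/\on{Ad}(P)$ is a quotient of a scheme by an algebraic group, hence AG tame.
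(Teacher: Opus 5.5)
Your proposal follows the paper's proof: you specialize \propref{p:map of traces stacks} to the identity endomorphisms with $\CZ=\CY$, and then use that $G\times M$ acts transitively on $G/N(P)$ with stabilizer $P$ (embedded via $p\mapsto (p,\bar p)$) to identify $\CW/(M\times G)$ with $P/\on{Ad}(P)$. This is just the inertia stack of $\CY/(G\times M)\simeq \on{pt}/P$, with its natural maps $\sfq$ and $\sfp$, so with correct bookkeeping no stray inversion arises. Your fallback of absorbing one via $\sigma_G$ would not work anyway: only an inversion on both legs can be absorbed (by inversion on $P/\on{Ad}(P)$), since $\sigma_G\circ\on{GrothSpr}_P\not\simeq\on{GrothSpr}_P$ in general.
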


\begin{proof}

We apply \propref{p:map of traces stacks} to the case when $F_{\CX_i}$ is the identity 1-morphism,
and the maps
$$
\CD
\CZ @>>> \CY\overset{G_2}\times G_2 @>{\simeq}>> \CY \\
@VVV \\
 \CY\overset{G_1}\times G_1 \\
 @V{\simeq}VV \\
\CY
\endCD
$$
are the identity maps.

\medskip

The resulting 1-morphism $\ul\Shv(G_1/\on{Ad}(G_1))\to \ul\Shv(G_2/\on{Ad}(G_2))$ is given by pull-push along the diagram
$$
\CD
\left((G_1\times \CY)\underset{\CY\times \CY}\times (G_2\times \CY)\right)/(G_1\times G_2) @>>> G_2/\on{Ad}(G_2) \\
@VVV \\
G_1/\on{Ad}(G_1).
\endCD
$$

Suppose now that the action of $G_1\times G_2$ on $\CY$ is transitive. Fix a point $y\in \CY$, and
let $P:=\on{Stab}_{G_1\times G_2}(y)$. In this case, we can rewrite the fiber product
$$\left((G_1\times \CY)\underset{\CY\times \CY}\times (G_2\times \CY)\right)/(G_1\times G_2)$$
as $P/\on{Ad}(P)$.

\medskip

We apply this to the case $G_1=M$, $G_2=G$ and $\CY=G/N(P)$, and the assertion of the proposition follows.

\end{proof}

\sssec{Trace and inversion} \label{sss:GrSpr and inversion}

Recall that $G\mmod$ is self-dual as an object of $\tAGCat$. Hence, we obtain a canonical
identification
\begin{multline} \label{e:inversion}
\ul\Shv(G/\on{Ad}(G))\simeq
\Tr(\on{Id},G\mmod)\overset{\text{\eqref{e:Tr of dual}}}\simeq \Tr(\on{Id},(G\mmod)^\vee)\overset{\text{\eqref{e:G mod self-dual}}}\simeq \\
\simeq \Tr(\on{Id},G\mmod)\simeq \ul\Shv(G/\on{Ad}(G)).
\end{multline}

It is easy to see that the map \eqref{e:inversion} is induced by the \emph{inversion} on $G$, to be denoted
$\sigma_G$.

\medskip

Note that with respect to the identifications
$$(G\mmod)^\vee\simeq G\mmod \text{ and } (M\mmod)^\vee\simeq M\mmod,$$
we have
$$\bind_P^\vee\simeq \bjacq_P.$$

Hence, we obtain a commutative diagram
$$
\CD
\ul\Shv(M/\on{Ad}(M)) @>{\on{GrothSpr}_P}>> \ul\Shv(G/\on{Ad}(G)) \\
@V{\sim}VV @VV{\sim}V \\
\Tr(\on{Id},M\mmod) @>{\Tr(\on{Id},\bind_P)}>> \Tr(\on{Id},G\mmod)  \\
@V{\sim}VV @VV{\sim}V \\
\Tr(\on{Id},(M\mmod)^\vee) @>{\Tr(\on{Id},((\bind_P)^R)^\vee)}>> \Tr(\on{Id},(G\mmod)^\vee) \\
@V{\sim}VV @VV{\sim}V \\
\Tr(\on{Id},M\mmod) @>{\Tr(\on{Id},\bind_P)}>> \Tr(\on{Id},G\mmod)  \\
@V{\sim}VV @VV{\sim}V \\
\ul\Shv(M/\on{Ad}(M)) @>{\on{GrothSpr}_P}>> \ul\Shv(G/\on{Ad}(G)),
\endCD
$$
in which the composite vertical arrows are given by $\sigma_M$ and $\sigma_G$, respectively.

\medskip

Thus, we obtain an isomorphism
\begin{equation} \label{e:Groth Spr and inversion}
\on{GrothSpr}_G\simeq \sigma_G\circ \on{GrothSpr}_G \circ \sigma_M.
\end{equation}

Note, however, that the pull-push functor in \propref{p:Springer functor} is also equipped
with an isomorphism
\begin{equation} \label{e:Groth Spr and inversion bis}
\sfp_*\circ \sfq^!\simeq \sigma_G\circ (\sfp_*\circ \sfq^!)\circ \sigma_M.
\end{equation}

It is easy to see that the isomorphisms \eqref{e:Groth Spr and inversion} and
\eqref{e:Groth Spr and inversion bis} are compatible via the identification of \propref{p:Springer functor}.

\sssec{The intertwining operator} \label{sss:intertwining functor}

Let $P_1$ and $P_2$ be two parabolic subgroups, such that the projections
$$M_1\twoheadleftarrow P_1\hookleftarrow P_1\cap P_2 \hookrightarrow P_2 \twoheadrightarrow M_2$$
are surjective. Hence, these projections each identify $M_i$, $i=1,2$ with the Levi quotient of $P_1\cap P_2$.
Denote this Levi quotient by $M$, so that we have
$$M_1\simeq M \simeq M_2.$$

We claim that there is a canonical 1-morphism in $\on{Arr}(\tDGCat)$
\begin{equation} \label{e:Springer action}
(M\mmod \overset{\bind_{P_1}}\to G\mmod)\to
(M\mmod \overset{\bind_{P_2}}\to G\mmod).
\end{equation}

Namely, the 1-morphisms
$$\ul\Shv(M)\to \ul\Shv(M) \text{ and } \ul\Shv(G)\to \ul\Shv(G)$$
are the identity ones,
and the 2-morphism
$$I_{P_1,P_2}:\bind_{P_1}\Rightarrow \bind_{P_2}$$ is given by pull-push along the following diagram
of $(M\times G)$-schemes
\begin{equation} \label{e:intertw diagram}
\CD
G/N(P_1\cap P_2) @>{r_2}>> G/N(P_2) \\
@V{r_1}VV \\
G/N(P_1).
\endCD
\end{equation}

Thus, we indeed obtain the desired 1-morphism \eqref{e:Springer action}.

\sssec{}

One can imagine wanting to compare the Grothendieck-Springer functors for different choices of a parabolic with the same Levi
quotient, using the above 2-morphism $I_{P_1,P_2}$.

\medskip

The problem is, however, that
this 2-morphism is \emph{not adjointable}, and so we cannot apply \secref{sss:3 funct Tr}.
In particular, it does \emph{not} induce a natural transformation
$$\on{GrothSpr}_{P_1}\to \on{GrothSpr}_{P_2}.$$

\medskip

And indeed, it is know that no such (non-zero) natural transformation exists:
this is evidenced by \cite[Theorem E]{Gu}.

\medskip

We will show how to remedy this by replacing the objects $G\mmod$
and $M\mmod$ by their relatives, denoted $G\mmod^{\on{spec.fin}}$ and $M\mmod^{\on{spec.fin}}$,
respectively.

\sssec{}

Let us denote by
$$\bjacq_P:G\mmod\to M\mmod$$
the 1-morphism \emph{also} given by the variety $(G\times M)$-scheme $G/N(P)$.

\sssec{}

Consider the corresponding 1-morphism
\begin{equation} \label{e:Jacquet}
\ul\Shv(G/\on{Ad}(G))\simeq \Tr(\on{Id},\ul\Shv(G)) \overset{\Tr(\on{id},\bjacq_P)}\longrightarrow \Tr(\on{Id},\ul\Shv(M)) \simeq
\ul\Shv(G/\on{Ad}(M)).
\end{equation}

We claim:

\begin{prop} \label{p:Jacquet functor}
The 1-morphism \eqref{e:GrothSpr} identifies canonically with the restriction functor
$$\fr_P:\ul\Shv(G/\on{Ad}(G))\to \ul\Shv(M/\on{Ad}(M)),$$ given by pull-push along the diagram
$$
\CD
P/\on{Ad}(P) @>{\sfq}>> M/\on{Ad}(M) \\
@V{\sfp}VV \\
G/\on{Ad}(G).
\endCD
$$
\end{prop}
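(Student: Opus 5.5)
The plan is to mirror the proof of \propref{p:Springer functor}: apply \propref{p:map of traces stacks}, now with $G_1=G$ and $G_2=M$, and with $F_{\CX_i}$ the identity 1-morphisms. Thus $\CX_1=G$ and $\CX_2=M$, each acted on by left/right multiplication. We take $\CY=G/N(P)$, now regarded as a prestack acted on by $G\times M$ in the order appropriate to a 1-morphism $G\mmod\to M\mmod$, and $\alpha_\CZ$ the identity 2-morphism. As before, $\CY$ is AG tame, so the hypotheses of \propref{p:map of traces stacks} hold. (The statement refers to \eqref{e:GrothSpr}, but the 1-morphism meant is \eqref{e:Jacquet}.)

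Concretely, \propref{p:map of traces stacks} expresses $\Tr(\on{id},\bjacq_P)$ as pull-push along
$$\left((G\times \CY)\underset{\CY\times\CY}\times (M\times \CY)\right)/(G\times M)\to M/\on{Ad}(M)$$
from $G/\on{Ad}(G)$. The key step is the same transitivity computation as in the proof of \propref{p:Springer functor}. The action of $G\times M$ on $\CY$ is transitive, and the stabilizer of the base point $1\cdot N(P)$ is $P$, embedded via $p\mapsto (p,\bar p)$. Hence the fiber product quotient identifies with $P/\on{Ad}(P)$.

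Under this identification the two maps become $\sfp:P/\on{Ad}(P)\to G/\on{Ad}(G)$ and $\sfq:P/\on{Ad}(P)\to M/\on{Ad}(M)$. Now, however, $\sfp$ plays the role of the "vertical" map (pullback) and $\sfq$ that of the "horizontal" map (pushforward). This gives $\fr_P=\sfq_\blacktriangle\circ\sfp^!$.

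The only point needing care, and the one I expect to be the main obstacle, is the bookkeeping of which factor acts on which side. One must check that reversing the roles of $G$ and $M$ in $\CY$ (versus $\bind_P$) really swaps the legs of the correspondence rather than, say, introducing an inversion. Here I would use \secref{sss:adj geom prel} and the description $\bind_P^\vee\simeq\bjacq_P$ from \secref{sss:GrSpr and inversion}: any stray inversion $\sigma_G,\sigma_M$ is absorbed by the canonical isomorphism $\sigma$-compatibility of the correspondence $P/\on{Ad}(P)$, as in \eqref{e:Groth Spr and inversion bis}. Alternatively, one can bypass the bookkeeping by noting that $\Tr(\on{id},\bjacq_P)=\Tr(\on{id},\bind_P^\vee)$ is, via \eqref{e:Tr of dual} and \secref{sss:trace and duality}, the dual of $\Tr(\on{id},\bind_P)=\on{GrothSpr}_P$ up to $\sigma$'s. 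By Verdier self-duality (\corref{c:Verdier}), the dual of $\sfp_\blacktriangle\sfq^!$ is $\sfq_\blacktriangle\sfp^!$.
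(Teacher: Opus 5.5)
Your main argument is correct and is exactly the paper's proof. The paper also applies \propref{p:map of traces stacks} with $G_1=G$, $G_2=M$, $\CY=G/N(P)$ and trivial $\CX_i$, and reads the transitivity computation from the proof of \propref{p:Springer functor} backwards, with the two legs of $P/\on{Ad}(P)$ exchanging roles.

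Your "alternative" via \eqref{e:Tr of dual} and \secref{sss:trace and duality} is not justified as stated. Those results compare traces on dual objects, and here they only give back the $\sigma$-symmetry of \secref{sss:GrSpr and inversion}, not that $\Tr(\on{id},\bjacq_P)$ is the dual 1-morphism of $\on{GrothSpr}_P$. You do not need that route, however.
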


\begin{proof}

Obtained by reading the proof of \propref{p:Springer functor} backwards.

\end{proof}

\sssec{}

Recall now that according to \secref{sss:prince series}, the 1-morphism $\bind_P$ is 2-adjointable. Moreover,
its right adjoint is given by $\bjacq_P$. Hence, by
\propref{p:adj Tr}, we obtain that the functor \eqref{e:Jacquet} is the right adjoint of \eqref{e:GrothSpr}.

\medskip

However, unwinding, we obtain:
\begin{cor} \label{c:ind restr}
Under the identifications of Propositions \ref{p:Springer functor} and \ref{p:Jacquet functor}, the above
adjunction corresponds to the usual $(\on{GrothSpr}_P,\fr_P)$-adjunction, given by\footnote{In the formula below, we have $\sfq^*\simeq \sfq^!$ because
the morphism $\sfq$ is smooth of relative dimension $0$.}
$$(\sfp_*)^R \simeq (\sfp^!) \text{ and } (\sfq^!)^R\simeq (\sfq^*)^R\simeq \sfq_*.$$
\end{cor}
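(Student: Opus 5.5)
The plan is to make explicit both the unit and the counit of the abstract adjunction from \propref{p:adj Tr}, and then match them with the geometric unit and counit coming from $\sfp_*\dashv \sfp^!$ and $\sfq^!\simeq\sfq^*\dashv \sfq_*$. The abstract construction applies with $F_1=\on{Id}_{M\mmod}$, $F_2=\on{Id}_{G\mmod}$, $t=\bind_P$ and $\alpha=\on{id}$, so that $(\alpha^{\on{BC}})^R=\on{id}$. Its unit was built from two ingredients. The first is the unit $\on{Id}_{M\mmod}\to \bjacq_P\circ\bind_P$ of the $(\bind_P,\bjacq_P)$-adjunction. The second is the 3-morphism $\gamma$ of \eqref{e:3 adj}, which in our situation is the tautological isomorphism. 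By the functoriality \eqref{e:3-cat Tr funct} of the trace, the abstract unit is therefore the 2-morphism
$$\on{Id}_{\ul\Shv(M/\on{Ad}(M))}=\Tr(\on{id},\on{Id}_{M\mmod})\to \Tr(\on{id},\bjacq_P\circ\bind_P)\simeq \fr_P\circ\on{GrothSpr}_P.$$
Concretely, one applies $\Tr(-,-)$ in the sense of \eqref{e:induced map on Tr explained particular} to the 2-morphism $\on{unit}$. The counit is obtained in the same way from the counit $\bind_P\circ\bjacq_P\to\on{Id}_{G\mmod}$.

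The geometric input comes from \secref{sss:adj geom}. With $\CY=G/N(P)$, $G_1=M$ and $G_2=G$, the unit $\on{Id}_{M\mmod}\to\bjacq_P\circ\bind_P$ is pull-push along
$$M\overset{v}\leftarrow M\times (\CY/G)\overset{h}\to (\CY\times\CY)/G,$$
and $\CY/G=\on{pt}/N(P)$. I will then describe $\Tr$ of this 2-morphism as a morphism between pull-push kernels on $M/\on{Ad}(M)$. This uses a relative version of \propref{p:map of traces stacks}: traces of 2-morphisms given by correspondences of kernels are computed by taking quotients by the diagonal action. After passing to the diagonal quotient, the kernel of $\fr_P\circ\on{GrothSpr}_P$ becomes the stack $P/\on{Ad}(P)\times_{G/\on{Ad}(G)}P/\on{Ad}(P)$ over $M/\on{Ad}(M)\times M/\on{Ad}(M)$. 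The identity kernel becomes $M/\on{Ad}(M)$. The geometric unit of the composite adjunction is pull-push along
$$M/\on{Ad}(M)\leftarrow P/\on{Ad}(P)\to P/\on{Ad}(P)\times_{G/\on{Ad}(G)}P/\on{Ad}(P),$$
where the second map is the diagonal. It is built from $\on{Id}\to\sfq_*\sfq^*$, the diagonal unit $\on{Id}\to\sfp^!\sfp_*$, and base change. I expect the trace of the diagram $M\leftarrow M\times\on{pt}/N(P)\to(\CY\times\CY)/G$ to be exactly this correspondence. The reason is that quotienting $M\times \on{pt}/N(P)$ by the diagonal twisted action of $M\times M$ yields $P/\on{Ad}(P)$, by the same stabilizer computation as in the proof of \propref{p:Springer functor}.

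The main obstacle is the last identification: showing that the resulting natural transformation is the standard one, and not merely a map between the same functors. Both sides are built from unit maps of (co)homological adjunctions, $\on{Id}\to\sfq_*\sfq^*$ and $\on{Id}\to \Delta^!\Delta_*$-type maps. These are characterized by compatibility with base change, and the trace construction \eqref{e:2-cat Tr} is built from exactly such units and from $u,\on{ev}$ for the self-duality of $\ul\Shv$ of stacks, as described in \cite[Sect. 3.7]{GRV}. I will therefore reduce the question to showing that the trace of the unit of the adjunction $f_\blacktriangle\dashv f^!$ for a mock-proper map, and of the adjunction $f^!\dashv f_\blacktriangle$ for a mock-smooth map, is again the unit for the induced map on diagonal quotients. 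I will check this using base change for pull-push in $\AGCat$. The counit is handled symmetrically, and then the footnote's identification $\sfq^*\simeq\sfq^!$ finishes the match.
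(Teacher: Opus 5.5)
Your proposal follows the paper's route. The paper states the corollary as the result of unwinding the unit (with trivial $\gamma$) from the proof of \propref{p:adj Tr} through the kernels of \secref{sss:adj geom} and \propref{p:map of traces stacks}, and your identification of the trace of $M\leftarrow M\times \on{pt}/N(P)\to N(P)\backslash G/N(P)$ with $M/\on{Ad}(M)\overset{\sfq}\leftarrow P/\on{Ad}(P)\to P/\on{Ad}(P)\underset{G/\on{Ad}(G)}\times P/\on{Ad}(P)$ is exactly that unwinding; matching the unit alone already suffices, since it determines the counit.

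One correction for your final check: for a mock-smooth map, the right adjoint of $f^!$ is in general not $f_\blacktriangle$. For instance, $v:M\times \on{pt}/N(P)\to M$ is smooth of relative dimension $-\dim(N(P))$, so $(v^!)^R$ differs from $v_*$ by $[2\dim(N(P))]$. You must carry this shift through the computation; only for $\sfq$, which has relative dimension $0$, is the right adjoint of $\sfq^!$ the plain $\sfq_*$.
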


\ssec{The Deligne-Lusztig functor} \label{ss:DL}

We continue to assume that $G$ is a connected reductive group.

\sssec{}

Let $P$ be a parabolic in $G$, and let $M$ be its Levi quotient. Choose a Levi splitting
$$M\hookrightarrow P,$$
and let us assume that $M$, viewed as a subgroup of $G$, is defined over $\BF_q$.

\medskip

Note that we are \emph{not} assuming that $P$ is defined over $\BF_q$.
Let $P'$ be the preimage of $P$ under the geometric Frobenius endomorphism $\Frob_G$ of $G$.

\medskip

Let us observe that the parabolic subgroups $P$ and $P'$ automatically satisfy
the conditions of \secref{sss:intertwining functor}. Indeed,
$$M\subset P\cap P'.$$

\medskip

Note also that the geometric Frobenius $\Frob_M$ on $M$ can be identified with
$$M=P/N(P)\simeq P\cap P'/N(P\cap P')\simeq P'/N(P') \overset{\Frob_G}\longrightarrow  P/N(P) =M.$$

\sssec{} \label{sss:constr DL}

We now claim that geometric Frobenius on $G$ defines a 1-endomorphism of the object
\begin{equation} \label{e:pre DL}
(M\mmod \overset{\bind_{P}}\to G\mmod)\in \on{Arr}(\tAGCat),
\end{equation}
to be denoted $\Frob_{\bind_P}$.

\medskip

Namely, the 1-endomorphism $\Frob_{\bind_P}$ is the composition of
\begin{equation} \label{e:pre DL 1}
(M\mmod \overset{\bind_{P}}\to G\mmod) \to (M\mmod \overset{\bind_{P'}}\to G\mmod)
\end{equation}
and
\begin{equation} \label{e:pre DL 2}
(M\mmod \overset{\bind_{P'}}\to G\mmod) \to
(M\mmod \overset{\bind_{P}}\to G\mmod),
\end{equation}
where:

\begin{itemize}

\item The 1-morphism \eqref{e:pre DL 1} is given by the action of the geometric Frobenius on $G$, i.e., by the diagram
$$
\xy
(0,0)*+{M\mmod}="A";
(60,0)*+{G\mmod}="C";
(0,-20)*+{M\mmod}="B";
(60,-20)*+{G\mmod,}="D";
{\ar@{->}_{\Frob_M} "A";"B"};
{\ar@{->}^{\bind_P} "A";"C"};
{\ar@{->}_{\bind_{P'}} "B";"D"};
{\ar@{->}^{\Frob_G} "C";"D"};
{\ar@{=>} "B";"C"};
\endxy
$$
where the 2-morphism is given by pushforward along
$$G/N(P')\overset{M,\Frob_M}\times M\to G/N(P), \quad (g,m)\mapsto \Frob(g)\cdot m;$$

\medskip

\item The 1-morphism \eqref{e:pre DL 2}  is given by the 2-morphism $I_{P,P'}$.

\end{itemize}

\sssec{}

In other words, in the datum of $\Frob_{\bind_P}$, the 1-endomorphisms
$$M\mmod\to M\mmod \text{ and }
G\mmod \to G\mmod$$
are given by $\Frob_M$ and $\Frob_G$, respectively.

\medskip

The 2-morphism
\begin{equation} \label{e:2-morph DL}
\bind_{P}\circ \Frob_M\Rightarrow \Frob_G\circ \bind_{P}
\end{equation}
is explicitly described as follows:

\medskip

The left-hand side in \eqref{e:2-morph DL} is given by the $(G\times M)$-scheme
$$(G/N(P)\times M)/M,$$
where the quotient is taken with respect to the action given by
$$m\cdot (g,m_1)=(g\cdot m^{-1},\Frob_M(m)\cdot m_1),$$\
with $G$ and $M$ acting on the left and right naturally.

\medskip

The right-hand side in \eqref{e:2-morph DL} is given by $G/N(P)$, where the $G$-action
is given by left multiplication precomposed with $\Frob_G$, and $M$ acting naturally on the right.

\medskip

The 2-morphism in \eqref{e:2-morph DL} is given by the diagram
$$
\CD
(G/N(P\cap P') \times M)/M @>>> (G/N(P') \times M)/M @>{(g,m)\mapsto \Frob_{G/N(P)}(g)\cdot m}>> G/N(P)\\
@VVV \\
(G/N(P) \times M)/M,
\endCD
$$
where:

\begin{itemize}

\item The action of $G$ is precomposed with $\Frob_G$ on the right-most term, and is the natural
action by left multiplication everywhere else;

\medskip

\item The action of $M$ is by right multiplication.

\end{itemize}

\sssec{}

Taking the trace of the above 1-endomorphism $\Frob_{\bind_P}$ of \eqref{e:pre DL}, we obtain an object of $\Omega(\on{Arr}(\tAGCat))$, i.e., a map
\begin{equation} \label{e:DL functor}
\Tr(\Frob_M,M\mmod)\to \Tr(\Frob_G,G\mmod).
\end{equation}

Applying \corref{c:Tr Frob}, and using the fact that $\bi:\DGCat\to \AGCat$ is fully faithful, we obtain a functor that we denote
$$\on{DL}_P:\Rep(M(\BF_q))\to \Rep(G(\BF_q)).$$

\sssec{}

Let us describe the functor $\on{DL}_P$ explicitly. Applying \propref{p:map of traces stacks}, we obtain that it is given by the correspondence
$$
\CD
\CW_P/(G\times M) @>>> G/\on{Ad}_{\Frob_G}(G)  @>{\sim}>> \on{pt}/G(\BF_q) \\
@VVV \\
M/\on{Ad}_{\Frob_M}(M) \\
@V{\sim}VV \\
\on{pt}/M(\BF_q),
\endCD
$$
where:

\begin{itemize}

\item $\CW_P$ is the variety
$$\{g\in G,\,m\in M,\, x\in G/N(P\cap P') \,|\,  r(x)= \Frob_G(g)\cdot \Frob_{G/N(P)}(r'(x)) \cdot m^{-1}\},$$
where
$$G/N(P) \overset{r}\leftarrow G/N(P\cap P') \overset{r'}\to G/N(P');$$

\item The $G$-action on $\CW_P$ is given by
$$g_1\cdot (g,m,x) = (g_1\cdot g\cdot \Frob(g_1)^{-1}, m, \Frob_G(g_1)\cdot x);$$

\item The $M$-action on $\CW_P$ is given by
$$m_1 \cdot (g,m,x)= (g, m_1\cdot m\cdot \Frob(m_1)^{-1}, x\cdot m_1^{-1}).$$

\end{itemize}

\sssec{}

Note that the fiber product
$$\on{pt} \underset{\on{pt}/G(\BF_q)}\times \CW_P/(G\times M)
\simeq \on{pt}\underset{G}\times (\CW_P/M)$$ is the usual parabolic Deligne-Lusztig variety
\begin{multline*}
\mathcal{DL}_P:=\{m\in M,\, x\in G/N(P\cap P')\, |\, r(x)=\Frob_{G/N(P)}(r'(x)) \cdot m^{-1}\}/M \simeq \\
\simeq \{x\in G/N(P\cap P')\,|\, r(x)=\Frob_{G/N(P)}(r'(x))\}/M(\BF_q),
\end{multline*}
which is equipped with its natural projection to $\on{pt}/M(\BF_q)$.

\medskip

Note also that the projection
$$G/N(P\cap P')\to G/(P\cap P')$$
defines an isomorphism
$$\mathcal{DL}_P\overset{\sim}\to \{y\in G/(P\cap P')\,|\, \ol{r}(y)=\Frob_{G/P}(\ol{r}'(y))\}, \quad G/P \overset{\ol{r}}\leftarrow G/(P\cap P') \overset{\ol{r}'}\to G/P'.$$

\sssec{}

In other words, the functor $\on{DL}_P$ is explicitly given by pull-push along the diagram
$$
\CD
\mathcal{DL}_P/G(\BF_q) @>>> \on{pt}/G(\BF_q) \\
@VVV \\
\on{pt}/M(\BF_q).
\endCD
$$

In other words, it sends $\rho\in \Rep(M(\BF_q))$ to
$$(\on{C}^\cdot(\mathcal{DL}_P,\omega_{\mathcal{DL}_P})\otimes \rho)^{M(\BF_q)}.$$

\begin{rem}

Note that what we call the Deligne-Lusztig functor differs from the usual convention by duality: the latter involves
$$\on{C}_c^\cdot(\mathcal{DL}_P)$$ rather than
$$\on{C}^\cdot(\mathcal{DL}_P,\omega_{\mathcal{DL}_P}).$$

\end{rem}

\sssec{}

We now consider the object \eqref{e:pre DL} as equipped with two 1-endomorphisms: one is $\Frob_{\bind_P}$ constructed
above, and the other is $\on{Id}$. They are equipped with a natural datum of commutation (see \secref{sss:setting for iterated trace}): indeed
any 1-endomorphism is equipped with a datum of commutation against the identity endomorphisms.

\medskip

We would have liked to apply \corref{c:2-dual} and conclude that certain two maps
$$\on{Funct}(M(\BF_q)/\on{Ad}(M(\BF_q)))\to \on{Funct}(G(\BF_q)/\on{Ad}(G(\BF_q)))$$
are equal.

\medskip

However, we cannot do this yet: namely, the  endomorphism $\Frob_{\bind_P}$ is \emph{not} adjointable
(see \secref{sss:cond for adj} for what this means). Namely, the 2-morphism \eqref{e:2-morph DL}
is not adjointable.

\medskip

In the next section we will remedy this by replacing $G\mmod$
and $M\mmod$ by the objects $G\mmod^{\on{spec.fin}}$ and $M\mmod^{\on{spec.fin}}$,
mentioned above.

\begin{rem}

Recall that in the case of the identity endomorphism, the right adjoint $\bjacq_P$ of $\bind_P$ induces a right
adjoint $\fr_P$ of the Grothendieck-Springer functor $\on{GrothSpr}_P$.

\medskip

We would have liked to produce a right adjoint to the Deligne-Lusztig functor, but at this stage we cannot do that:
we cannot apply \propref{p:adj Tr}, because the corresponding 2-morphism $\alpha^{\on{BC}}$ (see \secref{sss:BC 2 morph}; here
$\alpha$ is \eqref{e:2-morph DL}) does \emph{not} admit a right adjoint.

\medskip

This will also be remedied in the next section.

\end{rem}

\begin{rem} \label{r:DL parab}

The failure of the independence of the Deligne-Lusztig functor of $P$ happens\footnote{And such independence is obviously false:
for $M=T$ and the trivial character, different choices of the Borel lead to DL-varieties of different dimensions.}
for an even simpler reason (and cannot be remedied
by passing to the $T$-monodromic subcategories): the reason is that for two parabolics $P_1$ and $P_2$,
the diagram of natural transformations
$$
\CD
\bind_{P_1}\circ \Frob_M @>{I_{P_1,P'_1}}>> \bind_{P'_1}\circ \Frob_M  @>{\sim}>> \Frob_G\circ \bind_{P_1}  \\
@V{I_{P_1,P_2}}VV @VV{I_{P'_1,P'_2}}V  @VV{I_{P_1,P_2}}V \\
\bind_{P_2}\circ \Frob_M @>{I_{P_2,P'_2}}>> \bind_{P'_2}\circ \Frob_M  @>{\sim}>> \Frob_G\circ \bind_{P_2}
\endCD
$$
does \emph{not} commute up to any 3-isomorphism.  In fact, the diagram
\begin{equation} \label{e:bad intertwiners}
\CD
\bind_{P_1} @>{I_{P_1,P'_1}}>> \bind_{P'_1} \\
@V{I_{P_1,P_2}}VV  @VV{I_{P'_1,P'_2}}V \\
\bind_{P_2} @>{I_{P_2,P'_2}}>> \bind_{P'_2}
\endCD
\end{equation}
does \emph{not} commute up to any 3-isomorphism.

\end{rem}

\begin{rem}

That said, one can define a variant of the Deligne-Lusztig functor using \emph{any} 2-morphism
$$\wt{I}_{P,P'}:\bind_P\to \bind_{P'}$$
in $\tAGCat$, i.e., a 1-morphism
$$\wt{I}_{P,P'}:\ul\Shv(G/N(P))\to \ul\Shv(G/N(P'))$$
in $(G\times M)\mmod$.

\medskip

One can replace the diagram \eqref{e:bad intertwiners} by a commutative diagram
\begin{equation} \label{e:bad intertwiners corr}
\CD
\bind^{\on{spec.fin}}_{P'_1} @>{I^{\on{spec.fin}}_{P_1,P'_1}}>> \bind^{\on{spec.fin}}_{P'_1} \\
@V{I^{\on{spec.fin}}_{P_1,P_2}}VV  @VV{I^{\on{spec.fin}}_{P'_1,P'_2}}V \\
\bind^{\on{spec.fin}}_{P_2} @>{\wt{I}^{\on{spec.fin}}_{P_2,P'_2}}>> \bind^{\on{spec.fin}}_{P'_2},
\endCD
\end{equation}
of 1-morphisms \emph{between the spectrally finite subcategories} of the two sides for a suitably defined 1-morphism
$\wt{I}^{\on{spec.fin}}_{P_2,P'_2}$.

\medskip

This will allow us to conclude that
$$\on{DL}_{P_1}\simeq \wt{\on{DL}}_{P_2},$$
where $\wt{\on{DL}}_{P_2}$ is a version of the Deligne-Lusztig functor constructed using $\wt{I}^{\on{spec.fin}}_{P_2,P'_2}$.

\end{rem}

\ssec{The class of an object}

\sssec{}

Let $\phi$ be an endomorphism of $G$. Let $\ul\bC$ be an object of $G\mmod$, equipped with a 1-morphism
\begin{equation} \label{e:cat weak equiv}
\phi_{\ul\bC}:\ul\bC\to F_\phi(\ul\bC).
\end{equation}

In other words, $\phi_{\ul\bC}$ is an endomorphism of $\ul\bC$ as an object of $\AGCat$ that intertwines the action of $G$
with one obtained by precomposition with $\phi$.

\sssec{} \label{sss:class of obj}

We can think of this data as
\begin{equation} \label{e:class of an object}
\xy
(0,0)*+{\AGCat}="A";
(60,0)*+{G\mmod}="C";
(0,-20)*+{\AGCat}="B";
(60,-20)*+{G\mmod,}="D";
{\ar@{->}_{\on{Id}} "A";"B"};
{\ar@{->}^{\ul\bC} "A";"C"};
{\ar@{->}_{\ul\bC} "B";"D"};
{\ar@{->}^{F_\phi} "C";"D"};
{\ar@{=>}^{\phi_{\ul\bC}} "B";"C"};
\endxy
\end{equation}

\medskip

Assume that $\ul\bC$ is dualizable. Then, using \secref{sss:when adjntbl} and \eqref{e:2-categ trace},
we obtain that \eqref{e:class of an object} induces a map
$$\ul\Vect=\Tr(\on{Id},\AGCat) \to \Tr(F_\phi,G\mmod)\simeq \ul\Shv(G/\on{Ad}_\phi(G)).$$

I.e., we obtain an object of $\Shv(G/\on{Ad}_\phi(G))$, which we will denote by
$$\on{cl}(\ul\bC,\phi_{\ul\bC}).$$

\sssec{}

Consider the 1-morphism
$$\boblv_G:G\mmod\to \AGCat.$$

Given a point $g\in G$, we endow $\boblv_G$ with a natural transformation
\begin{equation} \label{e:class of an object fiber}
\xy
(0,0)*+{G\mmod}="A";
(60,0)*+{\AGCat}="C";
(0,-20)*+{G\mmod}="B";
(60,-20)*+{\AGCat,}="D";
{\ar@{->}_{F_\phi} "A";"B"};
{\ar@{->}^{\boblv_G} "A";"C"};
{\ar@{->}_{\boblv_G} "B";"D"};
{\ar@{->}^{\on{Id}} "C";"D"};
{\ar@{=>}^{\alpha_g} "B";"C"};
\endxy
\end{equation}
given by the action of $g$.

\medskip

From \propref{p:map of traces stacks} we obtain that the resulting functor
$$\ul\Shv(G/\on{Ad}_\phi(G))\simeq \Tr(F_\phi,G\mmod) \overset{\Tr(\alpha_g,\boblv_G)}\longrightarrow \Tr(\on{Id},\AGCat)\simeq \ul\Vect$$
is given by $\iota_g^!$, where $\iota_g$ is the map
$$\on{pt} \to G/\on{Ad}_\phi(G)$$
corresponding to $g$.

\sssec{}

Concatenating \eqref{e:class of an object} and \eqref{e:class of an object fiber}, we obtain:

\begin{cor} \label{c:fiber of class}
We have a canonical isomorphism (in $\Vect$):
$$\iota_g^!(\on{cl}(\ul\bC,\phi_{\ul\bC}))\simeq \Tr(g\circ \phi_{\ul\bC}).$$
\end{cor}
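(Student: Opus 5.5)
The plan is to use the functoriality of the 2-categorical trace under composition of adjointable 1-morphisms in $\tAGCat$; see \eqref{e:2-cat Tr funct}, i.e., the functor $\bL(\bO)\to\Omega(\bO)$ for $\bO=\tAGCat$. The squares \eqref{e:class of an object} and \eqref{e:class of an object fiber} are composable 1-morphisms in $\bL(\tAGCat)$:
\[
(\AGCat,\on{Id})\to (G\mmod,F_\phi)\to (\AGCat,\on{Id}).
\]
The horizontal arrows $\ul\bC:\AGCat\to G\mmod$ and $\boblv_G$ are adjointable. For the first this holds by \secref{sss:when adjntbl}, since $\ul\bC$ is dualizable. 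For the second it holds because $\boblv_G$ has a right adjoint. Functoriality then identifies the composite
\[
\ul\Vect\to \ul\Shv(G/\on{Ad}_\phi(G))\overset{\iota_g^!}\to \ul\Vect
\]
with $\Tr(\beta,\boblv_G\circ \ul\bC)$. Here $\beta$ is the pasted 2-morphism
\[
\boblv_G\circ\ul\bC\overset{\phi_{\ul\bC}}\to \boblv_G\circ F_\phi\circ\ul\bC\overset{\alpha_g}\to \boblv_G\circ\ul\bC .
\]
The left-hand side evaluated on $\ol\BQ_\ell$ is $\iota_g^!(\on{cl}(\ul\bC,\phi_{\ul\bC}))$, using the identification of $\Tr(\alpha_g,\boblv_G)$ with $\iota_g^!$ established just above.

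Next I identify the right-hand side. The composite $\boblv_G\circ\ul\bC:\AGCat\to\AGCat$ is the 1-morphism given by the underlying object $\ul\bC\in\AGCat$, viewed as an object of $\Omega(\tAGCat)\simeq\AGCat$. The pasted 2-morphism $\beta$ is the endomorphism $g\circ\phi_{\ul\bC}$ of $\ul\bC$. Concretely, $\phi_{\ul\bC}$ is the underlying endomorphism, and $\alpha_g$ postcomposes with the action of $g$, which gives the identification $F_\phi(\ul\bC)\to\ul\bC$ at the level of underlying objects.

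It then remains to prove a general fact. For $\bo=\one_\bO$ and $F=\on{Id}$, the 2-categorical trace $\Tr(\beta,t)$ of a dualizable $t\in\Omega(\bO)$ with endomorphism $\beta$ equals the 1-categorical trace $\Tr(\beta,t)\in\Omega^2(\bO)$. Here dualizability is the same as adjointability, since composition and tensor product in $\Omega(\bO)$ agree. I will check this from the explicit formula \eqref{e:2-cat Tr}. With $\bo_i=\one$, the unit and counit of the $(t,t^R)$-adjunction become the duality data of $t$ in $\Omega(\bO)$, and the cyclicity step becomes trivial. The composition then reads $\on{id}\to t^R\circ t\overset{\beta}\to t\circ t^R\to\on{id}$, which is the usual trace. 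Applying this in $\AGCat$ and then evaluating on $\on{pt}$, i.e.\ passing to $\Vect$ via $\ul\Vect(\on{pt})$, gives $\Tr(g\circ\phi_{\ul\bC})$.

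The main obstacle is bookkeeping rather than content. The concatenation must be a legitimate composition in $\bL(\tAGCat)$, with the source and target endomorphisms matching ($\on{Id}$ on $\AGCat$ and $F_\phi$ on $G\mmod$). I must also verify that the pasted 2-morphism really is $g\circ\phi_{\ul\bC}$ under $\boblv_G\circ F_\phi\simeq\boblv_G$, rather than, say, $\phi_{\ul\bC}\circ g$ or a version twisted by $\phi(g)$. I would settle this by unwinding $\alpha_g$ as the action of $g$ on $\boblv_G(F_\phi(\ul\bC))=\ul\bC$.
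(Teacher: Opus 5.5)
Your proposal is correct and is the paper's own argument: the corollary is obtained by concatenating the squares \eqref{e:class of an object} and \eqref{e:class of an object fiber} in $\bL(\tAGCat)$, using compatibility of the trace with composition of adjointable 1-morphisms and the identification $\Tr(\alpha_g,\boblv_G)\simeq \iota_g^!$ from \propref{p:map of traces stacks}. The paper leaves implicit the step you spell out, namely that for $\bo_1=\bo_2=\one$ the 2-categorical trace reduces to the ordinary trace in $\Omega(\tAGCat)\simeq\AGCat$; your worry about $g\circ\phi_{\ul\bC}$ versus $\phi_{\ul\bC}\circ g$ or $\phi(g)\circ\phi_{\ul\bC}$ is harmless, since these traces agree by cyclicity and the intertwining property of $\phi_{\ul\bC}$.
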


\sssec{}

Let us now concatenate \eqref{e:class of an object} with \eqref{e:Tr coinv diag}. We obtain:

\begin{cor} \label{c:sections of class}
There is a canonical isomorphism
$$\on{C}^\cdot_\blacktriangle(G/\on{Ad}_\phi(G),\on{cl}(\ul\bC,\phi_{\ul\bC})) \simeq \Tr(\phi_{\ul\bC},\bcoinv_G(\ul\bC)),$$
where by a slight abuse of notation we denote by the same symbol $\phi_{\ul\bC}$ the endofunctor of $\bcoinv_G(\ul\bC)$
induced by $\phi_{\ul\bC}$.
\end{cor}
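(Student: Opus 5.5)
The plan is to obtain the isomorphism from functoriality of the 2-categorical trace (\secref{sss:L for 2}, \eqref{e:2-cat Tr funct}) applied to a composite of two squares.

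\emph{First square.} This is \eqref{e:class of an object}, from $(\AGCat,\on{Id})$ to $(G\mmod,F_\phi)$, with horizontal 1-morphism $\ul\bC:\AGCat\to G\mmod$. This 1-morphism is adjointable because $\ul\bC$ is dualizable (\secref{sss:when adjntbl}).

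\emph{Second square.} This is \eqref{e:Tr coinv diag}, from $(G\mmod,F_\phi)$ to $(\AGCat,\on{Id})$, with horizontal 1-morphism $\bcoinv_G$. It is adjointable by \secref{sss:adj geom prel} applied to $\CY=\on{pt}$.

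\emph{Applying the trace.} Composites of adjointable 1-morphisms are adjointable, and $\Tr$ is a functor $\bL(\tAGCat)\to\Omega(\tAGCat)$. Hence $\Tr$ of the pasted square equals the composite of the two traces:
$$\ul\Vect\to \ul\Shv(G/\on{Ad}_\phi(G))\to \ul\Vect.$$
By definition of $\on{cl}$ and by \corref{c:Tr coinv}, this composite sends $\ol\BQ_\ell$ to $\on{C}^\cdot_\blacktriangle(G/\on{Ad}_\phi(G),\on{cl}(\ul\bC,\phi_{\ul\bC}))$.

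\emph{Identifying the pasted square.} The pasted square is a square from $(\AGCat,\on{Id})$ to itself.
\begin{itemize}
\item Its horizontal 1-morphism is $\bcoinv_G\circ\ul\bC$, i.e.\ the object $\bcoinv_G(\ul\bC)\in\AGCat$. This uses the composition rule of \secref{sss:bimodules as morphisms} with $\bM_{1,3}=\bcoinv_G(\ul\bC)$.
\item Its 2-morphism is the endofunctor of $\bcoinv_G(\ul\bC)$ induced by $\phi_{\ul\bC}$, followed by $\alpha$.
\item The trace of a square $(\AGCat,\on{Id})\to(\AGCat,\on{Id})$ with horizontal arrow an object $\bD$ and 2-morphism an endomorphism $\psi$ is simply $\Tr(\psi,\bD)$. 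To see this, compute \eqref{e:2-cat Tr} with $\bo_i=\one$ and $F_i=\on{Id}$: the unit and counit of $\one$ are trivial, so one recovers \eqref{e:trace functor expl} for $\bD$.
\end{itemize}
This gives the claimed isomorphism with $\Tr(\phi_{\ul\bC},\bcoinv_G(\ul\bC))$.

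\emph{Main obstacle.} The real check is that the 2-morphism obtained by pasting $\phi_{\ul\bC}$ with $\alpha$ is the endofunctor of $\bcoinv_G(\ul\bC)$ induced by $\phi_{\ul\bC}$. Concretely, write $\bcoinv_G(F_\phi(\ul\bC))\simeq\bcoinv_G(\ul\bC)$ via $\alpha$ and verify that $\alpha$ is the tautological identification of coinvariants under precomposition of the action with $\phi$.

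I would do this on the geometric-realization presentation $|\ul\bC(X\times G^\bullet)|$. There $\alpha$ is induced by $\phi$ on each $G^n$ factor, and $\phi_{\ul\bC}$ is compatible termwise with it. So the composite is the map induced on colimits by $\phi_{\ul\bC}$, which is by definition the endofunctor in the statement, and the square commutes.
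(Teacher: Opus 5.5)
Your proposal is correct and follows the paper's argument: the paper obtains this corollary by concatenating the square \eqref{e:class of an object} with \eqref{e:Tr coinv diag}, applying functoriality of the 2-categorical trace, and invoking \corref{c:Tr coinv}. The identification you flag as the main obstacle, that the pasted 2-morphism is the endofunctor of $\bcoinv_G(\ul\bC)$ induced by $\phi_{\ul\bC}$, is essentially definitional (it is the ``slight abuse of notation'' in the statement), so that extra check is harmless but not really needed.
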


\sssec{}

Let $(\ul\bC_1,\phi_{\ul\bC_1})$ and $(\ul\bC_2,\phi_{\ul\bC_2})$ be a pair of objects as above.
It is clear that
\begin{equation} \label{e:product classes}
\on{cl}(\ul\bC_1\otimes \ul\bC_2,\phi_{\ul\bC_1}\otimes \phi_{\ul\bC_1})\simeq
\on{cl}(\ul\bC_1,\phi_{\ul\bC_1})\sotimes \on{cl}(\ul\bC_2,\phi_{\ul\bC_2}).
\end{equation}

In particular, we obtain:

\begin{cor} \label{c:sections of product class}
There is a canonical isomorphism
$$\on{C}^\cdot_\blacktriangle(G/\on{Ad}_\phi(G),\on{cl}(\ul\bC_1,\phi_{\ul\bC_1})\sotimes \on{cl}(\ul\bC_2,\phi_{\ul\bC_2})) \simeq
\Tr(\phi_{\ul\bC_1}\otimes \phi_{\ul\bC_2},\ul\bC_1\underset{\ul\Shv(G)_{\on{co}}}\otimes \ul\bC_2),$$
where by a slight abuse of notation we denote by the same symbol $\phi_{\ul\bC_1}\otimes \phi_{\ul\bC_2}$ the endofunctor of
$\ul\bC_1\underset{\ul\Shv(G)}\otimes \ul\bC_2$ induced by $\phi_{\ul\bC_1}\otimes \phi_{\ul\bC_2}$.
\end{cor}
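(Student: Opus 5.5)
The plan is to deduce the statement from \corref{c:sections of class}, applied to the single object $\ul\bC:=\ul\bC_1\otimes \ul\bC_2\in G\mmod$ (with the diagonal $G$-action), combined with the multiplicativity \eqref{e:product classes}. First, $\ul\bC_1\otimes\ul\bC_2$ is dualizable, since dualizability in $G\mmod$ is detected on the underlying objects of $\AGCat$. Next, the tensor product $\phi_{\ul\bC_1}\otimes\phi_{\ul\bC_2}$ is a 1-morphism $\ul\bC_1\otimes\ul\bC_2\to F_\phi(\ul\bC_1\otimes \ul\bC_2)$; this uses that $F_\phi$ is symmetric monoidal on $G\mmod$, because $\phi_*$ is a map of Hopf algebras. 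So the pair $(\ul\bC_1\otimes\ul\bC_2,\phi_{\ul\bC_1}\otimes\phi_{\ul\bC_2})$ is an admissible input for \secref{sss:class of obj}.

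\corref{c:sections of class} then gives
$$\on{C}^\cdot_\blacktriangle(G/\on{Ad}_\phi(G),\on{cl}(\ul\bC_1\otimes\ul\bC_2,\phi_{\ul\bC_1}\otimes\phi_{\ul\bC_2}))\simeq \Tr(\phi_{\ul\bC_1}\otimes\phi_{\ul\bC_2},\bcoinv_G(\ul\bC_1\otimes\ul\bC_2)).$$
The left-hand side is rewritten with \eqref{e:product classes} as the cohomology of $\on{cl}(\ul\bC_1,\phi_{\ul\bC_1})\sotimes\on{cl}(\ul\bC_2,\phi_{\ul\bC_2})$. The right-hand side is rewritten with \eqref{e:coinv ten}: $\bcoinv_G(\ul\bC_1\otimes\ul\bC_2)\simeq \ul\bC_1\underset{\ul\Shv(G)_{\on{co}}}\otimes\ul\bC_2$. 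We must check that under this identification the endofunctor induced by $\phi_{\ul\bC_1}\otimes\phi_{\ul\bC_2}$ on coinvariants matches the one named in the statement. This is formal, since both are obtained from the same $\phi$-twisted equivariant functor on the bar construction.

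For completeness I would also justify \eqref{e:product classes}, which the paper says is clear. The trace functor \eqref{e:2-cat Tr funct} is symmetric monoidal. The diagram \eqref{e:class of an object} for the tensor product is the tensor product of the two diagrams, followed by the diagonal $G\mmod\otimes G\mmod\to G\mmod$, i.e.\ the symmetric monoidal structure. On traces, that structure map induces $\ul\Shv(G/\on{Ad}_\phi(G))\otimes\ul\Shv(G/\on{Ad}_\phi(G))\to\ul\Shv(G/\on{Ad}_\phi(G))$. By \propref{p:map of traces stacks} this map is $!$-pullback along the diagonal, which is precisely $\sotimes$.

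The main point of care, and the step I expect to be the main obstacle, is this last identification of the monoidal structure on $G\mmod$ at the level of traces with $\Delta^!$. It requires realizing the monoidal structure as the 1-morphism $T_\CY$ with $\CY=G$ (a $(G\times G)\times G$-space) and reading off the correspondence produced by \propref{p:map of traces stacks}.
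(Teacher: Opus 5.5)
Your argument is the paper's: apply \corref{c:sections of class} to $\ul\bC_1\otimes\ul\bC_2$ equipped with $\phi_{\ul\bC_1}\otimes\phi_{\ul\bC_2}$, then rewrite the left side via \eqref{e:product classes} and the right side via \eqref{e:coinv ten}. One correction to your extra justification of \eqref{e:product classes}: restriction along $\Delta\colon G\to G\times G$ is $T_\CY$ for $\CY=G\times G\simeq (G\times G\times G)/\Delta(G)$, not $\CY=G$; because the stabilizer is the diagonal copy of $G$, \propref{p:map of traces stacks} produces the diagonal of $G/\on{Ad}_\phi(G)$, so your conclusion that the induced map is $\Delta^!$, i.e.\ $\sotimes$, still holds.
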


\sssec{} \label{sss:DL ab}

Here is an application of \corref{c:sections of product class} to the theory of Deligne-Lusztig representations.

\medskip

Let $T_1$ and $T_2$ be two maximal tori in $G$ defined over $\BF_q$. Let $\chi_i$ be a character sheaf on $T_i$, equipped
with a Weil structure. Let $\chi^0_i$ denote the corresponding character of $T_i(\BF_q)$.

\medskip

Let $T_i\subset B_i$ be Borel subgroups, and consider the corresponding Deligne-Lusztig representations
$$\on{DL}_{B_i}(\ol\BQ_\ell^{\chi^0_i}),$$
where $\ol\BQ_\ell^{\chi_i^0}$ denotes the 1-dimensional representation of $T_i(\BF_q)$ corresponding to
the character $\chi_i^0$.

\medskip

We claim:

\begin{thm} \label{t:orth DL}
Suppose that $\chi^{-1}_1\neq \chi^w_2$ for any element $w\in W$. Then
$$(\on{DL}_{B_1}(\ol\BQ_\ell^{\chi^0_1})\otimes \on{DL}_{B_2}(\ol\BQ_\ell^{\chi^0_2}))^{G(\BF_q)}=0.$$
\end{thm}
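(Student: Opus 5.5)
We will realize each Deligne--Lusztig representation as the trace of a Frobenius-type endomorphism of a categorical representation. We then use \corref{c:sections of product class} to write the $G(\BF_q)$-invariants of the tensor product as a trace on a relative tensor product. Finally we show that this relative tensor product vanishes when the characters are not $W$-conjugate in the inverse sense.

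\medskip

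Concretely, for $i=1,2$ consider the object
$$\ul\bC_i:=\ul\Shv(G/N_i)\underset{\ul\Shv(T_i)_{\on{co}}}\otimes \ul\Vect_{\chi_i}\in G\mmod,$$
where $N_i=N(B_i)$ and $\ul\Vect_{\chi_i}$ is the $T_i$-representation on $\ul\Vect$ given by the character sheaf $\chi_i$; equivalently $\ul\bC_i=\bind_{B_i}(\ul\Vect_{\chi_i})$. The Weil structure on $\chi_i$ makes $\ul\Vect_{\chi_i}$ an object with a Frobenius-weak-equivariance datum. Composing with the 2-morphism \eqref{e:2-morph DL} (intertwiner $I_{B_i,B_i'}$ followed by Frobenius), we obtain $\phi_{\ul\bC_i}:\ul\bC_i\to F_{\Frob}(\ul\bC_i)$. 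Dualizability of $\ul\bC_i$ follows from \secref{sss:prince series}, since $\bind_{B_i}$ is adjointable.

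The first step is to identify the class. By construction, $\on{cl}(\ul\bC_i,\phi_{\ul\bC_i})\in\Shv(G/\on{Ad}_\Frob(G))\simeq\Rep(G(\BF_q))$ is the image of the class of $(\ul\Vect_{\chi_i},\text{Weil})$, namely $\ol\BQ_\ell^{\chi_i^0}\in\Rep(T_i(\BF_q))$, under $\Tr(\Frob,\bind_{B_i})$. This is a functoriality of traces along composable 1-morphisms in $\on{Arr}$, \eqref{e:2-cat Tr funct}. Hence the class equals $\on{DL}_{B_i}(\ol\BQ_\ell^{\chi_i^0})$, by the definition of $\on{DL}_P$ in \secref{ss:DL}.

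For the second step, note that under $G/\on{Ad}_\Frob(G)\simeq \on{pt}/G(\BF_q)$, the $!$-tensor product becomes the tensor product of representations up to a shift. Likewise $\on{C}^\cdot_\blacktriangle$ becomes $G(\BF_q)$-(co)invariants, which agree in characteristic zero. \corref{c:sections of product class} then gives
$$(\on{DL}_{B_1}\otimes\on{DL}_{B_2})^{G(\BF_q)}\simeq \Tr\bigl(\phi_1\otimes\phi_2,\ \ul\bC_1\underset{\ul\Shv(G)_{\on{co}}}\otimes\ul\bC_2\bigr).$$
It therefore suffices to show that $\ul\bC_1\otimes_{\ul\Shv(G)_{\on{co}}}\ul\bC_2=0$.

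The third step uses \eqref{e:nice comp}:
$$\ul\Shv(G/N_1)\underset{\ul\Shv(G)_{\on{co}}}\otimes\ul\Shv(G/N_2)\simeq\ul\Shv(N_1\backslash G/N_2)$$
as a $T_1\times T_2$-representation. The object in question is then the $(\chi_1,\chi_2)$-coinvariants, i.e.\ equivariants, of $\ul\Shv(N_1\backslash G/N_2)$. Stratify by the Bruhat decomposition, which is finite with locally closed strata. Each stratum is $N_1\backslash N_1 gN_2/N_2$, an affine-space bundle over a $T_1\times T_2$-torsor for the quotient $(T_1\times T_2)/\on{Stab}$. The stabilizer is the graph of $w:T_2\to T_1$ composed with inversion. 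So $(\chi_1,\chi_2)$-equivariant sheaves on the stratum are $\chi_1\boxtimes\chi_2$-equivariant sheaves on $(T_1\times T_2)/T$. These vanish unless $\chi_1\boxtimes\chi_2$ restricts trivially to the stabilizer, i.e.\ unless $\chi_1^{-1}\simeq\chi_2^w$. The hypothesis excludes this, so every stratum contributes zero. By the gluing (recollement) of $\ul\Shv$ along a finite stratification, compatible with taking invariants by \corref{c:inv colimits}, the whole category is zero, and the trace vanishes.

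The main obstacle is the first step: checking that the class of $\bind_{B_i}$ applied to $(\ul\Vect_{\chi_i},\text{Weil})$ agrees with the Deligne--Lusztig functor applied to $\chi_i^0$. This requires functoriality of $\Tr$ for the composite square, together with identifying the class of a 1-dimensional Weil character sheaf on $T$ with $\chi^0$ via \corref{c:fiber of class}. The stratum-vanishing computation is routine by comparison.
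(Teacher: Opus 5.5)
Your proposal follows the paper's proof essentially step for step. You realize $\on{DL}_{B_i}(\ol\BQ_\ell^{\chi^0_i})$ as $\on{cl}(\bind_{B_i}(\ul\Vect^{\chi_i}),\alpha^G_i)$ using the compatibility of the 2-categorical trace with composition, then apply \corref{c:sections of product class}, and reduce to the vanishing of $\bcoinv_G(\ul\bC_1\otimes\ul\bC_2)$; for that last vanishing you give a Bruhat-stratum argument where the paper simply asserts it.

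The character bookkeeping in that stratum computation needs repair. The identification $(G/N_1\times G/N_2)/\Delta(G)\simeq N_1\backslash G/N_2$, which is the antipode in \eqref{e:coinv ten} and \eqref{e:nice comp}, inverts the $T_1$-action. So for left--right multiplication the twist is $(\chi_1^{-1},\chi_2)$, as in the paper. With the twist $(\chi_1,\chi_2)$ and the stabilizer $\{(w(t)^{-1},t)\}$ that you actually state, the restriction is trivial iff $\chi_2\simeq w^*\chi_1$. Thus your final ``i.e.'' does not follow from your own data, and you reach the correct condition $\chi_1^{-1}\simeq\chi_2^{w}$ only because two sign slips cancel.
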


\ssec{Proof of \thmref{t:orth DL}}

\sssec{} \label{sss:Vect chi}

Let $\ul\Vect^{\chi_i}$ be a copy of $\ul\Vect$, viewed as an object of $T_i\mmod$, with the
action given by $\chi_i$.

\medskip

The Weil structure on $\chi_i$ upgrades the identity endomorphism of  $\ul\Vect$ to a structure
\eqref{e:cat weak equiv} for $\phi=\Frob_{T_i}$; denote it by $\alpha^T_i$.

\medskip

Unwinding, we obtain:
$$\on{cl}(\ul\Vect^{\chi_i},\alpha^T_i)=\ol\BQ_\ell^{\chi_i^0}$$
as objects of
$$\Tr(\Frob_{T_i},T_i\mmod)\simeq \Rep(T_i(\BF_q)).$$

\sssec{}

Consider the object
$$\bind_{B_i}(\ul\Vect^{\chi_i})\in G\mmod.$$

It identifies with
$$\ul\bC_i:=\ul\Shv(G/N)^{T,\chi_i}.$$

The datum of $\alpha^T_i$, combined with the endomorphism $\Frob_{\bind_{B_i}}$,
(see \secref{sss:constr DL}) equips $\ul\bC_i$ with a structure
\eqref{e:cat weak equiv} for $\phi=\Frob_{G}$; denote it by $\alpha^G_i$.

\medskip

By the compatibility of \eqref{e:2-categ trace} with compositions, we have
$$\on{DL}_{B_i}(\on{cl}(\ul\Vect^{\chi_i},\alpha_i^T))\simeq \on{cl}(\ul\bC_i,\alpha_i^G).$$

\medskip

Hence,
$$\on{DL}_{B_i}(\ol\BQ_\ell^{\chi^0_i}) \simeq \on{cl}(\ul\bC_i,\alpha_i^G).$$

\sssec{}

We now apply \corref{c:sections of product class}. We obtain
$$(\on{DL}_{B_1}(\ol\BQ_\ell^{\chi^0_1})\otimes \on{DL}_{B_2}(\ol\BQ_\ell^{\chi^0_2}))^{G(\BF_q)}
\simeq \Tr(\alpha^G_1\otimes \alpha^G_2,\bcoinv_G(\ul\bC_1\otimes \ul\bC_2)).$$

We rewrite
\begin{multline*}
\bcoinv_G(\ul\bC_1\otimes \ul\bC_2)\simeq \binv_G(\ul\bC_1\otimes \ul\bC_2)\simeq
\ul\Shv(G \backslash (\ul\Shv(G/N)^{T,\chi_1}\otimes \ul\Shv(G/N)^{T,\chi_2}))\simeq \\
\simeq \ul\Shv(N\backslash G/N)^{T\times T,(\chi^{-1}_1,\chi_2)}.
\end{multline*}

\sssec{}

Now, the condition that $\chi^{-1}_1\neq \chi^w_2$ for any element $w\in W$ implies that the object
$\ul\Shv(N\backslash G/N)^{T\times T,(\chi^{-1}_1,\chi_2)}$ is zero.

\qed[\thmref{t:orth DL}]

\ssec{Iterated traces}

\sssec{}

Let $\phi_1$ and $\phi_2$ be a pair of \emph{commuting} endomorphisms of $G$. Consider the corresponding
endomorphisms $F_{\phi_1}$, $F_{\phi_2}$ and $(F_{\phi_2})^R$ of $G\mmod$.

\medskip

By \corref{c:Tr endo},
$$\Tr(F_{\phi_1},G\mmod)\simeq \ul\Shv(G/\on{Ad}_{\phi_1}(G)).$$

We also have
$$\Tr((F_{\phi_2})^R,G\mmod)\simeq \ul\Shv(G/\on{Ad}_{\phi_2}(G)),$$
see \secref{sss:dual phi}.

\sssec{}

The fact that $\phi_1$ and $\phi_2$ commute implies that we have a canonical isomorphism
$$F_{\phi_2}\circ F_{\phi_1}\overset{\beta}\to F_{\phi_1}\circ F_{\phi_2}.$$

\medskip

By \propref{p:map of traces stacks}, the map
$$\Tr(F_{\phi_1},G\mmod) \overset{\Tr(\beta,F_{\phi_2})}\longrightarrow \Tr(F_{\phi_1},G\mmod)$$
identifies with
$$\ul\Shv(G/\on{Ad}_{\phi_1}(G)) \overset{\phi_2^!}\to \ul\Shv(G/\on{Ad}_{\phi_1}(G)).$$

\medskip

Similarly,
$$(\Tr(F_{\phi_2})^R,G\mmod) \overset{\Tr(\beta^{\on{BC}},F_{\phi_1})}\longrightarrow \Tr((F_{\phi_1})^R,G\mmod)$$
identifies with
$$\ul\Shv(G/\on{Ad}_{\phi_2}(G)) \overset{\phi_1^!}\to \ul\Shv(G/\on{Ad}_{\phi_2}(G)),$$
where $\beta^{\on{BC}}$ is as in \secref{sss:setting for iterated trace}.

\sssec{}

Now, by \cite[Corollary 5.2.7]{GRV}, we obtain:
$$\Tr(\phi_2^!,\ul\Shv(G/\on{Ad}_{\phi_1}(G)))\simeq \on{C}_\blacktriangle\left((G/\on{Ad}_{\phi_1}(G))^{\phi_2},\omega_{(G/\on{Ad}_{\phi_1}(G))^{\phi_2}}\right)$$
and
$$\Tr(\phi_1^!,\ul\Shv(G/\on{Ad}_{\phi_2}(G)))\simeq \on{C}_\blacktriangle\left((G/\on{Ad}_{\phi_2}(G))^{\phi_1},\omega_{(G/\on{Ad}_{\phi_2}(G))^{\phi_1}}\right).$$

Finally, we note
\begin{equation}  \label{e:double fixed points}
(G/\on{Ad}_{\phi_1}(G))^{\phi_2}=((\on{pt}/G)^{\phi_1})^{\phi_2}=(\on{pt}/G)^{\phi_1,\phi_2}=
((\on{pt}/G)^{\phi_2})^{\phi_1}=(G/\on{Ad}_{\phi_2}(G))^{\phi_1}.
\end{equation}

\sssec{}

We claim:

\begin{lem} \label{l:iterated trace groups}
The diagram
\begin{equation} \label{e:iterated trace groups}
\CD
\Tr(\Tr(\beta,F_{\phi_2}),\Tr(F_{\phi_1},G\mmod))  @>{\text{\thmref{t:2 traces}}}>{\sim}>  \Tr(\Tr(\beta^{\on{BC}},F_{\phi_1}),\Tr((F_{\phi_2})^R,G\mmod)) \\
@V{\sim}VV @VV{\sim}V \\
 \Tr(\phi_2^!,\ul\Shv(G/\on{Ad}_{\phi_1}(G)))  & & \Tr(\phi_1^!,\ul\Shv(G/\on{Ad}_{\phi_2}(G))) \\
 @V{\sim}VV @V{\sim}VV \\
\on{C}_\blacktriangle\left((G/\on{Ad}_{\phi_1}(G))^{\phi_2},\omega_{(G/\on{Ad}_{\phi_1}(G))^{\phi_2}}\right)
@>{\sim}>{\text{\eqref{e:double fixed points}}}>
\on{C}_\blacktriangle\left((G/\on{Ad}_{\phi_2}(G))^{\phi_1},\omega_{(G/\on{Ad}_{\phi_2}(G))^{\phi_1}}\right)
\endCD
\end{equation}
commutes.
\end{lem}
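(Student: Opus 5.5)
Everything in the diagram is an instance of the trace of a geometric correspondence. The plan is to reduce both routes to a single object: the trace of a pair of commuting endomorphisms of the 2-dualizable object $G\mmod$. That object is computed by $\ul\Shv$ of the ``double twisted-conjugacy'' stack $(\on{pt}/G)^{\phi_1,\phi_2}$.

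\textbf{Step 1: work universally over prestacks.} Note that $G\mmod\simeq \Tr$-data for $\on{pt}/G$ in the following sense. The 1-morphisms $F_{\phi_i}$ are given by the correspondences $G_{\phi_i}$. By \propref{p:Tr on categ rep} and \propref{p:map of traces stacks}, $\Tr(F_{\phi_1},G\mmod)=\ul\Shv(\CL_{\phi_1})$, where $\CL_{\phi}:=(\on{pt}/G)^{\phi}=G/\on{Ad}_\phi(G)$, and $\Tr(\beta,F_{\phi_2})=\phi_2^!$. I would unwind the proof of \thmref{t:2 traces} in this geometric setting. The diagram \eqref{e:pre iterated trace} is a concatenation of three squares whose entries are $\ul\Shv$ of explicit stacks: $\on{pt}$, $G\times G$ (giving $G\mmod\otimes G\mmod$), and the correspondences $G_{\phi_1}$, $G_{\phi_2}$. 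The 2-morphisms in it are pull-push along explicit maps of stacks, again by \secref{sss:adj geom}.

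\textbf{Step 2: compute both sides as a single trace.} Apply the trace functor \eqref{e:2-cat Tr funct} to the composite of these three squares, read as 1-morphisms in $\bL$. By \cite[Corollary 5.2.7]{GRV}, the trace of a correspondence endomorphism of $\ul\Shv(\CY)$ equals $\on{C}_\blacktriangle$ of $\omega$ on the corresponding fixed-point stack. Compatibility of this formula with composition of correspondences, and with the passage to right adjoints used in the ``rotation'' step, gives two things. First, the left column identifies with $\on{C}_\blacktriangle(\CL_{\phi_1}^{\phi_2},\omega)$. Second, the top isomorphism is induced by a canonical identification of the fixed-point stacks computed in the two orders. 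The rotation uses the adjoints $(F_{\phi_2})^R$ described in \secref{sss:dual phi}, together with the inversion identifying $\on{Ad}'_\phi$ with $\on{Ad}_\phi$.

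\textbf{Step 3: identify the map of stacks.} It remains to check that the resulting identification of stacks is \eqref{e:double fixed points}. Both fixed-point stacks parametrize triples $(g_1,g_2)$ with $\phi_1(g_2)g_1=\phi_2(g_1)g_2$ modulo $G$. Concretely, I would write both as $\Maps$ from the ``torus'' groupoid with two commuting generators into $\on{pt}/G$ and track the inversion.

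\textbf{Main obstacle.} The main obstacle is Step 2: verifying that the $\ul\Shv$-level 2-morphisms produced by passing to right adjoints (and by the duality data of \lemref{l:Trace dualizable}) match the geometric base-change isomorphisms. Only then does \cite[Corollary 5.2.7]{GRV} apply functorially. I would handle this by checking that the functor $\CY\mapsto \ul\Shv(\CY)$ from correspondences (with mock-proper/mock-smooth adjoints) to $\AGCat$ is symmetric monoidal and compatible with adjunctions. I would then quote the fixed-point trace formula as natural in the correspondence category. This reduces the lemma to the tautological equality of iterated fixed points.
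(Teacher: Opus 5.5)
Your plan follows the paper's own argument. The paper's proof consists of unwinding the construction of \thmref{t:2 traces}: it identifies the trace of the three-square diagram \eqref{e:rotated pre iterated trace} and the trace of its outer square \eqref{e:outer rotated pre iterated trace} with $\on{C}_\blacktriangle\left((\on{pt}/G)^{\phi_1,\phi_2},\omega_{(\on{pt}/G)^{\phi_1,\phi_2}}\right)$, compatibly with the two vertical composites, which is what your Steps 2--3 spell out; the compatibility you isolate as the main obstacle is exactly what the paper passes over with the word ``unwinding''. (Two minor slips: points of the double fixed-point stack are pairs $(g_1,g_2)$, not triples, and the ``maps from the torus groupoid'' description is literally correct only for $\phi_1=\phi_2=\on{id}$, since in general one takes homotopy fixed points of the two commuting twists.)
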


\begin{proof}

Unwinding, we identity the 2-morphism in both \eqref{e:rotated pre iterated trace} and \eqref{e:outer rotated pre iterated trace}
with
$$\on{C}_\blacktriangle\left((\on{pt}/G)^{\phi_1,\phi_2},\omega_{(\on{pt}/G)^{\phi_1,\phi_2}}\right),$$
and these identifications are equal to the composition of the left vertical arrow
in \eqref{e:iterated trace groups} with
$$\on{C}_\blacktriangle\left((G/\on{Ad}_{\phi_1}(G))^{\phi_2},\omega_{(G/\on{Ad}_{\phi_1}(G))^{\phi_2}}\right) \simeq
\on{C}_\blacktriangle\left((\on{pt}/G)^{\phi_1,\phi_2},\omega_{(\on{pt}/G)^{\phi_1,\phi_2}}\right),$$
and the composition of the left vertical arrow in \eqref{e:iterated trace groups} with
$$\on{C}_\blacktriangle\left((G/\on{Ad}_{\phi_2}(G))^{\phi_1},\omega_{(G/\on{Ad}_{\phi_2}(G))^{\phi_1}}\right)\simeq
\on{C}_\blacktriangle\left((\on{pt}/G)^{\phi_1,\phi_2},\omega_{(\on{pt}/G)^{\phi_1,\phi_2}}\right),$$
respectively.

\end{proof}

\sssec{}

Let us see what the commutative diagram \eqref{e:iterated trace groups} gives when $\phi_1=\Frob$ and $\phi_2=\on{Id}$.

\medskip

We identify
$$(G/\on{Ad}_{\Frob}(G))^{\on{id}} \simeq (\on{pt}/G(\BF_q))^{\on{id}}\simeq G(\BF_q)/\on{Ad}(G(\BF_q),$$
and it is easy to see that the identification of \eqref{e:double fixed points} corresponds to the canonical map
$$G(\BF_q)/\on{Ad}(G(\BF_q)) \to (G/\on{Ad}(G))(\BF_q) \simeq (G/\on{Ad}(G))^{\Frob}.$$

\medskip

The left vertical arrow in \eqref{e:iterated trace groups} is the map
\begin{multline*}
\Tr_{\AGCat}(\on{Id},\ul{\Rep}(G(\BF_q))) \simeq \Tr_{\DGCat}(\on{Id},\Rep(G(\BF_q))) \simeq  \\
\simeq \ol\BQ_\ell\underset{\BZ}\otimes K^0(\Rep(G(\BF_q))) \simeq \sFunct(G(\BF_q)/\on{Ad}(G(\BF_q)),\ol\BQ_\ell),
\end{multline*}
where the last isomorphism sends the class of a finite-dimensional representation to its character (this is true for $G(\BF_q)$ replaced
by any finite group $\Gamma$).

\medskip

The right vertical arrow is the \emph{Local Term} map $\on{LT}^{\on{AG}}$
$$\Tr(\Frob^!,\ul\Shv(G/\on{Ad}(G)))\to \sFunct((G/\on{Ad}(G))(\BF_q),\ol\BQ_\ell)$$
of \cite[Sect. 5.2.4]{GRV} (this is true for $G/\on{Ad}(G)$ replaced by an arbitrary AG tame
algebraic stack equipped with an endomorphism).

\sssec{}

Hence, we obtain:

\begin{cor} \label{c:iterated trace Frob}
The diagram
$$
\CD
\Tr(\on{Id},\Tr(\Frob,G\mmod))  @>{\text{\corref{c:2 traces}}}>{\sim}>  \Tr(\Tr(\on{id},\Frob),\Tr(\on{Id},G\mmod))  \\
@V{\text{\corref{c:Tr Frob}}}V{\sim}V @V{\sim}V{\text{\corref{c:Tr Id}}}V \\
\Tr(\on{Id},\ul{\Rep}(G(\BF_q))) & & \Tr(\Frob^!,\ul\Shv(G/\on{Ad}(G))) \\
@V{\rm{taking\,\,characters}}V{\sim}V @V{\sim}V{\on{LT}^{\on{AG}}}V \\
\sFunct(G(\BF_q)/\on{Ad}(G(\BF_q)),\ol\BQ_\ell) @>{\sim}>>  \sFunct((G/\on{Ad}(G))(\BF_q),\ol\BQ_\ell)
\endCD
$$
commutes.
\end{cor}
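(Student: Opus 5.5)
The plan is to deduce \corref{c:iterated trace Frob} from \lemref{l:iterated trace groups}, specialized to $\phi_1=\Frob$ and $\phi_2=\on{id}$. The identifications in the two diagrams then have to be matched.

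First, with $\phi_2=\on{id}$ the commutation datum $\beta$ is the identity of $F_\Frob$, and so is $\beta^{\on{BC}}$. Moreover $(F_{\on{id}})^R=\on{Id}$. Hence the top arrow of \eqref{e:iterated trace groups} is exactly the isomorphism of \corref{c:2 traces}, the case of \thmref{t:2 traces} singled out there.

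Next, the left column. By \propref{p:map of traces stacks}, $\Tr(\on{id},\on{Id})$ on $\ul\Shv(G/\on{Ad}_\Frob(G))$ is the identity functor. So the first left vertical map in the lemma is $\Tr(\on{Id},\Tr(\Frob,G\mmod))\simeq\Tr(\on{Id},\ul\Shv(G/\on{Ad}_\Frob(G)))$, which under Lang's theorem and \lemref{l:finite grp is restr} becomes $\Tr(\on{Id},\ul\Rep(G(\BF_q)))$, as in \corref{c:Tr Frob}. The second left vertical map is the GRV identification of the trace with $\on{C}_\blacktriangle$ of $\omega$ on the inertia stack $(\on{pt}/G(\BF_q))^{\on{id}}=G(\BF_q)/\on{Ad}(G(\BF_q))$. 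This is a finite discrete groupoid, so $\omega$ is the constant sheaf and the value is $\sFunct(G(\BF_q)/\on{Ad}(G(\BF_q)),\ol\BQ_\ell)$.

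The remaining check is that this composite equals ``taking characters''. For this I would use the decomposition in the proof of \lemref{l:finite grp is restr}: $\Rep(\Gamma)$ is semisimple with simples $\rho$, so its Hochschild trace is spanned by the classes $[\rho]$. By \corref{c:fiber of class}-type functoriality (equivalently, compatibility of the GRV map with $!$-restriction to points $\iota_\gamma$ of the inertia), the value of $[\rho]$ at $\gamma$ is $\Tr(\gamma,\rho)$. I expect this identification of the GRV local-term map on $\on{pt}/\Gamma$ with the character map to be the only real content, and I would settle it by evaluating on generators in this way.

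Finally, the right column. The isomorphism $\Tr(\Tr(\on{id},\Frob),\Tr(\on{Id},G\mmod))\simeq\Tr(\Frob^!,\ul\Shv(G/\on{Ad}(G)))$ of \corref{c:Tr Id} combined with \propref{p:map of traces stacks} is the first right vertical map of the lemma. By \cite[Sect.~5.2.4]{GRV}, the GRV identification with $\on{C}_\blacktriangle$ of the $\Frob$-fixed points, followed by the identification of the (discrete) fixed-point stack with $(G/\on{Ad}(G))(\BF_q)$, is by definition $\on{LT}^{\on{AG}}$. The bottom arrow \eqref{e:double fixed points} was already identified above with the map induced by $G(\BF_q)/\on{Ad}(G(\BF_q))\to(G/\on{Ad}(G))(\BF_q)$, which is an isomorphism by Lang's theorem. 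Substituting these identifications into \eqref{e:iterated trace groups} gives the corollary.
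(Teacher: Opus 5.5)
Your proposal is correct and follows the paper's route. You specialize \lemref{l:iterated trace groups} to $\phi_1=\Frob$, $\phi_2=\on{id}$. You then identify the left column with the character map, the right column with $\on{LT}^{\on{AG}}$ of \cite[Sect.~5.2.4]{GRV}, and the bottom arrow \eqref{e:double fixed points} with the canonical (Lang) equivalence $G(\BF_q)/\on{Ad}(G(\BF_q))\to (G/\on{Ad}(G))(\BF_q)$. The paper only asserts that the left column is ``taking characters'' for any finite group, so your check on simple objects via $!$-restriction to points of the inertia stack fills in that step rather than changing the argument.
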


\section{The condition of (quasi-)monodromicity} \label{s:mon}

As was explained in the Introduction, in order to obtain a theory with interesting
representation-theoretic applications, we will need to replace the 2-category $G\mmod$
by its full subcategory $G\mmod^{\on{spec.fin}}$. Its definition is based on the
notion of \emph{monodromicity}, which we develop in this section.

\medskip

Throughout this section, we will assume that $G$ is connected.

\ssec{The category of quasi-monodromic sheaves on a group}  \label{ss:q-mon}

\sssec{}

Let $X$ be a connected scheme. Let
\begin{equation} \label{e:alm const}
\Shv(X)^{\on{q-const}}\subset \Shv(X)
\end{equation}
be the full subcategory, consisting of objects all of whose cohomologies (with respect to either the perverse or the usual
t-structure) are (possibly infinite) extensions of the (shifted) constant sheaf.

\sssec{}

We will be interested in the case when $X=G$ is a group. 

\medskip

In \secref{sss:q vs ind} we will prove:

\begin{lem} \label{l:q-cnst comp gen}
The category $\Shv(G)^{\on{q-const}}$ is compactly generated.\footnote{Note, however, that the embedding \eqref{e:alm const} does \emph{not}
preserves compactness.}
\end{lem}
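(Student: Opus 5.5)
The plan is to identify $\Shv(G)^{\on{q-const}}$ with the ind-completion of an explicit small category, by a Koszul-duality type description. Set $A:=\on{C}^\cdot(G,\ol\BQ_\ell)=\CHom_{\Shv(G)}(\ol\BQ_{\ell,G},\ol\BQ_{\ell,G})$. Since $G$ is connected, $A$ is a finite-dimensional, coconnective, augmented algebra; in characteristic zero it is in fact an exterior algebra on odd generators (Borel's theorem, or more simply the Hopf structure given by $\on{mult}$). The constant sheaf $\ol\BQ_{\ell,G}$ is compact in $\Shv(G)$, so the functor $\CHom(\ol\BQ_{\ell,G},-)$ lands in $A\mod$.

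\textbf{Step 1.} Let $\Shv(G)^{\on{q-const}}_c\subset\Shv(G)$ be the thick (finite) subcategory generated by $\ol\BQ_{\ell,G}$; it is equivalent to $A\mod^{\on{perf}}$. Its objects lie in $\Shv(G)^{\on{q-const}}$, since finite extensions of shifts of the constant sheaf have cohomologies that are finite extensions of constant sheaves (the constant sheaf on a connected $X$ being closed under extensions up to cohomology). So we get a fully faithful embedding $A\mod^{\on{perf}}\hookrightarrow \Shv(G)^{\on{q-const}}$, and by ind-extension a functor $\Psi:\on{Ind}(A\mod^{\on{perf}})=A\mod\to\Shv(G)$ that preserves colimits. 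It is \emph{not} fully faithful into $\Shv(G)$, since $\ol\BQ_{\ell,G}$ is compact but colimits are taken in $\Shv(G)$; this matches the footnote. I will therefore work inside $\Shv(G)^{\on{q-const}}$ itself.

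\textbf{Step 2.} I will show that $\Shv(G)^{\on{q-const}}$ is closed under filtered colimits in $\Shv(G)$. Cohomology commutes with filtered colimits, and the class of (possibly infinite) extensions of the constant sheaf is closed under colimits. Then I will show that $\ol\BQ_{\ell,G}$ is compact as an object of $\Shv(G)^{\on{q-const}}$, which would follow from its compactness in $\Shv(G)$, and that it generates. For generation: if $\CF\in\Shv(G)^{\on{q-const}}$ has $\CHom(\ol\BQ_{\ell,G},\CF)=0$, then $\CF=0$. To prove this, I will run the argument of the footnote-style lemma below: take the lowest nonzero cohomology $H^n(\CF)$, which is bounded below only after truncation. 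It is cleaner to argue via the t-structure, using that $\Shv(G)^{\on{q-const}}$ is stable under truncations. A nonzero object of the heart that is an extension of constant sheaves receives a nonzero map from $\ol\BQ_{\ell,G}$ (take the first step of its filtration). One then uses left-completeness to handle unbounded objects.

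\textbf{Step 3.} It follows that $\Shv(G)^{\on{q-const}}$ is generated by the single compact object $\ol\BQ_{\ell,G}$ under colimits taken within it, hence it is compactly generated, and in fact equivalent to $A\mod$ computed with the internal colimits.

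\textbf{Main obstacle.} The subtle point is generation for unbounded, non-left-bounded objects: the vanishing of $\CHom(\ol\BQ_{\ell,G},\CF)$ must force $\CF=0$ even though the relevant Ext groups could mix degrees. I would first try to reduce to $\CF$ bounded below, using the perverse t-structure being left-complete and $\CHom(\ol\BQ_{\ell,G},-)$ having finite cohomological amplitude (since $A$ is finite-dimensional). With that amplitude bound, the lowest cohomology argument should go through for $\tau^{\geq -N}\CF$ uniformly in $N$.
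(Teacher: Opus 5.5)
Your proposal has a genuine gap: the generation claim in Step 2 is false, so the argument cannot be completed.

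\textbf{The constant sheaf does not generate.} Your Step 2 claims that $\ol\BQ_{\ell,G}$ generates $\Shv(G)^{\on{q-const}}$. This fails as soon as $G$ has a nontrivial semisimple part.

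By your own Step 2, $\Shv(G)^{\on{q-const}}$ is closed under colimits in $\Shv(G)$, and $\ol\BQ_{\ell,G}$ is compact in $\Shv(G)$. So the subcategory it generates is exactly $\Shv(G)^{\on{alm-const}}\simeq\on{C}^\cdot(G)\mod$. (Contrary to your Step 1, your functor $\Psi$ is therefore fully faithful.) The paper points out that $\Shv(G)^{\on{alm-const}}\subsetneq\Shv(G)^{\on{q-const}}$ in this case.

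Concretely, take $G=SL_2$:
\begin{itemize}
\item $\on{C}^\cdot(G)=\Lambda(x)$ with $\deg x=3$.
\item Koszul duality identifies $\on{C}^\cdot(G)\mod$ with $\ol\BQ_\ell[y]\mod_{\{0\}}$, $\deg y=-2$, compatibly with t-structures, and identifies $\Shv(G)^{\on{q-const}}$ with all of $\ol\BQ_\ell[y]\mod$.
\item The module $\ol\BQ_\ell[y,y^{-1}]$ gives a nonzero object all of whose cohomology sheaves are constant.
\item Yet $\CHom(\ol\BQ_{\ell,G},-)$ of this object is, up to shift, $\CHom_{\ol\BQ_\ell[y]}(\ol\BQ_\ell,\ol\BQ_\ell[y,y^{-1}])=0$, since $y$ acts invertibly.
\end{itemize}

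\textbf{Why the truncation fix fails.} This is exactly where your strategy breaks. From $\CHom(\ol\BQ_{\ell,G},\CF)=0$ and the amplitude bound, you only learn that $\CHom(\ol\BQ_{\ell,G},\tau^{\geq -N}\CF)\simeq\CHom(\ol\BQ_{\ell,G},\tau^{<-N}\CF)[1]$ lives in degrees $<-N+2\dim(G)$. Meanwhile, the nonzero class produced by the lowest cohomology of $\tau^{\geq -N}\CF$ sits in some degree $\geq -N$. These are perfectly compatible for an $\CF$ with cohomology in infinitely many negative degrees, such as the object above.

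Left-completeness does not rescue this. $\Shv(G)^{\on{q-const}}$ is the left completion of $\on{C}^\cdot(G)\mod$, not $\on{C}^\cdot(G)\mod$ itself. So your Step 3 conclusion $\Shv(G)^{\on{q-const}}\simeq A\mod$ is also wrong.

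\textbf{The missing idea.} You need a different compact generator, one that is not compact in $\Shv(G)$. That is what the footnote refers to; it is not about a failure of full faithfulness of $\Psi$. The paper proceeds as follows.
\begin{itemize}
\item It reduces to $G$ semisimple by passing to the reductive quotient and an isogeny, and by splitting off the torus factor via \lemref{l:prod by torus}.
\item That splitting uses that for a torus the constant sheaf does generate. This is \propref{p:q mon torus}, which rests on left-completeness of $\QCoh(V)_{\{0\}}$, not on a truncation argument.
\item For $G$ semisimple, it writes $\on{C}^\cdot(G)\simeq\Sym(W)$ with $W$ in odd degrees $\geq 3$.
\item It uses the Koszul duality $\Sym(W)\mod\simeq\Sym(W^\vee[-1])\mod_{\{0\}}$, matching t-structures.
\item It identifies $\Shv(G)^{\on{q-const}}$ with the left completion, namely $\Sym(W^\vee[-1])\mod$.
\end{itemize}
The category $\Sym(W^\vee[-1])\mod$ is compactly generated by the free module. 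The free module corresponds to the unbounded object $\on{q-const}_G$, whose $!$-fiber at $1$ is $\Sym(W^\vee[-1])$ (see Sect.~\ref{sss:q vs ind}). The constant sheaf corresponds to the augmentation module. It is still compact there, being perfect over a polynomial algebra on even generators, but it only generates the torsion part $\Shv(G)^{\on{alm-const}}$.
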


\sssec{}

We claim:

\begin{lem} \label{l:alm const}
For a pair of groups $G_1,G_2$, the functor
$$\boxtimes:\Shv(G_1)^{\on{q-const}}\otimes \Shv(G_2)^{\on{q-const}}\to \Shv(G_1\times G_2)$$
is an equivalence onto
$$\Shv(G_1\times G_2)^{\on{q-const}}\subset \Shv(G_1\times G_2).$$
\end{lem}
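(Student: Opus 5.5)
We split the claim into three parts: the functor lands in $\Shv(G_1\times G_2)^{\on{q-const}}$, it is fully faithful, and it is essentially surjective onto that subcategory.

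\emph{Landing in the subcategory.} By \lemref{l:q-cnst comp gen}, both factors are compactly generated. So the source is generated under colimits by external products $\CF_1\boxtimes\CF_2$ of compact generators of the two factors. Each $\CF_i$ has finitely many nonzero cohomologies, each an extension of shifted constant sheaves. Since $\boxtimes$ is t-exact for the perverse t-structure up to a shift, $\CF_1\boxtimes\CF_2$ has cohomologies that are extensions of shifted constant sheaves on $G_1\times G_2$. The target subcategory is closed under colimits, because it is defined cohomology-wise and colimits are filtered-exact. Hence $\boxtimes$ lands in $\Shv(G_1\times G_2)^{\on{q-const}}$.

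\emph{Full faithfulness.} The functor $\Shv(G_1)\otimes\Shv(G_2)\to\Shv(G_1\times G_2)$ is already fully faithful on all of $\Shv$, as recalled in the Introduction. Tensor products of fully faithful functors between dualizable (compactly generated) categories remain fully faithful. Composing, the restriction to $\Shv(G_1)^{\on{q-const}}\otimes\Shv(G_2)^{\on{q-const}}$ is fully faithful.

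\emph{Essential surjectivity.} I would first show that the essential image is closed under colimits and extensions in $\Shv(G_1\times G_2)$. It then suffices to show that every object of $\Shv(G_1\times G_2)^{\on{q-const}}$ is generated, under colimits and extensions inside the image, by $\on{const}_{G_1\times G_2}=\on{const}_{G_1}\boxtimes\on{const}_{G_2}$. I would use the description of the quasi-constant category, as in \secref{sss:q vs ind}, as modules (ind-completion) over $\on{End}(\on{const}_G)=\on{C}^\cdot(G)$, or as the ind-completion of the subcategory of constructible objects generated by the constant sheaf. The Künneth formula $\on{C}^\cdot(G_1\times G_2)\simeq \on{C}^\cdot(G_1)\otimes\on{C}^\cdot(G_2)$ then identifies $\Shv(G_1)^{\on{q-const}}\otimes\Shv(G_2)^{\on{q-const}}$ with the analogous category on $G_1\times G_2$.

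The main obstacle is this last point. Objects of $\Shv(G_1\times G_2)^{\on{q-const}}$ may be infinite extensions, which are not a priori obtainable as colimits of finite ones. One therefore needs a generation statement for $\Shv(G)^{\on{q-const}}$ that is compatible with products. This is exactly the content of whatever description proves \lemref{l:q-cnst comp gen}, which I expect to identify the category with $\on{Ind}$ of a small category determined by the local system side, $\on{C}_\cdot(G)$-modules via monodromy. I would establish that identification functorially in $G$, so that both sides become module categories over $\on{C}_\cdot(G_1)\otimes\on{C}_\cdot(G_2)\simeq\on{C}_\cdot(G_1\times G_2)$. Equivalence then follows from the fact that tensor products of module categories are module categories over the tensor product.
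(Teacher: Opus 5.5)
\textbf{The gap: essential surjectivity.} This is the real content of the lemma, and your proposal does not establish it. The descriptions you propose would prove a different statement.

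Modules over $\End(\on{const}_G)=\on{C}^\cdot(G)$, equivalently the ind-completion of the thick subcategory generated by $\on{const}_G$, give $\Shv(G)^{\on{alm-const}}$. This is strictly smaller than $\Shv(G)^{\on{q-const}}$ as soon as $G$ has a nontrivial semisimple part (\secref{sss:q vs ind}). With that identification, your K\"unneth argument proves the almost-constant analogue, \lemref{l:alm const prod}, not this lemma.

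For the same reason, your reduction ``it suffices to generate everything from $\on{const}_{G_1\times G_2}$ under colimits and extensions'' can only reach $\Shv(G_1\times G_2)^{\on{alm-const}}$. The extra objects of the quasi-constant category arise as limits (it is a left completion), not as colimits.

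Your alternative, $\on{C}_\cdot(G)$-modules, is not a category of local systems on $G$ at all: $\on{C}_\cdot(G)\mod\simeq\Shv(\on{pt}/G)$.

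\textbf{What the paper supplies instead.} The missing input is an explicit description of the quasi-constant category, obtained as follows.
\begin{enumerate}
\item Reduce to $G$ reductive, then via an isogeny to a product of a torus and a semisimple group (\secref{sss:replace by isogeny}).
\item Split off the torus by \lemref{l:prod by torus}. This step is not cosmetic. For a torus, $\Shv(T)^{\on{q-const}}=\Shv(T)^{\on{alm-const}}\simeq\QCoh(V)_{\{0\}}$ (\propref{p:q mon torus}, \secref{sss:alm const torus expl}). That is a torsion category, not the full module category over the Koszul dual of $\on{C}^\cdot(T)$. So a uniform ``modules over a loop-type algebra'' description cannot work.
\item For $G$ semisimple, write $\on{C}^\cdot(G)\simeq\Sym(W)$ with $W$ in odd degrees $\geq 3$. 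Use Koszul duality $\Sym(W)\mod\simeq\Sym(W^\vee[-1])\mod_{\{0\}}$, and identify $\Shv(G)^{\on{q-const}}$ with the left completion of this, which is all of $\Sym(W^\vee[-1])\mod$ (\secref{sss:descr q const ss}).
\item Since $W_{G_1\times G_2}=W_1\oplus W_2$, the box product becomes $\Sym(W_1^\vee[-1])\mod\otimes\Sym(W_2^\vee[-1])\mod\to\Sym((W_1\oplus W_2)^\vee[-1])\mod$. This is manifestly an equivalence, and it gives full faithfulness and essential surjectivity at once.
\end{enumerate}

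\textbf{Full faithfulness.} Your argument here is correct. The functor to $\Shv(G_1)\otimes\Shv(G_2)$ factors through $\Shv(G_1)\otimes\Shv(G_2)^{\on{q-const}}$. Because $\Shv(G_2)^{\on{q-const}}$ and $\Shv(G_1)$ are dualizable, each step is post-composition with a fully faithful functor on a category of functors out of a dual. This is the same argument the paper uses in \lemref{l:prod by torus}. Note, though, that \lemref{l:q-cnst comp gen}, which you invoke, is itself proved via the Koszul description above.

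\textbf{Landing step.} This rests on a false claim: the compact generators of $\Shv(G)^{\on{q-const}}$ cannot all have finitely many nonzero cohomologies. Every bounded-below object of $\Shv(G)^{\on{q-const}}$ already lies in $\Shv(G)^{\on{alm-const}}$, which is colimit-closed and strictly smaller. Indeed, the generator $\on{q-const}_G$ has fiber $\Sym(W^\vee[-1])$ at $1$, which is unbounded.

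The conclusion of that step can be rescued. The generators are bounded above, and for $\CF_1,\CF_2$ in degrees $\leq 0$ one has $\tau^{\geq -n}(\CF_1\boxtimes\CF_2)\simeq\tau^{\geq -n}(\tau^{\geq -n}\CF_1\boxtimes\tau^{\geq -n}\CF_2)$. This reduces to bounded objects, where your argument applies. As written, however, the argument does not go through.
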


The lemma will be proved in \secref{ss:alm const}. 

\begin{rem}
One can formulate statements parallel to Lemmas \ref{l:q-cnst comp gen} and \ref{l:alm const}, when 
instead of group(s) we have arbitrary (connected) scheme(s). However, we are not sure whether the
corresponding statements are true.

\medskip

That said, they are true for \emph{simply-connected} schemes; in fact our proofs apply in this
case.

\end{rem}

\sssec{}

Since the !-pullback functor (along a smooth morphism) maps the q-constant categories
to one another, from \lemref{l:alm const} we obtain:

\begin{cor} \label{c:alm const}
The functor
$$\on{mult}^!:\Shv(G)\to \Shv(G\times G)$$
induces a comonoidal structure on $\Shv(G)^{\on{q-const}}$.
\end{cor}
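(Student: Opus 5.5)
The plan is to produce the comonoidal structure on $\Shv(G)^{\on{q-const}}$ by restricting the one on $\Shv(G)$. Concretely, we show that the composite
$$\Shv(G)^{\on{q-const}}\hookrightarrow \Shv(G)\overset{\on{mult}^!}\longrightarrow \Shv(G\times G)$$
lands in $\Shv(G\times G)^{\on{q-const}}$. By \lemref{l:alm const}, the latter category is identified, via $\boxtimes$, with $\Shv(G)^{\on{q-const}}\otimes \Shv(G)^{\on{q-const}}$. This gives a comultiplication
$$\Shv(G)^{\on{q-const}}\to \Shv(G)^{\on{q-const}}\otimes \Shv(G)^{\on{q-const}}.$$
For the counit we take $\iota_1^!$, the !-fiber at the unit, with values in $\Vect$.

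The first step is the containment $\on{mult}^!(\Shv(G)^{\on{q-const}})\subset \Shv(G\times G)^{\on{q-const}}$. The map $\on{mult}$ is smooth of relative dimension $\dim G$ (it is a projection after the automorphism $(g,h)\mapsto (gh,h)$). Hence $\on{mult}^!\simeq \on{mult}^*[2\dim G]$ is t-exact up to a shift, with respect to both the usual and the perverse t-structures. Since $\on{mult}^!$ is also continuous, it commutes with taking cohomologies and sends extensions (including infinite ones, i.e. filtered colimits of finite extensions) of shifted constant sheaves to extensions of shifted constant sheaves on the connected scheme $G\times G$. This is exactly the containment we need.

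The second step is coassociativity and the higher coherences. We would apply the same reasoning to all the powers $G^n$. The functors $\on{mult}^!$, the face maps $G^{n+1}\to G^n$, and the degeneracies $G^{n}\to G^{n+1}$ (insertion of the unit) are all !-pullbacks along maps of schemes. Smooth maps preserve q-constancy by the first step. The unit insertions are closed embeddings, and !-pullback of a constant sheaf along $\{1\}\times G^n\hookrightarrow G^{n+1}$ is again constant up to shift, so these preserve q-constancy as well.

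Thus the cosimplicial coalgebra datum (the bar/Čech construction) of $\Shv(G^\bullet)$, which encodes the comonoidal structure on $\Shv(G)$ relative to the right-lax Künneth maps $\boxtimes$, restricts to the full subcategories $\Shv(G^\bullet)^{\on{q-const}}$. By \lemref{l:alm const}, applied iteratively to products of groups, the Künneth maps become equivalences on these subcategories. The right-lax monoidal functor $\Shv(-)$ therefore restricts, on the subcategory of powers of $G$, to a strictly monoidal assignment $G^n\mapsto (\Shv(G)^{\on{q-const}})^{\otimes n}$. Coalgebras in $\Sch$ (the diagonal-type structure of $G$ under $\on{mult}$ dualized via !-pullback) are then sent to coalgebras in $\DGCat$.

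The main obstacle is this coherence: we must make sure that "restricting a right-lax structure whose lax maps are equivalences on a full subcategory" yields a genuine comonoidal structure. I would handle it by noting that the full subcategories of q-constant objects form a sub-functor of $\Shv(-)$ on the category of powers of $G$. Once its lax structure maps are equivalences by \lemref{l:alm const}, the functor is strong monoidal, and strong monoidal functors carry coalgebra objects to coalgebra objects.
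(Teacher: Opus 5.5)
Your proposal is correct and follows essentially the same route as the paper: the paper's one-line proof is also that $!$-pullback along the smooth map $\on{mult}$ preserves q-constancy, and that \lemref{l:alm const} identifies $\Shv(G\times G)^{\on{q-const}}$ with $\Shv(G)^{\on{q-const}}\otimes \Shv(G)^{\on{q-const}}$, so the right-lax Künneth structure becomes strict there. Your extra treatment of coherences only spells out what the paper leaves implicit. For the unit insertions, the cleaner justification is Künneth compatibility, $(\iota_1\times\on{id})^!\simeq \iota_1^!\otimes\on{Id}$; your ``constant goes to constant'' remark is incomplete on its own, since $!$-pullback along a closed embedding is not t-exact in general.
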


\sssec{}

Let $\chi$ be a multiplicative 1-dimensional local system (a.k.a., abelian character sheaf) on $G$. Let
$$\Shv(G)^{\on{q-}\!\chi}\subset \Shv(G)$$
be the full subcategory equal to the essential image of
$$(-)\otimes \chi:\Shv(G)^{\on{q-const}}\to \Shv(G).$$

\medskip

It follows from \corref{c:alm const} that the functor $\on{mult}^!$ induces a comonoidal
structure on $\Shv(G)^{\on{q-}\!\chi}$.

\sssec{Example}

Here is a typical example of an object in $\Shv(G)^{\on{q-}\!\chi}\commod(\DGCat)$: we take the underlying object of $\DGCat$ to be $\Vect$,
with the coaction functor
$$\Vect\to \Vect\otimes \Shv(G)^{\on{q-}\!\chi}\simeq \Shv(G)^{\on{q-}\!\chi}$$
given by $\chi$.

\medskip

Denote this object by $\Vect^\chi$.

\begin{rem}

It is \emph{not} true that $\Vect^\chi$ generates $\Shv(G)^{\on{q-}\!\chi}\commod(\DGCat)$, i.e.,
the functor
$$\uHom_{\Shv(G)^{\on{q-}\!\chi}\commod,\DGCat}(\Vect^\chi,-):\Shv(G)^{\on{q-}\!\chi}\commod(\DGCat)\to \DGCat$$
is \emph{not} necessarily conservative.

\medskip

A parallel fact is true once we replace $\Shv(G)^{\on{q-}\!\chi}$ by a closely related category $\Shv(G)^{\on{alm-}\!\chi}$ (see \secref{sss:i-q}).

\end{rem}

\sssec{}

Set
$$\Shv(G)^{\on{q-mon}}:=\underset{\chi}\oplus\, \Shv(G)^{\on{q-}\!\chi},$$
where the direct sum is taken over the set of (isomorphism classes of) $\chi$'s.

\medskip

Note that the tautological functor
\begin{equation} \label{e:alm char}
\Shv(G)^{\on{q-mon}}\to \Shv(G)
\end{equation}
is fully faithful:

\medskip

It is clear that $\chi_1\neq \chi_2\, \Rightarrow\, \CHom_{\Shv(G)}(\chi_1,\chi_2)=0$, and then we use the fact that
$\Shv(G)$ is left-complete in its t-structure, see \cite[Theorem 1.1.6]{AGKRRV1}.

\sssec{}

By \corref{c:alm const}, we obtain that the functor $\on{mult}^!$ induces a comonoidal
structure on $\Shv(G)^{\on{q-mon}}$.

\medskip

In addition, the $\sotimes$ operation makes $\Shv(G)^{\on{q-mon}}$
into a symmetric monoidal category. These two structures combine to a commutative Hopf algebra
structure on $\Shv(G)^{\on{q-mon}}$.

\sssec{}

Note that we have an equivalence in $\tDGCat$:
$$\Shv(G)^{\on{q-mon}}\commod(\DGCat)\simeq \underset{\chi}\oplus\, \Shv(G)^{\on{q-}\!\chi}\commod(\DGCat).$$

\sssec{}

Let
$$\Shv(G)^{\on{mult-decomp}}\subset \Shv(G)$$
be the full subcategory, consisting of objects $\CF\in \Shv(G)$, for which
$\on{mult}^!(\CF)$ belongs to the full subcategory
$$\Shv(G)\otimes \Shv(G)\subset \Shv(G\times G).$$

\medskip

By \corref{c:alm const}, we have
\begin{equation} \label{e:alm char decomp}
\Shv(G)^{\on{q-mon}}\subset \Shv(G)^{\on{mult-decomp}}
\end{equation}

In \secref{ss:decomp} we will prove:

\begin{thm} \label{t:decomp}
The inclusion \eqref{e:alm char decomp} is an equivalence.
\end{thm}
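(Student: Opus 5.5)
We need to show that every $\CF\in \Shv(G)$ with $\on{mult}^!(\CF)\in \Shv(G)\otimes \Shv(G)$ lies in $\Shv(G)^{\on{q-mon}}$. The idea is to use the comonoidal structure to turn such an $\CF$ into a comodule over $\Shv(G)$ inside the world of honest DG categories, and then to show that such comodules are forced to be (quasi-)monodromic.

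The first step is a formal reduction. Set $\bC:=\Shv(G)^{\on{mult-decomp}}$. By associativity of $\on{mult}$, the functor $\on{mult}^!$ should restrict to a coaction
$$\bC\to \bC\otimes \Shv(G)$$
satisfying the square \eqref{e:assoc X naive}. One must check that the first tensor factor of $\on{mult}^!(\CF)$ again lies in $\bC$. For this, apply $(\on{mult}\times\on{id})^!$ and use that $\Shv(G)\otimes\Shv(G)\otimes\Shv(G)\subset \Shv(G^3)$ is closed under the relevant operations, via fully faithfulness of the Künneth functor.

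Next, restrict along the unit $e:\on{pt}\to G$. For $\CF\in\bC$ we have $\CF\simeq (\on{id}\otimes e^!)\on{mult}^!(\CF)$. Write $\on{mult}^!(\CF)$ as a colimit of objects $\CF'_i\boxtimes\CF''_i$. Reading off $\CF$ via restriction to $e\times G$ exhibits $\CF$ as a colimit of objects $\CF''_i$ that themselves "generate" translates. It is cleaner, however, to argue on cohomology sheaves. Since $\on{mult}^!$ is t-exact up to shift (because $\on{mult}$ is smooth) and the Künneth subcategory is compatible with truncations, one reduces to $\CF$ in the heart (a perverse sheaf). Left-completeness, \cite[Theorem 1.1.6]{AGKRRV1}, then lets us pass to colimits and limits of Postnikov towers. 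A further reduction to compact, i.e.\ constructible, objects should be possible using \lemref{l:q-cnst comp gen}.

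The core step concerns constructible $\CF$ in the heart with $\on{mult}^!\CF$ lying in $\Shv(G)\otimes\Shv(G)$. For constructible objects, lying in the Künneth subcategory means $\on{mult}^!\CF\simeq\bigoplus$ (a finite sum, up to extensions) of external products. Consider the singular support, or simply the restriction to generic points: $\on{mult}^!\CF$ has singular support contained in $SS(\CF')\times SS(\CF'')$ for some finite collections. Since $\on{mult}$ is a torsor, this product structure forces the singular support of $\CF$ to be invariant under translations in both directions, hence contained in the zero section. So $\CF$ is lisse on $G$. Then $\on{mult}^!\CF$ is a local system on $G\times G$ that decomposes as external products. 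Translating to $\pi_1$, the representation $\rho$ of $\pi_1(G)$ (abelian for connected $G$ in the tame-relevant sense) satisfies that $\rho\circ\on{mult}_*$ on $\pi_1(G)\times\pi_1(G)$ decomposes. Jordan–Hölder factors of a lisse sheaf $\CF$ with this property must be multiplicative rank-one local systems $\chi$. This holds because $\on{mult}^!(L)|_{G\times\{g\}}\simeq L$ for all $g$, and irreducibility combined with decomposability gives $\on{mult}^!L\simeq L\boxtimes L$, which characterizes character sheaves. Grouping Jordan–Hölder factors by $\chi$ and using the vanishing of $\Ext$ between distinct $\chi$'s, $\CF$ splits as $\bigoplus_\chi \chi\otimes(\text{q-const})$, i.e.\ $\CF\in\Shv(G)^{\on{q-mon}}$.

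\textbf{Main obstacle.} I expect the hardest step to be the passage from the abstract condition "lies in $\Shv(G)\otimes\Shv(G)$", which involves infinite colimits, to the geometric statement that $\CF$ is lisse. In particular, one must ensure that truncation and constructibility reductions preserve the mult-decomposability. I would first try to use that the Künneth subcategory is exactly the set of objects whose singular support is a product (in the sense of \cite{AGKRRV1}, "sheaves with finite singular support"-type arguments) to make this step rigorous.
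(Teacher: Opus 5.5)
Your outline matches the paper's: use $t$-exactness of $\on{mult}^![-\dim G]$ and of $\boxtimes$ to pass to perverse cohomologies, reduce to irreducible objects, show they are lisse, then get rank one and multiplicativity by restricting $\on{mult}^!L\simeq L_1\boxtimes L_2$ to the slices through the unit. The coaction construction and the restriction along $e$ in your first two steps are not needed.

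The genuine difference is the lisseness step. The paper first reduces to $\CF$ irreducible, so that $\on{mult}^!\CF\simeq\CF_1\boxtimes\CF_2$. It then compares maximal lisse loci: $\on{mult}^{-1}(U)=U_1\times U_2$, and since the left side is stable under $(g,h)\mapsto(ga,a^{-1}h)$, this forces $U_1=U_2=U=G$.

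Your singular-support argument also works, but not via ``translation invariance''. A nonzero $\xi\in SS(\CF)_x$ pulls back along $\on{mult}$ to covectors at the points $(g,g^{-1}x)$ with both components nonzero. Hence $\bigcup_i SS(\CF'_i)$ would contain nonzero covectors over every point of $G$. That is impossible for a closed conic subset of $T^*G$ of dimension $\dim G$: its projectivized nonzero part has dimension at most $\dim G-1$. This needs Beilinson's dimension theorem (equidimensionality suffices, which matters since in characteristic $p$ singular support need not be Lagrangian), the smooth pullback formula, and the bound $SS(\CF'\boxtimes\CF'')\subset SS(\CF')\times SS(\CF'')$. It has the merit of not requiring irreducibility. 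But you must pass to Jordan--H\"older constituents anyway for the rank-one step, so it buys nothing over the elementary argument.

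Three justifications need correcting.

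First, \lemref{l:q-cnst comp gen} does not give your reduction to constructible objects. It concerns the target category and says nothing about splitting a mult-decomposable sheaf into mult-decomposable constructible pieces. What does the job, and what disposes of your ``main obstacle'', is that the heart of the essential image of $\Shv(G)\otimes\Shv(G)\to\Shv(G\times G)$ is closed under subquotients. Its constructible part consists of perverse sheaves all of whose Jordan--H\"older constituents are external products, and a constructible subobject of a filtered colimit of such already embeds into some term. Since $\on{mult}^![-\dim G]$ is exact on hearts and preserves irreducibility (it is smooth with connected fibers), irreducible subquotients of a mult-decomposable perverse sheaf are again mult-decomposable. This is exactly the step the paper uses.

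Second, your premise that $\on{mult}^!(L)|_{G\times\{g\}}\simeq L$ for all $g$ is false as a general fact in characteristic $p>2$. For $G=\BG_a$ and $L=\CL_\psi(x^2)$, the translate by $a$ is $L\otimes\CL_\psi(2ax)\not\simeq L$. Wild sheaves must be handled (the character sheaves of $\BG_a$ are Artin--Schreier sheaves), so no abelian-monodromy shortcut is available either. Only $g=1$ is needed. Restricting $\on{mult}^!L\simeq L_1\boxtimes L_2$ to $G\times\{1\}$ and to $\{1\}\times G$ and using irreducibility gives $\on{rk}(L_1)=\on{rk}(L_2)=1$ and $L_1\simeq L\simeq L_2$.

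Third, the characterization of the K\"unneth subcategory by product singular support that you float for the obstacle is false. For example, $\CL_\psi(xy)$ on $\BA^1\times\BA^1$ is lisse but does not lie in $\Shv(\BA^1)\otimes\Shv(\BA^1)$. Only the easy inclusion is needed.
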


\ssec{Quasi-monodromic categorical representations} \label{ss:mon 1}

\sssec{}

Denote
$$G\mmod^{\on{q-}\!\chi}:=\bi(\Shv(G)^{\on{q-}\!\chi})\commod(\AGCat).$$

Note that since $\Shv(G)^{\on{q-}\!\chi}\in \DGCat$ is dualizable, we can identify
$$G\mmod^{\on{q-}\!\chi} \simeq \AGCat\underset{\DGCat}\otimes \left(\Shv(G)^{\on{q-}\!\chi}\commod(\DGCat)\right).$$

By a slight abuse of notation, we will denote by $\bi$ the (fully faithful functor)
$$\Shv(G)^{\on{q-}\!\chi}\commod(\DGCat)\to G\mmod^{\on{q-}\!\chi}.$$

\sssec{Example}

We have
$$\bi(\Vect^\chi)=\ul\Vect^\chi,$$
where the latter is as in \secref{sss:Vect chi}.

\sssec{}

Denote
$$G\mmod^{\on{q-mon}} :=\underset{\chi}\oplus\, G\mmod^{\on{q-}\!\chi}\simeq
\bi(\Shv(G)^{\on{q-mon}})\commod(\AGCat).$$

Note that we can identify
$$G\mmod^{\on{q-mon}} \simeq \AGCat\underset{\DGCat}\otimes (\Shv(G)^{\on{q-mon}}\commod(\DGCat)),$$
and let us denote by $\bi$ the corresponding (fully faithful) functor
$$\Shv(G)^{\on{q-mon}}\commod(\DGCat)\to G\mmod^{\on{q-mon}}.$$

\sssec{}

Note now that by the construction of the comonoidal structure on $\Shv(G)^{\on{q-mon}}$ we have a naturally defined map
of coalgebras in $\AGCat$:
\begin{equation} \label{e:mon to uShv}
\bi(\Shv(G)^{\on{q-mon}})\to \ul\Shv(G).
\end{equation}

Hence, we obtain a functor
\begin{equation} \label{e:G mon}
\bemb^{\on{q-mon}}:G\mmod^{\on{q-mon}}\to G\mmod.
\end{equation}

\begin{rem}
In fact, the map \eqref{e:mon to uShv} is compatible with commutative Hopf algebra structures on the two sides.
This makes \eqref{e:G mon} into a symmetric monoidal functor.
\end{rem}

\sssec{}

We claim:

\begin{thm} \label{t:mon cat rep} \hfill

\smallskip

\noindent{\em(a)} The functor \eqref{e:G mon} is fully faithful.

\smallskip

\noindent{\em(b)} Let $\ul\bC$ be an object of $G\mmod$, whose underlying object
of $\AGCat$ is of the form $\bi(\bC)$, where $\bC$ is a \emph{dualizable} DG category.
Then $\ul\bC$ lies in the essential image of the composition
$$\Shv(G)^{\on{q-mon}}\commod(\DGCat) \overset{\bi}\to G\mmod^{\on{q-mon}}\to G\mmod.$$
\end{thm}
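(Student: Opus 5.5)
Both parts reduce to statements about the coalgebra map \eqref{e:mon to uShv}, evaluated on schemes $X$; the inputs are \lemref{l:alm const} and \thmref{t:decomp}.

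\emph{Part (a).} I will show that \eqref{e:mon to uShv}, $\bi(\Shv(G)^{\on{q-mon}})\to \ul\Shv(G)$, is a fully faithful map of coalgebras with a well-behaved right adjoint. Then restriction of comodules along it is fully faithful. Evaluated on a scheme $X$, the map is the functor
$$\Shv(G)^{\on{q-mon}}\otimes \Shv(X)\to \Shv(G\times X).$$
This functor is fully faithful: $\Shv(G)^{\on{q-mon}}$ is compactly generated by \lemref{l:q-cnst comp gen}, and $\boxtimes$ is fully faithful on compact objects by K\"unneth. The same holds for all the iterated tensor powers $\bi(\Shv(G)^{\on{q-mon}})^{\otimes n}\to \ul\Shv(G^n)$, using \lemref{l:alm const}.

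Now take $\ul\bC,\ul\bC'\in G\mmod^{\on{q-mon}}$. Morphisms between them in either category are computed as the totalization of the cosimplicial objects of maps $\ul\bC\to \ul\bC'\otimes(\text{coalgebra})^{\otimes n}$. Postcomposition with the fully faithful maps induces equivalences term-by-term, so it gives an equivalence on the totalizations. Here I use that $\ul\bC'\otimes(-)$ preserves fully faithfulness, which holds because the embeddings admit $\AGCat$-linear continuous right adjoints. This gives (a).

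\emph{Part (b).} Take $\ul\bC=\bi(\bC)$ with $\bC$ dualizable. Its coaction is a map $\bi(\bC)\to \bi(\bC)\otimes\ul\Shv(G)$. By dualizability of $\bC$ and full faithfulness of $\bi$, this amounts to a functor
$$\bC\to \bC\otimes \Shv(G)\subset \ul\Shv(G)(\bC^\vee\text{-twisted}).$$
Concretely, it is an object $\CK\in \bC^\vee\otimes\bC\otimes\Shv(G)$, i.e. a $\Shv(G)$-valued kernel. The key step is to show that $\CK$ lies in $\bC^\vee\otimes\bC\otimes\Shv(G)^{\on{q-mon}}$.

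For this I use associativity \eqref{e:assoc X naive}. Applying $\on{mult}^!$ to the $\Shv(G)$-factor of $\CK$ gives $(\on{coact}\otimes\on{Id})\circ\on{coact}$. The latter lies in $\bC^\vee\otimes\bC\otimes\Shv(G)\otimes\Shv(G)$, not merely in $\Shv(G\times G)$-valued objects. So after pairing with arbitrary functionals on $\bC^\vee\otimes \bC$ (valued in $\Vect$, preserving compacts), every resulting object $\CF\in\Shv(G)$ has $\on{mult}^!(\CF)\in\Shv(G)\otimes\Shv(G)$. That is, $\CF\in\Shv(G)^{\on{mult-decomp}}$, and by \thmref{t:decomp} $\CF\in \Shv(G)^{\on{q-mon}}$. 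Since these functionals jointly detect membership in the subcategory $\bC^\vee\otimes\bC\otimes\Shv(G)^{\on{q-mon}}$ (again via the right adjoint to the embedding and dualizability), $\CK$ lands there.

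It follows that the coaction factors through $\bC\otimes\Shv(G)^{\on{q-mon}}$. The higher associativity data factor likewise, by the full faithfulness of tensor powers established in (a). This produces an object of $\Shv(G)^{\on{q-mon}}\commod(\DGCat)$ whose image under $\bi$ and \eqref{e:G mon} is $\ul\bC$.

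\emph{Main obstacle.} I expect the difficulty to be the detection argument in (b): passing from ``all matrix coefficients are mult-decomposable'' to ``the kernel lies in $\bC^\vee\otimes\bC\otimes\Shv(G)^{\on{q-mon}}$''. The cleanest route is to use a compact generating set of $\bC$ and the continuous right adjoint of $\Shv(G)^{\on{q-mon}}\hookrightarrow\Shv(G)$ (note the embedding does not preserve compacts, so care is needed). The deeper input is \thmref{t:decomp}, which is assumed here.
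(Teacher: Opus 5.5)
Your part (b) is essentially the paper's argument. The paper phrases it as conservativity of $\bC\otimes(\Shv(G)/\Shv(G)^{\on{q-mon}})\to \bC\otimes(\Shv(G\times G)/(\Shv(G)\otimes \Shv(G)))$, while you phrase it through matrix coefficients. Part (a), however, has a genuine gap.

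The step carrying all the weight is that $\ul\bC'\otimes(-)$ preserves full faithfulness of $\bi(\Shv(G^n)^{\on{q-mon}})\to \ul\Shv(G^n)$. Your justification --- that this embedding has an $\AGCat$-linear continuous right adjoint --- is false. The functor $\on{emb}^{\on{q-mon}}:\Shv(G)^{\on{q-mon}}\to \Shv(G)$ does not preserve compactness: the compact generator of $\Shv(G)^{\on{q-const}}$ goes to the unbounded object $\on{q-const}_G$ (see \secref{sss:q vs ind}). So as soon as $G$ has a non-trivial semisimple part, it has no continuous right adjoint; the paper points this out in the footnote in \secref{sss:Shv mon self-dual}, in contrast with $\on{emb}^{\on{mon}}$.

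The underlying object of an object of $G\mmod^{\on{q-mon}}$ is arbitrary, so $\ul\bC'(X)$ need not be dualizable. Then there is no formal reason for $\ul\bC'(X)\otimes \Shv(G)^{\on{q-mon}}\to \ul\bC'(X)\otimes \Shv(G)$ to be fully faithful. This statement, for arbitrary $\bC\in \DGCat$, is exactly \propref{p:ff mon}, and it is the real content of (a).

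The same issue affects your first step. Compact objects of $\Shv(G)^{\on{q-mon}}$ are not compact in $\Shv(G)$, so K\"unneth on compacts does not apply. The claim for $\Shv(X)$ is nonetheless true, because $\Shv(X)$ is dualizable and $\bD\otimes(-)\simeq \ul\Hom_{\DGCat}(\bD^\vee,-)$ preserves fully faithful functors.

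What you are missing is the argument for \propref{p:ff mon}, which runs as follows.
\begin{enumerate}
\item Reduce to $G$ reductive, by pulling back along $G\to G_{\on{red}}$.
\item Pass, via invariants under the finite group $G'\cap Z^0(G)$, to $G'\times Z^0(G)$.
\item The torus factor is harmless: there q-mon equals mon, and that embedding does have a continuous right adjoint.
\item For the semisimple part, after passing to the simply connected cover, q-mon equals q-const. The embedding then admits a continuous \emph{left} adjoint, given by $\CF\mapsto \on{C}^\cdot_c(G,\CF)$ in the Koszul-dual description (\propref{p:left adj ss}).
\end{enumerate}
A fully faithful functor with a continuous left adjoint stays fully faithful after tensoring with any $\bC$. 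Granting this, your totalization argument goes through. Note that the terms are only fully faithful (an equivalence in degree $0$), not equivalences as you wrote, but that suffices.

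In (b) you should likewise drop the appeal to a right adjoint. Detection holds because for dualizable $\bE$ one has $\bE\otimes \bD\simeq \ul\Hom_{\DGCat}(\bE^\vee,\bD)$, and $\Shv(G)^{\on{q-mon}}$ is closed under colimits. Lifting the higher coherences only needs full faithfulness of $\bC\otimes\Shv(G^n)^{\on{q-mon}}\to \bC\otimes \Shv(G^n)$ for dualizable $\bC$, which is formal. So (b) does not actually depend on (a).
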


We can restate \thmref{t:mon cat rep}(b) as follows:

\begin{cor} \label{c:mon cat rep}
An object $\ul\bC\in G\mmod$, for which the underlying object of $\AGCat$ is of
the form $\bi(\bC)$, where $\bC$ is a \emph{dualizable} DG category, splits as a direct sum
$$\ul\bC\simeq \underset{\chi}\oplus\, \bi(\bC^\chi),$$
where
$$\bC^\chi\in \Shv(G)^{\on{q-}\!\chi}\mmod(\DGCat).$$
\end{cor}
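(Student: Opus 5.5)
The statement to prove is \corref{c:mon cat rep}. I would derive it directly from \thmref{t:mon cat rep}(b) and the decomposition of $\Shv(G)^{\on{q-mon}}\commod(\DGCat)$ as a direct sum.

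First, I apply \thmref{t:mon cat rep}(b) to $\ul\bC$, whose underlying object is $\bi(\bC)$ with $\bC$ dualizable. This gives an object
$$\bC'\in \Shv(G)^{\on{q-mon}}\commod(\DGCat)$$
together with an isomorphism $\ul\bC\simeq \bemb^{\on{q-mon}}(\bi(\bC'))$ in $G\mmod$.

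Next, I use the equivalence in $\tDGCat$
$$\Shv(G)^{\on{q-mon}}\commod(\DGCat)\simeq \underset{\chi}\oplus\, \Shv(G)^{\on{q-}\!\chi}\commod(\DGCat),$$
which comes from $\Shv(G)^{\on{q-mon}}=\underset{\chi}\oplus\,\Shv(G)^{\on{q-}\!\chi}$ as coalgebras. Concretely, the unit of the coalgebra is the direct sum of the $\chi$-components, and these components are orthogonal idempotents for the comultiplication. So any comodule $\bC'$ splits canonically as $\underset{\chi}\oplus\,\bC^\chi$, where $\bC^\chi$ is a $\Shv(G)^{\on{q-}\!\chi}$-comodule. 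This is what the statement writes as a module for the dual algebra, $\Shv(G)^{\on{q-}\!\chi}\mmod(\DGCat)$, via dualizability of $\Shv(G)^{\on{q-}\!\chi}$.

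Then I apply the functors $\bi$ and $\bemb^{\on{q-mon}}$. Both commute with direct sums: $\bi$ is a left adjoint, and $\bemb^{\on{q-mon}}$ is restriction along the coalgebra map \eqref{e:mon to uShv}, which preserves colimits. Since $G\mmod^{\on{q-mon}}=\underset{\chi}\oplus\,G\mmod^{\on{q-}\!\chi}$, this gives
$$\ul\bC\simeq \underset{\chi}\oplus\,\bi(\bC^\chi).$$
Here each $\bi(\bC^\chi)$ is viewed as an object of $G\mmod$ via the composition of \eqref{e:mon to uShv} with the projection onto the $\chi$-summand.

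I expect the only point requiring care to be the sense of "direct sum" in $\tAGCat$ and in $G\mmod$. In a $\DGCat$-enriched 2-category, finite and infinite coproducts coincide with products, since the 2-category is semiadditive. So I need the sum over the set of isomorphism classes of $\chi$ to be compatible on both sides; this holds because all constructions are colimit-preserving. The substantive content lies entirely in \thmref{t:mon cat rep}(b), which I take as given.
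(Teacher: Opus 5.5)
Your proposal is correct and follows the paper, which states this corollary simply as a restatement of \thmref{t:mon cat rep}(b) using the decompositions $G\mmod^{\on{q-mon}}=\underset{\chi}\oplus\, G\mmod^{\on{q-}\!\chi}$ and $\Shv(G)^{\on{q-mon}}\commod(\DGCat)\simeq \underset{\chi}\oplus\, \Shv(G)^{\on{q-}\!\chi}\commod(\DGCat)$; your argument is exactly the unpacking of that. One small slip: $\bi(\bC^\chi)$ becomes an object of $G\mmod$ via the \emph{inclusion} of the $\chi$-summand followed by \eqref{e:mon to uShv} (dually, via the dual of \eqref{e:mon to uShv} followed by the projection), not via \eqref{e:mon to uShv} composed with the projection.
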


\sssec{}

We conjecture that the assertion of \thmref{t:mon cat rep}(b) holds without the dualizability
assumption. Namely, we propose:

\begin{conj} \label{c:restr torus}
The essential image of the composition
$$\Shv(G)^{\on{q-mon}}\commod(\DGCat) \overset{\bi}\to G\mmod^{\on{q-mon}}\to G\mmod$$
consists of those objects of $G\mmod$, for which the underlying object of $\AGCat$ lies
in the essential image of the functor $\bi:\DGCat\to \AGCat$.
\end{conj}

\ssec{Proof of \thmref{t:mon cat rep}}

\sssec{Proof of \thmref{t:mon cat rep}(a)}

To prove point (a), it suffices to show that for any $\ul\bC\in \AGCat$, the functor
$$\ul\bC(X)\otimes \Shv(G)^{\on{q-mon}}\to \ul\bC(X\times G)$$
is fully faithful.

\medskip

We factor the above functor as
\begin{equation} \label{e:ff mon}
\ul\bC(X)\otimes \Shv(G)^{\on{q-mon}}\to \ul\bC(X)\otimes \Shv(G)\simeq (\ul\bC\otimes \bi(\Shv(G)))(X)\to
(\ul\bC\otimes \ul\Shv(G))(X)\simeq \ul\bC(X\times G),
\end{equation}
and we note that the third arrow is fully faithful for any $\ul\bC$ (since $\bi(\Shv(G))\to \ul\Shv(G)$ is fully faithful
and admits a right adjoint).

\medskip

Hence, the required assertion follows from the next proposition, which will be proved in \secref{ss:ff mon}:

\begin{prop} \label{p:ff mon}
The functor
$$\bC\otimes \Shv(G)^{\on{q-mon}}\to \bC\otimes \Shv(G)$$
is fully faithful for any $\bC\in \DGCat$.
\end{prop}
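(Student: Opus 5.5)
Write $\bC\otimes \Shv(G)^{\on{q-mon}}\simeq \underset{\chi}\oplus\, \bC\otimes \Shv(G)^{\on{q-}\!\chi}$. The proof splits into two parts: first show that each summand embeds fully faithfully, and then show that different summands are mutually orthogonal inside $\bC\otimes \Shv(G)$. Tensoring with $\chi$ is an autoequivalence of $\Shv(G)$ carrying $\Shv(G)^{\on{q-const}}$ onto $\Shv(G)^{\on{q-}\!\chi}$. Hence for a single $\chi$ it suffices to treat the quasi-constant case, i.e., to show that
$$\bC\otimes \Shv(G)^{\on{q-const}}\to \bC\otimes \Shv(G)$$
is fully faithful for every $\bC\in\DGCat$.

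The point is that $\bC\otimes(-)$ preserves fully faithfulness of functors that admit a continuous right adjoint, since the unit isomorphism is then preserved. So the plan is to show that the embedding $\Shv(G)^{\on{q-const}}\hookrightarrow \Shv(G)$ has a continuous right adjoint. By \lemref{l:q-cnst comp gen}, $\Shv(G)^{\on{q-const}}$ is compactly generated, so this amounts to checking that the embedding sends compact generators to objects whose $\CHom$ out of them commutes with filtered colimits, or else to constructing the adjoint explicitly. I would try the explicit construction first. Inspecting the proof of \lemref{l:q-cnst comp gen} (\secref{sss:q vs ind}), I expect $\Shv(G)^{\on{q-const}}$ to be identified with modules over $\on{C}_\cdot(G)$, or with an ind-completion of unipotent local systems of finite rank. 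The right adjoint should then be $\CF\mapsto \on{const}_G\otimes \CHom(\on{const}_G,\CF)$ corrected by the unipotent monodromy. For connected $G$, $\pi_1$ is abelian and $\on{C}^\cdot(G)$ is an exterior algebra, so this adjoint should be continuous.

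If a continuous right adjoint is not available, because the embedding fails to preserve compactness, I would instead argue through the pro-unipotent completion: express $\Shv(G)^{\on{q-const}}$ as a colimit of categories of the form $\Shv(G)^{\on{q-const}}_{\leq n}$ with continuous adjoints, then pass to the colimit using that $\bC\otimes-$ commutes with colimits. I expect this step to be the main obstacle, since the footnote to \lemref{l:q-cnst comp gen} warns that the embedding does not preserve compactness.

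The orthogonality step then runs as follows. For $\chi_1\neq\chi_2$, $\CHom_{\Shv(G)}(\CF_1,\CF_2)=0$ for $\CF_i\in\Shv(G)^{\on{q-}\!\chi_i}$, which follows as in the discussion after \eqref{e:alm char} using left-completeness. After tensoring with $\bC$, I would again use the right adjoints $R_{\chi}$ of the embeddings. The composite $R_{\chi_2}\circ \iota_{\chi_1}$ vanishes because it is computed by $\CHom$'s between compact generators of the two subcategories, and $\bC\otimes(-)$ preserves this vanishing. Finally, full faithfulness on the infinite direct sum reduces to the finite case, because $\bC\otimes \Shv(G)$ is compactly generated by objects whose $\CHom$ commutes with direct sums.
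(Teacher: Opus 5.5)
Your central step fails. As soon as $G$ has a nontrivial semisimple part, the embedding $\Shv(G)^{\on{q-const}}\hookrightarrow \Shv(G)$ has \emph{no} continuous right adjoint. The source is compactly generated, so a continuous right adjoint exists iff the embedding preserves compactness, and it does not: the compact generator of $\Shv(G)^{\on{q-const}}\simeq \Sym(W^\vee[-1])\mod$ goes to the unbounded, non-constructible object $\on{q-const}_G$ (\secref{sss:q vs ind}).

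The description you guessed, an ind-completion of finite-rank unipotent local systems, is essentially that of $\Shv(G)^{\on{alm-const}}\simeq \on{C}^\cdot(G)\mod$. Its inclusion does have a continuous right adjoint, because it is generated by the compact object $\on{const}_G$. But $\Shv(G)^{\on{q-const}}$ is the left completion of $\Shv(G)^{\on{alm-const}}$ and is strictly bigger.

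Your fallback does not repair this. Take any version in which the pieces are compactly generated full subcategories of $\Shv(G)^{\on{q-const}}$ whose inclusions into $\Shv(G)$ have continuous right adjoints. Such pieces are generated by objects compact in $\Shv(G)$, i.e.\ constructible ones. Constructible q-constant objects already lie in $\Shv(G)^{\on{alm-const}}$, so any colimit of such pieces lands inside $\Shv(G)^{\on{alm-const}}$. (Your argument is fine for a torus, by \propref{p:q mon torus}.) The orthogonality step has the same defect, since it again uses continuous right adjoints $R_\chi$. The passage to the infinite sum also invokes compact generation of $\bC\otimes\Shv(G)$, which fails for an arbitrary $\bC\in\DGCat$.

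The missing idea is to use a \emph{left} adjoint instead. For $G$ semisimple the embedding admits one (\propref{p:left adj ss}). Under the Koszul duality $\Sym(W)\mod^{\on{ren}}\simeq \Sym(W^\vee[-1])\mod$ it is $\CF\mapsto \on{C}^\cdot_c(G,\CF)$. The adjunction is checked by computing $\CHom(\CF,\on{q-const}_G)$, using the presentation of $\on{q-const}_G$ as a limit of $\sP_i^\vee\underset{\on{C}^\cdot(G)}\otimes\omega_G$. A left adjoint is automatically continuous, so $\bC\otimes(-)$ preserves the adjunction and the invertible counit, hence full faithfulness.

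The general case is then obtained by reductions rather than by a $\chi$-by-$\chi$ analysis:
\begin{itemize}
\item pass from $G$ to $G_{\on{red}}$ via $p^*$;
\item pass to $G'\times Z^0(G)$ and take invariants under the finite group $G'\cap Z^0(G)$, since limits preserve fully faithful functors;
\item split off the torus factor, where the right-adjoint argument you had in mind does apply;
\item pass to the simply connected cover of the semisimple part, where $\Shv(G)^{\on{q-mon}}=\Shv(G)^{\on{q-const}}$ and the left adjoint applies.
\end{itemize}
Nontrivial characters are thus absorbed by the torus factor and the finite-group invariants, and no separate orthogonality argument is needed.
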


\sssec{Proof of \thmref{t:mon cat rep}(b)}

It suffices to show that if $\bC$ is equipped with a map \eqref{e:coact naive} that makes
\eqref{e:assoc X naive} commute, then the essential image of \eqref{e:coact naive} lies
in
$$\bC\otimes \Shv(G)^{\on{q-mon}}\subset \bC\otimes \Shv(G),$$
provided that $\bC$ is dualizable.

\medskip

I.e., it suffices to show that the functor
$$\bC\otimes (\Shv(G)/\Shv(G)^{\on{q-mon}})\to \bC\otimes \left(\Shv(G\times G)/(\Shv(G)\otimes \Shv(G))\right),$$
induced by $\on{mult}^!$, is conservative.

\medskip

The functor
$$\Shv(G)/\Shv(G)^{\on{q-mon}}\to \Shv(G\times G)/(\Shv(G)\otimes \Shv(G))$$
is conservative by \thmref{t:decomp}.

\medskip

Now, for any conservative functor $\bD_1\to \bD_2$, the functor
$$\bC\otimes \bD_1\to \bC\otimes \bD_2$$
stays conservative if $\bC$ is dualizable: indeed, we can rewrite $\bC\otimes \bD$ as
$$\ul\Hom_{\DGCat}(\bC^\vee,\bD).$$

\qed

\ssec{The maximal quasi-monodromic subcategory} \label{ss:q-mon 2}

In this subsection we will show that given an object $\ul\bC\in G\mmod$, there is a well-defined operation that extracts its
maximal quasi-monodromic subobject.

\sssec{}

Consider again the functor $\bemb^{\on{q-mon}}$ of \eqref{e:G mon}. We claim that it admits a right adjoint, to be denoted
\begin{equation} \label{e:extract q-mon}
\ul\bC\mapsto \ul\bC^{\on{q-mon}}.
\end{equation}

Namely, the category $\ul\bC^{\on{q-mon}}(X)$
is the full subcategory of $\ul\bC(X)$ that consists of objects $\bc$ for which
$$\on{coact}_X(\bc)\in \ul\bC(X\times G)$$
belongs to the full subcategory
$$\ul\bC(X)\otimes \Shv(G)^{\on{q-mon}}\subset \ul\bC(X\times G)$$
(see \eqref{e:ff mon} above).

\medskip

The tautological embedding
$$\on{emb}^{\on{q-mon}}:\ul\bC^{\on{q-mon}}\to \ul\bC$$
is the counit of the $(\bemb^{\on{q-mon}},(\bemb^{\on{q-mon}})^R)$-adjunction.

\medskip

The unit of the adjunction is an isomorphism, since $\bemb^{\on{q-mon}}$ is fully faithful.

\sssec{}

Let us describe the functor \eqref{e:extract q-mon} in the geometric situation.

\medskip

For a prestack $\CY$ equipped with an action of $G$, denote by
\begin{equation} \label{e:q-mon shvs}
\Shv(\CY)^{\on{q-mon}}\subset \Shv(\CY)
\end{equation}
the full subcategory consisting of objects $\CF$, for which
$$\on{act}^!(\CF), \quad \on{act}:G\times \CY\to \CY$$
lies in the full subcategory
$$\Shv(G)^{\on{q-mon}}\otimes \Shv(\CY)\subset \Shv(G)\otimes \Shv(\CY) \subset \Shv(G\times \CY).$$

\medskip

It is clear that (for any $\CY$), the object $\Shv(\CY)^{\on{q-mon}}\in \DGCat$ naturally upgrades to an object of
$\Shv(G)^{\on{q-mon}}\commod(\DGCat)$.

\sssec{}

For $\CY$ as above, consider $\ul\Shv(\CY)\in G\mmod$. Consider the corresponding object
$$\ul\Shv(\CY)^{\on{q-mon}}\in G\mmod,$$
see \eqref{e:extract q-mon}.

\medskip

It is clear that for $X\in \Sch$,
$$\ul\Shv(\CY)^{\on{q-mon}}(X)=\Shv(\CY\times X)^{\on{q-mon}}$$
as subcategories of
$$\ul\Shv(\CY)(X)=\Shv(\CY\times X).$$

\sssec{}

We will now describe the sub-object
$$\ul\Shv(\CY)^{\on{q-mon}}\subset \ul\Shv(\CY)$$
in a particular situation.

\medskip

\begin{prop} \label{p:q-mon restr}
Suppose that $\CY$ is algebraic stack on which $G$ acts with finitely many orbits. Then the natural map
$$\bi(\Shv(\CY)^{\on{q-mon}})\to \ul\Shv(\CY)^{\on{q-mon}}$$
is an isomorphism. In particular, the functor
$$\Shv(\CY)^{\on{q-mon}}\otimes \Shv(X)\to \Shv(\CY\times X)^{\on{q-mon}}, \quad X\in \Sch$$
is an equivalence.
\end{prop}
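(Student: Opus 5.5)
We prove that for every $X\in \Sch$ the fully faithful functor
$$\Shv(\CY)^{\on{q-mon}}\otimes \Shv(X)\to \Shv(\CY\times X)^{\on{q-mon}}$$
is essentially surjective. The isomorphism $\bi(\Shv(\CY)^{\on{q-mon}})\simeq \ul\Shv(\CY)^{\on{q-mon}}$ is just this statement read value-wise.

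\emph{Reduction to a single orbit.} Stratify $\CY$ by its finitely many $G$-orbits. Each orbit $\CO$ is a locally closed substack, $G$-stable, with $G$ acting transitively. Both sides are compatible with the recollement for a closed $G$-stable substack $\iota:\CZ\hookrightarrow\CY$ with open complement $j$:
\begin{itemize}
\item the functors $\iota^!,\iota_*,j^*,j_*$ commute with the $G$-action, so they preserve the q-monodromic subcategories;
\item tensoring with $\Shv(X)$ preserves the recollement, because $\Shv(X)$ is dualizable.
\end{itemize}
Now take $\CF\in \Shv(\CY\times X)^{\on{q-mon}}$. It is an extension of $\iota_*\iota^!\CF$ by $j_*j^*\CF$ (in the appropriate order of the fiber sequence). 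Once each piece is known to lie in the essential image, so does $\CF$, since the essential image is closed under extensions. By induction on the number of orbits, this reduces us to the case $\CY=G/H$, with $H$ the stabilizer of a point.

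\emph{The homogeneous case.} For $\CY=G/H$ we have $\CY\times X/G\simeq X/H$, and $G$-equivariant objects on $\CY\times X$ correspond to $H$-equivariant objects on $X$. I would transport the q-monodromic condition through the projection $\pi:G\times X\to G/H\times X$. The condition says that $\on{act}^!\CF$ lies in $\Shv(G)^{\on{q-mon}}\otimes\Shv(\CY\times X)$. Pulling back along $\pi$, with $\pi$ smooth, this amounts to $\pi^!\CF$ lying in $\Shv(G)^{\on{q-mon}}\otimes \Shv(X)$. By \lemref{l:alm const} and the decomposition $\Shv(G)^{\on{q-mon}}=\oplus_\chi \Shv(G)^{\on{q-}\!\chi}$, $\pi^!\CF$ then splits as
$$\pi^!\CF\simeq\oplus_\chi\,(\chi\otimes \CF_\chi),\qquad \CF_\chi\in \Shv(G)^{\on{q-const}}\otimes\Shv(X).$$
Only characters $\chi$ that are trivial on $H^0$ can contribute, since $\pi^!\CF$ is $H$-equivariant along the fibers.

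Next, descend along $\pi$ using \propref{p:ff mon} and the fact that descent data are tensored with $\Shv(X)$ termwise. The descent is a totalization, and it commutes with $-\otimes\Shv(X)$ because $\Shv(X)$ is dualizable. Using \lemref{l:alm const} for products of groups, with $H$ in place of $G$, to describe the Čech terms, this identifies $\Shv(G/H\times X)^{\on{q-mon}}$ with the tensor product $\Shv(G/H)^{\on{q-mon}}\otimes\Shv(X)$.

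The finite group $\pi_0(H)$ contributes a factor $\Rep(\pi_0 H)$. I would handle it exactly as in \lemref{l:finite grp is restr}, via the canonical isotypic splitting.

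\emph{Expected main obstacle.} The hard step is the interchange of the descent totalization with $-\otimes\Shv(X)$ inside the q-monodromic subcategories. Those subcategories are not closed under the relevant limits in an obvious way, and the embedding \eqref{e:alm const} does not preserve compactness. My first attempt would rely on the compact generation of $\Shv(G)^{\on{q-const}}$ (\lemref{l:q-cnst comp gen}). I would try to replace the totalization by a finite limit: the Čech nerve for $G\to G/H$ only involves finitely many nontrivial terms after passing to q-constant objects. If that fails, I would fall back on an explicit argument with the t-structure: the cohomologies are extensions of $\chi\boxtimes\CG$, and left-completeness of $\Shv$ would let me pass from cohomologies to arbitrary objects.
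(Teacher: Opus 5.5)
Your d\'evissage to a single orbit is the same as the paper's. Smooth descent along $\pi:G\times X\to G/H\times X$ is a viable alternative for the homogeneous case, but as written that case is not established. You assert that $\on{Tot}$ commutes with $-\otimes\Shv(X)$, and then single out exactly this interchange as the unresolved obstacle. The fallbacks you sketch are not needed, and the finite-truncation claim is unsupported.

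The missing point is the choice of cosimplicial subcategory. Do not totalize the q-monodromic subcategories of $\Shv(G\times H^n\times X)$. Those are $\Shv(G)^{\on{q-mon}}\otimes\Shv(H^n\times X)$, in which $\Shv(X)$ is not a tensor factor. Instead totalize
\[
\bC^n\otimes\Shv(X)\subset\Shv(G\times H^n\times X),\qquad \bC^n:=\Shv(G)^{\on{q-mon}}\otimes\Shv(H^n),
\]
with no condition at all along the $H^n$-directions. With this choice, \lemref{l:alm const} for $H$, the $\chi$-splitting and the separate treatment of $\pi_0(H)$ play no role. Three things must then be checked.

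(i) The \v{C}ech components of $\CF$ are $!$-pullbacks of $\pi^!\CF$ along the projections $G\times H^n\times X\to G\times X$. So they lie in $\bC^n\otimes\Shv(X)$ once $\pi^!\CF\in\Shv(G)^{\on{q-mon}}\otimes\Shv(X)$. That fact does not come from ``pulling back along the smooth map $\pi$''. It comes from writing $\pi=(\on{act}\times\on{id}_X)\circ(\on{id}_G\times\iota_{1,\CY}\times\on{id}_X)$ and restricting $\on{act}^!\CF$ along the unit section.

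(ii) The $\bC^\bullet$ form a cosimplicial subcategory of $\Shv(G\times H^\bullet)$. The structure maps acting only on the $H$-coordinates obviously preserve them. The coface map through the action map $G\times H\to G$ preserves them by \corref{c:alm const}. So the relevant cosimplicial category is literally $\bC^\bullet\otimes\Shv(X)$, and $\on{Tot}(\bC^\bullet\otimes\Shv(X))\simeq\on{Tot}(\bC^\bullet)\otimes\Shv(X)$ because $\Shv(X)$ is dualizable. There is no obstacle here.

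(iii) You must show that $\on{Tot}(\bC^\bullet)\subset\Shv(G/H)$, i.e.\ the objects $\CF_0$ with $p^!\CF_0\in\Shv(G)^{\on{q-mon}}$, lies in $\Shv(G/H)^{\on{q-mon}}$. This needs a second descent for $\on{act}^!\CF_0$, again using \corref{c:alm const}. You do not give it; you only assert the identification.

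For comparison, the paper uses the same key fact as your (i): $(p\times\on{id})^!$ sends $\Shv(\CY\times X)^{\on{q-mon}}$ into $\Shv(G)^{\on{q-mon}}\otimes\Shv(X)$. It avoids descent altogether. Since the functor is fully faithful, it suffices that its right adjoint be conservative. Using \lemref{l:triple}, the paper shows that $\boxtimes^R$ sends $\Shv(\CY\times X)^{\on{q-mon}}$ into $\Shv(\CY)^{\on{q-mon}}\otimes\Shv(X)$, so it provides that right adjoint. Composing with $\iota_{1,\CY}^!\otimes\on{Id}$ gives $(\iota_{1,\CY}\times\on{id})^!=(\iota_{1,G}\times\on{id})^!\circ(p\times\on{id})^!$. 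Since $(p\times\on{id})^!$ is conservative, it remains that $\iota_{1,G}^!\otimes\on{Id}_{\Shv(X)}$ be conservative on $\Shv(G)^{\on{q-mon}}\otimes\Shv(X)$. This holds because $\iota_{1,G}^!$ is conservative on $\Shv(G)^{\on{q-mon}}$ and $\Shv(X)$ is dualizable.

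Your route, once repaired, trades \lemref{l:triple} and the conservativity of the $!$-fiber for the comonoidal structure of \corref{c:alm const} plus descent bookkeeping. In exchange, it produces a preimage of $\CF$ directly in $\on{Tot}(\bC^\bullet)\otimes\Shv(X)$.
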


\sssec{}

Note that as a particular case (for $G=1$) we obtain:

\begin{cor} \label{c:restricted stack}
Let $\CY$ be an algebraic stack with finitely many isomorphism classes of points. Then the object
$$\ul\Shv(\CY)\in \AGCat$$
is restricted, i.e., the functor
$$\Shv(\CY)\otimes \Shv(X)\to \Shv(\CY\times X), \quad X\in \Sch$$
is an equivalence.
\end{cor}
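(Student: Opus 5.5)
The plan is to deduce the statement directly from \propref{p:q-mon restr} by specializing to the trivial group $G=1$, and to check that in this case the quasi-monodromic conditions become vacuous.

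First, for $G=1$ the only abelian character sheaf is the trivial one. Moreover $\Shv(\on{pt})^{\on{q-const}}=\Vect$, since every complex of vector spaces is an extension of shifts of the constant sheaf. Hence $\Shv(G)^{\on{q-mon}}=\Shv(G)=\Vect$.

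Consequently, for any prestack $\CY$ with the trivial action, the defining condition of \eqref{e:q-mon shvs} is automatic: $\on{act}^!$ is the identity, and $\Shv(G)^{\on{q-mon}}\otimes \Shv(\CY)=\Shv(\CY)$. So $\Shv(\CY)^{\on{q-mon}}=\Shv(\CY)$. Likewise, the construction \eqref{e:extract q-mon} gives $\ul\Shv(\CY)^{\on{q-mon}}=\ul\Shv(\CY)$, and $\ul\Shv(\CY)^{\on{q-mon}}(X)=\Shv(\CY\times X)$ for every $X\in\Sch$.

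Next, I will match the hypotheses. The orbits of the trivial group on $\CY$ are exactly the isomorphism classes of $k$-points. Hence ``$G$ acts with finitely many orbits'' is precisely the assumption that $\CY$ has finitely many isomorphism classes of points. \propref{p:q-mon restr} then says that $\bi(\Shv(\CY))\to \ul\Shv(\CY)$ is an isomorphism. Equivalently, $\Shv(\CY)\otimes\Shv(X)\to\Shv(\CY\times X)$ is an equivalence for all $X\in\Sch$, which is the assertion.

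The real content therefore lies in \propref{p:q-mon restr}. If one wanted an argument independent of it, I would proceed as follows:
\begin{itemize}
\item Stratify $\CY$ into finitely many locally closed reduced substacks. Since $k$ is algebraically closed, each stratum is of the form $\on{pt}/H_i$ for an algebraic group $H_i$.
\item Use the recollement (gluing) description of $\Shv(\CY)$ and of $\Shv(\CY\times X)$ along this stratification. Tensoring with the dualizable category $\Shv(X)$ preserves the gluing data, so the problem reduces to $\CY=\on{pt}/H$.
\item Factor $\on{pt}/H\to\on{pt}/\pi_0(H)$. The finite part is handled as in \lemref{l:finite grp is restr}, via the canonical isotypic splitting.
\item For $\on{pt}/H^\circ$ with $H^\circ$ connected, $\Shv(\on{pt}/H^\circ)$ is generated by the constant sheaf with endomorphisms $\on{C}^\cdot(\on{pt}/H^\circ)$. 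The claim then becomes that $\Shv(X\times\on{pt}/H^\circ)$ is generated by objects pulled back from $X$.
\end{itemize}
This last point, generation and the correct (renormalized) module description for the connected classifying stack, is where I expect the main obstacle to lie. I would handle it using the smooth cover $X\to X\times\on{pt}/H^\circ$ and the comonadic description with $H^\circ$ connected.
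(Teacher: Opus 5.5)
Your argument is correct and is exactly the paper's: the corollary is obtained as the case $G=1$ of \propref{p:q-mon restr}, where the quasi-monodromic conditions become vacuous and the orbits are the isomorphism classes of points. Your optional independent route is not needed. Note that the paper's proof of that proposition, specialized to $G=1$, already reduces by devissage to $\on{pt}/H$. There it only uses that $\boxtimes^R$ is conservative, since composing it with $\iota^!\otimes\on{Id}$ for the cover $\iota:\on{pt}\to\on{pt}/H$ gives the conservative functor $(\iota\times\on{id})^!$. So the renormalized module description you flagged as the main obstacle never arises.
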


\sssec{}

As another particular case, for $\CY=G$ equipped with the left action on itself, we obtain:

\begin{cor} \label{c:q-mon grp retsr}
The object $\ul\Shv(G)^{\on{q-mon}}\in \AGCat$
$$\ul\Shv(G)^{\on{q-mon}}(X):=\Shv(X\times G)^{\on{q-mon}}$$
is restricted, i.e., the functor
$$\Shv(G)^{\on{q-mon}}\otimes \Shv(X)\to \Shv(G\times X)^{\on{q-mon}}, \quad X\in \Sch$$
is an equivalence.
\end{cor}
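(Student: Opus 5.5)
The plan is to obtain this as the special case $\CY=G$ of \propref{p:q-mon restr}, with $G$ acting on itself by left multiplication. This action is transitive, so it has a single orbit and the hypothesis of \propref{p:q-mon restr} holds. The proposition then gives that
$$\bi(\Shv(G)^{\on{q-mon}})\to \ul\Shv(G)^{\on{q-mon}}$$
is an isomorphism. Evaluating on $X\in \Sch$ and using the identification $\ul\Shv(\CY)^{\on{q-mon}}(X)=\Shv(\CY\times X)^{\on{q-mon}}$ stated just before \propref{p:q-mon restr}, this is exactly the claimed equivalence $\Shv(G)^{\on{q-mon}}\otimes \Shv(X)\simeq \Shv(G\times X)^{\on{q-mon}}$.

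There is one compatibility to check. For $\CY=G$, the subcategory \eqref{e:q-mon shvs} consists of those $\CF$ with $\on{mult}^!(\CF)\in \Shv(G)^{\on{q-mon}}\otimes\Shv(G)$. It must be shown that this agrees with the intrinsic $\Shv(G)^{\on{q-mon}}=\oplus_\chi \Shv(G)^{\on{q-}\!\chi}$.
\begin{itemize}
\item The inclusion $\subset$ holds by \corref{c:alm const} (equivalently \lemref{l:alm const}).
\item For the reverse inclusion, restrict $\on{mult}^!(\CF)$ along $G\times\{1\}\hookrightarrow G\times G$. This recovers $\CF$. The restriction lands in $\Shv(G)^{\on{q-mon}}\otimes \Shv(\on{pt})=\Shv(G)^{\on{q-mon}}$, because pullback in the second factor commutes with the tensor decomposition.
\end{itemize}

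To avoid relying on the (later-proved) proposition as a black box, I would also record the direct argument, since it is short in this case.

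Full faithfulness of $\Shv(G)^{\on{q-mon}}\otimes\Shv(X)\to \Shv(G\times X)$ follows by factoring it as
$$\Shv(G)^{\on{q-mon}}\otimes \Shv(X)\to \Shv(G)\otimes\Shv(X)\to \Shv(G\times X).$$
The first arrow is fully faithful by \propref{p:ff mon} with $\bC=\Shv(X)$. The second is fully faithful by the basic Künneth property of $\Shv$.

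The image lies in the q-mon part. The coaction on a product $\CF_1\boxtimes\CF_2$ with $\CF_1$ quasi-monodromic is $\on{mult}^!(\CF_1)\boxtimes\CF_2$. This lies in $\Shv(G)^{\on{q-mon}}\otimes\Shv(G)\otimes\Shv(X)$ by \corref{c:alm const}, and the condition is closed under colimits.

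Essential surjectivity is the only non-formal step. Take $\CF\in \Shv(G\times X)^{\on{q-mon}}$, so that $\on{act}^!(\CF)\in \Shv(G)^{\on{q-mon}}\otimes \Shv(G\times X)$ for $\on{act}(g,h,x)=(gh,x)$. Restrict along the section $h=1$, that is, along $G\times X\to G\times G\times X$, $(g,x)\mapsto (g,1,x)$. The composite with $\on{act}$ is the identity, so this restriction returns $\CF$. On the other hand, it carries $\Shv(G)^{\on{q-mon}}\otimes \Shv(G\times X)$ into $\Shv(G)^{\on{q-mon}}\otimes \Shv(X)$. The point needing care is that this $!$-restriction is $\Shv(G)^{\on{q-mon}}$-linear, i.e. it is $\on{Id}\otimes(\text{restriction})$ on the tensor product. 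This should follow because the restriction only involves the $G\times X$ factor, and because the embeddings into $\Shv(G\times G\times X)$ are compatible via \eqref{e:ff mon}. Together with full faithfulness, this proves the corollary.
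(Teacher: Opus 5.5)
Your proposal is correct and matches the paper, which obtains the corollary as the case $\CY=G$ (with the left action) of \propref{p:q-mon restr}. Your identification of the action-defined $\Shv(G)^{\on{q-mon}}$ with the intrinsic one is a harmless detail the paper leaves implicit.

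Your direct argument is also sound. It is the step in the paper's proof of \propref{p:q-mon restr} where $(p\times \on{id})^!$ is rewritten as $(\on{id}_G\times \iota_{1,\CY}\times \on{id})^!\circ (\on{act}\times \on{id})^!$. For $H=1$ this step already gives essential surjectivity, so the detour through $\boxtimes^R$ and \lemref{l:triple} is unnecessary.
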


\ssec{Proof of \propref{p:q-mon restr}} \label{ss:q-mon 3}

\sssec{}

We have to show that for any scheme $X$, the map
\begin{equation} \label{e:emb q-mon again}
\Shv(\CY)^{\on{q-mon}}\otimes \Shv(X)\to \Shv(\CY\times X)^{\on{q-mon}}
\end{equation}
is an equivalence. Since the functor in question is fully faithful, we only have to establish the essential surjectivity.

%

\medskip

By devissage, we can assume that $G$ acts transitively on $\CY$, i.e.,
$$\CY\simeq G/H,$$
where $H$ is an algebraic group that maps to $G$ (not necessarily injectively). We will use this assumption
later in the proof.

\sssec{}

It suffices to show that the right adjoint of \eqref{e:emb q-mon again} is conservative. Consider the right
adjoint $\boxtimes^R$ of the functor
$$\Shv(\CY)\otimes \Shv(X) \overset{\boxtimes}\to \Shv(\CY\times X).$$

We claim that it sends $\Shv(\CY\times X)^{\on{q-mon}}$ to $\Shv(\CY)^{\on{q-mon}}\otimes \Shv(X)$,
thereby providing a right adjoint to \eqref{e:emb q-mon again}.

\medskip

Indeed, this follows from the fact that the diagram
$$
\CD
\Shv(G)^{\on{q-mon}}\otimes \Shv(\CY)\otimes \Shv(X) @<{\on{Id}\otimes \boxtimes^R}<< \Shv(G)^{\on{q-mon}}\otimes \Shv(\CY\times X) \\
@VVV @VVV \\
\Shv(G)\otimes \Shv(\CY)\otimes \Shv(X) @<{\on{Id}\otimes \boxtimes^R}<< \Shv(G)\otimes \Shv(\CY\times X) \\
@VVV @VVV \\
\Shv(G\times \CY)\otimes \Shv(X) @<{\boxtimes^R}<< \Shv(G\times \CY\times X) \\
@A{\on{act}^!\otimes \on{Id}}AA @AA{(\on{act}\times \on{id})^!}A \\
\Shv(\CY)\otimes \Shv(X) @<{\boxtimes^R}<< \Shv(\CY\times X)
\endCD
$$
commutes (the commutation of the bottom square is obtained by passing to the left adjoints along all arrows, and the commutation
of the middle square is \lemref{l:triple} below).

\medskip

Thus, it remains to show that the functor $\boxtimes^R$ is conservative when restricted to $\Shv(\CY\times X)^{\on{q-mon}}$.

\sssec{}

Let us compose the functor $\boxtimes^R$ with the functor
$$(\iota_{1,\CY})^!\otimes \on{Id}:\Shv(\CY)\otimes \Shv(X)\to \Shv(X),$$
where $\iota_{1,\Y}$ is the map $\on{pt}\to G/H\simeq \CY$ corresponding to the unit point.

\medskip

This composition is isomorphic to
$$(\iota_{1,\CY}\times \on{id})^!:\Shv(\CY\times X)\to \Shv(X)$$
(one sees that by passing to left adjoints).

\medskip

Hence, it is sufficient to show that the functor $(\iota_{1,\CY}\times \on{id})^!$ is conservative, when restricted to
the subcategory $\Shv(\CY\times X)^{\on{q-mon}}$.

\sssec{}

Let us now take $\CY=G/H$. We can rewrite the functor $(\iota_{1,\CY}\times \on{id})^!$ as
$$(\iota_{1,G}\times \on{id})^! \circ (\on{p}\times \on{id})^!, \quad p:G\to G/H\simeq \CY.$$

The functor $(p\times \on{id})^!$ is conservative. We claim that it sends $\Shv(\CY\times X)^{\on{q-mon}}$ to
$$\Shv(G)^{\on{q-mon}}\otimes \Shv(X)\subset \Shv(G\times X).$$ 

Indeed, this follows by rewriting $(p\times \on{id})^!$ as
$$(\on{id}_G\times \iota_{1,\CY}\times \on{id})^!\circ (\on{act}\times \on{id})^!.$$

\medskip

Hence, we are reduced to showing that the functor
$$(\iota_{1,G}\times \on{id})^!:\Shv(G\times X)\to \Shv(X)$$
is conservative, when restricted to $\Shv(G)^{\on{q-mon}}\otimes \Shv(X)$.

\sssec{}

Note, however, that the latter functor identifies with
$$\iota_{1,G}^!\otimes \on{id}_{\Shv(X)}.$$

Now the assertion follows from the fact that $\iota_{1,G}^!$ is conservative on $\Shv(G)^{\on{q-mon}}$: indeed, since
$\Shv(X)$ is dualizable, tensoring by it preserves conservativity.

\qed[\propref{p:q-mon restr}]

\sssec{}

We claim:

\begin{lem} \label{l:triple}
For a triple of tame prestacks stacks $\CY_1,\CY_2,\CY_3$, the diagram
$$
\CD
\Shv(\CY_1)\otimes \Shv(\CY_2) \otimes \Shv(\CY_3) @<{\on{Id}\otimes \boxtimes^R}<< \Shv(\CY_1)\otimes \Shv(\CY_2\times \CY_3) \\
@VVV @VVV \\
\Shv(\CY_1\times \CY_2) \otimes \Shv(\CY_3) @<{\boxtimes^R}<< \Shv(\CY_1\times \CY_2 \times \CY_3),
\endCD
$$
which a priori commutes up to a natural transformation, actually commutes.
\end{lem}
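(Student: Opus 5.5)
The plan is to reduce to the case where all three factors are replaced by compact generators and then compare the two functors on them. The vertical arrows are the fully faithful $\boxtimes$-functors (for tame prestacks, $\ul\Shv$ is strictly symmetric monoidal, so $\Shv(\CY_1)\otimes\Shv(\CY_2\times\CY_3)\simeq \ul\Shv(\CY_2\times\CY_3)(\CY_1)$). The a priori natural transformation
$$(\boxtimes\otimes\on{Id})\circ(\on{Id}\otimes\boxtimes^R)\to \boxtimes^R\circ(\on{Id}_{\Shv(\CY_1)}\otimes\boxtimes)$$
is the Beck--Chevalley map. It comes from the commutative square of left adjoints, namely $\boxtimes\circ(\boxtimes\otimes\on{Id})\simeq\boxtimes\circ(\on{Id}\otimes\boxtimes)$ as functors $\Shv(\CY_1)\otimes\Shv(\CY_2)\otimes\Shv(\CY_3)\to\Shv(\CY_1\times\CY_2\times\CY_3)$.

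First I observe that $\on{Id}_{\Shv(\CY_1)}\otimes\boxtimes^R$ is the right adjoint of $\on{Id}\otimes\boxtimes$. This holds because tensoring a $\DGCat$-adjunction with a fixed category preserves adjunctions once the right adjoint is continuous. Here $\boxtimes^R$ is continuous, since for tame prestacks $\boxtimes$ preserves compacts (the categories are dualizable, and $\boxtimes$ comes from $\ul\Shv$ being symmetric monoidal on tame objects).

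The key step is to identify $\Shv(\CY_1)\otimes\Shv(\CY_2\times\CY_3)$ with $\ul\Shv(\CY_2\times\CY_3)(\CY_1)$, and the bottom-right corner similarly. Under this identification, $\on{Id}\otimes\boxtimes^R$ becomes the value on $\CY_1$ of the 1-morphism in $\AGCat$
$$\ul\Shv(\CY_2\times\CY_3)\to \ul\Shv(\CY_2)\otimes\bi(\Shv(\CY_3)),$$
which is right adjoint to the canonical map $\ul\Shv(\CY_2)\otimes\bi(\Shv(\CY_3))\to\ul\Shv(\CY_2)\otimes\ul\Shv(\CY_3)$. That canonical map is induced by $\bi(\Shv(\CY_3))\to\ul\Shv(\CY_3)$, which is fully faithful and has a right adjoint $\ul\Shv(\CY_3)\to\bi\be\ul\Shv(\CY_3)$ in $\AGCat$. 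In other words, the bottom arrow is $\on{Id}_{\ul\Shv(\CY_1\times\CY_2)}\otimes(\text{counit-adjoint for }\CY_3)$ evaluated at $\on{pt}$, and the top arrow is $\on{Id}_{\ul\Shv(\CY_2)}\otimes(\text{same})$ evaluated at $\CY_1$. Both are then instances of one $\AGCat$-linear morphism $\ul\Shv(\CY_3)\to\bi\be(\ul\Shv(\CY_3))$ tensored with $\ul\Shv(\CY_2)$ and evaluated on $\CY_1$ versus evaluated at a point after absorbing $\CY_1$.

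Commutation then follows from functoriality of evaluation in $\AGCat$: for $\ul\bC\in\AGCat$ and $X$, the evaluation $\ul\bC(X)$ equals $(\ul\bC\otimes\ul\Shv(X))(\on{pt})$ compatibly with 1-morphisms. Applying this with $\ul\bC=\ul\Shv(\CY_2)\otimes\ul\Shv(\CY_3)$ and $X=\CY_1$ gives that the two composites agree, and the Beck--Chevalley map is the resulting identification.

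The main obstacle is the claim that the right adjoint $\boxtimes^R$ really is the evaluation of an $\AGCat$-morphism, i.e. that the adjoint of $\bi\be\ul\Shv(\CY_3)\to\ul\Shv(\CY_3)$ is compatible with evaluations (is ``strict''). If this is not directly available from \cite{GRV}, I would fall back on checking the Beck--Chevalley map on generators. Take objects $\CF_1\boxtimes\CF$ with $\CF_1\in\Shv(\CY_1)$ compact and $\CF\in\Shv(\CY_2\times\CY_3)$. Compute both sides by mapping in from compact $\CG_1\boxtimes\CG_2\boxtimes\CG_3$. Both sides give $\CHom(\CG_1,\CF_1)\otimes\CHom(\CG_2\boxtimes\CG_3,\CF)$, using the Künneth formula for $\CHom$ between compacts on tame prestacks. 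Since both functors are continuous, this suffices.
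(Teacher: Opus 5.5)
There is a genuine gap, and it comes from the identification you call the key step. The claim $\Shv(\CY_1)\otimes\Shv(\CY_2\times\CY_3)\simeq\ul\Shv(\CY_2\times\CY_3)(\CY_1)$ is false. The right-hand side is $\Shv(\CY_1\times\CY_2\times\CY_3)$, and the external product into it is in general only fully faithful; this failure of K\"unneth is the reason $\AGCat$ is introduced. Strict monoidality of $\ul\Shv$ is the statement $\ul\Shv(\CY_1)\otimes\ul\Shv(\CY_2\times\CY_3)\simeq\ul\Shv(\CY_1\times\CY_2\times\CY_3)$ for the tensor product \emph{in} $\AGCat$. By contrast, $\Shv(\CY_1)\otimes\Shv(\CY_2\times\CY_3)$ is the value at $\on{pt}$ of $\bi(\Shv(\CY_1))\otimes\ul\Shv(\CY_2\times\CY_3)$. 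Consequently both of your descriptions, ``$\on{Id}_{\ul\Shv(\CY_1\times\CY_2)}\otimes R$ at $\on{pt}$'' and ``$\on{Id}_{\ul\Shv(\CY_2)}\otimes R$ at $\CY_1$'', are the \emph{bottom} arrow $\boxtimes^R_{\CY_1\times\CY_2,\CY_3}$. The top arrow $\on{Id}_{\Shv(\CY_1)}\otimes\boxtimes^R$ never enters, so the comparison you make is a tautology.

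Your formal idea can be set up correctly. The top row is the value at $\on{pt}$ of $\bi(\Shv(\CY_1))\otimes\ul\Shv(\CY_2)\otimes(-)$, the bottom row is that of $\ul\Shv(\CY_1)\otimes\ul\Shv(\CY_2)\otimes(-)$, and the vertical arrows are induced by $\bi(\Shv(\CY_1))\to\ul\Shv(\CY_1)$. With this bookkeeping the lemma would follow from the interchange law, but only \emph{given} that the right adjoint of $\bi(\Shv(\CY_3))\to\ul\Shv(\CY_3)$ exists as a 1-morphism of $\AGCat$. Concretely, this means the functors $\boxtimes^R$ commute with $(f\times\on{id})^!$ and $(f\times\on{id})_*$. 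That is exactly the point you flag as the obstacle and leave open, and its natural proof uses the same Verdier-duality input as the lemma itself.

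The fallback does not close the gap. The objects $\CG_1\boxtimes\CG_2\boxtimes\CG_3$ do not generate $\Shv(\CY_1\times\CY_2)\otimes\Shv(\CY_3)$; they generate only the essential image of the left vertical arrow $L:=\boxtimes\otimes\on{Id}$. Maps out of them see the Beck--Chevalley map only after applying $L^R$, and that is automatically an isomorphism:
$$L^R\circ\boxtimes^R_{\CY_1\times\CY_2,\CY_3}\circ\boxtimes_{\CY_1,\CY_2\times\CY_3}\simeq(\on{Id}\otimes\boxtimes^R)\circ\boxtimes^R_{\CY_1,\CY_2\times\CY_3}\circ\boxtimes_{\CY_1,\CY_2\times\CY_3}\simeq\on{Id}\otimes\boxtimes^R,$$
by full faithfulness of $\boxtimes_{\CY_1,\CY_2\times\CY_3}$. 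This is why your two Hom computations agree for free. The real content of the lemma is that $\boxtimes^R_{\CY_1\times\CY_2,\CY_3}(\CF_1\boxtimes\CF)$ has no component orthogonal to that image. Detecting this requires test objects $\CF_{12}\boxtimes\CF_3$ with $\CF_{12}\in\Shv(\CY_1\times\CY_2)$ arbitrary, and for these no K\"unneth formula is available.

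The missing idea is the paper's. After reducing to schemes by tameness, pair via Verdier duality and integrate out the first factor:
$$\langle \CF_{12}\boxtimes \CF_3,\CF_1\boxtimes (\boxtimes\circ \boxtimes^R(\CF_{23}))\rangle_{Y_1\times Y_2\times Y_3}\simeq \langle (p_2)_*(\CF_{12}\sotimes p_1^!(\CF_1))\boxtimes \CF_3,\boxtimes\circ \boxtimes^R(\CF_{23})\rangle_{Y_2\times Y_3}.$$
The test object is now an external product on $Y_2\times Y_3$. Then use that $\boxtimes$ and $\boxtimes^R$ are mutually dual under Verdier self-duality (because $\boxtimes$ commutes with $\BD$ on compacts) to delete $\boxtimes\circ\boxtimes^R$. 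Both sides of the Beck--Chevalley map then become $\langle \CF_{12}\boxtimes \CF_3,\CF_1\boxtimes \CF_{23}\rangle_{Y_1\times Y_2\times Y_3}$.
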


\begin{proof}

The tameness assumption readily reduces the assertion to the case when $\CY_i=Y_i$ are schemes. Fix objects
$$\CF_1\in \Shv(Y_1),\,\, \CF_{23}\in \Shv(Y_2\times Y_3),\,\, \CF_{12}\in \Shv(\CY_1\times \CY_2),\,\,\CF_3\in \Shv(\CY_3).$$
It suffices to show that the map
\begin{equation} \label{e:triple}
\langle \CF_{12}\boxtimes \CF_3,\CF_1\boxtimes (\boxtimes\circ \boxtimes^R(\CF_{23}))\rangle_{Y_1\times Y_2\times Y_3}\to
\langle \CF_{12}\boxtimes \CF_3,\boxtimes\circ \boxtimes^R(\CF_1\boxtimes \CF_{23})\rangle_{Y_1\times Y_2\times Y_3}
\end{equation}
is an isomorphism, where
$$\langle -,-\rangle_Y:\Shv(Y)\times \Shv(Y)\to \Vect$$
denotes the Verdier duality pairing.

\medskip

We rewrite the left-hand side of \eqref{e:triple} as
\begin{equation} \label{e:triple LHS}
\langle (p_2)_*(\CF_{12}\sotimes p_1^!(\CF_1))\boxtimes \CF_3,\boxtimes\circ \boxtimes^R(\CF_{23})\rangle_{Y_2\times Y_3}.
\end{equation}

\medskip

Note that for a pair of schemes $Z_1$ and $Z_2$, the functors
$$\boxtimes:\Shv(Z_1)\otimes \Shv(Z_2)\leftrightarrows \Shv(Z_1\times Z_2):\boxtimes^R$$
are mutually \emph{dual} under the Verdier duality identifications
$$\Shv(Z_i)^\vee \simeq \Shv(Z_i).$$
(Indeed, this is a formal consequence of the fact that the functor $\boxtimes$ commutes with Verdier duality on compact objects.)

\medskip

Hence, we can rewrite \eqref{e:triple LHS} as
\begin{equation} \label{e:triple LHS 1}
\langle (p_2)_*(\CF_{12}\sotimes p_1^!(\CF_1))\boxtimes \CF_3,\CF_{23}\rangle_{Y_2\times Y_3},
\end{equation}
and further as
\begin{equation} \label{e:triple LHS 2}
\langle \CF_{12}\boxtimes \CF_3,\CF_1\boxtimes \CF_{23}\rangle_{Y_1\times Y_2\times Y_3}.
\end{equation}

Similarly, we rewrite right-hand side of \eqref{e:triple} as
$$\langle \CF_{12}\boxtimes \CF_3,\CF_1\boxtimes \CF_{23}\rangle_{Y_1\times Y_2\times Y_3},$$
which is the same as \eqref{e:triple LHS 2}.

\medskip

Unwinding, we obtain that this is the same identification as the one induced by the natural transformation.

\end{proof}

\ssec{Quasi-monodromic vs monodromic}

In this subsection we introduce a variant of the ``quasi-constant/quasi-monodromic" condition, which
exhibits better functorial properties.

\medskip

We note, however, that when $G=T$ is a torus (which is the case of primary interest), the two conditions
are the same.

\sssec{} \label{sss:i-q}

Let
\begin{equation} \label{e:ind const}
\Shv(G)^{\on{alm-const}}\subset \Shv(G)
\end{equation}
be the full DG subcategory generated under colimits by the constant sheaf.

\medskip

We have an inclusion
$$\Shv(G)^{\on{alm-const}}\subset \Shv(G)^{\on{q-const}},$$
but it is \emph{not} an equality as long as $G$ has a non-trivial semi-simple part, see \secref{sss:q vs ind}.

\sssec{}

We have the following counterpart of \lemref{l:alm const}:

\begin{lem} \label{l:alm const prod}
For a pair of groups $G_1,G_2$, the functor
$$\boxtimes:\Shv(G_1)^{\on{alm-const}}\otimes \Shv(G_2)^{\on{alm-const}}\to \Shv(G_1\times G_2)$$
is an equivalence onto
$$\Shv(G_1\times G_2)^{\on{alm-const}}\subset \Shv(G_1\times G_2).$$
\end{lem}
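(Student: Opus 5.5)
The plan is to reduce the statement to a computation with a single compact generator on each side. By definition, $\Shv(G_i)^{\on{alm-const}}$ is generated under colimits by $\on{const}_{G_i}$, so it is compactly generated by this one object (viewed as compact in the subcategory, even though it need not be compact in $\Shv(G_i)$). Hence
$$\Shv(G_i)^{\on{alm-const}}\simeq A_i\mod, \qquad A_i:=\CHom_{\Shv(G_i)}(\on{const}_{G_i},\on{const}_{G_i})^{\on{op}}\simeq \on{C}^\cdot(G_i)$$
(up to the appropriate shift and twist, given by cochains on $G_i$ with the cup product). Consequently the source of $\boxtimes$ is $(A_1\otimes A_2)\mod$, compactly generated by $\on{const}_{G_1}\otimes\on{const}_{G_2}$.

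The functor $\boxtimes$ sends this generator to $\on{const}_{G_1}\boxtimes\on{const}_{G_2}\simeq\on{const}_{G_1\times G_2}$, and it preserves colimits. Its essential image is therefore contained in the full subcategory generated under colimits by $\on{const}_{G_1\times G_2}$, i.e. in $\Shv(G_1\times G_2)^{\on{alm-const}}$. For full faithfulness, I will compute endomorphisms and use the Künneth formula:
$$\CHom(\on{const}_{G_1\times G_2},\on{const}_{G_1\times G_2})\simeq \on{C}^\cdot(G_1\times G_2)\simeq \on{C}^\cdot(G_1)\otimes \on{C}^\cdot(G_2).$$
This identification must be compatible with the algebra structures, which holds because the map is induced by $\boxtimes$, a monoidal functor with respect to composition.

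The main obstacle is that $\on{const}_{G_1\times G_2}$ is not compact in $\Shv(G_1\times G_2)$. Knowing that $\Hom$ out of the generator agrees on the generator therefore does not immediately give full faithfulness on arbitrary colimits of the generator. To get around this, I will take the subcategory generated by $\on{const}_{G_1\times G_2}$ as a category in its own right, in which the generator is compact by construction, so that it is equivalent to $\on{End}(\on{const})^{\on{op}}\mod$ by the Barr–Beck/Schwede–Shipley recognition. The functor $\boxtimes$ then factors through this category as the map $(A_1\otimes A_2)\mod\to \on{End}(\on{const})^{\on{op}}\mod$ induced by the algebra map above, which is an isomorphism by Künneth. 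The point requiring care is that $\Shv(G)^{\on{alm-const}}$ means the cocomplete subcategory in $\DGCat$ generated by $\on{const}$, and not the colimit closure computed via the non-compact embedding. If instead the colimit closure inside $\Shv(G)$ is intended, I would follow the model of \lemref{l:q-cnst comp gen}: use left-completeness and the t-structure, as in the argument for full faithfulness of \eqref{e:alm char}, to show that colimits computed in $\Shv(G_1\times G_2)$ of copies of the constant sheaf agree with those in the module category.
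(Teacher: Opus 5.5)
Your strategy (one generator on each side, K\"unneth for its endomorphisms, the generator goes to the generator) is the same as the paper's. However, the ``main obstacle'' you identify does not exist, and your workaround for it is not valid.

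In the sheaf theory used in the paper, $\Shv(X)=\on{Ind}(\Shv(X)^{\on{constr}})$ for a scheme $X$. So $\on{const}_{G_1\times G_2}$, being constructible, \emph{is} compact in $\Shv(G_1\times G_2)$. The paper relies on exactly this: its proof says the functor ``preserves compacts'', and compare \secref{sss:Shv mon self-dual}. The failure of compactness you have in mind concerns $\Shv(G)^{\on{q-const}}$, whose generator $\on{q-const}_G$ is unbounded (see \lemref{l:q-cnst comp gen} and \secref{sss:q vs ind}).

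Had the constant sheaf really been non-compact, your fix would not work. The colimit closure of an object $c$ in a presentable stable category need not have $c$ compact in it, so ``compact by construction'' is false. For example, $\prod_{\BN}\ol\BQ_\ell$ generates $\Vect$ under colimits, since it has $\ol\BQ_\ell$ as a retract, but it is not compact. Likewise, $\on{Ind}$ of the thick subcategory generated by $c$ would not in general map fully faithfully to $\Shv(G_1\times G_2)$. The same objection applies to your parenthetical claim that $\on{const}_{G_i}$ is compact in $\Shv(G_i)^{\on{alm-const}}$ ``even though it need not be compact in $\Shv(G_i)$''.

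Once you use that constant sheaves are compact, your argument is complete and coincides with the paper's. The functor $\boxtimes$ is continuous and sends the compact generator $\on{const}_{G_1}\otimes \on{const}_{G_2}$ to the compact object $\on{const}_{G_1\times G_2}$. It induces an isomorphism $\on{C}^\cdot(G_1)\otimes \on{C}^\cdot(G_2)\simeq \on{C}^\cdot(G_1\times G_2)$ on endomorphisms by K\"unneth. Hence it is fully faithful, and its essential image is the colimit closure of $\on{const}_{G_1\times G_2}$, i.e.\ $\Shv(G_1\times G_2)^{\on{alm-const}}$. Your final paragraph about left-completeness and t-structures is then unnecessary.
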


\begin{proof}

The functor in question is fully faithful, preserves compacts, and
sends the compact generator of the left-hand side to the
compact generator of the right-hand side.

\end{proof}

\sssec{}

The material in Sects. \ref{ss:q-mon}-\ref{ss:mon 1} and \ref{ss:q-mon 2}-\ref{ss:q-mon 3} readily adapts to the present
context. In particular:

\medskip

\begin{itemize}

\item The functor $\on{mult}^!$ induces comonoidal structures on
$$\Shv(G)^{\on{alm-}\!\chi}:=\chi\otimes \Shv(G)^{\on{alm-const}}
\text{ and } \Shv(G)^{\on{mon}}:=\underset{\chi}\oplus\, \Shv(G)^{\on{alm-}\!\chi};$$

\medskip

\item Consider the (symmetric monoidal) category
\begin{multline*}
G\mmod^{\on{mon}}:=\underset{\chi}\oplus\, G\mmod^{\on{alm-}\!\chi}
\simeq \bi(\Shv(G)^{\on{mon}})\commod(\AGCat) \simeq \\
\simeq \AGCat\underset{\DGCat}\otimes (\Shv(G)^{\on{mon}}\commod(\DGCat)).
\end{multline*}
We have the fully faithful embeddings
$$\Shv(G)^{\on{mon}}\commod(\DGCat) \overset{\bi}\hookrightarrow G\mmod^{\on{mon}} \overset{\bemb^{\on{mon}}}\hookrightarrow G\mmod;$$

\medskip

\item The functor $\bemb^{\on{mon}}$ admits a right adjoint
$$\ul\bC\mapsto \ul\bC^{\on{mon}}$$
and the counit of the adjunction
$$\on{emb}^{\on{mon}}:\ul\bC^{\on{mon}}\to \ul\bC$$
is fully faithful;

\medskip

\item For a prestack $\CY$ with an action of $G$, the subcategory
$$\ul\Shv(\CY)^{\on{mon}}(\on{pt})\subset \ul\Shv(\CY)(\on{pt})=\Shv(\CY)$$
equals
$$\Shv(\CY)^{\on{mon}}\subset \Shv(\CY),$$
i.e., it consists of those $\CF\in \Shv(\CY)$ for which
$$\on{act}^!(\CF)\in \Shv(G)^{\on{mon}}\otimes \Shv(\CY)\subset \Shv(G\times \CY);$$

\medskip

\item A counterpart of \propref{p:q-mon restr} holds: if $\CY$ is an algebraic stack with finitely many $G$-orbits, then
the natural map
\begin{equation} \label{e:mon restr Y}
\bi(\Shv(\CY)^{\on{mon}})\to \ul\Shv(\CY)^{\on{mon}}
\end{equation}
is an isomorphism.

\end{itemize}

\medskip

We now highlight some points of difference between the ``almost" and ``quasi" situations.

\sssec{}

We claim that for a given character sheaf $\chi$, the category $\Shv(G)^{\on{alm-}\!\chi}\commod(\DGCat)$ is
generated by $\Vect^\chi$.

\medskip

In fact, we claim that for any $\bC\in \Shv(G)^{\on{alm-}\!\chi}\commod(\DGCat)$, we have
\begin{equation} \label{e:recover V chi}
\bC\simeq \Vect^\chi\underset{\Shv(\on{pt}/G)}\otimes \ul\Hom_{\Shv(G)^{\on{alm-}\!\chi}\commod(\DGCat),\DGCat}(\Vect^\chi,\bC),
\end{equation}
where we identify
$$\Shv(\on{pt}/G)\simeq \ul\Hom_{\Shv(G)^{\on{alm-}\!\chi}\commod(\DGCat),\DGCat}(\Vect^\chi,\Vect^\chi),$$
(see \cite[Theorem B.4.3]{Ga} for the proof).

\medskip

It follows formally that $G\mmod^{\on{alm-}\!\chi}$ is generated by $\ul\Vect^\chi$ and for $\ul\bC\in G\mmod^{\on{alm-}\!\chi}$ we have
\begin{equation} \label{e:recover V chi alg geom}
\ul\bC\simeq \ul\Vect^\chi\underset{\bi(\Shv(\on{pt}/G))}\otimes \uHom_{G\mmod,\AGCat}(\ul\Vect^\chi,\ul\bC),
\end{equation}

\sssec{} \label{sss:inv gen mon}

For $\bC\in  \Shv(G)^{\on{alm-const}}\commod(\DGCat)$, the right adjoint, $\on{Av}^G_*$ of the forgetful functor
\begin{equation} \label{e:emb inv}
\binv_G(\bC):=\ul\Hom_{\Shv(G)^{\on{alm-const}}\commod(\DGCat),\DGCat}(\Vect,\bC)\to \bC
\end{equation}
is conservative. This follows formally from \eqref{e:recover V chi}.

\medskip

In particular the essential image of \eqref{e:emb inv} generates the target.

\medskip

It follows formally that for $\ul\bC\in G\mmod$, the essential image of
$$\underset{\chi}\oplus\, \binv_{G,\chi}(\ul\bC)(X)\to \ul\bC^{\on{mon}}(X), \quad X\in \Sch$$
generates the target, where
$$\binv_{G,\chi}(\ul\bC):=\binv_G(\ul\bC\otimes \ul\Vect^\chi).$$

\sssec{}

Applying \secref{sss:inv gen mon} to $\bC:=\Shv(\CY)$, where $\CY$ is a prestack with an action of $G$, we obtain that the essential
image of
$$\Shv(\CY/G)\simeq \Shv(\CY)^G:=\binv_G(\Shv(\CY)):=\binv_G(\ul\Shv(\CY))(\on{pt})\to \Shv(\CY)^{\on{alm-const}}$$
generates the target, where
$$\Shv(\CY)^{\on{alm-const}}\subset \Shv(\CY)^{\on{mon}}$$
is the direct summand corresponding to the trivial $\chi$.

\medskip

Similarly, the essential image of
$$\underset{\chi}\oplus\,  \binv_{G,\chi}(\Shv(\CY))\to \Shv(\CY)^{\on{mon}}$$
generates the target, where
$$\binv_{G,\chi}(\Shv(\CY)):=\Shv(\CY)^{G,\chi}.$$

\ssec{The existence of left adjoints}

In this subsection we prove (a seemingly, technical) assertion that says that the $\on{Av}^G_!$-averaging functor
is defined on the $G$-monodromic subcategory. We will need this assertion in the sequel.

\medskip

That said, this subsection can be skipped on the first pass, and returned to when necessary.

\sssec{}

Consider
$$\bC\mapsto \bC:=\boblv_G(\bC) \text{ and } \bC\mapsto \binv_G(\bC)$$
as functors
$$\Shv(G)\commod(\DGCat)\to \DGCat.$$

They are connected by the usual adjunction
\begin{equation}  \label{e:simple inv}
\on{oblv}_G:\binv_G(\bC) \rightleftarrows \bC:\Av^G_*.
\end{equation}

\sssec{}

However, we claim:

\begin{prop} \label{p:simple inv}
The natural transformation $\on{oblv}_G$ in \eqref{e:simple inv} admits a \emph{left} adjoint when restricted
to
$$\Shv(G)^{\on{mon}}\commod(\DGCat)\subset \Shv(G)\commod(\DGCat),$$
Furthermore, we have
$$\Av^G_!\simeq \Av^G_*[\dim(G_{\on{red}})],$$
where $G_{\on{red}}$ denotes the reductive quotient of $G$.
\end{prop}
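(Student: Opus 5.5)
We reduce to a single character $\chi$ and then to the universal case. Since $\Shv(G)^{\on{mon}}=\underset{\chi}\oplus\,\Shv(G)^{\on{alm-}\!\chi}$, an object $\bC\in \Shv(G)^{\on{mon}}\commod(\DGCat)$ splits as $\underset{\chi}\oplus\,\bC^\chi$. The forgetful functor $\on{oblv}_G$ only sees the summand with $\chi$ trivial: $\binv_G(\bC^\chi)=0$ for $\chi$ nontrivial, because $\CHom(\on{const},\chi)$-type cohomology vanishes. So it suffices to construct the left adjoint on $\Shv(G)^{\on{alm-const}}\commod(\DGCat)$, and to define it as zero on the other summands. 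By \eqref{e:recover V chi} with $\chi$ trivial, every such $\bC$ has the form
$$\Vect\underset{\Shv(\on{pt}/G)}\otimes \bC',\qquad \bC':=\binv_G(\bC).$$
Under this identification, $\on{oblv}_G$ becomes the functor induced by the $\Shv(\on{pt}/G)$-linear functor $\pi^!:\Shv(\on{pt}/G)\to\Vect$ (pullback along $\pi:\on{pt}\to\on{pt}/G$), tensored with $\bC'$. The functor $\Av^G_*$ is $\pi_*$ tensored with $\bC'$.

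The universal case is thus the adjunction $\pi^!\dashv\pi_*$ of $\Shv(\on{pt}/G)$-module categories. The plan is to show that $\pi^!$ also admits a $\Shv(\on{pt}/G)$-linear \emph{left} adjoint, namely $\pi_*[\dim G_{\on{red}}]$. Functoriality of $-\underset{\Shv(\on{pt}/G)}\otimes\bC'$ in 1-morphisms and 2-morphisms then transports this adjunction to every $\bC$, naturally in $\bC$, since everything is expressed by relative tensor products. This gives the natural transformation $\on{oblv}_G$ a left adjoint with $\Av^G_!\simeq\Av^G_*[\dim(G_{\on{red}})]$. Linearity of the unit and counit is automatic, because $\pi^!$ is linear and its adjoints inherit (here strict, by the group argument of \lemref{l:adj in G mod}) linear structures.

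The key computation, and the main obstacle, is to show that $\pi_![\ldots]$ (the left adjoint of $\pi^!$ on the relevant subcategory) exists and agrees with $\pi_*$ up to the shift. Concretely, $\pi^!$ has a left adjoint on $\Vect$ precisely when $\pi^!(\ol\BQ_\ell)=\omega_{\on{pt}/G}$ corepresents a continuous functor, i.e. when $\CHom(\omega_{\on{pt}/G},-)$ commutes with colimits. We must first reduce from $G$ to $G_{\on{red}}$: the unipotent radical $U$ has $\on{pt}/U$ with contractible cohomology, so $\Shv(\on{pt}/G)\to\Shv(\on{pt}/G_{\on{red}})$ is an equivalence, and the unipotent part contributes shifts $[2\dim U]$ that cancel against the $!$ versus $*$ normalizations. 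For reductive $G_{\on{red}}$, the stack $\on{pt}/G_{\on{red}}$ is safe (its stabilizers have reductive identity component, so in the sense of \cite{DrGa1} every constructible sheaf is compact). Hence $\omega$ is compact, and the left adjoint is $\CHom(\omega,-)=\on{C}^\cdot(\on{pt}/G,-\otimes\omega^{-1})$. This we then identify with $\pi_*$ shifted by $[\dim G_{\on{red}}]$, using $\omega_{\on{pt}/G}\simeq\on{const}[-2\dim G]$ together with the Verdier self-duality relation for cochains on $\on{pt}/G_{\on{red}}$ (equivalently the duality between $\on{C}^\cdot(G)$ and $\on{C}_\cdot(G)$, which for reductive $G$ differ by a shift of $\dim G_{\on{red}}$ combined with the Tate twist, as in the proof of \propref{p:coinv to inv}). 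Getting this shift exactly right, i.e. tracking how $[-2\dim G]$ from $\omega$ combines with the top-degree class $[\dim G_{\on{red}}]$, is the delicate bookkeeping. The rest is formal.
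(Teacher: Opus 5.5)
Your reduction steps coincide with the paper's. You discard the summands with $\chi$ nontrivial, where $\binv_G$ vanishes. You write $\bC\simeq \Vect\underset{\Shv(\on{pt}/G)}\otimes \binv_G(\bC)$ via \eqref{e:recover V chi}. You then reduce to the universal functor $\Shv(\on{pt}/G)\to\Vect$ given by the fiber at the point.

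The gap is in the one non-formal step, which does not work as written, for three reasons.
\begin{itemize}
\item You silently switch from $\pi:\on{pt}\to\on{pt}/G$ to the structure map $\on{pt}/G\to\on{pt}$: the object $\omega_{\on{pt}/G}$ and the functor $\CHom(\omega_{\on{pt}/G},-)$ belong to the latter.
\item $\CHom(\omega_{\on{pt}/G},-)$ is the global-sections (i.e.\ $G$-invariants) functor $\Shv(\on{pt}/G)\to\Vect$. It is the \emph{right} adjoint of $V\mapsto V\otimes\omega_{\on{pt}/G}$. It cannot be a left adjoint of the fiber functor, since it has the same source and target as the fiber functor.
\item The claim that $\on{pt}/G_{\on{red}}$ is safe is false. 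Safety requires the neutral components of stabilizers to be unipotent; the paper uses exactly this in the non-example after \corref{c:stack 2-adj}. Concretely, $\omega_{\on{pt}/\BG_m}$ is not compact.
\end{itemize}
So this step neither produces $\Av^G_!$ nor compares it with $\Av^G_*$. Your appeal to \propref{p:coinv to inv} is also misplaced: the shift $[-2\cdot\bullet\cdot\dim(G)]$ there has nothing to do with $\dim(G_{\on{red}})$.

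What is actually needed, and what the paper does, runs as follows.
\begin{itemize}
\item Identify $\Shv(\on{pt}/G)\simeq \on{C}_\cdot(G)\mod$, with $\on{C}_\cdot(G)$ an algebra under convolution. Under this identification the fiber functor becomes the forgetful functor.
\item Its left and right adjoints are the free and cofree module functors. They send $\ol\BQ_\ell$ to $\on{C}_\cdot(G)$ and to $\on{C}_\cdot(G)^\vee\simeq \on{C}^\cdot(G)$, respectively.
\item Existence of the left adjoint is therefore not the obstacle; geometrically it is $\pi_!$, since $\pi$ is schematic.
\item The whole content is the isomorphism $\on{C}^\cdot(G)\simeq \on{C}_\cdot(G)[-\dim(G_{\on{red}})]$ \emph{as $\on{C}_\cdot(G)$-modules}. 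This is Poincar\'e duality for the compact form of $G_{\on{red}}$, after discarding the unipotent radical.
\end{itemize}
This must be an isomorphism of objects $\pi_!\ol\BQ_\ell\simeq \pi_*\ol\BQ_\ell[\ldots]$ of $\Shv(\on{pt}/G)$, i.e.\ of modules. Matching the fibers $H^\cdot_c(G)$ and $H^\cdot(G)$ as vector spaces is not enough. You mention this duality only parenthetically and without the module structure, yet it is the heart of the proof.

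Once you have it, the linearity needed to tensor the adjunction with $\binv_G(\bC)$ comes from the Hopf structure on $\on{C}_\cdot(G)$, via the tensor identity for free modules. It does not come from \lemref{l:adj in G mod}, which concerns comodule structures rather than linearity over $\Shv(\on{pt}/G)$.
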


\sssec{}

Before we prove this proposition, let us see how it applies to $G\mmod$.
Let $\CY$ be a prestack equipped with an action of $G$; let $\pi_\CY$ denote the projection
$\CY\to \CY/G$.

\medskip

Consider the functor
$$(\pi_\CY)_!:\Shv(\CY)\to \Shv(\CY/G).$$

\medskip

For a map $f:\CY_1\to \CY_2$, we have the Beck-Chevalley natural transformations
\begin{equation} \label{e:BC !-Av}
(\pi_{\CY_2})_!\circ f_*\to (f/G)_*\circ (\pi_{\CY_1})_! \text{ and } (\pi_{\CY_2})_!\circ f^!\to (f/G)^!\circ (\pi_{\CY_1})_!
\end{equation}

We obtain:

\begin{cor} \label{c:!-Av mon} \hfill

\smallskip

\noindent{\em(a)}
The natural transformations \eqref{e:BC !-Av} are isomorphisms when restricted to
$$\Shv(\CY_1)^{\on{mon}}\subset \Shv(\CY_1) \text{ and } \Shv(\CY_2)^{\on{mon}}\subset \Shv(\CY_2),$$
respectively.

\smallskip

\noindent{\em(b)} There exists a canonical isomorphism
$$(\pi_\CY)_!\simeq (\pi_\CY)_*[\dim(G_{\on{red}})]$$
as functors $\Shv(\CY)^{\on{mon}}\to  \Shv(\CY)$.
\end{cor}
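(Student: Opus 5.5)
The plan is to deduce \corref{c:!-Av mon} from \propref{p:simple inv}, applied to $\bC:=\Shv(\CY)^{\on{mon}}$. This is an object of $\Shv(G)^{\on{mon}}\commod(\DGCat)$ whose invariants are $\binv_G(\Shv(\CY)^{\on{mon}})$.

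First I would identify these invariants with the full subcategory of $\Shv(\CY/G)\simeq \binv_G(\Shv(\CY))$ whose pullback to $\CY$ is monodromic. By the definition of $\Shv(\CY)^{\on{mon}}$ via $\on{act}^!$, the coaction on $\Shv(\CY)^{\on{mon}}$ is the restriction of that on $\Shv(\CY)$. Since the embedding is fully faithful, the cosimplicial totalization computing $\binv_G$ is a full subcategory of the one for $\Shv(\CY)$. Equivariant objects have $\on{act}^!\CF\simeq \on{const}_G\boxtimes \CF$, which lies in $\Shv(G)^{\on{mon}}\otimes\Shv(\CY)$, so the equivariant category is all of $\Shv(\CY/G)$. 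Under this identification $\oblv_G=\pi_\CY^!$ and $\Av^G_*=(\pi_\CY)_*$, restricted to $\Shv(\CY)^{\on{mon}}$; here I use that $\pi_\CY^!$ lands in monodromic objects.

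By \propref{p:simple inv}, $\oblv_G$ then admits a left adjoint $\Av^G_!\simeq \Av^G_*[\dim(G_{\on{red}})]$ on $\Shv(\CY)^{\on{mon}}$. On the other hand, $(\pi_\CY)_!$ is by definition the partially defined left adjoint of $\pi_\CY^!$. Its restriction to $\Shv(\CY)^{\on{mon}}$ is the left adjoint of $\pi_\CY^!\colon \Shv(\CY/G)\to\Shv(\CY)^{\on{mon}}$, so it agrees with $\Av^G_!$. This gives part (b).

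For part (a), I would use naturality in $\bC$ in \propref{p:simple inv}. The map $f^!$ is a morphism of comodules $\Shv(\CY_2)^{\on{mon}}\to\Shv(\CY_1)^{\on{mon}}$. The map $f_*$ is also $G$-linear; it preserves monodromic objects by base change along $\on{act}$ for the coaction. For any $G$-linear functor $\Phi$, the Beck--Chevalley map $\Av^G_!\circ\Phi\to \binv_G(\Phi)\circ\Av^G_!$ corresponds, under $\Av^G_!\simeq\Av^G_*[d]$, to the map for $\Av^G_*$. That map is an isomorphism because $\Av^G_*$ is computed by the coaction and is strictly compatible with any comodule functor. Taking $\Phi=f^!,f_*$ gives exactly the two maps \eqref{e:BC !-Av}, via $\binv_G(f^!)=(f/G)^!$ and $\binv_G(f_*)=(f/G)_*$.

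The main obstacle is this last compatibility check. One must verify that the isomorphism $\Av^G_!\simeq\Av^G_*[d]$ of \propref{p:simple inv} is natural in $\bC$ for $G$-linear functors, not merely an objectwise abstract isomorphism. I would first try to show that it comes from a universal map at the level of $\Shv(G)^{\on{mon}}$ itself: a map $\on{const}_G[d]\to$ the object that corepresents $\Av^G_!$. Functoriality would then follow formally, since both averaging functors are given by the action of an object of $\Shv(G)^{\on{mon}}$.
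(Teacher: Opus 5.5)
Your proposal is correct and follows the paper's route: the paper states this corollary as an immediate consequence of \propref{p:simple inv}, and you apply that proposition to $\Shv(\CY)^{\on{mon}}$, whose invariants are $\Shv(\CY/G)$, with $\oblv_G=\pi_\CY^!$ and $\Av^G_*=(\pi_\CY)_*$. The naturality you flag for (a) comes from the proof of that proposition. There $\oblv_G$ becomes $p^*\otimes\on{Id}$ (with $p^*$ as in \eqref{e:pullback to pt}) under the identification $\bC\simeq\Vect\underset{\Shv(\on{pt}/G)}\otimes\binv_G(\bC)$, which is functorial in $\bC$. Hence $\Av^G_!\simeq (p^*)^L\otimes\on{Id}$ commutes with every $G$-linear functor $\on{Id}\otimes\binv_G(\Phi)$, so the Beck--Chevalley maps are visibly isomorphisms and no comparison with $\Av^G_*$ is needed; note that the Beck--Chevalley map for $\Av^G_*$ points in the opposite direction anyway.
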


\sssec{Proof of \propref{p:simple inv}}

It is clear that for the proof of the proposition, we can replace $\Shv(G)^{\on{mon}}\commod(\DGCat)$ by $\Shv(G)^{\on{alm-const}}\commod(\DGCat)$.

\medskip

For $\bC\in \Shv(G)^{\on{alm-const}}\commod(\DGCat)$, by \eqref{e:recover V chi}, we identify
$$\bC\simeq \Vect\underset{\Shv(\on{pt}/G)}\otimes \binv_G(\bC).$$

In terms of this identification $\on{oblv}_G$ corresponds to the functor
$$\bC\simeq \Shv(\on{pt}/G)\underset{\Shv(\on{pt}/G)}\otimes \binv_G(\bC)\to \Vect\underset{\Shv(\on{pt}/G)}\otimes \binv_G(\bC),$$
induced by
\begin{equation} \label{e:pullback to pt}
\Shv(\on{pt}/G) \overset{p^*}\to \Shv(\on{pt})\simeq \Vect.
\end{equation}

\sssec{}

We identify
$$\Shv(\on{pt}/G)\simeq \on{C}_\cdot(G)\mod,$$
where $\on{C}_\cdot(G)$ is an associative algebra with respect to convolution.

\medskip

In terms of this identification, \eqref{e:pullback to pt} corresponds to the forgetful functor
$$\on{C}_\cdot(G)\mod\to \Vect.$$

The left and right adjoints of this functor are given by sending the 1-dimensional vector space to
$$\on{C}_\cdot(G) \text{ and } (\on{C}_\cdot(G))^\vee\simeq \on{C}^\cdot(G),$$
respectively.

\sssec{}

This makes the assertion manifest, since
$$\on{C}^\cdot(G)\simeq \on{C}_\cdot(G)[-\dim(G_{\on{red}})]$$
as $\on{C}_\cdot(G)$-modules.

\qed[\propref{p:simple inv}]

\ssec{The 2-adjointability of the embedding of the monodromic subcategory} \label{ss:2 adj mon}

We now come to the key point of difference between the monodoromic vs quasi-monodromic situations. Namely,
in this subsection we will show that the 1-morphism
$$\bemb^{\on{mon}}:G\mmod^{\on{mon}} \hookrightarrow  G\mmod$$
is 2-adjointable, in contrast with $\bemb^{\on{q-mon}}$, which is not in general 2-adjointable.

\sssec{}  \label{sss:Shv mon self-dual}

Note now that since $\Shv(G)^{\on{mon}}$ is generated by objects that are compact in $\Shv(G)$, for any scheme $X$, the embedding
$$\on{emb}^{\on{mon}}:
\Shv(G)^{\on{mon}}\otimes \Shv(X)\simeq \Shv(G\times X)^{\on{mon}}\to \Shv(G\times X)$$
admits a right adjoint.\footnote{We emphasize again
that this is \emph{not} the case for the corresponding functor $\on{emb}^{\on{q-mon}}:\Shv(G)^{\on{q-mon}}\to \Shv(G)$.}
(We draw the reader's attention to the notational distinction: $\bemb^{\on{mon}}$
denotes the 2-categorical functor, whereas $\on{emb}^{\on{mon}}$ denotes the 1-categorical one.)

\medskip

Moreover, Verdier duality on $G\times X$ induces an identification
$$(\Shv(G\times X)^{\on{mon}})^\vee\simeq \Shv(G\times X)^{\on{mon}}.$$
and under the self-duality and the Verdier self-duality of $\Shv(G\times X)$, the right adjoint of $\on{emb}^{\on{mon}}$
identifies with its dual.

\medskip

The latter interpretation makes it clear that for a map $f:X_1\to X_2$, the natural transformations
$$(\on{Id}\otimes f)^!\circ (\on{emb}^{\on{mon}})^R\to (\on{emb}^{\on{mon}})^R\circ (\on{id}\times f)^! \text{ and }
(\on{Id}\otimes f)_*\circ (\on{emb}^{\on{mon}})^R\to (\on{emb}^{\on{mon}})^R\circ (\on{id}\times f)_*$$
are isomorphisms.

\medskip

Hence, when we view $\on{emb}^{\on{mon}}$ as a map
$$\bi(\Shv(G)^{\on{mon}})\to \ul\Shv(G)$$
in $\AGCat$, it admits a right adjoint (see \cite[Proposition 2.7.2]{GRV}).

\sssec{}

By duality, the structure on $\Shv(G)^{\on{mon}}$ of commutative Hopf algebra in $\DGCat$ induces on
it a structure of cocommutative Hopf algebra. When viewed as such we will denote it by $\Shv(G)^{\on{mon}}_{\on{co}}$.

\medskip

Taking the dual of the canonical map
$$\on{emb}^{\on{mon}}:\bi(\Shv(G)^{\on{mon}})\to \ul\Shv(G)$$
we obtain a map
\begin{equation} \label{e:dual of emb mon}
(\on{emb}^{\on{mon}})^\vee:\ul\Shv(G)_{\on{co}}\to \bi(\Shv(G)^{\on{mon}}_{\on{co}}),
\end{equation}
compatible with the Hopf algebra structures.

\medskip

The functor \eqref{e:dual of emb mon} admits a left adjoint, namely,
$$((\on{emb}^{\on{mon}})^\vee)^L\simeq ((\on{emb}^{\on{mon}})^R)^\vee.$$

When we think of $\ul\Shv(G)_{\on{co}}$ (resp., $\Shv(G)^{\on{mon}}_{\on{co}}$)
as $\ul\Shv(G)$ (resp., $\Shv(G)^{\on{mon}}$), then $((\on{emb}^{\on{mon}})^\vee)^L$ is simply
the original embedding
$$\on{emb}^{\on{mon}}:\bi(\Shv(G)^{\on{mon}})\to \ul\Shv(G).$$

\sssec{}

Let us consider $\ul\Shv(G)_{\on{co}}$ and $\bi(\ul\Shv(G)^{\on{mon}}_{\on{co}})$
as modules over $\ul\Shv(G)_{\on{co}}$, where the action on $\bi(\ul\Shv(G)^{\on{mon}}_{\on{co}})$ is
via $(\on{emb}^{\on{mon}})^\vee$.

\medskip

A priori, the functor $((\on{emb}^{\on{mon}})^\vee)^L$ is \emph{left-lax} compatible with the actions
of $\ul\Shv(G)_{\on{co}}$. However, we claim:

\begin{lem} \label{l:left is strict}
The structure on $((\on{emb}^{\on{mon}})^\vee)^L$ of left-lax compatibility with the $\ul\Shv(G)_{\on{co}}$-actions
is strict.
\end{lem}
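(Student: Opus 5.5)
The plan is to reduce the claim to a statement about the dual, \emph{right}-lax structure, and then to use the group structure to show that the lax comparison maps are invertible, in the spirit of \lemref{l:adj in G mod}.

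Under duality, left adjoints of $\ul\Shv(G)_{\on{co}}$-linear maps correspond to right adjoints of the dual $\ul\Shv(G)$-colinear maps. We have $((\on{emb}^{\on{mon}})^\vee)^L\simeq ((\on{emb}^{\on{mon}})^R)^\vee$. So it suffices to show that the right-lax compatibility of $(\on{emb}^{\on{mon}})^R:\ul\Shv(G)\to \bi(\Shv(G)^{\on{mon}})$ with the relevant (co)actions is strict. The relevant structure is the one induced by the comultiplication $\on{mult}^!$, i.e. the action of $\ul\Shv(G)_{\on{co}}$ by convolution $\on{mult}_*$.

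Concretely, the lax structure consists of a natural transformation
$$\on{mult}_*\circ (\on{Id}\otimes \on{emb}^{\on{mon}})\circ(\on{Id}\otimes (\on{emb}^{\on{mon}})^R) \to \on{emb}^{\on{mon}}\circ (\on{emb}^{\on{mon}})^R\circ \on{mult}_*,$$
evaluated on $X$-points: functors $\Shv(X\times G\times G)\to \Shv(X\times G)$, where the first $G$-factor is the acting one. I will verify invertibility in two steps.

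First, I will use the graph automorphism $(g_1,g_2)\mapsto (g_1,g_1g_2)$ of $G\times G$. It converts $\on{mult}_*$ into pushforward along the second projection, after a base change over $X\times G$. The monodromic condition on the second factor is defined via the action of $G$ on itself. By the invariance of $\Shv(G)^{\on{mon}}$ under translations and under $\on{mult}^!$ (\corref{c:alm const} and its ``alm'' variant), this automorphism preserves the subcategory $\Shv(X\times G)\otimes\Shv(G)^{\on{mon}}$. It therefore intertwines $(\on{emb}^{\on{mon}})^R$ in the second variable with $(\on{emb}^{\on{mon}})^R$ relative to $X\times G$. This is the same trick as in \lemref{l:adj in G mod}: a group rather than a monoid is what makes the shear map invertible.

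Second, after the shear the comparison becomes the Beck–Chevalley map between $(\on{emb}^{\on{mon}})^R$ and pushforward along $X\times G\to X$ in the ``parameter'' direction. That map is an isomorphism by \secref{sss:Shv mon self-dual}, which states that $(\on{emb}^{\on{mon}})^R$ commutes with $(\on{id}\times f)_*$ and $(\on{id}\times f)^!$ for any map $f$ of schemes.

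The main obstacle is the first step: checking that the shear isomorphism genuinely identifies the lax structure map with this Beck–Chevalley map, rather than merely the source and target. I would handle it by passing to Verdier duals on compact generators. Under the self-duality of $\Shv(G\times X)^{\on{mon}}$, the right adjoint is dual to the embedding, and the embedding is strictly compatible with $\on{mult}^!$ by construction. Dualizing that strict compatibility then yields strictness for $(\on{emb}^{\on{mon}})^R$ with respect to $\on{mult}_*$.
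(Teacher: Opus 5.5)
Your reduction to the right-lax structure on $R:=(\on{emb}^{\on{mon}})^R$ is the same first step as in the paper, and your two geometric steps are correct. One has $\on{mult}_*=(\on{pr}_2)_*\circ\sigma_*$ for the shear $\sigma$. Both $\sigma_*$ and $(\sigma^{-1})_*$ preserve $\Shv(X\times G)\otimes\Shv(G)^{\on{mon}}$, because $\on{mult}^!$ sends $\Shv(G)^{\on{mon}}$ to $\Shv(G)^{\on{mon}}\otimes\Shv(G)^{\on{mon}}$; hence $\sigma_*$ commutes with $\on{emb}^{\on{mon}}\circ R$. Pushforward along the parameter commutes with it by \secref{sss:Shv mon self-dual}.

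The gap is in your last paragraph, which is supposed to settle the identification you rightly flag. Dualizing the strict compatibility of $\on{emb}^{\on{mon}}$ with $\on{mult}^!$ yields only $R\circ\on{mult}_*\simeq\on{mult}_*\circ(R\otimes R)$, with the multiplication of $\bi(\Shv(G)^{\on{mon}}_{\on{co}})$ on the right. This says that $(\on{emb}^{\on{mon}})^\vee$ is linear for the action on $\bi(\Shv(G)^{\on{mon}}_{\on{co}})$ obtained by restriction along $(\on{emb}^{\on{mon}})^\vee$ itself. That is the standing hypothesis of the lemma, not its conclusion. The lax map you must invert is $\on{mult}_*\circ(\on{Id}\otimes\on{emb}^{\on{mon}}\circ R)\to\on{emb}^{\on{mon}}\circ R\circ\on{mult}_*$, in which the acting factor is \emph{not} projected. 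Comparing convolution with $a$ and with $R(a)$ on monodromic objects is exactly what the lemma asserts. So, as written, that paragraph is circular.

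The identification is in fact easy via the universal property of the coreflection. The source of the lax map lies in the monodromic subcategory (this also follows from your two steps). Hence a map from it to $\on{emb}^{\on{mon}}\circ R\circ\on{mult}_*$ is determined by its composite with the counit $\on{emb}^{\on{mon}}\circ R\to\on{Id}$. The lax map is by definition the one whose composite is $\on{mult}_*(\on{Id}\otimes\on{counit})$. The shear isomorphism $\sigma_*\circ\on{emb}^{\on{mon}}\circ R\simeq\on{emb}^{\on{mon}}\circ R\circ\sigma_*$ and the Beck--Chevalley map for $(\on{pr}_2)_*$ are each characterized by the analogous compatibility with counits. Therefore their composite is the lax map, and it is invertible.

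With this fix your argument is complete, and it is genuinely different from the paper's. The paper stays in the coaction picture and rewrites the comparison as the statement that $\on{mult}_*\circ(\on{emb}^{\on{mon}}\otimes\on{Id})$ factors through $\on{Id}\otimes R$. It proves this using two inputs: the geometric fact that convolving a monodromic object with an arbitrary one gives a monodromic object, and the multiplicativity of $R$ (the very identity your last paragraph produces).

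Your route is instead a concrete unwinding of \lemref{l:adj in G mod}. The shear is the action of the tautological $(X\times G)$-point of $G$, its invertibility is where the group structure enters, and the Beck--Chevalley isomorphism is the compatibility of a 1-morphism of $\AGCat$ with pushforwards. Indeed, \lemref{l:adj in G mod} in its left-adjoint form, applied to $(\on{emb}^{\on{mon}})^\vee$ viewed as a 1-morphism in $G\mmod$, already gives the statement.
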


\begin{proof}

By definition, we have to show that the natural transformation from
\begin{multline} \label{e:action factor 0}
\ul\Shv(G) \overset{(\on{emb}^{\on{mon}})^R}\longrightarrow \bi(\Shv(G)^{\on{mon}}) \overset{\on{mult}^!}\longrightarrow  \\
\to \bi(\Shv(G)^{\on{mon}})\otimes \bi(\Shv(G)^{\on{mon}}) \overset{\on{Id}\otimes \on{emb}^{\on{mon}}}\longrightarrow
\bi(\Shv(G)^{\on{mon}})\otimes \ul\Shv(G)
\end{multline}
to
\begin{equation} \label{e:action factor}
\ul\Shv(G) \overset{\on{mult}^!}\to \ul\Shv(G\times G) =\ul\Shv(G)\otimes \ul\Shv(G) \overset{(\on{emb}^{\on{mon}})^R\otimes \on{Id}}\longrightarrow \\
\bi(\Shv(G)^{\on{mon}})\otimes \ul\Shv(G)
\end{equation}
is an isomorphism.

\medskip

We rewrite \eqref{e:action factor 0} as
\begin{multline*} \label{e:action factor 1}
\ul\Shv(G) \overset{\on{mult}^!}\to \ul\Shv(G\times G) =\ul\Shv(G)\otimes \ul\Shv(G)
\overset{(\on{emb}^{\on{mon}})^R\otimes (\on{emb}^{\on{mon}})^R}\longrightarrow \\
\to \bi(\Shv(G)^{\on{mon}})\otimes \bi(\Shv(G)^{\on{mon}}) \overset{\on{Id}\otimes \on{emb}^{\on{mon}}} \longrightarrow
\bi(\Shv(G)^{\on{mon}})\otimes \ul\Shv(G),
\end{multline*}
which is the same as the composition of \eqref{e:action factor} with
$$\bi(\Shv(G)^{\on{mon}})\otimes \ul\Shv(G) \overset{\on{Id}\otimes (\on{emb}^{\on{mon}})^R}\longrightarrow
\bi(\Shv(G)^{\on{mon}})\otimes \bi(\Shv(G)^{\on{mon}}) \overset{\on{Id}\otimes \on{emb}^{\on{mon}}} \longrightarrow
\bi(\Shv(G)^{\on{mon}})\otimes \ul\Shv(G).$$

Thus, the statement is equivalent to the assertion that \eqref{e:action factor}
factors through the (fully faithful) embedding
$$\bi(\Shv(G)^{\on{mon}})\otimes \bi(\Shv(G)^{\on{mon}}) \overset{\on{Id}\otimes \on{emb}^{\on{mon}}}\hookrightarrow \bi(\Shv(G)^{\on{mon}})\otimes \ul\Shv(G).$$

By duality, this is equivalent to the statement that the functor
\begin{equation} \label{e:mult by mon}
\bi(\Shv(G)^{\on{mon}})\otimes \ul\Shv(G) \overset{\on{emb}^{\on{mon}}\otimes \on{Id}}\longrightarrow \ul\Shv(G)\otimes \ul\Shv(G)
\overset{\on{mult}_*}\longrightarrow \ul\Shv(G)
\end{equation}
factors via
$$\bi(\Shv(G)^{\on{mon}})\otimes \ul\Shv(G) \overset{\on{Id}\otimes (\on{emb}^{\on{mon}})^R}\twoheadrightarrow \bi(\Shv(G)^{\on{mon}})\otimes \bi(\Shv(G)^{\on{mon}}).$$

The key observation is that the functor \eqref{e:mult by mon} factors via
$$\bi(\Shv(G)^{\on{mon}}) \overset{\on{emb}^{\on{mon}}}\hookrightarrow \ul\Shv(G).$$

Hence, the functor \eqref{e:mult by mon} is isomorphic to the composition
\begin{multline} \label{e:mult by mon 1}
\bi(\Shv(G)^{\on{mon}})\otimes \ul\Shv(G) \overset{\on{emb}^{\on{mon}}\otimes \on{Id}}\longrightarrow \ul\Shv(G)\otimes \ul\Shv(G)
\overset{\on{mult}_*}\longrightarrow \\
\to \ul\Shv(G) \overset{(\on{emb}^{\on{mon}})^R}\longrightarrow \bi(\Shv(G)^{\on{mon}}) \overset{\on{emb}^{\on{mon}}}\hookrightarrow \ul\Shv(G).
\end{multline}

In its turn, \eqref{e:mult by mon 1} is isomorphic to
\begin{multline} \label{e:mult by mon 2}
\bi(\Shv(G)^{\on{mon}})\otimes \ul\Shv(G) \overset{\on{emb}^{\on{mon}}\otimes \on{Id}}\longrightarrow \ul\Shv(G)\otimes \ul\Shv(G)
\overset{(\on{emb}^{\on{mon}})^R\otimes (\on{emb}^{\on{mon}})^R}\longrightarrow \\
\to \bi(\Shv(G)^{\on{mon}}) \otimes \bi(\Shv(G)^{\on{mon}}) \overset{\on{mult}_*}\longrightarrow \bi(\Shv(G)^{\on{mon}})  \overset{\on{emb}^{\on{mon}}}\hookrightarrow \ul\Shv(G),
\end{multline}
which is the same as
\begin{multline*}
\bi(\Shv(G)^{\on{mon}})\otimes \ul\Shv(G)
\overset{\on{Id}\otimes (\on{emb}^{\on{mon}})^R}\longrightarrow
 \bi(\Shv(G)^{\on{mon}}) \otimes \bi(\Shv(G)^{\on{mon}}) \overset{\on{mult}_*}\longrightarrow \\
\to \bi(\Shv(G)^{\on{mon}})  \overset{\on{emb}^{\on{mon}}}\hookrightarrow \ul\Shv(G),
\end{multline*}
making the required factorization of \eqref{e:mult by mon} manifest.

\end{proof}

\sssec{}

Consider the adjunction
\begin{equation} \label{e:emb mon 2adj}
\bemb^{\on{mon}}:G\mmod^{\on{mon}}\leftrightarrows G\mmod:(\bemb^{\on{mon}})^R.
\end{equation}

We claim:

\begin{prop} \label{p:emb mon 2adj}
The  unit and counit of the adjunction in \eqref{e:emb mon 2adj} admit right adjoints (i.e., the functor $\bemb^{\on{mon}}$
is 2-adjointable).
\end{prop}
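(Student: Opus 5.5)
We plan to reduce the 2-adjointability of $\bemb^{\on{mon}}$ to the statement that a certain 1-morphism of $\ul\Shv(G)_{\on{co}}$-modules is adjointable, and then invoke \lemref{l:left is strict} together with \lemref{l:adj in G mod}. The idea is to realize $\bemb^{\on{mon}}$ and its right adjoint as functors given by bimodules. We view $G\mmod^{\on{mon}}$ as $\bi(\Shv(G)^{\on{mon}}_{\on{co}})\mmod(\AGCat)$, which is the dual description of $\bi(\Shv(G)^{\on{mon}})\commod(\AGCat)$. Then $\bemb^{\on{mon}}$ is restriction along the algebra map $(\on{emb}^{\on{mon}})^\vee:\ul\Shv(G)_{\on{co}}\to \bi(\Shv(G)^{\on{mon}}_{\on{co}})$ of \eqref{e:dual of emb mon}. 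Its right adjoint is coinduction. Since $\bi(\Shv(G)^{\on{mon}}_{\on{co}})$ is a dualizable (hence totally compact) module, coinduction equals induction $\bi(\Shv(G)^{\on{mon}}_{\on{co}})\underset{\ul\Shv(G)_{\on{co}}}\otimes -$.

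\textbf{The counit.} The counit $\bemb^{\on{mon}}\circ(\bemb^{\on{mon}})^R\to \on{Id}_{G\mmod}$ is the map of $(\ul\Shv(G)_{\on{co}},\ul\Shv(G)_{\on{co}})$-bimodules $\bi(\Shv(G)^{\on{mon}}_{\on{co}})\to \ul\Shv(G)_{\on{co}}$ underlying it. With the above conventions, this map is $((\on{emb}^{\on{mon}})^\vee)^L$, i.e. the embedding $\on{emb}^{\on{mon}}$. It is a priori only left-lax bilinear. By \lemref{l:left is strict} it is strictly compatible with the action, and the analogous argument applied on the other side gives strictness for the second action as well. So it is a genuine 1-morphism in $(G\times G)\mmod$. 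Its right adjoint in $\AGCat$ is $(\on{emb}^{\on{mon}})^\vee$, which exists by \secref{sss:Shv mon self-dual}. By \lemref{l:adj in G mod}, applied to the group $G\times G$, this right adjoint is automatically strictly $(G\times G)$-equivariant. This exhibits the right adjoint of the counit, since 2-morphisms between functors $G\mmod\to G\mmod$ are 1-morphisms of bimodules (\secref{sss:bimodules as morphisms}).

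\textbf{The unit.} The unit $\on{Id}_{G\mmod^{\on{mon}}}\to(\bemb^{\on{mon}})^R\circ\bemb^{\on{mon}}$ is an isomorphism, because $\bemb^{\on{mon}}$ is fully faithful. Its inverse is therefore both a left and a right adjoint, so no further work is needed there.

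The main obstacle is the identification of the bimodule representing the counit with $((\on{emb}^{\on{mon}})^\vee)^L$, and the two-sided strictness of this identification. \lemref{l:left is strict} handles one side. For the other side, we will first try to deduce it from the first by applying the antipode (inversion on $G$). Inversion preserves $\Shv(G)^{\on{mon}}$, because $\chi^{-1}$ is again a character sheaf, and it intertwines the left and right actions. Once two-sided strictness is in hand, adjointability reduces entirely to the underlying $\AGCat$-level adjoint constructed in \secref{sss:Shv mon self-dual}.
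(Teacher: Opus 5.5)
Your proposal is correct, but it follows a different route from the paper. The paper stays in the module picture. The unit is the equivalence $\ul\bC\simeq \Hom_{\ul\Shv(G)_{\on{co}}}(\bi(\Shv(G)^{\on{mon}}_{\on{co}}),\ul\bC)$, deduced from the idempotence of $\bi(\Shv(G)^{\on{mon}}_{\on{co}})$ over $\ul\Shv(G)_{\on{co}}$. The counit is precomposition with the homomorphism $\phi:=(\on{emb}^{\on{mon}})^\vee$, and its right adjoint is written down explicitly as precomposition with $\phi^L$; \lemref{l:left is strict} is exactly what makes $\phi^L$ a map of modules, so that this formula makes sense.

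You instead read off the counit as a 1-morphism in $(G\times G)\mmod$, namely its value on the regular object: the embedding $\bi(\Shv(G)^{\on{mon}})\hookrightarrow \ul\Shv(G)$. You take its right adjoint in $\AGCat$ from \secref{sss:Shv mon self-dual} and lift it by \lemref{l:adj in G mod} for $G\times G$. For the unit you cite the full faithfulness of $\bemb^{\on{mon}}$ recorded earlier, rather than redoing the idempotence computation. Your route is more formal: it reduces everything to the general fact that adjoints of equivariant functors are automatically equivariant for groups. The paper's route yields the explicit description of $(\on{emb}^{\on{mon}})^R$ on any $\ul\bC$ as the action of the idempotent $\phi^L\phi(\delta_1)$, which is used immediately afterwards.

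One correction: the ``main obstacle'' you single out is not real. The counit is a 2-morphism of $\tAGCat$, so it is automatically a strict map of bimodules. In the tensor description it is the dual $\phi^\vee$ of the algebra map, defined on the module-dual of $\bi(\Shv(G)^{\on{mon}}_{\on{co}})$, and only its underlying functor enters your argument. Your identification of it with $\phi^L$ \emph{as a bimodule map} is not justified, but it is also not needed.

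In any case you need only one of the two lemmas you invoke. If the counit is $\phi^L$, then its right adjoint $\phi$ is already a homomorphism, and \lemref{l:adj in G mod} is superfluous. Conversely, the left-adjoint half of \lemref{l:adj in G mod}, applied to $\phi$ as a map in $(G\times G)\mmod$, already gives strictness of $\phi^L$ for both actions at once. So the antipode argument is unnecessary.
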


\begin{proof}

We will think of
$G\mmod$ (resp., $G\mmod^{\on{mon}}$) as
$$\ul\Shv(G)_{\on{co}}\mmod(\AGCat) \text{ and } \bi(\Shv(G)^{\on{mon}}_{\on{co}})\mmod(\AGCat),$$
respectively.

\medskip

In these terms, the functor $\bemb^{\on{mon}}$ is given by restriction along the homomorphism \eqref{e:dual of emb mon},
and its right adjoint is given by
$$\Hom_{\ul\Shv(G)_{\on{co}}}(\bi(\Shv(G)^{\on{mon}}_{\on{co}}),-).$$

\medskip

In terms of these identifications, the unit of the adjunction amounts to the equivalence
$$\ul\bC \simeq \Hom_{\ul\Shv(G)_{\on{co}}}(\bi(\Shv(G)^{\on{mon}}_{\on{co}}),\ul\bC), \quad \ul\bC\in \bi(\Shv(G)^{\on{mon}}_{\on{co}})\mmod(\AGCat),$$
which follows from the fact that
$$\bi(\Shv(G)^{\on{mon}}_{\on{co}})\underset{\ul\Shv(G)_{\on{co}}}\otimes \bi(\Shv(G)^{\on{mon}}_{\on{co}})\to \bi(\Shv(G)^{\on{mon}}_{\on{co}})$$
is an equivalence (which in turn follows from the fact that the functor $(\on{emb}^{\on{mon}})^\vee$ admits a fully faithful (left) adjoint).

\medskip

The counit of the adjunction is the functor
$$\Hom_{\ul\Shv(G)_{\on{co}}}(\bi(\Shv(G)^{\on{mon}}_{\on{co}}),\ul\bC) \to
\Hom_{\ul\Shv(G)_{\on{co}}}(\ul\Shv(G)_{\on{co}},\ul\bC) \simeq \ul\bC, \quad \ul\bC\in \ul\Shv(G)_{\on{co}}\mmod(\AGCat),$$
given by precomposition with $(\on{emb}^{\on{mon}})^\vee$.

\medskip

This functor admits a right adjoint, given by precomposition with $((\on{emb}^{\on{mon}})^\vee)^L$, which is well-defined thanks
to \lemref{l:left is strict}.

\end{proof}

\sssec{}

Concretely, \propref{p:emb mon 2adj} means that the for $\ul\bC\in G\mmod$, the tautological embedding
\begin{equation} \label{e:emb mon down}
\on{emb}^{\on{mon}}:\ul\bC^{\on{mon}}\to \ul\bC
\end{equation}
admits a right adjoint.

\medskip

The idempotent comonad $\on{emb}^{\on{mon}}\circ (\on{emb}^{\on{mon}})^R$ of $\ul\bC$ is given by
the action of the idempotent
$$((\on{emb}^{\on{mon}})^\vee)^L\circ (\on{emb}^{\on{mon}})^\vee(\one_{\Shv(G)})\in \Shv(G)_{\on{co}}.$$

\medskip

From here we obtain:

\begin{lem} \label{l:dual mon}
Suppose that $\ul\bC$ is dualizable. Then so is $\ul\bC^{\on{mon}}$, and we have a canonical
identification
$$(\ul\bC^{\on{mon}})^\vee\simeq (\ul\bC^\vee)^{\on{mon}}$$
so that
$$(\on{emb}_{\ul\bC}^{\on{mon}})^\vee\simeq (\on{emb}^{\on{mon}}_{\ul\bC^\vee})^R.$$
\end{lem}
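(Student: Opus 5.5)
The idea is to realize $\ul\bC^{\on{mon}}$ as the image of an idempotent comonad on $\ul\bC$. Let $e:=((\on{emb}^{\on{mon}})^\vee)^L\circ(\on{emb}^{\on{mon}})^\vee(\one_{\Shv(G)})\in\Shv(G)_{\on{co}}$ be the idempotent from the discussion following \propref{p:emb mon 2adj}. The comonad $\on{emb}^{\on{mon}}\circ(\on{emb}^{\on{mon}})^R$ on $\ul\bC$ is given by the action of $e$. Write $\on{act}_e:\ul\bC\to\ul\bC$ for this action, a 1-morphism in $\AGCat$. Then $\ul\bC^{\on{mon}}$ is a retract of $\ul\bC$ in $\AGCat$: the inclusion is $\on{emb}^{\on{mon}}$, the retraction is $(\on{emb}^{\on{mon}})^R$ (which exists by \propref{p:emb mon 2adj}), and their composite $(\on{emb}^{\on{mon}})^R\circ\on{emb}^{\on{mon}}$ is the identity because $\on{emb}^{\on{mon}}$ is fully faithful. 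Since a retract of a dualizable object in a symmetric monoidal category is dualizable, $\ul\bC^{\on{mon}}$ is dualizable, and its dual is the retract of $\ul\bC^\vee$ cut out by the dual idempotent $(\on{act}_e)^\vee$ on $\ul\bC^\vee$.

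The main step is to identify $(\on{act}_e)^\vee$ with $\on{act}_{e}$ for the $G$-action on $\ul\bC^\vee$. Recall that $\ul\bC^\vee$ carries the natural $G$-action, in which the right action is turned into a left one via the antipode. Under this, $(\on{act}_e)^\vee=\on{act}_{\sigma(e)}$, where $\sigma=\on{inv}_*$ is the antipode. So it suffices to check that $\sigma(e)\simeq e$ as idempotents in $\Shv(G)_{\on{co}}$. Here I would use that inversion $\on{inv}:G\to G$ preserves $\Shv(G)^{\on{mon}}$: it sends $\chi$ to $\chi^{-1}$ and the constant sheaf to itself, so it permutes the summands $\Shv(G)^{\on{alm-}\!\chi}$. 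Moreover, $\on{inv}$ commutes with $\on{emb}^{\on{mon}}$, with $(\on{emb}^{\on{mon}})^R$, and with Verdier duality. Hence $\on{inv}_*$ intertwines $(\on{emb}^{\on{mon}})^\vee$ and its left adjoint, and therefore fixes $e$. I expect this step to be the main obstacle. The cleanest way to phrase it is that $e$ is characterized intrinsically as the idempotent of $\Shv(G)_{\on{co}}$ whose module category is $\Shv(G)^{\on{mon}}_{\on{co}}$, and this is stable under the antipode because the antipode is a Hopf automorphism preserving the monodromic subcategory.

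Granting this, the image of $\on{act}_e$ on $\ul\bC^\vee$ is $(\ul\bC^\vee)^{\on{mon}}$, which gives the identification $(\ul\bC^{\on{mon}})^\vee\simeq(\ul\bC^\vee)^{\on{mon}}$. Finally, the inclusion and retraction of a retract are exchanged under duality. Concretely, the dual of the inclusion $\ul\bC^{\on{mon}}\hookrightarrow\ul\bC$ is the projection $\ul\bC^\vee\to(\ul\bC^\vee)^{\on{mon}}$ onto the image of the dual idempotent, and that projection is $(\on{emb}^{\on{mon}}_{\ul\bC^\vee})^R$. This gives $(\on{emb}^{\on{mon}}_{\ul\bC})^\vee\simeq(\on{emb}^{\on{mon}}_{\ul\bC^\vee})^R$.
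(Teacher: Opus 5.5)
Your proposal is correct and is essentially the paper's argument. The paper deduces the lemma in one line from the fact that the idempotent comonad $\on{emb}^{\on{mon}}\circ(\on{emb}^{\on{mon}})^R$ on $\ul\bC$ is given by the action of the idempotent $e$, and you have spelled out what that deduction uses: a retract of a dualizable object is dualizable, with dual cut out by the dual idempotent, and the dual of the action of $e$ on $\ul\bC$ is the action of $\on{inv}_*(e)\simeq e$ on $\ul\bC^\vee$. The only slip is calling the antipode a Hopf automorphism: it is an anti-automorphism for convolution. Your direct argument does not need that claim and suffices on its own: inversion fixes $\delta_1$ and preserves $\Shv(G)^{\on{mon}}$, hence commutes with $\on{emb}^{\on{mon}}\circ(\on{emb}^{\on{mon}})^R$ compatibly with the counit to $\delta_1$.
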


\sssec{} \label{sss:mon righ adj}

In particular, taking $\ul\bC=\ul\Shv(\CY)$ for a prestack $\CY$ acted on by $G$, we obtain that the (fully faithful)
1-morphism
\begin{equation} \label{e:emb mon stacks}
\on{emb}^{\on{mon}}:\ul\Shv(\CY)^{\on{mon}}\to \ul\Shv(\CY)
\end{equation}
admits a right adjoint.

\medskip

Evaluating on $X=\on{pt}$, we obtain a right adjoint to the fully faithful embedding
$$\on{emb}^{\on{mon}}:\Shv(\CY)^{\on{mon}}\to \Shv(\CY).$$

\begin{rem}

The content of the existence of \eqref{e:emb mon stacks} is that the rights adjoints to
$$\on{emb}^{\on{mon}}:\Shv(\CY\times X)^{\on{mon}}\to \Shv(\CY\times X), \quad X\in \Sch$$
are compatible with the $(-)^!$ and $(-)_*$ functors for maps $X_1\to X_2$.

\medskip

That said, the last commutation is easy to see directly by passing to the left adjoints.

\end{rem}

\ssec{The case of a torus} \label{ss:alm-const T}

In this subsection, we will show that when $G=T$ is a torus, there is no difference between the
``almost" and ``quasi" conditions.

\sssec{}

We claim:

\begin{prop} \label{p:q mon torus}
Let $G=T$ be a torus. Then the inclusion
\begin{equation} \label{e:q const torus}
\Shv(T)^{\on{alm-const}}\hookrightarrow \Shv(T)^{\on{q-const}}
\end{equation}
is an equality.
\end{prop}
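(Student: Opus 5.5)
We must show that every object of $\Shv(T)^{\on{q-const}}$ lies in the subcategory generated under colimits by the constant sheaf $\omega_T$ (equivalently, by $\underline{\ol\BQ_\ell}_T$, since $T$ is smooth). Write $T\simeq \BG_m^n$. The plan is to identify $\Shv(T)^{\on{alm-const}}$ with modules over the Koszul-dual algebra of chains $\on{C}_\cdot(T)\simeq \on{Sym}(\Lambda[1])$. Here $\Lambda$ is the cocharacter lattice tensored with $\ol\BQ_\ell$, so this is an exterior algebra on $n$ generators in degree $-1$. In parallel, identify $\Shv(T)^{\on{q-const}}$ with local systems whose monodromy is unipotent, i.e.\ with modules over the group algebra $\ol\BQ_\ell[\pi_1(T)]$ (completed at the augmentation ideal) that are locally unipotent. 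The comparison then becomes a statement about a polynomial/Laurent algebra in $n$ variables.

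\medskip

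First, I would reduce from general objects to the heart. Because of left-completeness (\cite[Theorem 1.1.6]{AGKRRV1}), an object $\CF\in \Shv(T)^{\on{q-const}}$ is the limit of its truncations $\tau^{\geq -m}\CF$. So I must argue that $\Shv(T)^{\on{alm-const}}$ is closed under the relevant limits. Alternatively, I can use that $\Shv(T)^{\on{alm-const}}$ is the image of the left adjoint of the conservative-on-image functor $\on{C}^\cdot(T,-)\simeq \CHom(\underline{\ol\BQ_\ell},-)$. Concretely, the cleanest formulation is to show that the right orthogonal of $\underline{\ol\BQ_\ell}_T$ inside $\Shv(T)^{\on{q-const}}$ vanishes. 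That is, if $\CF$ is q-constant and $\CHom(\underline{\ol\BQ_\ell}_T,\CF)=0$, then $\CF=0$. Given this, the inclusion \eqref{e:q const torus} is an equality, since $\Shv(T)^{\on{alm-const}}$ is compactly generated inside the cocomplete category $\Shv(T)^{\on{q-const}}$ and thus has a right adjoint with a complementary orthogonal.

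\medskip

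The key step is the vanishing claim, which I would carry out by induction on $n$ using the projection $p:T=T'\times\BG_m\to T'$. For $\BG_m$ itself, the perverse cohomologies of $\CF$ are (possibly infinite) iterated extensions of $\underline{\ol\BQ_\ell}[1]$, hence unipotent local systems $V$ with monodromy $\gamma$. Here $\CHom(\underline{\ol\BQ_\ell},V)$ computes $\ker(\gamma-1)$ and $\on{coker}(\gamma-1)$. For a locally unipotent $\gamma$ acting on a nonzero $V$, the kernel is nonzero. For a complex one must handle the spectral sequence, but the key point is that $\on{C}^\cdot(\BG_m,-)$ is conservative on locally unipotent modules over $\ol\BQ_\ell[t^{\pm1}]$. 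This holds because such a module is a module over the complete local ring $\ol\BQ_\ell[[t-1]]$ that is $(t-1)$-torsion, and the derived fiber at the closed point is conservative on torsion modules. In the inductive step, $p_*$ preserves q-constancy, because pushforward commutes with the $\pi_1$-description, and it intertwines the $\CHom$ from the constant sheaf. Applying the rank-one case fiberwise then finishes the argument.

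\medskip

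The main obstacle is the derived, infinite-extension case. One must ensure that the conservativity of the derived fiber at $t=1$ holds for unbounded complexes of locally unipotent modules, not just for bounded ones. I would handle this by using that $\ol\BQ_\ell[t^{\pm1}]$ has finite global dimension, so that the derived $(t-1)$-torsion modules form precisely the category generated by $\ol\BQ_\ell$ (this is the Dwyer--Greenlees/local cohomology statement). That statement matches exactly the desired equality for each $\BG_m$ factor; combined with \lemref{l:alm const} and \lemref{l:alm const prod}, it yields the torus case by taking tensor products.
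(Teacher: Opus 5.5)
Your main line of argument is correct, and it takes a genuinely different route from the paper's. You reduce to the vanishing of the right orthogonal of $\on{const}_T$ in $\Shv(T)^{\on{q-const}}$, i.e.\ to conservativity of $\on{C}^\cdot(T,-)$ on q-constant objects. You then prove this by a rank-one computation plus induction along $p:T'\times\BG_m\to T'$.

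The paper argues differently. It observes that the two subcategories agree on bounded-below objects and that $\Shv(T)^{\on{q-const}}$ is left-complete, so the latter is the left completion of $\Shv(T)^{\on{alm-const}}$. It then identifies $\Shv(T)^{\on{alm-const}}\simeq \on{C}^\cdot(T)\mod\simeq\QCoh(V)_{\{0\}}$ by Koszul duality, with matching t-structures, and reads off left-completeness from that of $\QCoh(V)$. This gives an explicit model of the whole category at once, reused in the appendix, and makes the contrast with \secref{sss:q vs ind} transparent.

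Your argument avoids Koszul duality. It isolates the torus-specific input as the fact that $\BG_m$ is an affine curve, so $\on{C}^\cdot(\BG_m,-)$ has amplitude $[0,1]$. This is exactly what fails for $SL_2$, whose cohomology lives in degrees $0$ and $3$.

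Your inductive step is the same argument the paper uses to prove \lemref{l:prod by torus}, with the torus there taken to be $\BG_m$ and $X=T'$. Read ``fiberwise'' with care: $!$-fibers at closed points are \emph{not} conservative on all of $\Shv$. What you actually use is the following:
\begin{itemize}
\item $p_*$ preserves q-constancy;
\item base change gives $i_{t'}^!\circ p_*\simeq\on{C}^\cdot(\BG_m,i^!(-))$ for $i:\{t'\}\times\BG_m\hookrightarrow T$;
\item $i^!\simeq i^*[-2\dim T']$ on objects with ind-lisse cohomology, and $i^*$ is t-exact.
\end{itemize}
Together these show that vanishing of a single restriction kills a q-constant $\CF$.

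You should, however, discard your last paragraph. To invoke Dwyer--Greenlees you would need to identify $\Shv(\BG_m)^{\on{q-const}}$ with derived $(t-1)$-torsion $\ol\BQ_\ell[t^{\pm1}]$-modules. That identification is not available; it is essentially the rank-one case of the statement you are proving. Nor can you cite \lemref{l:alm const}: the appendix reduces it to the semisimple case via \lemref{l:prod by torus}, whose proof uses this very proposition, so the argument would be circular.

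The ``main obstacle'' is in fact handled directly by the amplitude bound. By Artin vanishing, $\on{C}^\cdot(\BG_m,-)$ maps $\Shv^{\leq 0}$ to $\Vect^{\leq 1}$, and it maps $\Shv^{\geq 0}$ to $\Vect^{\geq 0}$. Hence for an arbitrary unbounded $\CF$, the group $H^n\on{C}^\cdot(\BG_m,\CF)$ depends only on $\tau^{\geq n-1}\tau^{\leq n}\CF$ and sits in a short exact sequence
$$0\to H^1(\BG_m,h^{n-1}(\CF))\to H^n\on{C}^\cdot(\BG_m,\CF)\to H^0(\BG_m,h^n(\CF))\to 0.$$
If $\on{C}^\cdot(\BG_m,\CF)=0$, every ind-unipotent $h^n(\CF)$ has no invariants, hence vanishes. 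Then $\CF=0$ because the t-structure on $\Shv(\BG_m)$ is left- and right-complete.

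A minor correction: $\Shv(T)^{\on{alm-const}}$ is the category of modules over the cochains $\on{C}^\cdot(T)$, not over the chains $\on{C}_\cdot(T)$; the latter give $\Shv(\on{pt}/T)$. Your argument does not depend on this.
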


As an immediate corollary, we obtain:

\begin{cor} \label{c:q mon torus}
The inclusion
\begin{equation} \label{e:q mon torus}
\Shv(T)^{\on{mon}}\hookrightarrow \Shv(T)^{\on{q-mon}}
\end{equation}
is an equality.
\end{cor}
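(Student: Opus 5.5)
We have to prove \propref{p:q mon torus}: every object of $\Shv(T)^{\on{q-const}}$ lies in the subcategory generated under colimits by $\on{const}_T$. The plan is to exploit two features of tori: they are $K(\pi,1)$'s, and the constant sheaf is a compact object of $\Shv(T)$.

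\medskip

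\noindent\emph{Step 1: reduction to hearts.} Since $\Shv(T)$ is left-complete in its t-structure (\cite[Theorem 1.1.6]{AGKRRV1}), an object of $\Shv(T)^{\on{q-const}}$ is the limit of its truncations $\tau^{\geq -n}$. Each truncation is a finite extension of shifted objects of the heart $\Shv(T)^{\on{q-const},\heartsuit}$.

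\medskip

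\noindent\emph{Step 2: the heart.} An object of $\Shv(T)^{\on{q-const},\heartsuit}$ is a (possibly infinite) iterated extension of $\on{const}_T$ (up to shift). Using $T\simeq K(\BZ^r,1)$, we identify such objects with ind-unipotent local systems, i.e. modules over $\ol\BQ_\ell[\pi_1(T)]$ on which the augmentation ideal acts locally nilpotently. Since $\pi_1$ is abelian, these are the same as modules over the completed group algebra $\ol\BQ_\ell[[t_1,\dots,t_r]]$ that are torsion with respect to the maximal ideal.

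\medskip

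\noindent\emph{Step 3: matching the two sides.} On the other side, $\on{End}(\on{const}_T)=\on{C}^\cdot(T)$ is an exterior algebra $\Lambda$ on $r$ generators in degree $1$. Since $\on{const}_T$ is compact (this is the step that fails for non-simply-connected semisimple parts), we get
$$\Shv(T)^{\on{alm-const}}\simeq \Lambda^{\on{op}}\mod.$$
By Koszul duality, $\Lambda\mod$ is equivalent to the category of ind-nilpotent (torsion) modules over $\Sym(V[-2])^\wedge$, compatibly with the functor to $\Shv(T)$. The free-module direction of this equivalence sends $\Lambda$ to the constant sheaf.

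The real content is to show that the functor $\Lambda\mod\to\Shv(T)$ is essentially surjective onto the q-constant objects. First, the essential image is closed under colimits and extensions. Each unipotent local system of finite rank is a finite extension of $\on{const}_T$, so it lies in the image. An ind-unipotent local system is a filtered colimit of finite unipotent ones, so it lies in the image as well. It remains to pass from bounded to unbounded objects.

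\medskip

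\noindent\emph{Step 4: the main obstacle, unbounded objects.} The difficulty is controlling limits $\lim \tau^{\geq -n}$, since the image is only closed under colimits. I would handle this by showing that the right adjoint of the inclusion commutes with these limits and is t-exact up to a bounded shift. Then the counit is an isomorphism on each $\tau^{\geq -n}$, and hence on the limit, because $\Lambda\mod$ is left-complete for a t-structure in which $\Lambda$ has bounded amplitude.

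Concretely, the right adjoint is $\CHom(\on{const}_T,-)=\on{C}^\cdot(T,-)$, which has cohomological amplitude $[0,r]$. This bounded amplitude gives the compatibility with Postnikov limits.
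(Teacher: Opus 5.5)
Your Steps 1--3 are fine. They show that $\Shv(T)^{\on{alm-const}}$ contains every bounded-below quasi-constant object, which is the easy half of the paper's argument. The corollary then follows from \propref{p:q mon torus} by twisting by each $\chi$.

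The gap is Step 4, which carries all the content. Write $\iota$ for the inclusion and $\CF\simeq\lim_n \tau^{\geq -n}\CF$. The identification $\iota^R(\CF)\simeq\lim_n \iota^R(\tau^{\geq -n}\CF)=:\lim_n M_n$ is automatic. But the counit $\iota\iota^R(\CF)\to\CF$ is then the comparison map $\iota(\lim_n M_n)\to\lim_n\iota(M_n)$ for the \emph{left} adjoint $\iota$. This map is an isomorphism for all such $\CF$ exactly when $\Shv(T)^{\on{alm-const}}\simeq\Lambda\mod$ is left-complete for the t-structure transported from $\Shv(T)$.

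You justify this by the bounded amplitude of $\Lambda$ (i.e.\ of $\on{const}_T$) and of $\on{C}^\cdot(T,-)$. That cannot suffice, because everything you use holds verbatim for a semi-simple $G$:
\begin{itemize}
\item $\on{const}_G$ is compact, since it is constructible, so your parenthetical diagnosis is wrong;
\item $\on{C}^\cdot(G,-)$ has bounded amplitude;
\item Steps 1--3 put all bounded-below quasi-constant objects into $\Shv(G)^{\on{alm-const}}$.
\end{itemize}
Yet the inclusion $\Shv(G)^{\on{alm-const}}\subset\Shv(G)^{\on{q-const}}$ is strict. The object $\on{q-const}_G$ of \secref{sss:q vs ind}, whose $!$-fiber at $1$ is $\Sym(W^\vee[-1])$, is the limit of its truncations. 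Each truncation lies in $\Shv(G)^{\on{alm-const}}$, but $\on{q-const}_G$ itself does not. Note also that the amplitude of $\on{C}^\cdot(T,-)$ as a functor to $\Vect$ is not the relevant one. What you would need is a bound for the colocalization $\iota\iota^R$ with respect to the sheaf t-structure.

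The missing, genuinely torus-specific input is the one the paper uses. The Koszul dual of $\on{C}^\cdot(T)\simeq\Sym(V[-1])$ is a power series ring on generators in degree $0$, reflecting that $T$ is a $K(\pi,1)$. It is not $\Sym(V[-2])^\wedge$, which is (the completion of) $\on{C}^\cdot(\on{pt}/T)$, the Koszul dual of the chain algebra $\on{C}_\cdot(T)$.

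The correct duality gives $\Shv(T)^{\on{alm-const}}\simeq\QCoh(V)_{\{0\}}$, with the $!$-fiber at $1$ corresponding to $\Gamma(V,-)$. So the t-structure induced from $\Shv(T)$ is, up to shift, the one induced from the standard t-structure on $\QCoh(V)$. Left-completeness of $\QCoh(V)$ then yields that of $\QCoh(V)_{\{0\}}$, which is exactly what Step 4 needs. For semi-simple $G$ the dual generators sit in degrees $\leq -2$, the t-structure is not of this classical form, and left-completeness fails.
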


The rest of this subsection is devoted to the proof of \propref{p:q mon torus}.

\sssec{} \label{sss:q-const T}

Note that the subcategories
$$\Shv(T)^{\on{alm-const}}\hookrightarrow \Shv(T)^{\on{q-const}} \subset \Shv(T)$$
are compatible with the (perverse) t-structure on $\Shv(T)$.

\medskip

Moreover, it is easy to see that the embedding \eqref{e:q mon torus} induces an equivalence
on the corresponding bounded subcategories. Hence, it induces an equivalence on the bounded
below categories (as both categories are right-complete in their t-structures).

\medskip

Recall also (see \cite[Theorem 1.1.6]{AGKRRV1}) that $\Shv(T)$ is left-complete in its t-structure. This
implies that $\Shv(T)^{\on{q-const}}$ is also left-complete in its t-structure. Hence, the embedding
\eqref{e:q const torus} identifies $\Shv(T)^{\on{q-const}}$ with the left completion of $\Shv(T)^{\on{alm-const}}$.

\medskip

Therefore, the assertion of the proposition is equivalent to the statement that $\Shv(T)^{\on{alm-const}}$ is actually
left-complete in its t-structure.

\sssec{} \label{sss:alm const torus expl}

The category $\Shv(T)^{\on{alm-const}}$ is compactly generated by the constant sheaf $\on{const}_T$. Hence,
$$\Shv(T)^{\on{alm-const}}\simeq \on{C}^\cdot(T)\mod.$$

We identify $\on{C}^\cdot(T)\simeq \Sym(V[-1])\mod$, where $V$ is a finite-dimensional $\ol\BQ_\ell$-vector space
(placed in cohomological degree $0$); in fact $V=H^1(T,\ol\BQ_\ell)$.

\medskip

The functor of !-fiber at $1\in T$ identifies (up to a cohomological shift) with the functor
\begin{equation} \label{e:fiber torus}
(-)\underset{\Sym(V[-1])}\otimes \ol\BQ_\ell, \quad \Sym(V[-1])\mod\to \Vect.
\end{equation}

\sssec{}

By Koszul duality, we can identify
$$\Sym(V[-1])\mod \simeq \QCoh(V)_{\{0\}},$$
where the subscript $\{0\}$ indicates set-theoretic support at $0$.

\medskip

In terms of this equivalence, the functor \eqref{e:fiber torus} corresponds to the functor
$$\QCoh(V)_{\{0\}}\hookrightarrow \QCoh(V) \overset{\Gamma(V,-)}\longrightarrow \Vect.$$

\medskip

In particular, the t-structure\footnote{Usual or perverse--the two differ by a cohomological shift on our subcategory.}
on $\Shv(T)^{\on{alm-const}}$ corresponds (possibly, up to an overall cohomological shift)
to the t-structure on $\QCoh(V)_{\{0\}}$,
induced by the natural t-structure on $\QCoh(V)$.

\sssec{}

This makes the assertion manifest: the category $\QCoh(V)$ is left-complete in its t-structure,
and hence so is $\QCoh(V)_{\{0\}}$.

\qed[\propref{p:q mon torus}]

\section{Spectrally finite and restricted categorical representations} \label{s:restr}

In this section we will finally define the subcategory
$$G\mmod^{\on{spec.fin}}\subset G\mmod,$$
and show that as an object of $\tAGCat$, it is tensored up from $\DGCat$.

\medskip

We will also show that the induction functor
$$\bind_P:M\mmod\to G\mmod$$
has an additional adjointability property when restricted to $G\mmod^{\on{spec.fin}}$.

\medskip

In this section we let $G$ be a connected reductive group.

\ssec{The notion of restricted categorical representation} \label{ss:restr}

\sssec{}

We first consider the case when $G=T$ is a torus. In this case we give the following definitions:

\medskip

$$T\mmod^{\on{spec.fin}}:=T\mmod^{\on{mon}};$$
$$T\mmod^{\on{restr}}:=T\mmod^{\on{mon}}\underset{\AGCat}\times \DGCat,$$
where
$$T\mmod^{\on{mon}}\overset{\boblv_G}\to \AGCat \overset{\bi}\leftarrow \DGCat.$$

\medskip

In other words, $T\mmod^{\on{restr}}$ is a full subcategory of $T\mmod^{\on{mon}}$ equal to the essential image
of the forgetful functor
$$\Shv(T)^{\on{mon}}\commod(\DGCat)\overset{\bi}\to \bi(\Shv(T)^{\on{mon}})\commod(\AGCat).$$

\sssec{}

We now consider the case of a general reductive group.

\bigskip

\noindent{\bf Notational convention:} We will denote by $N$ the unipotent radical of (a chosen) Borel subgroup $B\subset G$.
We denote by $T$ the quotient $B/N$; this is the (abstract) Cartan of $G$.

\bigskip

\noindent{\bf Notational convention:} For the rest of this paper (excluding the Appendix), the subscript ``$\on{mon}$'' will refer to the condition
of monodromicity with respect to the Cartan subgroup of $G$ (rather than $G$ itself).

\sssec{}


We set
$$G\mmod^{\on{spec.fin}}:=G\mmod\underset{T\mmod}\times T\mmod^{\on{spec.fin}},$$
where $G\mmod\to T\mmod$ is the functor $\bjacq_B$.

\medskip

We set
$$G\mmod^{\on{restr}}:=G\mmod^{\on{spec.fin}}\underset{T\mmod^{\on{spec.fin}}}\times T\mmod^{\on{restr}}.$$

\sssec{}

Note that we can rewrite
$$G\mmod^{\on{restr}}=G\mmod^{\on{spec.fin}}\underset{\AGCat}\times \DGCat,$$
where
$$G\mmod^{\on{spec.fin}}\overset{\binv_N}\to \AGCat \overset{\bi}\leftarrow \AGCat.$$

\medskip

We claim:

\begin{prop} \label{p:charact restr}
Let $\ul\bC\in G\mmod$ be dualizable. Then its belongs to $G\mmod^{\on{restr}}$ if and only if
$\binv_N(\ul\bC)$ is restricted as an object of $\AGCat$ (i.e., belongs to the essential image of
the functor $\bi:\DGCat\to \AGCat$).
\end{prop}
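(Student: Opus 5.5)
The plan is to reduce everything to the torus case via the object $\bjacq_B(\ul\bC)\in T\mmod$, whose underlying object of $\AGCat$ is $\binv_N(\ul\bC)$. By definition, $\ul\bC\in G\mmod^{\on{restr}}$ means two things. First, $\bjacq_B(\ul\bC)$ lies in $T\mmod^{\on{mon}}$. Second, as an object of $T\mmod^{\on{mon}}$ it lies in the essential image of
$$\bi:\Shv(T)^{\on{mon}}\commod(\DGCat)\to T\mmod^{\on{mon}}.$$
The ``only if'' direction is then immediate: if $\bjacq_B(\ul\bC)\simeq \bi(\bD)$ with $\bD\in \Shv(T)^{\on{mon}}\commod(\DGCat)$, then the underlying object $\binv_N(\ul\bC)\simeq \bi(\boblv(\bD))$ is restricted.

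For the ``if'' direction, assume $\binv_N(\ul\bC)\simeq \bi(\bC')$ for some $\bC'\in\DGCat$. The first step is to check dualizability. Since $\ul\bC$ is dualizable in $G\mmod$, its underlying object in $\AGCat$ is dualizable. The object $\bjacq_B(\ul\bC)\simeq \binv_N(\ul\bC)$ is given by a $T$-action on $\ul\bC$ computed through $\binv_N$. I would then argue that $\binv_N$ of a dualizable object is dualizable, by the analog of \corref{c:inv dualizable} for the unipotent group $N$; that corollary applies to any algebraic group, and $\ul\bC$ restricted to $N$ is dualizable. Hence $\bi(\bC')$ is dualizable in $\AGCat$. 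Because $\bi$ is fully faithful and symmetric monoidal with a $\DGCat$-linear right adjoint $\be$, and $\be\circ\bi\simeq\on{Id}$, I expect $\bC'$ itself to be dualizable in $\DGCat$, with dual $\be(\bi(\bC')^\vee)$. This needs a short argument: apply $\be$ to the duality data, using that $\be$ is right-lax monoidal and becomes strict on objects in the image of $\bi$, via \cite[Sect. 2.4.1]{GRV}.

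Next I would apply \thmref{t:mon cat rep}(b) with $G=T$ to the object $\bjacq_B(\ul\bC)\in T\mmod$. Its underlying object is $\bi(\bC')$ with $\bC'$ dualizable, so it lies in the essential image of
$$\Shv(T)^{\on{q-mon}}\commod(\DGCat)\overset{\bi}\to T\mmod^{\on{q-mon}}\to T\mmod.$$
By \corref{c:q mon torus}, the quasi-monodromic and monodromic conditions agree for a torus. So $\bjacq_B(\ul\bC)$ lies in $T\mmod^{\on{mon}}=T\mmod^{\on{spec.fin}}$, and moreover in the image of $\bi$ from $\Shv(T)^{\on{mon}}\commod(\DGCat)$, i.e. in $T\mmod^{\on{restr}}$. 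Hence $\ul\bC\in G\mmod^{\on{restr}}$.

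The main obstacle is the dualizability transfer: showing that $\bC'$ is dualizable in $\DGCat$ when $\bi(\bC')$ is dualizable in $\AGCat$, together with the dualizability of $\binv_N(\ul\bC)$ itself. If the direct argument via $\be$ runs into trouble, my fallback is the $(\bi,\be)$ ambidextrous adjunction (\cite[Corollary 4.5.6]{GRV}). I would use it to produce evaluation and coevaluation maps for $\bC'$ out of those for $\bi(\bC')$, and check the zigzag identities using full faithfulness of $\bi$.
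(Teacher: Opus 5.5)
Your proposal is correct and follows the paper's argument, which is a one-line application of \thmref{t:mon cat rep}(b) for $G=T$ to $\bjacq_B(\ul\bC)$ (whose underlying object is $\binv_N(\ul\bC)$), combined with \corref{c:q mon torus} to replace quasi-monodromic by monodromic. The paper leaves implicit the check that the DG category $\bC'$ with $\binv_N(\ul\bC)\simeq\bi(\bC')$ is dualizable, which that theorem requires. You supply this check correctly: first use \corref{c:inv dualizable} for $N$, then apply $\be$ to the duality data, noting that the lax structure maps of $\be$ are isomorphisms whenever one factor lies in the image of $\bi$, by $\DGCat$-linearity of $\be$.
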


\begin{proof}

Follows from \thmref{t:mon cat rep}(b) (combined with \corref{c:q mon torus}).

\end{proof}

\begin{rem}

Note that if we assume \conjref{c:restr torus} (for tori), then it would follow that the assertion of \propref{p:charact restr}
holds without the assumption that $\ul\bC\in G\mmod$ be dualizable.

\end{rem}

\sssec{Examples}

The objects $\ul\Shv(G/B)$ and $\ul\Shv(G/N)^{\on{mon}}$
belong to $G\mmod^{\on{restr}}$. If we tensor them
by an object $\ul\bD\in \AGCat$ that is \emph{not} in
$\DGCat$, we obtain objects in $G\mmod^{\on{spec.fin}}$ that
are no longer in $G\mmod^{\on{restr}}$.

\medskip

By contrast $\ul\Shv(G/N)\notin G\mmod^{\on{spec.fin}}$.

\sssec{}

It follows from \lemref{l:dual mon} that if a dualizable $\ul\bC$ belongs to 
$G\mmod^{\on{spec.fin}}$, then so does $\ul\bC^\vee$.

\medskip

It follows formally that the same is true for $G\mmod^{\on{restr}}$.

\sssec{}

Note that $G\mmod^{\on{restr}}$ is naturally tensored over $\DGCat$, and the tautological embedding
$$G\mmod^{\on{restr}}\to G\mmod^{\on{spec.fin}}$$
is $\DGCat$-linear.

\medskip

Hence, we obtain a functor
\begin{equation} \label{e:tensor up restr}
\AGCat\underset{\DGCat}\otimes G\mmod^{\on{restr}}\to G\mmod^{\on{spec.fin}}
\end{equation}
in $\tAGCat$.

\medskip

We are going to prove:

\begin{thm} \label{t:restr main} \hfill

\smallskip

\noindent{\em(a)} The embedding
$$G\mmod^{\on{spec.fin}} \overset{\bemb^{\on{spec.fin}}}\longrightarrow G\mmod$$
is 2-adjointable.

\smallskip

\noindent{\em(a')} The Beck-Chevalley natural transformation
$$\bjacq_B \circ (\bemb_G^{\on{spec.fin}})^R\to (\bemb_T^{\on{spec.fin}})^R\circ \bjacq_B$$
as functors
$$G\mmod\to T\mmod^{\on{spec.fin}}$$
is an isomorphism.

\smallskip

\noindent{\em(b)} The functor \eqref{e:tensor up restr} is an equivalence.

\end{thm}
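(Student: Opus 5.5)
The plan is to pass through the Hecke-algebra description of $G\mmod$ mentioned in the Introduction: $\binv_N^{\on{enh}}:G\mmod\simeq \ul\CH\mmod(\AGCat)$ with $\ul\CH=\ul\Shv(N\backslash G/N)$ (the theorem of \cite{BGO}, proved later as \thmref{t:G mod via Hecke}). I will assume this equivalence, and also the fact that $\binv_N$ is computed via $\bjacq_B$ followed by forgetting the $T$-action. Under this equivalence, $G\mmod^{\on{spec.fin}}$ becomes the full subcategory of $\ul\CH$-modules whose underlying object of $T\mmod$ (via the right $T$-action on $N\backslash G/N$) is monodromic.

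The key algebraic input is the idempotent of \propref{p:emb mon 2adj}. The comonad $\on{emb}^{\on{mon}}\circ(\on{emb}^{\on{mon}})^R$ is given by the action of an idempotent object $e\in \Shv(T)^{\on{mon}}_{\on{co}}$. I will push $e$ into $\ul\CH$ through the map $\ul\Shv(T)_{\on{co}}\to \ul\CH$ (pushforward along $T\to N\backslash G/N$). The main check is that the resulting idempotent $e_G$ is central, or at least that left and right monodromicity coincide, so that $\ul\CH^{\on{mon}}=e_G\ul\CH e_G$ carries an algebra structure. This amounts to the following claim: for $\CF\in\ul\CH$, the object $e\star\CF$ is right-monodromic, and $e\star\CF\simeq\CF\star e'$ with $e'$ running over all characters. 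I would establish this by checking it on Bruhat cells $N\backslash BwB/N\simeq T\times\BA^{\ell(w)}$-type strata, where the left and right $T$-actions differ by $w$ and the finite union over characters is $W$-stable. Given this, $G\mmod^{\on{spec.fin}}\simeq \ul\CH^{\on{mon}}\mmod(\AGCat)$, and the embedding is restriction along $\ul\CH\to \ul\CH^{\on{mon}}$, $h\mapsto e_Ghe_G$.

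Part (a) then follows exactly as in the proof of \propref{p:emb mon 2adj}. The unit is an isomorphism because $\ul\CH^{\on{mon}}\otimes_{\ul\CH}\ul\CH^{\on{mon}}\simeq\ul\CH^{\on{mon}}$ for an idempotent. The counit is precomposition with the map $\ul\CH\to\ul\CH^{\on{mon}}$, and its right adjoint is precomposition with the left adjoint embedding $\ul\CH^{\on{mon}}\hookrightarrow\ul\CH$. Strict linearity of that left adjoint is proved as in \lemref{l:left is strict}, using that convolution with a monodromic object lands in monodromic objects. Part (a') unwinds as follows: both sides apply $e$ on the $T$-side. The Beck--Chevalley map is $e\cdot\binv_N(\ul\bC)\to\binv_N(e_G\cdot\ul\bC)$, which is an isomorphism by the same centrality statement.

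For (b), I will show that $e_G\ul\CH e_G$ lies in the image of $\bi$, namely $\bi(\CH^{\on{mon}})$. This follows from the counterpart of \propref{p:q-mon restr}, equation \eqref{e:mon restr Y}, applied to $\CY=N\backslash G/N$ with the $T\times T$-action: this stack has finitely many orbits by Bruhat, so monodromic sheaves on it are restricted. Hence $G\mmod^{\on{spec.fin}}\simeq \bi(\CH^{\on{mon}})\mmod(\AGCat)\simeq \AGCat\otimes_{\DGCat}\CH^{\on{mon}}\mod(\DGCat)$, since module categories commute with base change along $\bi$. By \propref{p:charact restr}, or directly, $G\mmod^{\on{restr}}$ corresponds to $\CH^{\on{mon}}\mod(\DGCat)$, which identifies \eqref{e:tensor up restr} with this equivalence.

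I expect the main obstacle to be the centrality/bi-monodromicity statement for $e_G$. It requires a stratum-by-stratum argument together with a gluing argument to pass from strata to the whole $N\backslash G/N$, with care about the infinite sums over characters.
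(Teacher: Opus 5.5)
Your proposal is correct and follows essentially the paper's route. The paper also passes through the Hecke equivalence $G\mmod\simeq\ul\CH\mmod(\AGCat)$. It proves that left-, right- and bi-monodromic parts of $\ul\CH$ coincide by d\'evissage to Bruhat cells (this is \lemref{l:mon Hecke}, your centrality statement). It then identifies $G\mmod^{\on{spec.fin}}\simeq\ul\CH^{\on{mon}}\mmod(\AGCat)$, deduces (a) and (a') by the argument of \secref{ss:2 adj mon}, and obtains (b) from $\ul\CH^{\on{mon}}\simeq\bi(\CH^{\on{mon}})$. For the last step of (b), use your ``direct'' option: full faithfulness of $\bi$ identifies $\bi(\CH^{\on{mon}})\mmod(\AGCat)\underset{\AGCat}\times\DGCat$ with $\CH^{\on{mon}}\mmod(\DGCat)$. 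Do not rely on \propref{p:charact restr}, which only covers dualizable objects.
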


\ssec{Categorical representations as modules over the Hecke algebra}

The proof of \thmref{t:restr main} is based on interpreting $G\mmod$ as modules
(in $\AGCat$) over the (categorical) Hecke algebra.

\sssec{}

Consider the object
$$\ul\Shv(G/N)\in G\mmod.$$

Denote
$$\ul\CH:=\ul\End_{G\mmod,\AGCat}(\ul\Shv(G/N))\in \on{AssocAlg}(\AGCat).$$

 The usual manipulation shows that as a plain object of $\AGCat$
 $$\ul\CH\simeq \ul\Shv(N\backslash G/N),$$
 and the algebra structure on it is given by convolution.

 \sssec{}

 Consider the functor
\begin{equation} \label{e:inv N on G}
\binv_N:G\mmod\to \AGCat.
\end{equation}

 It is co-represented (in the sense of $\uHom_{G\mmod,\AGCat}(-,-)$) by $\ul\Shv(G/N)$;
 in particular, it admits a left adjoint (see \secref{sss:adj geom prel}). Hence, we can identify
$$\ul\CH\simeq \ul\End_{\Maps_{\tAGCat}(G\mmod,\AGCat),\AGCat}(\binv_N),$$
i.e., $\ul\CH$ is the monad on $\AGCat$ corresponding to \eqref{e:inv N on G}.

\medskip

Thus, the functor $\binv_N$ upgrades to a functor
\begin{equation} \label{e:inv N on G enh}
\binv^{\on{enh}}_N:G\mmod\to \ul\CH\mmod(\AGCat).
\end{equation}

\sssec{}

We are going to prove:

\begin{thm} \label{t:G mod via Hecke}
The functor \eqref{e:inv N on G enh} is an equivalence.
\end{thm}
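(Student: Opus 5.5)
We prove \thmref{t:G mod via Hecke} by the Barr--Beck--Lurie argument for the adjunction
$$\bind^{\on{enh}}:=\ul\Shv(G/N)\underset{\ul\CH}\otimes (-):\ul\CH\mmod(\AGCat)\rightleftarrows G\mmod:\binv^{\on{enh}}_N.$$
We treat everything $\AGCat$-linearly, so that the forgetful functor to $\AGCat$ makes sense. The functor $\binv_N$ is corepresented by $\ul\Shv(G/N)$, and its left adjoint is $\ul\bD\mapsto \ul\Shv(G/N)\otimes \ul\bD$ (see \secref{sss:adj geom prel}). By construction, $\ul\CH$ is the monad $\binv_N\circ(\ul\Shv(G/N)\otimes -)$. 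Hence $\binv^{\on{enh}}_N$ is the comparison functor of this monadic situation, and it suffices to verify two conditions: that $\binv_N$ commutes with the relevant (sifted, in fact all) colimits, and that $\binv_N$ is conservative.

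For the first condition, we use $\binv_N\simeq \bcoinv_N$ from \propref{p:coinv to inv} (applied to $N$) together with \corref{c:inv colimits}. These show that $\binv_N$ commutes with all colimits and with tensoring by objects of $\AGCat$. In particular the comparison functor is compatible with colimits, and the unit $\ul\bM\to \binv_N(\ul\Shv(G/N)\underset{\ul\CH}\otimes \ul\bM)$ is an isomorphism. This reduces by colimits to the free case $\ul\bM=\ul\CH\otimes\ul\bD$, where it is tautological. So $\bind^{\on{enh}}$ is fully faithful.

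The main obstacle is conservativity: we must show that $\binv_N(\ul\bC)=0$ implies $\ul\bC=0$. Equivalently, we must show that $\ul\Shv(G/N)$ generates $G\mmod$. The plan is to reduce first to the Borel $B$ and then to the torus. Write $\binv_N=\boblv_T\circ\binv_B^{T\text{-}\on{enh}}$. It suffices to show that the counit $\ul\Shv(G/B)\otimes_{\binv_B}\ldots$, more concretely the natural map $\bcoinv_B(\ul\Shv(G)\otimes\ul\bC)\to\ul\bC$, has $\ul\bC$ as a retract. We view $\ul\Shv(G/B)\in (G\times G)\mmod$ as the kernel of the endofunctor $\bind_B\circ\bjacq_B$, via \eqref{e:nice comp}. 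By \secref{sss:prince series}, $\bind_B$ is 2-adjointable, so the counit $\bind_B\circ\bjacq_B\to\on{Id}$ has a right adjoint $\on{Id}\to \bind_B\circ\bjacq_B$. We then compute the composite $\on{Id}\to\bind_B\bjacq_B\to\on{Id}$ geometrically via \secref{sss:adj geom}. It is given by convolution with the kernel $\on{C}^\cdot$ of the fibers of $G\to G/B$ pushed forward to $G$, i.e.\ by the $\ul\Shv(G)_{\on{co}}$-action of the object $p_*p^!\delta_1$ up to shift, where $p:G\to G/B$ (equivalently the pushforward of the dualizing sheaf of $B$). Since $\Gamma(G/B,-)$ of the constant sheaf has nonzero Euler characteristic $|W|$, we would try to exhibit $\on{Id}$ as a direct summand. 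The idea is to note that this endomorphism of $\on{Id}_{G\mmod}$ is given by a central object of $\ul\Shv(G)_{\on{co}}$ whose action factors through $\on{C}^\cdot(G/B)$, with the degree-zero piece the identity. The cohomology of the flag variety is a finite-dimensional algebra concentrated in even degrees, so the projection onto top or bottom degree splits, giving $\on{Id}$ as a retract of $\bind_B\bjacq_B$.

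Hence if $\bjacq_B(\ul\bC)=0$, equivalently $\binv_N(\ul\bC)=0$ (since $\bjacq_B$ lands in $T\mmod$, whose underlying object is $\binv_N$), then $\ul\bC=0$. Combined with the colimit compatibility, Barr--Beck gives the equivalence. The delicate point to verify is the splitting in $\tAGCat$. If the direct Euler-characteristic argument is awkward, we would instead argue pointwise in $X$, reducing to the classical statement of \cite{BGO} that $\ul\bC(X)$ is generated by the image of $\Av^N_*$-type functors. This is a statement about $\ul\Shv(G)$-comodules in $\DGCat$, which can be checked on the regular object $\ul\Shv(G)\otimes\ul\bD$, since every comodule is a totalization of these.
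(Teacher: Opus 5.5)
The Barr--Beck setup, the colimit compatibility of $\binv_N$, and the full faithfulness of $\ul\Shv(G/N)\underset{\ul\CH}\otimes(-)$ are fine. The gap is in the conservativity step, which is the entire content of the theorem.

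\textbf{The functor you analyze is the wrong one.} $\bcoinv_B(\ul\Shv(G)\otimes\ul\bC)$ is $\on{Ind}^G_B\on{Res}^G_B(\ul\bC)$, with kernel $\ul\Shv(G\overset{B}\times G)$. By contrast, $\bind_B\bjacq_B(\ul\bC)$ factors through $\binv_N$ and has kernel $\ul\Shv(G/N\overset{T}\times N\backslash G)$.

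The endomorphism you describe, acting through $\on{C}^\cdot(G/B)$ with the unit splitting off, is what one gets for the counit of $\on{Ind}^G_B\dashv\on{Res}^G_B$. There $\on{counit}\circ\on{counit}^R=m_*m^!\simeq \on{C}_\cdot(G/B)\otimes\on{Id}$, since $G\overset{B}\times G\simeq G\times G/B$ over $G$. This only shows that restriction to $B$ is conservative, which is trivial. It does not help, because $\binv_N$ is \emph{not} conservative on $B\mmod$: it kills the induction from $N$ to $B$ of $\ul\Vect^{\psi}$ for any non-trivial character sheaf $\psi$ on $N$. So the argument must use more of $G$ than $G/B$.

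For the correct functor, unwinding the kernels shows that $\on{counit}\circ\on{counit}^R$ is convolution with the Springer sheaf $\on{Spr}$ (\lemref{l:Springer}). This is the pushforward along $G\overset{B}\times N\to G$, $(g,n)\mapsto gng^{-1}$, and it is supported on the whole unipotent cone. The needed input is that $\delta_{1,G}$ is a direct summand of $\on{Spr}$. This comes from the decomposition theorem for the semismall Springer map: the stratum $\{1\}$ is relevant and $H^{2\dim(G/B)}(G/B)$ is one-dimensional. Parity of $H^*(G/B)$ only controls the stalk at $1$ and cannot split $\delta_{1,G}$ off the sheaf.

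\textbf{The splitting is at the wrong categorical level.} Even granting this input, you do not get $\on{Id}$ as a retract of $\bind_B\bjacq_B$ as 1-morphisms, since $\on{counit}\circ\on{counit}^R$ is not invertible. The splitting $\delta_{1,G}\to\on{Spr}\to\delta_{1,G}$ lives one level up: it exhibits $\on{Id}_{\ul\bC}$ as a direct summand of the functor $\on{counit}\circ\on{counit}^R:\ul\bC\to\ul\bC$. What it yields is the following chain.
\begin{itemize}
\item $\on{counit}^R$ is conservative.
\item Hence $\on{counit}\dashv\on{counit}^R$ is monadic, with monad $(\on{counit}^R\circ\on{counit})\otimes\on{Id}_{\ul\bC}$ on $\ul\Shv(G/N\overset{T}\times N\backslash G)\underset{\ul\Shv(G)_{\on{co}}}\otimes\ul\bC$, natural in $\ul\bC$.
\item Hence a 1-morphism $F$ is an equivalence whenever $\bind_B\bjacq_B(F)$ is (see \propref{p:Springer monad} and the argument following it).
\end{itemize}
This reflection of equivalences is what Barr--Beck--Lurie requires. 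The statement $\binv_N(\ul\bC)=0\Rightarrow\ul\bC=0$, which is all you aim for, does not suffice, because $G\mmod$ is not stable. Already in $\DGCat$, a functor with zero kernel and zero cofiber need not be an equivalence.

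\textbf{The fallback does not repair this.} \cite{BGO} is a D-module statement, and generation does not pass to totalizations. The legitimate version of your reduction runs through the bar resolution applied to the counit, using that both $LR$ and $\on{Id}$ commute with colimits and with tensoring by $\AGCat$. That reduction leaves exactly $\ul\Shv(G/N)\underset{\ul\CH}\otimes\ul\Shv(N\backslash G)\simeq\ul\Shv(G)$ to be proved, which again needs the Springer input.
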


\begin{rem}

The above \thmref{t:G mod via Hecke} is an adaptation to the context of $\AGCat$ of a theorem
of \cite{BGO}, which was initially proved in the context of D-modules.

\end{rem}

\sssec{}

The rest of this subsection is devoted to the proof of this theorem.

\medskip

Recall
that by \corref{c:inv colimits}, the functor $\binv_G$ commutes with colimits
and tensor products by objects of $\AGCat$.

\medskip

Hence, by the Barr-Beck-Lurie theorem, in order to prove \thmref{t:G mod via Hecke}, it suffices
to show that the functor $\binv_N$  (equivalently, $\binv^{\on{enh}}_N$) is conservative.

\sssec{}

In order to show that $\binv^{\on{enh}}_N$ is conservative, it suffices to prove that the
functor
$$\bjacq_B:G\mmod\to T\mmod$$
is conservative.

\medskip

For the latter, it suffices to show that the endofunctor
$$\bind_B\circ \bjacq_B$$
of $G\mmod$ is conservative.

\sssec{}

Consider the counit of the $(\bind_B, \bjacq_B)$-adjunction. It is given by a map in $(G\times G)\mmod$
\begin{equation} \label{e:conv map G/N}
\ul\Shv(G/N\overset{T}\times N\backslash G)\to \ul\Shv(G),
\end{equation}
where:

\begin{itemize}

\item $\overset{T}\times$ means the quotient by the diagonal action of $T$;

\item The map in \eqref{e:conv map G/N} is given by pull-push along the diagram
$$
\CD
G\overset{B}\times G @>>> G/N\overset{T}\times N\backslash G \\
@VVV \\
G.
\endCD
$$

\end{itemize}

\sssec{}

Note, however, that the vertical map in the above diagram is proper and the horizontal is smooth.
Hence, the functor \eqref{e:conv map G/N} admits a right adjoint.

\medskip

We claim:

\begin{prop} \label{p:Springer monad}
The adjunction
$$\on{counit}:\bind_B\circ \bjacq_B(\ul\bC) \rightleftarrows \ul\bC:\on{counit}^R, \quad \ul\bC\in G\mmod$$
is monadic.
\end{prop}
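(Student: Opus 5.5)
Write $\Phi:=\on{counit}^R\circ\on{counit}$; this is a monad on $\bind_B\circ\bjacq_B(\ul\bC)$. The claim is that $\on{counit}^R$ is monadic. I would use Barr--Beck--Lurie, which needs two things: $\on{counit}^R$ preserves the relevant (split) geometric realizations, and $\on{counit}^R$ is conservative.

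\textbf{Step 1: colimits.} Both $\on{counit}$ and $\on{counit}^R$ are realized by the action of objects of $(G\times G)\mmod$ built from $\ul\Shv$ of AG tame stacks. Indeed, $\on{counit}^R$ is pull-push along the diagram transposed to the one for \eqref{e:conv map G/N}: $*$-pull along the proper map $G\overset{B}\times G\to G$, then push along the smooth map. In $\AGCat$ all 1-morphisms are colimit-preserving, so $\on{counit}^R$ commutes with all colimits, functorially in $\ul\bC$. This condition is therefore automatic.

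\textbf{Step 2: conservativity, reduced to a universal statement.} Every $\ul\bC\in G\mmod$ is $\ul\Shv(G)_{\on{co}}\underset{\ul\Shv(G)_{\on{co}}}\otimes\ul\bC$. The functor $\on{counit}^R$ is obtained by tensoring $\ul\bC$ over $\ul\Shv(G)_{\on{co}}$ with the 1-morphism
$$\ul\Shv(G)\to\ul\Shv(G\overset{B}\times G)\simeq\ul\Shv(G/N\overset{T}\times N\backslash G)$$
in $(G\times G)\mmod$. So it suffices to show this universal map is a \emph{split} injection, i.e., that the identity of $\ul\Shv(G)$ is a retract of $\on{counit}\circ\on{counit}^R$. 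A retract survives tensoring with any $\ul\bC$, and a functor $F$ with $\on{Id}$ a retract of $G\circ F$ is conservative.

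\textbf{Step 3: the key geometric input.} Compute $\on{counit}\circ\on{counit}^R$ by base change. It is convolution on $\ul\Shv(G)$ with the object $\pi_*\pi^!$ applied to the unit, where $\pi:G\overset{B}\times G\to G$ is the multiplication map. Equivalently, it is $\on{C}^\cdot(G/B)$-valued: fiberwise the composite is $\on{Id}\otimes\on{C}^\cdot(G/B)$ up to shift, the Springer-type (Grothendieck) computation $\pi_*\pi^!\simeq\on{Id}\otimes\on{C}^\cdot(G/B,\omega)$ coming from the $G/B$-bundle structure. Since $G/B$ is proper and smooth, $\on{C}^\cdot(G/B)$ contains $\ol\BQ_\ell$ as a direct summand, via the unit in degree $0$ and the projection to top degree, using the trace map. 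This gives the required splitting of $\on{Id}\to\on{counit}\circ\on{counit}^R$ up to shift.

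\textbf{Main obstacle.} The hard part is making Step 3 an honest splitting in $(G\times G)\mmod$ rather than merely in $\AGCat$. The identification of $\on{counit}\circ\on{counit}^R$ with $\on{Id}\otimes\on{C}^\cdot(G/B)$ must be $G\times G$-equivariant, and the trace/unit maps must be $G\times G$-linear. I would obtain this by working universally with the stack $G\backslash(G\overset{B}\times G\underset{G}\times G\overset{B}\times G)/G\simeq B\backslash G/B$. There the composite is pull-push along $\on{pt}/B\to\on{pt}/G$, whose fiber is $G/B$. The unit and trace then come from the proper smooth map $\on{pt}/B\to\on{pt}/G$, which is the adjunction of \secref{sss:prince series}, via the 2-adjointability of $\bind_B$. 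Conservativity follows, and monadicity then follows from Barr--Beck--Lurie.
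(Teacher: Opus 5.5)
Your reduction is the same as the paper's: Barr--Beck--Lurie, then conservativity of $\on{counit}^R$, then exhibiting $\on{Id}_{\ul\Shv(G)}$ as a retract of $\on{counit}\circ\on{counit}^R$ in $(G\times G)\mmod$. However, your computation of that composite in Steps 2--3 is wrong. The identification $\ul\Shv(G\overset{B}\times G)\simeq\ul\Shv(G/N\overset{T}\times N\backslash G)$ is false. The map $p\colon G\overset{B}\times G\to G/N\overset{T}\times N\backslash G$, $(g_1,g_2)\mapsto(g_1N,Ng_2)$, is smooth with fibers $N$, so $p^!$ is fully faithful but not essentially surjective. Write $m$ for the multiplication map. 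Then $\on{counit}=m_*\circ p^!$ and $\on{counit}^R\simeq p_*\circ m^![-2\dim N]$, so
$$\on{counit}\circ\on{counit}^R\simeq m_*\circ p^!\circ p_*\circ m^![-2\dim N].$$
The middle factor $p^!\circ p_*$ is not the identity. What you computed, $m_*\circ m^!\simeq\on{C}^\cdot(G/B,\omega_{G/B})\otimes\on{Id}$, is a different functor. By base change, the true composite is pull-push along $G\leftarrow(G\times N\times G)/B\to G$, $(g_1,n,g_2)\mapsto(g_1g_2,\,g_1ng_2)$. This is convolution with the Springer sheaf $\on{Spr}$, the pushforward along $G\overset{B}\times N\to G$, $(g,n)\mapsto gng^{-1}$ (this is \lemref{l:Springer}). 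Its $!$-fiber at $1$ is $\on{C}^\cdot(G/B,\omega_{G/B})$ up to shift, but $\on{Spr}$ is supported on the entire unipotent variety, not just at $1$.

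So the missing input is that $\delta_{1,G}$ is, up to shift, a direct summand of $\on{Spr}$ in $\Shv(G/\on{Ad}(G))$; this is exactly what the paper invokes. Your unit-and-trace argument on $\on{C}^\cdot(G/B)$ does not give this. Take the composite $\delta_{1,G}\to\on{Spr}\to\delta_{1,G}$ (suitably shifted) built from the fundamental class and the top-degree trace on the Springer fiber $G/B$. It is computed by the natural map $i_1^!\to i_1^*$, i.e., by the Euler class of the normal bundle of the section $G/B\subset G\overset{B}\times N$. That Euler number is $\pm\chi(G/B)=\pm|W|$, not $1$. You therefore need either this computation together with the invertibility of $|W|$ in $\ol\BQ_\ell$, or the decomposition theorem for the semismall Springer map, where the point orbit contributes $H^{\on{top}}(G/B)\neq0$.

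Conversely, the ``main obstacle'' you flag is not the real one. Endomorphisms of $\ul\Shv(G)$ in $(G\times G)\mmod$ are convolutions with objects of $\ul\Shv(G/\on{Ad}(G))$, so a splitting of $\on{Spr}$ in $\Shv(G/\on{Ad}(G))$ is automatically $(G\times G)$-equivariant. Also, the universal stack you propose ($B\backslash G/B$, with $\on{pt}/B\to\on{pt}/G$) again encodes $m_*\circ m^!$, rather than the correct $N/\on{Ad}(B)\to G/\on{Ad}(G)$.
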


\sssec{}

Let us assume \propref{p:Springer monad} for a moment, and finish the proof of \thmref{t:G mod via Hecke}:

\medskip

By \propref{p:Springer monad}, we recover $\ul\bC$ as
$$(\on{counit}^R\circ \on{counit})\mod\left(\ul\Shv(G/N\overset{T}\times N\backslash G)\underset{\ul\Shv(G)_{\on{co}}}\otimes \ul\bC\right).$$

Note that the monad $(\on{counit}^R\circ \on{counit})$ acting on
$$\ul\Shv(G/N\overset{T}\times N\backslash G)\underset{\ul\Shv(G)_{\on{co}}}\otimes \ul\bC$$
is given by
$$(\on{counit}^R\circ \on{counit})\otimes \on{Id}_{\ul\bC},$$
where the left factor is the corresponding monad on $\ul\Shv(G/N\overset{T}\times N\backslash G)$.

\medskip

In particular, for a morphism $F:\ul\bC_1\to \ul\bC_2$ in $G\mmod$, the diagram
\begin{equation} \label{e:monad Spr diag}
\CD
\ul\Shv(G/N\overset{T}\times N\backslash G)\underset{\ul\Shv(G)_{\on{co}}}\otimes \ul\bC_1 @>{\on{Id}\otimes F}>>
\ul\Shv(G/N\overset{T}\times N\backslash G)\underset{\ul\Shv(G)_{\on{co}}}\otimes \ul\bC_2 \\
@V{\on{counit}^R\circ \on{counit}}VV @VV{\on{counit}^R\circ \on{counit}}V \\
\ul\Shv(G/N\overset{T}\times N\backslash G)\underset{\ul\Shv(G)_{\on{co}}}\otimes \ul\bC_1 @>{\on{Id}\otimes F}>>
\ul\Shv(G/N\overset{T}\times N\backslash G)\underset{\ul\Shv(G)_{\on{co}}}\otimes \ul\bC_2
\endCD
\end{equation}
commutes.

\medskip

Let $F$ be such that
$$\ul\Shv(G/N\overset{T}\times N\backslash G)\underset{\ul\Shv(G)_{\on{co}}}\otimes \ul\bC_1\overset{\on{Id}\otimes F}\longrightarrow
\ul\Shv(G/N\overset{T}\times N\backslash G)\underset{\ul\Shv(G)_{\on{co}}}\otimes \ul\bC_2$$
is an equivalence. Then from \eqref{e:monad Spr diag} we obtain that $\on{Id}\otimes F$ induces an equivalence
\begin{multline*}
(\on{counit}^R\circ \on{counit})\mod\left(\ul\Shv(G/N\overset{T}\times N\backslash G)\underset{\ul\Shv(G)_{\on{co}}}\otimes \ul\bC_1\right)\to \\
(\on{counit}^R\circ \on{counit})\mod\left(\ul\Shv(G/N\overset{T}\times N\backslash G)\underset{\ul\Shv(G)_{\on{co}}}\otimes \ul\bC_2\right),
\end{multline*}
as required.

\qed[\thmref{t:G mod via Hecke}]

\sssec{Proof of \propref{p:Springer monad}}

By Barr-Beck-Lurie, it suffices to show that the functor $\on{counit}^R$ is conservative. We will
show that the composition
\begin{equation} \label{e:orgnl Spr}
\on{counit}\circ \on{counit}^R
\end{equation}
contains the identity endofunctor of $\ul\bC$ as a direct summand.

\medskip

In fact, we will show that the functor \eqref{e:orgnl Spr}
viewed as an endofunctor of
$$\ul\Shv(G)\in (G\times G)\mmod$$
contains the identity as a direct summand.

\medskip

However, unwinding, we obtain:

\begin{lem} \label{l:Springer}
The functor \eqref{e:orgnl Spr} is given by convolution
with the Springer sheaf
$$\on{Spr}\in \Shv(G/\on{Ad}(G)).$$
\end{lem}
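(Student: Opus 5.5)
Since $\on{counit}$ and $\on{counit}^R$ are 1-morphisms in $(G\times G)\mmod$ between $\ul\Shv(G/N\overset{T}\times N\backslash G)$ and $\ul\Shv(G)$, and both are given by pull-push along $(G\times G)$-equivariant correspondences, I would compute the composite $\on{counit}\circ\on{counit}^R$ by composing correspondences. The counit is pull-push along
$$G\overset{B}\times G \overset{h}\to G/N\overset{T}\times N\backslash G,\qquad G\overset{B}\times G\overset{v}\to G,$$
where $v$ is the multiplication $(g_1,g_2)\mapsto g_1g_2$, which is proper, and $h$ is smooth. Hence $\on{counit}=v_*\circ h^!$ and, up to the shift/twist making $h^!$ and $h^*$ agree, $\on{counit}^R=h_*\circ v^!$, since the right adjoint of $h^!$ for $h$ smooth is $h_*$ after the shift $h^*\simeq h^![-2d]$. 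I would first make this adjoint description precise in $(G\times G)\mmod$, using \lemref{l:adj in G mod} to reduce to $\AGCat$, where it is standard pull-push adjunction (\cite[Sect. 3.7]{GRV}).

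Next I would compose: $v_*h^!h_*v^!$. Base change along $h$ turns $h^!h_*$ into pull-push through $(G\overset{B}\times G)\underset{G/N\overset{T}\times N\backslash G}\times (G\overset{B}\times G)$. A point of this fiber product is a pair $(g_1,g_2),(g_1',g_2')$ with $g_1'=g_1b$ and $g_2'=b'g_2$ where $b,b'\in B$ have the same image in $T$. Hence the endofunctor of $\ul\Shv(G)$ is pull-push along a $(G\times G)$-equivariant correspondence $G\leftarrow \CZ\rightarrow G$. Dividing by the left $G$-action, such a $(G\times G)$-equivariant correspondence on $G$ is the same as a kernel on $G/\on{Ad}(G)$ acting by convolution, i.e. an object of $\Shv(G/\on{Ad}(G))$. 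So I would identify the kernel as the pushforward to $G/\on{Ad}(G)$ of the dualizing (equivalently, shifted constant) sheaf of the stack of pairs $(x,gB)$ with $x\in G$ and $x\in gBg^{-1}$, with $N$-conjugation collapsing. That is the Grothendieck-Springer resolution $\wt G/\on{Ad}(G)\to G/\on{Ad}(G)$. Its pushforward of the constant sheaf is by definition $\on{Spr}$, up to a shift.

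The main obstacle is the bookkeeping in the fiber product: checking that the $T$-diagonal condition exactly leaves $\wt G=\{(x,gB): g^{-1}xg\in B\}$ modulo $\on{Ad}(G)$, with the torus directions killed rather than adding an extra $T$-factor. I would handle this by writing the composite as the correspondence $G\overset{B}\times G\underset{T}\times\ldots$ and noticing the map $(g_1,g_2,g_1',g_2')\mapsto g_1g_2(g_1'g_2')^{-1}$ lands in $g_1Bg_1^{-1}$ with the unipotent part free. I would also confirm that the shifts from $h^*$ versus $h^!$ cancel against the dimension of the fiber, which is $\dim T$ plus the $N$-directions, so that the answer is $\on{Spr}$ on the nose rather than a shift of it. Finally, I would note that the identification is compatible with the $(G\times G)$-structure, since everything is built from equivariant maps, so the lemma holds as an endomorphism of $\ul\Shv(G)\in (G\times G)\mmod$.
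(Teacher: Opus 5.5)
Your route is the right one, and it is exactly the ``unwinding'' the paper has in mind: write $\on{counit}=v_*\circ h^!$ and $\on{counit}^R\simeq h_*\circ v^!$ (up to a shift), base-change $h^!h_*$ through $Z\underset{Y}\times Z$ with $Z=G\overset{B}\times G$ and $Y=G/N\overset{T}\times N\backslash G$, and then read off the kernel on $G/\on{Ad}(G)$ via $(x,y)\mapsto yx^{-1}$. But the identification you reach at the end is wrong.

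\textbf{The relation in $Y$.} It is $(g_1,g_2)\sim(g_1b,\,b'g_2)$ with $\ol b\cdot\ol b'=1$ in $T$, not $\ol b=\ol b'$. This is forced: the relation $(g_1,g_2)\sim(g_1b,b^{-1}g_2)$ defining $G\overset{B}\times G$ must map to it. Hence $g_1'g_2'(g_1g_2)^{-1}=g_1(bb')g_1^{-1}\in g_1Ng_1^{-1}$. So the $T$-diagonal condition kills the torus part of the difference element; it lands in $g_1Ng_1^{-1}$, not $g_1Bg_1^{-1}$.

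\textbf{The correct kernel.} The correspondence is $\CZ=\{(gB,x,y)\,|\,yx^{-1}\in gNg^{-1}\}$. After dividing by $G\times G$, the kernel is the pushforward of the (shifted) dualizing sheaf along $\wt{\mathcal U}/\on{Ad}(G)\to G/\on{Ad}(G)$, where $\wt{\mathcal U}=\{(u,gB)\,|\,g^{-1}ug\in N\}$ is the Springer resolution of the unipotent variety. It is not the pushforward along $\wt G=\{(x,gB)\,|\,g^{-1}xg\in B\}$. A dimension count confirms this:
\begin{itemize}
\item the fibers of $h$ are $(B\underset{T}\times B)/B\simeq N$, so the shift involves $\dim N$, not $\dim T+\dim N$;
\item the fibers of $\CZ\to G$ have dimension $\dim(G/B)+\dim N=2\dim N=\dim\wt{\mathcal U}$, whereas $\dim\wt G=\dim G$.
\end{itemize}

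\textbf{Why this is not a naming issue.} In the paper, $\on{Spr}$ is the unipotent Springer sheaf (equivalently $\on{GrothSpr}_B(\delta_{1,T})$ up to shift). The lemma is used immediately to conclude that $\on{counit}\circ\on{counit}^R$ contains the identity as a direct summand, because $\delta_{1,G}$ is a summand of $\on{Spr}$. The sheaf you obtained, the pushforward of the constant sheaf from $\wt G$, is the Grothendieck--Springer sheaf. Since $\wt G\to G$ is small, it is the intermediate extension of a local system on $G^{\on{rs}}$. All its simple constituents therefore have full support, so $\delta_{1,G}$ is not a summand, and the conservativity argument in \propref{p:Springer monad} would break with your identification.

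To repair the proposal, redo the fiber-product bookkeeping with the correct relation $\ol b\cdot\ol b'=1$, and identify the kernel with $\wt{\mathcal U}\to G$.
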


The required assertion follows now from the fact that $\on{Spr}$ contains $\delta_{1,G}$ as a direct summand.

\qed[\propref{p:Springer monad}]

\ssec{The monodromic Hecke algebra}

\sssec{}

Consider the stack $N\backslash G/N$. We view it as acted on by the group $T\times T$. We observe:

\begin{lem} \label{l:mon Hecke}
The inclusions
$$\ul\Shv(N\backslash G/N)^{\on{left-}\!T\on{-mon}} \hookleftarrow
\ul\Shv(N\backslash G/N)^{(T\times T)\on{-mon}}
\hookrightarrow \ul\Shv(N\backslash G/N)^{\on{right-}\!T\on{-mon}}$$
are equalities.
\end{lem}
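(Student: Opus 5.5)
The plan is to reduce the statement to a check on $\Shv(N\backslash G/N\times X)$ for each scheme $X$, functorially in $X$. The subcategories in question are defined by the condition that the !-pullback along the (left, right, or diagonal) $T$-action map lands in $\Shv(T)^{\on{mon}}\otimes(-)$. The inclusions of $(T\times T)$-monodromic objects into left- and right-monodromic ones are tautological. So the content is the converse: a left-$T$-monodromic object is automatically right-$T$-monodromic (the other direction being symmetric under inversion $g\mapsto g^{-1}$, which swaps the two actions). By \corref{c:q mon torus}, $\Shv(T)^{\on{mon}}=\Shv(T)^{\on{q-mon}}$. This lets me work with whichever of the two descriptions is more convenient; the quasi-monodromic one (a condition on cohomology sheaves) is the handy one for devissage.

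First I would reduce to Bruhat cells. Stratify $N\backslash G/N$ by the locally closed substacks $N\backslash BwB/N\simeq (N\cap {}^wN)\backslash \,\dot w T\ltimes(\text{affine space})/N$, indexed by $w\in W$; each is stable under $T\times T$. Both the left- and right-monodromicity conditions are preserved by $!$-restriction and $*$-extension along these $T\times T$-equivariant locally closed embeddings. This uses the Beck--Chevalley compatibilities of \corref{c:!-Av mon}, or directly that the defining condition is checked after !-pullback along the action map, which is smooth and commutes with these functors. An object then lies in the subcategory if and only if each of its restrictions does. So it suffices to treat a single cell $\CY_w:=N\backslash BwB/N$, with $X$ as a passive parameter.

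On $\CY_w$, the $N$-quotient collapses the unipotent parts, and $\CY_w$ is, up to a quotient by a unipotent group (which does not affect sheaf categories), a $T$-torsor $\dot wT$. On it, the left action $t\cdot \dot w s=\dot w\,(w^{-1}(t)s)$ and the right action $\dot w s\cdot t'=\dot w\, st'$ differ by the automorphism $w^{-1}$ of $T$. So a sheaf on $\CY_w\times X$ is left-monodromic with respect to $\chi$ if and only if it is right-monodromic with respect to $w^{-1}(\chi)$. Here I use that a character sheaf pulled back along an automorphism of $T$ is again a character sheaf, and that the q-const condition is invariant under automorphisms. Since $\Shv(T)^{\on{mon}}$ is the sum over all $\chi$, the two conditions coincide. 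Joint $(T\times T)$-monodromicity then follows because the $T\times T$-action on the torsor factors through the quotient $(t,t')\mapsto w^{-1}(t)t'$, so monodromicity for the quotient torus gives monodromicity for $T\times T$ by pullback along a homomorphism.

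The step I expect to be the main obstacle is the devissage. One must check that monodromicity can be tested stratum by stratum also for the $\AGCat$-enhanced objects, i.e. uniformly in $X$ and compatibly with the $!$/$*$ functorialities in $X$, and that the recollement glues back (monodromic subcategories being closed under extensions and colimits). For this I would use that $\Shv(T)^{\on{mon}}\otimes\Shv(\CY)$ is a full subcategory closed under colimits and cones, together with the finite stratification, arguing by induction on the number of cells.
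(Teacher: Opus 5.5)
Your proposal is correct and is essentially the paper's proof: devissage along the Bruhat stratification reduces to a single cell $N\backslash BwB/N$, which up to a unipotent stabilizer is a copy of $T$ on which the left and right $T$-actions differ by $w$, so left-$\chi$-monodromicity coincides with right-$w(\chi)$-monodromicity (your $w^{-1}$ comes from the other parametrization of the cell). The stratum-by-stratum gluing you flag as the main obstacle works as you describe, via base change for the action map and closure of the monodromic subcategory under cones; \corref{c:!-Av mon} is not needed for this step.
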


\begin{proof}

By devissage, it is sufficient to prove that for a single Bruhat cell $BwB$, the inclusions
$$\ul\Shv(N\backslash BwB/N)^{\on{left-}\!T\on{-mon}} \hookleftarrow
\ul\Shv(N\backslash BwB/N)^{(T\times T)\on{-mon}}  \hookrightarrow
\ul\Shv(N\backslash BwB/N)^{\on{right-}\!T\on{-mon}}$$
are equalities.

\medskip

We have
$$\ul\Shv(N\backslash BwB/N) \simeq \ul\Shv(T)$$
as objects of $(T\times T)\mmod$,
with the left action of $T$ given by multiplication, and the right action given by multiplication precomposed with $w$.

\medskip

Hence, for a given character sheaf
$$\ul\Shv(N\backslash BwB/N)^{\on{left-alm-}\!\chi} = \ul\Shv(N\backslash BwB/N)^{\on{right-alm-}\!w(\chi)},$$
making the assertion of the lemma manifest.

\end{proof}

\sssec{}  \label{sss:H mon prel}

In what follows, we will denote the object of $\AGCat$ that appears in \lemref{l:mon Hecke} by
$$\ul\Shv(N\backslash G/N)^{\on{mon}}.$$

\medskip

Consider the (fully faithful) map
$$\on{emb}^{\on{mon}}:\ul\Shv(N\backslash G/N)^{\on{mon}}\hookrightarrow \ul\Shv(N\backslash G/N).$$

By \secref{sss:mon righ adj}, the above 1-morphism admits a right adjoint (in $\AGCat$).
\begin{equation} \label{e:emb mon GNN}
(\on{emb}^{\on{mon}})^R:\ul\Shv(N\backslash G/N)\twoheadrightarrow \ul\Shv(N\backslash G/N)^{\on{mon}}.
\end{equation}

It follows from \lemref{l:mon Hecke} that the algebra structure on $\ul\Shv(N\backslash G/N)$
induces a (uniquely defined) algebra structure on $\ul\Shv(N\backslash G/N)^{\on{mon}}$ so that
\eqref{e:emb mon GNN} is a homomorphism.

\sssec{} \label{sss:H mon}

Denote the corresponding object of $\on{AssocAlg}(\AGCat)$ by $\ul\CH^{\on{mon}}$.

\medskip

From the isomorphism \eqref{e:mon restr Y} we obtain:
$$\ul\CH^{\on{mon}}\simeq \bi(\CH^{\on{mon}}),$$
where
$$\CH^{\on{mon}}:=\Shv(N\backslash G/N)^{\on{mon}}\in \on{AssocAlg}(\DGCat).$$

\sssec{}

For future use we record also the following consequence of \lemref{l:mon Hecke}:

\begin{cor} \label{c:Hecke mon ideal}
The full subcategory
$$\ul\Shv(N\backslash G/N)^{\on{mon}}\overset{\on{emb}^{\on{mon}}}\hookrightarrow
\ul\Shv(N\backslash G/N)$$
is a monoidal ideal.
\end{cor}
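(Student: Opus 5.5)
The plan is to reduce the ideal property to \lemref{l:mon Hecke}, using that monodromicity is a condition on a single $T$-action and is preserved by $T$-equivariant functors.

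First, I would record the equivariance of convolution. Consider
$$\ul\Shv(N\backslash G/N)\otimes \ul\Shv(N\backslash G/N)\overset{\star}\to \ul\Shv(N\backslash G/N).$$
It is given by pull-push along the convolution diagram through $N\backslash G\overset{N}\times G/N$. This diagram is equivariant for the left $T$-action on the first factor and the right $T$-action on the second. Hence $\star$ upgrades to a 1-morphism in $(T\times T)\mmod$, where on the source the left copy of $T$ acts on the first tensor factor only and the right copy acts on the second tensor factor only.

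Second, I would check that 1-morphisms in $T\mmod$ preserve monodromic subobjects. If $F:\ul\bC_1\to\ul\bC_2$ is a 1-morphism in $T\mmod$, then for each $X\in\Sch$ we have $\on{coact}_X\circ F\simeq (F\otimes\on{Id})\circ\on{coact}_X$. So if $\on{coact}_X(\bc)$ lies in $\ul\bC_1(X)\otimes\Shv(T)^{\on{mon}}$, then $\on{coact}_X(F(\bc))$ lies in $\ul\bC_2(X)\otimes\Shv(T)^{\on{mon}}$. Equivalently, $F$ restricts to $\ul\bC_1^{\on{mon}}\to\ul\bC_2^{\on{mon}}$; this is also formal from $(\bemb^{\on{mon}})^R$ being a right adjoint.

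Third, I would verify the source condition. The subobject $\ul\Shv(N\backslash G/N)^{\on{mon}}\otimes\ul\Shv(N\backslash G/N)$ is left-$T$-monodromic for the left $T$-action acting through the first factor. This is because the coaction on it factors through $\Shv(T)^{\on{mon}}\otimes(-)$, which can be checked value-wise in $X$.

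Combining the three steps, $\CF\star\CG$ is left-$T$-monodromic whenever $\CF$ is monodromic. By \lemref{l:mon Hecke}, left-$T$-monodromic equals $(T\times T)$-monodromic, so $\CF\star\CG$ lies in $\ul\Shv(N\backslash G/N)^{\on{mon}}$. Symmetrically, $\CG\star\CF$ is right-$T$-monodromic when $\CF$ is monodromic, and \lemref{l:mon Hecke} again gives that it is monodromic. This is exactly the two-sided ideal property.

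The only step needing real care is the first one: identifying precisely which $T$-action on the source of $\star$ corresponds to the left (resp. right) $T$-action on the target. The rest is formal given \lemref{l:mon Hecke}.
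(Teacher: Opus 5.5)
Your argument is correct and is the one the paper intends when it calls this corollary a consequence of \lemref{l:mon Hecke}. Convolution is equivariant for the left (resp.\ right) $T$-action acting through the first (resp.\ second) factor, so it preserves left (resp.\ right) $T$-monodromicity, and \lemref{l:mon Hecke} then identifies one-sided monodromicity with $(T\times T)$-monodromicity.
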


\sssec{}

We have a closed embedding
$$T\to N\backslash G/N,$$
which induces a map of algebras
$$\ul\Shv(T)_{\on{co}}\to \ul\CH.$$

\medskip

Note that we have a commutative diagram
$$
\CD
\ul\Shv(T)_{\on{co}} @>>> \ul\CH \\
@VVV @VVV  \\
\ul\Shv(T)^{\on{mon}}_{\on{co}}  @>>> \ul\CH^{\on{mon}}
\endCD
$$
of algebra objects in $\AGCat$.

\medskip

Denote by $\bRes^\CH_T$ the corresponding restriction functors
$$\ul\CH\mmod(\AGCat)\to T\mmod \text{ and } \ul\CH^{\on{mon}}\mmod(\AGCat)\to T\mmod^{\on{mon}}.$$

\sssec{}

Restriction along \eqref{e:emb mon GNN} gives rise to a (fully faithful) functor
$$\bemb^{\on{mon}}_\CH:\ul\CH^{\on{mon}}\mmod(\AGCat) \hookrightarrow \ul\CH\mmod(\AGCat).$$

It admits a right adjoint given by
$$\ul\bC\mapsto \Hom_{\ul\CH}(\ul\CH^{\on{mon}},\ul\bC).$$

We claim:

\begin{prop} \label{p:Hecke mon}  \hfill

\smallskip

\noindent{\em(a)} The functor $\bemb^{\on{mon}}_\CH$ is 2-adjointable, i.e., the unit and counit of the
$(\bemb^{\on{mon}}_\CH,(\bemb^{\on{mon}}_\CH)^R)$-adjunction admit right adjoints;

\smallskip

\noindent{\em(b)} An object $\ul\bC\in \ul\CH\mmod$ belongs to the essential image of $\bemb^{\on{mon}}_\CH$
if and only if $\bRes^\CH_T(\ul\bC)$ belongs to the essential image of
$$\bemb^{\on{mon}}_T:T\mmod^{\on{mon}}\to T\mmod.$$

\smallskip

\noindent{\em(c)} The Beck-Chevalley natural transformation
$$\bRes^\CH_T \circ (\bemb^{\on{mon}}_\CH)^R\to (\bemb^{\on{mon}}_T)^R\circ \bRes^\CH_T,$$
as functors
$$\ul\CH\mmod(\AGCat)\to T\mmod^{\on{mon}}$$
is an isomorphism.

\end{prop}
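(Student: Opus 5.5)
The plan is to mimic the proof of \propref{p:emb mon 2adj}, with $\ul\Shv(G)_{\on{co}}$ replaced by $\ul\CH$ and $\bi(\Shv(G)^{\on{mon}}_{\on{co}})$ replaced by $\ul\CH^{\on{mon}}$, and then to reduce (b) and (c) to the torus using \lemref{l:mon Hecke} and \corref{c:Hecke mon ideal}.

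The first step is to record the structure of the homomorphism $(\on{emb}^{\on{mon}})^R:\ul\CH\to \ul\CH^{\on{mon}}$ of \eqref{e:emb mon GNN}. It has a fully faithful left adjoint $\on{emb}^{\on{mon}}$. By \corref{c:Hecke mon ideal}, the image of $\on{emb}^{\on{mon}}$ is a two-sided monoidal ideal. The same argument as in \lemref{l:left is strict} then shows that $\on{emb}^{\on{mon}}$, which is a priori only left-lax $\ul\CH$-bilinear, is strictly $\ul\CH$-bilinear. Concretely, we factor convolution $\ul\CH^{\on{mon}}\otimes\ul\CH\to\ul\CH$ through the ideal and apply $(\on{emb}^{\on{mon}})^R$. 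From this we get:
\begin{itemize}
\item $\ul\CH^{\on{mon}}\underset{\ul\CH}\otimes \ul\CH^{\on{mon}}\simeq \ul\CH^{\on{mon}}$;
\item $\ul\CH^{\on{mon}}$ is an idempotent $\ul\CH$-bimodule, equal to the image of an idempotent comonad on $\ul\CH$.
\end{itemize}

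For (a): the unit of $(\bemb^{\on{mon}}_\CH,(\bemb^{\on{mon}}_\CH)^R)$ is the equivalence $\ul\bC\simeq \Hom_{\ul\CH}(\ul\CH^{\on{mon}},\ul\bC)$ for $\ul\bC\in\ul\CH^{\on{mon}}\mmod$. Being an equivalence, it is trivially adjointable. The counit is precomposition with $\ul\CH\to\ul\CH^{\on{mon}}$. Its right adjoint is precomposition with the $\ul\CH$-linear map $\on{emb}^{\on{mon}}:\ul\CH^{\on{mon}}\to\ul\CH$, which is well defined by the strictness established above.

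For (b), the "only if" direction is clear, since $\bRes^\CH_T$ intertwines the two restriction functors via the commutative square of algebras. For "if", $\ul\bC$ lies in the essential image exactly when the counit $\Hom_{\ul\CH}(\ul\CH^{\on{mon}},\ul\bC)\to\ul\bC$ is an equivalence. Equivalently, the idempotent comonad given by the action of the idempotent $e:=\on{emb}^{\on{mon}}\circ(\on{emb}^{\on{mon}})^R(\delta_1)\in\ul\CH$ acts on $\ul\bC$ as the identity. By \lemref{l:mon Hecke}, $e$ is the image of the monodromic idempotent $e_T\in\ul\Shv(T)_{\on{co}}$ under $\ul\Shv(T)_{\on{co}}\to\ul\CH$. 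Indeed, being left-$T$-monodromic is the same as being $(T\times T)$-monodromic, and the projector onto left-monodromic objects is convolution with $e_T$ on the left. Therefore the action of $e$ on $\ul\bC$ equals the action of $e_T$ on $\bRes^\CH_T(\ul\bC)$, and this is the identity precisely when $\bRes^\CH_T(\ul\bC)\in T\mmod^{\on{mon}}$.

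For (c), the same identification of idempotents shows that both sides are the image of the action of $e=e_T$ on $\bRes^\CH_T(\ul\bC)$, and that the Beck--Chevalley map is the identity of this image.

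The main obstacle is the identification $e\simeq\on{image}(e_T)$. This requires checking that the right adjoint $(\on{emb}^{\on{mon}})^R$ on $\ul\Shv(N\backslash G/N)$ coincides with left convolution by $e_T$, compatibly over all $X\in\Sch$. I would check this Bruhat cell by cell, using $\ul\Shv(N\backslash BwB/N)\simeq\ul\Shv(T)$ together with the compatibility of the right adjoints with $!$-pullback and $*$-pushforward established in \secref{sss:Shv mon self-dual}.
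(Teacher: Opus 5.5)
Your proposal is correct and is essentially the paper's argument. The paper's proof simply says the statement follows from \secref{ss:2 adj mon} together with \lemref{l:mon Hecke}, i.e.\ that the left and right $T$-monodromic quotients of $\ul\CH$ both equal $\ul\CH^{\on{mon}}$; your write-up makes this reduction to the torus explicit. The step you flag as the main obstacle needs no cell-by-cell check: apply \secref{ss:2 adj mon} to $\ul\CH$ with its left $T$-action, and then \lemref{l:mon Hecke} together with uniqueness of right adjoints immediately identifies $\on{emb}^{\on{mon}}\circ(\on{emb}^{\on{mon}})^R$ on $\ul\CH$ with left convolution by $e_T$.
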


\begin{proof}

Follows from \secref{ss:2 adj mon} and the fact that the (quotient) objects
$$(\bemb^{\on{left}}_T)^R(\ul\CH) \twoheadleftarrow \ul\CH \twoheadrightarrow (\bemb^{\on{right}}_T)^R(\ul\CH)$$
both identify with $\ul\CH^{\on{mon}}$, which is a reformulation of \lemref{l:mon Hecke}.

\end{proof}

\sssec{Semi-rigidity}

Recall the notion of \emph{semi-rigid} monoidal category, see \cite[Appendix C]{AGKRRV1}.
We claim:

\begin{prop} \label{p:H mon semi-rigid}
The monoidal category $\CH^{\on{mon}}$ is semi-rigid.
\end{prop}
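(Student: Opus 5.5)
We need to prove that $\CH^{\on{mon}}=\Shv(N\backslash G/N)^{\on{mon}}$ is semi-rigid in the sense of \cite[Appendix C]{AGKRRV1}. Recall what has to be checked: the category is compactly generated; the monoidal operation preserves compactness (though the unit need not be compact); and every compact object admits both a left and a right monoidal dual.

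\textbf{Step 1: generators.} The monoidal unit of $\CH^{\on{mon}}$ is the image under $(\on{emb}^{\on{mon}})^R$ of $\delta_{1}$. This is a monodromic "free" object on $T$, which is an ind-object and not compact; this is why we aim for semi-rigidity rather than rigidity. By \lemref{l:mon Hecke} and devissage along the Bruhat stratification, $\CH^{\on{mon}}$ is compactly generated by the objects
$$j_{w,!}(\CL),\quad j_{w,*}(\CL),$$
where $j_w:N\backslash BwB/N\to N\backslash G/N$ is the inclusion of a cell, and $\CL$ runs over compact objects of $\Shv(T)^{\on{mon}}$, i.e. $\chi\otimes$(compact objects of $\Shv(T)^{\on{alm-const}}\simeq \on{C}^\cdot(T)\mod$). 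These objects are compact in $\Shv(N\backslash G/N)$. Since $\on{emb}^{\on{mon}}$ has a continuous right adjoint (\secref{sss:mon righ adj}), objects that are compact in $\Shv(N\backslash G/N)$ and lie in the monodromic part remain compact in $\CH^{\on{mon}}$.

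\textbf{Step 2: convolution preserves compactness.} By \corref{c:Hecke mon ideal}, the monoidal structure on $\CH^{\on{mon}}$ is the restriction of convolution on $\Shv(N\backslash G/N)$. Convolution is pull-push along $N\backslash G\overset{B}\times G/N\to N\backslash G/N$, composed with a $T$-quotient. The possible problem is the pushforward along $\on{pt}/T$-type directions, which does not preserve compactness in general. We resolve it with \corref{c:!-Av mon}(b): on monodromic objects $*$-pushforward along the $T$-quotient agrees with $!$-pushforward up to shift, and $!$-pushforward preserves compactness. The proper part of the convolution diagram (the map $G\overset{B}\times G\to G$) preserves compactness. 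Hence the convolution of two generators is compact.

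\textbf{Step 3: duals.} We construct the duals on generators and extend to all compacts by retracts and finite colimits. Let $\sigma$ be the inversion $N\backslash G/N\to N\backslash G/N$, $g\mapsto g^{-1}$. The candidate right/left dual of a compact $\CF$ is $\sigma^*\BD(\CF)$, with Verdier duality $\BD$ preserving the monodromic subcategory (\secref{sss:Shv mon self-dual}), possibly twisted by $[2\dim]$ and the shift $[\dim T]$ coming from \propref{p:simple inv}. The unit and counit maps $\one\to \CF\star\CF^\vee$ and $\CF^\vee\star\CF\to\one$ come from the standard base change/adjunction identities for convolution. Equivalently, we check the criterion of \cite[Appendix C]{AGKRRV1}: for compact $\CF$, the functor $\CF\star-$ has a right adjoint that is $\CH^{\on{mon}}$-linear and equal to convolution with a compact object. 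Concretely, $\CF\star-$ is pull-push along a correspondence whose right adjoint is push-pull along the transposed correspondence, and after monodromic projection this is again convolution with $\sigma^*\BD\CF$ (up to shift).

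\textbf{Main obstacle.} The hardest part is making the adjoint's shifts and the unit correct, given that the unit $\one_{\CH^{\on{mon}}}=(\on{emb}^{\on{mon}})^R(\delta_1)$ is not compact. We must check that the right adjoint of convolution computed in $\Shv(N\backslash G/N)$ stays inside the monodromic ideal and commutes with $(\on{emb}^{\on{mon}})^R$. We plan to derive this from \propref{p:Hecke mon}(c) and \lemref{l:left is strict} (strictness of linearity), which together make the Beck--Chevalley maps isomorphisms.
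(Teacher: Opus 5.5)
Your Steps 1 and 2 are fine, and Step 2 already uses the decisive input, \corref{c:!-Av mon}(b). The gap is Step 3, which carries all the content of semi-rigidity and is only asserted. You do not construct the unit and counit for $\sigma^*\BD(\CF)$, and you do not show that the right adjoint of $\CF\star-$ is $\CH^{\on{mon}}$-linear.

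The route you indicate would also not produce this. Describing the right adjoint as push-pull along the transposed correspondence requires that, for the $T$-quotient $\pi\colon N\backslash G\overset{N}\times G/N\to N\backslash G\overset{B}\times G/N$, the right adjoint of $\pi_*$ be $\pi^!$ up to shift. On all of $\Shv(N\backslash G/N)$ this is false: it would force $\pi_*\simeq\pi_!$ up to a uniform shift, which fails for non-monodromic objects. It holds exactly on the monodromic subcategory, by \corref{c:!-Av mon}(b). So the adjoint has to be computed inside $\CH^{\on{mon}}$, not in the full category and then projected.

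The results you plan to invoke, \propref{p:Hecke mon}(c) and \lemref{l:left is strict}, are beside the point. They concern the 2-categorical embeddings of monodromic module categories and the $\ul\Shv(G)_{\on{co}}$-linearity of adjoints of $\on{emb}^{\on{mon}}$. They say nothing about adjoints of convolution with a fixed object of $\CH^{\on{mon}}$.

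The missing idea, which is the paper's entire proof, is to use \corref{c:!-Av mon}(b) to compute the right adjoint of the monoidal operation $\CH^{\on{mon}}\otimes\CH^{\on{mon}}\to\CH^{\on{mon}}$ itself. Convolution is $!$-pull along the smooth map $N\backslash G\overset{N}\times G/N\to (N\backslash G/N)\times(N\backslash G/N)$, followed by $*$-push along ${}'\!\on{mult}$. The three pieces have the following right adjoints:
\begin{itemize}
\item The $!$-pull is $*$-pull up to shift, so its right adjoint is $*$-push.
\item ${}'\!\on{mult}$ factors as $\pi$ followed by the proper map $N\backslash G\overset{B}\times G/N\to N\backslash G/N$, whose $*$-push has right adjoint $!$-pull.
\item On monodromic objects, $\pi_*$ has right adjoint $\pi^!$ up to shift.
\end{itemize}
Hence the right adjoint of the multiplication is, up to a cohomological shift, pull-push along the transposed diagram. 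This functor is continuous and visibly compatible with the left and right $\CH^{\on{mon}}$-actions. So for every compact $\CF$ the right adjoints of $\CF\star-$ and $-\star\CF$ are linear, and all compact objects acquire duals at once. With this argument, your explicit generators, the candidate $\sigma^*\BD(\CF)$ with its shift bookkeeping, and any separate handling of the non-compact unit become unnecessary.
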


\begin{proof}

We will deduce the assertion from \propref{p:simple inv}. Namely, we will show that the functor
right adjoint to the monoidal operation on $\CH^{\on{mon}}$ is given, up to a cohomological twist,
bu pull-push along the diagram
$$
\CD
N\backslash G\overset{N}\times G/N @>>>  (N\backslash G/N) \times (N\backslash G/N) \\
@V{'\!\on{mult}}VV \\
N\backslash G/N,
\endCD
$$
where the map $'\!\on{mult}$ is induced by the multiplication map.

\medskip

Indeed, by definition, the monoidal operation on $\CH^{\on{mon}}$ is given by pull-push along
$$
\CD
N\backslash G\overset{N}\times G/N @>{'\!\on{mult}}>> N\backslash G/N \\
@VVV \\
(N\backslash G/N) \times (N\backslash G/N),
\endCD
$$
where:

\medskip

\begin{itemize}

\item The vertical map is smooth, so !-pullback with respect to it is isomorphic to the *-pullback, up to a cohomological shift;

\medskip

\item The horizontal map factors as
$$N\backslash G\overset{N}\times G/N \to N\backslash G\overset{B}\times G/N\to N\backslash G/N ,$$
where the second arrow is proper.

\end{itemize}

However, \corref{c:!-Av mon}(b) implies that the right adjoint to the *-pushforward along
$$N\backslash G\overset{N}\times G/N \to N\backslash G\overset{B}\times G/N,$$
\emph{resctricted to the monodromic subcategory},
is given pullback along the same map (up to a cohomological shift).

\end{proof}

 \sssec{}

Combining \propref{p:H mon semi-rigid} with \cite[Propositions C.5.5 and C.5.7]{AGKRRV1}, we obtain:

\begin{cor} \label{c:right adjoint Hecke}
Let $F:\bC_1\to \bC_2$ a 1-morphism in $\CH^{\on{mon}}\mmod(\DGCat)$.
If the functor between the underlying DG categories
admits a right/left adjoint, then so does $F$.
\end{cor}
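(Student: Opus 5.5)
The corollary is stated for plain DG categories. So the plan is simply to feed \propref{p:H mon semi-rigid} into the general theory of modules over semi-rigid monoidal categories from \cite[Appendix C]{AGKRRV1}.

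Let $F:\bC_1\to \bC_2$ be a morphism of $\CH^{\on{mon}}$-modules. Suppose the underlying functor admits a right adjoint $F^R$. A priori $F^R$ is only right-lax compatible with the $\CH^{\on{mon}}$-actions: for $h\in \CH^{\on{mon}}$ we have natural maps
$$h\star F^R(-)\to F^R(h\star -).$$
The task is to show these maps are isomorphisms. For the left adjoint the situation is dual: $F^L$ is a priori left-lax, and we need the structure maps $F^L(h\star -)\to h\star F^L(-)$ to be invertible.

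The key steps, in order, are as follows.

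First, I would check that $\CH^{\on{mon}}$ meets the hypotheses of \cite[Propositions C.5.5 and C.5.7]{AGKRRV1}. These are compact generation, compactness of the unit up to the relevant variant, and semi-rigidity. Compact generation holds because $\CH^{\on{mon}}$ is generated by monodromic objects that are compact in $\Shv(N\backslash G/N)$, as in \secref{ss:2 adj mon}. Semi-rigidity is exactly \propref{p:H mon semi-rigid}.

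Second, I would invoke the key consequence of semi-rigidity. The action functor
$$\CH^{\on{mon}}\otimes \bC\to \bC$$
on any module has a right adjoint that is $\CH^{\on{mon}}$-bilinear. Equivalently, for every compact $h$, the endofunctor $h\star-$ has both a left and a right adjoint, and these are again given by the action of objects of $\CH^{\on{mon}}$: namely the left and right duals of $h$, up to the twist present in the semi-rigid setting.

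Third, with that in hand, the lax structure maps are handled by the same trick as in the proof of \lemref{l:adj in G mod}. There the inverse of $\alpha_g$ was built from $\alpha_{g^{-1}}$. Here, instead of $g^{-1}$, one uses the adjoint of $h\star(-)$, which exists in $\CH^{\on{mon}}$ by the second step. Composing the unit and counit of that adjunction with the lax map produces an inverse. It suffices to check invertibility on compact $h$, since $\CH^{\on{mon}}$ is compactly generated and both sides commute with colimits in $h$.

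The step I expect to be the main obstacle is confirming that the conditions of \cite[Propositions C.5.5, C.5.7]{AGKRRV1} are literally met, in particular whatever compactness condition is imposed on the unit. The unit of $\CH^{\on{mon}}$ is the delta sheaf on the closed cell $T\subset N\backslash G/N$, projected to the monodromic part. Its compactness should follow from the fact that $(\on{emb}^{\on{mon}})^R$ preserves compactness, being dual to the embedding. If the cited propositions instead need only semi-rigidity plus compact generation, the corollary is immediate from \propref{p:H mon semi-rigid}.
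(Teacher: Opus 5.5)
Your proposal follows the paper's proof, which is exactly the combination of \propref{p:H mon semi-rigid} with \cite[Propositions C.5.5 and C.5.7]{AGKRRV1}; your third step (using duals of compact objects to invert the lax structure maps, as in \lemref{l:adj in G mod}) is just an unwinding of what those propositions give. Your ``main obstacle'' is not an obstacle, and your proposed fix for it is false: semi-rigidity is precisely the weakening of rigidity that drops compactness of the unit, and the unit of $\CH^{\on{mon}}$ is in fact \emph{not} compact --- already for $G=T$ it is an infinite direct sum, over all character sheaves $\chi$, of nonzero objects, each corresponding to the non-perfect augmentation module over $\on{C}^\cdot(T)$ --- so $(\on{emb}^{\on{mon}})^R$ does not preserve compactness, and being dual to a compactness-preserving functor does not imply that it does.
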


\ssec{Restricted representations as modules over the monodromic Hecke algebra}

In this subsection we will finally prove \thmref{t:restr main}.

\sssec{}

First, combining \thmref{t:G mod via Hecke} with \propref{p:Hecke mon}(b), we obtain:

\begin{cor} \label{c:restr via Hecke}
There exists a unique isomorphism
\begin{equation} \label{e:restr via Hecke}
G\mmod^{\on{spec.fin}}\simeq \ul\CH^{\on{mon}}\mmod(\AGCat)
\end{equation}
that makes the diagram
$$
\CD
G\mmod^{\on{spec.fin}} @>{\sim}>> \ul\CH^{\on{mon}}\mmod(\AGCat) \\
@V{\bemb^{\on{spec.fin}}}VV @VV{\bemb^{\on{mon}}_\CH}V \\
G\mmod @>{\sim}>> \ul\CH\mmod(\AGCat)
\endCD
$$
commute.
\end{cor}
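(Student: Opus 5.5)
We transport the definition of $G\mmod^{\on{spec.fin}}$ through the equivalence of \thmref{t:G mod via Hecke}. By definition,
$$G\mmod^{\on{spec.fin}}=G\mmod\underset{T\mmod}\times T\mmod^{\on{mon}},$$
where the functor $G\mmod\to T\mmod$ is $\bjacq_B$. So $G\mmod^{\on{spec.fin}}$ is the full subcategory of $G\mmod$ consisting of those $\ul\bC$ for which $\bjacq_B(\ul\bC)$ lies in the essential image of $\bemb^{\on{mon}}_T$. On the Hecke side, \propref{p:Hecke mon}(b) identifies the essential image of the fully faithful functor $\bemb^{\on{mon}}_\CH$ with the full subcategory of $\ul\CH\mmod(\AGCat)$ consisting of those $\ul\bC$ for which $\bRes^\CH_T(\ul\bC)$ is $T$-monodromic. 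Hence the statement reduces to one compatibility: we must show that $\bRes^\CH_T\circ \binv^{\on{enh}}_N\simeq \bjacq_B$ as functors $G\mmod\to T\mmod$.

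For this compatibility, note that $\bjacq_B$ is the 1-morphism given by the $(G\times T)$-scheme $G/N$. Via \secref{sss:bimodules as morphisms} and \eqref{e:inv Funct}, it sends $\ul\bC$ to $\uHom_{G\mmod,\AGCat}(\ul\Shv(G/N),\ul\bC)\simeq \binv_N(\ul\bC)$, with $T$ acting through the right action of $T$ on $G/N$. The functor $\binv^{\on{enh}}_N$ produces the same object $\binv_N(\ul\bC)$, now with the full $\ul\CH=\ul\End(\ul\Shv(G/N))$-action. Restricting this action along $\ul\Shv(T)_{\on{co}}\to \ul\CH$, which is induced by $T\hookrightarrow N\backslash G/N$, recovers exactly the action of $T$ by right translations on $G/N$. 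This is the one step where something has to be checked: we must verify that the map of algebras $\ul\Shv(T)_{\on{co}}\to\ul\CH$ is the one induced by the right $T$-action on $\ul\Shv(G/N)$ through endomorphisms in $G\mmod$. This amounts to unwinding the convolution description of $\ul\CH$ on the closed substack $T\subset N\backslash G/N$, and we expect it to be routine.

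Given this, the equivalence $\binv^{\on{enh}}_N$ of \thmref{t:G mod via Hecke} carries the full subcategory $G\mmod^{\on{spec.fin}}$ onto the essential image of $\bemb^{\on{mon}}_\CH$. Since $\bemb^{\on{mon}}_\CH$ is fully faithful (restriction along the localization \eqref{e:emb mon GNN}, whose unit is invertible), that essential image is identified with $\ul\CH^{\on{mon}}\mmod(\AGCat)$. This gives \eqref{e:restr via Hecke} together with commutativity of the square. Uniqueness follows because both vertical arrows in the square are fully faithful, so any functor making the diagram commute is determined up to unique isomorphism.
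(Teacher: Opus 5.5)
Your proposal is correct and follows the paper's route: the paper obtains the corollary in one line by combining \thmref{t:G mod via Hecke} with \propref{p:Hecke mon}(b). It leaves implicit the compatibility $\bRes^\CH_T\circ \binv^{\on{enh}}_N\simeq \bjacq_B$, which you isolate and check, and your uniqueness argument via full faithfulness of the vertical arrows is the intended one.
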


\sssec{}

The fact that $\bemb^{\on{spec.fin}}$ is 2-adjointable follows now from \propref{p:Hecke mon}(a).

\medskip

Point (a') of \thmref{t:restr main} follows from \propref{p:Hecke mon}(c).

\sssec{}

We now prove point (b) of \thmref{t:restr main}.

\medskip

By \corref{c:restr via Hecke}, we can identify
$$G\mmod^{\on{restr}}\simeq \ul\CH^{\on{mon}}\mmod(\AGCat)\underset{\AGCat}\times \DGCat.$$

Recall the object
$$\CH^{\on{mon}}\in \on{AssocAlg}(\DGCat),$$
see \secref{sss:H mon}.

\medskip

We obtain
$$\ul\CH^{\on{mon}}\mmod(\AGCat)\simeq \AGCat\underset{\DGCat}\otimes \CH^{\on{mon}}\mmod(\DGCat)$$
and
$$G\mmod^{\on{restr}}\simeq \bi(\CH^{\on{mon}})\mmod(\AGCat)\underset{\AGCat}\times \DGCat.$$

Since the functor $\bi:\DGCat\to \AGCat$ is fully faithful, the latter equivalence implies that
$$\bi(\CH^{\on{mon}})\mmod(\AGCat)\underset{\AGCat}\times \DGCat \simeq
\CH^{\on{mon}}\mmod(\DGCat).$$

\qed[\thmref{t:restr main}]

\sssec{}

In what follows we will use a short-hand notation
$$\ul\bC\mapsto \ul\bC^{\on{spec.fin}}$$
for the functor $(\bemb^{\on{spec.fin}})^R$.

\sssec{}

Note that in the course of the proof we have also established:

\begin{cor}  \label{c:restr via Hecke bis}
There exists a unique equivalence
\begin{equation} \label{e:restr via Hecke bis}
G\mmod^{\on{restr}}\simeq \CH^{\on{mon}}\mmod(\DGCat)
\end{equation}
that makes the diagram
$$
\CD
G\mmod^{\on{restr}} @>{\sim}>> \ul\CH^{\on{mon}}\mmod(\DGCat) \\
@V{\bi}VV @VV{\bi}V \\
G\mmod^{\on{spec.fin}} @>{\sim}>> \ul\CH^{\on{mon}}\mmod(\AGCat)
\endCD
$$
commute.
\end{cor}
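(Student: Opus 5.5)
The plan is to read off the equivalence \eqref{e:restr via Hecke bis} from the chain of identifications already assembled in the proof of \thmref{t:restr main}(b). Then the only remaining work is to check that it is compatible with $\bi$ and that it is unique.

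First, recall the definition $G\mmod^{\on{restr}}=G\mmod^{\on{spec.fin}}\underset{\AGCat}\times \DGCat$, where the functor to $\AGCat$ is $\binv_N$. Under the equivalence \eqref{e:restr via Hecke} of \corref{c:restr via Hecke}, the functor $\binv_N$ corresponds to the forgetful functor $\ul\CH^{\on{mon}}\mmod(\AGCat)\to \AGCat$. This holds because \eqref{e:restr via Hecke} is obtained by restricting $\binv^{\on{enh}}_N$ of \thmref{t:G mod via Hecke}, and $\binv_N$ factors through the right adjoint $(\bemb^{\on{spec.fin}})^R$ by \thmref{t:restr main}(a'). We therefore get
$$G\mmod^{\on{restr}}\simeq \bi(\CH^{\on{mon}})\mmod(\AGCat)\underset{\AGCat}\times \DGCat,$$
using $\ul\CH^{\on{mon}}\simeq\bi(\CH^{\on{mon}})$ from \secref{sss:H mon}.

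Next, we identify the right-hand side with $\CH^{\on{mon}}\mmod(\DGCat)$. Since $\bi$ is a fully faithful symmetric monoidal functor, an object $\bC\in\DGCat$ has $\bi(\CH^{\on{mon}})\otimes\bi(\bC)\simeq\bi(\CH^{\on{mon}}\otimes\bC)$. Consequently an action of $\bi(\CH^{\on{mon}})$ on $\bi(\bC)$, together with all its higher coherences, is the same as an action of $\CH^{\on{mon}}$ on $\bC$. The same reasoning applies to 1- and 2-morphisms, using that $\bi$ is fully faithful on mapping categories. This gives the equivalence, and by construction the square involving the two functors $\bi$ commutes, since both vertical arrows are the tautological inclusions of the fiber product.

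For uniqueness, the two vertical functors $\bi$ are fully faithful, with the right-hand one fully faithful by \thmref{t:restr main}(b), or directly from the tensoring-up discussion. Any equivalence making the square commute is therefore determined by the bottom horizontal arrow, which is itself unique by \corref{c:restr via Hecke}.

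The main obstacle is the step showing that modules over $\bi(\CH^{\on{mon}})$ whose underlying object is restricted coincide with $\CH^{\on{mon}}$-modules in $\DGCat$. The first approach is the coherence argument via full faithfulness of $\bi$ on iterated tensor powers. If that proves awkward, the fallback is to use the ambidextrous adjunction $(\bi,\be)$: since $\be$ is $\DGCat$-linear and $\be\circ\bi\simeq\on{Id}$, applying $\be$ transports the module structure back to $\DGCat$.
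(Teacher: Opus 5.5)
Your proposal is correct and follows the paper's own argument, which is given inside the proof of \thmref{t:restr main}(b): use \corref{c:restr via Hecke} to identify $G\mmod^{\on{restr}}$ with $\bi(\CH^{\on{mon}})\mmod(\AGCat)\underset{\AGCat}\times \DGCat$, then use that $\bi$ is fully faithful and symmetric monoidal to identify this with $\CH^{\on{mon}}\mmod(\DGCat)$. Two of your citations should be replaced: the fact that $\binv_N\circ\bemb^{\on{spec.fin}}$ corresponds to the forgetful functor follows directly from the commutative square of \corref{c:restr via Hecke}, since restriction of scalars along $\ul\CH\to\ul\CH^{\on{mon}}$ preserves underlying objects, so \thmref{t:restr main}(a') is not needed; and the full faithfulness of the right-hand $\bi$ (needed for uniqueness) should be taken from the tensoring-up discussion, not from \thmref{t:restr main}(b), whose proof already contains the present statement, so citing it would be circular.
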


\sssec{}

As a corollary of Corollaries \ref{c:restr via Hecke} and \ref{c:restr via Hecke bis}, we obtain:

\begin{cor} \label{c:restr prod} \hfill

\smallskip

\noindent{\em(a)}
For a pair of reductive groups $G_1$ and $G_2$, the (fully faithful) functor
$$G_1\mmod^{\on{spec.fin}}\otimes G_2\mmod^{\on{spec.fin}}\to
G_1\mmod \otimes G_2\mmod \simeq (G_1\times G_2)\mmod$$
is an equivalence onto $(G_1\times G_2)\mmod^{\on{spec.fin}}$.

\smallskip

\noindent{\em(b)} There exists a canonical equivalence
$$G_1\mmod^{\on{restr}}\otimes G_2\mmod^{\on{restr}} \simeq
(G_1\times G_2)\mmod^{\on{restr}}$$
compatible with the equivalence of point (a) via the functor $\bi$.
\end{cor}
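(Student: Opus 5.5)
We deduce both points from the Hecke-module descriptions in Corollaries \ref{c:restr via Hecke} and \ref{c:restr via Hecke bis}, applied to $G_1$, $G_2$ and $G_1\times G_2$. The reductive group $G_1\times G_2$ has Borel $B_1\times B_2$, unipotent radical $N_1\times N_2$ and Cartan $T_1\times T_2$, so its stack of double cosets is
$$(N_1\times N_2)\backslash (G_1\times G_2)/(N_1\times N_2)\simeq (N_1\backslash G_1/N_1)\times (N_2\backslash G_2/N_2).$$
Hence $\ul\CH_{G_1\times G_2}\simeq \ul\CH_{G_1}\otimes \ul\CH_{G_2}$ as algebras in $\AGCat$; this uses the strict monoidality of $\ul\Shv(-)$, and convolution is compatible with products.

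The first step is to identify the monodromic Hecke algebras,
$$\CH^{\on{mon}}_{G_1\times G_2}\simeq \CH^{\on{mon}}_{G_1}\otimes \CH^{\on{mon}}_{G_2}\in \on{AssocAlg}(\DGCat).$$
By \lemref{l:mon Hecke} and devissage along products of Bruhat cells $N_1\backslash B_1w_1B_1/N_1\times N_2\backslash B_2w_2B_2/N_2\simeq T_1\times T_2$, it is enough to know that $\Shv(T_1\times T_2)^{\on{mon}}\simeq \Shv(T_1)^{\on{mon}}\otimes \Shv(T_2)^{\on{mon}}$. This follows from \lemref{l:alm const prod}: character sheaves on $T_1\times T_2$ are exactly the $\chi_1\boxtimes\chi_2$, and the direct sums over $\chi$ match. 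One then globalizes over the cells by the gluing used for \eqref{e:mon restr Y}. The external product functor is fully faithful and monoidal, so the only thing to check is essential surjectivity. I expect this cell-by-cell devissage to be the main obstacle: one has to verify that the recollement triangles for $\Shv(-)^{\on{mon}}$ are compatible with $\otimes$. This holds because every $!$/$*$ functor involved commutes with $\boxtimes$, and because the monodromic categories on strata are compactly generated, hence dualizable, so tensoring preserves the exact sequences.

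Granting this, point (b) is formal. By \eqref{e:restr via Hecke bis},
$$G_1\mmod^{\on{restr}}\otimes G_2\mmod^{\on{restr}}\simeq \CH^{\on{mon}}_{G_1}\mmod(\DGCat)\otimes \CH^{\on{mon}}_{G_2}\mmod(\DGCat)\simeq (\CH^{\on{mon}}_{G_1}\otimes \CH^{\on{mon}}_{G_2})\mmod(\DGCat).$$
The last equivalence is the standard fact that module 2-categories over algebras in a symmetric monoidal 2-category tensor as module categories over the tensor product algebra. The right-hand side is $(G_1\times G_2)\mmod^{\on{restr}}$ by the first step.

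Point (a) is proved the same way in $\AGCat$ using \eqref{e:restr via Hecke}: $G_i\mmod^{\on{spec.fin}}\simeq \ul\CH^{\on{mon}}_{G_i}\mmod(\AGCat)$, and $\ul\CH^{\on{mon}}_{G_i}=\bi(\CH^{\on{mon}}_{G_i})$ with $\bi$ symmetric monoidal. The embedding $\bemb^{\on{spec.fin}}$ corresponds to restriction along $\ul\CH\to\ul\CH^{\on{mon}}$, and the product equivalence \eqref{e:prod groups reps} corresponds to \thmref{t:G mod via Hecke} for $G_1\times G_2$ together with $\ul\CH_{G_1\times G_2}\simeq\ul\CH_{G_1}\otimes\ul\CH_{G_2}$. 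The resulting square therefore commutes, and the tensor product of the two embeddings is identified with the embedding for $G_1\times G_2$, which is fully faithful onto $(G_1\times G_2)\mmod^{\on{spec.fin}}$. Compatibility of (a) and (b) via $\bi$ follows from the square in \corref{c:restr via Hecke bis} together with $\AGCat\underset{\DGCat}\otimes(-)$ commuting with tensor products.
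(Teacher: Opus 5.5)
Your proposal is correct and follows the paper's route. Both points are reduced, via Corollaries \ref{c:restr via Hecke} and \ref{c:restr via Hecke bis} and the formal identification of tensor products of module 2-categories, to the equivalence $\CH^{\on{mon}}_{G_1}\otimes \CH^{\on{mon}}_{G_2}\simeq \CH^{\on{mon}}_{G_1\times G_2}$. The paper simply asserts that equivalence, whereas you also sketch why it holds (devissage along products of Bruhat cells, plus \lemref{l:alm const prod} for $T_1\times T_2$), which fills in the one step the paper leaves implicit.
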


\begin{proof}

Follows from the fact that the functor
$$\CH^{\on{mon}}_{G_1}\otimes \CH^{\on{mon}}_{G_2}\to
\CH^{\on{mon}}_{G_1\times G_2}$$
is an equivalence.

\end{proof}

\ssec{Inherited properties of restricted representations} \label{ss:inherited}

In \secref{s:Cat Rep} we established some general properties of the objects $G\mmod\in \tDGCat$
and 1-morphisms between them. In this subsection we will see that many of these properties pass
on to the spectrally finite subcategories.

\sssec{}

First, since $G\mmod^{\on{spec.fin}}$ (resp., $G\mmod^{\on{restr}}$) is of the form $A\mmod(\AGCat)$
(resp., $A\mmod(\DGCat)$), with $A=\CH^{\on{mon}}$, it is a dualizable object in $\tAGCat$ (resp., $\tDGCat$).

\medskip

Moreover, the inversion involution on $G$ makes $G\mmod^{\on{spec.fin}}$ (resp., $G\mmod^{\on{restr}}$)
self-dual, see \secref{sss:G-mod dualizable}.

\sssec{}

The unit of the self-duality on $G\mmod^{\on{spec.fin}}$ is represented by the object
$$\ul\Shv(G)^{\on{left-}\!G\on{-spec.fin}} \simeq \ul\Shv(G)^{(G\times G)\on{-spec.fin}} \simeq
\ul\Shv(G)^{\on{right-}\!G\on{-spec.fin}},$$
where the equivalences in the above formula follow from  \lemref{l:mon Hecke}.

\medskip

From there we obtain that the $\on{unit}_{G\mmod^{\on{spec.fin}}}$ is isomorphic to
\begin{multline*}
((\bemb^{\on{spec.fin}})^R\otimes \on{Id})(\on{unit}_{G\mmod}) \simeq \\
\simeq ((\bemb^{\on{spec.fin}})^R\otimes (\bemb^{\on{spec.fin}})^R)(\on{unit}_{G\mmod})
\simeq(\on{Id}\otimes (\bemb^{\on{spec.fin}})^R)(\on{unit}_{G\mmod}).
\end{multline*}

Hence, we obtain:

\begin{cor}
With respect to the self-dualities
$$G\mmod^\vee \simeq G\mmod  \text{ and }
(G\mmod^{\on{spec.fin}})^\vee \simeq G\mmod^{\on{spec.fin}}$$
we have
$$(\bemb^{\on{spec.fin}})^\vee\simeq (\bemb^{\on{spec.fin}})^R.$$
\end{cor}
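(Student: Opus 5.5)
The claim $(\bemb^{\on{spec.fin}})^\vee\simeq (\bemb^{\on{spec.fin}})^R$ reduces to comparing the units of the two self-dualities: one on $G\mmod$, one on $G\mmod^{\on{spec.fin}}$. Recall the general recipe. If $\bo,\bo'$ are self-dual with units $\on{u}_\bo:\one\to\bo\otimes\bo$ and $\on{u}_{\bo'}:\one\to\bo'\otimes\bo'$, then the dual of a 1-morphism $f:\bo'\to\bo$ is characterized as follows. The morphism $f^\vee:\bo\to\bo'$ is the unique 1-morphism with
$$(f\otimes \on{Id}_{\bo'})\circ \on{u}_{\bo'}\simeq (\on{Id}_\bo\otimes f^\vee)\circ \on{u}_\bo.$$
Both sides are 1-morphisms $\one\to \bo\otimes\bo'$. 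The plan is to show that this identity holds with $f=\bemb^{\on{spec.fin}}$ and $f^\vee$ replaced by $(\bemb^{\on{spec.fin}})^R$.

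First, I would use the displayed chain of isomorphisms just above:
$$\on{unit}_{G\mmod^{\on{spec.fin}}}\simeq ((\bemb^{\on{spec.fin}})^R\otimes \on{Id})(\on{unit}_{G\mmod})\simeq(\on{Id}\otimes (\bemb^{\on{spec.fin}})^R)(\on{unit}_{G\mmod}).$$
These come from \lemref{l:mon Hecke}, which identifies left-, right- and bi-spectrally-finite parts of $\ul\Shv(G)$. Applying $\bemb^{\on{spec.fin}}\otimes\on{Id}$ to the rightmost expression, I would obtain
$$(\bemb^{\on{spec.fin}}\otimes \on{Id})\circ \on{unit}_{G\mmod^{\on{spec.fin}}}\simeq (\on{Id}\otimes (\bemb^{\on{spec.fin}})^R)\circ(\bemb^{\on{spec.fin}}\otimes\on{Id})\circ \on{unit}_{G\mmod}.$$
Here I use that the operations on the two tensor factors commute with each other. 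Next I would remove the extra $\bemb^{\on{spec.fin}}$ on the left factor. For this I use the first isomorphism in the chain: $\on{unit}_{G\mmod}$, after applying $(\bemb^{\on{spec.fin}})^R$ on the right factor, already lands in the spectrally finite part on the left factor. On that part $\bemb\circ\bemb^R\simeq \on{Id}$, via the counit of the adjunction. This yields exactly $(\on{Id}\otimes(\bemb^{\on{spec.fin}})^R)\circ\on{unit}_{G\mmod}$, which is the defining identity for the dual.

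The main obstacle is to check that the resulting isomorphism is compatible with the counits, i.e. that the triangle identities hold, and is not just an abstract isomorphism of 1-morphisms. I would handle this concretely in terms of kernels. Under the identification $\uHom_{\tAGCat}(G\mmod,G\mmod)\simeq (G\times G)\mmod$, the functor $(\bemb^{\on{spec.fin}})^R$ is given by the idempotent kernel $\ul\Shv(G)^{\on{spec.fin}}$. Its transpose under the inversion-induced self-duality is the same object, because of the left/right symmetry from \lemref{l:mon Hecke}. Transposing the kernel of $\bemb^{\on{spec.fin}}$ (the module $\ul\Shv(G)^{\on{spec.fin}}$ viewed from the other side) therefore gives the kernel of $(\bemb^{\on{spec.fin}})^R$. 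I would also note that the statement is insensitive to the choice of dualities, since both were induced by the same inversion on $G$. Finally, \corref{c:restr via Hecke} lets me transport the whole check to $\ul\CH^{\on{mon}}$-modules, where $\bemb$ is restriction along $\ul\CH\to\ul\CH^{\on{mon}}$ and its dual is the induced functor.
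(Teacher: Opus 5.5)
Your argument is essentially the paper's: the corollary is read off from identifying $\on{unit}_{G\mmod^{\on{spec.fin}}}$ with $(\bemb^{\on{spec.fin}})^R$ applied to $\on{unit}_{G\mmod}$ on either or both factors (via \lemref{l:mon Hecke}), combined with the unit characterization of the dual. Two minor fixes: first, apply $\bemb^{\on{spec.fin}}\otimes\on{Id}$ to the middle term $((\bemb^{\on{spec.fin}})^R\otimes(\bemb^{\on{spec.fin}})^R)(\on{unit}_{G\mmod})$ rather than to the rightmost one, since your displayed composite $(\bemb^{\on{spec.fin}}\otimes\on{Id})\circ\on{unit}_{G\mmod}$ does not typecheck; second, the triangle-identity ``obstacle'' is not a real one, because any $g$ with $(f\otimes\on{Id})\circ\on{u}_{\bo'}\simeq(\on{Id}\otimes g)\circ\on{u}_\bo$ is isomorphic to $f^\vee$ by the zig-zag identities for $G\mmod$ alone, so the kernel and Hecke-module discussion can be dropped.
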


\sssec{}

Let $G_1$ and $G_2$ be a pair of reductive groups, and let
$$F:G_1\mmod\to G_2\mmod$$
be a 1-morphism in $\tAGCat$.

\medskip

Denote
$$F^{\on{spec.fin}}:=(\bemb^{\on{spec.fin}}_{G_2})^R\circ F\circ \bemb^{\on{spec.fin}}_{G_1}, \quad
G_1\mmod^{\on{spec.fin}}\to G_2\mmod^{\on{spec.fin}}.$$

We claim:

\begin{lem} \label{l:adj is inherited}
Suppose that $F$ is adjointable. Then so is $F^{\on{spec.fin}}$, and
$$(F^{\on{spec.fin}})^R\simeq (F^R)^{\on{spec.fin}}.$$
\end{lem}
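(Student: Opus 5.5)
The plan is to write down the candidate right adjoint $(F^R)^{\on{spec.fin}}=(\bemb^{\on{spec.fin}}_{G_1})^R\circ F^R\circ \bemb^{\on{spec.fin}}_{G_2}$ and construct the unit and counit from those of $(F,F^R)$ together with those of the two adjunctions $(\bemb^{\on{spec.fin}}_{G_i},(\bemb^{\on{spec.fin}}_{G_i})^R)$. Abbreviate $e_i:=\bemb^{\on{spec.fin}}_{G_i}$. Since $e_i$ is fully faithful, $e_i^R\circ e_i\simeq \on{Id}$. The key input is that the idempotent comonad $e_i\circ e_i^R$ on $G_i\mmod$ is central: it commutes with every 1-morphism of the relevant type. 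So the proof reduces to showing that $F$ (and $F^R$) intertwine the two comonads $e_1\circ e_1^R$ and $e_2\circ e_2^R$.

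First I establish this intertwining. By the identification of \secref{sss:bimodules as morphisms}, $F=T_{\bM}$ for some $\bM\in (G_1\times G_2)\mmod$, and composition corresponds to relative tensor product. Precomposing with $e_1\circ e_1^R$ corresponds to replacing $\bM$ by its spectrally finite part for the $G_1$-action. Postcomposing with $e_2\circ e_2^R$ does the same for the $G_2$-action. Concretely, $e_i\circ e_i^R$ is the 1-morphism given by the object $\ul\Shv(G_i)^{\on{spec.fin}}\in (G_i\times G_i)\mmod$, the unit of the self-duality of $G_i\mmod^{\on{spec.fin}}$. By the remark preceding the corollary above (deduced from \lemref{l:mon Hecke}), this object is the same whether spectral finiteness is imposed on the left or on the right. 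Both composites therefore equal $\bM^{\on{spec.fin}}$, taken for $G_1$ and $G_2$ separately.

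The main obstacle is the identity
$$e_2\circ e_2^R\circ F\simeq F\circ e_1\circ e_1^R .$$
I would establish it by checking that $\bM^{(G_1)\on{-spec.fin}}$ is already spectrally finite for $G_2$. The plan is to use the Hecke description \corref{c:restr via Hecke} together with \corref{c:restr prod}(a). On the bimodule $\bM$, the $G_1$-spectrally finite projection is the action of the idempotent $\CH^{\on{mon}}_{G_1}$ on $\binv_{N_1}$, and similarly for $G_2$. These two idempotents act through commuting factors, because $(G_1\times G_2)\mmod^{\on{spec.fin}}=G_1\mmod^{\on{spec.fin}}\otimes G_2\mmod^{\on{spec.fin}}$. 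It follows that
$$(e_2\circ e_2^R)\circ F\circ(e_1\circ e_1^R)$$
agrees with each one-sided composite. For a general $F$ this shows only that both one-sided composites map to the two-sided one, which is not the desired statement. I expect the honest statement to require a correction: the one-sided truncations should be replaced by the genuine idempotent comonad $e_2\circ e_2^R\circ F\circ e_1\circ e_1^R$. The adjunction can then be built directly without full centrality, as described next.

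With this in hand, the adjunction is constructed directly. The unit is
$$\on{Id}\simeq e_1^R e_1\to e_1^R F^R F e_1\to e_1^R F^R e_2 e_2^R F e_1 ,$$
where the last arrow uses the unit of $(e_2,e_2^R)$. The hard point here is that $(e_2,e_2^R)$ has a counit $e_2e_2^R\to \on{Id}$ and not a unit. To get the arrow I use \thmref{t:restr main}(a), the 2-adjointability of $e_2$: by \lemref{l:ambi}, $e_2^R$ is also a left adjoint of $e_2$, which provides $\on{Id}\to e_2e_2^R$. The counit is
$$e_2^R F e_1 e_1^R F^R e_2\to e_2^R F F^R e_2\to e_2^R e_2\simeq \on{Id},$$
built from the counit of $(e_1,e_1^R)$ and the counit of $(F,F^R)$. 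The triangle identities then reduce to those for $(F,F^R)$ and for the ambidextrous adjunctions of the $e_i$. The step to double-check is that the composite of the ambidextrous unit and the counit of $e_2$ restricts to the identity on $e_2^R$, i.e.\ that the relevant Serre twist of $e_2$ is trivial. I would verify this from the explicit description of the unit and counit in \propref{p:emb mon 2adj} via the idempotent of \lemref{l:left is strict}.
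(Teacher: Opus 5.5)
Your last paragraph is a correct proof, and it is the paper's proof written out. With $e_i=\bemb^{\on{spec.fin}}_{G_i}$, the paper notes that $F^{\on{spec.fin}}=e_2^R\circ F\circ e_1$ is a composite of adjointable 1-morphisms. The only non-obvious one is $e_2^R$, whose right adjoint is $e_2$ by \lemref{l:ambi} and \thmref{t:restr main}(a). Your unit and counit are exactly those of the composite of the adjunctions $e_1\dashv e_1^R$, $F\dashv F^R$ and $e_2^R\dashv e_2$, so the triangle identities are formal.

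Two things should be fixed.

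First, drop the opening detour altogether. The lemma needs no ``correction'', but the intertwining $e_2\circ e_2^R\circ F\simeq F\circ e_1\circ e_1^R$ is false for general $F$. It is a strengthening of the spec.fin-adapted condition, which the paper imposes as a separate hypothesis right after this lemma. It already fails for $G_1=G_2=T$ and $F=\btriv_T\circ\boblv_T$. On $\ul\Shv(T)$ the left side gives $\btriv_T(\ul\Shv(T))$, because trivial representations are monodromic. The right side gives $\btriv_T(\ul\Shv(T)^{\on{mon}})$. The remark derived from \lemref{l:mon Hecke} that you invoke concerns only the identity bimodule $\ul\Shv(G)$, not an arbitrary $\bM$.

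Second, the ``step to double-check'' at the end is automatic and has nothing to do with a Serre twist. Let $u_2\colon \on{Id}\to e_2^R\circ e_2$ and $c_2\colon e_2\circ e_2^R\to \on{Id}$ be the unit and counit of $e_2\dashv e_2^R$. By \lemref{l:ambi}, the pair $(c_2^R,u_2^R)$ is the unit and counit of $e_2^R\dashv e_2$. Its triangle identities are obtained by passing to right adjoints in those for $(u_2,c_2)$. Moreover $u_2^R=u_2^{-1}$ because $u_2$ is invertible, and this is precisely the isomorphism $e_2^R\circ e_2\simeq\on{Id}$ used in your counit. Nothing from the explicit description in \propref{p:emb mon 2adj} or \lemref{l:left is strict} is required beyond the existence of $c_2^R$.
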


\begin{proof}

Follows from the fact that
$$((\bemb^{\on{spec.fin}}_{G})^R)^R\simeq \bemb^{\on{spec.fin}}_{G},$$
which in turn follows from the fact that $\bemb^{\on{spec.fin}}_{G}$ is 2-adjointable
(see \lemref{l:ambi}).

\end{proof}

\sssec{}

Combining with \corref{c:G-mod 2-dual}, we obtain:

\begin{cor}
The object $G\mmod^{\on{spec.fin}}\in \tAGCat$ is 2-dualiazable.
\end{cor}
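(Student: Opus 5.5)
We have to show that $G\mmod^{\on{spec.fin}}$ is dualizable in $\tAGCat$ and that the unit and counit of this duality are adjointable 1-morphisms. Dualizability, and self-duality via the inversion on $G$, was already recorded at the start of \secref{ss:inherited}: $G\mmod^{\on{spec.fin}}\simeq \ul\CH^{\on{mon}}\mmod(\AGCat)$. So the only task is adjointability of the unit and counit. The plan is to deduce this from the corresponding property of $G\mmod$ (\corref{c:G-mod 2-dual}) by means of \lemref{l:adj is inherited}, applied to the product group.

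First, we express the unit and counit for $G\mmod^{\on{spec.fin}}$ in terms of those for $G\mmod$. By \corref{c:restr prod}(a), $G\mmod^{\on{spec.fin}}\otimes G\mmod^{\on{spec.fin}}\simeq (G\times G)\mmod^{\on{spec.fin}}$. The computation just before the statement shows
$$\on{unit}_{G\mmod^{\on{spec.fin}}}\simeq (\bemb^{\on{spec.fin}}_{G\times G})^R\circ \on{unit}_{G\mmod}\circ \bemb^{\on{spec.fin}}_{1},$$
where the trivial group gives $1\mmod^{\on{spec.fin}}=\AGCat$ and $\bemb_1$ is the identity. Here we also use $(\bemb^{\on{spec.fin}}_G)^R\otimes(\bemb^{\on{spec.fin}}_G)^R\simeq (\bemb^{\on{spec.fin}}_{G\times G})^R$, which follows from \corref{c:restr prod}(a) together with the uniqueness of adjoints. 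In the notation of \lemref{l:adj is inherited}, this says $\on{unit}_{G\mmod^{\on{spec.fin}}}\simeq(\on{unit}_{G\mmod})^{\on{spec.fin}}$.

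For the counit, we dualize. The counit of $G\mmod^{\on{spec.fin}}$ is $\on{ev}_{G\mmod}\circ(\bemb^{\on{spec.fin}}_G\otimes\bemb^{\on{spec.fin}}_G)$. This uses that the duality data of a retract are obtained by composing with the embedding and its dual, together with the preceding corollary $(\bemb^{\on{spec.fin}})^\vee\simeq(\bemb^{\on{spec.fin}})^R$. Hence $\on{ev}_{G\mmod^{\on{spec.fin}}}\simeq(\on{ev}_{G\mmod})^{\on{spec.fin}}$.

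Now \lemref{l:adj is inherited} applies to the 1-morphisms $\on{unit}_{G\mmod}:\AGCat\to (G\times G)\mmod$ and $\on{ev}_{G\mmod}:(G\times G)\mmod\to\AGCat$. These are adjointable by \corref{c:G-mod 2-dual}, since they are given by $\ul\Shv(G)$, which is dualizable in $\AGCat$. The lemma is stated for reductive groups; it also holds with $G_1=1$ or $G_2=1$, because its proof only uses that $\bemb^{\on{spec.fin}}$ is 2-adjointable (\thmref{t:restr main}(a) for $G\times G$), and for the trivial group the embedding is the identity. We conclude that the unit and counit of $G\mmod^{\on{spec.fin}}$ are adjointable, with adjoints $(\on{unit}^R)^{\on{spec.fin}}$ and $(\on{ev}^R)^{\on{spec.fin}}$.

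The main point needing care is the identification of the duality data of $G\mmod^{\on{spec.fin}}$ with $(-)^{\on{spec.fin}}$ of those of $G\mmod$. This in turn rests on \lemref{l:mon Hecke}: left, right and two-sided spectral finiteness of $\ul\Shv(G)$ coincide, so that the projection may be applied on either factor. This identification is precisely what the displayed computation preceding the statement provides.
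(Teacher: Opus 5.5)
Your proposal is correct and is the paper's own argument: the paper deduces the corollary by combining \lemref{l:adj is inherited} with \corref{c:G-mod 2-dual}, via the same identification you spell out of the unit (and, dually, the counit) of $G\mmod^{\on{spec.fin}}$ with the spec.fin-projection of the corresponding 1-morphism for $G\mmod$. Your extra remark extending the lemma to the trivial group is harmless but unnecessary, since the trivial group is itself reductive and $1\mmod^{\on{spec.fin}}=\AGCat$.
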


And further:

\begin{cor}
The object $G\mmod^{\on{restr}}\in \tDGCat$ is 2-dualiazable.
\end{cor}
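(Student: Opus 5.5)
The statement to prove is that $G\mmod^{\on{restr}}\in \tDGCat$ is 2-dualizable. The plan is to transfer the 2-dualizability of $G\mmod^{\on{spec.fin}}$ (just established) along the fully faithful, 2-adjointable functor $\bi\otimes \on{Id}$, using the equivalence of \thmref{t:restr main}(b),
$$G\mmod^{\on{spec.fin}}\simeq \AGCat\underset{\DGCat}\otimes G\mmod^{\on{restr}}.$$

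\emph{Step 1: dualizability.} By \corref{c:restr via Hecke bis}, $G\mmod^{\on{restr}}\simeq \CH^{\on{mon}}\mmod(\DGCat)$, so it is dualizable in $\tDGCat$. The inversion on $G$ makes it self-dual. The unit and counit are given by $\Shv(G)^{\on{spec.fin}}$, i.e. $\CH^{\on{mon}}$ viewed as a bimodule. Via \corref{c:restr prod}(b), these are 1-morphisms
$$\DGCat\rightleftarrows (G\times G)\mmod^{\on{restr}}$$
in $\tDGCat$, and tensoring them up along $\AGCat\underset{\DGCat}\otimes -$ recovers the unit and counit for $G\mmod^{\on{spec.fin}}$.

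\emph{Step 2: adjointability of the unit and counit.} It remains to show that these 1-morphisms admit right adjoints in $\tDGCat$. We know their images under $\AGCat\underset{\DGCat}\otimes-$ admit right adjoints in $\tAGCat$, since $G\mmod^{\on{spec.fin}}$ is 2-dualizable. The corresponding 2-morphisms (unit and counit of those adjunctions) are 1-morphisms in $\ul\CH^{\on{mon}}$-bimodules in $\AGCat$, between objects in the essential image of $\bi$.

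Because $\bi$ is fully faithful, as is its extension $\bi\otimes\on{Id}$ (the unit of \eqref{e:DGCat  AGCat adj}), these 2-morphisms descend uniquely to $\DGCat$-level 2-morphisms. The adjunction identities then hold after descent, again by full faithfulness. The same reasoning yields the right adjoints themselves: they are given by explicit bimodules ($\CH^{\on{mon}}$ with dual actions), and these bimodules visibly lie in the restricted part.

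An alternative route for this step is to argue directly that a 1-morphism in $\CH^{\on{mon}}$-bimodule categories is adjointable as soon as its underlying DG functor is, via \corref{c:right adjoint Hecke} together with semi-rigidity (\propref{p:H mon semi-rigid}). Then it suffices to check that the underlying functors $\Vect\to \CH^{\on{mon}}$ (the unit object) and $\CH^{\on{mon}}\otimes\CH^{\on{mon}}\to \Vect$ (a trace-type pairing) admit right adjoints in $\DGCat$. The first holds because the unit object is compact. The second holds because the pairing preserves compacts, given that $\CH^{\on{mon}}$ is generated by objects compact in $\Shv(N\backslash G/N)$.

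\emph{Main obstacle.} The main obstacle is the descent step: one has to know that the 3-categorical functor $\tDGCat\to\tAGCat$ is fully faithful on the relevant Hom 2-categories, not merely on 1-morphisms. I expect this to follow from the ambidexterity of $(\bi\otimes\on{Id},\be\otimes\on{Id})$ recorded after \eqref{e:to ten AGCat}. If that route is too delicate, the fallback is the direct semi-rigidity argument above.
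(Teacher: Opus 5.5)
Your main route is correct and matches the paper, which deduces this corollary directly from the 2-dualizability of $G\mmod^{\on{spec.fin}}$ via the tensoring-up equivalence of \thmref{t:restr main}(b) and the full faithfulness of $\bi$. Your ``main obstacle'' needs only the full faithfulness of $\bi\otimes\on{Id}$, not ambidexterity: if $j$ is fully faithful with a right adjoint and $j\circ u$ is adjointable, then $u$ is adjointable with $u^R\simeq (j\circ u)^R\circ j$, so you need not even check that the right adjoints are restricted.

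Drop the fallback, though, for two reasons. First, the unit of $\CH^{\on{mon}}$ is $(\on{emb}^{\on{mon}})^R(\delta_1)$, which is an infinite direct sum over $\chi$ of non-compact free-monodromic objects; it is not compact, and $\CH^{\on{mon}}$ is only semi-rigid. Second, \corref{c:right adjoint Hecke} concerns adjoints of 1-morphisms inside $\CH^{\on{mon}}\mmod(\DGCat)$, not the adjointability of the unit and counit as 1-morphisms in $\tDGCat$.
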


\sssec{}

For $F$ as above, we shall say that it is \emph{spec.fin-adapted} if it sends
$$G_1\mmod^{\on{spec.fin}} \subset G_1\mmod$$
to
$$G_2\mmod^{\on{spec.fin}} \subset G_2\mmod.$$

This is equivalent to the requirement that the natural
transformation
$$\bemb^{\on{spec.fin}}_{G_2}\circ
(\bemb^{\on{spec.fin}}_{G_2})^R\circ F\circ \bemb^{\on{spec.fin}}_{G_1}\circ (\bemb^{\on{spec.fin}}_{G_1})^R
\to
F\circ \bemb^{\on{spec.fin}}_{G_1}\circ (\bemb^{\on{spec.fin}}_{G_1})^R$$
as functors
$$G_1\mmod\to G_2\mmod$$
is an isomorphism.

\medskip

Still equivalently, the above condition is equivalent to the requirement that the diagram
\begin{equation} \label{e:functor and spec fin}
\CD
G_1\mmod @>{F}>> G_2\mmod \\
@A{\bemb^{\on{spec.fin}}_{G_1}}AA @AA{\bemb^{\on{spec.fin}}_{G_2}}A \\
G_1\mmod^{\on{spec.fin}} @>{F^{\on{spec.fin}}}>> G_2\mmod^{\on{spec.fin}},
\endCD
\end{equation}
which a priori commutes up to a natural transformation, actually commutes.

\sssec{} \label{sss:when spec fin adapted}

The above condition can be rewritten as follows:

\medskip

Let $$\bM_{1,2}\in (G_1\times G_2)\mmod$$
be the object that represents $F$. Then $F$ is \emph{spec.fin-adapted} if and only if
$(\bemb^{\on{spec.fin}}_{G_1})^R(\bM_{1,2})$, viewed as an object of
$(G_1\times G_2)\mmod$ (or, equivalently, just $G_2\mmod$) belongs to
$(G_1\times G_2)\mmod^{\on{spec.fin}}$ (resp., $G_2\mmod^{\on{spec.fin}}$).
By definition this means that the object
\begin{equation} \label{e:crit mon adapt}
(\bjacq_{N_1\times N_2}(\bM_{1,2}))^{T_1\on{-mon}}\in T_2\mmod
\end{equation}
belongs to $T_2\mmod^{\on{mon}}$.

\sssec{}

We shall say that $F$ is \emph{restricted-adapted} if it is \emph{spec.fin-adapted}
and the functor $F^{\on{spec.fin}}$ sends
$$G_1\mmod^{\on{restr}} \to G_2\mmod^{\on{restr}}.$$

This is equivalent to the requirement that the object $(\bemb^{\on{spec.fin}}_{G_1})^R(\bM_{1,2})$
above belongs to the subcategory $(G_1\times G_2)\mmod^{\on{restr}}$ (equivalently, to
$G_2\mmod^{\on{restr}}$, when viewed as an object of just $G_2\mmod$).

\medskip

This is further equivalent to the requirement that the object
$$(\bjacq_{N_1\times N_2}(\bM_{1,2}))^{T_1\on{-mon}}\in \AGCat$$
belongs to the essential image of $\DGCat$.

\medskip

Under the above circumstances, we will denote by  $F^{\on{restr}}$ the resulting
1-morphism
$$G_1\mmod^{\on{restr}} \to G_2\mmod^{\on{restr}}.$$

\sssec{}

We claim:

\begin{lem} \label{l:2-adj spec fin}
Let $F:G_1\mmod\to G_2\mmod$ be 2-adjointable. If it is
spec.fin-adapted, then the 1-morphism
$$F^{\on{spec.fin}}:G_1\mmod^{\on{spec.fin}}\to G_2\mmod^{\on{spec.fin}}$$
is also 2-adjointable.
\end{lem}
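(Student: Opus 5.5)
We need to show that $F^{\on{spec.fin}}$ is adjointable and that the unit and counit of the adjunction admit right adjoints. By \lemref{l:adj is inherited}, $F^{\on{spec.fin}}$ is adjointable with right adjoint $(F^R)^{\on{spec.fin}}$. The plan is to express the unit and counit of the $(F^{\on{spec.fin}},(F^R)^{\on{spec.fin}})$-adjunction in terms of those of $(F,F^R)$ and of the 2-adjointable embeddings $\bemb_{G_i}:=\bemb^{\on{spec.fin}}_{G_i}$ (\thmref{t:restr main}(a)). Then we use the principle that whiskering an adjointable 2-morphism by 1-morphisms gives an adjointable 2-morphism, and that composites of adjointable 2-morphisms are adjointable.

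\textbf{Counit.} Spec.fin-adaptedness gives $\bemb_{G_2}\circ F^{\on{spec.fin}}\simeq F\circ \bemb_{G_1}$ (diagram \eqref{e:functor and spec fin}). Hence
$$F^{\on{spec.fin}}\circ (F^R)^{\on{spec.fin}}\simeq \bemb_{G_2}^R\circ F\circ \bemb_{G_1}\circ \bemb_{G_1}^R\circ F^R\circ \bemb_{G_2}.$$
The counit of the $\bemb_{G_2}^{\on{spec.fin}}$-level adjunction is, by unwinding, the composite
$$\bemb_{G_2}^R F\bemb_{G_1}\bemb_{G_1}^R F^R\bemb_{G_2}\to \bemb_{G_2}^R F F^R \bemb_{G_2}\to \bemb_{G_2}^R\bemb_{G_2}\simeq \on{Id}.$$
The first arrow is induced by the counit $\bemb_{G_1}\bemb_{G_1}^R\to \on{Id}$, and the second by the counit of $(F,F^R)$. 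Both are adjointable 2-morphisms by hypothesis, namely 2-adjointability of $\bemb_{G_1}$ and of $F$. Whiskering by 1-morphisms preserves adjointability, and the final isomorphism $\bemb_{G_2}^R\bemb_{G_2}\simeq\on{Id}$ is invertible. So the counit is adjointable.

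\textbf{Unit.} Symmetrically, the unit is the composite
$$\on{Id}\simeq \bemb_{G_1}^R\bemb_{G_1}\to \bemb_{G_1}^R F^R F\bemb_{G_1}\simeq \bemb_{G_1}^R F^R \bemb_{G_2}F^{\on{spec.fin}}\to \bemb_{G_1}^R F^R\bemb_{G_2}\bemb_{G_2}^R\bemb_{G_2}F^{\on{spec.fin}}.$$
The last arrow uses the unit of $(\bemb_{G_2},\bemb_{G_2}^R)$, which is an isomorphism since $\bemb_{G_2}$ is fully faithful. The remaining arrow is a whiskering of the unit of $(F,F^R)$, which is adjointable. So the unit is adjointable as well.

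The main obstacle is checking that these composites really are the unit and counit of the adjunction produced in \lemref{l:adj is inherited}. That adjunction was obtained by transporting through $(\bemb_{G}^R)^R\simeq\bemb_G$, the ambidexterity from \lemref{l:ambi}, so one must verify that the identification of $F^{\on{spec.fin}}\circ(F^R)^{\on{spec.fin}}$ above is compatible with it. I would handle this by defining the adjunction directly via the displayed unit and counit, which uses only the commuting square \eqref{e:functor and spec fin}, and checking the triangle identities from those of $(F,F^R)$ and $(\bemb_{G_2},\bemb_{G_2}^R)$. Uniqueness of adjoints then identifies it with \lemref{l:adj is inherited}, and the adjointability argument above applies.
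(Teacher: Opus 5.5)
Your proposal is correct and follows essentially the same route as the paper. The counit is written as the whiskered counits of the $(\bemb^{\on{spec.fin}}_{G_1},(\bemb^{\on{spec.fin}}_{G_1})^R)$- and $(F,F^R)$-adjunctions, followed by the isomorphism $(\bemb^{\on{spec.fin}}_{G_2})^R\circ \bemb^{\on{spec.fin}}_{G_2}\simeq \on{Id}$. The unit is written as the whiskered unit of $(F,F^R)$, flanked by isomorphisms coming from full faithfulness of $\bemb^{\on{spec.fin}}_{G_1}$ and from spec.fin-adaptedness.

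Two cosmetic remarks. First, the counit step does not use adaptedness at all: your displayed formula for $F^{\on{spec.fin}}\circ (F^R)^{\on{spec.fin}}$ holds by definition, so the ``hence'' is not needed. Second, your last unit arrow is superfluous, since $(\bemb^{\on{spec.fin}}_{G_1})^R\circ F^R\circ \bemb^{\on{spec.fin}}_{G_2}\circ F^{\on{spec.fin}}$ is already $(F^R)^{\on{spec.fin}}\circ F^{\on{spec.fin}}$.
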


\begin{proof}

We need to show that the unit and counit of the adjunction admit right adjoints.
For the counit, this follows without the condition of being spec.fin-adapted:

\medskip

The counit is the composition
\begin{multline*}
(\bemb^{\on{spec.fin}}_{G_2})^R\circ F\circ
\bemb^{\on{spec.fin}}_{G_1}\circ (\bemb^{\on{spec.fin}}_{G_1})^R\circ F^R \circ
\bemb^{\on{spec.fin}}_{G_2} \to  \\
\to (\bemb^{\on{spec.fin}}_{G_2})^R\circ F\circ F^R \circ \bemb^{\on{spec.fin}}_{G_2} \to
(\bemb^{\on{spec.fin}}_{G_2})^R\circ \bemb^{\on{spec.fin}}_{G_2}\simeq \on{Id},
\end{multline*}
where:

\medskip

\begin{itemize}

\item The first arrow is the counit for the
$(\bemb^{\on{spec.fin}}_{G_1},(\bemb^{\on{spec.fin}}_{G_1})^R)$-adjunction, and hence
admits a right adjoint by \thmref{t:restr main}(a);

\medskip

\item The second arrow is the counit of the $(F,F^R)$-adjunction, and hence admits a
right adjoint by the hypothesis on $F$;

\medskip

\item The last isomorphism is due to the fact that $\bemb^{\on{spec.fin}}_{G_2}$
is fully faithful.

\end{itemize}

\medskip

We now consider the unit. It is the composition
$$\on{Id} \to (\bemb^{\on{spec.fin}}_{G_1})^R\circ \bemb^{\on{spec.fin}}_{G_1} \to
(\bemb^{\on{spec.fin}}_{G_1})^R\circ F^R\circ F\circ \bemb^{\on{spec.fin}}_{G_1} \simeq
(F^{\on{spec.fin}})^R\circ F^{\on{spec.fin}},$$
where:

\medskip

\begin{itemize}

\item The first arrow is the unit for the
$(\bemb^{\on{spec.fin}}_{G_1},(\bemb^{\on{spec.fin}}_{G_1})^R)$-adjunction, and hence
admits a right adjoint by \thmref{t:restr main}(a); in fact, it is an isomorphism since
$\bemb^{\on{spec.fin}}_{G_1}$ is fully-faithful;

\medskip

\item The second arrow is the unit of the $(F,F^R)$-adjunction, and hence admits a
right adjoint by the hypothesis on $F$;

\medskip

\item The last isomorphism is due to the fact that $F$ is spec.fin-adapted.

\end{itemize}

\end{proof}

\begin{cor} \label{c:2-adj spec restr}
Let $F:G_1\mmod\to G_2\mmod$ be 2-adjointable. Then if it is
restricted-adapted, then the 1-morphism
$$F^{\on{restr}}:G_1\mmod^{\on{restr}}\to G_2\mmod^{\on{restr}}$$
is also 2-adjointable.
\end{cor}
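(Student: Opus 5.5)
The plan is to deduce this from \lemref{l:2-adj spec fin} together with the equivalence of \thmref{t:restr main}(b), i.e., the identification
$$G\mmod^{\on{spec.fin}}\simeq \AGCat\underset{\DGCat}\otimes G\mmod^{\on{restr}}.$$
First, \lemref{l:2-adj spec fin} applies because a restricted-adapted $F$ is in particular spec.fin-adapted. It gives that
$$F^{\on{spec.fin}}:G_1\mmod^{\on{spec.fin}}\to G_2\mmod^{\on{spec.fin}}$$
is 2-adjointable in $\tAGCat$, with right adjoint $(F^R)^{\on{spec.fin}}$ by \lemref{l:adj is inherited}. Moreover, since $F$ is restricted-adapted, $F^{\on{spec.fin}}$ restricts to $F^{\on{restr}}$. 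Via \corref{c:restr via Hecke bis}, $F^{\on{spec.fin}}$ identifies with $\AGCat\underset{\DGCat}\otimes F^{\on{restr}}$: it is $\AGCat$-linear and is determined by its restriction along $\bi\otimes\on{Id}$.

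Second, I must show that $F^R$ is also restricted-adapted. Otherwise, the right adjoint of $F^{\on{spec.fin}}$ need not preserve the restricted subcategories. The kernel of $F^R$ is $\bM_{1,2}^\vee$, and by the criterion in \secref{sss:when spec fin adapted} the condition is that $(\bjacq_{N_1\times N_2}(\bM_{1,2}^\vee))^{\on{mon}}$ lies in the image of $\bi$. I would obtain this by duality. By \lemref{l:dual mon} and the self-duality of $(G_1\times G_2)\mmod^{\on{restr}}$, the dual of a dualizable restricted object is restricted. Dualizability here is automatic from adjointability of $F$ (\secref{sss:when adjntbl}).

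Third, I transport the adjunction data to $\tDGCat$. The 1-morphisms $F^{\on{restr}}$ and $(F^R)^{\on{restr}}$ are obtained from $F^{\on{spec.fin}}$ and its adjoint by applying the right adjoint $\be\otimes\on{Id}$ of the extension of scalars $\bi\otimes\on{Id}$. The latter is fully faithful and 2-adjointable (\cite[Corollary 4.5.6]{GRV}), so the 2-functor it induces on Hom-categories is fully faithful. The unit and counit of the $(F^{\on{spec.fin}},(F^{\on{spec.fin}})^R)$-adjunction, together with their right adjoints and the triangle 3-morphisms, are all 2- and 3-morphisms between 1-morphisms lying in the image of restricted ones. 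Full faithfulness of $\Hom$ from $\CH^{\on{mon}}_{1,2}\mmod(\DGCat)$ into $\bi(\CH^{\on{mon}}_{1,2})\mmod(\AGCat)$ then lifts all these data uniquely to $\tDGCat$.

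The main obstacle is this last step: checking that the unit, the counit, and the right adjoints of both lie in the restricted world. For the right adjoints of unit and counit, I would argue with \corref{c:right adjoint Hecke}: these are 1-morphisms in $\CH^{\on{mon}}\mmod(\DGCat)$ (for the relevant bimodule categories) whose underlying DG functors admit right adjoints, which can be read off after applying $\bi$. Semi-rigidity then upgrades these to adjoints in the module 2-category.
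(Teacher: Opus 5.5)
Your argument works in substance, but it follows a different route from the paper. Also, Step 2 aims at a stronger statement than you need, and stronger than your duality argument actually gives.

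\textbf{Step 2.} Restricted-adaptedness of $F^R$, in the paper's sense, includes spec.fin-adaptedness of $F^R$. That is a condition on the $G_2$-spec.fin part of $\bM_{1,2}^\vee$: it must automatically be $G_1$-spec.fin. Your hypotheses only control the $G_1$-spec.fin part of $\bM_{1,2}$, and dualizing does not swap the roles of $G_1$ and $G_2$. So this is not established by your argument.

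It is also not needed. Step 3 only uses that $(F^{\on{spec.fin}})^R\simeq (F^R)^{\on{spec.fin}}$ (\lemref{l:adj is inherited}) sends restricted objects to restricted ones. The kernel of this 1-morphism is the $(G_1\times G_2)$-spec.fin part of $\bM_{1,2}^\vee$. Since passing to spec.fin parts commutes with duals (\lemref{l:dual mon}, as you use it), this kernel is the dual of $(\bemb^{\on{spec.fin}}_{G_1\times G_2})^R(\bM_{1,2})=(\bemb^{\on{spec.fin}}_{G_1})^R(\bM_{1,2})$. The latter is restricted by hypothesis, so its dual is restricted.

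\textbf{Step 3.} Once $F^{\on{spec.fin}}$ and its right adjoint both lie in the image of tensoring up, this step is purely formal. Postcomposition with the fully faithful $\bi\otimes\on{Id}$ is fully faithful on Hom 2-categories. Hence the unit, the counit, their right adjoints and all 3-morphisms lift automatically. The appeal to semi-rigidity (\corref{c:right adjoint Hecke}) in your last paragraph is unnecessary. The real content of your proof is Step 2, not Step 3.

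\textbf{The paper's route.} The paper gives no separate proof and treats the statement as immediate from \lemref{l:2-adj spec fin}. The natural reading is to rerun that lemma's proof with $\bi\otimes\on{Id}$ in place of $\bemb^{\on{spec.fin}}$. Write $F^{\on{restr}}\simeq(\be\otimes\on{Id})\circ F^{\on{spec.fin}}\circ(\bi\otimes\on{Id})$. Here $\bi\otimes\on{Id}$ is fully faithful and 2-adjointable with ambidextrous adjoint $\be\otimes\on{Id}$ (see the tensoring-up discussion in \secref{s:Cat Rep}). $F^{\on{spec.fin}}$ is 2-adjointable by the lemma, and restricted-adaptedness plays the role of spec.fin-adaptedness.

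Then $(F^{\on{restr}})^R\simeq(\be\otimes\on{Id})\circ(F^{\on{spec.fin}})^R\circ(\bi\otimes\on{Id})$ exists by ambidexterity alone. The counit is a composite of a whiskered counit of the $(\bi\otimes\on{Id},\be\otimes\on{Id})$-adjunction, a whiskered counit of $F^{\on{spec.fin}}$, and an isomorphism; all three admit right adjoints. The unit is a composite of three pieces:
\begin{itemize}
\item an isomorphism;
\item a whiskered unit of $F^{\on{spec.fin}}$;
\item a whiskered unit of the $(\be\otimes\on{Id},\bi\otimes\on{Id})$-adjunction, which is an isomorphism because $F^{\on{spec.fin}}\circ(\bi\otimes\on{Id})\simeq(\bi\otimes\on{Id})\circ F^{\on{restr}}$.
\end{itemize}

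\textbf{Comparison.} The paper's route needs no information about the kernel of $F^R$ and no duality, but it uses the 2-adjointability of $\bi\otimes\on{Id}$. Yours needs only the full faithfulness of $\bi\otimes\on{Id}$, at the price of the kernel computation above.
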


\ssec{The induction functor on restricted representations} \label{ss:ind on restr}

In this subsection we will encounter a key feature that distinguishes $G\mmod^{\on{spec.fin}}$
from $G\mmod$: the intertwining functor $I_{P_1,P_2}$ becomes an \emph{isomorphism}
when restricted to the spectrally finite subcategory.

\sssec{} \label{sss:ind is comp}

We apply the material of \secref{ss:inherited} to $G_1=M$ and $G_2=G$,
where $M$ is a Levi subgroup in $G$ and $F=\bind_P$.

\medskip

We claim that $\bind_P$ is spec.fin-adapted. Indeed, the corresponding object
\eqref{e:crit mon adapt} is
$$\ul\Shv(N\backslash G/N)^{\on{left-}\!T\on{-mon}},$$
and the assertion follows from \lemref{l:mon Hecke} (see \secref{sss:when spec fin adapted}).

\medskip

Moreover, from \propref{p:q-mon restr} it follows that $\bind_P$ is restricted adapted.

\sssec{}

From \lemref{l:2-adj spec fin} and \corref{c:2-adj spec restr}, combined with \secref{sss:prince series}
we obtain:

\begin{cor}  \label{c:restr adj 2-adj}
The 1-morphisms
$$\bind_P^{\on{spec.fin}}:M\mmod^{\on{spec.fin}}\to G\mmod^{\on{spec.fin}}$$
and
$$\bind_P^{\on{restr}}:M\mmod^{\on{restr}}\to G\mmod^{\on{restr}}$$
are 2-adjointable.
\end{cor}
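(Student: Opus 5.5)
The plan is to combine three inputs that are already in place: the 2-adjointability of $\bind_P$ (\secref{sss:prince series}, via \propref{p:stack 2-adj}), the adaptedness checks in \secref{sss:ind is comp}, and the inheritance statements \lemref{l:2-adj spec fin} and \corref{c:2-adj spec restr}.

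\textbf{First claim.} We first confirm that $\bind_P:M\mmod\to G\mmod$ is 2-adjointable.
\begin{itemize}
\item It is given by the $(G\times M)$-scheme $Y=G/N(P)$, and $Y$ satisfies conditions (i)--(iv) of \propref{p:stack 2-adj}; this is exactly the content of \secref{sss:prince series}.
\item Its right adjoint is $\bjacq_P$, represented by the same scheme, as in \secref{sss:adj geom prel}.
\end{itemize}

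Next we need $\bind_P$ to be spec.fin-adapted. By the criterion of \secref{sss:when spec fin adapted}, it suffices that the object \eqref{e:crit mon adapt}, which here is $\ul\Shv(N\backslash G/N)^{\on{left-}\!T\on{-mon}}$, is monodromic for the right $T$-action.
\begin{itemize}
\item This is \lemref{l:mon Hecke}.
\item One should check that taking $N_M$-invariants of $\ul\Shv(G/N(P))$ yields $\ul\Shv(G/N)$. This holds because $N=N(P)\cdot N_M$ for compatible Borels.
\end{itemize}
\lemref{l:2-adj spec fin} then gives that $\bind_P^{\on{spec.fin}}$ is 2-adjointable. Its right adjoint is $(\bjacq_P)^{\on{spec.fin}}$, by \lemref{l:adj is inherited}.

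\textbf{Second claim.} We must verify that $\bind_P$ is restricted-adapted. Concretely, the monodromic object above must lie in the essential image of $\bi$.
\begin{itemize}
\item We apply the monodromic counterpart \eqref{e:mon restr Y} of \propref{p:q-mon restr} to $\CY=N\backslash G/N$ with its $T\times T$-action. This action has finitely many orbits, namely the Bruhat cells.
\item It gives $\ul\Shv(N\backslash G/N)^{\on{mon}}\simeq\bi(\Shv(N\backslash G/N)^{\on{mon}})$.
\end{itemize}
With this, \corref{c:2-adj spec restr} applies and yields the 2-adjointability of $\bind_P^{\on{restr}}$.

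\textbf{Main obstacle.} The delicate point is the proof of \corref{c:2-adj spec restr} itself. Here I would use the equivalence \eqref{e:tensor up restr} from \thmref{t:restr main}(b), which identifies $G\mmod^{\on{spec.fin}}\simeq \AGCat\underset{\DGCat}\otimes G\mmod^{\on{restr}}$.
\begin{itemize}
\item Under this equivalence, $\bind_P^{\on{spec.fin}}$ is the tensoring-up of $\bind_P^{\on{restr}}$.
\item Its unit and counit 2-morphisms are likewise tensored up from their restricted counterparts.
\item The functor $\fE\mapsto\AGCat\underset{\DGCat}\otimes\fE$ is fully faithful on 2-morphisms, since $\bi$ is fully faithful with a 2-adjointable right adjoint $\be$.
\end{itemize}
Consequently, the right adjoints of the unit and counit, which exist at the spec.fin level, descend to the restricted level. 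Alternatively, one can use \corref{c:right adjoint Hecke}: the underlying functors of $\CH^{\on{mon}}$-modules admit right adjoints, and semi-rigidity of $\CH^{\on{mon}}$ (\propref{p:H mon semi-rigid}) upgrades these to adjoints in $\CH^{\on{mon}}\mmod(\DGCat)$.
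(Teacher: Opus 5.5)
Your proposal is correct and follows the paper's own proof: $\bind_P$ is 2-adjointable by \secref{sss:prince series}, it is spec.fin-adapted by \lemref{l:mon Hecke} and restricted-adapted by \propref{p:q-mon restr} (as in \secref{sss:ind is comp}), and \lemref{l:2-adj spec fin} together with \corref{c:2-adj spec restr} then give both assertions. Your ``main obstacle'' paragraph is not needed, since the paper treats \corref{c:2-adj spec restr} as already established; if you do run the tensoring-up argument, note that it also requires $\bjacq_P$ to be restricted-adapted (true by the same reasoning), and that the 2-adjointable 1-morphism there is $\bi\otimes\on{Id}$, not its right adjoint.
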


\sssec{}

Recall the 2-morphism
$$I_{P_1,P_2}:\bind_{P_1}\to \bind_{P_2},$$
see \secref{sss:intertwining functor}.

\medskip

This 2-morphism automatically induces a 2-morphism
$$I^{\on{spec.fin}}_{P_1,P_2}:\bind^{\on{spec.fin}}_{P_1}\to \bind^{\on{spec.fin}}_{P_2},$$
between 1-morphisms
$$M\mmod^{\on{spec.fin}}\to G\mmod^{\on{spec.fin}}$$
and a 2-morphism
$$I^{\on{restr}}_{P_1,P_2}:\bind^{\on{restr}}_{P_1}\to \bind^{\on{restr}}_{P_2},$$
between 1-morphisms
$$M\mmod^{\on{spec.fin}}\to G\mmod^{\on{spec.fin}}.$$

\sssec{}

Here is a key assertion that makes the spectrally finite theory work better than
the entire one:

\begin{prop} \label{p:intertwiner spec finite}
The 2-morphism $I^{\on{spec.fin}}_{P_1,P_2}$ is an \emph{isomorphism};
in particular, it admits a right adjoint.
\end{prop}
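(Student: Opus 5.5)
We reduce the claim to a statement about a single 1-morphism between objects of $(G\times M)\mmod^{\on{spec.fin}}$, and then test it after applying $\binv_N$, which is conservative.

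First, $\bind_{P_i}$ is represented by $\ul\Shv(G/N(P_i))\in (G\times M)\mmod$, and $I_{P_1,P_2}$ is the 1-morphism $(r_2)_\blacktriangle\circ r_1^!$ of \eqref{e:intertw diagram}. Since $\bind_{P_i}$ is spec.fin-adapted (\secref{sss:ind is comp}), $\bind^{\on{spec.fin}}_{P_i}$ is represented by the spec.fin part
$$\ul\Shv(G/N(P_i))^{\on{spec.fin}}\in (G\times M)\mmod^{\on{spec.fin}},$$
using \corref{c:restr prod}(a). The 2-morphism $I^{\on{spec.fin}}_{P_1,P_2}$ is the induced map between these objects. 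So it suffices to show that $(\bemb^{\on{spec.fin}})^R(I_{P_1,P_2})$ is an equivalence in $(G\times M)\mmod^{\on{spec.fin}}$.

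By \thmref{t:G mod via Hecke}, applied to $G\times M$ with Borel $B\times B_M$, a 1-morphism in $(G\times M)\mmod$ is an equivalence if and only if its image under $\binv_{N\times N_M}$ is. By \thmref{t:restr main}(a'), this functor commutes with taking the spec.fin/monodromic parts. We are therefore reduced to showing that the map
$$\ul\Shv(N\backslash G/N(P_1\cap P_2))^{\on{mon}}\text{-correspondence}:\ \ul\Shv(N\backslash G/N(P_1)N_M)^{\on{mon}}\to \ul\Shv(N\backslash G/N(P_2)N_M)^{\on{mon}}$$
is an equivalence. Here $N(P_i)N_M$ is the unipotent radical of the Borel $B_i\subset P_i$ with $B_i\cap M=B_M$, and "mon" means monodromic for the $T\times T$-action. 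This map is pull-push along
$$N\backslash G/N(B_1\cap B_2)\to N\backslash G/N_{B_2},$$
and it is right convolution with the (monodromic projection of the) standard object $\Delta$ on the relevant double coset in $\CH^{\on{mon}}$. Writing $B_2={}^{w}B_1$ for a suitable Weyl element $w$, this object is the standard sheaf $j_{w,*}$ or $j_{w,!}$ on $N\backslash BwB/N$, restricted to the monodromic part.

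The key step, and the one I expect to be the main obstacle, is to show that convolution with the monodromic standard object $\Delta^{\on{mon}}_w\in \CH^{\on{mon}}$ is invertible, with inverse given by the monodromic costandard object $\nabla^{\on{mon}}_{w^{-1}}$. This is the monodromic analogue of the classical relation $\Delta_w\star\nabla_{w^{-1}}\simeq\delta$ in the Hecke category. It fails in the non-monodromic $\ul\CH$, which is exactly why $I_{P_1,P_2}$ itself is not invertible.

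I would try the following route. By induction on length, the braid relations $\Delta_{w}\star\Delta_{w'}\simeq\Delta_{ww'}$ for $\ell(ww')=\ell(w)+\ell(w')$ reduce the claim to simple reflections. This reduction also matches the factorization of $I_{P_1,P_2}$ through a chain of adjacent parabolics. For a simple reflection the computation is a rank-one computation on $SL_2$ or $PGL_2$. There the pushforward along the $\BA^1$-fibration $N\backslash G/N(B_1\cap B_2)\to N\backslash G/N_{B_2}$, applied to $T$-monodromic objects, is computed using \corref{c:!-Av mon} ($\pi_!\simeq\pi_*$ up to shift on monodromic objects). This identifies $\Delta_s\star\nabla_s$ with $\delta$ on each $\chi$-summand, both for $\chi$ fixed by $s$ and for $\chi$ not fixed by $s$.

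Since all objects involved are restricted (\propref{p:q-mon restr}), the computation takes place in $\Shv$ rather than $\ul\Shv$. Finally, because equivalences are in particular adjointable, the "in particular" clause of the statement is immediate.
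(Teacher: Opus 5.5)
Your reduction is the paper's: conservativity of unipotent invariants (\thmref{t:G mod via Hecke}) brings you to Borel subgroups and $T$-monodromic sheaves. Additivity of lengths then brings you to a simple reflection, i.e.\ to a rank-one statement. Also taking $N$-invariants on the $G$-side, so that the claim becomes invertibility of a monodromic (co)standard object in $\CH^{\on{mon}}$, is a harmless repackaging. It even makes the passage to the Levi of the minimal parabolic cleaner than the paper's \'etale-local argument over $G/P$.

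The gap is in the rank-one step, which carries all the content. \corref{c:!-Av mon} concerns the quotient map by a group $H$ applied to $H$-monodromic objects. The sheaves $r_1^!\CF$ that you push along the $\BA^1$-fibration are monodromic for $T$. They are not monodromic for $N_2/(N_1\cap N_2)\simeq\BG_a$, along whose orbits you are pushing. In fact the asserted isomorphism ``$\pi_!\simeq\pi_*$ up to a shift'' is false there. Take $G=SL_2$ and $G/N\simeq V\smallsetminus 0$. The constant sheaf on the closed orbit $\BG_m\cdot e_1$ is $N$-equivariant and $(T\times T)$-equivariant, so it is a legitimate monodromic object. The $*$- and $!$-pushforwards of its pullback along $\sH\to V^*\smallsetminus 0$ are $j_*$ and $j_!$ of the (shifted) constant sheaf on the open set $\{v^*\,|\,\langle e_1,v^*\rangle\neq 0\}$. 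These differ on the complementary punctured line, and no shift relates them. So the mechanism you propose does not yield $\Delta_s\star\nabla_s\simeq\one$.

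What is needed is the classical input the paper invokes at this point. The Radon transform $(r_{V^*})_*\circ r_V^!$ is an equivalence on $\BG_m$-monodromic sheaves on $V\smallsetminus 0$, with inverse $(r_V)_!\circ r_{V^*}^*$. In your language, this is the rank-one invertibility of monodromic standard and costandard objects. Note the mechanism: a $!$-pushforward/$*$-pullback transform inverts the $*$-pushforward/$!$-pullback one on monodromic objects. It is not that the two pushforwards agree. On non-monodromic objects, e.g.\ skyscrapers, the composite acquires an extra constant term on an open set, which is exactly why $I_{P_1,P_2}$ is not invertible on all of $G\mmod$. The paper does not prove this fact; it quotes it as known. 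If you cite it, or verify it by a direct computation on $V\smallsetminus 0$, the rest of your argument goes through.
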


As an immediate corollary we obtain:

\begin{cor} \label{c:intertwiner restr}
The 2-morphism $I^{\on{restr}}_{P_1,P_2}$ is an isomorphism;
in particular, it admits a right adjoint.
\end{cor}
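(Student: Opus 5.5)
Since both sides are 1-morphisms $M\mmod^{\on{spec.fin}}\to G\mmod^{\on{spec.fin}}$, I will test isomorphy on the representing objects. By \secref{sss:bimodules as morphisms} and \secref{sss:ind is comp}, $\bind^{\on{spec.fin}}_{P_i}$ is represented by $\ul\Shv(G/N(P_i))^{\on{spec.fin}}\in (M\times G)\mmod^{\on{spec.fin}}$. The map $I^{\on{spec.fin}}_{P_1,P_2}$ is pull-push along \eqref{e:intertw diagram}, restricted to these objects. By \corref{c:restr via Hecke} (and \corref{c:restr prod} for $M\times G$), the functor $\binv_{N_M\times N}$ is conservative on spec.fin objects, where $N_M$ is the unipotent radical of a Borel of $M$. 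So it suffices to show that $I_{P_1,P_2}$ becomes an isomorphism after taking $\binv_{N_M\times N}$ and passing to $T$-monodromic parts. Using \eqref{e:inv as quot} and \propref{p:q-mon restr}, this is the functor
$$\Shv(N\backslash G/N(P_1)N_M)^{\on{mon}}\to \Shv(N\backslash G/N(P_2)N_M)^{\on{mon}}$$
given by pull-push along $N\backslash G/N(P_1\cap P_2)N_M$. Here $N(P_i)N_M$ is the unipotent radical $N_i$ of a Borel $B_i=B_M N(P_i)$, and $B_1,B_2$ share the torus $T\subset M$.

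So I will reduce to a statement about Borels: for two Borels $B_1,B_2$ with common torus, the functor $\Shv(G/N_1)^{\on{mon}}\to \Shv(G/N_2)^{\on{mon}}$ given by $(r_2)_*\circ r_1^!$ along $G/(N_1\cap N_2)$ is an equivalence, compatibly with the $G$-action. It is enough to check this on $\ul\Shv(G/N_1)^{\on{mon}}$ as $G\times T$-objects. I will factor the relative position $w$ of $(B_1,B_2)$ into simple reflections through a chain of Borels containing $T$. The paper's transitivity statement \eqref{e:add up Intro} says intertwiners compose when lengths add. I will check that this holds at the level of pull-push, since $G/(N_1\cap N_3)\to G/(N_1\cap N_2)\times_{G/N_2} G/(N_2\cap N_3)$ is an isomorphism when lengths add. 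This reduces the problem to a simple reflection, i.e. to a minimal parabolic and to $G$ of semisimple rank one.

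The rank-one case is the main obstacle. There the functor is convolution with the $*$-extended $\omega$ (up to shift) on the big cell $N\backslash B s B/N\simeq T\times \BA^1$, i.e. the standard object $\Delta_s$ or $\nabla_s$ in $\CH^{\on{mon}}$. I would show it is invertible in the monodromic Hecke category by producing the inverse: the costandard object (with the opposite extension), using the known relation $\Delta_s\star\nabla_s\simeq\delta$ in monodromic sheaves. This relation holds for all characters $\chi$, including those where $s$ does not fix $\chi$. I would verify it by a direct $\mathbb{P}^1$ computation on $T$-monodromic sheaves, handling the $\chi$-singular and $\chi$-regular cases separately and working with the generating objects from \secref{sss:inv gen mon}. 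Monodromicity is essential: on $\Shv(N\backslash G/N)$ itself, without the monodromic restriction, the analogous convolution is not invertible, which is why $I_{P_1,P_2}$ fails to be an isomorphism. Once invertibility is known, the right adjoint is the inverse, which gives the last clause of the statement.
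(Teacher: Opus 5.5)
Your proof is correct and is essentially the paper's argument. The paper gets the corollary immediately from \propref{p:intertwiner spec finite}, since $I^{\on{restr}}_{P_1,P_2}$ is the restriction of $I^{\on{spec.fin}}_{P_1,P_2}$ along the fully faithful $\bi$; you should state this step explicitly rather than treating the two as the same. It proves that proposition by your reduction (conservativity of invariants, composition of intertwiners when lengths add, reduction to semisimple rank one), and it quotes the rank-one input as the known invertibility of the Radon transform on $\BG_m$-monodromic sheaves on $V-0$, which is the same fact as your $\Delta_s\star\nabla_s\simeq\delta$ in $\CH^{\on{mon}}$.

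One correction in the rank-one step. First, $\ul\Shv(N\backslash BsB/N)\simeq\ul\Shv(T)$; it is $BsB/N$, not $N\backslash BsB/N$, that looks like $T\times\BA^1$. Second, the kernel of the rank-one intertwiner is the $*$-extension from the cell of the $\delta$-sheaf at a point $\dot s$, and its monodromic projection is what plays the role of $\nabla_s$. The $*$-extension of $\omega$ of the whole cell is that kernel convolved with $\omega_T$; it kills all non-unipotent monodromy, so it is not invertible.
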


\sssec{Proof of \propref{p:intertwiner spec finite}}

Choose a maximal unipotent $N_M\subset M$ so that the preimage $N_i$ under $P_i\to M$ is a maximal
unipotent in $G$. Since the functor
$$\binv_{N_M}:M\mmod\to \AGCat$$
is conservative (by \thmref{t:G mod via Hecke}), it is enough to show that the 2-morphism $I^{\on{restr}}_{P_1,P_2}$
induces an equivalence (in $\AGCat$) after taking $N_M$-invariants.

\medskip

Note that the resulting functor is given by pull-push along
$$
\CD
G/N_1\cap N_2 @>{r'_2}>> G/N_2 \\
@V{r'_1}VV \\
G/N_1.
\endCD
$$

Thus, we need to show that for a scheme $Y$, the functor
$$(\on{id} \times r'_2)_* \circ (\on{id} \times r'_1)^!$$
induces an equivalence
$$\Shv(Y \times (G/N_1))^{T\on{-mon}}\to \Shv(Y\times (G/N_2))^{T\on{-mon}}.$$


\medskip

The relative position of $N_1$ and $N_2$ is a well-defined element $w(N_1,N_2)\in W$. It is easy to see that
for a triple of maximal unipotents $N_1,N_2,N_3$ with
$$\ell(w(N_1,N_2))+\ell(w(N_2,N_3))=\ell(w(N_1,N_3)),$$
we have
$$I_{B_1,B_3}\simeq I_{B_2,B_3}\circ I_{B_1,B_2},$$
where $B_i$ is the Borel containing $N_i$.

\medskip

Hence, we can assume that the relative position of $N_1$ and $N_2$ is a simple reflection $s_i$. Let $P$ be
the subminimal parabolic that contains both $N_1$ and $N_2$. The diagram
$$G/N_1 \to G/P\leftarrow G/N_2$$
\'etale-locally with respect to $G/P$ is isomorphic to a product situation with $G$ of semisimple rank $1$,
and $P_1$ and $P_2$ are a Borel subgroup and its opposite.

\sssec{} \label{sss:Radon}

Thus, we can assume that $G$ has semi-simple rank $1$. Further, it is easy to reduce the assertion to the
case when $G=SL_2$. We will analyze the latter case explicitly.

\medskip

We identify $G/N$ with $(V-0)$, where $V$ is a 2-dimensional vector space with a trivialized determinant
line; in particular we obtain an identification $V\simeq V^*$.

\medskip

Under these identofications, $I_{B,B^-}$ corresponds to the Radon transform
$$\on{Rad}_V:(\on{id}\times r_{V^*})_*\circ (\on{id}\times r_V)^! :\Shv(Y\times (V-0)) \to \Shv(Y\times (V^*-0)),$$
where
$$(V-0) \overset{r_V}\longleftarrow \sH \overset{r_{V^*}}\longrightarrow (V^*-0),\quad \sH=\{(v,v^*)\,|\, \langle v,v^*\rangle=1\}.$$

Now, it is known that $\on{Rad}^*_V$ is an equivalence \emph{on the $\BG_m$-monodromic subcategories} with inverse
$$\wt{\on{Rad}}_{V^*}:(\on{id}\times r_{V})_!\circ (\on{id}\times r_{V^*})^* :\Shv(Y\times (V^*-0)) \to \Shv(Y\times (V-0)).$$

\qed[\propref{p:intertwiner spec finite}]

\ssec{Jacquet functor on restricted representations} \label{ss:jack on restr}

In this subsection we will encounter another new feature of the induction
functor in the spectrally finite situation.

\sssec{}

Recall that the fact that the 1-morphism $\bind_P$ is 2-adjointable implies that the adjunction
$$(\bind_P,\bjacq_P)$$
is ambidexterous.

\medskip

In particular, the 1-morphism $\bjacq_P$ is adjointable. But the above does not say (and it is \emph{not} true) that
$\bjacq_P$ is 2-adjointable.

\sssec{}

Note that by the same logic as in the case of $\bind_P$, the 1-morphism $\bjacq_P$ is spec.fin-adapted (and also
restricted-adapted).

\medskip

In this subsection will prove:

\begin{thm} \label{t:jacq mon}
The 1-morphism
$$\bjacq_P^{\on{spec.fin}}:G\mmod^{\on{spec.fin}}\to M\mmod^{\on{spec.fin}}$$
is 2-adjointable.
\end{thm}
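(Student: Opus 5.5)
We need to show that the unit and counit of the $(\bjacq^{\on{spec.fin}}_P,(\bjacq^{\on{spec.fin}}_P)^R)$-adjunction admit right adjoints. By Corollary \ref{c:restr adj 2-adj} and Lemma \ref{l:ambi}, the adjunction $(\bind^{\on{spec.fin}}_P,\bjacq^{\on{spec.fin}}_P)$ is ambidexterous. Hence $(\bjacq^{\on{spec.fin}}_P)^R\simeq \bind^{\on{spec.fin}}_P$, with the adjunction data given by $\on{counit}^R$ and $\on{unit}^R$ for the original adjunction. So the statement reduces to showing that the 2-morphisms
$$\on{unit}^R:\bjacq^{\on{spec.fin}}_P\circ \bind^{\on{spec.fin}}_P\to \on{Id}_{M\mmod^{\on{spec.fin}}},\qquad \on{counit}^R:\on{Id}_{G\mmod^{\on{spec.fin}}}\to \bind^{\on{spec.fin}}_P\circ \bjacq^{\on{spec.fin}}_P$$
themselves admit right adjoints.

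The strategy is to identify $(\bjacq_P^{\on{spec.fin}})^R$ with a \emph{different} incarnation of induction in which these maps become honest units and counits. The natural candidate is $\bind^{\on{spec.fin}}_{P^-}$. By \propref{p:intertwiner spec finite}, $I^{\on{spec.fin}}_{P,P^-}$ is an isomorphism $\bind_P^{\on{spec.fin}}\simeq \bind_{P^-}^{\on{spec.fin}}$. Dually, $\bjacq_P^{\on{spec.fin}}\simeq\bjacq_{P^-}^{\on{spec.fin}}$, either by applying $(-)^\vee$, using $\bind_P^\vee\simeq\bjacq_P$ together with the self-dualities, or directly via the Radon transform isomorphism.

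Concretely, I would compute the two 2-morphisms geometrically in terms of $(M\times M)$- and $(G\times G)$-prestacks, following \secref{sss:adj geom}:
\begin{itemize}
\item $\on{unit}^R$ is pull-push along the transposed diagram relating $(G/N(P)\times G/N(P))/G\simeq N(P)\backslash G/N(P)$ to $M$, restricted to the $T$-monodromic part.
\item $\on{counit}^R$ is pull-push relating $G$ to $G/N(P)\overset{M}\times N(P)\backslash G$.
\end{itemize}
The right adjoint of a pull-push $h_\blacktriangle\circ v^!$ exists if $h$ is mock-proper and $v$ is mock-smooth. Here the maps are of the wrong type: for instance $M\hookrightarrow N(P)\backslash G/N(P)$ is a closed embedding but not smooth, and $G/N(P)\overset{M}\times N(P)\backslash G\to G$ is smooth but not proper. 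The fix is to insert the isomorphism $I^{\on{spec.fin}}_{P,P^-}$ on one side. On the monodromic part, after twisting by the Radon transform, $\on{unit}^R$ becomes pull-push through the open Bruhat cell $N(P)\backslash P\cdot P^-/N(P^-)\simeq M$. This is now an open embedding composed with an isomorphism, and by \corref{c:!-Av mon} its adjoint can be computed with $(\pi)_!\simeq(\pi)_*[\dim]$. Similarly, $\on{counit}^R$ becomes pull-push along $G/N(P)\overset{M}\times N(P^-)\backslash G\to G$, whose generic part is the open cell, and the shift comparison $\Av_!\simeq\Av_*[\dim]$ again supplies the needed adjoint. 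The equivalence of the $T$-monodromic open-cell $!$- and $*$-extensions, i.e.\ the cleanness coming from the Radon inverse $\wt{\on{Rad}}$, is what makes these adjoints well-defined and $G$-equivariant.

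To organize this cleanly, I would pass through the Hecke description (Corollary \ref{c:restr via Hecke}). There, 1-morphisms are bimodules over $\CH^{\on{mon}}$. By Corollary \ref{c:right adjoint Hecke} and semi-rigidity (\propref{p:H mon semi-rigid}), a 2-morphism of bimodules admits a right adjoint as soon as the underlying functor of DG categories does. This reduces everything to checking that two explicit functors between categories of monodromic sheaves on double quotients have right adjoints, which the above geometric analysis provides.

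The main obstacle is the bookkeeping identifying $\on{unit}^R$ and $\on{counit}^R$, after conjugation by $I^{\on{spec.fin}}_{P,P^-}$, with open-cell pull-push functors. The step to verify first is that $(\on{counit}^R)$ for $\bind_P$ matches, under $I_{P,P^-}$, the map to $G/N(P)\overset{M}\times N(P^-)\backslash G$ up to cohomological shift.
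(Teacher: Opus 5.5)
Your reduction matches the paper's up to the last step. Since $\bind^{\on{spec.fin}}_P$ is 2-adjointable, 2-adjointability of $\bjacq^{\on{spec.fin}}_P$ amounts to the unit and counit of the original $(\bind^{\on{spec.fin}}_P,\bjacq^{\on{spec.fin}}_P)$-adjunction admitting double right adjoints. Via \corref{c:restr via Hecke} these become the bimodule maps $i_*$ and $\on{CH}^{\on{ext}}$ between $T$-monodromic sheaves on $N_M\backslash M/N_M$, $N\backslash G/N$ and $N\backslash G/N(P)\overset{M}\times N(P)\backslash G/N$. By \propref{p:q-mon restr} they are $\bi$ of DG-level maps; you use this tacitly, and it is what makes \corref{c:right adjoint Hecke} applicable. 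Semi-rigidity then reduces everything to showing that $i^!$ and $\on{HC}^{\on{ext}}$ have continuous right adjoints in $\DGCat$.

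What you are missing is the short argument that finishes this step. A continuous functor between compactly generated categories has a continuous right adjoint iff it preserves compact objects. All the stacks above are safe, so compact objects (also in the monodromic subcategories) are exactly the constructible ones. The functors $i^!$ and $\on{HC}^{\on{ext}}$ obviously preserve constructibility, and that is the whole proof. Your remark that the maps are ``of the wrong type'' concerns the mock-proper/mock-smooth criterion in $\AGCat$. That criterion is irrelevant once you have passed to $\DGCat$, and no intertwiners are needed.

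The substitute you propose does not close the gap.

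\emph{Unit.} Here it can be made to work. As in the proof of \thmref{t:2-Cat Serre}, one has $\iota^{\on{spec.fin}}_P\simeq(\on{Id}\otimes I^{\on{spec.fin}}_{P^-,P})\circ(\jmath^{\on{spec.fin}}_{P,P^-})^L[-2\dim(N(P))]$. Hence $\on{unit}^R$ is an equivalence followed by restriction to the open cell, and its right adjoint is given by $*$-extension. \corref{c:!-Av mon} plays no role here.

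\emph{Counit.} Here your description does not hold up. There is no $(G\times G)$-equivariant map $G/N(P)\overset{M}\times N(P^-)\backslash G\to G$, since multiplication is not well defined there. Also, \corref{c:!-Av mon} concerns quotient maps by a group with respect to whose action the objects are monodromic. The relevant pushforward here is along the $N(P)$-fibers of $G\overset{P}\times G\to G/N(P)\overset{M}\times N(P)\backslash G$. Nothing in the spec.fin condition, which is $T$-monodromicity after taking $N$-invariants, puts you in that situation.

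You flag this step yourself as unverified. As written, the existence of $(\on{counit}^R)^R$ is not established.
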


As an immediate corollary we obtain:

\begin{cor} \label{c:jacq mon}
The 1-morphism
$$\bjacq_P^{\on{restr}}:G\mmod^{\on{restr}}\to M\mmod^{\on{restr}}$$
is 2-adjointable.
\end{cor}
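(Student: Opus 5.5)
The plan is to deduce the corollary from \thmref{t:jacq mon}, in the same way that \corref{c:2-adj spec restr} follows from \lemref{l:2-adj spec fin}. The transfer goes through the relation between $G\mmod^{\on{restr}}$ and $G\mmod^{\on{spec.fin}}$ given by \thmref{t:restr main}(b) and \corref{c:restr via Hecke bis}.

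First, I will use that $\bjacq_P$ is restricted-adapted, as noted just before \thmref{t:jacq mon}. So $\bjacq^{\on{restr}}_P$ is defined, and
$$\bi\circ \bjacq^{\on{restr}}_P\simeq \bjacq^{\on{spec.fin}}_P\circ \bi.$$
Under \corref{c:restr via Hecke bis}, $G\mmod^{\on{restr}}\simeq \CH^{\on{mon}}_G\mmod(\DGCat)$ and $G\mmod^{\on{spec.fin}}\simeq \AGCat\underset{\DGCat}\otimes \CH^{\on{mon}}_G\mmod(\DGCat)$, and similarly for $M$. Consequently the 2-category
$$\BMaps_{\tAGCat}(G\mmod^{\on{spec.fin}},M\mmod^{\on{spec.fin}})\simeq (\CH^{\on{mon}}_G)^{\on{rev}}\otimes \CH^{\on{mon}}_M\mmod(\AGCat)$$
is obtained from $\BMaps_{\tDGCat}(G\mmod^{\on{restr}},M\mmod^{\on{restr}})\simeq (\CH^{\on{mon}}_G)^{\on{rev}}\otimes\CH^{\on{mon}}_M\mmod(\DGCat)$ by tensoring up along $\bi$. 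The same holds for endomorphism 2-categories of $G\mmod$ and $M\mmod$, and this identification is compatible with composition. Denote the resulting comparison 2-functors again by $\bi$.

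The key formal point is that each of these 2-functors $\bi$ admits a retraction $\be$, namely $\be\otimes\on{Id}$. Here $\be$ is the $\DGCat$-linear right adjoint of $\bi:\DGCat\to\AGCat$, and $\be\circ\bi\simeq\on{Id}$ because $\bi$ is fully faithful. Since $\be$ is $\DGCat$-linear, $\be\otimes\on{Id}$ is a functor of $(\infty,2)$-categories; in particular it sends adjoint pairs of 1-morphisms to adjoint pairs.

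Now I take the unit $\on{Id}\to \bjacq^R\circ\bjacq$ and the counit $\bjacq\circ\bjacq^R\to\on{Id}$ of $\bjacq^{\on{restr}}_P$. Note that $\bjacq^R$ exists, since $\bjacq_P$ is adjointable and \lemref{l:adj is inherited} applies. These are 1-morphisms in the relevant $\BMaps_{\tDGCat}$ 2-categories, and their images under $\bi$ are the unit and counit for $\bjacq^{\on{spec.fin}}_P$. By \thmref{t:jacq mon}, these images admit right adjoints. Applying $\be$ to these adjunctions, and using $\be\circ\bi\simeq\on{Id}$, produces right adjoints to the original unit and counit. This proves 2-adjointability.

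The main thing to check carefully is the second step: that the identification of Hom 2-categories as tensored-up bimodule categories is compatible with composition, so that $\bi$ really carries the unit and counit of the restricted adjunction to those of the spectrally finite one, and that $\be\otimes\on{Id}$ is a genuine 2-functor rather than merely a lax one. I expect $\DGCat$-linearity of $\be$, from \cite[Sect. 2.4.1]{GRV}, to settle the latter point.
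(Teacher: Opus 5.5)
Your argument is correct, but it runs in the opposite direction from the paper. The paper states the corollary as immediate, and its proof of \thmref{t:jacq mon} in fact establishes the restricted statement first. Using \corref{c:restr via Hecke} and \propref{p:q-mon restr}, it identifies the unit and counit of the $(\bind_P,\bjacq_P)$-adjunction with $\bi$ applied to the $\DGCat$-level functors \eqref{e:adj init 1 bis} and \eqref{e:adj init 2 bis}, viewed in the $\CH^{\on{mon}}$-bimodule 2-categories over $\DGCat$. It then shows these admit double right adjoints there: \corref{c:right adjoint Hecke} reduces this to plain $\DGCat$, where it amounts to $i^!$ and $\on{HC}^{\on{ext}}$ preserving compactness on safe stacks. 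Under \corref{c:restr via Hecke bis}, that is exactly the 2-adjointability of $\bjacq^{\on{restr}}_P$, and the spec.fin version follows by applying the 2-functor $\bi$. You instead take \thmref{t:jacq mon} as a black box and descend along the retraction $\be\otimes\on{Id}$ of $\bi\otimes\on{Id}$ on the Hom 2-categories. What your route buys is generality: it is purely formal and transfers any such (2-)adjointability statement from spec.fin to restr, and the same argument yields \corref{c:2-adj spec restr} and \corref{c:intertwiner restr}. What it costs is checking that tensoring up along $\bi$ is compatible with the Hom 2-categories, with composition, and with adjunction data. You rightly flag this; it holds because $\AGCat\underset{\DGCat}\otimes(-)$ is functorial at the 3-categorical level and $\be$ is $\DGCat$-linear, hence a genuine 2-functor. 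One citation is off: \lemref{l:adj is inherited} concerns $F^{\on{spec.fin}}$, so it does not directly give a right adjoint of $\bjacq^{\on{restr}}_P$. That right adjoint, namely $\bind^{\on{restr}}_P$, comes instead from \corref{c:restr adj 2-adj} together with \lemref{l:ambi}.
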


\sssec{}

Note that by combining \thmref{t:jacq mon} with \lemref{l:ambi}, we obtain a \emph{new} adjunction between $\bind^{\on{spec.fin}}_P$ (as a left adjoint)
and $\bjacq^{\on{spec.fin}}_P$ (as a right adjoint).

\medskip

This adjunction is obtained from the initial $(\bind^{\on{spec.fin}}_P,\bjacq^{\on{spec.fin}}_P)$-adjunction by replacing the
unit/counit by their respective \emph{double right adjoints}.

\sssec{} \label{sss:vacillating}

More generally, let
$$F:\bo_1\to \bo_2$$
be an adjointable 1-morphism in a 3-category, such that its unit and counit admits double right adjoints.

\medskip

By the above, we obtain a \emph{new} adjunction
$$F\leftrightarrow F^R.$$

We will call such $F$ a \emph{vacillating}\footnote{The name was suggested by S.~Raskin.} functor.

\sssec{}

The rest of the subsection is devoted to the proof of \thmref{t:jacq mon}.

\medskip

By definition, we need to show that the 2-morphisms
\begin{equation} \label{e:adj init}
\on{Id}\to \bjacq_P^{\on{spec.fin}}\circ \bind_P^{\on{spec.fin}} \text{ and }
\bind_P^{\on{spec.fin}} \circ \bjacq_P^{\on{spec.fin}}\to \on{Id}
\end{equation}
that give rise to the initial $(\bind^{\on{spec.fin}}_P,\bjacq^{\on{spec.fin}}_P)$-adjunction admit double right adjoints.

\medskip

Using \corref{c:restr via Hecke}, we interpret the 2-morphisms \eqref{e:adj init} as the morphisms
\begin{equation} \label{e:adj init 1}
\ul\Shv(N_M\backslash M/N_M)^{T\on{-mon}}\overset{i_*}\to \ul\Shv(N\backslash G/N)^{T\on{-mon}}
\end{equation}
(in $(\ul\CH_M^{\on{mod}}\otimes \ul\CH_M^{\on{mod}})\mmod(\AGCat)$), and
\begin{equation} \label{e:adj init 2}
\ul\Shv(N\backslash G/N(P)\overset{M}\times N(P)\backslash G/N)^{T\on{-mon}} \overset{\on{CH}^{\on{ext}}}\longrightarrow
\ul\Shv(N\backslash G/N)^{T\on{-mon}}
\end{equation}
(in $(\ul\CH_G^{\on{mod}}\otimes \ul\CH_G^{\on{mod}})\mmod(\AGCat)$), respectively, where
$$i:N_M\backslash M/N_M \to N_M\backslash (N(P)\backslash G/N(P))/N_M \simeq N\backslash G/N$$
is the natural map, and $\on{CH}^{\on{ext}}$ is pull-push along
$$
\CD
N\backslash G\overset{P}\times G/N @>>> N\backslash G/N \\
@VVV \\
N\backslash G/N(P)\overset{M}\times N(P)\backslash G/N.
\endCD
$$

\sssec{}

Note now that by \propref{p:q-mon restr}, the 1-morphisms \eqref{e:adj init 1} and \eqref{e:adj init 2} are obtained by applying the functor $\bi:\DGCat\to \AGCat$
to the 1-morphisms
\begin{equation} \label{e:adj init 1 bis}
\Shv(N_M\backslash M/N_M)^{T\on{-mon}}\overset{i_*}\to \Shv(N\backslash G/N)^{T\on{-mon}}
\end{equation}
in $(\CH_M^{\on{mod}}\otimes \CH_M^{\on{mod}})\mmod(\DGCat)$, and
\begin{equation} \label{e:adj init 2 bis}
\Shv(N\backslash G/N(P)\overset{M}\times N(P)\backslash G/N)^{T\on{-mon}} \overset{\on{CH}^{\on{ext}}}\longrightarrow
\Shv(N\backslash G/N)^{T\on{-mon}}
\end{equation}
in $(\CH_G^{\on{mod}}\otimes \CH_G^{\on{mod}})\mmod(\DGCat)$,
respectively.

\medskip

So, it is enough to show that the functors \eqref{e:adj init 1 bis} and \eqref{e:adj init 2 bis} admit double right adjoints
in
$$ (\CH_M^{\on{mod}}\otimes \CH_M^{\on{mod}})\mmod(\DGCat) \text{ and }  (\CH_G^{\on{mod}}\otimes \CH_G^{\on{mod}})\mmod(\DGCat),$$
respectively.

\sssec{}

However, applying \corref{c:right adjoint Hecke}, we obtain that it suffices that the 1-morphisms \eqref{e:adj init 1} and \eqref{e:adj init 2} admit double
right adjoints in $\DGCat$.

\sssec{}

The right adjoints to the maps in \eqref{e:adj init 1 bis} and \eqref{e:adj init 2 bis} are given by
\begin{equation} \label{e:adj init bis}
i^! \text{ and } \on{HC}^{\on{ext}}
\end{equation}
respectively,
where $\on{HC}^{\on{ext}}$ is given by pull-push along the diagram
$$
\CD
N\backslash G\overset{P}\times G/N @>>> N\backslash G/N(P)\overset{M}\times N(P)\backslash G/N \\
@VVV \\
N\backslash G/N.
\endCD
$$

\medskip

In order for these functors to admit right adjoints in $\DGCat$, it is necessary and sufficient that they
preserves compactness.

\medskip

Note, however, that all the stacks involved are \emph{safe}, so compactness is equivalent to constructibility.
Now, the fact that the functors \eqref{e:adj init bis} preserve constructibility is obvious.

\qed[\thmref{t:jacq mon}]

\ssec{The Serre structure on the induction functor}

Building on the previous subsection, we will show that the induction functor admits an additional
\emph{structure} in the spectrally finite situation.

\sssec{}

Let us return to the context of \secref{sss:vacillating}. Thus, we start with an initial adjunction
$$F\leftrightarrow F^R$$
and by passing to double right adjoints of the unit and counit, we obtain a \emph{new} adjunction
$$F\overset{\on{new}}\leftrightarrow F^R.$$

Thus, the functor $F$ admits a canonical automorphism, called the Serre functor,
to be denoted $\on{Se}_F$ that intertwines the two adjunctions.

\medskip

In other words,
\begin{equation} \label{e:Serre 1 abs}
\on{unit}^{\on{new}}\simeq (\on{id}_{F^R}(\on{Se}_F))\circ \on{unit}
\end{equation}
or
\begin{equation} \label{e:Serre 2 abs}
\on{counit}\simeq \on{counit}^{\on{new}}\circ (\on{Se}_F(\on{id}_{F^R})),
\end{equation}
where we denote by $\on{id}_{F^R}(\on{Se}_F)$ and $\on{Se}_F(\on{id}_{F^R})$ the
corresponding automorphisms of $F^R\circ F$ and $F\circ F^R$, respectively.

\sssec{}

We apply this to the vacillating functor $\bind^{\on{spec.fin}}_P$. Denote the corresponding Serre automorphism of
$$\ul\Shv(G/N(P))^{\on{spec.fin}}\in (G\times M)\mmod$$ by $\on{Se}_{G/N(P)}$.

\medskip

By definition, $\on{Se}_{G/N(P)}$ is uniquely characterized by \emph{either} of the following two isomorphisms
\begin{equation} \label{e:Serre 1}
\on{unit}^{\on{new}}\simeq (\on{Id}\otimes \on{Se}_{G/N(P)})\circ \on{unit}
\end{equation}
or
\begin{equation} \label{e:Serre 2}
\on{counit}\simeq \on{counit}^{\on{new}}\circ (\on{Id}\otimes \on{Se}_{G/N(P)}).
\end{equation}

\sssec{}

In this subsection we will prove the following result:

\begin{thm} \label{t:2-Cat Serre}
The functor $\on{Se}_{G/N(P)}$ equals the composition
$$I^{\on{spec.fin}}_{P^-,P}\circ I^{\on{spec.fin}}_{P,P^-}[-2\dim(N(P))]$$
viewed as an endomorphism of $\ul\Shv(G/N(P))^{\on{spec.fin}}\in (G\times M)\mmod$.
\end{thm}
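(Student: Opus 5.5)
We characterize $\on{Se}_{G/N(P)}$ via \eqref{e:Serre 2}: it is the unique automorphism of $\bind^{\on{spec.fin}}_P$ with $\on{counit}\simeq \on{counit}^{\on{new}}\circ (\on{Id}\otimes \on{Se}_{G/N(P)})$. So we must identify $\on{counit}^{\on{new}}$, i.e., the double right adjoint of the original counit, as an explicit 1-morphism in $(G\times G)\mmod^{\on{spec.fin}}$, and compare it with the original counit.

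\textbf{Step 1: the Hecke-algebra model.} Via \corref{c:restr via Hecke} and \corref{c:restr via Hecke bis}, the whole question descends to $\DGCat$: the counit is the functor $\on{CH}^{\on{ext}}$ of \eqref{e:adj init 2 bis}, its right adjoint is $\on{HC}^{\on{ext}}$, and the double right adjoint is the right adjoint of $\on{HC}^{\on{ext}}$ on $T$-monodromic, constructible-generated categories. All stacks involved are safe, and $\on{HC}^{\on{ext}}$ preserves constructibility, so it is computed on compact objects by Verdier duality: $(\on{HC}^{\on{ext}})^R\simeq \BD\circ (\on{HC}^{\on{ext}})^L\circ \BD$. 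The left adjoint of $\on{HC}^{\on{ext}}$ (pull-push along $N\backslash G\overset{P}\times G/N$) swaps $*$ and $!$ functors: it is $!$-pushforward along $N\backslash G\overset{P}\times G/N\to N\backslash G/N(P)\overset{M}\times N(P)\backslash G/N$ composed with $*$-pullback from $N\backslash G/N$. Conjugating by duality gives $\on{counit}^{\on{new}}$ as $*$-push/$!$-pull but with the \emph{!-pushforward} along the $N(P)$-fibration-type map $N\backslash G\overset{N(P)}\times G/N\to N\backslash G\overset{P}\times G/N$ replaced accordingly. Comparing with $\on{CH}^{\on{ext}}$, the only discrepancy is $!$ vs.\ $*$ pushforward along an affine-space fibration with fibers $N(P)$, in the relative direction.

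\textbf{Step 2: expressing the discrepancy through intertwiners.} The key claim is that on $T$-monodromic objects, $*$-averaging over $N(P)$ composed back equals $!$-averaging up to the Radon-type correction: $\Av^{N(P)}_!\simeq \Av^{N(P)}_*\circ (I_{P^-,P}\circ I_{P,P^-})[-2\dim N(P)]$, read as an identity of endofunctors of $\ul\Shv(G/N(P))^{\on{spec.fin}}$. I would prove this by reducing, as in the proof of \propref{p:intertwiner spec finite}, via transitivity $I_{B_1,B_3}\simeq I_{B_2,B_3}\circ I_{B_1,B_2}$ for reduced decompositions, to semisimple rank one, i.e.\ $SL_2$ and $V-0$. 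There the statement is the classical fact that on $\BG_m$-monodromic sheaves the inverse of $\on{Rad}_V$ is $\wt{\on{Rad}}_{V^*}$ (the $!/*$-swapped transform), so $\wt{\on{Rad}}\circ\on{Rad}\simeq\on{Id}$ while $\on{Rad}_{V^*}\circ\on{Rad}_V$ differs from it by exactly the $!$-vs-$*$ Fourier/duality twist with shift $[-2]$ (one for each dimension of $N(P)$ in general).

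\textbf{Step 3: assembling.} Plug Step 2 into the formula for $\on{counit}^{\on{new}}$ from Step 1 to obtain $\on{counit}\simeq \on{counit}^{\on{new}}\circ(\on{Id}\otimes I^{\on{spec.fin}}_{P^-,P}\circ I^{\on{spec.fin}}_{P,P^-}[-2\dim N(P)])$, and conclude by uniqueness in \eqref{e:Serre 2}; one must check the isomorphisms are $(G\times M)$-equivariant, which is automatic since all maps are pull-push along $(G\times M)$-equivariant correspondences and \corref{c:right adjoint Hecke} lets adjoints be computed on underlying categories. The main obstacle is Step 2 with correct shifts and canonicity: making the rank-one Radon computation global and coherent with the Hecke-module structure, and verifying that the resulting isomorphism is the one intertwining the two adjunctions rather than merely an abstract isomorphism of functors.
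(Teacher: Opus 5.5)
There is a genuine gap: the decisive computation is deferred to Steps 1--2, and neither step is correct as stated. In Step 1, $(\on{HC}^{\on{ext}})^R\simeq\BD\circ(\on{HC}^{\on{ext}})^L\circ\BD$ is not a general fact. Conjugation by $\BD$ exchanges left and right adjoints, so the correct identity is $F^R\simeq\BD\circ(\BD\circ F\circ\BD)^L\circ\BD$; your version holds only if $F$ commutes with Verdier duality. The functor you describe ($*$-pull along $h$, then $!$-push along $v$) is the Verdier conjugate of $\on{HC}^{\on{ext}}$, not its left adjoint $h_!\circ v^*$. Here $h:N\backslash G\overset{P}\times G/N\to N\backslash G/N$ is smooth and proper (a $G/P$-bundle). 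The map $v:N\backslash G\overset{P}\times G/N\to N\backslash G/N(P)\overset{M}\times N(P)\backslash G/N$ is smooth with fibers $N(P)$; this is not the map you name, whose fibers are $M$. Consequently $\BD\circ\on{CH}^{\on{ext}}\circ\BD\simeq\on{CH}^{\on{ext}}[-2\dim(N(P))]$, and your formula would give $\on{counit}^{\on{new}}\simeq\on{counit}$ up to shift. That would make the Serre functor a pure shift, contradicting the theorem, since $I^{\on{spec.fin}}_{P^-,P}\circ I^{\on{spec.fin}}_{P,P^-}$ is not a shift of the identity. What is really needed is a right adjoint of $v_*$ (equivalently, a left adjoint of $v_!$) along the non-proper $N(P)$-fibration. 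Such an adjoint exists only on spec.fin objects, and identifying it via intertwiners is equivalent to the theorem itself; compare \eqref{e:HC ext Serre}, which the paper deduces \emph{from} the theorem. Step 2 does not supply this. $\Av^{N(P)}_!$ and $\Av^{N(P)}_*$ are not endomorphisms of $\ul\Shv(G/N(P))^{\on{spec.fin}}$ in $(G\times M)\mmod$, so your displayed identity is not well-typed. Moreover, the rank-one reduction in the proof of \propref{p:intertwiner spec finite} works only because invertibility is stable under composition and $I_{B_1,B_3}\simeq I_{B_2,B_3}\circ I_{B_1,B_2}$ holds when lengths add up. For $P\to P^-\to P$ they do not: factoring yields a palindromic product, and you would have to transport rank-one defects through the remaining intertwiners, which you do not address.

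The missing idea is to use the unit characterization \eqref{e:Serre 1} instead of the counit. The unit is $\iota_P$ (pull-push through $N(P)\backslash P/N(P)$), and the claim is equivalent to \eqref{e:Serre unit}. By \eqref{e:restr and intertw}, $(\on{Id}\otimes I_{P,P^-})\circ\iota_P\simeq(\jmath_{P,P^-})^R$ is the $*$-extension from the open big cell \eqref{e:big cell}. On the other hand, $(I^{\on{spec.fin}}_{P^-,P})^{-1}$ is the value-wise left adjoint of $I^{\on{spec.fin}}_{P^-,P}$, hence is given by $*$-pull and $!$-push. Therefore $(\on{Id}\otimes(I^{\on{spec.fin}}_{P^-,P})^{-1})\circ\iota^{\on{spec.fin}}_P\simeq(\jmath^{\on{spec.fin}}_{P,P^-})^L[-2\dim(N(P))]$. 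Both sides of \eqref{e:Serre unit} then equal $\jmath^{\on{spec.fin}}_{P,P^-}[2\dim(N(P))]$. The double-adjoint computation thus reduces to the fact that $!$- and $*$-extension are the two adjoints of restriction to an open substack. This needs no rank-one reduction and no computation of $\on{counit}^{\on{new}}$; its explicit form \eqref{e:CH ext Serre} comes out afterwards as a consequence.
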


The rest of this section is devoted to the proof of \thmref{t:2-Cat Serre}.

\sssec{}

We will prove that $\on{Se}_{G/N(P)}$ satisfies \eqref{e:Serre 1}, while \eqref{e:Serre 2} will be a
(non-obvious) consequence, to be used in \secref{sss:counit Serre}.

\sssec{}

Recall that the unit of the initial $(\bind_P,\bjacq_P)$-adjunction is given by
the functor
$$\iota_P:\ul\Shv(M)\to \ul\Shv(N(P)\backslash G/N(P)),$$
equal to pull-push along
$$
\CD
N(P)\backslash P/N(P) @>>> N(P)\backslash G/N(P) \\
@VVV \\
M.
\endCD
$$

Set
$$\iota^{\on{spec.fin}}_P:=\iota_P|_{\ul\Shv(M)^{\on{spec.fin}}}.$$

Thus, we need to construct an isomorphism
$$(\on{Id}\otimes (I^{\on{spec.fin}}_{P^-,P}\circ I^{\on{spec.fin}}_{P,P^-}))\circ \iota^{\on{spec.fin}}_P[-2\dim(N(P))] \simeq ((\iota^{\on{spec.fin}}_P)^R)^R$$
as maps
$$\ul\Shv(M)^{\on{spec.fin}}\to \ul\Shv(N(P)\backslash G/N(P))^{\on{spec.fin}}$$
in $(M\times M)\mmod$, where the symbol $\on{id}\otimes I_{?,?}$ refers to the action of the corresponding
intertwining functor on the right.

\medskip

This is equivalent to an isomorphism
\begin{equation} \label{e:Serre unit}
((\on{Id}\otimes (I^{\on{spec.fin}}_{P^-,P})^{-1})\circ \iota^{\on{spec.fin}}_P)^R\simeq ((\on{Id}\otimes I^{\on{spec.fin}}_{P,P^-})\circ \iota^{\on{spec.fin}}_P)^L[2\dim(N(P))]
\end{equation}
as maps
$$\ul\Shv(N(P)\backslash G/N(P^-))^{\on{spec.fin}}\to \ul\Shv(M)^{\on{spec.fin}}$$
in $(M\times M)\mmod$.

\sssec{} \label{sss:j p}

Consider the \emph{open} embedding
\begin{equation} \label{e:big cell}
M\hookrightarrow N(P)\backslash G/N(P^-).
\end{equation}

Let $\jmath_{P,P^-}$ denote the corresponding functor of restriction
$$\ul\Shv(N(P)\backslash G/N(P^-))\to \ul\Shv(M).$$

It admits a right adjoint, given by *-extension along \eqref{e:big cell}.

\sssec{}

Unwinding, we obtain:
\begin{equation} \label{e:restr and intertw}
(\on{Id}\otimes I_{P,P^-})\circ \iota_P\simeq (\jmath_{P,P^-})^R
\end{equation}

Hence,
$$(\on{Id}\otimes I^{\on{spec.fin}}_{P,P^-})\circ \iota^{\on{spec.fin}}_P\simeq (\jmath_{P,P^-}^{\on{spec.fin}})^R,$$
where
$$\jmath_{P,P^-}^{\on{spec.fin}}:=\jmath_{P,P^-}|_{\ul\Shv(N(P)\backslash G/N(P^-))^{\on{spec.fin}}}.$$

\sssec{}

Consider the functor
$$\on{Id}\otimes (I^{\on{spec.fin}}_{P^-,P})^{-1}:\ul\Shv(N(P)\backslash G/N(P))^{\on{spec.fin}}\to \ul\Shv(N(P)\backslash G/N(P^-))^{\on{spec.fin}}.$$

It is the value-wise left adjoint of
$$(\on{Id}\otimes I^{\on{spec.fin}}_{P^-,P}):\Shv(Y\times (N(P)\backslash G/N(P^-)))^{\on{spec.fin}} \to
\Shv(Y\times (N(P)\backslash G/N(P)))^{\on{spec.fin}}.$$

Hence, it is given by \emph{*-pull followed by !-push} along the diagram
$$
\CD
Y\times (N(P)\backslash G) @>>> Y\times (N(P)\backslash G/N(P^-)) \\
@VVV \\
Y\times (N(P)\backslash G/N(P)).
\endCD
$$

Hence, the functor
$$(\on{Id}\otimes (I^{\on{spec.fin}}_{P^-,P})^{-1})\circ \iota^{\on{spec.fin}}_P$$
is given value-wise by
$$((\jmath_{P,P^-})^{\on{spec.fin}})^L[-2\dim(N(P))],$$
thanks to the fact that \eqref{e:big cell} is an open embedding.

\sssec{}

This makes \eqref{e:Serre unit} manifest, since both sides identify with
$$\jmath_{P,P^-}^{\on{spec.fin}}[2\dim(N(P))].$$

\qed[\thmref{t:2-Cat Serre}]

\begin{rem}

A somewhat parallel statement (originally proved in \cite{BBM} and then reproved in \cite{CGY}) says 
that the usual Serre functor on the category $\Shv(N\backslash G/B)$ is equal to
$$I_{B^-,B}\circ I_{B,B^-}[-2\dim(N)],$$
where we view the intertwining functor as acting on the right.

\medskip 

We were not able to formally deduce the above description of the
Serre functor for $\Shv(N\backslash G/B)$ from \thmref{t:2-Cat Serre}.

\medskip

We would be curious to know whether such a formal deduction exists. 

\end{rem}

\section{Applications to character sheaves} \label{s:char shv}

In this subsection we will recast the theory of character sheaves from the point
of view of $G\mmod^{\on{spec.fin}}$ by taking the trace.

\medskip

We will encounter some (non-obvious) features of character sheaves
(e.g., the functional equation for the Grothendieck-Springer functor, and the ambidexterity
of the $(\on{GrothSpr}_P,\fr_P)$-adjunction) that follow from the 3-categorical
formalism.

\ssec{Character sheaves--a reminder} \label{ss:char shv}

\sssec{}

For a scheme $Y$, consider the pair of categories
\begin{equation} \label{e:setting for HCh}
\Shv(Y\times (G/\on{Ad}(G))) \text{ and } \Shv(Y\times (N\backslash G/N)/\on{Ad}(T)),
\end{equation}
where $\on{Ad}(T)$ refers to the diagonal action of $T$ on $N\backslash G/N$.

\medskip

We regard the two sides of \eqref{e:setting for HCh} as monoidal categories
with respect to \emph{convolution}.

\sssec{} \label{sss:HCh av}

We have a monoidal functor
$$\on{HC}: \Shv(Y\times (G/\on{Ad}(G))) \to \Shv(Y\times (N\backslash G/N)/\on{Ad}(T)),$$
given by pull-push along the diagram
\begin{equation} \label{e:HCh diag}
\CD
Y \times G/\on{Ad}(B) @>>> Y\times (N\backslash G/N)/\on{Ad}(T) \\
@VVV \\
Y\times G/\on{Ad}(G).
\endCD
\end{equation}

Since the horizontal morphism in \eqref{e:HCh diag} is smooth and the vertical one is proper, the functor
$\on{HC}$ admits a left adjoint, to be denoted $\on{CH}$, which commutes with !-pullbacks and *-pushforwards
along $Y$.

\medskip

The functor $\on{CH}$ is left-lax monoidal, but is strictly compatible with the convolution
action of the monoidal category $\Shv(Y\times (G/\on{Ad}(G)))$ on either side.

\medskip

In particular, we obtain an adjunction between the corresponding objects in $\AGCat$:
$$\on{CH}:\ul\Shv(Y\times (N\backslash G/N)/\on{Ad}(T))\rightleftarrows \ul\Shv(Y\times (G/\on{Ad}(G))):\on{HC}.$$

\sssec{}

We note:

\begin{lem} \label{p:AB}
The endofunctor $\on{CH}\circ \on{HC}$ of $\Shv(Y\times (G/\on{Ad}(G)))$ is isomorphic to the convolution with the
Springer sheaf.
\end{lem}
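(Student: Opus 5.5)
The plan is a direct base-change computation of $\on{CH}\circ \on{HC}$ as a kernel, carried out uniformly in $Y$ (the $Y$-factor is inert, since all functors commute with $!$-pullbacks and $*$-pushforwards along $Y$). We then compare this kernel with the convolution diagram defining the Springer sheaf. Write $\mathsf{a}:G/\on{Ad}(B)\to G/\on{Ad}(G)$ for the vertical map (proper) and $\mathsf{b}:G/\on{Ad}(B)\to (N\backslash G/N)/\on{Ad}(T)$ for the horizontal map (smooth). Thus $\on{HC}=\mathsf{b}_*\circ \mathsf{a}^!$, and its left adjoint is $\on{CH}\simeq \mathsf{a}_!\circ \mathsf{b}^{*}\simeq \mathsf{a}_*\circ \mathsf{b}^![-2d]$, where $d$ is the relative dimension of $\mathsf{b}$.

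First, I compute $\mathsf{b}^!\circ \mathsf{b}_*$ by base change along the fiber square
$$G/\on{Ad}(B)\underset{(N\backslash G/N)/\on{Ad}(T)}\times G/\on{Ad}(B).$$
A point of this fiber product is a pair $(g_1,g_2)$ with $g_2\in N g_1 N$, modulo $B$. Sending $(g,n_1,n_2)\mapsto (g,n_1gn_2)$ identifies it with $(G\times N\times N)/B$, where $B$ acts by conjugation on $G$ and appropriately on $N\times N$.

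Second, I substitute into $\on{CH}\circ\on{HC}=\mathsf{a}_*\mathsf{b}^!\mathsf{b}_*\mathsf{a}^![-2d]$. This exhibits the composite as pull-push along the correspondence
$$G/\on{Ad}(G)\leftarrow (G\times N\times N)/\on{Ad}(B)\to G/\on{Ad}(G).$$
The two legs are $g$ and $n_1gn_2$. Reparametrizing with $g'=n_1 g n_2$ and $u=n_2n_1$, the right leg becomes conjugate to $g\cdot u$. Up to the $N$-directions, which are contractible and absorb the shift $[-2d]$ via $\on{C}^\cdot_c$ of affine spaces, the correspondence becomes
$$G/\on{Ad}(G)\leftarrow (G\times N)/\on{Ad}(B)\to G/\on{Ad}(G),\quad (g,u)\mapsto g,\ gu.$$
This is convolution on $G/\on{Ad}(G)$ with the pushforward of the dualizing (equivalently constant, up to shift) sheaf along $N/\on{Ad}(B)\to G/\on{Ad}(G)$.

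Third, I identify that pushforward with the Springer sheaf, $\on{Spr}=\pi_*\omega$ for $\pi:\widetilde{\mathcal N}=G\overset{B}\times N\to G$ taken modulo $\on{Ad}(G)$, using $(G\overset{B}\times N)/\on{Ad}(G)\simeq N/\on{Ad}(B)$. Finally, I check that the convolution structure matches the one on $\Shv(G/\on{Ad}(G))$, i.e. that the correspondence is $\on{mult}$ from $G/\on{Ad}(G)\times N/\on{Ad}(B)$ fibered appropriately. This is also the content of \lemref{l:Springer} in the categorical-representation setting, and can alternatively be deduced from it by taking $\Tr(\on{Id},-)$ of $\on{counit}\circ\on{counit}^R$ for the $(\bind_B,\bjacq_B)$-adjunction.

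I expect the main obstacle to be the middle step. The reparametrization of $(G\times N\times N)/B$ must correctly collapse one $N$ factor while keeping track of $B$-equivariance and the cohomological shifts, so that no spurious Tate twist or shift remains. I would first try to absorb it by noting that $N\times N\to N$, $(n_1,n_2)\mapsto n_2n_1$, is a trivial affine fibration compatible with the $B$-action.
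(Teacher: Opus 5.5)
Your main route, a direct base-change computation of the kernel of $\on{CH}\circ\on{HC}$, differs from the paper's, which is formal. The paper applies $\binv_{\Delta(G)}$ to \lemref{l:Springer}. This sends $\ul\Shv(G)$ to $\ul\Shv(G/\on{Ad}(G))$, the counit \eqref{e:conv map G/N} to $\on{CH}$, its right adjoint to $\on{HC}$, and convolution with $\on{Spr}$ to convolution with $\on{Spr}$. The alternative you mention at the end is essentially this argument. In the paper's notation it is the 1-morphism $\Tr(\alpha,\on{Id})$ induced by the 2-morphism $\alpha=\on{counit}\circ\on{counit}^R$ of $\on{Id}_{G\mmod}$, rather than ``$\Tr(\on{Id},-)$''. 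The advantage of the paper's route is that the geometric computation is done once, $(G\times G)$-equivariantly, and the statement for $G/\on{Ad}(G)$ and any $Y$ is then automatic.

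The direct route is viable, but the fiber product in your first step is misidentified. Write $(N\backslash G/N)/\on{Ad}(T)=G/H$, where $H=\{(b_1,b_2)\in B\times B\,|\,\bar b_1=\bar b_2\}$ acts by $g\mapsto b_1gb_2^{-1}$, and $G/\on{Ad}(B)=G/\Delta(B)$. Since $G/\Delta(B)\simeq (G\times H/\Delta(B))/H$ and $H/\Delta(B)\simeq N$, one gets
\[
G/\Delta(B)\underset{G/H}{\times} G/\Delta(B)\simeq \bigl(G\times H/\Delta(B)\times H/\Delta(B)\bigr)/H\simeq (G\times N)/\on{Ad}(B),
\]
with legs $(g,n)\mapsto g$ and $(g,n)\mapsto ng$.

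Your model $(G\times N\times N)/B$ divides by the wrong group. The residual symmetry of your parametrization $(g,n_1,n_2)\mapsto (g,n_1gn_2)$ by $N\times N\subset H$ is the whole group $\{(b_1,b_2)\in B\times B\,|\,\bar b_1=\bar b_2\}$: one copy of $B$ acts on each of $g_1,g_2$, constrained to keep the isomorphism datum in $N\times N$. It is not just the diagonal $B$. So your stack is too big by $\dim N$. You can also see this from the fact that $p_1$, a base change of $\mathsf{b}$, must be smooth of relative dimension $\dim N$, while your model has fibers $N\times N$.

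Consequently there is no extra $N$-direction to collapse, and the step where the $N$-directions ``absorb the shift $[-2d]$'' is spurious. The correct fiber product is already the Springer convolution correspondence. The shift $[-2\dim N]$ coming from $\mathsf{b}^*\simeq \mathsf{b}^![-2\dim N]$ survives and turns $\omega$ along $N$ into the constant sheaf. So you get convolution with $\pi_*$ of the constant sheaf on $N/\on{Ad}(B)\simeq \widetilde{\mathcal N}/\on{Ad}(G)$ (up to Tate twist), not $\pi_*\omega$ with ``no spurious shift''.

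Because the lemma does not fix a normalization of $\on{Spr}$, the conclusion survives once the fiber product is corrected. As written, however, your middle step computes the wrong stack, and your shift bookkeeping only balances because of that error.
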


\begin{proof}

This is obtained from \lemref{l:Springer} by passing to $G$-invariants on both sides.

\end{proof}

\sssec{}

Since the Springer sheaf contains $\delta_{1,G}$ as a direct summand, we obtain:

\begin{cor} \label{c:A is cons}
The functor $\on{HC}$ is conservative.
\end{cor}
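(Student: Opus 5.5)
The statement to prove is \corref{c:A is cons}: $\on{HC}$ is conservative. The plan is to exhibit the identity functor as a retract of a functor that factors through $\on{HC}$.

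\lemref{p:AB} identifies $\on{CH}\circ\on{HC}$ with convolution by the Springer sheaf $\on{Spr}\in\Shv(G/\on{Ad}(G))$, acting on $\Shv(Y\times (G/\on{Ad}(G)))$. By the decomposition theorem (the Springer sheaf is the pushforward along a proper small map from the Grothendieck--Springer resolution, so it is a direct sum of IC sheaves indexed by $W$-irreducibles), $\on{Spr}$ contains $\delta_{1,G}$ as a direct summand. Convolution with $\delta_{1,G}$ is the identity functor. Since convolution is additive in the kernel, the identity endofunctor of $\Shv(Y\times(G/\on{Ad}(G)))$ is therefore a retract of $\on{CH}\circ\on{HC}$.

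Now suppose $\on{HC}(\CF)=0$. Then $\on{CH}\circ\on{HC}(\CF)=0$. Since $\CF$ is a direct summand of $\on{CH}\circ\on{HC}(\CF)$, it follows that $\CF=0$.

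Conservativity in the sense of reflecting isomorphisms follows by applying this to cones, because $\on{HC}$ is exact.

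The one point requiring care is that the splitting $\delta_{1,G}\to\on{Spr}\to\delta_{1,G}$ must be functorial, so that it induces a retraction of functors and not just objectwise. This holds because the splitting lives at the level of kernels, and convolution is functorial in the kernel. Beyond that I expect no real obstacle: the statement is formal once \lemref{p:AB} is granted.
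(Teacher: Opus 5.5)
Your argument has the same structure as the paper's. \lemref{p:AB}, together with the fact that $\on{Spr}$ contains $\delta_{1,G}$ as a direct summand, exhibits $\on{Id}$ as a functorial retract of $\on{CH}\circ\on{HC}$. Your remarks on the functoriality of the splitting, and on passing from ``kills only zero objects'' to ``reflects isomorphisms'', are fine. However, the one non-formal input is misjustified, and as you state it the argument fails.

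The kernel in \lemref{p:AB} (cf.\ \lemref{l:Springer}) is not the Grothendieck--Springer sheaf. Unwind $\on{CH}\circ\on{HC}$ by smooth base change. The fiber product of $G/\on{Ad}(B)$ with itself over $(N\backslash G/N)/\on{Ad}(T)$ is $(G\times N)/\on{Ad}(B)$. It maps to the two copies of $G/\on{Ad}(G)$ by $(g,n)\mapsto g$ and $(g,n)\mapsto gn$. So $\on{CH}\circ\on{HC}$ is convolution with the pushforward along $N/\on{Ad}(B)\to G/\on{Ad}(G)$. This is the classical Springer sheaf, coming from the Springer resolution $G\overset{B}\times N\to\mathcal{U}$ of the unipotent variety.

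If the kernel were the pushforward along the small map $\wt G=G\overset{B}\times B\to G$, as you claim, smallness would force every summand to be the IC extension of a local system on $G^{\on{rs}}$. Every summand would then have full support, and $\delta_{1,G}$ would not be a summand at all. This already shows up for $G=T$: there $\on{HC}=\on{CH}=\on{Id}$, so the kernel must be $\delta_{1,T}$, whereas the Grothendieck--Springer sheaf is the constant sheaf on $T$.

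The correct reason is that $G\overset{B}\times N\to\mathcal{U}$ is semi-small and the point stratum $\{1\}$ is relevant. Its fiber is the whole flag variety $G/B$, of dimension $\dim N=\tfrac12\dim\mathcal{U}$, and its top cohomology is one-dimensional. So the decomposition theorem yields $\delta_{1,G}$ as a direct summand of $\on{Spr}$, with multiplicity one. With that replacement, your proof is complete and coincides with the paper's.
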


\sssec{}

Let
$$\Shv^{\on{ch}}(Y\times (G/\on{Ad}(G))) \subset \Shv(Y\times (G/\on{Ad}(G)))$$
be the full subcategory consisting of objects whose image along $\on{HC}$ lands in
$$\Shv(Y\times (N\backslash G/N)/\on{Ad}(T))^{\on{mon}}\subset \Shv(Y\times (N\backslash G/N)/\on{Ad}(T)).$$
Since $\on{HC}$ is monoidal, from \corref{c:Hecke mon ideal}
we obtain that $\Shv^{\on{ch}}(Y\times (G/\on{Ad}(G)))$ is a monoidal ideal
inside $\Shv(Y\times (G/\on{Ad}(G)))$.

\medskip

It is clear that for $f:Y\to Y'$, the *-pushforward and !-pullback functors along $f \times \on{id}$
map the categories $\Shv^{\on{ch}}(Y\times (G/\on{Ad}(G)))$ and $\Shv^{\on{ch}}(Y'\times (G/\on{Ad}(G)))$
to one another.

\sssec{}

Thus, we obtain a well-defined object
$$\ul\Shv^{\on{ch}}(G/\on{Ad}(G))\in \AGCat,$$
equipped with a fully-faithful 1-morphism\footnote{The notion of fully faithfulness for a 1-morphism is intrinsic to any 2-category:
it means that the induced functor on $\bMaps(-,-)$ is fully faithful. In the case of $\AGCat$, this is equivalent to the fact that 
the given 1-morphism evaluates to a fully faithful functor in $\DGCat$ on every scheme.} 
\begin{equation} \label{e:emb ch}
\on{emb}^{\on{ch}}:\ul\Shv^{\on{ch}}(G/\on{Ad}(G))\to \ul\Shv(G/\on{Ad}(G)).
\end{equation}

\sssec{}

We observe:

\begin{lem} \label{l:HCh pres char}
The functor $\on{CH}$ maps $\Shv(Y\times (N\backslash G/N)/\on{Ad}(T))^{\on{mon}}$ to
$\Shv^{\on{ch}}(Y\times (G/\on{Ad}(G)))$.
\end{lem}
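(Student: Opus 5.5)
We have to show: for $\CF$ in $\Shv(Y\times (N\backslash G/N)/\on{Ad}(T))^{\on{mon}}$, the object $\on{HC}(\on{CH}(\CF))$ is again monodromic. The plan is to use the monoidal-ideal property of the monodromic Hecke category, together with the strict compatibility of $\on{CH}$ with the convolution action of $\Shv(Y\times (G/\on{Ad}(G)))$.

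Rather than compute $\on{HC}\circ\on{CH}$ directly (which would require a Mackey-type decomposition over Bruhat cells), I will argue as follows. By \secref{sss:HCh av}, $\on{CH}$ is strictly compatible with the convolution action of $\Shv(Y\times (G/\on{Ad}(G)))$. It is also compatible with the corresponding action on $\Shv(Y\times (N\backslash G/N)/\on{Ad}(T))$ through the monoidal functor $\on{HC}$. Passing to the right adjoints of this compatibility shows that $\on{HC}\circ\on{CH}$ is a functor of modules over $\Shv(Y\times(N\backslash G/N)/\on{Ad}(T))$ acting via $\on{HC}$. Hence $\on{HC}(\on{CH}(\CF))\simeq \on{HC}(\on{CH}(\delta))\star\CF$ up to the lax structure, which is where care is needed.

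A cleaner route is to avoid the lax structure altogether and describe $\on{HC}\circ\on{CH}$ explicitly by base change. It is pull-push along
$$(N\backslash G/N)/\on{Ad}(T)\leftarrow G/\on{Ad}(B)\to G/\on{Ad}(G)\leftarrow G/\on{Ad}(B)\to (N\backslash G/N)/\on{Ad}(T).$$
After base change, this becomes a horocycle-type correspondence
$$(N\backslash G/N)/\on{Ad}(T)\leftarrow (G\times G/B)/\on{Ad}(B) \to (N\backslash G/N)/\on{Ad}(T),$$
which is the same as convolution on both sides with the kernel $\ul\Shv(N\backslash G/N)$, i.e. $\CF\mapsto \on{Av}\bigl(\delta\star\CF\star\delta\bigr)$ over the $T$-diagonal. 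More precisely, after $N$-averaging it is a two-sided convolution of $\CF$ with objects of $\Shv(N\backslash G/N)$ followed by pushforward along $T$-orbits. By \corref{c:Hecke mon ideal}, convolving a monodromic object with anything yields a monodromic object. Pushforward along the quotient by $\on{Ad}(T)$ preserves monodromicity, since monodromicity is a condition on the $T\times T$-action and commutes with $Y$-operations by \secref{sss:mon righ adj}.

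The main obstacle is to identify the composite correspondence honestly as a convolution on $N\backslash G/N$ (with the diagonal $T$-twist) and to check that the monoidal-ideal statement applies with the $Y$ factor present. For the latter, I will use that the monodromic subcategory is defined via $\ul\Shv(-)^{\on{mon}}\in\AGCat$. I will also use that \lemref{l:mon Hecke} (left monodromicity equals two-sided monodromicity) is stable under tensoring with $\Shv(Y)$.
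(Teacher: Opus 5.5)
Your reduction (it suffices that $\on{HC}\circ\on{CH}$ preserves the monodromic subcategory) is the right one. So is your base-change description of $\on{HC}\circ\on{CH}$ as pull-push along $(N\backslash G/N)/\on{Ad}(T)\leftarrow (G\times G/B)/\on{Ad}(B)\to (N\backslash G/N)/\on{Ad}(T)$. But the step that is supposed to finish the argument is missing, and as stated it is false.

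In your first route, the only linearity available is over $\Shv(Y\times (G/\on{Ad}(G)))$, acting on the target through $\on{HC}$. That lets you pull out objects of the form $\on{HC}(\CK)$, not a general monodromic $\CF$. So $\on{HC}(\on{CH}(\CF))\simeq \on{HC}(\on{CH}(\delta))\star\CF$ does not follow.

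In your second route, the two legs of the correspondence are $(g,hB)\mapsto g$ and $(g,hB)\mapsto h^{-1}gh$. The left and right translations are tied together by a single $h$ varying over $G/B$. This is therefore not a two-sided convolution with objects of $\Shv(N\backslash G/N)$, and \corref{c:Hecke mon ideal}, which concerns convolution inside $\Shv(N\backslash G/N)$, cannot be applied to it. A formula of the shape $\on{Av}(\delta\star\CF\star\delta)$ cannot be right anyway: it only sees the closed stratum $h\in B$ and misses all contributions indexed by $w\neq 1$. You flag this identification yourself as the main obstacle, and it is exactly where the content of the lemma lies.

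The missing idea is to stratify $(G\times G/B)/\on{Ad}(B)$ by the relative position $w\in W$ of the flags $B$ and $hB$. By base change this gives a finite filtration of $\on{HC}\circ\on{CH}$ with subquotients $(\on{HC}\circ\on{CH})_w$. Since monodromic objects are closed under extensions, it suffices to treat each piece. The $w$-stratum is $G/\on{Ad}(B\cap {}^wB)$, with ${}^wB:=\dot wB\dot w^{-1}$ for a lift $\dot w$ of $w$, and there the second leg is $g\mapsto \dot w^{-1}g\dot w$.

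The paper concludes at once from this: $(\on{HC}\circ\on{CH})_w$ intertwines the $T$-action with its twist by $w$, and such a functor preserves monodromicity because twisting by $w$ permutes character sheaves on $T$. The total functor is not compatible with the $T$-action; only its graded pieces are, each with its own twist, which is why the filtration is indispensable.

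Your convolution idea can be salvaged, but only stratum by stratum. On the $w$-stratum, both legs factor through the fibration with fibers $N\cap{}^wN$ onto $((N\cap{}^wN)\backslash G/(N\cap{}^wN))/\on{Ad}(T)$. Pull-push along that fibration is a shift of the identity, and this decouples the left and right sides. After forgetting the $\on{Ad}(T)$-equivariance one gets, up to shift, $\CF\mapsto \CK_{w^{-1}}\star\CF\star\CK_w$, where $\CK_{w^{\pm 1}}$ are the $*$-extensions of the constant sheaves on $N\dot w^{\pm1}N$. Now \corref{c:Hecke mon ideal} applies. This is more work than the paper's equivariance argument, and it rests on the same filtration.
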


\begin{proof}

We have to show that the endofunctor
$\on{HC}\circ \on{CH}$ of $\Shv(Y\times (N\backslash G/N)/\on{Ad}(T))$ preserves the subcategory
$\Shv(Y\times (N\backslash G/N)/\on{Ad}(T))^{\on{mon}}$.

\medskip

We note that by proper and smooth base change, the functor $\on{HC}\circ \on{CH}$ admits a filtration
indexed by elements $w\in W$, and each subquotient $(\on{HC}\circ \on{CH})_w$ intertwines
the initial $T$-action on $\Shv(Y\times (N\backslash G/N)/\on{Ad}(T))$ with one twisted by the
action of $w$ on $T$. In particular, each $(\on{HC}\circ \on{CH})_w$ preserves the monodromic
subcategory. Hence, so does $\on{HC}\circ \on{CH}$.

\end{proof}

\sssec{}

Thus, we obtain that the adjoint functors $(\on{CH},\on{HC})$ induce adjoint functors
\begin{equation} \label{e:HCh for char}
\Shv(Y\times (N\backslash G/N)/\on{Ad}(T))^{\on{mon}}\rightleftarrows
\Shv^{\on{ch}}(Y\times (G/\on{Ad}(G))).
\end{equation}

Since $\on{HC}$ is conservative (on all of $\Shv(Y\times (G/\on{Ad}(G))$),
we obtain:

\begin{cor} \label{c:HCh gen}
The essential image of the functor
$$\on{CH}:\Shv(Y\times (N\backslash G/N)/\on{Ad}(T))^{\on{mon}}\to
\Shv^{\on{ch}}(Y\times (G/\on{Ad}(G)))$$ generates the target category.
\end{cor}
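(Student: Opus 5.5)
The plan is to use the adjunction \eqref{e:HCh for char} together with conservativity of $\on{HC}$ (\corref{c:A is cons}). I will show that the right orthogonal of the essential image of $\on{CH}$ inside $\Shv^{\on{ch}}(Y\times (G/\on{Ad}(G)))$ is zero. Since all categories in sight are cocomplete and $\on{CH}$ is continuous (it is a left adjoint), this implies that the essential image generates the target under colimits.

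First, take $\CF\in \Shv^{\on{ch}}(Y\times (G/\on{Ad}(G)))$ such that
$$\CHom(\on{CH}(\CG),\CF)=0 \text{ for all } \CG\in \Shv(Y\times (N\backslash G/N)/\on{Ad}(T))^{\on{mon}}.$$
By the adjunction \eqref{e:HCh for char}, which is the restriction of the $(\on{CH},\on{HC})$-adjunction, this says
$$\CHom(\CG,\on{HC}(\CF))=0$$
for all monodromic $\CG$.

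Next, I use the definition of $\Shv^{\on{ch}}$: the object $\on{HC}(\CF)$ itself lies in the monodromic subcategory. Taking $\CG=\on{HC}(\CF)$ gives $\on{id}_{\on{HC}(\CF)}=0$, so $\on{HC}(\CF)=0$.

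Finally, \corref{c:A is cons} says that $\on{HC}$ is conservative on all of $\Shv(Y\times (G/\on{Ad}(G)))$. Hence $\CF=0$, and the right orthogonal vanishes as required.

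The only point needing care is the implicit use of \lemref{l:HCh pres char}: it guarantees that $\on{CH}$ lands in $\Shv^{\on{ch}}$, so that the adjunction \eqref{e:HCh for char} is genuinely an adjunction between the two subcategories. I expect no real obstacle beyond that.
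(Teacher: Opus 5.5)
Your proof is correct and is the paper's argument. The paper deduces the corollary in one line from the restricted adjunction \eqref{e:HCh for char} together with the conservativity of $\on{HC}$ (\corref{c:A is cons}); you spell out the same deduction via the vanishing of the right orthogonal.
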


\sssec{}

We further obtain:

\begin{prop} \label{p:emb ch R}
The inclusion functor
$$\on{emb}^{\on{ch}}:\Shv^{\on{ch}}(Y\times (G/\on{Ad}(G))) \hookrightarrow \Shv(Y\times (G/\on{Ad}(G)))$$
admits a continuous right adjoint, which makes the diagrams
\begin{equation} \label{e:right adj to ch}
\CD
\Shv^{\on{ch}}(Y\times (G/\on{Ad}(G))) @<{(\on{emb}^{\on{ch}})^R}<< \Shv(Y\times (G/\on{Ad}(G))) \\
@V{\on{HC}}VV @VV{\on{HC}}V \\
\Shv(Y\times (N\backslash G/N)/\on{Ad}(T))^{\on{mon}}
@<{(\on{emb}^{\on{mon}})^R}<< \Shv(Y\times (N\backslash G/N)/\on{Ad}(T))
\endCD
\end{equation}
and
\begin{equation} \label{e:right adj to ch bis}
\CD
\Shv^{\on{ch}}(Y\times (G/\on{Ad}(G))) @<{(\on{emb}^{\on{ch}})^R}<< \Shv(Y\times (G/\on{Ad}(G))) \\
@A{\on{CH}}AA @AA{\on{CH}}A \\
\Shv(Y\times (N\backslash G/N)/\on{Ad}(T))^{\on{mon}}
@<{(\on{emb}^{\on{mon}})^R}<< \Shv(Y\times (N\backslash G/N)/\on{Ad}(T))
\endCD
\end{equation}
commute. Furthermore:

\medskip

\noindent{\em(a)}
For a morphism $f:Y\to Y'$, the natural transformations
\begin{equation} \label{e:adj to ch functors}
(f\times \on{id})_* \circ (\on{emb}^{\on{ch}})^R \to (\on{emb}^{\on{ch}})^R \circ (f\times \on{id})_* \text{ and }
(f\times \on{id})^! \circ (\on{emb}^{\on{ch}})^R \to (\on{emb}^{\on{ch}})^R \circ (f\times \on{id})^!
\end{equation}
are isomorphisms.

\medskip

\noindent{\em(b)} The right adjoint $(\on{emb}^{\on{ch}})^R$ is strictly monoidal, and is given by
convolution with the (central idempotent) object
$$\delta_{1,G}^{\on{ch}}:=(\on{emb}^{\on{ch}}_{\on{pt}})^R(\delta_{1,G})\in
\Shv^{\on{ch}}(G/\on{Ad}(G)) \subset \Shv(G/\on{Ad}(G)).$$

\end{prop}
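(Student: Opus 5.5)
The plan is to construct $(\on{emb}^{\on{ch}})^R$ by transporting the right adjoint $(\on{emb}^{\on{mon}})^R$ on the Hecke side through the adjunction $(\on{CH},\on{HC})$. Concretely, I would set
$$\delta_{1,G}^{\on{ch}}:=\on{CH}\circ \on{emb}^{\on{mon}}\circ (\on{emb}^{\on{mon}})^R\circ \on{HC}(\delta_{1,G})$$
or, more invariantly, define the candidate right adjoint $R$ as convolution with $(\on{emb}^{\on{ch}}_{\on{pt}})^R(\delta_{1,G})$, once the latter is shown to exist. The key structural input is that $\Shv^{\on{ch}}$ is a monoidal ideal: it is the preimage under the monoidal functor $\on{HC}$ of the monoidal ideal $\Shv(\ldots)^{\on{mon}}$ (\corref{c:Hecke mon ideal}). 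In addition, the latter is cut out by a central idempotent, namely the action of the idempotent $\on{emb}^{\on{mon}}\circ(\on{emb}^{\on{mon}})^R(\delta)$ from \secref{ss:2 adj mon}.

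The steps, in order, are as follows.

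First, I would show that $\Shv^{\on{ch}}(Y\times G/\on{Ad}(G))$ is compactly generated by the essential image of $\on{CH}$ applied to compact monodromic objects (\corref{c:HCh gen}). Since $\on{CH}$ preserves compactness (it is left adjoint to a continuous functor), the embedding $\on{emb}^{\on{ch}}$ preserves compacts. It therefore has a continuous right adjoint.

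Second, I would verify the commutation of \eqref{e:right adj to ch}. For $\CF$ arbitrary and $\CG$ monodromic,
$$\Hom(\CG,\on{HC}\,\on{emb}^{\on{ch}}(\on{emb}^{\on{ch}})^R\CF)\simeq \Hom(\on{CH}\CG,(\on{emb}^{\on{ch}})^R\CF)\simeq \Hom(\on{CH}\CG,\CF)\simeq\Hom(\CG,\on{HC}\CF),$$
using \lemref{l:HCh pres char}. Both $\on{HC}\circ(\on{emb}^{\on{ch}})^R\CF$ and $(\on{emb}^{\on{mon}})^R\on{HC}\CF$ are monodromic and corepresent the same functor on monodromic objects, so they agree. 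Diagram \eqref{e:right adj to ch bis} follows similarly, or by noting that $\on{CH}$ is strictly linear for convolution by $\Shv(G/\on{Ad}(G))$ combined with (b).

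Third, for (a), I would check that the Beck--Chevalley maps become isomorphisms after applying the conservative functor $\on{HC}$ (\corref{c:A is cons}). Here I use that $\on{HC}$ commutes with $(f\times\on{id})_*$ and $(f\times\on{id})^!$, together with \eqref{e:right adj to ch}, which reduces (a) to the analogous statement for $(\on{emb}^{\on{mon}})^R$ in \secref{sss:Shv mon self-dual}.

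Fourth, for (b), I would write $\delta^{\on{ch}}:=(\on{emb}^{\on{ch}})^R(\delta_{1,G})$. The map $\delta^{\on{ch}}\star\CF\to\delta_{1,G}\star\CF=\CF$ has source in $\Shv^{\on{ch}}$ by the ideal property. To see that it is the counit, I would apply $\on{HC}$ (monoidal) and reduce to the statement that $(\on{emb}^{\on{mon}})^R$ on the Hecke side is convolution with its own idempotent. This holds by \secref{ss:2 adj mon} and the two-sided equality of \lemref{l:mon Hecke}. Centrality of $\delta^{\on{ch}}$ is automatic for $G/\on{Ad}(G)$ since convolution there is braided, and strict monoidality follows because the functor is given by an idempotent.

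I expect the main obstacle to be the comparison in the fourth step. It requires upgrading the objectwise isomorphism obtained after $\on{HC}$ to an identification of functors. This in turn requires knowing that $\on{HC}$ intertwines convolution with $\delta^{\on{ch}}$ and the Hecke idempotent compatibly with the left-lax structure. I would first try to handle this by checking the claim only on the generators $\on{CH}(\CG)$ and using linearity of $\on{CH}$.
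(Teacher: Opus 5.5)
Your arguments for \eqref{e:right adj to ch} (the Yoneda computation via \lemref{l:HCh pres char}, which is the paper's ``pass to right adjoints''), for (a) (check after the conservative $\on{HC}$ and reduce to $(\on{emb}^{\on{mon}})^R$), and for (b) (apply the strictly monoidal $\on{HC}$, then use \eqref{e:right adj to ch} and the Hecke-side idempotent) coincide with the paper's.

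The difference is \eqref{e:right adj to ch bis}, and there you should drop ``follows similarly''. The dual Yoneda argument would need the left adjoint $\wt{\on{HC}}$ of $\on{CH}$ (the $*$-pull/$!$-push transform) to send $\Shv^{\on{ch}}$ into the monodromic subcategory. This is not formal, because $\Shv^{\on{ch}}$ is defined through $\on{HC}$, not $\wt{\on{HC}}$. The paper proves exactly this by Verdier duality: the compact generators of $\Shv^{\on{ch}}$ are constructible, $\wt{\on{HC}}$ is the Verdier conjugate of $\on{HC}$, and Verdier duality preserves compact objects on both sides (for character sheaves via \corref{c:HCh gen} and the Verdier compatibility of $\on{CH}$).

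Your other option is a correct and genuinely different route. Prove (b) first; your Step 4 only uses \eqref{e:right adj to ch}, so there is no circularity once you reorder. Then the strict $\Shv(G/\on{Ad}(G))$-linearity of $\on{CH}$ from \secref{sss:HCh av} gives
$$(\on{emb}^{\on{ch}})^R\on{CH}(\CG)\simeq \delta^{\on{ch}}_{1,G}\star \on{CH}(\CG)\simeq \on{CH}\bigl(\on{HC}(\delta^{\on{ch}}_{1,G})\star \CG\bigr)\simeq \on{CH}\bigl((\on{emb}^{\on{mon}})^R(\one)\star\CG\bigr)\simeq \on{CH}\bigl((\on{emb}^{\on{mon}})^R(\CG)\bigr).$$
Here you use $\on{HC}(\delta_{1,G})\simeq \one$ and $\on{HC}(\delta^{\on{ch}}_{1,G})\simeq (\on{emb}^{\on{mon}})^R(\on{HC}(\delta_{1,G}))$, the latter from \eqref{e:right adj to ch}. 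This route trades the constructibility and Verdier-duality input for the linearity of $\on{CH}$. By adjunction it also recovers, a posteriori, that $\wt{\on{HC}}$ maps $\Shv^{\on{ch}}$ into monodromic objects. The paper's route, in exchange, makes explicit the Verdier stability of compact character sheaves that underlies Remark \ref{r:Verdier ch}.

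The obstacle you anticipate in Step 4 is not real. The map $\delta^{\on{ch}}_{1,G}\star\CF\to\CF$ is natural in $\CF$ and has source in $\Shv^{\on{ch}}$. So it suffices to show, objectwise, that its factorization through $(\on{emb}^{\on{ch}})^R(\CF)$ is an isomorphism. After applying the strictly monoidal $\on{HC}$, both objects become the monodromic coreflection of $\on{HC}(\CF)$, and conservativity of $\on{HC}$ concludes; no lax structure on $\on{CH}$ is involved.

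In Step 1, also record why $\on{CH}$ of a compact monodromic object is compact in $\Shv(Y\times (G/\on{Ad}(G)))$. Compact monodromic objects are compact in the ambient Hecke category because $\on{emb}^{\on{mon}}$ has a continuous right adjoint (\secref{sss:mon righ adj}).
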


\begin{proof}
The commutation of diagram \eqref{e:right adj to ch} follows by passing to right adjoints from the tautologically
commuting diagram
$$
\CD
\Shv^{\on{ch}}(Y\times (G/\on{Ad}(G))) @>{\on{emb}^{\on{ch}}}>> \Shv(Y\times (G/\on{Ad}(G))) \\
@A{\on{CH}}AA @AA{\on{CH}}A \\
\Shv(Y\times (N\backslash G/N)/\on{Ad}(T))^{\on{mon}} @>{\on{emb}^{\on{mon}}}>>
\Shv(Y\times (N\backslash G/N)/\on{Ad}(T)).
\endCD
$$

For the commutation of \eqref{e:right adj to ch bis}, we need to show that the diagram
$$
\CD
\Shv^{\on{ch}}(Y\times (G/\on{Ad}(G))) @>{\on{emb}^{\on{ch}}}>> \Shv(Y\times (G/\on{Ad}(G))) \\
@V{\wt{\on{HC}}}VV @VV{\wt{\on{HC}}}V \\
\Shv(Y\times (N\backslash G/N)/\on{Ad}(T))^{\on{mon}} @>{\on{emb}^{\on{mon}}}>>
\Shv(Y\times (N\backslash G/N)/\on{Ad}(T))
\endCD
$$
commutes, where $\wt{\on{HC}}$ is the \emph{left} adjoint of $\on{CH}$, i.e., $\wt{\on{HC}}$ is given (up to a cohomological shift)
by *-pull followed by !-push along \eqref{e:HCh diag}.

\medskip

Thus, we need to show that $\wt{\on{HC}}_Y$ sends
$\Shv^{\on{ch}}(Y\times (G/\on{Ad}(G)))$ to $\Shv(Y\times (N\backslash G/N)/\on{Ad}(T))^{\on{mon}}$.
Note, however, that the compact generators of $\Shv^{\on{ch}}(Y\times (G/\on{Ad}(G)))$ are constructible,
and $\wt{\on{HC}}$ is Verdier conjugate of the functor $\on{HC}$. Now the required assertion follows from
the fact that Verdier duality defines anti-involutions on
$$(\Shv(Y\times (N\backslash G/N)/\on{Ad}(T))^{\on{mon}})^c \text{ and } (\Shv^{\on{ch}}(Y\times (G/\on{Ad}(G))))^c.$$

For the former, this follows from the definitions. For the latter this follows from \corref{c:HCh gen}, since the
functor $\on{CH}$ commutes with Verdier duality, up to cohomological shift.

\medskip

To show that the maps in \eqref{e:adj to ch functors} are isomorphisms, it suffices to show that the maps
$$\on{HC}\circ (f\times \on{id})_* \circ (\on{emb}^{\on{ch}})^R\to
\on{HC}\circ (\on{emb}^{\on{ch}})^R \circ (f\times \on{id})_*$$
and
$$\on{HC}\circ (f\times \on{id})^! \circ (\on{emb}^{\on{ch}})^R
 \to \on{HC}\circ (\on{emb}^{\on{ch}})^R \circ (f\times \on{id})^!$$
are isomorphisms (indeed, the functors $\on{HC}$, $\on{HC}$ are conservative by \corref{c:A is cons}).

\medskip

Note that we have a commutative diagram
$$
\CD
\on{HC}\circ  (f\times \on{id})_* \circ (\on{emb}^{\on{ch}})^R @>{\sim}>>
(f\times \on{id})_* \circ \on{HC}\circ  (\on{emb}^{\on{ch}})^R \\
@VVV @VV{\sim}V \\
\on{HC}\circ (\on{emb}^{\on{ch}})^R \circ (f\times \on{id})_*  & &
(f\times \on{id})_* \circ (\on{emb}^{\on{mon}})^R \circ \on{HC} \\
@V{\sim}VV @VV{\sim}V \\
(\on{emb}^{\on{mon}})^R \circ \on{HC} \circ (f\times \on{id})_* @>{\sim}>>
(\on{emb}^{\on{mon}})^R \circ (f\times \on{id})_* \circ \on{HC}.
\endCD
$$
Hence, the upper left vertical arrow is also an isomorphism.

\medskip

The case of $f^!$ is treated similarly.

\medskip

Point (b) follows from the corresponding property of $\on{emb}^{\on{mon}}$ (see \secref{sss:H mon prel}),
using the diagram \eqref{e:right adj to ch}.

\end{proof}

\begin{cor}
The 1-morphism \eqref{e:emb ch} admits a right adjoint
$$(\on{emb}^{\on{ch}})^R:\ul\Shv(G/\on{Ad}(G))\to \ul\Shv^{\on{ch}}(G/\on{Ad}(G)).$$
\end{cor}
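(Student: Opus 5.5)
The corollary should follow directly from \propref{p:emb ch R} together with the criterion for adjointability in $\AGCat$ from \cite[Proposition 2.7.2]{GRV}. That criterion says: a 1-morphism of objects of $\AGCat$ has a right adjoint as soon as two conditions hold. First, each of its values on schemes $Y$ admits a continuous right adjoint. Second, these value-wise right adjoints are compatible with the structure functors $(f\times\on{id})^!$ and $(f\times\on{id})_*$ for maps $f:Y\to Y'$, i.e.\ the Beck--Chevalley maps are isomorphisms. This is exactly the argument already used in \secref{sss:Shv mon self-dual} for $\on{emb}^{\on{mon}}:\bi(\Shv(G)^{\on{mon}})\to\ul\Shv(G)$.

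We first check that \eqref{e:emb ch} is a well-defined 1-morphism in $\AGCat$. Its value on $Y$ is the fully faithful inclusion
$$\Shv^{\on{ch}}(Y\times (G/\on{Ad}(G)))\hookrightarrow \Shv(Y\times (G/\on{Ad}(G))),$$
and these inclusions commute strictly with $(f\times\on{id})^!$ and $(f\times\on{id})_*$, as noted just before the definition of $\ul\Shv^{\on{ch}}(G/\on{Ad}(G))$.

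Next, we supply the two inputs to the criterion. The first is the existence of a continuous right adjoint $(\on{emb}^{\on{ch}}_Y)^R$ for every $Y$; this is the main statement of \propref{p:emb ch R}. The second is that the natural transformations \eqref{e:adj to ch functors} are isomorphisms; this is \propref{p:emb ch R}(a). Applying \cite[Proposition 2.7.2]{GRV} then assembles the collection $\{(\on{emb}^{\on{ch}}_Y)^R\}_Y$ into a 1-morphism
$$\ul\Shv(G/\on{Ad}(G))\to\ul\Shv^{\on{ch}}(G/\on{Ad}(G)),$$
right adjoint to \eqref{e:emb ch}. The unit and counit are the value-wise ones, and their coherence is automatic once the Beck--Chevalley isomorphisms hold.

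The only place that needs care is that \cite[Proposition 2.7.2]{GRV} may also require compatibility with the external tensoring by $\Shv(X)$, that is, with $\ul\bC(Y)\otimes\Shv(X)\to\ul\bC(Y\times X)$. If so, I would reduce this to the $!$-pullback and $*$-pushforward compatibilities already established, by the same reasoning as in \secref{sss:Shv mon self-dual}. Alternatively, one can observe from \propref{p:emb ch R}(b) that $(\on{emb}^{\on{ch}})^R$ is convolution with $\delta^{\on{ch}}_{1,G}$ along the $G/\on{Ad}(G)$ factor, and convolution is manifestly compatible with external products in the $Y$ direction. I expect no real obstacle beyond this bookkeeping.
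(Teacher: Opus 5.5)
Your proposal is correct and is the argument the paper intends. The corollary is stated without a separate proof immediately after \propref{p:emb ch R}, whose main assertion and part (a) supply exactly what you use: the value-wise continuous right adjoints, and the Beck--Chevalley isomorphisms for $(f\times\on{id})^!$ and $(f\times\on{id})_*$. These are precisely the inputs to \cite[Proposition 2.7.2]{GRV}, applied as in \secref{sss:Shv mon self-dual}. As the remark following \secref{sss:mon righ adj} indicates, these two compatibilities are all that is required, so the extra bookkeeping you anticipate for external tensoring is unnecessary.
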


\begin{rem} \label{r:Verdier ch}

It is easy to see that \propref{p:emb ch R} implies that the Verdier self-duality of
$\ul\Shv(G/\on{Ad}(G))$ induces a Verdier self-duality on $\ul\Shv^{\on{ch}}(G/\on{Ad}(G))$
so that
$$(\on{emb}^{\on{ch}})^\vee \simeq (\on{emb}^{\on{ch}})^R.$$

\end{rem}

\sssec{}

Denote
$$\Shv^{\on{ch}}(G/\on{Ad}(G)):=\ul\Shv^{\on{ch}}(G/\on{Ad}(G))(\on{pt})\in \DGCat.$$

We claim:

\begin{prop} \label{p:ch is restr}
The canonical map
\begin{equation} \label{e:ch is restr}
\bi(\Shv^{\on{ch}}(G/\on{Ad}(G))) \to \ul\Shv^{\on{ch}}(G/\on{Ad}(G))
\end{equation}
is an isomorphism.
\end{prop}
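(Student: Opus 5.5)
We have to show that for every scheme $Y$ the functor
$$\Shv^{\on{ch}}(G/\on{Ad}(G))\otimes \Shv(Y)\to \Shv^{\on{ch}}(Y\times (G/\on{Ad}(G)))$$
is an equivalence. Full faithfulness is the easy half. It follows from the fact that $\Shv(G/\on{Ad}(G))\otimes \Shv(Y)\to \Shv(Y\times G/\on{Ad}(G))$ is fully faithful, together with \propref{p:emb ch R}, which realizes $\Shv^{\on{ch}}$ as a colocalization whose right adjoint is compatible with $\Shv(Y)$. So the content is essential surjectivity, which amounts to a generation statement.

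The strategy is to transport restrictedness from the monodromic side, where we already know it. By \eqref{e:mon restr Y}, applied to the stack $N\backslash G/N$ with its $T$-action (finitely many orbits; then pass to $\on{Ad}(T)$-invariants, i.e., apply \propref{p:q-mon restr} to $(N\backslash G/N)/\on{Ad}(T)$, which has finitely many $T$-orbits), we get
$$\Shv((N\backslash G/N)/\on{Ad}(T))^{\on{mon}}\otimes \Shv(Y)\simeq \Shv(Y\times (N\backslash G/N)/\on{Ad}(T))^{\on{mon}}.$$
Next we use the adjunction \eqref{e:HCh for char}, $\on{CH}_Y\dashv \on{HC}_Y$, between the monodromic category and $\Shv^{\on{ch}}(Y\times G/\on{Ad}(G))$. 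By \secref{sss:HCh av}, $\on{CH}$ commutes with !-pullbacks and *-pushforwards in $Y$, so these functors assemble into a 1-morphism of $\AGCat$. The same holds for $\on{HC}$, and \propref{p:emb ch R}(a) guarantees that the restriction to the $\on{ch}$ subobject is compatible as well. Consequently $\on{CH}_Y\simeq \on{CH}_{\on{pt}}\otimes \on{Id}_{\Shv(Y)}$ on the restricted source.

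By \corref{c:HCh gen}, the essential image of $\on{CH}_Y$ generates $\Shv^{\on{ch}}(Y\times G/\on{Ad}(G))$. Every object in that image lies in the essential image of $\Shv^{\on{ch}}(G/\on{Ad}(G))\otimes\Shv(Y)$, because $\on{CH}_{\on{pt}}$ lands in $\Shv^{\on{ch}}(G/\on{Ad}(G))$ by \lemref{l:HCh pres char}. The essential image of a fully faithful continuous functor is closed under colimits, so the functor is essentially surjective.

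I expect the main obstacle to be the identification $\on{CH}_Y\simeq \on{CH}_{\on{pt}}\otimes\on{Id}$, i.e., checking that the external-product functor intertwines $\on{CH}_{\on{pt}}\otimes \on{Id}$ with $\on{CH}_Y$. I would verify it by base change: $\on{CH}$ is given by *-pull along a smooth map followed by !-push along a proper map (up to shift, being the left adjoint of a pull-push along proper and smooth maps), and both operations commute with $\boxtimes$ with an arbitrary $\CF\in\Shv(Y)$. A secondary point is keeping the full faithfulness argument honest, since $\Shv^{\on{ch}}(G/\on{Ad}(G))$ is not obviously dualizable a priori. Here I would use that it is compactly generated by constructible objects (images under $\on{CH}$ of compact monodromic objects), hence dualizable, so tensoring with it preserves full faithfulness.
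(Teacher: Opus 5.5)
Your proposal is correct and follows the paper's proof. Full faithfulness comes from the fully faithful map $\bi(\Shv(G/\on{Ad}(G)))\to\ul\Shv(G/\on{Ad}(G))$. Essential surjectivity comes from the commutative square relating $\on{Id}\otimes\on{CH}$ and $\on{CH}$, the restrictedness of $\ul\Shv((N\backslash G/N)/\on{Ad}(T))^{\on{mon}}$ from \propref{p:q-mon restr}, and generation from \corref{c:HCh gen}. Two remarks: the step you flag as the main obstacle is formal, since it is just naturality of the counit $\bi(\ul\bC(\on{pt}))\to\ul\bC$ applied to the 1-morphism $\on{CH}$ of $\AGCat$; and you do not need dualizability of $\Shv^{\on{ch}}(G/\on{Ad}(G))$, because tensoring with the dualizable $\Shv(Y)$ already preserves full faithfulness.
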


\begin{proof}

Both sides embed fully faithfully in
$$\bi(\Shv(G/\on{Ad}(G)))  \text{ and } \ul\Shv(G/\on{Ad}(G)),$$
respectively. Sunce
$$\bi(\Shv(G/\on{Ad}(G))) \to \ul\Shv(G/\on{Ad}(G))$$
is fully faithful\footnote{It is not an equivalence!}, functor \eqref{e:ch is restr} is fully faithful.
Hence, it remains to show that it is essentially surjective.

\medskip

We have a commutative diagram
$$
\CD
\Shv(Y)\otimes \Shv(G/\on{Ad}(G))) @>>> \Shv(Y\times G/\on{Ad}(G)) \\
@A{\on{Id}\otimes \on{CH}}AA @AA{\on{CH}}A \\
\Shv(Y)\otimes \Shv((N\backslash G/N)/\on{Ad}(T))^{\on{mon}}@>>>
\Shv(Y\times (N\backslash G/N)/\on{Ad}(T))^{\on{mon}}.
\endCD
$$

Hence, the assertion follows from \corref{c:HCh gen}, since the object
$\ul\Shv((N\backslash G/N)/\on{Ad}(T))^{\on{mon}}\in \AGCat$ is restricted
(by \propref{p:q-mon restr}).

\end{proof}

\ssec{Another interpretation of character sheaves}

\sssec{}

Consider the object
$$\ul\Shv(G)^{\on{spec.fin}}:=(\bemb^{\on{spec.fin}})^R(\ul\Shv(G))\in (G\times G)\mmod^{\on{spec.fin}}\subset (G\times G)\mmod^.$$

In the above formula, we apply $(\bemb^{\on{spec.fin}})^R$ to $\ul\Shv(G)$ either as an object of $G\mmod$ along the action on one of the sides,
or as an object of $(G\times G)\mmod$. The result is the same by \lemref{l:mon Hecke}.

\sssec{}

The (fully faithful) embedding
\begin{equation} \label{e:emb spec fin sheaves}
\ul\Shv(G)^{\on{spec.fin}}\overset{\on{emb}^{\on{spec.fin}}}\longrightarrow \ul\Shv(G)
\end{equation}
admits a right adjoint (by \thmref{t:restr main}(a)), and thus induces a fully faithful embedding
\begin{equation} \label{e:emb char sheaves via spec. fin}
\binv_{\Delta(G)}(\ul\Shv(G)^{\on{spec.fin}})\to \binv_{\Delta(G)}(\ul\Shv(G))\simeq \ul\Shv(G/\on{Ad}(G)).
\end{equation}

We claim:

\begin{prop} \label{p:emb char sheaves via spec. fin}
The essential image of \eqref{e:emb char sheaves via spec. fin} equals
$$\ul\Shv^{\on{ch}}(G/\on{Ad}(G))\subset \ul\Shv(G/\on{Ad}(G)).$$
\end{prop}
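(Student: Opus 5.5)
We will identify both subcategories after applying the conservative functor $\on{HC}$ (Corollary \ref{c:A is cons}), interpreting everything via $N$-invariants. Since all constructions are compatible with $X\in\Sch$, it suffices to work value-wise: for each scheme $Y$, show that the essential image of
$$\binv_{\Delta(G)}(\ul\Shv(G)^{\on{spec.fin}})(Y)\to \Shv(Y\times G/\on{Ad}(G))$$
is $\Shv^{\on{ch}}(Y\times G/\on{Ad}(G))$.

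The first step is to rewrite the left side. The idempotent comonad $\on{emb}^{\on{spec.fin}}\circ(\on{emb}^{\on{spec.fin}})^R$ on $\ul\Shv(G)\in(G\times G)\mmod$ commutes with the diagonal $G$-action, because it is a $(G\times G)$-morphism. Passing to $\Delta(G)$-invariants therefore produces an idempotent comonad on $\ul\Shv(G/\on{Ad}(G))$, and the essential image of \eqref{e:emb char sheaves via spec. fin} is exactly the image of this comonad. Concretely, an object $\CF\in\Shv(Y\times G/\on{Ad}(G))$ lies in the image if and only if its pullback to $Y\times G$ lies in $\Shv(Y\times G)^{\on{spec.fin}}$, where this means spec.fin for the right $G$-action, equivalently for the left one, by \lemref{l:mon Hecke}.

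The second step unwinds the spec.fin condition. By definition, together with \thmref{t:restr main}(a'), an object of $\ul\Shv(G)(Y)$ is spec.fin exactly when its image under $\binv_N$ on the right, i.e.\ its pull-push to $\Shv(Y\times G/N)$, is $T$-monodromic. For a $\Delta(G)$-equivariant object we then pass further to the quotient by the left $N$ and by the diagonal $T$. Transitivity of the $G$-action identifies $\Delta(G)$-invariants of $\ul\Shv(G/N)$ with $\ul\Shv(B\backslash G/N)$, which is the same as $\ul\Shv((N\backslash G/N)/\on{Ad}(T))$. Under this identification the functor from $\binv_{\Delta G}$ of the $N$-invariants becomes $\on{HC}$ of diagram \eqref{e:HCh diag}. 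So the condition reads: $\on{HC}(\CF)$ is monodromic for the right $T$-action. By \lemref{l:mon Hecke}, right-$T$-monodromic is the same as $(T\times T)$-monodromic, which is exactly the definition of $\Shv^{\on{ch}}$.

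The main obstacle is making the identification in the second step precise. One has to check that ``$\binv_{\Delta(G)}$ of a spec.fin object'' coincides with ``objects whose $\on{HC}$ is monodromic'', i.e.\ that taking $(\bemb^{\on{spec.fin}})^R$ commutes with $\binv_{\Delta(G)}$ and with the passage to $N$-invariants. For the former, the plan is to use Beck--Chevalley: \thmref{t:restr main}(a') together with the fact that $(\bemb^{\on{spec.fin}})^R$ is $G\times G$-linear, and then compare with \propref{p:emb ch R}, diagram \eqref{e:right adj to ch}. Concretely, we would show that the comonad $\on{emb}^{\on{ch}}\circ(\on{emb}^{\on{ch}})^R$ and the $\Delta(G)$-invariants of the spec.fin comonad agree after applying $\on{HC}$; both then become $(\on{emb}^{\on{mon}})^R$, and conservativity of $\on{HC}$ concludes.
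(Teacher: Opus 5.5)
Your Step 1 is fine, but Step 2 is not correct as stated. Its justification (``by definition, together with \thmref{t:restr main}(a')'') gives only one direction. It is false that an arbitrary $\CF\in\Shv(Y\times G)$ is spec.fin as soon as its right $N$-average on $Y\times G/N$ is $T$-monodromic. Right $N$-averaging is not conservative, and this condition is not even stable under right translations, whereas $\ul\Shv(G)^{\on{spec.fin}}$ is a sub-object in $(G\times G)\mmod$.

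Here is a counterexample. Let $\CG\neq 0$ be a sheaf on $N$ with $\on{C}^\cdot(N,\CG)=0$ (say, an Artin--Schreier sheaf), and let $\CF$ be its extension by zero to $G$. The right $N$-average of $\CF$ vanishes. Now translate $\CF$ on the right by a lift $\dot w_0$ of the longest Weyl element. The average of this translate is $\CG$ transported to the cell $N\dot w_0N/N$, which is not stable under the right $T$-action. Spec.fin objects are stable under right translation and have monodromic averages, so $\CF$ is not spec.fin.

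What \thmref{t:restr main}(a') actually gives is that averaging intertwines the two colocalizations. Put $E:=\on{emb}^{\on{spec.fin}}\circ(\on{emb}^{\on{spec.fin}})^R$. For $\CF$ with monodromic average, the counit $E(\CF)\to\CF$ becomes an isomorphism after averaging. To conclude you need conservativity, which fails in general but holds on $\Delta(G)$-equivariant objects by \corref{c:A is cons}.

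The identifications in Step 2 are also misstated. After taking right $N$-invariants, no $\Delta(G)$-action is left. Moreover, $B\backslash G/N$ (quotient by the left $T$) is not $(N\backslash G/N)/\on{Ad}(T)$ (quotient by the diagonal $T$); for $G=T$ these are $\on{pt}$ and $T\times\on{pt}/T$. The correct statement is $\on{HC}\simeq\Av_*\circ\oblv$, with $\oblv$ from $\Delta(G)$- to $\Delta(B)$-equivariance and $\Av_*$ from $\Delta(B)$- to $(N\times N)\cdot\Delta(T)$-equivariance. Equivalently, for $\Delta(G)$-equivariant $\CF$, the pullback of $\on{HC}(\CF)$ to $Y\times G/N$ is the right $N$-average of $\CF$.

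Your last paragraph supplies exactly the missing ingredient and gives a complete proof that bypasses the general claim of Step 2. Since $E$ is a 1-morphism in $(G\times G)\mmod$, it commutes with $\oblv$ and $\Av_*$ between invariants for subgroups of $G\times G$. Hence $\on{HC}\circ\binv_{\Delta(G)}(E)\simeq\binv_{(N\times N)\cdot\Delta(T)}(E)\circ\on{HC}$, compatibly with counits. By \thmref{t:restr main}(a') applied on one side, together with \lemref{l:mon Hecke}, $\binv_{N\times N}(E)$ is the monodromic colocalization of $\ul\Shv(N\backslash G/N)$. Conservativity of $\on{HC}$ then shows that $\CF$ lies in the image of \eqref{e:emb char sheaves via spec. fin} if and only if $\on{HC}(\CF)$ is monodromic. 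That is the definition of $\Shv^{\on{ch}}$, so \propref{p:emb ch R} is not needed.

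This is a genuinely different route from the paper's. The paper uses \thmref{t:G mod via Hecke} to exhibit $\ul\Shv(G)^{\on{spec.fin}}$ as the sub-object generated by the image of $\ul\Shv(G/N)^{T\on{-mon}}\underset{\ul\Shv(T)}\otimes\ul\Shv(N\backslash G)^{T\on{-mon}}$, formula \eqref{e:emb spec fin sheaves ident}. It then applies $\binv_{\Delta(G)}\simeq\bcoinv_{\Delta(G)}$ to see that the image is generated by $\on{CH}$ of monodromic objects, and concludes by \corref{c:HCh gen}.

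The paper's route yields an explicit generating description of $\ul\Shv(G)^{\on{spec.fin}}$. Yours tests membership directly against the defining condition. It also identifies $\binv_{\Delta(G)}(E)$ with the colocalization onto character sheaves, recovering the analog of \eqref{e:right adj to ch}. Both ultimately rest on the conservativity of $\on{HC}$ (the Springer sheaf contains $\delta_{1,G}$), which in the paper enters through \corref{c:HCh gen}.
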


The rest of this subsection is devoted to the proof of \propref{p:emb char sheaves via spec. fin}. In the process,
we will describe the subobject \eqref{e:emb spec fin sheaves} more explicitly, see formula \eqref{e:emb spec fin sheaves ident}.

\sssec{}

Note that by \thmref{t:G mod via Hecke}, we have
$$\ul\Shv(G)\simeq \ul\Shv(G/N)\underset{\ul\CH}\otimes \ul\Shv(N\backslash G)$$
as objects of $(G\times G)\mmod$.

\medskip

Under this identification, we have
$$\ul\Shv(G)^{\on{spec.fin}}\simeq \ul\Shv(G/N)^{T\on{-mon}}\underset{\ul\CH}\otimes \ul\Shv(N\backslash G)^{T\on{-mon}},$$
which we can also rewrite as
$$\ul\Shv(G/N)^{T\on{-mon}}\underset{\ul\CH^{\on{mon}}}\otimes \ul\Shv(N\backslash G)^{T\on{-mon}},$$
since the $\ul\CH$-action on $\ul\Shv(G/N)^{T\on{-mon}}$ factors through the colocalization
$$\ul\CH\twoheadrightarrow \ul\CH^{\on{mon}}.$$

\sssec{}

In particular, we obtain that $\ul\Shv(G)^{\on{spec.fin}}$ is the subobject of $\ul\Shv(G)$ generated by the essential image of
\begin{multline*}
\ul\Shv(G/N)^{T\on{-mon}}\otimes \ul\Shv(N\backslash G)^{T\on{-mon}}\to
\ul\Shv(G/N)^{T\on{-mon}}\underset{\ul\CH}\otimes \ul\Shv(N\backslash G)^{T\on{-mon}}\hookrightarrow \\
\to \ul\Shv(G/N)\underset{\ul\CH}\otimes \ul\Shv(N\backslash G)\to \ul\Shv(G).
 \end{multline*}

We rewrite the latter functor as
\begin{multline*}
\ul\Shv(G/N)^{T\on{-mon}}\otimes \ul\Shv(N\backslash G)^{T\on{-mon}} \to
\ul\Shv(G/N)^{T\on{-mon}}\underset{\ul\Shv(T)}\otimes \ul\Shv(N\backslash G)^{T\on{-mon}} \to \\
\to \ul\Shv(G/N)\underset{\ul\Shv(T)}\otimes \ul\Shv(N\backslash G)\to
\ul\Shv(G/N)\underset{\ul\CH}\otimes \ul\Shv(G/N)\simeq \ul\Shv(G)
\end{multline*}

\sssec{}

Note that the essential image of the functor
$$\ul\Shv(G/N)^{T\on{-mon}}\otimes \ul\Shv(G/N)^{T\on{-mon}} \to \ul\Shv(G/N)^{T\on{-mon}}\underset{\ul\Shv(T)}\otimes \ul\Shv(N\backslash G)^{T\on{-mon}}$$
generates the target (when evaluated on every scheme).

\medskip

Hence, we can further rewrite $\ul\Shv(G)^{\on{spec.fin}}\subset \ul\Shv(G)$ as the essential image of
\begin{equation} \label{e:emb spec fin sheaves ident}
\ul\Shv(G/N)^{T\on{-mon}}\underset{\ul\Shv(T)}\otimes \ul\Shv(N\backslash G)^{T\on{-mon}}
\to \ul\Shv(G/N)\underset{\ul\Shv(T)}\otimes \ul\Shv(N\backslash G)\to \ul\Shv(G).
\end{equation}

This is the promised explicit description of the subcategory \eqref{e:emb spec fin sheaves}.

\sssec{}

Applying $\binv_{\Delta(G)}\simeq \bcoinv_{\Delta(G)}$ to \eqref{e:emb spec fin sheaves ident}
we obtain that the essential image of \eqref{e:emb char sheaves via spec. fin} is the essential image of
$$\ul\Shv((N\backslash G/N)/\on{Ad}(T))^{T\on{-mon}} \subset \ul\Shv((N\backslash G/N)/\on{Ad}(T)) \to
\ul\Shv(G/\on{Ad}(G)),$$
where the second arrow is the functor $\on{CH}$.

\qed[\propref{p:emb char sheaves via spec. fin}]

\ssec{Character sheaves as trace} \label{ss:char shv as Tr}

\sssec{}

Consider again the 1-morphism
$$\bemb^{\on{spec.fin}}:G\mmod^{\on{spec.fin}}\to G\mmod.$$

It is adjointable, and hence induces a 1-morphism
\begin{equation} \label{e:Tr of incl}
\Tr(\on{Id},G\mmod^{\on{spec.fin}})\to \Tr(\on{Id},G\mmod)\simeq \ul\Shv(G/\on{Ad}(G))
\end{equation}
in $\AGCat$.

\sssec{}

We claim:

\begin{thm} \label{t:ch sheaves}
The 1-morphism \eqref{e:Tr of incl} is fully faithful, and its essential image
identifies with $\ul\Shv^{\on{ch}}(G/\on{Ad}(G))\hookrightarrow \ul\Shv(G/\on{Ad}(G))$.
\end{thm}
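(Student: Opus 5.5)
The plan is to combine \corref{c:Tr ff} (traces of fully faithful 2-adjointable 1-morphisms are fully faithful) with \propref{p:emb char sheaves via spec. fin} (which identifies character sheaves as $\Delta(G)$-invariants of $\ul\Shv(G)^{\on{spec.fin}}$).

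\textbf{Full faithfulness.} By \thmref{t:restr main}(a), the 1-morphism $\bemb^{\on{spec.fin}}$ is 2-adjointable. It is fully faithful, since the unit $\on{Id}\to (\bemb^{\on{spec.fin}})^R\circ \bemb^{\on{spec.fin}}$ is an isomorphism. We take $F_1=\on{Id}$, $F_2=\on{Id}$ and $\alpha=\on{id}$. Then $\alpha$ and $\alpha^{\on{BC}}$ are isomorphisms, so we are in the setting of \secref{sss:BC adj}. \corref{c:Tr ff} then says that \eqref{e:Tr of incl} is fully faithful, with right adjoint $\Tr(\on{id},(\bemb^{\on{spec.fin}})^R)$.

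\textbf{Essential image.} We compute both traces concretely, as in the proof of \propref{p:Tr on categ rep}. The identity of $G\mmod^{\on{spec.fin}}$ is represented by the bimodule $\ul\Shv(G)^{\on{spec.fin}}$. This holds because the unit of the self-duality of $G\mmod^{\on{spec.fin}}$ is $((\bemb^{\on{spec.fin}})^R\otimes \on{Id})(\on{unit}_{G\mmod})$, as recorded in \secref{ss:inherited}. Next, write $G\mmod^{\on{spec.fin}}\simeq \ul\CH^{\on{mon}}\mmod(\AGCat)$. Then the trace is the Hochschild-type coend of this bimodule. Using that $\bemb^{\on{spec.fin}}$ is realized by restriction along $\ul\CH\to \ul\CH^{\on{mon}}$, I would identify
$$\Tr(\on{Id},G\mmod^{\on{spec.fin}})\simeq \bcoinv_{\Delta(G)}(\ul\Shv(G)^{\on{spec.fin}})\simeq \binv_{\Delta(G)}(\ul\Shv(G)^{\on{spec.fin}}),$$
where the second isomorphism is \propref{p:coinv to inv}.

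Under this identification, the map \eqref{e:Tr of incl} should become \eqref{e:emb char sheaves via spec. fin}, i.e.\ the map induced by $\on{emb}^{\on{spec.fin}}$. To see this, unwind \eqref{e:2-cat Tr} for $t=\bemb^{\on{spec.fin}}$. The composite $t\circ t^R$ is represented by the bimodule $\ul\Shv(G)^{\on{spec.fin}}$ inside $\ul\Shv(G)$, and the counit is represented by the embedding $\on{emb}^{\on{spec.fin}}$. The remaining steps are the cyclicity isomorphism and the unit, which is an isomorphism here. With this identification, \propref{p:emb char sheaves via spec. fin} gives that the essential image is exactly $\ul\Shv^{\on{ch}}(G/\on{Ad}(G))$.

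The main obstacle is this compatibility. One has to check that the abstract 2-categorical trace of $\bemb^{\on{spec.fin}}$, built from the unit and counit of self-duality and the adjunction data, really matches the concrete map on $\Delta(G)$-invariants. I would try to avoid heavy unwinding by a shortcut: apply \propref{p:map of traces stacks}-style reasoning to the bimodules $\ul\Shv(G)^{\on{spec.fin}}\to \ul\Shv(G)$ viewed in $(G\times G)\mmod$. The key point is that traces of endofunctors of $G\mmod$ given by $(G\times G)$-bimodules $\bM$ are functorially $\bcoinv_{\Delta(G)}(\bM)$. A map of bimodules, i.e.\ a 2-morphism between endofunctors, then induces the corresponding map on coinvariants via $\Tr(\alpha,\on{Id})$. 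Finally, one expresses $\Tr(\on{id},\bemb^{\on{spec.fin}})$ through cyclicity as $\Tr(\on{Id},G\mmod^{\on{spec.fin}})\simeq\Tr(\bemb\circ\bemb^R,G\mmod)\to\Tr(\on{Id},G\mmod)$, where the last arrow is induced by the counit bimodule map.
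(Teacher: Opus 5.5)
Your proposal is correct. The full faithfulness step is the paper's (via \corref{c:Tr ff}), but you identify the essential image by a different route.

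\textbf{The paper's route.} It works in the Hecke model. It writes $\Tr(\on{Id},G\mmod^{\on{spec.fin}})$ as the Hochschild homology of $\ul\CH^{\on{mon}}$ and uses that the image of $\ul\CH^{\on{mon}}$ generates it value-wise. It then computes the ``Hochschild class'' map $\ul\CH\to \Tr(\on{Id},\ul\CH\mmod(\AGCat))\simeq \ul\Shv(G/\on{Ad}(G))$ as $\on{CH}\circ \Av^{\on{Ad}(T)}_*$ (\propref{p:B as class}, obtained from \propref{p:map of traces stacks} applied to \eqref{e:A diag}). It concludes with \corref{c:HCh gen}.

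\textbf{Your route.} You stay in $G\mmod$. By \eqref{e:2-cat Tr}, and since the unit is invertible, $\Tr(\on{id},\bemb^{\on{spec.fin}})$ is the composite of an equivalence
$\Tr(\on{Id},G\mmod^{\on{spec.fin}})\simeq \Tr(\bemb^{\on{spec.fin}}\circ(\bemb^{\on{spec.fin}})^R,G\mmod)\simeq \bcoinv_{\Delta(G)}(\ul\Shv(G)^{\on{spec.fin}})$
with the map induced by the counit bimodule map $\on{emb}^{\on{spec.fin}}$. After \propref{p:coinv to inv}, that map is \eqref{e:emb char sheaves via spec. fin}, whose image is computed in \propref{p:emb char sheaves via spec. fin}.

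Your last paragraph is the right justification. The ``main obstacle'' you flag is immediate from \eqref{e:2-cat Tr} together with the naturality of \eqref{e:Tr for Hopf} in the bimodule. Your earlier appeal to a coend in the $\ul\CH^{\on{mon}}$-model is unnecessary and slightly off: that coend is of $\ul\CH^{\on{mon}}$, not of $\ul\Shv(G)^{\on{spec.fin}}$.

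\textbf{Comparison.} Your route is shorter and more conceptual: the trace of a colocalization is the coinvariants of the colocalized identity bimodule. It also re-proves full faithfulness, since \eqref{e:emb char sheaves via spec. fin} is fully faithful. The underlying geometric input is the same, because the proof of \propref{p:emb char sheaves via spec. fin} itself rests on \thmref{t:G mod via Hecke} and \corref{c:HCh gen}. The paper's route buys the explicit formula of \propref{p:B as class}. That Hecke-algebra pattern is then reused, with $\Frob$ in place of $\on{Id}$, in the proof of \thmref{t:Frob restr}.
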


\begin{rem}

\thmref{t:ch sheaves} is due to D. Ben-Zvi and D.~Nadler, see \cite{BN2}. The proof presented below
is a mild variation on their argument.

\end{rem}

\sssec{}

As an immediate consequence of \thmref{t:ch sheaves} (combined with \propref{p:ch is restr}), we obtain:

\begin{cor} \label{c:Tr id restr}
There exists a canonical equivalence
$$\Tr(\on{Id},G\mmod^{\on{restr}})\simeq \Shv^{\on{ch}}(G/\on{Ad}(G)),$$
whose image under $\bi$ produces the equivalence
$$\Tr(\on{Id},G\mmod^{\on{spec.fin}})\simeq \ul\Shv^{\on{ch}}(G/\on{Ad}(G))$$
of \thmref{t:ch sheaves}.
\end{cor}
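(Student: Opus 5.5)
The plan is to deduce the corollary from \thmref{t:ch sheaves} by descending along $\bi$. The three ingredients are \thmref{t:restr main}(b), the functoriality of traces under symmetric monoidal functors, and \propref{p:ch is restr}.

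\medskip

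\noindent\emph{Step 1: compare the two traces.} Consider the symmetric monoidal functor of 3-categories
$$\AGCat\underset{\DGCat}\otimes (-):\tDGCat\to \tAGCat$$
from \eqref{e:DGCat  AGCat adj}. On $\Omega(-)$ this functor induces
$$\Omega(\tDGCat)\simeq \DGCat\to \AGCat\simeq \Omega(\tAGCat),$$
and I will check that this induced functor is $\bi$. Indeed, it sends $\bC\in \DGCat$, viewed as an endomorphism of the unit $\DGCat$, to $\AGCat\otimes_{\DGCat}\bC$, which as an endomorphism of $\AGCat$ is $\bi(\bC)$.

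\medskip

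By \secref{ss:inherited}, $G\mmod^{\on{restr}}\simeq \CH^{\on{mon}}\mmod(\DGCat)$ is dualizable in $\tDGCat$. A symmetric monoidal functor preserves dualizable objects, identity endomorphisms and the trace construction \eqref{e:trace functor expl}. Combining this with \thmref{t:restr main}(b), i.e.\ $\AGCat\otimes_{\DGCat} G\mmod^{\on{restr}}\simeq G\mmod^{\on{spec.fin}}$, gives a canonical equivalence in $\AGCat$:
$$\bi\bigl(\Tr(\on{Id},G\mmod^{\on{restr}})\bigr)\simeq \Tr(\on{Id},G\mmod^{\on{spec.fin}}).$$

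\medskip

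\noindent\emph{Step 2: compose with the known identifications.} Next I compose the equivalence of Step 1 with two further equivalences: the equivalence $\Tr(\on{Id},G\mmod^{\on{spec.fin}})\simeq \ul\Shv^{\on{ch}}(G/\on{Ad}(G))$ of \thmref{t:ch sheaves}, and the inverse of \eqref{e:ch is restr} from \propref{p:ch is restr}. The result is an equivalence
$$\Phi:\bi\bigl(\Tr(\on{Id},G\mmod^{\on{restr}})\bigr)\overset{\sim}\to \bi\bigl(\Shv^{\on{ch}}(G/\on{Ad}(G))\bigr)$$
in $\AGCat$.

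\medskip

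\noindent\emph{Step 3: descend along $\bi$.} The functor $\bi:\DGCat\to\AGCat$ is fully faithful as a functor of 2-categories: it induces equivalences on categories of 1-morphisms (see \cite[Sect. 2.4.1]{GRV}). Hence $\Phi$ and its inverse, together with the 2-isomorphisms exhibiting them as mutually inverse, are the images under $\bi$ of unique data in $\DGCat$. This gives a unique equivalence
$$\Tr(\on{Id},G\mmod^{\on{restr}})\simeq \Shv^{\on{ch}}(G/\on{Ad}(G))$$
with $\bi$ of it equal to $\Phi$. By construction, $\Phi$ composed with \eqref{e:ch is restr} is the equivalence of \thmref{t:ch sheaves}, after identifying $\bi(\Tr(\on{Id},G\mmod^{\on{restr}}))$ with $\Tr(\on{Id},G\mmod^{\on{spec.fin}})$ via Step 1. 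This is exactly the compatibility asserted in the corollary.

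\medskip

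\noindent\emph{Main obstacle.} The step I expect to require the most care is Step 1. One must verify that the induced functor on $\Omega$ is $\bi$, and that the compatibility of traces with the symmetric monoidal functor is canonical, i.e.\ that duality data for $G\mmod^{\on{restr}}$ map to duality data for $G\mmod^{\on{spec.fin}}$. This requires that the self-duality of $G\mmod^{\on{spec.fin}}$ used in \thmref{t:ch sheaves} is the one tensored up from $G\mmod^{\on{restr}}$. I would check this using \corref{c:restr via Hecke} and \corref{c:restr via Hecke bis}: both self-dualities are induced by the anti-involution of $\CH^{\on{mon}}$, and the unit $\CH^{\on{mon}}\in (\CH^{\on{mon}}\otimes\CH^{\on{mon}})\mmod(\DGCat)$ is sent by $\bi$ to the unit $\ul\CH^{\on{mon}}$ for $G\mmod^{\on{spec.fin}}$.
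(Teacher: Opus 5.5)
Your proposal is correct and follows the paper's own route. The paper deduces the corollary directly from \thmref{t:ch sheaves} and \propref{p:ch is restr}, implicitly using $\Tr(\on{Id},\AGCat\underset{\DGCat}\otimes G\mmod^{\on{restr}})\simeq \bi(\Tr(\on{Id},G\mmod^{\on{restr}}))$ (spelled out in the remark after the corollary), \thmref{t:restr main}(b), and the full faithfulness of $\bi$; the worry in your last paragraph is unnecessary, since the trace of the identity does not depend on the chosen duality datum.
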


\begin{rem}

Note that \corref{c:Tr id restr} gives a different proof of \propref{p:ch is restr}:
we have
\begin{multline*}
\ul\Shv^{\on{ch}}(G/\on{Ad}(G))\simeq \Tr_{\AGCat}(\on{Id},G\mmod^{\on{spec.fin}})\simeq
\Tr_{\AGCat}(\on{Id},\AGCat\underset{\DGCat}\otimes G\mmod^{\on{restr}}) \simeq \\
\simeq \bi(\Tr_{\DGCat}(\on{Id},G\mmod^{\on{restr}}))\simeq \bi(\Shv^{\on{ch}}(G/\on{Ad}(G))).
\end{multline*}

\end{rem}

\sssec{}

The rest of this subsection is devoted to the proof of \thmref{t:ch sheaves}.

\medskip

The fact that \eqref{e:Tr of incl} is fully faithful follows immediately from \corref{c:Tr ff}.
Hence, it remains to identify its essential image.

\sssec{}

We have to identify the essential image of
\begin{multline*}
\ul\CH^{\on{mon}}\underset{\ul\CH^{\on{mon}}\otimes \ul\CH^{\on{mon}}}\otimes \ul\CH^{\on{mon}}\simeq
\Tr(\on{Id},\ul\CH^{\on{mon}}\mmod(\AGCat)) \to \\
\to \Tr(\on{Id},\ul\CH\mmod(\AGCat))\simeq \Tr(\on{Id},G\mmod)\simeq \ul\Shv(G/\on{Ad}(G)).
\end{multline*}

Note, however, that the essential image of
$$\ul\CH^{\on{mon}}\to \ul\CH^{\on{mon}}\underset{\ul\CH^{\on{mon}}\otimes \ul\CH^{\on{mon}}}\otimes \ul\CH^{\on{mon}}$$
value-wise generates the target category, so we have to show that the essential image of
\begin{multline*}
\ul\CH^{\on{mon}}\to \ul\CH^{\on{mon}}\underset{\ul\CH^{\on{mon}}\otimes \ul\CH^{\on{mon}}}\otimes \ul\CH^{\on{mon}}\simeq
\Tr(\on{Id},\ul\CH^{\on{mon}}\mmod(\AGCat)) \to \\
\to \Tr(\on{Id},\ul\CH\mmod(\AGCat))\simeq \Tr(\on{Id},G\mmod)\simeq \ul\Shv(G/\on{Ad}(G))
\end{multline*}
value-wise generates $\ul\Shv^{\on{ch}}(G/\on{Ad}(G))$.

\medskip

Note that we have a commutative diagram
$$
\CD
\ul\Shv(N\backslash G/N)^{\on{mon}}  @>>> \ul\Shv(N\backslash G/N)  \\
@V{=}VV @VV{=}V \\
 \ul\CH^{\on{mon}} @>>>   \ul\CH \\
@VVV @VVV \\
 \Tr(\on{Id},\ul\CH^{\on{mon}}\mmod) @>>>  \Tr(\on{Id},\ul\CH\mmod)  \\
 & & @VVV \\
 & & \Tr(\on{Id},G\mmod) \\
 & & @VVV \\
 & & \ul\Shv(G/\on{Ad}(G)),
\endCD
$$
where the third horizontal arrow is fully faithful.

\medskip

Hence, the required assertion follows from the combination of \corref{c:HCh gen}, the fact that the functor
$$\ul\Shv(N\backslash G/N)  \overset{\Av^{\on{Ad}(T)}_*}\longrightarrow \ul\Shv((N\backslash G/N)/\on{Ad}(T))$$
value-wise generates the essential image and the next statement:
\begin{prop} \label{p:B as class}
The composition
$$\ul\Shv(N\backslash G/N) =  \ul\CH \to \Tr(\on{Id},\ul\CH\mmod(\AGCat)) \simeq \Tr(\on{Id},G\mmod) \simeq \ul\Shv(G/\on{Ad}(G))$$
identifies with
\begin{equation} \label{e:B as class}
\ul\Shv(N\backslash G/N)  \overset{\Av^{\on{Ad}(T)}_*}\longrightarrow \ul\Shv((N\backslash G/N)/\on{Ad}(T)) \overset{\on{CH}}\to
 \ul\Shv(G/\on{Ad}(G)).
 \end{equation}
 \end{prop}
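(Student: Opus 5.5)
The plan is to view the map $\ul\CH\to \Tr(\on{Id},\ul\CH\mmod(\AGCat))$ as the trace class of a 2-categorical datum. Then, using the equivalence of \thmref{t:G mod via Hecke}, I would transport that datum to $G\mmod$ and compute it geometrically with \propref{p:map of traces stacks}. An object $h\in\ul\CH=\ul\End_{G\mmod,\AGCat}(\ul\Shv(G/N))$ can be packaged as the 1-morphism $\ul\Shv(G/N):\AGCat\to G\mmod$, which is adjointable because $G/N$ is AG tame, together with the 2-morphism given by the action of $h$, exactly as in \secref{sss:class of obj} with $\phi=\on{id}$. For algebras in general, the map $A\to A\underset{A\otimes A^{\on{rev}}}\otimes A\simeq \Tr(\on{Id},A\mmod)$ is precisely $a\mapsto \Tr(a,A)$, i.e.\ the class of the free module $A$ with its endomorphism given by right multiplication by $a$. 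So the first step is to identify the composition in the statement with $h\mapsto \on{cl}(\ul\Shv(G/N),h)$, viewed as a 1-morphism $\ul\CH\to\ul\Shv(G/\on{Ad}(G))$ in $\AGCat$. This should be a formal consequence of the functoriality of the 2-categorical trace (\secref{sss:L for 2}) applied to the free-module 1-morphism $\AGCat\to\ul\CH\mmod$.

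The second step is the geometric computation. I would apply \propref{p:map of traces stacks} with $G_1=1$, $G_2=G$, $\CY=G/N$, $\CX_1=\on{pt}$ and $\CX_2=G$. The 2-morphism is given by a kernel on $N\backslash G/N$, i.e.\ by pull-push along $G\overset{N}\times G/N\to G/N$ twisted by $h$. It is convenient to do this universally in $h$: take $\CZ=G/N\overset{N}\times (N\backslash G)$, or more precisely the correspondence with $N\backslash G/N$. The stack $\CW/G$ produced by the proposition should then identify with $\{(g,x)\in G\times G/N : g^{-1}x\in xN\cdot\text{(the point of }N\backslash G/N)\}/G$. Equivalently, this is $G\overset{B}\times$-type data, that is $(N\backslash G/N)/\on{Ad}(T)$-correspondence stacks: the stack $\{(g,x)\}/G\simeq G/\on{Ad}(N)$, which maps to $N\backslash G/N$ and to $G/\on{Ad}(G)$.

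The third step is to factor the correspondence $N\backslash G/N\leftarrow G/\on{Ad}(N)\to G/\on{Ad}(G)$ through $G/\on{Ad}(B)$. Concretely, $G/\on{Ad}(N)\to G/\on{Ad}(B)$ is a $T$-torsor, and pull-push along it realizes $\Av^{\on{Ad}(T)}_*$ after base change with the smooth map $G/\on{Ad}(B)\to (N\backslash G/N)/\on{Ad}(T)$. The remaining proper map $G/\on{Ad}(B)\to G/\on{Ad}(G)$ together with the smooth map gives $\on{CH}$, which is the left adjoint of $\on{HC}$ from \secref{sss:HCh av}. I expect the main obstacle to be matching pushforward and pullback types. \propref{p:map of traces stacks} produces !-pull/$\blacktriangle$-push, whereas $\on{CH}$ is *-pull/!-push up to shift. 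I would resolve this using properness (so $!$-push $=$ $*$-push) and smoothness (so $*$-pull $=$ $!$-pull up to shift), and then check that the shifts cancel against the $T$-averaging normalization, possibly invoking \corref{c:!-Av mon}(b).
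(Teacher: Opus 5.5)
Your proposal is essentially the paper's proof. The paper likewise realizes $\ul\CH\to\Tr(\on{Id},\ul\CH\mmod(\AGCat))$ as the trace of a universal square: its horizontal arrows are the free-module functor (i.e.\ $\ul\Shv(G/N)\otimes(-)$ after passing to $G\mmod$), its left vertical arrow is $\ul\CH\otimes(-)$ on $\AGCat$, and its 2-morphism is the convolution action $\ul\Shv(G/N)\otimes\ul\Shv(N\backslash G/N)\to\ul\Shv(G/N)$. The paper then simply invokes \propref{p:map of traces stacks}. Your computation of the correspondence $N\backslash G/N\leftarrow G/\on{Ad}(N)\to G/\on{Ad}(G)$, and of its factorization through $G/\on{Ad}(B)$ by base change along the $T$-torsors, is exactly the unwinding the paper omits, and it is correct. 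Three small fixes: the universal form is necessary rather than merely convenient, since a 1-morphism out of the non-restricted object $\ul\Shv(N\backslash G/N)$ is not determined by its values over $\on{pt}$; the convolution stack is $\CZ=G\overset{N}\times G/N$; and the shifts need not cancel. The resulting $!$-pull/$*$-push functor agrees with $\on{CH}\circ\Av^{\on{Ad}(T)}_*$ only up to a cohomological shift, coming from $h^!\simeq h^*[2\dim N]$ for the smooth map $h:G/\on{Ad}(B)\to(N\backslash G/N)/\on{Ad}(T)$ and from the normalization of $\Av_*$. The paper does not track this shift, and it is harmless for the application in \thmref{t:ch sheaves}. \corref{c:!-Av mon}(b) plays no role here, since it concerns $!$- versus $*$-pushforward on monodromic objects only.
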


The rest of this subsection is devoted to the proof of \propref{p:B as class}.

\qed[\thmref{t:ch sheaves}]

\sssec{}

For an associative algebra $A$ (in a symmetric monoidal category $\bO$), the map
\begin{equation} \label{e:A map Hoch}
A\to A\underset{A\otimes A}\otimes A\simeq \on{Tr}(\on{Id},A\mod(\bO))
\end{equation}
can be interpreted as follows:

\medskip

Consider the diagram
\begin{equation} \label{e:A diag}
\xy
(0,0)*+{\bO}="A";
(40,0)*+{A\mod(\bO)}="C";
(0,-20)*+{\bO}="B";
(40,-20)*+{A\mod(\bO),}="D";
{\ar@{->}_{A\otimes (-)} "A";"B"};
{\ar@{->}^{A\otimes (-)} "A";"C"};
{\ar@{->}_{A\otimes (-)} "B";"D"};
{\ar@{->}^{\on{Id}} "C";"D"};
{\ar@{=>}^{\alpha} "B";"C"};
\endxy
\end{equation} 
where $\alpha$ is the map of $A$-modules
$$A\otimes \oblv_{A}(A) \to A,$$
given by right multiplication.

\medskip

We regard \eqref{e:A diag} as an instance of \eqref{e:basic diag} (in $\bL(\bO\mmod)$), and the map \eqref{e:A diag}
is obtained by applying to it the trace construction. 

\sssec{}

Hence, the composition
$$\ul\Shv(N\backslash G/N) =  \ul\CH \to \Tr(\on{Id},\ul\CH\mmod(\AGCat)) \simeq \Tr(\on{Id},G\mmod)$$
is given by
$$
\xy
(0,0)*+{\AGCat}="A";
(40,0)*+{G\mmod}="C";
(0,-20)*+{\AGCat}="B";
(40,-20)*+{G\mmod,}="D";
{\ar@{->}_{\ul\Shv(N\backslash G/N)\otimes (-)} "A";"B"};
{\ar@{->}^{\ul\Shv(N\backslash G/N)\otimes (-)} "A";"C"};
{\ar@{->}_{\ul\Shv(N\backslash G/N)\otimes (-)} "B";"D"};
{\ar@{->}^{\on{Id}} "C";"D"};
{\ar@{=>}^{\alpha} "B";"C"};
\endxy
$$
where $\alpha$ is given by
$$\ul\Shv(G/N)\otimes  \ul\Shv(N\backslash G/N)\overset{\on{convolution}}\longrightarrow \ul\Shv(G/N).$$

\medskip

Now the required assertion follows from \propref{p:map of traces stacks}.

\qed[\propref{p:B as class}]

\ssec{Functional equation for the Grothendieck-Springer functor}

In this subsection we will encounter a new feature of the Grothendieck-Springer functor
(in the spectrally finite situation); namely that for a pair of parabolics $P_1$ and $P_2$,
there is a canonical isomorphism
$$\on{GrothSpr}^{\on{ch}}_{P_1}\overset{i_{P_1,P_2}}\longrightarrow \on{GrothSpr}^{\on{ch}}_{P_2}$$
as functors
$$\ul\Shv^{\on{ch}}(M/\on{Ad}(M))\to \ul\Shv^{\on{ch}}(G/\on{Ad}(G)).$$

The question of transitivity of these isomorphisms (for a triple of parabolics) will be discussed
in \secref{s:trans}.

\sssec{}

Let $P$ be a parabolic in $G$. Recall that the (adjointable) 1-morphism
$$\bind_P:M\mmod\to G\mmod$$
induces a map of traces of the identity 1-endomorphisms
$$\Tr(\on{Id},M\mmod)\to \Tr(\on{Id},G\mmod),$$
which identifies with the Grothendieck-Springer functor
$$\on{GrothSpr}_P:\ul\Shv(M/\on{Ad}(M))\to \ul\Shv(G/\on{Ad}(G)).$$

\medskip

Similarly, the functor $\bind_P^{\on{spec.fin}}$ induces a map
$$\Tr(\on{Id},M\mmod^{\on{spec.fin}})\to \Tr(\on{Id},G\mmod^{\on{spec.fin}}),$$
so that we have a commutative diagram
$$
\CD
\Tr(\on{Id},M\mmod) @>{\Tr(\on{Id},\bind_P)}>>  \Tr(\on{Id},G\mmod) \\
@AAA @AAA \\
\Tr(\on{Id},M\mmod^{\on{spec.fin}}) @>{\Tr(\on{Id},\bind^{\on{spec.fin}}_P)}>>  \Tr(\on{Id},G\mmod^{\on{spec.fin}}).
\endCD
$$

Hence, we obtain a functor
$$\on{GrothSpr}^{\on{ch}}_P: \ul\Shv^{\on{ch}}(M/\on{Ad}(M))\to \ul\Shv^{\on{ch}}(G/\on{Ad}(G))$$
so that the diagram
$$
\CD
\ul\Shv(M/\on{Ad}(M)) @>{\on{GrothSpr}_P}>> \ul\Shv(G/\on{Ad}(G)) \\
@AAA @AAA \\
\ul\Shv^{\on{ch}}(M/\on{Ad}(M)) @>>{\on{GrothSpr}^{\on{ch}}_P}> \ul\Shv^{\on{ch}}(G/\on{Ad}(G))
\endCD
$$
commutes.

\medskip

Evaluating on $\on{pt}$ we obtain a functor
$$\on{GrothSpr}^{\on{ch}}_P: \Shv^{\on{ch}}(M/\on{Ad}(M))\to \Shv^{\on{ch}}(G/\on{Ad}(G)).$$

\sssec{}

Let $P_1$ and $P_2$ be a pair of parabolics as in \secref{sss:intertwining functor}.
Recall that the resulting two functors
$$\on{GrothSpr}_{P_1},\on{GrothSpr}_{P_2}: \ul\Shv(M/\on{Ad}(M))\to \ul\Shv(G/\on{Ad}(G))$$
are in general \emph{not} isomorphic.

\medskip

However, we claim:

\begin{thmconstr} \label{t:intertw char}
The (iso)morphism
$$\bind_{P_1}^{\on{spec.fin}} \overset{I^{\on{spec.fin}}_{P_1,P_2}}\longrightarrow \bind_{P_2}^{\on{spec.fin}}$$
induces an isomorphism
$$\on{GrothSpr}^{\on{ch}}_{P_1}\overset{i_{P_1,P_2}}\longrightarrow \on{GrothSpr}^{\on{ch}}_{P_2}$$
as functors
$$\ul\Shv^{\on{ch}}(M/\on{Ad}(M))\to \ul\Shv^{\on{ch}}(G/\on{Ad}(G)).$$
\end{thmconstr}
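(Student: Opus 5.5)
The plan is to apply the 3-categorical functoriality of the trace from \secref{sss:higher funct} (the functor \eqref{e:3-cat Tr funct}) to the objects
$$\bind^{\on{spec.fin}}_{P_1},\ \bind^{\on{spec.fin}}_{P_2}:M\mmod^{\on{spec.fin}}\to G\mmod^{\on{spec.fin}}$$
of $\on{Arr}(\tAGCat)$, each equipped with the identity endomorphism $(\on{Id},\on{Id},\on{id})$. In the notation of \secref{sss:L for 3}, a 2-morphism in $\bL(\tAGCat)$ between these two 1-morphisms consists of:
\begin{itemize}
\item an \emph{adjointable} 2-morphism $\delta:\bind^{\on{spec.fin}}_{P_1}\to \bind^{\on{spec.fin}}_{P_2}$;
\item a 3-morphism $\gamma:\delta\circ\on{id}\Rrightarrow \on{id}\circ\delta$.
\end{itemize}
I take $\delta:=I^{\on{spec.fin}}_{P_1,P_2}$ and $\gamma$ the tautological identification. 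Adjointability of $\delta$ is exactly \propref{p:intertwiner spec finite}, since an isomorphism is adjointable, with inverse as adjoint. This produces
$$i_{P_1,P_2}:=\Tr(\gamma,\delta):\Tr(\on{id},\bind^{\on{spec.fin}}_{P_1})\to \Tr(\on{id},\bind^{\on{spec.fin}}_{P_2}).$$
Here both sides make sense because $\bind^{\on{spec.fin}}_{P_i}$ is adjointable (\corref{c:restr adj 2-adj}) and $M\mmod^{\on{spec.fin}}$ and $G\mmod^{\on{spec.fin}}$ are dualizable (\secref{ss:inherited}). By the definition of $\on{GrothSpr}^{\on{ch}}_{P}$ just given, together with \thmref{t:ch sheaves}, the two sides are $\on{GrothSpr}^{\on{ch}}_{P_1}$ and $\on{GrothSpr}^{\on{ch}}_{P_2}$.

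Next I show that $i_{P_1,P_2}$ is an isomorphism. Since \eqref{e:3-cat Tr funct} is a functor of 2-categories, it sends invertible 2-morphisms of $\bL(\tAGCat)$ to invertible ones. The datum $(\delta^{-1},\gamma^{-1})$, where $\gamma^{-1}$ is obtained by whiskering $\gamma$ with $\delta^{-1}$, is again a 2-morphism in $\bL(\tAGCat)$, because $\delta^{-1}=I^{\on{spec.fin},-1}_{P_1,P_2}$ is likewise an isomorphism and hence adjointable. Its composites with $(\delta,\gamma)$ in both orders are identities. Applying the trace therefore gives an inverse of $i_{P_1,P_2}$, using compatibility of $\Tr$ with compositions of 2-morphisms.

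The step I expect to need the most care is the identification of the endpoints. Concretely, the source and target of $\Tr(\gamma,\delta)$ must match the restrictions of $\on{GrothSpr}_{P_i}$ to character sheaves, compatibly with the fully faithful embedding \eqref{e:Tr of incl}. To check this I would use naturality of the trace with respect to the adjointable 1-morphisms $\bemb^{\on{spec.fin}}$, together with the spec.fin-adaptedness of $\bind_{P_i}$ from \secref{sss:ind is comp}. That adaptedness gives a commuting square $\bind_{P_i}\circ\bemb^{\on{spec.fin}}_M\simeq \bemb^{\on{spec.fin}}_G\circ\bind^{\on{spec.fin}}_{P_i}$. The relevant Beck--Chevalley maps are invertible by \thmref{t:restr main}(a'), so the trace square commutes with \eqref{e:Tr of incl}. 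It is this compatibility that justifies calling the result a natural isomorphism between the functors $\on{GrothSpr}^{\on{ch}}_{P_i}$, rather than merely between abstract traces.
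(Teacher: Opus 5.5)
Your proposal is correct and matches the paper's argument: the paper's proof just invokes the 3-categorical functoriality of \secref{sss:higher funct}, with $\delta=I^{\on{spec.fin}}_{P_1,P_2}$ being adjointable (indeed invertible) by \propref{p:intertwiner spec finite}. Your last paragraph on compatibility with \eqref{e:Tr of incl} goes beyond what the statement needs, since the paper builds that compatibility into the definition of $\on{GrothSpr}^{\on{ch}}_P$ given just before the theorem.
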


\begin{proof}

Follows from \secref{sss:higher funct} using \propref{p:intertwiner spec finite}.

\end{proof}

\begin{rem}

In \secref{ss:intw expl} we will describe the isomorphism $i_{P_1,P_2}$ in
(somewhat) explicit terms.

\end{rem}

\sssec{}

Recall that we have a canonical isomorphism
$$\on{GrothSpr}_G\simeq \sigma_G\circ \on{GrothSpr}_G \circ \sigma_M,$$
where $\sigma_G$ and $\sigma_M$ denote the inversion automorphisms of $\ul\Shv(G/\on{Ad}(G))$ and
$\ul\Shv(M/\on{Ad}(M))$, respectively (see \eqref{e:Groth Spr and inversion}).

\medskip

Hence, we obtain an isomorphism
\begin{equation} \label{e:GrothSpr and inv ch}
\on{GrothSpr}^{\on{ch}}_P \simeq \sigma_G\circ  \on{GrothSpr}^{\on{ch}}_P \circ \sigma_M.
\end{equation}

Unwinding, we obtain:

\begin{lem} \label{l:intertwiner and inversion}
The diagram
$$
\CD
\on{GrothSpr}^{\on{ch}}_{P_1}  @>{i_{P_1,P_2}}>> \on{GrothSpr}^{\on{ch}}_{P_2}  \\
@V{\text{\eqref{e:GrothSpr and inv ch}}}VV @VV{\text{\eqref{e:GrothSpr and inv ch}}}V \\
\sigma_G\circ  \on{GrothSpr}^{\on{ch}}_{P_1} \circ \sigma_M @>{i_{P_1,P_2}}>> \sigma_G\circ  \on{GrothSpr}^{\on{ch}}_{P_2} \circ \sigma_M.
\endCD
$$
\end{lem}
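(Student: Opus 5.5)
The plan is to trace the isomorphism \eqref{e:GrothSpr and inv ch} back to its origin, which is the self-duality of the 3-categorical picture. We then check that the intertwiner $I_{P_1,P_2}$ is compatible with that self-duality, and conclude by functoriality of the trace under duality (\secref{sss:trace and duality}, extended to 2-morphisms via the functor \eqref{e:3-cat Tr funct}).

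Recall how \eqref{e:Groth Spr and inversion} was produced in \secref{sss:GrSpr and inversion}. We used the identifications $(G\mmod)^\vee\simeq G\mmod$ and $(M\mmod)^\vee\simeq M\mmod$, the isomorphism $\bind_P^\vee\simeq \bjacq_P$, and \eqref{e:Tr of dual} applied to the object $(M\mmod\overset{\bind_P}\to G\mmod)$ of $\on{Arr}(\tAGCat)$. Its dual is $(M\mmod\overset{(\bjacq_P)^\vee}\to G\mmod)$, which the self-dualities identify with $\bind_P$ again. The same applies verbatim to the spec.fin versions. Here we use that $G\mmod^{\on{spec.fin}}$ is self-dual compatibly with $\bemb^{\on{spec.fin}}$, since $(\bemb^{\on{spec.fin}})^\vee\simeq(\bemb^{\on{spec.fin}})^R$.

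Now $i_{P_1,P_2}$ is $\Tr$ of the 2-morphism $I^{\on{spec.fin}}_{P_1,P_2}$ in $\bL(\tAGCat)$, as in \secref{sss:higher funct}. So the first step is a 3-categorical analogue of \secref{sss:trace and duality}. Given a 2-morphism $\delta:t\Rightarrow t'$ in $\bL(\bO)$, the 3-morphism $\Tr(\delta)$ corresponds, under \eqref{e:Tr of dual}, to $\Tr$ of the dual 2-morphism. That dual is $(\delta^{R})^\vee:(t^R)^\vee\Rightarrow (t'^R)^\vee$, possibly with a Beck--Chevalley twist. This is formal: \eqref{e:Tr of dual} is itself induced by the symmetric monoidal functor \eqref{e:3-cat Tr funct} together with duality in $\bL(\bO)$.

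The second step is geometric: identify the dual of $I_{P_1,P_2}$. Transported through $\bind_{P_i}^\vee\simeq\bjacq_{P_i}$ and $\bjacq_{P_i}^\vee\simeq \bind_{P_i}$, this dual is again a 2-morphism $\bind_{P_1}\to\bind_{P_2}$. The claim to verify is that it equals $I_{P_1,P_2}$, in the spec.fin setting where everything is invertible by \propref{p:intertwiner spec finite}. Concretely, $I_{P_1,P_2}$ is pull-push along \eqref{e:intertw diagram}. Its right adjoint, then dualized via Verdier self-duality of $\ul\Shv(G/N(P_i))$ (which is composed with inversion to match $\sigma$), should again be pull-push along the same correspondence, with the roles of $r_1,r_2$ restored. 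This holds because the dual of $(r_2)_*\circ r_1^!$ is $(r_1)_*\circ r_2^!$ read in the opposite direction, and inverting the right adjoint swaps the direction back.

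The main obstacle is this second step. One must keep careful track of which adjoint (left or right, inverse or dual) appears, and of possible cohomological shifts. A cleaner route would be to reduce, as in the proof of \propref{p:intertwiner spec finite}, to the simple-reflection case and then to $SL_2$. There the required compatibility becomes the statement that the Radon transform $\on{Rad}_V$ is self-dual under the identification $V\simeq V^*$. Finally, the geometric isomorphism \eqref{e:Groth Spr and inversion bis} is compatible with \eqref{e:Groth Spr and inversion}, as already noted in the paper, so the diagram of the lemma commutes.
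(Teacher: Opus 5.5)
Your first step is sound. Under the identification \eqref{e:GrothSpr and inv ch}, which is the 3-categorical form of \secref{sss:trace and duality}, $\Tr$ of a 2-morphism $\delta:\bind_{P_1}\Rightarrow\bind_{P_2}$ matches $\Tr$ of a 2-morphism $\delta':\bjacq_{P_1}^\vee\Rightarrow\bjacq_{P_2}^\vee$. Here $\delta'$ is the dual of the mate $\bjacq_{P_1}\Rightarrow\bjacq_{P_2}$ of $\delta^R$, transported along $\bjacq_{P_i}^\vee\simeq\bind_{P_i}$.

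The gap is in your second step, the claim that $\delta'=\delta$ for $\delta=I^{\on{spec.fin}}_{P_1,P_2}$. Carry out your own computation: $\delta^R=\delta^{-1}$, and the Verdier dual of $(r_2)_\blacktriangle\circ r_1^!$ is $(r_1)_\blacktriangle\circ r_2^!=I_{P_2,P_1}$. Hence $\delta'=(I^{\on{spec.fin}}_{P_2,P_1})^{-1}$. This is exactly the factor on the dual side of the paper's explicit formula \eqref{e:intertwiner as compos}.

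Inverting does restore the direction of the arrow. But the inverse of $(r_1)_\blacktriangle\circ r_2^!$ is, value-wise, its left adjoint $(r_2)_!\circ r_1^*$, as in the proof of \thmref{t:2-Cat Serre} and the formula for $\wt{\on{Rad}}$ in \secref{sss:Radon}. It is not $(r_2)_\blacktriangle\circ r_1^!$. The two differ by the 2-automorphism $I_{P_2,P_1}\circ I_{P_1,P_2}$ of $\bind_{P_1}$, which is \emph{not} the identity (see the discussion of \eqref{e:square Intertw} and \thmref{t:2-Cat Serre}).

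Your $SL_2$ fallback tests the wrong identity for the same reason. Verdier self-duality of $\on{Rad}_V$ holds but is not what is needed. You would need $\on{Rad}_{V^*}\circ\on{Rad}_V\simeq\on{Id}$ on monodromic sheaves, and this fails: the pushforward of $\omega$ from a punctured line $L-0\subset V-0$ is sent to a sheaf with non-zero $!$-fibers off $L$.

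Consequently, the duality argument done correctly shows that \eqref{e:GrothSpr and inv ch} carries $i_{P_1,P_2}$ to $\sigma_G\circ i_{P_2,P_1}^{-1}\circ\sigma_M$. That is, it proves the commutativity of \eqref{e:diagram to prove}, not of the square in the lemma. Given \eqref{e:diagram to prove}, the lemma is equivalent to $i_{P_2,P_1}\circ i_{P_1,P_2}\simeq\on{id}$, i.e.\ to \thmref{t:involut}. That theorem says the non-trivial automorphism $I_{P_2,P_1}\circ I_{P_1,P_2}$ becomes trivial after applying $\Tr(\on{Id},-)$.

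No bookkeeping of adjoints, duals and shifts can produce that. You also cannot quote \thmref{t:involut}, because the paper deduces it from this very lemma; the lemma itself is recorded after ``Unwinding, we obtain'' with no further argument.

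What is missing is a genuinely trace-level argument that $\Tr(\on{Id},I_{P_2,P_1}\circ I_{P_1,P_2})$ is the identity of $\on{GrothSpr}^{\on{ch}}_{P_1}$. For $M=T$, the fixed-point comparison on $G^{\on{rs}}$ from the proof of \thmref{t:Springer action} gives this on the objects $\on{GrothSpr}_B(\chi)$. That does not cover all of $\ul\Shv^{\on{ch}}(T/\on{Ad}(T))$, let alone a general Levi.
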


\ssec{Functional equation on the regular semi-simple locus}

In this subsection, we will show that when $M=T$ is the Cartan subgroup, the functional
equation boils down to something familiar.

\sssec{} \label{sss:Springer GM}

We will now calculate the isomorphism of \thmref{t:intertw char} is a particular case. We take $P_1=B_1=B$
and $P_2=B_2$ be a Borel subgroup in relative position $w$ with respect to $B$ for some $w\in W$. Note
that we can then rewrite $\on{GrothSpr}^{\on{ch}}_{B_2}$ as
$$\on{GrothSpr}^{\on{ch}}_{B_1}\circ w^{-1},$$
where $w$ denotes the action of the Weyl group on $T$.

\medskip

We start with a 1-dimensional character sheaf $\chi\in \Shv(T)$, which we can regard as an object of
$\Shv(T/\on{Ad}(T))$ as the adjoint action of $T$ on itself is trivial.

\medskip

Consider the objects
\begin{equation} \label{e:GrothSpr and w}
\on{GrothSpr}_{B_1}(\chi) \text{ and } \on{GrothSpr}_{B_2}(\chi)
\end{equation}
of $\Shv^{\on{ch}}(G/\on{Ad}(G))\subset \Shv(G/\on{Ad}(G))$.

\medskip

It is well-known that both lie in $\Shv(G/\on{Ad}(G))^\heartsuit$ and are the Goresky-MacPherson
extensions of their restrictions to the regular semi-simple locus $G^{\on{rs}}/\on{Ad}(G)$.

\begin{lem} \label{l:Springer w}
The restrictions of the objects \eqref{e:GrothSpr and w} to $G^{\on{rs}}/\on{Ad}(G)$ are
canonically isomorphic.
\end{lem}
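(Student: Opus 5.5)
Over the regular semi-simple locus I will compute both sides of \eqref{e:GrothSpr and w} directly from the pull-push description of \propref{p:Springer functor}. Over $G^{\on{rs}}$ the Grothendieck-Springer diagram becomes a finite étale Galois cover. Set $\wt G^{\on{rs}}_{B_i}:=\{(g,xB_i)\,|\,x^{-1}gx\in B_i\}$ with $g\in G^{\on{rs}}$. The map $\sfp:B_i/\on{Ad}(B_i)\to G/\on{Ad}(G)$ restricted over $G^{\on{rs}}$ is identified with $\wt G^{\on{rs}}_{B_i}/\on{Ad}(G)\to G^{\on{rs}}/\on{Ad}(G)$. Since $B_i^{\on{rs}}/\on{Ad}(B_i)\simeq T^{\on{rs}}/\on{Ad}(T)$ and conjugation inside $B_i$ takes a regular semi-simple element to its image in $T$, both covers identify with
$$T^{\on{rs}}/\on{Ad}(T)\to G^{\on{rs}}/\on{Ad}(G),\qquad (T^{\on{rs}}\times \on{pt}/T)\to (T^{\on{rs}}\times\on{pt}/T)/W.$$
The two identifications differ by precomposition with the action of $w$ on $T$, coming from the relation $B_2=\dot w B\dot w^{-1}$ (up to the conventions of \secref{sss:Springer GM}). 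The map $\sfq$ becomes the projection to $T$ in both cases.

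The first step is to record this. Under the identifications just described, $\on{GrothSpr}_{B_1}(\chi)|_{G^{\on{rs}}}\simeq \pi_*(\chi|_{T^{\on{rs}}})$ and $\on{GrothSpr}_{B_2}(\chi)|_{G^{\on{rs}}}\simeq \pi_*(w^{*}\chi|_{T^{\on{rs}}})$, where $\pi:T^{\on{rs}}/\on{Ad}(T)\to G^{\on{rs}}/\on{Ad}(G)$ is the $W$-torsor. This is consistent with the rewriting $\on{GrothSpr}_{B_2}\simeq\on{GrothSpr}_{B_1}\circ w^{-1}$. I use smooth and proper base change along the open embedding $G^{\on{rs}}\subset G$, and the fact that $\sfp$ is proper, so that $\sfp_*=\sfp_!$.

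The second step is the canonical isomorphism. Since $\pi$ is a $W$-torsor, for any $\CF$ on $T^{\on{rs}}/\on{Ad}(T)$ there is a canonical isomorphism $\pi_*\CF\simeq \pi_*(w^*\CF)$, induced by the deck transformation $w$ (namely $\pi\circ w=\pi$). Applying it to $\CF=\chi|_{T^{\on{rs}}}$ gives the asserted isomorphism. Canonicity means independence of the lift $\dot w\in N(T)$. Two lifts differ by an element $t\in T$, which acts trivially on $T/\on{Ad}(T)$, and the resulting automorphism is the identity $2$-isomorphism: the inertia is $T$, and $t$ acts on $\chi$ through the $T$-equivariant structure, which is trivial because the adjoint action is trivial.

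The main obstacle is pinning down the identification of the $B_2$-cover with the $B_1$-cover compatibly with the maps to $T$. I must check that the Levi projections $B_1\to T$ and $B_2\to T$, both identified with the abstract Cartan through $B_1\cap B_2$, produce precisely the twist by $w$, and not by $w^{-1}$. In case of sign confusion, I would verify this in $SL_2$ on a single regular semi-simple fiber, where the two Borels containing a given torus give the two points of the fiber.
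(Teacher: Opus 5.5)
Your proposal is correct and follows essentially the same route as the paper: restrict to the regular semi-simple locus, where the Grothendieck--Springer map becomes a $W$-torsor, and use the deck transformation by $w$ to identify the two pushforwards of $\chi$. The only difference is packaging. The paper uses the honest $G$-equivariant $W$-action on the scheme $\widetilde{G}^{\on{rs}}$, which makes canonicity automatic. You instead work on the stack $T^{\on{rs}}/\on{Ad}(T)\simeq \widetilde{G}^{\on{rs}}/\on{Ad}(G)$ and check independence of the lift $\dot w$ by hand, which is valid because the inertia acts trivially on $\chi$.
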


\begin{proof}

This statement is well-known; we include the proof for completeness.

\medskip

By definition, for a Borel subgroup $B$, the object $\on{GrothSpr}_B(\chi)$ is push-pull of $\chi$ along the diagram
$$
\CD
\widetilde{G}/\on{Ad}(G) @>>> G/\on{Ad}(G) \\
@VVV \\
T/\on{Ad}(T),
\endCD
$$
where $\widetilde{G}\to G$ is the Grothendieck-Springer resolution.

\medskip

Let $\widetilde{G}^{\on{rs}}$ denote the preimage of the open $G^{\on{rs}}\subset G$ in $\widetilde{G}$.
Consider the resulting diagram:
$$
\CD
\widetilde{G}^{\on{rs}}/\on{Ad}(G) @>>> G^{\on{rs}}/\on{Ad}(G)\\
@VVV \\
T/\on{Ad}(T).
\endCD
$$

Now, it is known that the action of $W$ on $T$ can be lifted to one on $\widetilde{G}^{\on{rs}}$, so that the projection
$\widetilde{G}^{\on{rs}}\to G^{\on{rs}}$ is a $W$-torsor.

\medskip

Thus, the action of $w$ induces an isomorphism
$$\on{GrothSpr}_{B_1}(\chi)|_{G^{\on{rs}}/\on{Ad}(G)} \to \on{GrothSpr}_{B_2}(\chi)|_{G^{\on{rs}}/\on{Ad}(G)}.$$

\end{proof}

\sssec{}

We claim:

\begin{thm} \label{t:Springer action}
The isomorphism
$$\on{GrothSpr}_{B_1}(\chi) \overset{i_{B_1,B_2}}\simeq \on{GrothSpr}_{B_2}(\chi)$$
of \thmref{t:intertw char} equals the Goresky-MacPherson extension of the isomorphism
$$\on{GrothSpr}_{B_1}(\chi)|_{G^{\on{rs}}/\on{Ad}(G)} \simeq \on{GrothSpr}_{B_2}(\chi)|_{G^{\on{rs}}/\on{Ad}(G)}$$
of \lemref{l:Springer w}.
\end{thm}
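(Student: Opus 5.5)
Since both $\on{GrothSpr}_{B_1}(\chi)$ and $\on{GrothSpr}_{B_2}(\chi)$ lie in the heart and are Goresky-MacPherson extensions from $G^{\on{rs}}/\on{Ad}(G)$, the functor of restriction to the regular semi-simple locus is fully faithful on such objects. So it suffices to check that $i_{B_1,B_2}$, restricted to $G^{\on{rs}}/\on{Ad}(G)$, coincides with the isomorphism of \lemref{l:Springer w}. The plan is therefore to compute $i_{B_1,B_2}$ explicitly as a correspondence, using \propref{p:map of traces stacks} (in its 3-categorical form from \secref{sss:higher funct}), and then restrict to the regular semi-simple locus.

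The first step is to reduce to $w=s$ a simple reflection. When lengths add, $I_{B_1,B_3}\simeq I_{B_2,B_3}\circ I_{B_1,B_2}$, and taking traces gives $i_{B_1,B_3}\simeq i_{B_2,B_3}\circ i_{B_1,B_2}$. The $W$-torsor isomorphisms of \lemref{l:Springer w} are compatible with composition in $W$. Hence both sides are multiplicative along a reduced decomposition of $w$.

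The second step is to unwind the trace of the 2-morphism $I_{B_1,B_2}$, given by pull-push along $G/N_1 \leftarrow G/(N_1\cap N_2)\to G/N_2$. By \secref{sss:higher funct}, $\Tr(\gamma,\delta)$ is a map between the pull-push functors along $B_i/\on{Ad}(B_i)$. It is built from the correspondence $(B_1\cap B_2)/\on{Ad}(B_1\cap B_2)$ together with the unit and counit of the $(I,I^R)$ adjunction. These adjunction data exist because, on the monodromic part, $I^{\on{spec.fin}}$ is invertible by \propref{p:intertwiner spec finite}, with inverse the $!$-Radon transform. Over $G^{\on{rs}}$ all the maps $\wt G^{\on{rs}}_{B_i}\to G^{\on{rs}}$ are finite étale. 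The relevant correspondence $\{(g,B',B'') : g\in B'\cap B'',\ (B',B'')\ \text{in position}\ \le s\}$ restricted to $G^{\on{rs}}$ decomposes into two components: the diagonal one ($B'=B''$, relative position $1$) and the graph of the $W$-action by $s$ (position $s$). I would then show that the Radon-transform inverse kills the contribution of the diagonal component on the $\chi$-monodromic piece. This is where monodromicity enters, via the $SL_2$ computation of \secref{sss:Radon}: the $\BG_m$-fibers of the generic stratum have cohomology $\chi$-twisted and cancel. What remains is the graph component, which is exactly the isomorphism given by the lift of $s$.

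The main obstacle is this last bookkeeping. One must keep track of the adjunction units and counits hidden in $\Tr(\gamma,\delta)$, and check that no extra scalar or sign is introduced, so that the result is the torsor isomorphism on the nose. I would handle this by reducing, via étale localization over $G/P_s$ as in the proof of \propref{p:intertwiner spec finite}, to $G$ of semisimple rank one (then $SL_2$, or $PGL_2$ times a torus). There I would verify the identity on stalks at a single regular semisimple element of $T$. Both sides are isomorphisms of local systems on a connected space, so agreement at one point suffices. At such a point the fiber of $\wt G$ is $W$, and the computation is a direct evaluation of the Radon transform on a $\chi$-monodromic sheaf on $V-0$.
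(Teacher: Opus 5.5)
Your first step is fine and matches the paper: by the Goresky--MacPherson property you reduce to the regular semisimple locus, and then to a single stalk. The gap is in the core step. You never identify what the stalk of $i_{B_1,B_2}$ at a regular semisimple $g$ actually is, and the mechanism you sketch rests on a misdescription of the kernel.

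\begin{itemize}
\item \textbf{There is no diagonal component.} $I_{B_1,B_2}$ is pull-push along $G/N_1\leftarrow G/(N_1\cap N_2)\to G/N_2$. The image of $G/(N_1\cap N_2)$ in $G/B_1\times G/B_2$ is the $G$-orbit of pairs in relative position \emph{exactly} $w$ (exactly $s$ in your reduction), not its closure. For regular semisimple $g$, the $g$-fixed pairs in that orbit form precisely the graph of $w$ acting on the $W$-torsor of Borels containing $g$.
\item \textbf{The cancellation mechanism fails as stated.} The claim that the $\BG_m$-fibers have $\chi$-twisted cohomology which cancels cannot work uniformly. The theorem includes trivial $\chi$ (the Springer sheaf), where $\on{C}^\cdot(\BG_m,\chi)\neq 0$.
\item \textbf{The rank-one reduction is not justified.} \'Etale localization over $G/P_s$ is how one checks that a functor on sheaves on $G/N$ is an equivalence. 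It does not commute with the global trace construction that produces $i_{B_1,B_2}$, so it does not reduce the stalk comparison to $SL_2$.
\item \textbf{The proposed final computation computes the wrong thing.} A direct evaluation of the Radon transform on a $\chi$-monodromic sheaf does not compute a map between traces.
\end{itemize}

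The missing idea is the one the paper uses.
\begin{itemize}
\item The $!$-fiber of $\on{GrothSpr}_{B_i}(\chi)$ at $g$ is $\Tr(g,\ul\Shv(G/N_i)^{T,\chi})$.
\item The fiber of $i_{B_1,B_2}$ at $g$ is $\Tr(g,-)$ applied to the $g$-equivariant equivalence $\ul\Shv(G/N_1)^{T,\chi}\to \ul\Shv(G/N_2)^{T,\chi}$ induced by $I_{B_1,B_2}$.
\item These traces are computed on $g$-fixed loci: $\Tr(g,\ul\Shv(G/N_i)^{T,\chi})\simeq \on{C}^\cdot((G/B_i)^g,q_i^!(\chi))$. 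One compares the big intertwiner with its version on the preimages $Y_i\subset G/N_i$ of $(G/B_i)^g$, given by pull-push along $Y_1\leftarrow Y_{1,2}\to Y_2$. This version is compatible with the big intertwiner after applying $\Tr(g,-)$.
\item For $g$ regular semisimple, the maps $(G/B_1\cap B_2)^g\to (G/B_i)^g$ are isomorphisms. The identification they induce is, by construction, the fiber at $g$ of the isomorphism of \lemref{l:Springer w}.
\end{itemize}
This handles arbitrary $w$ at once. No reduction to simple reflections or to rank one is needed, and there is no bookkeeping of units and counits or cancellation of extra contributions.
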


The rest of this subsection is devoted to the proof of  \thmref{t:Springer action}.

\sssec{Interpreting the fibers of the Springer sheaf}

It is sufficient that the two morphisms in question agree at the level of fibers at points $g\in G^{\on{rs}}$.

\medskip

Unwinding the definitions, the !-fiber of $\on{GrothSpr}_{B_1}(\chi)$ at $g\in G$
is the object of $\Vect$ equal to
$$\Tr(g,\ul\Shv(G/N_1)^{T,\chi}),$$
and similarly for $\on{GrothSpr}_{B_2}(\chi)$.

\medskip

Furthermore, the fiber at $g$ of the isomorphism
\begin{equation} \label{e:Springer fibers}
\on{GrothSpr}_{B_1}(\chi) \simeq \on{GrothSpr}_{B_2}(\chi)
\end{equation}
of \thmref{t:intertw char} is given by applying $\Tr(g,-)$ to the
$g$-equivariant 1-isomorphism
\begin{equation} \label{e:intertw chi}
\ul\Shv(G/N_1)^{T,\chi} \overset{\sim}\to \ul\Shv(G/N_2)^{T,\chi},
\end{equation}
given by pull-push along the diagram
\begin{equation} \label{e:B-diag}
\CD
G/N_1\cap N_2 @>>> G/N_2 \\
@VVV \\
G/N_1.
\endCD
\end{equation}

\sssec{Calculating the fibers} \label{sss:Springer fibers}

Explicitly, the object
$$\Tr(g,\ul\Shv(G/N_i)^{T,\chi})\in \Vect$$
is calculated as follows.

\medskip

Consider the scheme
$$(G/B_i)^g:=\{g'\,|\, g'{}^{-1}\cdot g\cdot g'\in B_i\}/B_i.$$

We have a well-defined map
$$q_i:(G/B_i)^g\to T, \quad g'\mapsto \ol{g'{}^{-1}\cdot g\cdot g'},$$
where $b\mapsto \ol{b}$ denotes the projection $B_i\to T$.

\medskip

Then
$$\Tr(g,\ul\Shv(G/N_i)^{T,\chi})\simeq \on{C}^\cdot((G/B_i)^g,q_i^!(\chi)).$$

\sssec{Geometric interpretation}

Consider the diagram
$$
\CD
(G/B_1\cap B_2)^g @>>> (G/B_2)^g @>>{q_2}>  T \\
@VVV \\
(G/B_1)^g \\
@V{q_1}VV \\
T.
\endCD
$$

If $g$ is regular semi-simple, both maps
\begin{equation} \label{e:B-diag fixed}
\CD
(G/B_1\cap B_2)^g @>>> (G/B_2)^g \\
@VVV \\
(G/B_1)^g
\endCD
\end{equation}
are isomorphisms.

\medskip

The resulting identification
\begin{equation} \label{e:ident fibers geom}
\on{C}^\cdot((G/B_1)^g,q_i^!(\chi)) \simeq \on{C}^\cdot((G/B_2)^g,q_i^!(\chi))
\end{equation}
is the fiber at $g$ of the isomorphism
$$\on{GrothSpr}_{B_1}(\chi)|_{G^{\on{rs}}/\on{Ad}(G)} \simeq \on{GrothSpr}_{B_2}(\chi)|_{G^{\on{rs}}/\on{Ad}(G)}$$
of \lemref{l:Springer w}.

\sssec{Comparing the two identifications}

Let $Y_i$ be the preimage in $G/N_i$ of $(G/B_i)^g\subset G/B_i$. Let
$Y_{1,2}$ be the preimage in $G/N_1\cap N_2$ of $(G/B_1\cap B_2)^g\subset G/B_1\cap B_2$.

\medskip

We have a 1-isomorphism
$$\ul\Shv(Y_1)^{T,\chi}\to \ul\Shv(Y_2)^{T,\chi},$$
given by pull-push along
$$
\CD
Y_{1,2} @>>> Y_2 \\
@VVV \\
Y_1,
\endCD
$$
which is compatible with \eqref{e:intertw chi} via the $g$-equivariant 1-morphisms
$$\ul\Shv(Y_i)^{T,\chi}\to \ul\Shv(G/N_i)^{T,\chi}.$$

\medskip

In particular, we have a commutative diagram in $\Vect$
$$
\CD
\Tr(g,\ul\Shv(Y_1)^{T,\chi}) @>>> \Tr(g,\ul\Shv(Y_2)^{T,\chi})  \\
@VVV @VVV \\
\Tr(g,\ul\Shv(G/N_1)^{T,\chi}) @>>> \Tr(g,\ul\Shv(G/N_2)^{T,\chi}),
\endCD
$$
which fits into the diagram
\begin{equation} \label{e:fixed points diag}
\CD
\on{C}^\cdot((G/B_1)^g,q_1^!(\chi)) @>>> \on{C}^\cdot((G/B_1)^g,q_2^!(\chi)) \\
@VVV @VVV \\
\Tr(g,\ul\Shv(Y_1)^{T,\chi})  @>>> \Tr(g,\ul\Shv(Y_2)^{T,\chi}) \\
@VVV @VVV \\
 \Tr(g,\ul\Shv(G/N_1)^{T,\chi})  @>>> \Tr(g,\ul\Shv(G/N_2)^{T,\chi})  \\
@V{\sim}VV @VV{\sim}V \\
 (\on{GrothSpr}_{B_1}(\chi))_g @>>{\text{\eqref{e:Springer fibers}}}>  (\on{GrothSpr}_{B_2}(\chi))_g
\endCD
\end{equation}
in which the composite vertical arrows
$$\on{C}^\cdot((G/B_i)^g,q_i^!(\chi)) \to \Tr(g,\ul\Shv(Y_i)^{T,\chi}) \to \Tr(g,\ul\Shv(G/N_i)^{T,\chi})$$
are the identifications of \secref{sss:Springer fibers}.

\medskip

Hence, it remains to show that the top horizontal arrow in \eqref{e:fixed points diag} is the identification of \eqref{e:ident fibers geom}. However,
the latter is manifest, as the maps in diagram \eqref{e:B-diag fixed} are isomorphisms.

\qed[\thmref{t:Springer action}]

\ssec{Ambidexterity of the Grothendieck-Springer functor}

\sssec{}

Recall that in \thmref{t:jacq mon}, we proved that the 1-morphism
$$\bjacq^{\on{spec.fin}}_P:G\mmod^{\on{spec.fin}}\to M\mmod^{\on{spec.fin}}$$
is 2-adjointable, and its (both left and right) adjoints identify with $\bind_P^{\on{spec.fin}}$.

\medskip

Hence, by \propref{p:adj Tr}, we obtain: an identification between
the functor
\begin{equation} \label{e:Tr jacq spec fin}
\Tr(\on{Id},G\mmod^{\on{spec.fin}}) \overset{\Tr(\on{id},\bjacq^{\on{spec.fin}}_P)}\longrightarrow \Tr(\on{Id},M\mmod^{\on{spec.fin}})
\end{equation}
and the \emph{left} adjoint of
\begin{equation} \label{e:Tr ind spec fin}
\Tr(\on{Id},M\mmod^{\on{spec.fin}}) \overset{\Tr(\on{id},\bind^{\on{spec.fin}}_P)}\longrightarrow \Tr(\on{Id},G\mmod^{\on{spec.fin}}).
\end{equation}

\sssec{}

Recall now that under the identifications
$$\Tr(\on{Id},G\mmod^{\on{spec.fin}})\simeq \ul\Shv^{\on{ch}}(G/\on{Ad}(G)) \text{ and }
\Tr(\on{Id},M\mmod^{\on{spec.fin}})\simeq \ul\Shv^{\on{ch}}(M/\on{Ad}(M)),$$
the functors \eqref{e:Tr jacq spec fin} and \eqref{e:Tr ind spec fin} identify with
$$\fr^{\on{ch}}_P:\ul\Shv^{\on{ch}}(G/\on{Ad}(G)) \to \ul\Shv^{\on{ch}}(M/\on{Ad}(M))$$  and
$$\on{GrothSpr}^{\on{ch}}_P:\ul\Shv^{\on{ch}}(M/\on{Ad}(M)) \to \ul\Shv^{\on{ch}}(G/\on{Ad}(G)),$$
respectively.

\sssec{}

Hence, we obtain:

\begin{cor} \label{c:restr as left adj}
There exists a canonical identification between
$$\fr^{\on{ch}}_P:\ul\Shv^{\on{ch}}(G/\on{Ad}(G)) \to \ul\Shv^{\on{ch}}(M/\on{Ad}(M))$$
and a \emph{left} adjoint of
$$\on{GrothSpr}^{\on{ch}}_P:\ul\Shv^{\on{ch}}(M/\on{Ad}(M)) \to \ul\Shv^{\on{ch}}(G/\on{Ad}(G)).$$
\end{cor}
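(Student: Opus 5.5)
The plan is to derive \corref{c:restr as left adj} directly from \propref{p:adj Tr}, applied to the 2-adjointable 1-morphism $\bjacq^{\on{spec.fin}}_P$ rather than to $\bind^{\on{spec.fin}}_P$. Take $\bo_1=G\mmod^{\on{spec.fin}}$, $\bo_2=M\mmod^{\on{spec.fin}}$, $t=\bjacq^{\on{spec.fin}}_P$, $F_1=\on{Id}$, $F_2=\on{Id}$ and $\alpha=\on{id}$. Then $\alpha^{\on{BC}}$ is the identity of $t^R$, so it is trivially adjointable; this is the situation of \secref{sss:BC adj}, since both $F_i$ are invertible. By \thmref{t:jacq mon}, $t$ is 2-adjointable, so \propref{p:adj Tr} applies. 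It says that $\Tr(\on{id},(\bjacq^{\on{spec.fin}}_P)^R)$ is right adjoint to $\Tr(\on{id},\bjacq^{\on{spec.fin}}_P)$.

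Next we identify $(\bjacq^{\on{spec.fin}}_P)^R$ with $\bind^{\on{spec.fin}}_P$. By \lemref{l:ambi} applied to the 2-adjointable $\bind^{\on{spec.fin}}_P$ (\corref{c:restr adj 2-adj}), $\bjacq^{\on{spec.fin}}_P$ is a left adjoint of $\bind^{\on{spec.fin}}_P$. Combined with the 2-adjointability of $\bjacq^{\on{spec.fin}}_P$, \lemref{l:ambi} also makes $\bind^{\on{spec.fin}}_P$ a right adjoint of $\bjacq^{\on{spec.fin}}_P$. Hence
$$\Tr(\on{id},\bind^{\on{spec.fin}}_P)\simeq \Tr(\on{id},(\bjacq^{\on{spec.fin}}_P)^R),$$
which is right adjoint to $\Tr(\on{id},\bjacq^{\on{spec.fin}}_P)$. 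Equivalently, $\Tr(\on{id},\bjacq^{\on{spec.fin}}_P)$ is a left adjoint of $\Tr(\on{id},\bind^{\on{spec.fin}}_P)$.

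Finally, we transport this adjunction along the identifications of \thmref{t:ch sheaves}. The functor $\Tr(\on{id},\bind^{\on{spec.fin}}_P)$ becomes $\on{GrothSpr}^{\on{ch}}_P$ by construction of the latter. For $\Tr(\on{id},\bjacq^{\on{spec.fin}}_P)$, we argue as follows. The diagram relating $\bjacq_P$ and $\bjacq^{\on{spec.fin}}_P$ commutes, because $\bjacq_P$ is spec.fin-adapted (see \eqref{e:functor and spec fin}). Trace functoriality along $\bemb^{\on{spec.fin}}$, which is adjointable, then gives a commutative square. That square, together with \propref{p:Jacquet functor}, identifies $\Tr(\on{id},\bjacq^{\on{spec.fin}}_P)$ with the restriction of $\fr_P$ to character sheaves, that is, with $\fr^{\on{ch}}_P$. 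In particular $\fr_P$ preserves character sheaves.

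The main obstacle is this last compatibility: checking that the 2-categorical trace of $\bemb^{\on{spec.fin}}$ intertwines $\Tr(\on{id},\bjacq_P)$ with $\Tr(\on{id},\bjacq^{\on{spec.fin}}_P)$ strictly, and not only up to a lax natural transformation. I expect to handle it using the commutativity of \eqref{e:functor and spec fin} for $F=\bjacq_P$ and functoriality of \eqref{e:2-cat Tr funct} under composition of adjointable 1-morphisms.
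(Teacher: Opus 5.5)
Your proposal is correct and follows the paper's own argument: you apply \propref{p:adj Tr} to the 2-adjointable $\bjacq^{\on{spec.fin}}_P$ (\thmref{t:jacq mon}) with $F_1=F_2=\on{Id}$, identify its right adjoint with $\bind^{\on{spec.fin}}_P$ via ambidexterity, and then transport along the identifications of $\Tr(\on{id},\bjacq^{\on{spec.fin}}_P)$ and $\Tr(\on{id},\bind^{\on{spec.fin}}_P)$ with $\fr^{\on{ch}}_P$ and $\on{GrothSpr}^{\on{ch}}_P$. The compatibility you flag as the main obstacle is routine: since $\bjacq_P$ is spec.fin-adapted, the square \eqref{e:functor and spec fin} commutes up to a 2-isomorphism, and functoriality of the trace with respect to compositions of adjointable 1-morphisms (and 2-isomorphisms between them) then gives the required identification directly.
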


\begin{rem}
Recall (see \secref{c:ind restr}) that in addition to identification of \corref{c:restr as left adj} we have a
(rather tautological) identification between
$$\fr_P:\ul\Shv(G/\on{Ad}(G)) \to \ul\Shv(M/\on{Ad}(M))$$
and a \emph{right} adjoint of
$$\on{GrothSpr}_P:\ul\Shv(M/\on{Ad}(M)) \to \ul\Shv(G/\on{Ad}(G)).$$

In particular, the $(\on{GrothSpr}_P,\fr_P)$ becomes \emph{ambidexterous} once restricted
to character sheaves.
\end{rem}

\sssec{}

Consider again the usual $(\on{GrothSpr}_P,\fr_P)$-adjunction, and let us conjugate it by Verdier duality.
We obtain an adjunction
\begin{equation} \label{e:Verdier GrothSpr adj}
(\BD_{M/\on{Ad}(M)}\circ \fr_P\circ \BD_{G/\on{Ad}(G)},\BD_{G/\on{Ad}(G)}\circ \on{GrothSpr}_P \circ \BD_{M/\on{Ad}(M)}),
\end{equation}
as functors
$$\Shv(G/\on{Ad}(G))^c\leftrightarrow \Shv(M/\on{Ad}(M))^c.$$

\sssec{}

Note now that the functor $\on{GrothSpr}_P$ satisfies
$$\BD_{G/\on{Ad}(G)}\circ \on{GrothSpr}_P \circ \BD_{M/\on{Ad}(M)}\simeq \on{GrothSpr}_P.$$

Denote
$$\wt\fr_P:=\BD_{M/\on{Ad}(M)}\circ \fr_P\circ \BD_{G/\on{Ad}(G)},$$
and let us ind-extend this functor from $\Shv(G/\on{Ad}(G))^c$ to all of $\Shv(G/\on{Ad}(G))$.

\medskip

Thus, \eqref{e:Verdier GrothSpr adj} can be rewritten as an adjunction
\begin{equation} \label{e:Verdier GrothSpr adj bis}
(\wt\fr_P,\on{GrothSpr}_P),
\end{equation}
as functors
$$\Shv(G/\on{Ad}(G))\leftrightarrow \Shv(M/\on{Ad}(M)).$$

\sssec{}

In particular, restricting \eqref{e:Verdier GrothSpr adj bis} to character sheaves, we obtain an adjunction
\begin{equation} \label{e:Verdier GrothSpr adj ch}
(\wt\fr^{\on{ch}}_P,\on{GrothSpr}^{\on{ch}}_P),
\end{equation}
as functors
$$\Shv^{\on{ch}}(G/\on{Ad}(G))\leftrightarrow \Shv^{\on{ch}}(M/\on{Ad}(M)),$$
where
$$\wt\fr^{\on{ch}}_P:=\wt\fr_P|_{\Shv^{\on{ch}}(G/\on{Ad}(G))}.$$

Hence, combining with \corref{c:restr as left adj}, we obtain:

\begin{cor} \label{c:restr and Verdier}
There is a canonical isomorphism
\begin{equation} \label{e:restr and Verdier}
\wt\fr^{\on{ch}}_P\simeq \fr^{\on{ch}}_P
\end{equation}
as functors
$$\Shv^{\on{ch}}(G/\on{Ad}(G))\to \Shv^{\on{ch}}(M/\on{Ad}(M)).$$
\end{cor}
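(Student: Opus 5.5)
The plan is to deduce \eqref{e:restr and Verdier} from uniqueness of adjoints. By \corref{c:restr as left adj}, $\fr^{\on{ch}}_P$ (the value on $\on{pt}$ of the 1-morphism in $\AGCat$) is canonically a left adjoint of $\on{GrothSpr}^{\on{ch}}_P$. By \eqref{e:Verdier GrothSpr adj ch}, $\wt\fr^{\on{ch}}_P$ is also a left adjoint of $\on{GrothSpr}^{\on{ch}}_P$. Two left adjoints of the same functor are canonically isomorphic, so the isomorphism \eqref{e:restr and Verdier} is the one induced by comparing the two adjunction data.

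The first step is to make sure both adjunctions are adjunctions between the same pair of categories $\Shv^{\on{ch}}(G/\on{Ad}(G))\leftrightarrows \Shv^{\on{ch}}(M/\on{Ad}(M))$, with the same right adjoint. For \corref{c:restr as left adj} this comes from evaluating on $\on{pt}$: an adjunction in $\AGCat$ evaluates to an adjunction in $\DGCat$. The functor $\on{GrothSpr}^{\on{ch}}_P$ there is the restriction of $\on{GrothSpr}_P$, by the commutative square relating $\Tr(\on{Id},\bind_P)$ and $\Tr(\on{Id},\bind^{\on{spec.fin}}_P)$ together with \thmref{t:ch sheaves}.

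The second step is to check that \eqref{e:Verdier GrothSpr adj ch} is a genuine adjunction after restricting to character sheaves. This requires that $\wt\fr_P$ maps $\Shv^{\on{ch}}(G/\on{Ad}(G))$ into $\Shv^{\on{ch}}(M/\on{Ad}(M))$, and that $\on{GrothSpr}_P$ maps character sheaves to character sheaves. The latter is already known. For the former, I would argue on compact generators: character sheaves are compactly generated by constructible objects stable under Verdier duality (as in the proof of \propref{p:emb ch R}, using \corref{c:HCh gen} and Remark \ref{r:Verdier ch}). Since $\fr_P$ preserves character sheaves (being $\Tr(\on{id},\bjacq_P)$ restricted) and preserves constructibility, $\wt\fr_P=\BD\circ\fr_P\circ\BD$ preserves compact character sheaves, and by ind-extension all of them.

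I expect this verification that $\wt\fr_P$ preserves the character-sheaf subcategories, together with matching the ind-extension with the continuous functor, to be the main obstacle. Once it is in place, restricting an adjunction along full subcategories preserved by both functors gives an adjunction, and the corollary follows formally by uniqueness of left adjoints.
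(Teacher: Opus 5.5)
Your proposal is correct and is essentially the paper's argument: the paper obtains \eqref{e:restr and Verdier} by combining \corref{c:restr as left adj} with the adjunction \eqref{e:Verdier GrothSpr adj ch} and invoking uniqueness of left adjoints of $\on{GrothSpr}^{\on{ch}}_P$. Your extra check that $\wt\fr_P$ preserves character sheaves, via Verdier-stable constructible compact generators, makes explicit a point the paper leaves implicit when it restricts \eqref{e:Verdier GrothSpr adj bis} to character sheaves.
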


\begin{rem}

As was explained to us by R.~Bezrukavnikov, one can prove \corref{c:restr and Verdier} directly, without involving
traces on $\AGCat$. We will return to this in \secref{sss:self-dual r expl}.

\end{rem}

\begin{rem}

Note that by construction, the identification \eqref{e:restr and Verdier} is involutive in the sense that the composition
\begin{multline*}
\fr^{\on{ch}}_P\simeq \BD_{M/\on{Ad}(M)}\circ \BD_{M/\on{Ad}(M)}\circ \fr^{\on{ch}}_P
\overset{\text{\eqref{e:restr and Verdier}}}\simeq \BD_{M/\on{Ad}(M)}\circ \fr^{\on{ch}}_P\circ
\BD_{G/\on{Ad}(G)} \overset{\text{\eqref{e:restr and Verdier}}}\simeq \\
\simeq \fr^{\on{ch}}_P\circ  \BD_{G/\on{Ad}(G)}
\circ  \BD_{G/\on{Ad}(G)}\simeq \fr^{\on{ch}}_P
\end{multline*}
is the identity.

\end{rem}

\section{Further properties of the Grothendieck-Springer functor} \label{s:trans}

The main goal of this section is to establish the transitivity property of the functional
equation (this is \thmref{t:intertwiner trans} below).

\medskip

In addition, we will use the Serre functor on $\bind_P$ to give streamlined proofs
for assertions about the Harish-Chandra transform of character sheaves.

\ssec{Transitivity of the functional equation}

\sssec{} \label{sss:three piglets}

Let $P_1,P_2$ and $P_3$ be three parabolic subgroups of $G$ containing a common Levi subgroup $M$. Consider the natural transformations
$$\on{GrothSpr}^{\on{ch}}_{P_1}\overset{i_{P_1,P_2}}\longrightarrow \on{GrothSpr}^{\on{ch}}_{P_2},$$
$$\on{GrothSpr}^{\on{ch}}_{P_2}\overset{i_{P_2,P_3}}\longrightarrow \on{GrothSpr}^{\on{ch}}_{P_3}$$
and
$$\on{GrothSpr}^{\on{ch}}_{P_1}\overset{i_{P_1,P_3}}\longrightarrow \on{GrothSpr}^{\on{ch}}_{P_3}.$$

We will prove the following assertion:

\begin{thm} \label{t:intertwiner trans}
There exists \emph{an} somorphism
\begin{equation} \label{e:intertwiner trans}
i_{P_2,P_3}\circ i_{P_1,P_2}\simeq i_{P_1,P_3}
\end{equation}
as natural transformations $\on{GrothSpr}^{\on{ch}}_{P_1}\to \on{GrothSpr}^{\on{ch}}_{P_3}$.
\end{thm}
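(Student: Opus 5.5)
The plan follows the outline given in the Introduction. Throughout we use that $i_{P,P'}$ is obtained by applying the functoriality of the 3-categorical trace (\secref{sss:higher funct}) to the invertible 2-morphism $I^{\on{spec.fin}}_{P,P'}$ of \propref{p:intertwiner spec finite}. Since the trace construction \eqref{e:3-cat Tr funct} is a functor of 2-categories, it is compatible with vertical composition of 2-morphisms in $\bL(\tAGCat)$. Hence $i_{P_2,P_3}\circ i_{P_1,P_2}$ is the trace of $I^{\on{spec.fin}}_{P_2,P_3}\circ I^{\on{spec.fin}}_{P_1,P_2}$.

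\emph{Step 1: the additive case.} When the relative positions of $(P_1,P_2)$ and $(P_2,P_3)$ add up, we show that
$$I_{P_2,P_3}\circ I_{P_1,P_2}\simeq I_{P_1,P_3}$$
already holds in $G\mmod$. The argument is the standard one used in the proof of \propref{p:intertwiner spec finite}. The composite of the correspondences \eqref{e:intertw diagram} is pull-push along
$$G/N(P_1\cap P_2)\underset{G/N(P_2)}\times G/N(P_2\cap P_3).$$
Under the length-additivity assumption, the natural map from $G/N(P_1\cap P_3)$ to this fiber product, composed with the projection, is an affine-space fibration. The extra fibers are contracted by !-pull/*-push, up to the normalization built into the definition. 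Restricting to spec.fin and taking traces then gives \eqref{e:intertwiner trans} in this case.

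\emph{Step 2: reduction to an involutivity statement.} We use the case where $P_1,P_2,P_3$ are Borels (or parabolics in the standard apartment determined by $M$) and induct on the distances. A gallery argument reduces the general case to two kinds of relations. The first kind are additive triples, handled by Step 1. The second kind is the case $P_3=P_1$, i.e.\ the statement
$$i_{P_2,P_1}\circ i_{P_1,P_2}\simeq \on{id}_{\on{GrothSpr}^{\on{ch}}_{P_1}}.$$
Concretely, one writes $i_{P_1,P_3}$ as a composite along a minimal gallery, and writes $i_{P_2,P_3}\circ i_{P_1,P_2}$ as a composite along a non-minimal one. The non-minimal gallery is reduced to the minimal one by cancelling backtracking steps $P\to P'\to P$. (Braid-type relations among minimal galleries already hold at the level of $I$ by Step 1.)

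\emph{Step 3: the involutivity (main obstacle).} Here $I_{P_2,P_1}\circ I_{P_1,P_2}$ is not the identity. By \thmref{t:2-Cat Serre} it is, up to shift, closely related to the Serre automorphism $\on{Se}$ of $\bind^{\on{spec.fin}}_P$ for $P_2=P_1^-$. The general relative-position case should reduce to this via Levi subgroups of intermediate parabolics, in the same way as the rank-one reduction in \propref{p:intertwiner spec finite}. So the claim becomes that $\Tr(\on{Se}_{G/N(P)},\bind_P^{\on{spec.fin}})$, shifted by $[-2\dim N(P)]$, is the identity of $\on{GrothSpr}^{\on{ch}}_P$.

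I would prove this abstractly for a vacillating 2-adjointable 1-morphism $F$. The trace of $F$ can be computed via \eqref{e:2-cat Tr} using either the original adjunction or the new one (double right adjoints of unit and counit). Both computations give the same 1-morphism $\Tr(\on{id},F)$ of $\AGCat$. Moreover, the identification between them is induced by $\on{Se}_F$ through \eqref{e:Serre 1 abs}--\eqref{e:Serre 2 abs}. So $\Tr(\on{Se}_F,F)$ equals the comparison isomorphism between two ways of computing the same trace.

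The remaining point, which is the hardest, is to check that this comparison is the identity after the shift. I would attempt it via the cyclicity form \eqref{e:2-cat Tr}. There the unit and counit enter in the combination "unit, cyclicity, counit", and replacing both by their $\on{Se}$-twisted versions inserts $\on{Se}$ and $\on{Se}^{-1}$, which cancel by cyclicity. Failing this, one can compute explicitly on generators $\on{GrothSpr}^{\on{ch}}_B(\chi)$. On the regular semisimple locus this uses \thmref{t:Springer action}, where $i_{B,B_2}$ is the $W$-action, and $w\cdot w^{-1}=1$. The result then extends by the Goresky--MacPherson extension property and by generation (\corref{c:HCh gen}).
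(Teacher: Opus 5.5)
Your Steps 1 and 2 match the paper. The additive case is the trace of $I_{P_2,P_3}\circ I_{P_1,P_2}\simeq I_{P_1,P_3}$. When $N(P_1)\cap N(P_3)\subset N(P_2)$, the map from $G/N(P_1\cap P_3)$ to the fiber product is in fact an isomorphism, so nothing needs to be contracted. Your backtracking cancellation along galleries is an equivalent packaging of the paper's induction on distance via \lemref{l:rel Bruhat}. Both reduce the theorem to $i_{P_2,P_1}\circ i_{P_1,P_2}\simeq \on{id}$ (\thmref{t:involut}).

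The gap is Step 3, which is where all the content lies. Your abstract argument would prove that $\Tr(\on{Se}_F,F)$ is the identity for every vacillating $F$, and this is false. Take $\bo_1=\bo_2=\one$ in $\tDGCat$ and $F$ a smooth proper DG category $\bC$. Then $\Tr(\on{id},F)=\on{HH}(\bC)$, and $\Tr(\on{Se}_F,F)$ is the map induced by the Serre functor on Hochschild homology. For $\bC=\QCoh(E)$ with $E$ an elliptic curve, $\on{Se}_\bC\simeq[1]$ acts on $\on{HH}(\bC)\neq 0$ by $-1$, compatibly with its action by $-1$ on $K_0$.

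The faulty step is identifying $\Tr(\on{Se}_F,F)$ with ``the comparison between the two computations of the trace''. Your cyclicity cancellation correctly shows that the old and new adjunction data yield the same $\Tr(\on{id},F)$ with trivial comparison. But $\Tr(\on{Se}_F,F)$ is the functoriality of \secref{sss:higher funct} applied to the 2-automorphism $\on{Se}_F$: it acts by $\on{Se}_F$ on $t$ and by the mate of $\on{Se}_F^{-1}$ on $(t^R)^\vee$. That is a different map.

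Your fallback does not close the gap either. \thmref{t:Springer action} only concerns $M=T$ and the objects $\on{GrothSpr}_B(\chi)$. For general $M$, the relevant objects (e.g.\ those induced from cuspidal character sheaves of $M$) are not Goresky--MacPherson extensions from $G^{\on{rs}}$. In any case, a natural transformation between continuous functors is not determined by its components on generators of the source.

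What is missing is an input specific to $\bind_P$; in the paper this is the left--right symmetry given by inversion. Unwinding \eqref{e:2-trace}, $i_{P_1,P_2}$ is the composite
\[
\on{CH}^{\on{ch}}_{P_1}\circ \iota^{\on{ch}}_{P_1}\simeq \on{CH}^{\on{ch}}_{P_2}\circ \bigl((I^{\on{spec.fin}}_{P_2,P_1})^{-1}\otimes I^{\on{spec.fin}}_{P_1,P_2}\bigr)\circ \iota^{\on{ch}}_{P_1}\simeq \on{CH}^{\on{ch}}_{P_2}\circ \iota^{\on{ch}}_{P_2},
\]
with the two arrows coming from the geometric isomorphisms \eqref{e:intertwiner as compos 1 bis} and \eqref{e:intertwiner as compos 2 bis}, respectively. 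By \lemref{l:intertwiner and inversion}, conjugating $i_{P_2,P_1}$ by $\sigma_G,\sigma_M$ produces $i^\sigma_{P_2,P_1}$. This is the same construction with the two factors swapped, i.e.\ it passes through $\on{CH}^{\on{ch}}_{P_1}\circ\bigl(I^{\on{spec.fin}}_{P_2,P_1}\otimes (I^{\on{spec.fin}}_{P_1,P_2})^{-1}\bigr)\circ\iota^{\on{ch}}_{P_2}$. The identity $i^\sigma_{P_2,P_1}\circ i_{P_1,P_2}\simeq \on{id}$ is then a direct diagram chase via the maps $\phi_{1,2}$.

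This proves \thmref{t:involut} for every pair $(P_1,P_2)$, with no reduction to opposite parabolics and no use of the Serre functor. The statement about $\on{Se}_{G/N(P)}$ that you reduced to does hold here. But it holds as a consequence of this argument, not of formal properties of vacillating morphisms.
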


The next few subsections are devoted to the proof of this theorem.

\begin{rem}

Our construction of the isomorphism \eqref{e:intertwiner trans} depends on some combinatorial choices.
However, we conjecture that it is actually canonical, i.e., we have a Grothendieck-Springer functor
$$\on{GrothSpr}^{\on{ch}}_{M,G}:\Shv^{\on{ch}}(M/\on{Ad}(M))\to \Shv^{\on{ch}}(G/\on{Ad}(G))$$
that only depends on the pair $M$ and $G$, and moreover it does so in a canonical way.

\medskip

In particular, we conjecture that the functor $\on{GrothSpr}^{\on{ch}}_{M,G}$ is acted on by $\on{Norm}(M)/M$.

\medskip

Note that for $M=T$, this means that the corresponding functor $\on{GrothSpr}^{\on{ch}}_{T,G}$ is acted
on by the Weyl group. (Note that this is \emph{not} in contradiction with \cite[Theorem D]{Gu}, as the latter
talks about all of $\Shv(M/\on{Ad}(M))$ and not $\on{GrothSpr}^{\on{ch}}_{M,G}$.

\end{rem}

\sssec{} \label{sss:add up}

Note that for a triple of parabolics, there is a well-defined notion of their relative positions adding up: this
means that
$$N(P_1)\cap N(P_3)\subset N(P_2).$$

\medskip

For $P_1,P_2,P_3$ as in \secref{sss:three piglets}, if their relative positions add up, then
\begin{equation} \label{e:big intertw compose}
I_{P_1,P_3}\circ I_{P_1,P_2}\simeq I_{P_1,P_3}
\end{equation}
as 2-morphisms $\bind_{P_1}\to \bind_{P_3}$, and the assertion of \thmref{t:intertwiner trans} is
obtained by applying trace to \eqref{e:big intertw compose}.

\sssec{}

We now consider the case opposite to one in \secref{sss:add up}.

\medskip

The key ingredient in the proof of \thmref{t:intertwiner trans} is the following:

\begin{thm} \label{t:involut}
For a pair of parabolics $P_1$ and $P_2$, exists a canonical isomorphism
$$i_{P_2,P_1}\circ i_{P_1,P_2}\simeq \on{id}$$
as endofunctors of $\on{GrothSpr}^{\on{ch}}_{P_1}$.
\end{thm}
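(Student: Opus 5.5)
We reduce \thmref{t:involut} to a statement about the Serre automorphism $\on{Se}_{G/N(P)}$ of \thmref{t:2-Cat Serre}, and then show that the trace construction kills this automorphism.

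First, we reduce to $P_2=P_1^-$. Among parabolics containing $M$, the relative positions of $(P_1,P_2)$ and $(P_2,P_1^-)$ add up, and so do those of $(P_1^-,P_2)$ and $(P_2,P_1)$. Hence, by \eqref{e:big intertw compose} (after taking $\Tr(\on{Id},-)$), we can write
$$i_{P_1,P_1^-}=i_{P_2,P_1^-}\circ i_{P_1,P_2} \quad\text{and}\quad i_{P_1^-,P_1}=i_{P_2,P_1}\circ i_{P_1^-,P_2}.$$
A formal manipulation with these invertible maps then reduces the claim for $(P_1,P_2)$ to the claims for $(P_1,P_1^-)$ and $(P_2,P_1^-)$. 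The second pair is closer to opposite than the original one, so we may run an induction on the relative position. The base case to be proved is therefore
$$i_{P^-,P}\circ i_{P,P^-}\simeq\on{id}.$$
By functoriality of the 3-categorical trace (\secref{sss:higher funct}), the left-hand side equals $\Tr(\gamma,I^{\on{spec.fin}}_{P^-,P}\circ I^{\on{spec.fin}}_{P,P^-})$. By \thmref{t:2-Cat Serre}, this is $\Tr$ applied to $\on{Se}_{G/N(P)}[2\dim N(P)]$, viewed as a 2-automorphism of $\bind^{\on{spec.fin}}_P$.

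The key step is to show that $\Tr(\on{id},-)$ sends $\on{Se}_{F}$, for a vacillating $F=\bind^{\on{spec.fin}}_P$, to the identity of $\Tr(\on{id},F)$, up to the shift $[2\dim N(P)]$. That shift must itself be matched with a shift occurring in the identification of traces.

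The plan is to use \propref{p:adj Tr} twice. The functor $\Tr(\on{id},\bjacq_P^{\on{spec.fin}})$ is a right adjoint of $\on{GrothSpr}^{\on{ch}}_P$ via the original unit and counit. It is also a left adjoint via the double right adjoints, by \thmref{t:jacq mon} and \lemref{l:ambi}. Moreover, taking $\Tr$ of the "new" adjunction data gives the double right adjoints of the traced unit and counit; this is the 3-categorical content of \propref{p:adj Tr}, applied one level up.

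By \eqref{e:Serre 1 abs}, $\Tr(\on{Se}_F)$ is then the discrepancy between two adjunctions $(\on{GrothSpr}^{\on{ch}}_P,\fr^{\on{ch}}_P)$. One of them is the standard one. The other is the standard one transported through Verdier duality, using \corref{c:restr and Verdier}, the self-duality of $\on{GrothSpr}_P$ and \corref{c:Verdier}.

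I would compare the two adjunctions concretely. Both units are maps $\on{Id}\to\fr^{\on{ch}}_P\circ\on{GrothSpr}^{\on{ch}}_P$, and both can be computed by \propref{p:map of traces stacks} as pull-push along explicit correspondences over $P/\on{Ad}(P)$. The Serre discrepancy is visible in the unit of \thmref{t:2-Cat Serre} as the open big-cell restriction $\jmath_{P,P^-}$ together with the shift $[2\dim N(P)]$. After passing to $\Tr$, i.e.\ to $\Delta$-invariants, this correspondence collapses: the big cell $M\subset N(P)\backslash G/N(P^-)$ becomes $M/\on{Ad}(M)$, and the shift is absorbed by the relative dimension of $P/\on{Ad}(P)\to M/\on{Ad}(M)$ combined with the $*$ vs.\ $!$ discrepancy.

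The main obstacle is exactly this step: showing that the traced Serre automorphism is trivial rather than merely invertible. If the direct computation is unwieldy, a fallback is to check triviality on the generators $\on{CH}(\CF)$, for $\CF$ monodromic, using \corref{c:HCh gen} and \propref{p:B as class}. For $M=T$, \thmref{t:Springer action} identifies $i_{B,B^-}$ and $i_{B^-,B}$ with Goresky–MacPherson extensions of the $W$-action on $\widetilde G^{\on{rs}}$, which compose to the identity. The general Levi would then be handled by the same regular-locus argument on $G^{\on{rs}}$-type loci, with a Goresky–MacPherson extension.
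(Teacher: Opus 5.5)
Your reformulation of the opposite case is correct: via \thmref{t:2-Cat Serre} and functoriality of the 3-categorical trace, the theorem for $(P,P^-)$ becomes the claim that the trace of $\on{Se}_{G/N(P)}$ is trivial. But that claim is equivalent to what you want, and the proposal does not prove it. Nothing formal forces it, since traces of Serre automorphisms are non-trivial in general (already the Serre functor of $\QCoh(X)$ acts non-trivially on $\on{HH}_\bullet(X)$). So specific geometry has to enter, and your suggested mechanisms do not supply it:
\begin{itemize}
\item Tracing the new adjunction cannot produce ``double right adjoints of the traced unit and counit''. $\Omega(\tAGCat)\simeq\AGCat$ is a 2-category, so the traced unit $\on{Id}\to\fr^{\on{ch}}_P\circ\on{GrothSpr}^{\on{ch}}_P$ is a natural transformation with no adjoint.
\item \corref{c:restr and Verdier} comes from uniqueness of left adjoints for the adjunction traced from $(\on{unit}^R,\on{counit}^R)$. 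It says nothing about the traced \emph{double} right adjoints, which is what $\on{Se}_{G/N(P)}$ measures. Identifying the traced new adjunction with a Verdier transport of the standard one is therefore exactly the missing step.
\item \propref{p:map of traces stacks} computes functors, not natural transformations between them, so it cannot exhibit a ``collapse'' of units.
\item The relative dimension of $\sfq$ is zero, so it absorbs nothing; the shift $[2\dim N(P)]$ is simply even and harmless.
\item The fallback fails too. Agreement of two natural transformations on a generating family of objects does not give an isomorphism between them, still less agreement on the objects $\on{GrothSpr}_B(\chi)$ of \thmref{t:Springer action}. For general $M$, the objects $\on{GrothSpr}^{\on{ch}}_P(\CF)$ are not Goresky--MacPherson extensions from a regular locus.
\end{itemize}

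Your induction is also circular: applied to $(P_2,P_1^-)$, it returns $(P_1^-,P_2^-)$, which is in the same relative position as $(P_1,P_2)$. A correct reduction takes $P'$ as in \lemref{l:rel Bruhat}(b). Since $N(P_1)\cap N(P_2)\subset N(P')$ is symmetric in $P_1,P_2$, both $i_{P_1,P_2}$ and $i_{P_2,P_1}$ factor through $P'$. This reduces to adjacent pairs, and by induction in stages to opposite pairs in the Levi quotient of a larger parabolic.

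The ingredient you are missing is the one the paper uses, and it bypasses the Serre functor entirely: the inversions $\sigma_G,\sigma_M$, which exchange the roles of $P_1$ and $P_2$. By \secref{ss:intw expl}, $i_{P_1,P_2}$ is the composite of the explicit isomorphisms \eqref{e:intertwiner as compos 1 ch} and \eqref{e:intertwiner as compos 2 ch}. These come from identifications of correspondences in which the intertwiners act on the left or on the right factor. Using \lemref{l:intertwiner and inversion}, the theorem becomes the commutativity of \eqref{e:diagram to prove}. Because inversion swaps the two factors, conjugating $i_{P_2,P_1}$ by the isomorphisms \eqref{e:GrothSpr and inv ch} gives $i^\sigma_{P_2,P_1}$, the same construction with left and right interchanged. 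The identity $i^\sigma_{P_2,P_1}\circ i_{P_1,P_2}\simeq\on{id}$ is then a diagram chase through the auxiliary isomorphism $\phi_{1,2}$. This handles an arbitrary pair $(P_1,P_2)$ directly, with no induction. Combined with \thmref{t:2-Cat Serre}, it is what shows a posteriori that the trace of $\on{Se}_{G/N(P)}$ is indeed trivial.
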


\ssec{The implication \thmref{t:involut} \texorpdfstring{$\Rightarrow$}{implies} \thmref{t:intertwiner trans}}

\sssec{}

Let $P'$ and $P''$ be a pair parabolics sharing a Levi subgroup. We shall say that they are \emph{adjacent}
if there exists a parabolic $Q$ containing $P'$ and $P''$, and such that the projections of $P'$ and $P''$
to $L:=Q/N(Q)$ are \emph{sub-maximal opposite} parabolics in $L$.

\sssec{}

We have the following combinatorial statement\footnote{We learnt the proof of \lemref{l:rel Bruhat} from K.~Lin; however, it is possible that
the assertion has been known classically.}:

\begin{lem}  \label{l:rel Bruhat}
For a pair of parabolics $P_1\neq P_2$ sharing a Levi subgroup, let $P'_1$ be another such parabolic adjacent to $P_1$.
Then:

\medskip

\noindent{\em(a)} Exactly one of the following two scenarios occurs:

\begin{enumerate}

\item Either the relative positions $P_1,P'_1,P_2$ add up;

\item Or the relative positions $P'_1,P_1,P_2$ add up.

\end{enumerate}

\medskip

\noindent{\em(b)} There exists $P'_1$ so that scenario \em{(1)} occurs.

\end{lem}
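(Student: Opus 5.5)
The plan is to translate everything into the geometry of chambers of a (generally non-Coxeter) real hyperplane arrangement, after which both assertions become elementary convexity statements.

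\emph{Setting up the dictionary.} Let $Z$ be the connected center of $M$ and $\mathfrak a:=\Lambda_*(Z)\otimes\BR$. Let $\Phi_{G,M}$ be the nonzero restrictions to $Z$ of the roots of $G$; these are exactly the roots of $G$ not in $M$. Consider the arrangement of hyperplanes $H_\alpha=\{\alpha=0\}\subset\mathfrak a$, for $\alpha\in\Phi_{G,M}$, where proportional $\alpha$'s give the same hyperplane. The standard facts I will recall are:
\begin{itemize}
\item Parabolics $P\supset M$ with Levi $M$ correspond bijectively to the open chambers $C_P$ of this arrangement, via $\Lie N(P)=\bigoplus_{\alpha>0 \text{ on } C_P}\fg_\alpha$.
\item Consequently, $\Lie(N(P_1)\cap N(P_3))$ is the sum of the root spaces over those $\alpha$ positive on both $C_{P_1}$ and $C_{P_3}$.
\end{itemize}
Write $S(P,P')$ for the set of hyperplanes separating $C_P$ and $C_{P'}$. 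Then the condition of \secref{sss:add up}, $N(P_1)\cap N(P_3)\subset N(P_2)$, says that every root positive on $C_{P_1}$ and $C_{P_3}$ is positive on $C_{P_2}$. This is equivalent to
$$S(P_1,P_3)=S(P_1,P_2)\sqcup S(P_2,P_3),$$
that is, no hyperplane separates $C_{P_2}$ from both of the others. Next I check that adjacency means $C_{P'}$ and $C_{P''}$ share a codimension-one face $F$. Given such an $F$, let $Q$ be the parabolic generated by $P'$ and the root spaces for roots vanishing on $F$; its Levi $L$ is the centralizer of a generic point of $F$. Inside $L$ the arrangement for $(L,M)$ has the single hyperplane $H\supset F$, so the two images are sub-maximal and opposite. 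The argument reverses: opposite sub-maximal parabolics in $L$ differ exactly across the unique hyperplane of the $(L,M)$-arrangement. This dictionary step is the only part that needs care, and I expect it to be the main (if mild) obstacle, mostly bookkeeping with proportional roots.

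\emph{Part (a).} Let $H$ be the wall between $C_{P_1}$ and $C_{P'_1}$. Adjacent chambers lie on the same side of every hyperplane except $H$, hence
$$S(P'_1,P_2)=S(P_1,P_2)\,\triangle\,\{H\}.$$
If $H\in S(P_1,P_2)$, then $S(P_1,P_2)=\{H\}\sqcup S(P'_1,P_2)=S(P_1,P'_1)\sqcup S(P'_1,P_2)$, which is scenario (1). If $H\notin S(P_1,P_2)$, then $S(P'_1,P_2)=S(P'_1,P_1)\sqcup S(P_1,P_2)$, which is scenario (2). The two scenarios are mutually exclusive. Scenario (1) forces $H\in S(P_1,P_2)$, while scenario (2) forces $H\notin S(P'_1,P_1)\cap S(P_1,P_2)$, i.e.\ $H\notin S(P_1,P_2)$ because $H\in S(P'_1,P_1)$.

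\emph{Part (b).} By (a), it suffices to find a wall $H$ of $C_{P_1}$ lying in $S(P_1,P_2)$. The closed chamber $\ol{C_{P_1}}$ is the intersection of the closed half-spaces bounded by its walls. If no wall of $C_{P_1}$ separated $C_{P_1}$ from $C_{P_2}$, then $C_{P_2}$ would lie in all these half-spaces and hence in $\ol{C_{P_1}}$. Being open chambers, this forces $C_{P_2}=C_{P_1}$, so $P_1=P_2$, a contradiction. Crossing such a wall produces the required $P'_1$.
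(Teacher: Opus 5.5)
Your argument is correct, but it is organized differently from the paper's.

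\textbf{The paper's route.} The paper works directly with roots. It takes $Q\supset P_1,P'_1$ with $L=Q/N(Q)$. It observes that the image $Q_2$ of $Q\cap P_2$ in $L$ is a parabolic of $L$ with Levi $M$, hence equal to one of the two opposite sub-maximal parabolics $Q_1$, $Q'_1$. In the case $Q_2=Q'_1$ it checks root by root that $N(P_1)\cap N(P_2)\subset N(Q)\subset N(P'_1)$, and the other case is symmetric.

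\textbf{Comparison.} Your dichotomy $H\in S(P_1,P_2)$ versus $H\notin S(P_1,P_2)$ is exactly the paper's dichotomy $Q_2=Q'_1$ versus $Q_2=Q_1$: it records which side of the wall between $C_{P_1}$ and $C_{P'_1}$ the chamber $C_{P_2}$ lies on. So the core of (a) is the same. The difference is that you first build a dictionary, which the paper avoids:
\begin{itemize}
\item parabolics with Levi $M$ correspond to chambers of the $(G,M)$-arrangement;
\item adjacency means sharing a facet;
\item ``adding up'' means $S(P_1,P_3)=S(P_1,P_2)\sqcup S(P_2,P_3)$.
\end{itemize}
That setup is the only real overhead, and your sketch of it, including both directions of the adjacency translation, is sound.

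\textbf{What your framework buys.}
\begin{itemize}
\item The ``exactly one'' clause becomes immediate; the paper leaves exclusivity implicit.
\item You give an actual proof of (b), by the facet description of the closed chamber. The paper only says (b) is proved ``along the same lines''.
\item The additivity of the distance $\dim(P_1/P_1\cap P_2)$ under adding up becomes transparent. This distance is a weighted count of the hyperplanes in $S(P_1,P_2)$, the weights being the root-space dimensions. The paper merely asserts this additivity in the induction step of the proof of \thmref{t:intertwiner trans}.
\end{itemize}
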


\begin{proof}

We will prove point (a); point (b) is proved along the same lines.

\medskip

Recall that for a pair of parabolic subgroups $Q_1$ and $Q_2$ (with Levi quotients
$L_1$ and $L_2$, respectively), the image of the projection
$$Q_1\cap Q_2/Q_1\cap N(Q_2)\to M_2$$
is a parabolic subgroup in $M_2$, and similarly for $1$ and $2$ swapped.

\medskip

For $P_1$ and $P'_1$ as in the lemma, let $Q$ be a parabolic that contains $P_1$ and $P'_1$ submaximally; let $L$
denote its Levi quotient. Denote by $Q_1$ and $Q'_1$ the images  of
$$P_1\cap Q\to L \leftarrow P'_1\cap Q,$$
respectively. They are two submaximal opposite parabolics with $Q_1\cap Q'_1\simeq M$.

\medskip

Let $Q_2$ denote the image of $Q\cap P_2$ in $L$. Since $M\subset P_2$, we obtain that $Q_2$ contains
(the image of) $M$. Thus, we obtain that $Q_2$ is \emph{yet another} submaximal parabolic in $L$ that contains
(the image of) $M$. This implies that either $Q_2=Q_1$ or $Q_2=Q'_1$.

\medskip

By symmetry, let us assume the latter. We claim that in this case the relative positions $P_1,P'_1,P_2$ add up,
i.e.,
$$N(P_1)\cap N(P_2)\subset N(P'_1).$$

\medskip

Choose a maximal torus $T\subset M$, so that we can consider the root decomposition with respect to it.
Thus, let $\alpha$ be a root in $N(P_1)\cap N(P_2)$. We claim that $\alpha$ is in fact contained in $N(Q)$;
this would imply that $\alpha$ is contained in $N(P'_1)$ as $N(Q)\subset N(P'_1)$.

\medskip

Indeed, if $\alpha\notin N(Q)$, then $\alpha\in N(Q_1)$. Since $\alpha\in N(P_2)$, we also have 
$\alpha \in N(Q_2)$. However, since $Q_1$ and $Q_2$ are opposite parabolics in $L$, there
are no roots common to $N(Q_1)$ and $N(Q_2)$, which is a contradiction.

\end{proof}

\sssec{}

Let us show how \lemref{l:rel Bruhat} and \thmref{t:involut} imply \thmref{t:intertwiner trans}.

\medskip

We argue by induction on the distance $P_1$ and $P_2$, i.e., $\dim(P_1/P_1\cap P_2)=\dim(P_2/P_1\cap P_2)$.

\medskip

The base case is when $P_1=P_2$, in which case there is nothing to prove.

\sssec{}

Next we consider the case when $P_1$ and $P_2$ are adjacent. Then by \lemref{l:rel Bruhat}(a) there are
two scenarios:


%

\medskip

\noindent{(a)} Either the relative positions of $P_1,P_2,P_3$ add up;

\medskip

\noindent{(b)} Or the relative positions of $P_2,P_1,P_3$ add up.

\medskip

In case (a) we are done by \secref{sss:add up}.

\medskip

In case (b), we have
$$i_{P_2,P_3}\circ i_{P_1,P_2} \overset{\text{\secref{sss:add up}}}\simeq
i_{P_1,P_3}\circ i_{P_2,P_1}\circ i_{P_1,P_2} \overset{\text{\thmref{t:involut}}}\simeq i_{P_1,P_3},$$
as required.

\sssec{}

We now perform the induction step, i.e., we will show how to reduce the statement for
a given triple $(P_1,P_2,P_3)$ to another one $(P'_1,P'_2,P'_3)$ for which the distance
between $P'_1$ and $P'_2$ is smaller than that between $P_1$ and $P_2$.

\medskip

We will take $P'_2=P_2$ and $P'_3=P_3$, and we let $P'_1$ be as in \lemref{l:rel Bruhat}(b). Note
that in situation of \secref{sss:add up}, the distances also add up, i.e., the distance between
$P'_1$ and $P_2$ is indeed smaller than the distance between $P_1$ and $P_2$.

\medskip

We have
$$i_{P_2,P_3}\circ i_{P_1,P_2} \overset{\text{\secref{sss:add up}}}\simeq
i_{P_2,P_3}\circ i_{P'_1,P_2}\circ i_{P_1,P'_1}  \overset{\text{induction hypoth.}}\simeq
i_{P'_1,P_3}\circ i_{P_1,P'_1}\overset{\text{adjacent case}}\simeq  i_{P_1,P_3}.$$

\qed[\thmref{t:intertwiner trans}]

\ssec{Explicit description of the functional equation} \label{ss:intw expl}

In this subsection we will give an explicit description of the natural transformation
$$\on{GrothSpr}^{\on{ch}}_{P_1}\overset{i_{P_1,P_2}}\longrightarrow \on{GrothSpr}^{\on{ch}}_{P_2}$$
as functors
$$\Shv^{\on{ch}}(M)\to \Shv^{\on{ch}}(G).$$

\sssec{}

Recall that the functor $\on{GrothSpr}_P$ is given by pull-push along the diagram
$$
\CD
P/\on{Ad}(P) @>{\sfp_P}>> G/\on{Ad}(G) \\
@V{\sfq_P}VV \\
M/\on{Ad}(M).
\endCD
$$

Consider the functor
\begin{equation} \label{e:HCh P}
\on{CH}_P:\ul\Shv((N(P)\backslash G/N(P))/\on{Ad}(M)) \to \ul\Shv(G/\on{Ad}(G)),
\end{equation}
given by pull-push along
\begin{equation} \label{e:HCh P diag}
\CD
G/\on{Ad}(P) @>>> G/\on{Ad}(G) \\
@VVV \\
(N(P)\backslash G/N(P))/\on{Ad}(M).
\endCD
\end{equation}

Let $\iota_P$ denote the functor
\begin{equation} \label{e:iota P}
\ul\Shv(M/\on{Ad}(M))\to \ul\Shv((N(P)\backslash G/N(P))/\on{Ad}(M))
\end{equation}
given by pull-push along
\begin{equation} \label{e:iota diag}
\CD
(N(P)\backslash P/N(P))/\on{Ad}(M) @>>> (N(P)\backslash G/N(P))/\on{Ad}(M) \\
@VVV \\
M/\on{Ad}(M).
\endCD
\end{equation}
We have
$$\on{GrothSpr}_P\simeq \on{CH}_P\circ \iota_P$$

\sssec{}

Consider the category
$$\ul\Shv(N(P)\backslash G/N(P))^{\on{spec.fin}},$$
where the spec.fin condition is taken with respect to the left or equivalently right action of $M$ (or both).

\medskip

Set
$$\ul\Shv^{\on{ch}}((N(P)\backslash G/N(P))/\on{Ad}(M)):=\binv_{\Delta(M)}\left(\ul\Shv(N(P)\backslash G/N(P))^{\on{spec.fin}}\right).$$

The functors $\on{CH}_P$ and $\iota_P$ restrict to functors
$$\on{CH}^{\on{ch}}_P:\ul\Shv^{\on{ch}}(G/\on{Ad}(G))\to \ul\Shv^{\on{ch}}((N(P)\backslash G/N(P))/\on{Ad}(M))$$
and
$$\iota^{\on{ch}}_P:\ul\Shv^{\on{ch}}(M/\on{Ad}(M))\to \ul\Shv^{\on{ch}}((N(P)\backslash G/N(P))/\on{Ad}(M)),$$
respectively, and we have
\begin{equation} \label{e:GrothSpr as compos}
\on{GrothSpr}^{\on{ch}}_P \simeq \on{CH}^{\on{ch}}_P\circ \iota^{\on{ch}}_P.
\end{equation}

\sssec{} \label{e:ip1p2}

With the above notations, applying \eqref{e:2-trace}, we obtain that the isomorphism
$$\on{GrothSpr}^{\on{ch}}_{P_1} \overset{i_{P_1,P_2}}\to \on{GrothSpr}^{\on{ch}}_{P_2}$$
is equal to the composition
\begin{equation} \label{e:intertwiner as compos}
\on{CH}^{\on{ch}}_{P_1}\circ \iota^{\on{ch}}_{P_1} \overset{\sim}\to
\on{CH}^{\on{ch}}_{P_2}\circ ((I^{\on{spec.fin}}_{P_2,P_1})^{-1})\otimes I^{\on{spec.fin}}_{P_1,P_2}) \circ \iota^{\on{ch}}_{P_1} \overset{\sim}\to
\on{CH}^{\on{ch}}_{P_2}\circ \iota^{\on{ch}}_{P_2},
\end{equation}
where the two arrows are described as follows.

\sssec{}

We start with the second arrow in \eqref{e:intertwiner as compos}. We claim that there is a canonical isomorphism
\begin{equation} \label{e:intertwiner as compos 2}
((I^{\on{spec.fin}}_{P_2,P_1})^{-1})\otimes I^{\on{spec.fin}}_{P_1,P_2}) \circ \iota^{\on{ch}}_{P_1} \overset{\sim}\to \iota^{\on{ch}}_{P_2}.
\end{equation}

To construct it, we rewrite the sought for isomorphism as
\begin{equation} \label{e:intertwiner as compos 2 ch}
(\on{Id}\otimes I^{\on{spec.fin}}_{P_1,P_2}) \circ \iota^{\on{ch}}_{P_1} \overset{\sim}\to
(I^{\on{spec.fin}}_{P_2,P_1}\otimes \on{Id}) \circ \iota^{\on{ch}}_{P_2}.
\end{equation}

The latter follows from
\begin{equation} \label{e:intertwiner as compos 2 bis}
(\on{Id}\otimes I_{P_1,P_2}) \circ \iota_{P_1} \overset{\sim}\to
(I_{P_2,P_1}\otimes \on{Id}) \circ \iota_{P_2},
\end{equation}
which is a valid isomorphism, as the two sides are given by pull-push along
\begin{equation} \label{e:P_1 P_2 diag}
\CD
\Bigl(N(P_1)\backslash (N(P_1)\cdot M\cdot N(P_2))/N(P_2)\Bigr)/\on{Ad}(M) @>>> \Bigl(N(P_1)\backslash G/N(P_2)\Bigr)/\on{Ad}(M) \\
@VVV \\
M/\on{Ad}(M).
\endCD
\end{equation}

\sssec{}

We now consider the first arrow in \eqref{e:intertwiner as compos}. We claim that there is a canonical isomorphism
\begin{equation} \label{e:intertwiner as compos 1}
\on{CH}^{\on{ch}}_{P_1} \overset{\sim}\to
\on{CH}^{\on{ch}}_{P_2}\circ ((I^{\on{spec.fin}}_{P_2,P_1})^{-1})\otimes I^{\on{spec.fin}}_{P_1,P_2}).
\end{equation}

To construct it, we rewrite the sought for isomorphism as
\begin{equation} \label{e:intertwiner as compos 1 ch}
\on{CH}^{\on{ch}}_{P_1} \circ (I^{\on{spec.fin}}_{P_2,P_1}\otimes \on{Id})\simeq
\on{CH}^{\on{ch}}_{P_2}\circ (\on{Id}\otimes I^{\on{spec.fin}}_{P_1,P_2}).
\end{equation}

The latter follows from
\begin{equation} \label{e:intertwiner as compos 1 bis}
\on{CH}_{P_1} \circ (I_{P_2,P_1}\otimes \on{Id})\simeq
\on{CH}_{P_2}\circ (\on{Id}\otimes I_{P_1,P_2}),
\end{equation}
which is a valid isomorphism, as the two sides are given by pull-push along
$$
\CD
(G\overset{M\cdot (N(P_2)\cap N(P_1))}\times G)/\on{Ad}(G) @>>> G/\on{Ad}(G) \\
@VVV \\
(G/N(P_2)\overset{M}\times N(P_1)\backslash G)/\on{Ad}(G) \\
@VV{\sim}V \\
(N(P_2)\backslash G/N(P_1))/\on{Ad}(M).
\endCD
$$

\ssec{Proof of \thmref{t:involut}}

This proof will be essentially a tautology modulo one ingredient: the role of the inversion involution on $G$,
which exchanges the roles of $P_1$ and $P_2$.

\sssec{}

Recall that we denote by $\sigma_G$ and $\sigma_M$ the inversion automorphisms of $\ul\Shv(G/\on{Ad}(G))$ and
$\ul\Shv(M/\on{Ad}(M))$, respectively.

\medskip

By \lemref{l:intertwiner and inversion}, we have a commutative diagram
$$
\CD
\on{GrothSpr}_{P_1} @>{i_{P_1,P_2}}>>  \on{GrothSpr}_{P_2} \\
@V{\sim}VV @VV{\sim}V \\
\sigma_G\circ \on{GrothSpr}_{P_1} \circ \sigma_M  @>{i_{P_1,P_2}}>>  \sigma_G\circ \on{GrothSpr}_{P_2} \circ \sigma_M,
\endCD
$$

Thus, the assertion of \thmref{t:involut} is equivalent to the commutativity of the diagram
\begin{equation} \label{e:diagram to prove}
\CD
\on{GrothSpr}^{\on{ch}}_{P_1} @>\sim>> \sigma_G\circ \on{GrothSpr}^{\on{ch}}_{P_1} \circ \sigma_M \\
@V {i_{P_1,P_2}}VV  @AA{i_{P_2,P_1}}A \\
\on{GrothSpr}^{\on{ch}}_{P_2}  @>\sim>>\sigma_G\circ \on{GrothSpr}^{\on{ch}}_{P_2} \circ \sigma_M.
\endCD
\end{equation}

\sssec{}

We denote by $i^\sigma_{P_1,P_2}$ the isomorphism
\begin{equation} \label{e:ip1p2tau}
\on{GrothSpr}^{\on{ch}}_{P_1}\to \on{GrothSpr}^{\on{ch}}_{P_2},
\end{equation}
obtained from isomorphism $i_{P_1,P_2}$ defined in \secref{e:ip1p2} by interchanging the left and the right factors.
Explicitly, it is defined as the composition
\begin{equation*} 
\on{CH}^{\on{ch}}_{P_1}\circ \iota^{\on{ch}}_{P_1} \overset{\sim}\to
\on{CH}^{\on{ch}}_{P_2}\circ (I^{\on{spec.fin}}_{P_1,P_2}\otimes (I^{\on{spec.fin}}_{P_2,P_1})^{-1}) \circ \iota^{\on{ch}}_{P_1} \overset{\sim}\to \on{CH}^{\on{ch}}_{P_2}\circ \iota^{\on{ch}}_{P_2},
\end{equation*}
where the two arrows are deduced from isomorphisms
\begin{equation} \label{e:intertwiner as compos 2 ch tau}
(I^{\on{spec.fin}}_{P_1,P_2}\otimes\on{Id}) \circ \iota^{\on{ch}}_{P_1} \overset{\sim}\to
(\on{Id}\otimes I^{\on{spec.fin}}_{P_2,P_1}) \circ \iota^{\on{ch}}_{P_2}
\end{equation}
and
\begin{equation} \label{e:intertwiner as compos 1 ch tau}
\on{CH}^{\on{ch}}_{P_1} \circ (\on{Id}\otimes I^{\on{spec.fin}}_{P_2,P_1})\simeq
\on{CH}^{\on{ch}}_{P_2}\circ (I^{\on{spec.fin}}_{P_1,P_2}\otimes \on{Id}),
\end{equation}
obtained from the corresponding isomorphisms \eqref{e:intertwiner as compos 2 ch} and \eqref{e:intertwiner as compos 1 ch} by interchanging
the roles of $P_1$ and $P_2$.

\sssec{}

Note that diagram (\ref{e:diagram to prove}) decomposes as

\begin{equation} \label{e:involute to prove 2}
\CD
\on{GrothSpr}^{\on{ch}}_{P_1} @= \on{GrothSpr}^{\on{ch}}_{P_1} @>\sim>> \sigma_G\circ \on{GrothSpr}^{\on{ch}}_{P_1} \circ \sigma_M \\
@V {i_{P_1,P_2}}VV @AA{i^\sigma_{P_2,P_1}}A @AA{i_{P_2,P_1}}A \\
\on{GrothSpr}^{\on{ch}}_{P_2} @= \on{GrothSpr}^{\on{ch}}_{P_2}  @>\sim>>\sigma_G\circ \on{GrothSpr}^{\on{ch}}_{P_2} \circ \sigma_M,
\endCD
\end{equation}
so it remains to show that its both inner squares are commutative.

\medskip

The commutation of the right inner square follows from the fact that the involutions interchange the left and the right factors.
So it remains to show the commutativity of the left inner square.

\sssec{}

By definition, the left inner square of diagram (\ref{e:involute to prove 2}) is

\medskip

\begin{equation} \label{e:involute prefinal diag}
\CD
\on{CH}^{\on{ch}}_{P_1}\circ \iota^{\on{ch}}_{P_1} @>(\ref{e:intertwiner as compos 2 ch}) >> \on{CH}^{\on{ch}}_{P_1}\circ (I^{\on{spec.fin}}_{P_2,P_1}\otimes (I^{\on{spec.fin}}_{P_1,P_2})^{-1}) \circ \iota^{\on{ch}}_{P_2}\\
@V\sim VV @VV\sim V\\
\on{CH}^{\on{ch}}_{P_1}\circ (I^{\on{spec.fin}}_{P_2,P_1}\otimes \on{Id})\circ ((I^{\on{spec.fin}}_{P_2,P_1})^{-1}\otimes \on{Id}) \circ \iota^{\on{ch}}_{P_1} & & \on{CH}^{\on{ch}}_{P_1}\circ (I^{\on{spec.fin}}_{P_1,P_2}\otimes \on{Id})\circ (\on{Id}\otimes(I^{\on{spec.fin}}_{P_1,P_2})^{-1})\circ \iota^{\on{ch}}_{P_2}\\
@V(\ref{e:intertwiner as compos 1 ch})VV @VV(\ref{e:intertwiner as compos 1 ch})V\\
\on{CH}^{\on{ch}}_{P_2}\circ (\on{Id}\otimes I^{\on{spec.fin}}_{P_1,P_2})\circ ((I^{\on{spec.fin}}_{P_2,P_1})^{-1}\otimes \on{Id}) \circ \iota^{\on{ch}}_{P_1}  & & \on{CH}^{\on{ch}}_{P_2}\circ (\on{Id}\otimes I^{\on{spec.fin}}_{P_1,P_2})\circ (\on{Id}\otimes(I^{\on{spec.fin}}_{P_1,P_2})^{-1}) \circ\iota^{\on{ch}}_{P_2}\\
@V\sim VV @VV\sim V\\
\on{CH}^{\on{ch}}_{P_2}\circ ((I^{\on{spec.fin}}_{P_2,P_1})^{-1}\otimes I^{\on{spec.fin}}_{P_1,P_2}) \circ \iota^{\on{ch}}_{P_1} @>(\ref{e:intertwiner as compos 2 ch})>>\on{CH}^{\on{ch}}_{P_2}\circ \iota^{\on{ch}}_{P_2}.
\endCD
\end{equation}

\medskip

Let $\phi_{1,2}$ denote the isomorphism
\begin{equation} \label{e:intertwiner inverse}
((I^{\on{spec.fin}}_{P_2,P_1})^{-1}\otimes\on{Id}) \circ \iota^{\on{ch}}_{P_1} \overset{\sim}\to
(\on{Id}\otimes (I^{\on{spec.fin}}_{P_1,P_2})^{-1}) \circ \iota^{\on{ch}}_{P_2}
\end{equation}
equal to the composition
\begin{multline*}
((I^{\on{spec.fin}}_{P_2,P_1})^{-1}\otimes\on{Id})\circ \iota^{\on{ch}}_{P_1}\simeq
(I^{\on{spec.fin}}_{P_2,P_1}\otimes I^{\on{spec.fin}}_{P_1,P_2})^{-1}\circ (\on{Id}\otimes I^{\on{spec.fin}}_{P_1,P_2})\circ \iota^{\on{ch}}_{P_1}
\overset{(\ref{e:intertwiner as compos 2 ch})}{\simeq} \\
\simeq(I^{\on{spec.fin}}_{P_2,P_1}\otimes I^{\on{spec.fin}}_{P_1,P_2})^{-1}\circ (I^{\on{spec.fin}}_{P_2,P_1}\otimes\on{Id})\circ \iota^{\on{ch}}_{P_2}\simeq (\on{Id}\otimes(I^{\on{spec.fin}}_{P_2,P_1})^{-1})\circ \iota^{\on{ch}}_{P_2}.
\end{multline*}

\medskip

The map $\phi_{1,2}$ provides the two missing horizontal arrows in \eqref{e:involute prefinal diag}. Hence, to prove the commutation of
\eqref{e:involute prefinal diag}, it suffices to show that the resulting three inner squares commute.

\sssec{}

The resulting \emph{middle} inner square in \eqref{e:involute prefinal diag} is
$$
\CD
\on{CH}^{\on{ch}}_{P_1}\circ (I^{\on{spec.fin}}_{P_2,P_1}\otimes \on{Id})\circ ((I^{\on{spec.fin}}_{P_2,P_1})^{-1}\otimes \on{Id}) \circ \iota^{\on{ch}}_{P_1}  @>{\phi_{1,2}}>>
\on{CH}^{\on{ch}}_{P_1}\circ (I^{\on{spec.fin}}_{P_1,P_2}\otimes \on{Id})\circ (\on{Id}\otimes(I^{\on{spec.fin}}_{P_1,P_2})^{-1})\circ \iota^{\on{ch}}_{P_2} \\
@V{\text{\eqref{e:intertwiner as compos 1 ch}}}VV @VV{\text{\eqref{e:intertwiner as compos 1 ch}}}V \\
\on{CH}^{\on{ch}}_{P_2}\circ (\on{Id}\otimes I^{\on{spec.fin}}_{P_1,P_2})\circ ((I^{\on{spec.fin}}_{P_2,P_1})^{-1}\otimes \on{Id}) \circ \iota^{\on{ch}}_{P_1}  @>{\phi_{1,2}}>>
\on{CH}^{\on{ch}}_{P_2}\circ (\on{Id}\otimes I^{\on{spec.fin}}_{P_1,P_2})\circ (\on{Id}\otimes(I^{\on{spec.fin}}_{P_1,P_2})^{-1}) \circ\iota^{\on{ch}}_{P_2},
\endCD
$$
and it commutes tautologically.

\medskip

The top and the bottom inner squares are
$$
\CD
\on{CH}^{\on{ch}}_{P_1}\circ \iota^{\on{ch}}_{P_1} @>{\text{\eqref{e:intertwiner as compos 2 ch}}} >>
\on{CH}^{\on{ch}}_{P_1}\circ (I^{\on{spec.fin}}_{P_2,P_1}\otimes (I^{\on{spec.fin}}_{P_1,P_2})^{-1}) \circ \iota^{\on{ch}}_{P_2}\\
@V{\sim}VV @VV{\sim}V\\
\on{CH}^{\on{ch}}_{P_1}\circ (I^{\on{spec.fin}}_{P_2,P_1}\otimes \on{Id})\circ ((I^{\on{spec.fin}}_{P_2,P_1})^{-1}\otimes \on{Id}) \circ \iota^{\on{ch}}_{P_1} @>{\phi_{1,2}}>>
\on{CH}^{\on{ch}}_{P_1}\circ (I^{\on{spec.fin}}_{P_1,P_2}\otimes \on{Id})\circ (\on{Id}\otimes(I^{\on{spec.fin}}_{P_1,P_2})^{-1})\circ \iota^{\on{ch}}_{P_2}
\endCD
$$
and
$$
\CD
\on{CH}^{\on{ch}}_{P_2}\circ (\on{Id}\otimes I^{\on{spec.fin}}_{P_1,P_2})\circ ((I^{\on{spec.fin}}_{P_2,P_1})^{-1}\otimes \on{Id}) \circ \iota^{\on{ch}}_{P_1} @>{\phi_{1,2}}>>
\on{CH}^{\on{ch}}_{P_2}\circ (\on{Id}\otimes I^{\on{spec.fin}}_{P_1,P_2})\circ (\on{Id}\otimes(I^{\on{spec.fin}}_{P_1,P_2})^{-1}) \circ\iota^{\on{ch}}_{P_2}\\
@V{\sim}VV @VV{\sim}V \\
\on{CH}^{\on{ch}}_{P_2}\circ ((I^{\on{spec.fin}}_{P_2,P_1})^{-1}\otimes I^{\on{spec.fin}}_{P_1,P_2}) \circ \iota^{\on{ch}}_{P_1}
@>{\text{\eqref{e:intertwiner as compos 2 ch}}}>> \on{CH}^{\on{ch}}_{P_2}\circ \iota^{\on{ch}}_{P_2},
\endCD,
$$
respectively, and they commute by the definition of the map $\phi_{1,2}$.

\qed[\thmref{t:involut}]

\ssec{A theorem of [CY] and [GIT]}

In this subsection we will use some of the ideas introduced in \secref{ss:intw expl} to reprove a theorem from \cite{CY}
and \cite{GIT}.

\sssec{}

Let 
$$\on{CH}^{\on{ext}}_P: \ul\Shv(G/N(P)\overset{M}\times N(P)\backslash G)\to \ul\Shv(G)$$
be the functor given by pull-push along the diagram
\begin{equation} \label{e:CH ext diag}
\CD
G\overset{P}\times G @>>> G \\
@VVV \\
G/N(P)\overset{M}\times N(P)\backslash G.
\endCD
\end{equation}

By construction, it is the counit of the $(\bind_P,\bjacq_P)$-adjunction.

\sssec{}

Set
$$\ul\Shv(G/N(P)\overset{M}\times N(P)\backslash G)^{\on{spec.fin}}:=(\bemb^{\on{spec.fin}})^R(\ul\Shv(G/N(P)\overset{M}\times N(P)\backslash G)),$$
where the spec.fin condition refers to either the action of $G$ on one or both sides (the result is the same by \lemref{l:mon Hecke}).

\medskip

Set
$$\on{CH}^{\on{ext,spec.fin}}_P:=\on{CH}^{\on{ext}}_P|_{\ul\Shv(G/N(P)\overset{M}\times N(P)\backslash G)^{\on{spec.fin}}}.$$
This is the counit of the $(\bind^{\on{spec.fin}}_P,\bjacq^{\on{spec.fin}}_P)$-adjunction.

\sssec{}

By \thmref{t:2-Cat Serre} and formula \eqref{e:Serre 2}, we have:
\begin{equation} \label{e:CH ext Serre}
\on{CH}^{\on{ext,spec.fin}}_P \simeq ((\on{CH}^{\on{ext,spec.fin}}_P)^R)^R\circ (\on{Id}\otimes (I^{\on{spec.fin}}_{P^-,P}\circ I^{\on{spec.fin}}_{P,P^-}))[-2\dim(N(P))].
\end{equation}

\sssec{}

Let
$$\on{HC}^{\on{ext}}_P:\ul\Shv(G)\to \ul\Shv(G/N(P)\overset{M}\times N(P)\backslash G)$$
be the functor given by pull-push along the diagram
\begin{equation} \label{e:HC ext diag}
\CD
G\overset{P}\times G @>>> G/N(P)\overset{M}\times N(P)\backslash G \\
@VVV \\
G.
\endCD
\end{equation}

Denote:
$$\on{HC}^{\on{ext,spec.fin}}_P:=\on{HC}^{\on{ext}}_P|_{\ul\Shv(G)^{\on{spec.fin}}}, \quad
\ul\Shv(G)^{\on{spec.fin}}\to \ul\Shv(G/N(P)\overset{M}\times N(P)\backslash G)^{\on{spec.fin}}.$$

\medskip

Note now that the functor $\on{HC}^{\on{ext}}_P$ (resp., $\on{HC}^{\on{ext,\sec.fin}}_P$) 
is the right adjoint of $\on{CH}^{\on{ext}}_P$ (resp., $\on{CH}^{\on{ext,\sec.fin}}_P$), 
\emph{up to a cohomological shift by $2\dim(N(P))$}.

\sssec{}

Let
$$\wt{\on{HC}}^{\on{ext,spec.fin}}_P: \ul\Shv(G)^{\on{spec.fin}}\to \ul\Shv(G/N(P)\overset{M}\times N(P)\backslash G)^{\on{spec.fin}}$$
be the \emph{left} adjoint of $\on{CH}^{\on{ext,spec.fin}}_P$. It is given by \emph{*-pull and !-push} along
the diagram \eqref{e:HC ext diag}.

\medskip

Thus, we obtain that \eqref{e:CH ext Serre} is equivalent to
\begin{cor}
There exists a canonical isomorphism
\begin{equation} \label{e:HC ext Serre}
(\on{Id}\otimes I^{\on{spec.fin}}_{P,P^-})\circ \wt{\on{HC}}^{\on{ext,spec.fin}}_P\simeq
(\on{Id}\otimes (I^{\on{spec.fin}}_{P^-,P})^{-1}))\circ \on{HC}^{\on{ext,spec.fin}}_P
\end{equation}
as of functors
$$\ul\Shv(G)^{\on{spec.fin}}\to \ul\Shv(G/N(P)\overset{M}\times N(P^-)\backslash G)^{\on{spec.fin}}.$$
\end{cor}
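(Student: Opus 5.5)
We deduce \eqref{e:HC ext Serre} from \eqref{e:CH ext Serre} by passing to adjoints and then rearranging the invertible intertwiners. Write $\sS:=(\on{Id}\otimes (I^{\on{spec.fin}}_{P^-,P}\circ I^{\on{spec.fin}}_{P,P^-}))[-2\dim(N(P))]$. This is an automorphism of $\ul\Shv(G/N(P)\overset{M}\times N(P)\backslash G)^{\on{spec.fin}}$, since both intertwiners are isomorphisms by \propref{p:intertwiner spec finite}. Then \eqref{e:CH ext Serre} reads
$$\on{CH}^{\on{ext,spec.fin}}_P\simeq ((\on{CH}^{\on{ext,spec.fin}}_P)^R)^R\circ \sS.$$

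First we take left adjoints of both sides. The left adjoint of the left-hand side is, by definition, $\wt{\on{HC}}^{\on{ext,spec.fin}}_P$. The left adjoint of the right-hand side is $\sS^{-1}\circ (\on{CH}^{\on{ext,spec.fin}}_P)^R$. Here we use that the left adjoint of a double right adjoint is the single right adjoint; this adjoint exists by the 2-adjointability of \thmref{t:jacq mon}, working value-wise in $\AGCat$.

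Next we identify $(\on{CH}^{\on{ext,spec.fin}}_P)^R$ with $\on{HC}^{\on{ext,spec.fin}}_P$ up to the shift $[2\dim(N(P))]$, as recorded just before the corollary. Care is needed with the sign of the shift so that it cancels the shift in $\sS$. We obtain
$$\wt{\on{HC}}^{\on{ext,spec.fin}}_P\simeq (\on{Id}\otimes (I^{\on{spec.fin}}_{P,P^-})^{-1}\circ (I^{\on{spec.fin}}_{P^-,P})^{-1})\circ \on{HC}^{\on{ext,spec.fin}}_P.$$

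Finally we compose on the left with $\on{Id}\otimes I^{\on{spec.fin}}_{P,P^-}$. This cancels $(I^{\on{spec.fin}}_{P,P^-})^{-1}$ and yields \eqref{e:HC ext Serre}. The target of the resulting isomorphism is the $N(P^-)$-version of the category, as stated.

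The main obstacle is bookkeeping rather than substance. Two points need checking. First, the shift conventions must cancel exactly: $\on{HC}^{\on{ext}}$ is the right adjoint of $\on{CH}^{\on{ext}}$ only up to $[2\dim(N(P))]$, and $\sS$ carries $[-2\dim(N(P))]$. Second, the order of composition and the side on which the intertwiners act (the right factor, as in \thmref{t:2-Cat Serre}) must be tracked consistently when inverting. I would first verify both points in the $\on{pt}$-value case, where everything reduces to statements about $*$/$!$ pull-push along \eqref{e:HC ext diag}. I would then note that all the functors involved are compatible with the $\AGCat$ structure, by \thmref{t:restr main}(a), so the value-wise identification assembles.
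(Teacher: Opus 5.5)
Your proposal is correct and is essentially the paper's argument: the paper likewise obtains \eqref{e:HC ext Serre} from \eqref{e:CH ext Serre} by passing to left adjoints and then composing with $\on{Id}\otimes I^{\on{spec.fin}}_{P,P^-}$. Here the left adjoint of $((\on{CH}^{\on{ext,spec.fin}}_P)^R)^R$ is $(\on{CH}^{\on{ext,spec.fin}}_P)^R$. The shift does cancel, because $!$-pullback along the smooth map of relative dimension $\dim(N(P))$ in \eqref{e:CH ext diag} gives $(\on{CH}^{\on{ext,spec.fin}}_P)^R\simeq \on{HC}^{\on{ext,spec.fin}}_P[-2\dim(N(P))]$, and since the manipulation is formal at the level of 1-morphisms, your proposed separate value-wise verification is not needed.
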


\sssec{}

Applying $\binv_{\Delta(G)}$ to $\on{HC}^{\on{ext}}_P$, we obtain a functor
$$\on{HC}_P:\ul\Shv(G/\on{Ad}(G))\to \ul\Shv((N(P)\backslash G/N(P))/\on{Ad}(M)),$$
which is given by pull-push along
\begin{equation} \label{e:HCh P diag tr}
\CD
G/\on{Ad}(P) @>>> (N(P)\backslash G/N(P))/\on{Ad}(M) \\
@VVV \\
G/\on{Ad}(G).
\endCD
\end{equation}

\medskip

Let $\wt{\on{HC}}^{\on{ch}}_P$ be the \emph{left} adjoint of $\on{CH}^{\on{ch}}_P$.
It is given by \emph{*-pull and !-push} along
the diagram \eqref{e:HCh P diag tr}.

\sssec{}

Let
$$\ul\Shv^{\on{ch}}((N(P)\backslash G/N(P))/\on{Ad}(M)):=\binv_G(\ul\Shv(G/N(P)\overset{M}\times N(P)\backslash G)^{\on{spec.fin}}).$$

\medskip

Denote
$$\on{HC}^{\on{ch}}_P:=\on{HC}_P|_{\ul\Shv^{\on{ch}}(G/\on{Ad}(G))}, \quad
\ul\Shv^{\on{ch}}(G/\on{Ad}(G))\to \ul\Shv^{\on{ch}}((N(P)\backslash G/N(P))/\on{Ad}(M)).$$

\sssec{}

Applying $\binv_G$ to both sides of \eqref{e:HC ext Serre}, we obtain:

\begin{cor} \label{c:HC is ambi}
There exists a canonical isomorphism
$$(\on{Id}\otimes I^{\on{spec.fin}}_{P,P^-})\circ \wt{\on{HC}}^{\on{ch}}_P \simeq
(\on{Id}\otimes (I^{\on{spec.fin}}_{P^-,P})^{-1})\circ \on{HC}^{\on{ch}}_P$$
as functors
$$\ul\Shv^{\on{ch}}(G)\to \ul\Shv^{\on{ch}}(N(P)\backslash G/N(P)/\on{Ad}(M)).$$
\end{cor}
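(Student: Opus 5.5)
The plan is to deduce \corref{c:HC is ambi} from the isomorphism \eqref{e:HC ext Serre} by applying the 1-morphism $\binv_{\Delta(G)}$ (equivalently $\bcoinv_{\Delta(G)}$, by \propref{p:coinv to inv}) to both sides. This requires checking that each ingredient of \eqref{e:HC ext Serre} becomes the corresponding ingredient in the statement.

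\textbf{Step 1: compatibility of $\binv_{\Delta(G)}$ with the four functors.} All functors in \eqref{e:HC ext Serre} are 1-morphisms in $(G\times G)\mmod$. Moreover, $\binv_{\Delta(G)}$ is functorial and compatible with pull-push along $(G\times G)$-equivariant correspondences, via \eqref{e:inv as quot}. Hence:
\begin{itemize}
\item Applying $\binv_{\Delta(G)}$ to $\on{HC}^{\on{ext}}_P$ gives $\on{HC}_P$. This is stated in the text and comes from passing to quotients in \eqref{e:HC ext diag}, which yields \eqref{e:HCh P diag tr}.
\item $\wt{\on{HC}}^{\on{ext,spec.fin}}_P$ is *-pull, !-push along the same diagram. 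Quotienting by $\Delta(G)$ gives *-pull, !-push along \eqref{e:HCh P diag tr}, which is $\wt{\on{HC}}^{\on{ch}}_P$ once restricted.
\item For the *-/!-operations, I would justify base change along the smooth covers $\CY\to\CY/G$. A cleaner route is via adjunction: $\binv_{\Delta(G)}$ is a 2-functor, so it carries the left adjoint of $\on{CH}^{\on{ext,spec.fin}}_P$ to a left adjoint of $\binv_{\Delta(G)}(\on{CH}^{\on{ext,spec.fin}}_P)=\on{CH}^{\on{ch}}_P$. That left adjoint is $\wt{\on{HC}}^{\on{ch}}_P$ by definition.
\item The intertwiners $\on{Id}\otimes I^{\on{spec.fin}}_{P,P^-}$ and $\on{Id}\otimes(I^{\on{spec.fin}}_{P^-,P})^{-1}$ act on the right factor and commute with the diagonal $G$-action. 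They therefore descend to the corresponding functors on the $\Delta(G)$-invariants.
\end{itemize}

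\textbf{Step 2: identification of sources and targets.} By \propref{p:emb char sheaves via spec. fin},
$$\binv_{\Delta(G)}(\ul\Shv(G)^{\on{spec.fin}})\simeq\ul\Shv^{\on{ch}}(G/\on{Ad}(G)).$$
The target is $\ul\Shv^{\on{ch}}((N(P)\backslash G/N(P^-))/\on{Ad}(M))$. This follows by definition together with
$$\binv_{\Delta(G)}\bigl(\ul\Shv(G/N(P)\overset{M}\times N(P^-)\backslash G)\bigr)\simeq \ul\Shv\bigl((N(P)\backslash G/N(P^-))/\on{Ad}(M)\bigr).$$
We also use that $\binv_{\Delta(G)}$ preserves the spec.fin subobjects, because $(\bemb^{\on{spec.fin}})^R$ is itself a $G$-linear operation. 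Applying $\binv_{\Delta(G)}$ to \eqref{e:HC ext Serre} then yields the claimed isomorphism.

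\textbf{Main obstacle.} The delicate point is Step 1 for $\wt{\on{HC}}$: confirming that the ``left adjoint'' description survives passage to invariants, with no shift discrepancy between $\on{HC}$ and the right adjoint of $\on{CH}$. I would first try the adjunction argument, since 2-functors preserve adjunctions. This avoids any geometric base-change bookkeeping and leaves only the shift already built into \eqref{e:CH ext Serre}.
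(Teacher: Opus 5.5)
Your proposal is correct and matches the paper's argument: the corollary is obtained by applying $\binv_{\Delta(G)}$ to both sides of \eqref{e:HC ext Serre}. Your extra checks — passing from $\wt{\on{HC}}^{\on{ext,spec.fin}}_P$ to $\wt{\on{HC}}^{\on{ch}}_P$ via 2-functoriality of adjunctions, and observing that the cohomological shift is already absorbed into \eqref{e:HC ext Serre} — just make explicit what the paper leaves implicit. You are also right to take the source to be $\ul\Shv^{\on{ch}}(G/\on{Ad}(G))$ and the target to be $\ul\Shv^{\on{ch}}((N(P)\backslash G/N(P^-))/\on{Ad}(M))$; the $N(P)$ in the displayed statement is a typo, as \corref{c:HC is ambi bis} confirms.
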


\sssec{}

We evaluate the isomorphism of \corref{c:HC is ambi} on $\on{pt}$ and obtain an isomorphism
\begin{equation} \label{e:HC ambi}
(\on{Id}\otimes I^{\on{restr}}_{P,P^-})\circ \wt{\on{HC}}^{\on{ch}}_P \simeq
(\on{Id}\otimes (I^{\on{restr}}_{P^-,P})^{-1})\circ \on{HC}^{\on{ch}}_P
\end{equation}
as functors
$$\Shv^{\on{ch}}(G/\on{Ad}(G))\to \Shv^{\on{ch}}(N(P)\backslash G/N(P)/\on{Ad}(M)).$$

\sssec{}

Note now that we have a canonical identification
$$\BD_{(N(P)\backslash G/N(P))/\on{Ad}(M)}\circ \on{HC}^{\on{ch}}_P \circ \BD_{G/\on{Ad}(G)}\simeq \wt{\on{HC}}^{\on{ch}}_P$$
as functors
$$\Shv(G/\on{Ad}(G))\to \Shv(N(P)\backslash G/N(P)/\on{Ad}(M)).$$

In addition, we have
$$\BD_{(N(P)\backslash G/N(P^-))/\on{Ad}(M)}\circ I^{\on{spec.fin}}_{P,P^-} \circ \BD_{(N(P)\backslash G/N(P))/\on{Ad}(M)} \simeq (I^{\on{spec.fin}}_{P^-,P})^{-1}.$$

\sssec{} \label{sss:counit Serre}

Hence, from \eqref{e:HC ambi}, we obtain:

\begin{cor}  \label{c:HC is ambi bis}
The functor
$$(\on{Id}\otimes (I^{\on{restr}}_{P^-,P})^{-1})\circ \on{HC}^{\on{ch}}_P:
\Shv^{\on{ch}}(G/\on{Ad}(G))\to \Shv^{\on{ch}}(N(P)\backslash G/N(P^-)/\on{Ad}(M))$$
commutes with Verdier duality.
\end{cor}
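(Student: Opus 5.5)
The plan is to obtain \corref{c:HC is ambi bis} by conjugating the isomorphism \eqref{e:HC ambi} by Verdier duality. Write
$$\Phi:=(\on{Id}\otimes (I^{\on{restr}}_{P^-,P})^{-1})\circ \on{HC}^{\on{ch}}_P$$
for the functor in question. We must produce an isomorphism
$$\BD_{(N(P)\backslash G/N(P^-))/\on{Ad}(M)}\circ \Phi\circ \BD_{G/\on{Ad}(G)}\simeq \Phi$$
on compact objects of $\Shv^{\on{ch}}(G/\on{Ad}(G))$. Here we use Remark \ref{r:Verdier ch}, which tells us that Verdier duality preserves compact character sheaves. The analogous statement for the target category follows from \lemref{l:dual mon}, applied to the spec.fin part of $N(P)\backslash G/N(P^-)$.

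The computation is the chain
$$\BD\circ \Phi\circ \BD \simeq \bigl(\BD\circ (\on{Id}\otimes (I^{\on{restr}}_{P^-,P})^{-1})\circ \BD\bigr)\circ\bigl(\BD\circ \on{HC}^{\on{ch}}_P\circ \BD\bigr),$$
which we obtain by inserting $\BD\circ\BD\simeq\on{Id}$ on $(N(P)\backslash G/N(P))/\on{Ad}(M)$.

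\begin{itemize}
\item The second factor is identified with $\wt{\on{HC}}^{\on{ch}}_P$, using the stated identification $\BD\circ \on{HC}^{\on{ch}}_P\circ\BD\simeq \wt{\on{HC}}^{\on{ch}}_P$.
\item The first factor is handled by inverting the stated relation $\BD\circ I^{\on{spec.fin}}_{P,P^-}\circ\BD\simeq (I^{\on{spec.fin}}_{P^-,P})^{-1}$. Since $I^{\on{spec.fin}}_{P,P^-}$ is invertible by \propref{p:intertwiner spec finite}, and $\BD$ is an involution, this gives $\BD\circ (I^{\on{spec.fin}}_{P^-,P})^{-1}\circ\BD\simeq I^{\on{spec.fin}}_{P,P^-}$.
\end{itemize}

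Together these yield
$$\BD\circ\Phi\circ\BD\simeq (\on{Id}\otimes I^{\on{restr}}_{P,P^-})\circ \wt{\on{HC}}^{\on{ch}}_P.$$
By \eqref{e:HC ambi}, the right-hand side is isomorphic to $\Phi$, which is the claim.

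The step most likely to cause trouble is the Verdier-duality behaviour of the intertwiner. One has to check that the duality relation, stated for $I^{\on{spec.fin}}$, holds for the restricted version acting on one factor, and that it is compatible with passing to $\binv_{\Delta(M)}$ and evaluating at $\on{pt}$.

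The approach I would take is to use the explicit description from \secref{sss:Radon}. There $I_{P,P^-}$ is $*$-push/$!$-pull along the correspondence. Its Verdier conjugate is therefore $!$-push/$*$-pull along the same correspondence, and this was identified in the proof of \thmref{t:2-Cat Serre} as the inverse of $I^{\on{spec.fin}}_{P^-,P}$, up to a shift that I expect to cancel.

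A final point is that the isomorphism \eqref{e:HC ambi} is stated for the whole functor, so it applies in particular to compact objects. Since both sides preserve compactness on these safe stacks, restricting to compacts is harmless.
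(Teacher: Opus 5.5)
Your argument is correct and is the same as the paper's: conjugate \eqref{e:HC ambi} by $\BD$, using $\BD\circ\on{HC}^{\on{ch}}_P\circ\BD\simeq\wt{\on{HC}}^{\on{ch}}_P$ and $\BD\circ I^{\on{spec.fin}}_{P,P^-}\circ\BD\simeq (I^{\on{spec.fin}}_{P^-,P})^{-1}$, and your comparison of $*$-push/$!$-pull with $!$-push/$*$-pull along $N(P)\backslash G$ is exactly why the second identity holds, with no shift. One minor correction: $G/\on{Ad}(G)$ and $(N(P)\backslash G/N(P^-))/\on{Ad}(M)$ are not safe, so you should read off preservation of compact objects by $\Phi$ from \eqref{e:HC ambi} itself, which identifies $\Phi$, up to the equivalence $\on{Id}\otimes I^{\on{restr}}_{P,P^-}$, with $\wt{\on{HC}}^{\on{ch}}_P$, a left adjoint of the continuous functor $\on{CH}^{\on{ch}}_P$.
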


\begin{rem}
The statement of \corref{c:HC is ambi bis} has been known thanks to \cite{CY} and \cite{GIT} (in a different generality),
where it is proved by a different method.
\end{rem}

\ssec{A zoo of (un)natural isomorphisms}

In this subsection we will revisit some of the isomorphisms established earlier
(and will assert that we do not obtain anything new).

\medskip

This subsection can be regarded as ``bonus material" and can be skipped. 

\sssec{}

Let us think of the functor $\fr^{\on{ch}}_P$ as the right adjoint of the functor $\on{GrothSpr}^{\on{ch}}_{P}$.
By adjunction, the isomorphism
$$\on{GrothSpr}^{\on{ch}}_{P_1} \overset{i_{P_1,P_2}}\to \on{GrothSpr}^{\on{ch}}_{P_2}$$
induces an isomorphism
$$\fr^{\on{ch}}_{P_2}\overset{\sim}\to \fr^{\on{ch}}_{P_1},$$
which we denote by $i^\fr_{P_2,P_1}$.

\medskip

In this subsection we will explain a different way to obtain the isomorphism
$$i^\fr_{P_,P_2}:\fr^{\on{ch}}_{P_1} \overset{\sim}\to \fr^{\on{ch}}_{P_2}$$
in the particular case when $P_1=P$ and $P_2=P^-$ are opposite parabolics.

\sssec{} \label{sss:self-dual r expl}

Let $\jmath_P$ denote the functor given by pull-push along the diagram
\begin{equation} \label{e:iota diag trans}
\CD
(N(P)\backslash P/N(P))/\on{Ad}(M) @>>> M/\on{Ad}(M) \\
@VVV \\
(N(P)\backslash G/N(P))/\on{Ad}(M),
\endCD
\end{equation}
which is the transpose of the diagram \eqref{e:iota diag} that defines $\iota_P$.

\medskip

We have:
\begin{equation} \label{e:r j}
\jmath_P\circ \on{HC}_P\simeq \fr_P.
\end{equation}

\sssec{}

Note that we have
$$\jmath_P\circ (\on{Id}\otimes I_{P^-,P})\simeq \jmath_{P,P^-},$$
where $\jmath_{P,P^-}$ is as in \secref{sss:j p}.

\medskip

Hence, we obtain:
$$\jmath^{\on{ch}}_P\simeq \jmath^{\on{ch}}_{P,P^-}\circ (\on{Id}\otimes (I^{\on{spec.fin}}_{P^-,P})^{-1}).$$

Combining with \eqref{e:r j} we obtain:
\begin{equation} \label{e:r via good j}
\fr^{\on{ch}}_P \simeq \jmath^{\on{ch}}_{P,P^-}\circ (\on{Id}\otimes (I^{\on{spec.fin}}_{P^-,P})^{-1})\circ \on{HC}^{\on{ch}}_P,
\end{equation}

\sssec{}

Note that
$$\BD_{M/\on{Ad}(M)}\circ \jmath_{P,P^-}\circ \BD_{(N(P)\backslash G/N(P^-))/\on{Ad}(M)}\simeq
\jmath_{P,P^-},$$
since $\jmath_{P,P^-}$ is restriction along an open embedding.

\medskip

Combining this with \corref{c:HC is ambi bis} and using \eqref{e:r via good j},
we obtain:\footnote{The idea to deduce \corref{c:restr and Verdier again} from \corref{c:HC is ambi bis} was explained to us by R.~Bezrukavnikov.}

\begin{cor} \label{c:restr and Verdier again}
The functor
$$\fr^{\on{ch}}_P:\Shv^{\on{ch}}(G/\on{Ad}(G))\to \Shv^{\on{ch}}(M/\on{Ad}(M))$$
commutes with Verdier duality.
\end{cor}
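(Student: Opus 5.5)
The plan is to follow the route already set up in the two preceding subsections: express $\fr^{\on{ch}}_P$ as a composite of three functors, each of which is compatible with Verdier duality, and then compose the three compatibilities.

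The starting point is formula \eqref{e:r via good j}:
$$\fr^{\on{ch}}_P \simeq \jmath^{\on{ch}}_{P,P^-}\circ \Bigl((\on{Id}\otimes (I^{\on{spec.fin}}_{P^-,P})^{-1})\circ \on{HC}^{\on{ch}}_P\Bigr).$$
\begin{itemize}
\item \emph{Second factor.} The bracketed composite $(\on{Id}\otimes (I^{\on{restr}}_{P^-,P})^{-1})\circ \on{HC}^{\on{ch}}_P$ commutes with Verdier duality by \corref{c:HC is ambi bis}. Here we pass from spec.fin to restr by evaluating at $\on{pt}$, which is legitimate by \propref{p:ch is restr} and its analogue for $N(P)\backslash G/N(P^-)$, a consequence of \propref{p:q-mon restr}.
\item \emph{First factor.} For $\jmath^{\on{ch}}_{P,P^-}$ we use the observation recorded just before the statement. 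Since $\jmath_{P,P^-}$ is restriction along the open embedding \eqref{e:big cell}, and open restriction preserves constructibility, we have $\BD\circ \jmath_{P,P^-}\circ\BD\simeq \jmath_{P,P^-}$ on compact objects.
\end{itemize}
Composing the two compatibilities gives $\BD_{M/\on{Ad}(M)}\circ \fr^{\on{ch}}_P\circ \BD_{G/\on{Ad}(G)}\simeq \fr^{\on{ch}}_P$ on compact objects of $\Shv^{\on{ch}}(G/\on{Ad}(G))$. The statement is meant in this sense, i.e.\ after ind-extension as in the definition of $\wt\fr_P$.

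There are several checks to make along the way.

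First, we must justify that the isomorphism \eqref{e:r via good j} restricts correctly to character sheaves. This requires that $\on{HC}^{\on{ch}}_P$ lands in the spec.fin part, which holds by definition via $\binv_{\Delta(G)}$ and \lemref{l:mon Hecke}. It also requires that $\jmath_P\circ(\on{Id}\otimes I_{P^-,P})\simeq\jmath_{P,P^-}$, which follows from a base-change computation in the spirit of \eqref{e:restr and intertw}. We then invert $I^{\on{spec.fin}}_{P^-,P}$ using \propref{p:intertwiner spec finite}.

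Second, we must check that Verdier duality makes sense on each intermediate category: the compact objects of the relevant spec.fin categories must be constructible, with $\BD$ preserving them. For $\Shv^{\on{ch}}(G/\on{Ad}(G))$ this is Remark \ref{r:Verdier ch}. For the monodromic categories on $N(P)\backslash G/N(P^\pm)$ it follows from the self-duality discussion in \secref{sss:Shv mon self-dual}.

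I expect the main obstacle to be bookkeeping rather than anything conceptual. One has to make sure that the Verdier duality identification of \corref{c:HC is ambi bis} on $\Shv^{\on{ch}}(N(P)\backslash G/N(P^-)/\on{Ad}(M))$ is the same duality as the one used for $\jmath_{P,P^-}$, so that the two compatibilities really compose. One also has to track the cohomological shifts, by $\pm 2\dim N(P)$ from the relation $\wt{\on{HC}}$ versus $\on{HC}$ and from the intertwiners, and confirm that they cancel. I would handle both points by tracing the shifts through \eqref{e:HC ext Serre}: that equation already packages the correct normalization, and evaluating it at $\on{pt}$ should give exactly the normalization needed here.
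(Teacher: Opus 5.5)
Your proposal is correct and is essentially the paper's argument. You rewrite $\fr^{\on{ch}}_P$ via \eqref{e:r via good j}, use that $\jmath_{P,P^-}$ commutes with Verdier duality because it is restriction along the open embedding \eqref{e:big cell} (the actual reason is that $!$- and $*$-restriction agree there, not that open restriction preserves constructibility), and apply \corref{c:HC is ambi bis} to the remaining factor; the shift bookkeeping you worry about is already absorbed into \corref{c:HC is ambi bis}, so there is no need to retrace it through \eqref{e:HC ext Serre}.
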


However, unwinding, we obtain:

\begin{lem}
The datum of commutation with Verdier duality on $\fr^{\on{ch}}_P$ given by \corref{c:restr and Verdier again}
equals one given by \corref{c:restr and Verdier}.
\end{lem}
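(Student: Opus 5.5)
The plan is to reduce both data to one and the same 2-categorical input, namely the Serre automorphism $\on{Se}_{G/N(P)}$ of \thmref{t:2-Cat Serre}, and then check that the two routes apply $\Tr(\on{Id},-)$ to it in compatible ways. It is cleanest to rephrase each datum as an isomorphism of adjunctions. By Verdier conjugation, the datum of \corref{c:restr and Verdier} amounts to the following: the ``new'' adjunction $(\fr^{\on{ch}}_P,\on{GrothSpr}^{\on{ch}}_P)$, obtained by applying \propref{p:adj Tr} to the 2-adjointable $\bjacq^{\on{spec.fin}}_P$ of \thmref{t:jacq mon}, coincides with the Verdier conjugate of the usual $(\on{GrothSpr}^{\on{ch}}_P,\fr^{\on{ch}}_P)$-adjunction of \corref{c:ind restr}. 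I will first rewrite the datum of \corref{c:restr and Verdier again} in the same form, and then compare unit and counit maps.

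First I unwind \corref{c:restr and Verdier again}. It is built from three ingredients:
\begin{itemize}
\item the factorization \eqref{e:r via good j};
\item the tautological self-duality of the open restriction $\jmath^{\on{ch}}_{P,P^-}$;
\item \corref{c:HC is ambi bis}.
\end{itemize}
The last of these comes from \eqref{e:CH ext Serre}, which is formula \eqref{e:Serre 2} for the counit, made explicit through \thmref{t:2-Cat Serre}. Using \eqref{e:r j} together with the identification \eqref{e:restr and intertw} of $(\on{Id}\otimes I_{P,P^-})\circ\iota_P$ with $(\jmath_{P,P^-})^R$, I would show that the resulting Verdier datum on $\fr^{\on{ch}}_P$ is exactly $\binv_{\Delta(G)}$ applied to the following composite. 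First, the Verdier conjugate of $\on{HC}^{\on{ext,spec.fin}}_P$ is $\wt{\on{HC}}^{\on{ext,spec.fin}}_P$, the left adjoint of $\on{counit}$. Then this left adjoint is identified with a right adjoint of $\on{counit}^{\on{new}}$ via $\on{Se}_{G/N(P)}$. Finally, $\iota_P$ is matched with the unit. In other words, this datum is ``the new counit $\on{counit}^{\on{new}}=(\on{counit}^R)^R$ equals the Verdier conjugate of $\on{counit}$, twisted by $\on{Se}_{G/N(P)}$.''

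Second, I unwind \corref{c:restr and Verdier}. By the proof of \propref{p:adj Tr}, the new adjunction on traces is obtained by applying $\Tr$ to the units and counits of the new $(\bind^{\on{spec.fin}}_P,\bjacq^{\on{spec.fin}}_P)$-adjunction, which are the double right adjoints. For the usual adjunction, \corref{c:ind restr} identifies $\Tr$ of $\on{counit}$ with the geometric counit. By \secref{sss:trace and duality} and \corref{c:Verdier}, dualizing a trace map corresponds to Verdier conjugation, and passing to right adjoints on the level of $(G\times G)\mmod$ corresponds to passing to Verdier-dual left adjoints on traces (see also Remark \ref{r:Verdier ch}). Combining these facts with \eqref{e:Serre 2}, one sees that $\Tr$ of the new counit is the Verdier conjugate of $\Tr(\on{counit})$, corrected by $\Tr(\on{id},\on{Se}_{G/N(P)})$. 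This is the same description as in the first step, so the two data agree once the units are matched; the units are handled analogously using \eqref{e:Serre 1}.

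The main obstacle is the compatibility, in the second step, between taking trace of a right adjoint of a 2-morphism in $(G\times M)\mmod$ and Verdier-conjugating the traced functor. This has to hold as a coherent identification of 3-morphisms, not merely objectwise. I would attack it by writing everything as pull-push functors via \propref{p:map of traces stacks}. In that form, the right adjoints of the counit $\on{CH}^{\on{ext}}_P$ are given by explicit $*$-pull/$!$-push functors, as in \eqref{e:HC ext Serre}. The required statement then becomes the standard fact that Verdier duality exchanges $(f^!,f_*)$ with $(f^*,f_!)$ for the maps in \eqref{e:CH ext diag}, compatibly with base change to $\Delta(G)$-invariants. The remaining point is to confirm that the cohomological shifts $[\pm 2\dim N(P)]$ from \thmref{t:2-Cat Serre} and from the smooth map in \eqref{e:HC ext diag} cancel.
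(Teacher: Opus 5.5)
Your proposal is essentially the paper's argument, which consists of nothing more than ``unwinding'' the constructions; routing both data through $\on{Se}_{G/N(P)}$ and checking the resulting identification on explicit pull-push functors is a reasonable way to carry that out. One bookkeeping correction before you execute it: the adjunction $(\fr^{\on{ch}}_P,\on{GrothSpr}^{\on{ch}}_P)$ of \corref{c:restr as left adj} comes from tracing $\on{counit}^R$ and $\on{unit}^R$, which are the unit and counit of the $(\bjacq^{\on{spec.fin}}_P,\bind^{\on{spec.fin}}_P)$-adjunction, while the double right adjoints enter only as the right adjoints needed to make these 2-morphisms traceable. So what suffices is that, under the Verdier datum of \corref{c:restr and Verdier again}, $\Tr(\on{counit}^R)$ corresponds to $\BD\circ\Tr(\on{counit})\circ\BD$, as your last paragraph in effect says; a statement about $\Tr(\on{counit}^{\on{new}})$ does not work, since it points in the opposite direction, and likewise $\on{counit}^R$ is the left, not right, adjoint of $\on{counit}^{\on{new}}$.
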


\sssec{}

We now recall the following corollary of Braden's theorem (see \cite[Theorem 3.1.6]{DrGa3}):

\begin{lem} \label{l:Braden}
There is a canonical isomorphism
$$\wt\fr_P\simeq \fr_{P^-}$$
as functors
$\Shv(G/\on{Ad}(G))\to \Shv(M/\on{Ad}(M))$.
\end{lem}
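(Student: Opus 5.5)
The plan is to deduce \lemref{l:Braden} from Braden's hyperbolic localization theorem, in the form of \cite[Theorem 3.1.6]{DrGa3}. That theorem says that for a $\BG_m$-action, attracting restriction ($!$-pull to the attractor followed by $*$-push to the fixed points) agrees with repelling restriction with the dual functoriality. So the task is to recognize both sides of the lemma as these two hyperbolic restrictions.

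First, I write both functors as pull-push along explicit correspondences. By definition, $\fr_P$ is pull-push along
$$G/\on{Ad}(G)\overset{\sfp}\leftarrow P/\on{Ad}(P)\overset{\sfq}\to M/\on{Ad}(M),$$
i.e., $\sfq_*\circ\sfp^!$. Since $\BD$ exchanges $!$ and $*$ functors, unwinding the definition of $\wt\fr_P$ on compact objects gives
$$\wt\fr_P\simeq \sfq_!\circ\sfp^*.$$
By continuity, this extends from compact objects to all of $\Shv(G/\on{Ad}(G))$. The expression $\sfq_!\circ\sfp^*$ is exactly the ``other'' hyperbolic restriction functor.

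Second, I choose a cocharacter $\lambda:\BG_m\to Z(M)$ that is dominant for $P$, so that $P$ is the attractor of $\on{Ad}\lambda$ on $G$ and $P^-$ is the repeller. For the induced action on the stack $G/\on{Ad}(G)$, I will use the Drinfeld--Gaitsgory description of fixed points, attractors and repellers for stacks of the form $Z/H$:
\begin{itemize}
\item the fixed-point stack receives a map from $M/\on{Ad}(M)$, and this map is an isomorphism onto the relevant union of components;
\item the attractor is $P/\on{Ad}(P)$, with the maps $\sfp$ and $\sfq$;
\item the repeller is $P^-/\on{Ad}(P^-)$, with the maps $\sfp^-$ and $\sfq^-$.
\end{itemize}
Conjugation by $\lambda(t)$ is inner, so the objects of $\Shv(G/\on{Ad}(G))$ are automatically $\BG_m$-equivariant. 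Hence the theorem of \cite{DrGa3} applies and yields
$$\sfq_!\circ\sfp^*\simeq \sfq^-_*\circ(\sfp^-)^!=\fr_{P^-}.$$

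The main obstacle I expect is this second step: establishing the attractor/repeller identification on the quotient stack $G/\on{Ad}(G)$, rather than on the scheme $G$. The action of $\BG_m$ there is through $\on{Ad}(G)$ itself, so it is trivializable, and one must work with the graded version in which $\lambda$ is regarded as a section of the group. I would first try to reduce to the scheme level: apply Braden's theorem to $G$ with its $\on{Ad}(M)$-equivariant structure, which gives the corresponding statement for $(N(P)\backslash G/N(P))$-type correspondences. I would then descend via $\on{Ad}(G)$-equivariance, using the factorization $\fr_P\simeq\jmath_P\circ\on{HC}_P$ from \eqref{e:r j}. If this reduction works, no further check is needed, since the lemma only asserts the existence of a canonical isomorphism.
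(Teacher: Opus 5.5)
Your proposal follows the paper's route: the paper gives no argument beyond quoting the lemma as a corollary of Braden's theorem in the form of \cite[Theorem 3.1.6]{DrGa3}, and your identifications ($\wt\fr_P\simeq\sfq_!\circ\sfp^*$; attractor $P$, repeller $P^-$ and fixed points $M$ for a cocharacter of $Z(M)$ strictly positive on the roots of $N(P)$) are exactly what that citation packages. For the step you flag, the cleanest descent is to apply Braden's theorem to the scheme $G$ with $\on{Ad}(M)$-equivariance, which makes everything $\lambda$-monodromic, and then $!$-pull back along the smooth map $G/\on{Ad}(M)\to G/\on{Ad}(G)$: since each map $P^{\pm}/\on{Ad}(M)\to P^{\pm}/\on{Ad}(P^{\pm})$ has fibers the affine space $N(P^{\pm})$, the $\on{Ad}(M)$-level hyperbolic restrictions precomposed with this pullback are $\wt\fr_P[2\dim(N(P))]$ and $\fr_{P^-}[2\dim(N(P))]$, so no detour through $\on{HC}_P$ is needed.
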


\sssec{}

Combining \lemref{l:Braden} with the isomorphism of \corref{c:restr and Verdier again} we obtain an isomorphism:
\begin{equation} \label{e:funct eqn again}
\fr_P^{\on{ch}}\simeq \fr_{P^-}^{\on{ch}}.
\end{equation}

\sssec{}

However, unwinding, we obtain:

\begin{lem}
The isomorphism \eqref{e:funct eqn again} equals the isomorphism $i^\fr_{P,P^-}$.
\end{lem}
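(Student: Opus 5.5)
The plan is to reduce both isomorphisms to a common explicit form. That form is a chain of base-change isomorphisms along the big cell $M\hookrightarrow N(P)\backslash G/N(P^-)$, together with the intertwiners $I^{\on{spec.fin}}_{P,P^-}$ and $I^{\on{spec.fin}}_{P^-,P}$. We then check that the two chains agree.

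\textbf{Step 1: unwinding $i^\fr_{P,P^-}$.} By definition $i^\fr_{P,P^-}$ is the mate of $i_{P,P^-}$ under the $(\on{GrothSpr}^{\on{ch}},\fr^{\on{ch}})$-adjunctions. I would use the factorization \eqref{e:GrothSpr as compos}, $\on{GrothSpr}^{\on{ch}}_P\simeq \on{CH}^{\on{ch}}_P\circ \iota^{\on{ch}}_P$, and the description of $i_{P,P^-}$ as the composition \eqref{e:intertwiner as compos}. Passing to right adjoints termwise gives $\fr^{\on{ch}}_P\simeq (\iota^{\on{ch}}_P)^R\circ \on{HC}^{\on{ch}}_P=\jmath_P\circ\on{HC}^{\on{ch}}_P$, which is \eqref{e:r j}. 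It also expresses $i^\fr_{P,P^-}$ as the composite of the mates of \eqref{e:intertwiner as compos 1 ch} and \eqref{e:intertwiner as compos 2 ch}. Since the intertwiners are invertible on spec.fin objects, their mates are just their inverses on the other side. Using $\jmath_P\circ(\on{Id}\otimes I_{P^-,P})\simeq\jmath_{P,P^-}$ and the analogous identity for $P^-$, this should rewrite $i^\fr_{P,P^-}$ as the composite
$$\fr^{\on{ch}}_P\simeq \jmath^{\on{ch}}_{P,P^-}\circ(\on{Id}\otimes (I^{\on{spec.fin}}_{P^-,P})^{-1})\circ\on{HC}^{\on{ch}}_P \simeq \jmath^{\on{ch}}_{P^-,P}\circ(\cdots)\circ\on{HC}^{\on{ch}}_{P^-}\simeq \fr^{\on{ch}}_{P^-}.$$
Here the middle isomorphism comes from the geometric identity \eqref{e:intertwiner as compos 1 bis} for the pair $(P,P^-)$, restricted along the big cell.

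\textbf{Step 2: unwinding \eqref{e:funct eqn again}.} This isomorphism is the composite of three pieces. The first is \eqref{e:r via good j}. The second is the Verdier self-duality of $\jmath_{P,P^-}$ combined with \corref{c:HC is ambi bis}; that is, \eqref{e:HC ambi}, which came from the Serre description \thmref{t:2-Cat Serre} via \eqref{e:CH ext Serre}. The third is Braden's isomorphism $\wt\fr_P\simeq\fr_{P^-}$ of \lemref{l:Braden}. I would unwind \eqref{e:HC ambi} through the proof of \thmref{t:2-Cat Serre}. There the Serre isomorphism was produced exactly from the identification of both sides of \eqref{e:Serre unit} with $\jmath^{\on{spec.fin}}_{P,P^-}[2\dim N(P)]$, that is, from the same big-cell base change used in Step 1. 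After this unwinding, the composite should become: $\fr^{\on{ch}}_P$, then $\wt\fr^{\on{ch}}_P$ written as $*$-pull/$!$-push through the big cell, then the Braden map to $\fr_{P^-}$.

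\textbf{Step 3: comparison with Braden, the main obstacle.} It remains to show that Braden's isomorphism, restricted to character sheaves and written in these terms, coincides with the big-cell base-change isomorphism appearing in Step 1. Braden's map \cite{DrGa3} is built from the contracting $\BG_m$-action, with attractors $P/\on{Ad}(M)$ and repellers $P^-/\on{Ad}(M)$. I would first try to reduce the comparison to the universal case on the fixed-point/attractor/repeller correspondence for $G/\on{Ad}(M)$. There the canonical map $\wt\fr_P\to\fr_{P^-}$ factors through restriction to the open locus $P\cdot P^-\subset G$. That open locus maps to the big cell $N(P)\backslash G/N(P^-)\supset M$, and this factorization should identify the Braden map with the composite of base changes from Step 1. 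If a direct identification is awkward, a fallback is to check equality after composing with the conservative functor of taking fibres at points of $M$. On those fibres both maps reduce to the hyperbolic-localization isomorphism for a single point, where they visibly agree.
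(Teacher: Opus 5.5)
Your Steps 1 and 2 are a reasonable list of what has to be unwound; the paper itself offers nothing beyond ``unwinding''. Two pieces of bookkeeping need care.
\begin{itemize}
\item With the paper's conventions, $i^\fr_{P,P^-}$ is the mate of $i_{P^-,P}$, not of $i_{P,P^-}$. It agrees with the inverse of the mate of $i_{P,P^-}$ only because of \thmref{t:involut}.
\item \eqref{e:HC ambi} enters through the counit formula \eqref{e:Serre 2}. The proof of \thmref{t:2-Cat Serre} produced the identification \eqref{e:Serre unit} at the level of units. So calling \eqref{e:HC ambi} ``exactly'' the big-cell base change requires transporting that identification through the triangle identities and the passage to double right adjoints.
\end{itemize}

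The genuine gap is Step 3. It is the entire content of the lemma, and you leave it at ``should identify''. The two isomorphisms come from unrelated inputs.
\begin{itemize}
\item The first is built from the intertwiners, the Serre automorphism $I^{\on{spec.fin}}_{P^-,P}\circ I^{\on{spec.fin}}_{P,P^-}$, and the Verdier identifications $\BD\circ I_{P,P^-}\circ\BD\simeq (I_{P^-,P})^{-1}$ and $\BD\circ \on{HC}_P\circ \BD\simeq \wt{\on{HC}}_P$. None of these refers to the contracting $\BG_m$-action.
\item Braden's isomorphism is built from exactly that action: the attractor/repeller geometry ($P\times_G P^-=M$) and the contraction principle.
\end{itemize}
Your remark that Braden's map factors through restriction to $(P\cdot P^-)/\on{Ad}(M)$ carries no information. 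Any $\BG_m$-stable open containing the fixed locus automatically contains all attractors and repellers. What is needed is an actual computation of Braden's transformation on $P\cdot P^-\simeq N(P)\times M\times N(P^-)$. You must then check that it equals the composite of the following, taken in the specific form in which they enter \corref{c:HC is ambi bis} and \eqref{e:r via good j}:
\begin{itemize}
\item the base-change isomorphism $\jmath_P\circ(\on{Id}\otimes I_{P^-,P})\simeq \jmath_{P,P^-}$ and its mirror for $P^-$;
\item the isomorphism \eqref{e:restr and intertw};
\item the Verdier data above.
\end{itemize}
None of this is carried out.

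The fallback does not close the gap, for three reasons.
\begin{itemize}
\item Composing with a conservative functor only detects whether a morphism is invertible. It says nothing about whether two given morphisms coincide; for that you need faithfulness.
\item $!$-fibres at points are not faithful. A nonzero class in $H^1(M/\on{Ad}(M))$ exists as soon as $Z^0(M)\neq 1$. It gives a nonzero map $\on{const}\to\on{const}[1]$ all of whose fibres vanish. So the identity and the identity plus this map are distinct automorphisms of $\on{const}\oplus\on{const}[1]$ with identical fibres. Moreover, objectwise agreement of components does not in general give equality of natural transformations of $\infty$-functors.
\item The fibre of $\fr_P(\CF)$ at $m\in M$ is not a hyperbolic localization at a single point. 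It is a cohomology over the $\on{Ad}(N(P))$-quotient of $m\cdot N(P)$. The claim that the two maps ``visibly agree'' there therefore has no content as stated.
\end{itemize}
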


\section{Applications to the Deligne-Lusztig theory} \label{s:DL}

In this section we will apply the theory developed in the previous sections and
derive (both old and new) results about the Deligne-Lusztig functor.

\ssec{Trace of Frobenius on restricted representations}

In this subsection, we show that (unlike the case of the identity functor), if we take the trace
of Frobenius on $G\mmod^{\on{spec.fin}}$ (resp., $G\mmod^{\on{restr}}$), we recover all of
$\ul\Rep(G(\BF_q))$ (resp., $\Rep(G(\BF_q))$).

\sssec{}

Consider again the 1-morphism
$$\bemb^{\on{spec.fin}}:G\mmod^{\on{spec.fin}}\to G\mmod.$$

It is adjointable and compatible with the Frobenius automorphism on the two sides.
Hence, it induces a 1-morphism
\begin{equation} \label{e:Tr Frob of incl}
\Tr(\Frob,G\mmod^{\on{spec.fin}})\to \Tr(\Frob,G\mmod)\simeq \ul\Rep(G(\BF_q)).
\end{equation}
in $\AGCat$.

\sssec{}

We claim:

\begin{thm} \label{t:Frob restr}
The 1-morphism \eqref{e:Tr Frob of incl} is an equivalence.
\end{thm}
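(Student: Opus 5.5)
Since $\bemb^{\on{spec.fin}}$ is 2-adjointable (\thmref{t:restr main}(a)), fully faithful, and strictly intertwines Frobenius (Frobenius preserves monodromicity, so the Beck-Chevalley map is an isomorphism, as in \secref{sss:BC adj}), \corref{c:Tr ff} shows that \eqref{e:Tr Frob of incl} is fully faithful. It has a right adjoint, namely $\Tr$ of $(\bemb^{\on{spec.fin}})^R$, by \propref{p:adj Tr}. So it suffices to show essential surjectivity, i.e. that this right adjoint is conservative.

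Under $\Tr(\Frob,G\mmod)\simeq \ul\Rep(G(\BF_q))$ (\corref{c:Tr Frob}), the composite endofunctor $\Tr(\Frob,\bemb)\circ\Tr(\Frob,\bemb^R)$ of $\ul\Rep(G(\BF_q))$ is $\Tr$ of the idempotent 2-endomorphism $\bemb\circ\bemb^R$ of $G\mmod$. This 2-endomorphism is given by the $(G\times G)$-object $\ul\Shv(G)^{\on{spec.fin}}$, described in \eqref{e:emb spec fin sheaves ident}. Applying \propref{p:map of traces stacks}, as in \propref{p:B as class}, it acts on $\ul\Shv(G/\on{Ad}_\Frob(G))$ by $\Frob$-twisted convolution with the idempotent $\delta_{1,G}^{\on{ch}}$, or equivalently with the image of $\CH^{\on{mon}}$. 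Concretely, the essential image of \eqref{e:Tr Frob of incl} is generated by classes $\on{cl}(\ul\Shv(G/N)^{T,\chi},\alpha)$, for $\chi$ with $\Frob$-structures transported via $I_{B,\Frob(B)}$. Using \propref{p:intertwiner spec finite}, these $\Frob$-structures exist for $\Frob$-stable $W$-orbits of $\chi$, and possibly for twisted ones via other Borels $B'$ containing $\Frob$-stable tori $T'$. As in \secref{sss:DL ab}, these classes are exactly the Deligne-Lusztig representations $\on{DL}_{B'}(\chi^0)$, for all $\Frob$-stable maximal tori $T'$ and characters $\chi^0$ of $T'(\BF_q)$.

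The theorem therefore reduces to the classical fact that every irreducible representation of $G(\BF_q)$ occurs in some $H^*_c$ of a Deligne-Lusztig variety $\mathcal{DL}_{B'}$ (Deligne-Lusztig, \cite{DL}). The main obstacle is making the identification in the previous paragraph precise, i.e. showing the image contains all $\on{DL}_{B'}(\chi^0)$ with $B'$ ranging over all Weyl group twists. I would first try to avoid this bookkeeping by a direct argument: the subcategory is a direct summand closed under colimits, since the idempotent splits $\ul\Rep$ by \lemref{l:finite grp is restr}. If $\rho$ lies in the complementary summand, then $\Hom(\on{DL}_{B'}(\chi^0),\rho)=0$ for all such inputs. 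Summing over $\chi^0$ gives $\Hom(\on{C}^\cdot(\mathcal{DL}_{B'},\omega),\rho)=0$ for every $w$, and Deligne-Lusztig's exhaustion argument (the regular representation is a combination of the $R_{T_w}$) then forces $\rho=0$. A more intrinsic alternative, following Eteve, is to show $\Frob$-twisted convolution with $\delta^{\on{ch}}_{1,G}$ fixes $\delta_{1}$ on $\on{pt}/G(\BF_q)$. The idea is to use Lang's isogeny to identify the $\Frob$-twisted trace of the Springer-type monad (\lemref{l:Springer}) with one whose unit summand already lies in the image. I expect this step to be where the real work is.
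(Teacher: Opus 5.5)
Your full-faithfulness step is the paper's (\corref{c:Tr ff}). For essential surjectivity your route is genuinely different and it does close, but the step you call the main obstacle is immediate.

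The paper never uses the representation theory of $G(\BF_q)$. The map $\ul\CH\to\Tr(\Frob,\ul\CH\mmod)\simeq\Tr(\Frob,G\mmod)$ has generating image. It factors, compatibly with its monodromic version, through $\on{Av}^{\on{Ad}_{\Frob}(T)}_*:\ul\Shv(N\backslash G/N)\to\ul\Shv((N\backslash G/N)/\on{Ad}_{\Frob}(T))$. Then \propref{p:ad Frob T} shows that every sheaf on this twisted quotient is automatically $T$-monodromic. The reason is that each Bruhat stratum is $T/\on{Ad}_{w\cdot\Frob}(T)\simeq\on{pt}/T^w(\BF_q)$ by Lang's theorem, with $T$ acting through the finite Lang cover.

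You instead show that the image contains every $\on{DL}_{B'}(\chi^0)$, and then invoke the Deligne--Lusztig theorem that every irreducible representation of $G(\BF_q)$ is a constituent of some $R^{\theta}_{T'}$. The containment is easy, for the following reasons.
\begin{enumerate}
\item The object $\bind_{B'}(\ul\Vect^\chi)$, with the Frobenius structure of \secref{sss:DL ab}, lies in the full sub-2-category $G\mmod^{\on{spec.fin}}$. It comes with a 1-morphism to its Frobenius twist, and forming a class needs no invertibility of the intertwiner.
\item By compatibility of the trace with the factorization $\AGCat\to G\mmod^{\on{spec.fin}}\overset{\bemb^{\on{spec.fin}}}\longrightarrow G\mmod$, its class $\on{DL}_{B'}(\chi^0)$ lies in the image of \eqref{e:Tr Frob of incl}.
\item Every character of $T'(\BF_q)$ is the trace function of a Weil character sheaf coming from the Lang torsor.
\item Both sides are restricted (\thmref{t:restr main}(b), \corref{c:Tr Frob}), so you may argue in the semisimple category $\Rep(G(\BF_q))$. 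There, a colimit-closed full subcategory containing an object contains each irreducible occurring in its cohomology.
\end{enumerate}

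Two claims in your middle paragraph are unproved and not needed: the description via Frobenius-twisted convolution with $\delta^{\on{ch}}_{1,G}$, and the assertion that the image is \emph{generated} by such classes. Only containment is used. Your Eteve-style alternative points in the right direction, namely Lang's theorem, but is too vague to assess. The paper applies Lang on the Bruhat strata of $(N\backslash G/N)/\on{Ad}_{\Frob}(T)$, not to the Springer monad.

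As for what each approach buys: yours is short once classical theory is granted. However, it imports the exhaustion theorem, whose proof relies on the Deligne--Lusztig character formula and Green functions, into a statement the paper proves by pure geometry. The paper's argument keeps the later applications (\thmref{t:Srpinger chars}, \thmref{t:indep}) independent of that character theory. It is also the version that might extend to settings with no exhaustion theorem, such as the loop-group question in the introduction.
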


\begin{rem}

\thmref{t:Frob restr} is due to A.~Eteve. The proof that we give is a reformulation
of the argument in \cite[Theorem 1.3.1]{Ete}.

\end{rem}

\sssec{}

As an immediate corollary of Theorems \ref{t:Frob restr} and \ref{t:restr main}(b), we obtain:

\begin{cor} \label{c:Frob restr}
There exists a canonical equivalence
$$\Tr(\Frob,G\mmod^{\on{restr}})\simeq \Rep(G(\BF_q)),$$
whose image under $\bi$ produces the equivalence of \eqref{e:Tr Frob of incl}.
\end{cor}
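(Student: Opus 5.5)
The plan is to obtain the equivalence at the restricted level by descending the equivalence of \thmref{t:Frob restr} along the fully faithful functor $\bi:\DGCat\to \AGCat$. To make this work, I first need to know that the Frobenius $1$-endomorphism of $G\mmod^{\on{spec.fin}}$ is itself tensored up from $G\mmod^{\on{restr}}$.

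\textbf{Step 1: Frobenius descends to $G\mmod^{\on{restr}}$.} Under \corref{c:restr via Hecke bis}, $G\mmod^{\on{restr}}\simeq \CH^{\on{mon}}\mmod(\DGCat)$. The Frobenius $\Frob_G$ on $G\mmod$ corresponds, under \thmref{t:G mod via Hecke}, to restriction along the algebra automorphism of $\ul\CH$ induced by $\Frob$ on $N\backslash G/N$. Here I choose $B$ to be defined over $\BF_q$, which is possible by Lang's theorem. This automorphism preserves the $T$-monodromic subcategory, since $\Frob_T^!$ permutes character sheaves. It therefore induces an automorphism of $\ul\CH^{\on{mon}}=\bi(\CH^{\on{mon}})$. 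Because $\bi$ is fully faithful on algebras, this automorphism comes from a monoidal automorphism $\Frob^!$ of $\CH^{\on{mon}}$ in $\DGCat$. Restriction along it defines a $1$-endomorphism $\Frob^{\on{restr}}$ of $G\mmod^{\on{restr}}$. By construction, $\on{Id}_{\AGCat}\otimes \Frob^{\on{restr}}$ identifies with the Frobenius on $G\mmod^{\on{spec.fin}}$ under the equivalence \eqref{e:tensor up restr} of \thmref{t:restr main}(b).

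\textbf{Step 2: traces commute with tensoring up.} The functor $\AGCat\underset{\DGCat}\otimes(-):\tDGCat\to\tAGCat$ is symmetric monoidal. On endomorphisms of the unit it induces $\bi:\DGCat\to\AGCat$. Hence for any dualizable $\fE\in\tDGCat$ with endomorphism $F$, there is a canonical isomorphism
$$\Tr_{\tAGCat}(\on{Id}\otimes F,\AGCat\underset{\DGCat}\otimes\fE)\simeq \bi(\Tr_{\tDGCat}(F,\fE)).$$
Dualizability of $G\mmod^{\on{restr}}$ is recorded in \secref{ss:inherited}. Applying this to $\fE=G\mmod^{\on{restr}}$ and $F=\Frob^{\on{restr}}$, and then using Step 1, gives
$$\bi(\Tr(\Frob,G\mmod^{\on{restr}}))\simeq \Tr(\Frob,G\mmod^{\on{spec.fin}}).$$

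\textbf{Step 3: conclude.} Compose the isomorphism of Step 2 with \thmref{t:Frob restr} and \corref{c:Tr Frob}. This yields
$$\bi(\Tr(\Frob,G\mmod^{\on{restr}}))\simeq \ul\Rep(G(\BF_q))=\bi(\Rep(G(\BF_q))).$$
Since $\bi$ is fully faithful, this isomorphism is the image under $\bi$ of a unique equivalence
$$\Tr(\Frob,G\mmod^{\on{restr}})\simeq\Rep(G(\BF_q))$$
in $\DGCat$. By construction, applying $\bi$ to it gives back the composite of Step 2 with \eqref{e:Tr Frob of incl}, which is the asserted compatibility.

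The main obstacle is Step 1. One has to check that the Frobenius structure genuinely lives at the level of $\CH^{\on{mon}}$-modules in $\DGCat$, and not merely at the level of objects. The cleanest route is to note that the data needed — an automorphism of an algebra lying in the essential image of $\bi$ — lies entirely within the full subcategory $\bi(\DGCat)$. Full faithfulness of $\bi$, including on algebra objects, then transports all the coherences. The functoriality of trace used in Step 2 is formal once $\on{Id}\otimes\Frob^{\on{restr}}$ has been identified with $\Frob$.
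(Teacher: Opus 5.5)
Your proposal is correct and follows the paper's route: the paper derives this corollary immediately from \thmref{t:Frob restr} and \thmref{t:restr main}(b). That amounts to your Steps 2 and 3 — compatibility of trace with the symmetric monoidal base change $\AGCat\underset{\DGCat}\otimes(-)$, followed by full faithfulness of $\bi$. Your Step 1, which checks that the Frobenius on $G\mmod^{\on{spec.fin}}$ is tensored up from an endomorphism of $G\mmod^{\on{restr}}$ (via a rational Borel and the Hecke algebra), spells out a point the paper leaves implicit, and it is sound.
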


The rest of this subsection is devoted to the proof of \thmref{t:Frob restr}.

\sssec{}

The fact that \eqref{e:Tr Frob of incl} is fully faithful follows immediately from \corref{c:Tr ff}.
Hence, it remains to identify its essential image.

\sssec{}

We have to show that the essential image of
\begin{multline} \label{e:HH Frob H}
\ul\CH^{\on{mon}}\underset{\ul\CH^{\on{mon}}\otimes \ul\CH^{\on{mon}},\Frob}\otimes \ul\CH^{\on{mon}}
\simeq \Tr(\Frob,\ul\CH^{\on{mon}}\mmod(\AGCat)) \to \\
\to \Tr(\Frob,\ul\CH\mmod(\AGCat))\simeq \Tr(\Frob,G\mmod)\simeq \ul\Shv(G/\on{Ad}_{\Frob}(G))\simeq \ul\Rep(G(\BF_q))
\end{multline}
generates the target. It is sufficient to show that the precomposition of \eqref{e:HH Frob H} with
$$\ul\Shv(N\backslash G/N)^{\on{mon}}\simeq \ul\CH^{\on{mon}}\to \ul\CH^{\on{mon}}\underset{\ul\CH^{\on{mon}}\otimes \ul\CH^{\on{mon}},\Frob}\otimes \ul\CH^{\on{mon}}$$
generates the target.

\medskip

We note that the resulting functor
\begin{equation} \label{e:HH Frob H 0}
\ul\Shv(N\backslash G/N)^{\on{mon}}\to 
\ul\CH^{\on{mon}}\underset{\ul\CH^{\on{mon}}\otimes \ul\CH^{\on{mon}},\Frob}\otimes \ul\CH^{\on{mon}} \overset{\text{\eqref{e:HH Frob H}}}\longrightarrow 
\Tr(\Frob,G\mmod)
\end{equation} 
factors as
\begin{multline}  \label{e:HH Frob H 1}
\ul\Shv(N\backslash G/N)^{\on{mon}}\to
\ul\Shv(N\backslash G/N)^{\on{mon}}\underset{\ul\Shv(T)^{\on{mon}}_{\on{co}}\otimes \ul\Shv(T)^{\on{mon}}_{\on{co}},\Frob}\otimes \ul\Shv(T)^{\on{mon}}_{\on{co}}
\simeq \\
\simeq \ul\Shv((N\backslash G/N)/\on{Ad}_{\Frob}(T))^{\on{mon}}\to
\Tr(\Frob,G\mmod),
\end{multline}
while the composition of the first two arrows in \eqref{e:HH Frob H 1} identifies with
$$\ul\Shv(N\backslash G/N)^{\on{mon}} \overset{\on{Av}^{\on{Ad}_{\Frob}(T)}_*}\longrightarrow \ul\Shv((N\backslash G/N)/\on{Ad}_{\Frob}(T))^{\on{mon}}.$$

\medskip

Hence, the functor \eqref{e:HH Frob H 0} factors as 
\begin{equation}  \label{e:HH Frob H 3}
\ul\Shv(N\backslash G/N)^{\on{mon}}
\overset{\on{Av}^{\on{Ad}_{\Frob}(T)}_*}\longrightarrow \ul\Shv((N\backslash G/N)/\on{Ad}_{\Frob}(T))^{\on{mon}} \to \Tr(\Frob,G\mmod).
\end{equation} 

\medskip

Moreover, we have a commutative diagram
$$
\CD
\ul\Shv(N\backslash G/N)^{\on{mon}}  @>>> \ul\Shv(N\backslash G/N) \\
@VVV @VVV \\
\ul\Shv((N\backslash G/N)/\on{Ad}_{\Frob}(T))^{\on{mon}} @>>> \ul\Shv((N\backslash G/N)/\on{Ad}_{\Frob}(T)) \\
& & @VVV \\
& & \ul\CH\underset{\ul\CH\otimes \ul\CH,\Frob}\otimes \ul\CH  \\
& & @VV{\sim}V \\
& &  \Tr(\Frob,\ul\CH\mmod) \\
& & @VV{\sim}V \\
& &  \Tr(\Frob,G\mmod)
\endCD
$$

In the above diagram, the two top right vertical arrows generate their respective essential images.
Hence, to prove our assertion, it suffices to prove the following:

\begin{prop} \label{p:ad Frob T}
The functor
$$\ul\Shv((N\backslash G/N)/\on{Ad}_{\Frob}(T))^{\on{mon}} \to \ul\Shv((N\backslash G/N)/\on{Ad}_{\Frob}(T))$$
is an equivalence.
\end{prop}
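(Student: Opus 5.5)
We need to show that every object of $\ul\Shv((N\backslash G/N)/\on{Ad}_{\Frob}(T))$ is $T$-monodromic, i.e. that the inclusion of the monodromic part is essentially surjective on values $X\mapsto \Shv(X\times (N\backslash G/N)/\on{Ad}_{\Frob}(T))$. Here "monodromic" refers to the residual action of $T$ (left, or equivalently right, by \lemref{l:mon Hecke}) on the quotient by the Frobenius-twisted diagonal action. The key point is Lang's theorem: on $T$, the twisted action $\on{Ad}_{\Frob}$ is the action of $T$ on itself via the Lang isogeny $t\mapsto t\cdot\Frob(t)^{-1}$. This isogeny is surjective with finite kernel $T(\BF_q)$.

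\textbf{Reduction to a single Bruhat cell.} I would first reduce, by d\'evissage along the Bruhat stratification $N\backslash BwB/N$, to a single cell. Monodromicity is preserved under $!$-pullback and $*$-pushforward along the strata, and both sides are glued from the strata. On the cell we have
$$N\backslash BwB/N\simeq T,$$
with the left/right $T$-actions given by multiplication, twisted by $w$ on one side. The $\on{Ad}_{\Frob}(T)$-action becomes $t\cdot x = t\, x\, w(\Frob(t))^{-1}$, which is the translation action through the isogeny
$$\lambda_w:T\to T,\quad \lambda_w(t)=t\cdot w(\Frob(t))^{-1}.$$
Since $\Frob$ acts on cocharacters by $q$ times a finite-order automorphism, $\lambda_w$ is again a Lang-type isogeny: it is surjective with finite kernel $T^{w\Frob}$, the fixed points of the twisted Frobenius.

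\textbf{The cell computation.} It follows that
$$T/\on{Ad}_{\Frob}(T) \simeq \on{pt}/T^{w\Frob},$$
a finite group quotient. By \lemref{l:finite grp is restr}, $\ul\Shv(\on{pt}/\Gamma)\simeq\bi(\Rep(\Gamma))$ for the finite group $\Gamma=T^{w\Frob}$. It remains to check that the residual $T$-action on $\ul\Shv(\on{pt}/\Gamma)$ is monodromic. I would do this by decomposing $\Rep(\Gamma)$ into characters of $\Gamma$. Each character $\theta$ of $\Gamma$ corresponds to a Kummer-type character sheaf $\chi_\theta$ on $T$, obtained from the covering $\lambda_w$. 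Pulling back the summand indexed by $\theta$ to the cell $T$ produces objects in $\Shv(T)^{\on{alm-}\chi_\theta}$; these are extensions of $\chi_\theta$ with constant coefficients. Hence, for every $X$, every object of $\Shv(X\times \on{pt}/\Gamma)$, pulled back to $X\times T$, lies in $\Shv(X)\otimes\Shv(T)^{\on{mon}}$. This is exactly the monodromicity condition, and by \corref{c:q mon torus} there is no distinction between the almost and quasi variants here.

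\textbf{Main obstacle.} I expect the main difficulty to be the d\'evissage in the $\AGCat$ setting. One must check that monodromicity of an object on the open and closed strata implies monodromicity of the glued object. The point is that $\Shv(-)^{\on{mon}}$ is stable under cones and under the gluing functors $j_*$, $i_*$, $i^!$, $j^*$, since these functors are $T$-equivariant. The monodromic subcategory is a full subcategory closed under colimits and cones and preserved by equivariant functors, so the recollement restricts to it. Once that is in place, everything else is the explicit Lang-isogeny computation above.
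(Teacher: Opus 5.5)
Your proposal is correct and follows essentially the same route as the paper. The paper also d\'evissages along the Bruhat strata, identifies $T/\on{Ad}_{w\cdot\Frob}(T)\simeq\on{pt}/T^w(\BF_q)$ via (the proof of) Lang's theorem, and observes that the residual $T$-action is through the homomorphism $T\to\on{pt}/T^w(\BF_q)$ classifying the Lang-type cover; your decomposition into characters $\theta$ (via \lemref{l:finite grp is restr}), whose coaction is $\chi_\theta\boxtimes\theta$ with $\chi_\theta$ a Kummer character sheaf, together with your recollement argument, just spells out what the paper calls ``manifest.''
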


\qed[\thmref{t:Frob restr}]

\sssec{Proof of \propref{p:ad Frob T}}

The functor in question is fully faithful, so we only have to show that it is essentially surjective.
To do so, we can work on the strata of the Schubert stratification. I.e., we wish to show that
$$\ul\Shv((N\backslash BwB/N)/\on{Ad}_{\Frob}(T))^{\on{mon}} \to
\ul\Shv((N\backslash BwB/N)/\on{Ad}_{\Frob}(T))$$
is equivalence.

\medskip

The above arrow identifies with
\begin{equation} \label{e:ad Frob T}
\ul\Shv(T/\on{Ad}_{w\cdot \Frob}(T))^{\on{mon}} \to \ul\Shv(T/\on{Ad}_{w\cdot \Frob}(T)),
\end{equation}
so we have to show that \eqref{e:ad Frob T} is an equivalence.

\medskip

However, we claim that the proof of Lang's theorem applies, and we have
$$T/\on{Ad}_{w\cdot \Frob}(T) \simeq \on{pt}/T^w(\BF_q),$$
where $T^w$ is the Cartan subgroup equipped with a $\BF_q$-structure twisted by $w$.

\medskip

This makes the assertion manifest, since the (left) action of $T$ on $\on{pt}/T^w(\BF_q)$
is given by the homomorphism
$$T\to \on{pt}/T^w(\BF_q),$$
corresponding to the $T^w(\BF_q)$-cover
$$T\overset{t\mapsto w(t)\cdot \Frob(t^{-1})}\longrightarrow T.$$

\qed[\propref{p:ad Frob T}]

\ssec{Comparing classes}

This subsection is not essential for the main narrative and can be skipped on the first pass.

\sssec{}

Let us consider the functor
$$\Frob:G\mmod\to G\mmod.$$

It induces a commutative diagram
\begin{equation} \label{e:spec fin Frob 1}
\CD
G\mmod @>{\Frob}>> G\mmod \\
@A{\bemb^{\on{spec.fin}}}AA @AA{\bemb^{\on{spec.fin}}}A \\
G\mmod^{\on{spec.fin}} @>{\Frob}>> G\mmod^{\on{spec.fin}}.
\endCD
\end{equation}

Since the horizontal arrows in the above diagram are equivalences, the next diagram also
commutes
\begin{equation} \label{e:spec fin Frob 2}
\CD
G\mmod @>{\Frob}>> G\mmod \\
@V{(\bemb^{\on{spec.fin}})^R}VV @VV{(\bemb^{\on{spec.fin}})^R}V \\
G\mmod^{\on{spec.fin}} @>{\Frob}>> G\mmod^{\on{spec.fin}}.
\endCD
\end{equation}

\sssec{}

Let $\ul\bC$ be an object of $G\mmod$, equipped with a map
$$\alpha:\ul\bC\to \Frob(\ul\bC).$$

\medskip

Denote
$$\ul\bC^{\on{spec.fin}}:=\bemb^{\on{spec.fin}}\circ (\bemb^{\on{spec.fin}})^R(\ul\bC).$$

By \eqref{e:spec fin Frob 1} and \eqref{e:spec fin Frob 2}, the data of $\alpha$ induces a map
$$\alpha^{\on{spec.fin}}:\ul\bC^{\on{spec.fin}}\to \Frob(\ul\bC^{\on{spec.fin}}),$$
so that the diagram
$$
\CD
\ul\bC @>{\alpha}>> \Frob(\ul\bC) \\
@A{\on{emb}^{\on{spec.fin}}}AA @AA{\Frob(\on{emb}^{\on{spec.fin}})}A \\
\ul\bC^{\on{spec.fin}} @>{\alpha^{\on{spec.fin}}}>> \Frob(\ul\bC^{\on{spec.fin}})
\endCD
$$
commutes.

\sssec{}

Assume that $\ul\bC$ is dualizable. By \secref{sss:class of obj}, to $(\ul\bC,\alpha)$,
we can attach an object
$$\on{cl}(\ul\bC,\alpha)\in \Rep(G(\BF_q)).$$

Note that by \lemref{l:adj is inherited}, $\ul\bC^{\on{spec.fin}}$ is also dualizable, so that we can consider
$$\on{cl}(\ul\bC^{\on{spec.fin}},\alpha^{\on{spec.fin}})\in \Rep(G(\BF_q)).$$

Since the map
$$\on{emb}^{\on{spec.fin}}:\ul\bC^{\on{spec.fin}} \to \ul\bC$$ is adjointable, by \secref{sss:higher funct},
we obtain a map
\begin{equation} \label{e:class spec fin}
\on{cl}(\ul\bC^{\on{spec.fin}},\alpha^{\on{spec.fin}})\to \on{cl}(\ul\bC,\alpha)
\end{equation}
in $\Rep(G(\BF_q))$.

\sssec{}

We claim:

\begin{prop}  \label{p:class spec fin}
The map \eqref{e:class spec fin} is an isomorphism.
\end{prop}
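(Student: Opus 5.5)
Our approach is to reduce \propref{p:class spec fin} to \thmref{t:Frob restr} by rewriting both classes as traces through $G\mmod^{\on{spec.fin}}$. The datum $(\ul\bC,\alpha)$ is a 1-morphism $\ul\bC:\AGCat\to G\mmod$ weakly intertwining $\on{Id}$ with $\Frob$, as in \eqref{e:class of an object}. Likewise, $(\ul\bC^{\on{spec.fin}},\alpha^{\on{spec.fin}})$ is the composite of the analogous square for $(\bemb^{\on{spec.fin}})^R(\ul\bC):\AGCat\to G\mmod^{\on{spec.fin}}$ with the square \eqref{e:spec fin Frob 1} for $\bemb^{\on{spec.fin}}$. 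Since \eqref{e:2-cat Tr funct} is functorial in compositions of adjointable 1-morphisms, we get
$$\on{cl}(\ul\bC^{\on{spec.fin}},\alpha^{\on{spec.fin}})\simeq \Tr(\on{id},\bemb^{\on{spec.fin}})\bigl(\on{cl}((\bemb^{\on{spec.fin}})^R\ul\bC,\alpha')\bigr),$$
where $\alpha'$ is induced via \eqref{e:spec fin Frob 2}, and the class on the right lives in $\Tr(\Frob,G\mmod^{\on{spec.fin}})$.

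Next we handle $\on{cl}(\ul\bC,\alpha)$ itself. Here we use that $\bemb^{\on{spec.fin}}$ is 2-adjointable (\thmref{t:restr main}(a)) and that the Frobenius squares \eqref{e:spec fin Frob 1}, \eqref{e:spec fin Frob 2} are isomorphisms, so the Beck--Chevalley condition of \secref{sss:BC adj} holds. Then \propref{p:adj Tr} shows that $\Tr(\on{id},(\bemb^{\on{spec.fin}})^R)$ is right adjoint to $\Tr(\on{id},\bemb^{\on{spec.fin}})$. By \corref{c:Tr ff} and \thmref{t:Frob restr}, the latter is an equivalence. Hence the counit
$$\Tr(\on{id},\bemb^{\on{spec.fin}})\circ \Tr(\on{id},(\bemb^{\on{spec.fin}})^R)\to \on{Id}$$
on $\Tr(\Frob,G\mmod)$ is an isomorphism. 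Applying functoriality again gives $\Tr(\on{id},(\bemb^{\on{spec.fin}})^R)(\on{cl}(\ul\bC,\alpha))\simeq \on{cl}((\bemb^{\on{spec.fin}})^R\ul\bC,\alpha')$. Combining with the first paragraph, we obtain an isomorphism $\on{cl}(\ul\bC^{\on{spec.fin}},\alpha^{\on{spec.fin}})\simeq \on{cl}(\ul\bC,\alpha)$.

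The main obstacle is showing that this abstract isomorphism coincides with the specific map \eqref{e:class spec fin}. That map is produced by \secref{sss:higher funct} from the adjointable 2-morphism $\on{emb}^{\on{spec.fin}}:\bemb^{\on{spec.fin}}\circ(\bemb^{\on{spec.fin}})^R\circ\ul\bC\to \ul\bC$, which is the counit of the $(\bemb^{\on{spec.fin}},(\bemb^{\on{spec.fin}})^R)$-adjunction whiskered by $\ul\bC$. Using the 3-functoriality \eqref{e:3-cat Tr funct}, the trace of this whiskered counit should be the counit of the traced adjunction of \propref{p:adj Tr}, evaluated on $\on{cl}(\ul\bC,\alpha)$. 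I would verify this by unwinding the construction of the unit and counit in the proof of \propref{p:adj Tr} (the 3-morphism $\gamma$ there), checking that it matches the $\gamma$ arising from the Frobenius-compatibility of $\on{emb}^{\on{spec.fin}}$. Once this identification is made, invertibility of \eqref{e:class spec fin} follows from invertibility of that counit, which we established above.
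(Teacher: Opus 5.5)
Your proposal is correct and follows essentially the paper's argument. The paper also writes $\on{cl}(\ul\bC^{\on{spec.fin}},\alpha^{\on{spec.fin}})$ as $\Tr(\on{id},\bemb^{\on{spec.fin}})$ applied to the class of $(\bemb^{\on{spec.fin}})^R(\ul\bC)$, identifies that class with $\Tr(\on{id},(\bemb^{\on{spec.fin}})^R)(\on{cl}(\ul\bC,\alpha))$, regards \eqref{e:class spec fin} as the counit of the traced adjunction \eqref{e:emb adj}, and concludes from \thmref{t:Frob restr} that the two traced functors are mutually inverse equivalences; the identification of \eqref{e:class spec fin} with that counit, which you flag as the main obstacle, is asserted there by ``unwinding'', and your plan to derive it from the functoriality \eqref{e:3-cat Tr funct} applied to the whiskered counit is the right way to make it precise.
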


\begin{proof}

We can consider the object
$$\on{cl}\left((\bemb^{\on{spec.fin}})^R(\ul\bC),(\bemb^{\on{spec.fin}})^R(\alpha)\right)\in \Tr(\Frob,G\mmod^{\on{spec.fin}})(\on{pt}),$$
and the map \eqref{e:class spec fin} as obtained using the adjoint pair
\begin{equation} \label{e:emb adj}
\Tr(\on{id},\bemb^{\on{spec.fin}}):\Tr(\Frob,G\mmod^{\on{spec.fin}})\rightleftarrows \Tr(\Frob,G\mmod):\Tr(\on{id},(\bemb^{\on{spec.fin}})^R)
\end{equation}
from the identification
$$\on{cl}(\ul\bC^{\on{spec.fin}},\alpha^{\on{spec.fin}})\simeq \Tr(\on{id},\bemb^{\on{spec.fin}})\left(\on{cl}((\bemb^{\on{spec.fin}})^R(\ul\bC),(\bemb^{\on{spec.fin}})^R(\alpha))\right)$$
and the isomorphism
$$
\on{cl}\left((\bemb^{\on{spec.fin}})^R(\ul\bC),(\bemb^{\on{spec.fin}})^R(\alpha)\right)
\simeq  \Tr(\on{id},(\bemb^{\on{spec.fin}})^R)\left(\on{cl}(\ul\bC,\alpha)\right).$$

The assertion of the proposition follows now from the fact that the functors \eqref{e:emb adj}
are mutually inverse equivalences, by \thmref{t:Frob restr}.

\end{proof}

Combining \propref{p:class spec fin} with \corref{c:fiber of class}, we obtain:

\begin{cor} \label{c:Tr on spec fin}
For $\ul\bC$ as above and $g\in G(\BF_q)$, the map
$$\Tr(\alpha\circ g, \ul\bC^{\on{spec.fin}})\to \Tr(\alpha\circ g, \ul\bC)$$
is an isomorphism (in $\Vect$). 
\end{cor}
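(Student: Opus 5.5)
This follows from \propref{p:class spec fin} combined with \corref{c:fiber of class}, by taking !-fibers at the point corresponding to $g$. First we identify the relevant point. By Lang's theorem, $G/\on{Ad}_{\Frob}(G)\simeq \on{pt}/G(\BF_q)$, and a rational point $g\in G(\BF_q)$ gives a map
$$\iota_g:\on{pt}\to G/\on{Ad}_{\Frob}(G).$$
Under the identification $\Tr(\Frob,G\mmod)\simeq \ul\Rep(G(\BF_q))$ of \corref{c:Tr Frob}, the functor $\iota_g^!$ becomes the functor of taking the underlying vector space of a representation. In particular, it is conservative.

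Next we apply \corref{c:fiber of class} twice: once to $(\ul\bC,\alpha)$ and once to $(\ul\bC^{\on{spec.fin}},\alpha^{\on{spec.fin}})$. This yields canonical isomorphisms
$$\iota_g^!(\on{cl}(\ul\bC,\alpha))\simeq \Tr(g\circ\alpha,\ul\bC), \qquad \iota_g^!(\on{cl}(\ul\bC^{\on{spec.fin}},\alpha^{\on{spec.fin}}))\simeq \Tr(g\circ\alpha^{\on{spec.fin}},\ul\bC^{\on{spec.fin}}).$$
The composition $g\circ \alpha$ versus $\alpha\circ g$ is harmless: the two agree up to the twisting $g\mapsto \Frob(g)=g$, since $g$ is rational.

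Now apply $\iota_g^!$ to the map \eqref{e:class spec fin}. By \propref{p:class spec fin} that map is an isomorphism, so its image under $\iota_g^!$ is one too.

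The one step requiring care is checking that, under the two identifications above, the image of \eqref{e:class spec fin} agrees with the map $\Tr(\alpha\circ g,\ul\bC^{\on{spec.fin}})\to \Tr(\alpha\circ g,\ul\bC)$ induced by $\on{emb}^{\on{spec.fin}}$ via \secref{sss:higher funct}. This is a compatibility of the 3-categorical trace functoriality \eqref{e:3-cat Tr funct} with concatenation of squares. The square \eqref{e:class of an object fiber} defining $\iota_g^!$ is pasted horizontally after the 2-morphism $\on{emb}^{\on{spec.fin}}$ between the two squares of type \eqref{e:class of an object}. Since the trace is functorial for compositions of 1-morphisms in $\on{Arr}$, and for 2-morphisms between them, the two composites coincide, and the corollary follows.
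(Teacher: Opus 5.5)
Your proposal is correct and follows the paper's route exactly: the paper obtains this corollary by combining \propref{p:class spec fin} with \corref{c:fiber of class}, and your compatibility check (pasting the square defining $\iota_g^!$ onto the 2-morphism $\on{emb}^{\on{spec.fin}}$) spells out the step the paper leaves implicit. The conservativity of $\iota_g^!$ is not needed, since any functor sends an isomorphism to an isomorphism.
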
 

\sssec{}

Let $\CY$ be a quasi-compact AG Verdier-compatible algebraic stack acted on by $G$. From \corref{c:Tr on spec fin}, we obtain:

\begin{cor} \label{c:class spec fin}
The map
$$\Tr_{\AGCat}(\Frob^!,\ul\Shv(\CY)^{\on{spec.fin}})\to \Tr_{\AGCat}(\Frob^!,\ul\Shv(\CY))\overset{\on{LT}^{\on{AG}}}\simeq \sFunct(\CY(\BF_q),\ol\BQ_\ell)$$
is an isomorphism, where by a slight abuse of notation, we denote by $\ul\Shv(\CY)^{\on{spec.fin}}$ the object of $\AGCat$, underlying the
same-named object of $G\mmod^{\on{spec.fin}}$.
\end{cor}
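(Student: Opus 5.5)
The approach is to identify both sides via \corref{c:iterated trace Frob}, applied to $\ul\Shv(\CY)$ in place of $G\mmod$, and to reduce to \corref{c:Tr on spec fin}. I would regard $\ul\Shv(\CY)$, equipped with the Frobenius map $\alpha:=\Frob^!$ (viewed as $\ul\Shv(\CY)\to \Frob(\ul\Shv(\CY))$ in $G\mmod$), as an object of the type considered in \secref{sss:class of obj}. The goal is to express $\Tr_{\AGCat}(\Frob^!,\ul\Shv(\CY))$ as global sections of the class $\on{cl}(\ul\Shv(\CY),\alpha)\in \Rep(G(\BF_q))$, i.e. as $G(\BF_q)$-invariants. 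The same is to be done for the spec.fin version.

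The first step is to apply \corref{c:sections of class} with $\phi=\Frob$. This gives
$$\on{C}^\cdot_\blacktriangle(G/\on{Ad}_\Frob(G),\on{cl}(\ul\Shv(\CY),\alpha))\simeq \Tr(\alpha,\bcoinv_G(\ul\Shv(\CY)))=\Tr(\Frob^!,\ul\Shv(\CY/G)).$$
The analogous statement holds for $\ul\Shv(\CY)^{\on{spec.fin}}$, using \lemref{l:adj is inherited} for dualizability. This only handles $\CY/G$, not $\CY$. To get $\CY$ itself, I would instead replace $G\mmod$ by its restriction along the trivial group. More precisely, I would use \corref{c:fiber of class} at $g=1$, which says that the !-fiber at the point $1\in \on{pt}/G(\BF_q)$ of $\on{cl}(\ul\bC,\alpha)$ is $\Tr(\alpha,\ul\bC)$. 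Then \corref{c:Tr on spec fin} with $g=1$ directly yields that
$$\Tr(\Frob^!,\ul\Shv(\CY)^{\on{spec.fin}})\to \Tr(\Frob^!,\ul\Shv(\CY))$$
is an isomorphism in $\Vect$. Finally, I would compose with the Local Term isomorphism $\on{LT}^{\on{AG}}$ of \cite[Sect. 5.2.4]{GRV}, which applies because $\CY$ is quasi-compact and AG Verdier-compatible.

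The main check is that the map in \corref{c:Tr on spec fin}, which is built from the 2-categorical functoriality of trace for the adjointable map $\on{emb}^{\on{spec.fin}}$ (\secref{sss:higher funct}), agrees with the map in the statement. The latter is $\Tr_{\AGCat}$ applied to the $\Frob^!$-equivariant adjointable embedding $\ul\Shv(\CY)^{\on{spec.fin}}\hookrightarrow \ul\Shv(\CY)$. I expect this to follow by unwinding: taking the fiber at $1$ corresponds to composing with $\boblv_G$ (diagram \eqref{e:class of an object fiber} with $g=1$). The compatibility of $\Tr$ with composition of 1-morphisms in $\on{Arr}$ (\eqref{e:2-cat Tr funct}) then identifies the two maps.

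A second point to verify is that $\ul\Shv(\CY)$ is dualizable in $\AGCat$, which is needed for $\on{cl}$ to be defined. This holds for such stacks by the discussion after \secref{sss:G-mod dualizable}. I expect this compatibility bookkeeping to be the only real obstacle; the substance is already contained in \thmref{t:Frob restr}.
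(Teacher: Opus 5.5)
Your argument is correct and is exactly the paper's: the corollary is \corref{c:Tr on spec fin} specialized to $\ul\bC=\ul\Shv(\CY)$, $\alpha=\Frob^!$ and $g=1$, composed with the isomorphism $\on{LT}^{\on{AG}}$. The opening detour through \corref{c:sections of class} and \corref{c:iterated trace Frob} plays no role and can be dropped.
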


\sssec{An application: the Howlett-Lehrer theorem}

We will now use \corref{c:class spec fin} to reprove the following result:

\begin{thm}
Let $P_1$ and $P_2$ be a pair of rational parabolic subgroups of $G$ that share a Levi subgroup. Then the
intertwining map
$$\sFunct((G/N(P_1))(\BF_q),\ol\BQ_\ell)\to \sFunct((G/N(P_2))(\BF_q),\ol\BQ_\ell),$$
given by pull-push along
$$
\CD
(G/N(P_1)\cap N(P_2))(\BF_q) @>>> (G/N(P_2))(\BF_q) \\
@VVV \\
(G/N(P_1))(\BF_q)
\endCD
$$
is an isomorphism.
\end{thm}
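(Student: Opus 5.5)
The plan is to realize the intertwining map on functions as the trace of Frobenius of the categorical intertwining operator, and then apply \corref{c:class spec fin} together with \propref{p:intertwiner spec finite}. Take $\CY_i:=G/N(P_i)$ with its $G$-action by left multiplication; each is a quasi-compact scheme, hence AG Verdier-compatible. Since $P_1,P_2$ are rational, the Frobenius preserves $N(P_1)$, $N(P_2)$ and $N(P_1)\cap N(P_2)$. Hence the pull-push diagram defining $I_{P_1,P_2}$, namely \eqref{e:intertw diagram}, is Frobenius-equivariant. In particular the 1-morphism
$$I:=(r_2)_\blacktriangle\circ r_1^!:\ul\Shv(\CY_1)\to \ul\Shv(\CY_2)$$
in $G\mmod$ (in fact in $(G\times M)\mmod$) comes with a commutation datum $\alpha_I$ with the $\Frob^!$'s. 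Moreover $I$ is adjointable in $\AGCat$, because $r_1$ is smooth and $r_2$ is schematic. So the 2-categorical trace $\Tr(\alpha_I,I)$ is defined.

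First, identify $\Tr(\alpha_I,I)$, under the local term isomorphisms $\on{LT}^{\on{AG}}$ for $\CY_1$ and $\CY_2$, with the pull-push map on functions along $(G/N(P_1)\cap N(P_2))(\BF_q)$. This is the functoriality of local terms for pull-push correspondences established in \cite{GRV} (the generalization of the Grothendieck--Lefschetz formula from \cite{GaVa}). The correspondence is Frobenius-equivariant with $r_1$ smooth, and the fixed points of the correspondence are exactly the $\BF_q$-points of $G/N(P_1)\cap N(P_2)$. I will take this identification as given.

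Second, pass to the spectrally finite parts. The 1-morphism $I$ restricts to $I^{\on{spec.fin}}:\ul\Shv(\CY_1)^{\on{spec.fin}}\to\ul\Shv(\CY_2)^{\on{spec.fin}}$, compatibly with the embeddings $\on{emb}^{\on{spec.fin}}$, with their right adjoints, and with Frobenius (by \eqref{e:spec fin Frob 1} and \eqref{e:spec fin Frob 2}). Functoriality of the trace under composition, applied to the square relating $I$, $I^{\on{spec.fin}}$ and the embeddings, gives a commutative square in $\Vect$. Its vertical maps are the isomorphisms of \corref{c:class spec fin}, and its horizontal maps are $\Tr(\alpha_I,I)$ and $\Tr(\alpha^{\on{spec.fin}},I^{\on{spec.fin}})$. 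This reduces the theorem to showing that $\Tr(\alpha^{\on{spec.fin}},I^{\on{spec.fin}})$ is an isomorphism.

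Third, note that $\ul\Shv(\CY_i)^{\on{spec.fin}}$ is the object underlying $\bind^{\on{spec.fin}}_{P_i}$, viewed via $(G\times M)\mmod$, and $I^{\on{spec.fin}}$ is the underlying map of $I^{\on{spec.fin}}_{P_1,P_2}$. By \propref{p:intertwiner spec finite}, $I^{\on{spec.fin}}$ is an equivalence, and its inverse is compatible with the Frobenius datum, since $\alpha^{\on{spec.fin}}$ is invertible. The trace of an equivalence together with an invertible commutation datum is an isomorphism, because trace is functorial, so the inverse pair provides an inverse map. The step I expect to be the main obstacle is the second one. There one must check carefully that $\Tr(\alpha^{\on{spec.fin}},I^{\on{spec.fin}})$ agrees with the original trace after composing with the isomorphisms of \corref{c:class spec fin}. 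This requires tracking the Beck--Chevalley 2-morphisms between $I\circ \on{emb}^{\on{spec.fin}}$ and $\on{emb}^{\on{spec.fin}}\circ I^{\on{spec.fin}}$, which are isomorphisms here because $I$ preserves spec.fin-objects, an instance of spec.fin-adaptedness as in \secref{sss:ind is comp}. I would handle this via the 3-categorical functoriality of \secref{sss:higher funct}.
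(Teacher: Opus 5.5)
\textbf{The gap is in your first step.} Your skeleton matches the paper's one-line proof: pass to $\Tr_{\AGCat}(\Frob^!,-)$ of the spectrally finite parts, identify those traces with functions via \corref{c:class spec fin}, and use \propref{p:intertwiner spec finite}. But the first step rests on a false claim: $I=(r_2)_\blacktriangle\circ r_1^!:\ul\Shv(G/N(P_1))\to\ul\Shv(G/N(P_2))$ is \emph{not} adjointable in $\AGCat$.
\begin{itemize}
\item Being schematic is not the relevant condition. Adjointability of $(r_2)_\blacktriangle$ is mock-properness, and by \secref{sss:mock-proper schemes} a mock-proper map of schemes is proper. Your $r_2$ is a fibration with affine-space fibers $N(P_2)/N(P_1)\cap N(P_2)$, so it is not proper.
\item More decisively, the paper states in \secref{sss:intertwining functor} that the 2-morphism $I_{P_1,P_2}$ is not adjointable. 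By \lemref{l:adj in G mod}, this says exactly that $I$ has no right adjoint in $\AGCat$.
\end{itemize}
So $\Tr(\alpha_I,I)$ on the full categories is not defined, and there is no local-term formula for it to invoke. The commutative square in your second step factors the trace of $I\circ\on{emb}^{\on{spec.fin}}_1$ as $\Tr(\alpha_I,I)\circ\Tr(\on{emb}^{\on{spec.fin}}_1)$, so that square does not exist either.

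\textbf{What survives, and what is missing.} You do have the adjointable composite $I\circ\on{emb}^{\on{spec.fin}}_1\simeq\on{emb}^{\on{spec.fin}}_2\circ I^{\on{spec.fin}}$. Its trace is $\Tr(\on{emb}^{\on{spec.fin}}_2)\circ\Tr(\alpha^{\on{spec.fin}},I^{\on{spec.fin}})$, hence an isomorphism by your third step and \corref{c:class spec fin}; that part of your argument is fine. The missing ingredient is a reason, not routed through $\Tr(I)$, why this isomorphism is the pull-push map on functions after composing with $\Tr(\on{emb}^{\on{spec.fin}}_1)^{-1}$ and the local-term isomorphisms. Two possible routes:
\begin{itemize}
\item Compute on classes $\on{cl}(\CF,\alpha)$ of compact Weil objects of $\Shv(G/N(P_1))^{\on{spec.fin}}$. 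The trace of the composite sends $\on{cl}(\CF,\alpha)$ to $\on{cl}(I(\CF),I(\alpha))$. Then \secref{sss:recall GV} and the sheaf--function dictionary for $r_1^!$ and $(r_2)_*$ give the pull-push of $\sfunct(\CF,\alpha)$. You would still have to prove that such functions span $\sFunct((G/N(P_1))(\BF_q),\ol\BQ_\ell)$.
\item Establish a local-term formula for the adjointable kernel $I\circ\on{emb}^{\on{spec.fin}}_1\circ(\on{emb}^{\on{spec.fin}}_1)^R$.
\end{itemize}
The Beck--Chevalley bookkeeping you flagged as the main obstacle is harmless. The real work is this identification.
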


\begin{proof}

This is obtained by combining \corref{c:class spec fin} and \propref{p:intertwiner spec finite}.

\end{proof}

\ssec{Trace of Frobenius on character sheaves} \label{ss:Tr Frob char}

In this subsection we will show that (the usual) categorical trace of Frobenius on
$\Shv^{\on{ch}}(G/\on{Ad}(G))$ recovers the space of class functions on $G(\BF_q)$.

\sssec{}

Consider the functor
$$\on{emb}^{\on{ch}}:\ul\Shv^{\on{ch}}(G/\on{Ad}(G))\to \ul\Shv(G/\on{Ad}(G)).$$

It is adjointable and compatible with the action of the Frobenius endofunctor. Hence,
it induces a map (in $\Vect$):

\begin{equation} \label{e:Tr Frob emb ch}
\Tr(\Frob^!,\ul\Shv^{\on{ch}}(G/\on{Ad}(G))) \to \Tr(\Frob^!,\ul\Shv(G/\on{Ad}(G))) \overset{\on{LT}^{\on{AG}}}\simeq \sFunct((G/\on{Ad}(G))(\BF_q),\ol\BQ_\ell).
\end{equation}

\medskip

In this subsection we will prove:

\begin{thm} \label{t:Tr Frob on ch}
The map \eqref{e:Tr Frob emb ch} is an isomorphism.
\end{thm}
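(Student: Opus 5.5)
The plan is to deduce \thmref{t:Tr Frob on ch} from the double trace formalism (\corref{c:2-dual} and \corref{c:2 traces}), applied to the 1-morphism $\bemb^{\on{spec.fin}}:G\mmod^{\on{spec.fin}}\to G\mmod$, viewed as an object of $\on{Arr}(\tAGCat)$ and endowed with the two commuting endomorphisms $\Frob$ and $\on{Id}$. The first step is to check the hypotheses of \secref{sss:cond for adj}. Both objects are 2-dualizable (\corref{c:G-mod 2-dual} and its spec.fin analogue in \secref{ss:inherited}), and $\bemb^{\on{spec.fin}}$ is 2-adjointable by \thmref{t:restr main}(a). The Frobenius endomorphism is adjointable because $\Frob_G$ and $\Frob$ on spec.fin are equivalences, and the 2-morphism making \eqref{e:spec fin Frob 1} commute is an isomorphism satisfying Beck--Chevalley (by \eqref{e:spec fin Frob 2}), so it is adjointable.

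\corref{c:2-dual} then produces a commutative square. Its top row is the map
$$\Tr(\on{Id},\Tr(\Frob,G\mmod^{\on{spec.fin}}))\to \Tr(\on{Id},\Tr(\Frob,G\mmod)).$$
Its bottom row is the map
$$\Tr(\Tr(\on{id},\Frob),\Tr(\on{Id},G\mmod^{\on{spec.fin}}))\to \Tr(\Tr(\on{id},\Frob),\Tr(\on{Id},G\mmod)).$$
The vertical arrows are the isomorphisms of \corref{c:2 traces}.

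The top row is an isomorphism. By \thmref{t:Frob restr}, the functor $\Tr(\Frob,G\mmod^{\on{spec.fin}})\to\Tr(\Frob,G\mmod)\simeq\ul\Rep(G(\BF_q))$ is an equivalence, and taking $\Tr(\on{Id},-)$ preserves this. Hence the bottom row is an isomorphism as well.

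It remains to identify the bottom row with \eqref{e:Tr Frob emb ch}. By \thmref{t:ch sheaves}, $\Tr(\on{id},\bemb^{\on{spec.fin}})$ is the fully faithful functor $\on{emb}^{\on{ch}}:\ul\Shv^{\on{ch}}(G/\on{Ad}(G))\to\ul\Shv(G/\on{Ad}(G))$. By \propref{p:map of traces stacks}, $\Tr(\on{id},\Frob)$ on $\ul\Shv(G/\on{Ad}(G))$ is $\Frob^!$, and it restricts to $\Frob^!$ on character sheaves. The bottom row is then $\Tr(\Frob^!,-)$ applied to $\on{emb}^{\on{ch}}$, with the 2-morphism induced by the compatibility of $\on{emb}^{\on{ch}}$ with Frobenius. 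Composing with $\on{LT}^{\on{AG}}$ gives exactly \eqref{e:Tr Frob emb ch}; this uses \corref{c:iterated trace Frob}, which says the right-hand vertical identification is $\on{LT}^{\on{AG}}$.

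The main obstacle is this last identification of 2-morphisms. One must check that the Frobenius-compatibility datum on $\Tr(\on{id},\bemb^{\on{spec.fin}})$ obtained through the 3-categorical functoriality \eqref{e:3-cat Tr funct} agrees with the naive one on $\on{emb}^{\on{ch}}$. The approach is to unwind both data via \propref{p:map of traces stacks}. Since $\on{emb}^{\on{ch}}$ is fully faithful, the Frobenius structure on it is unique, which should make the comparison essentially formal. Finally, \propref{p:ch is restr} lets us replace $\ul\Shv^{\on{ch}}$ by $\bi(\Shv^{\on{ch}})$ if the trace is to be read in $\DGCat$.
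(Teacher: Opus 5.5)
Your proposal is correct and is essentially the paper's proof: apply \corref{c:2-dual} to $\bemb^{\on{spec.fin}}:G\mmod^{\on{spec.fin}}\to G\mmod$ with $F_1=F_2=\Frob$, observe that the top row is $\Tr(\on{Id},-)$ of the equivalence of \thmref{t:Frob restr}, and conclude that the bottom row is an isomorphism. That bottom row is \eqref{e:Tr Frob emb ch} before composing with the isomorphism $\on{LT}^{\on{AG}}$; the paper treats this identification as immediate from \thmref{t:ch sheaves}, so your extra checking there, and your appeal to \corref{c:iterated trace Frob} (unnecessary, since $\on{LT}^{\on{AG}}$ is already an isomorphism), are harmless but not needed.
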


\sssec{}

We will now derive from \thmref{e:Tr Frob emb ch} a statement that does \emph{not} involve $\AGCat$,
and which is the counterpart of the Trace Conjecture of \cite{AGKRRV1} for character sheaves:

\begin{cor} \label{c:Tr Frob on ch}
The composition
\begin{multline} \label{e:Tr Frob emb ch bis}
\Tr_{\DGCat}(\Frob^!,\Shv^{\on{ch}}(G/\on{Ad}(G))) \to \Tr_{\DGCat}(\Frob^!,\Shv(G/\on{Ad}(G))) \overset{\on{LT}}\to \\
\to \sFunct((G/\on{Ad}(G))(\BF_q),\ol\BQ_\ell)
\end{multline}
is an isomorphism.
\end{cor}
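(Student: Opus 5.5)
The plan is to deduce \corref{c:Tr Frob on ch} from \thmref{t:Tr Frob on ch} by comparing the $\DGCat$-trace with the $\AGCat$-trace, using the fact that character sheaves are restricted.

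By \propref{p:ch is restr}, the canonical map $\bi(\Shv^{\on{ch}}(G/\on{Ad}(G)))\to \ul\Shv^{\on{ch}}(G/\on{Ad}(G))$ is an isomorphism, compatibly with the Frobenius endofunctor (which is $\Frob^!$ on both sides). The functor $\bi:\DGCat\to\AGCat$ is symmetric monoidal and fully faithful. Hence it sends dualizable objects to dualizable objects and preserves traces: for dualizable $\bC\in\DGCat$ with endofunctor $F$, we have $\Tr_{\AGCat}(\bi(F),\bi(\bC))\simeq \Tr_{\DGCat}(F,\bC)$ as elements of $\End(\ul\Vect)=\End(\Vect)=\Vect$. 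Applying this to $\bC=\Shv^{\on{ch}}(G/\on{Ad}(G))$, which is dualizable (compactly generated), we identify
$$\Tr_{\DGCat}(\Frob^!,\Shv^{\on{ch}}(G/\on{Ad}(G)))\simeq \Tr_{\AGCat}(\Frob^!,\ul\Shv^{\on{ch}}(G/\on{Ad}(G))).$$

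Next we check compatibility with the embedding. The 1-morphism $\on{emb}^{\on{ch}}$ is $\bi$ applied to the $\DGCat$-embedding composed with the canonical map $\bi(\Shv(G/\on{Ad}(G)))\to\ul\Shv(G/\on{Ad}(G))$. The $\DGCat$-embedding is adjointable by \propref{p:emb ch R}, and the canonical map is adjointable with right adjoint $\be$-type colocalization. By the functoriality of the 2-categorical trace under composition of adjointable 1-morphisms, \eqref{e:Tr Frob emb ch} factors as
$$\Tr_{\DGCat}(\Frob^!,\Shv^{\on{ch}})\to \Tr_{\DGCat}(\Frob^!,\Shv(G/\on{Ad}(G)))\to \Tr_{\AGCat}(\Frob^!,\ul\Shv(G/\on{Ad}(G)))\overset{\on{LT}^{\on{AG}}}\to \sFunct.$$

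The remaining input, and the step I expect to be the main obstacle, is that $\on{LT}^{\on{AG}}$ precomposed with the comparison map $\Tr_{\DGCat}\to\Tr_{\AGCat}$ equals the usual local term map $\on{LT}$. I expect this to be part of the construction of $\on{LT}^{\on{AG}}$ in \cite[Sect. 5.2]{GRV}, namely compatibility of local terms with $\bi$, and I would cite it from there rather than reprove it. Granting it, \eqref{e:Tr Frob emb ch bis} coincides with \eqref{e:Tr Frob emb ch}. The latter is an isomorphism by \thmref{t:Tr Frob on ch}, and the first identification above is an isomorphism, so \eqref{e:Tr Frob emb ch bis} is an isomorphism.
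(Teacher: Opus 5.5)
Your proposal is correct and matches the paper's proof. The paper likewise identifies the $\DGCat$-trace with the $\AGCat$-trace via $\bi$ and \propref{p:ch is restr}, invokes the compatibility of $\on{LT}$ with $\on{LT}^{\on{AG}}$ (cited there as \cite[Theorem 5.4.12]{GRV}, which settles the step you flagged as the main obstacle), and concludes by \thmref{t:Tr Frob on ch}.
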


\begin{proof}[Proof of \corref{c:Tr Frob on ch}]

By \cite[Theorem 5.4.12]{GRV}, the map
$$\Tr_{\DGCat}(\Frob^!,\Shv(G/\on{Ad}(G))) \overset{\on{LT}}\to  \sFunct((G/\on{Ad}(G))(\BF_q),\ol\BQ_\ell)$$
is the same as
$$\Tr_{\DGCat}(\Frob^!,\Shv(G/\on{Ad}(G))) \to \Tr_{\AGCat}(\Frob^!,\ul\Shv(G/\on{Ad}(G)))
\overset{\on{LT}^{\on{AG}}}\simeq \sFunct((G/\on{Ad}(G))(\BF_q),\ol\BQ_\ell).$$

Hence, it suffices to show that the map
\begin{multline*}
\Tr_{\DGCat}(\Frob^!,\Shv^{\on{ch}}(G/\on{Ad}(G))) \simeq
\Tr_{\AGCat}(\Frob^!,\bi(\Shv^{\on{ch}}(G/\on{Ad}(G)))) \to \\
\to  \Tr_{\AGCat}(\Frob^!,\ul\Shv^{\on{ch}}(G/\on{Ad}(G)))
\to \Tr_{\AGCat}(\Frob^!,\ul\Shv(G/\on{Ad}(G)))
\end{multline*}
is an isomorphism.

\medskip

In the above composition, the second arrow is an isomorphism by \propref{p:ch is restr}, while the last arrow
is an isomorphism by \thmref{t:Tr Frob on ch}.

\end{proof}

\sssec{} \label{sss:ident char}

Note that from \corref{c:iterated trace Frob}, we obtain that the diagram
\begin{equation} \label{e:ident char}
\CD
\Tr(\on{Id},\Tr(\Frob,G\mmod^{\on{spec.fin}}))  @>{\text{\corref{c:2 traces}}}>{\sim}>
\Tr(\Tr(\on{id},\Frob),\Tr(\on{Id},G\mmod^{\on{spec.fin}}))  \\
@V{\text{\thmref{t:Frob restr}}}V{\sim}V @V{\sim}V{\text{\thmref{t:ch sheaves}}}V \\
\Tr(\on{Id},\ul\Rep(G(\BF_q))) & & \Tr(\Frob^!,\ul\Shv^{\on{ch}}(G/\on{Ad}(G))) \\
@V{\text{taking characters}}V{\sim}V @V{\sim}V{\text{\thmref{t:Tr Frob on ch}}}V \\
\sFunct(G(\BF_q)/\on{Ad}(G(\BF_q)),\ol\BQ_\ell) @>{\sim}>>  \sFunct((G/\on{Ad}(G))(\BF_q),\ol\BQ_\ell)
\endCD
\end{equation}
commutes, and hence so does the diagram
$$
\CD
\Tr(\on{Id},\Tr(\Frob,G\mmod^{\on{restr}}))  @>{\text{\corref{c:2 traces}}}>{\sim}>
\Tr(\Tr(\on{id},\Frob),\Tr(\on{Id},G\mmod^{\on{restr}}))  \\
@V{\text{\corref{c:Frob restr}}}V{\sim}V @V{\sim}V{\text{\corref{c:Tr id restr}}}V \\
\Tr(\on{Id},\Rep(G(\BF_q))) & & \Tr(\Frob^!,\Shv^{\on{ch}}(G/\on{Ad}(G))) \\
@V{\text{taking characters}}V{\sim}V @V{\sim}V{\text{\corref{c:Tr Frob on ch}}}V \\
\sFunct(G(\BF_q)/\on{Ad}(G(\BF_q)),\ol\BQ_\ell) @>{\sim}>> \sFunct((G/\on{Ad}(G))(\BF_q),\ol\BQ_\ell).
\endCD
$$

\sssec{}

The rest of this subsection is devoted to the proof of \thmref{t:Tr Frob on ch}. Let us apply
\corref{c:2-dual} to the 1-morphism $t$ being
$$\bemb^{\on{spec.fin}}:G\mmod^{\on{spec.fin}}\to G\mmod$$
with $F_1$ and $F_2$ given by $\Frob$.

\medskip

We obtain a commutative diagram
$$
\CD
\Tr(\on{Id},\Tr(\Frob,G\mmod^{\on{spec.fin}})) @>>> \Tr(\on{Id},\Tr(\Frob,G\mmod)) \\
@V{\text{\corref{c:2 traces}}}V{\sim}V  @V{\sim}V{\text{\corref{c:2 traces}}}V \\
\Tr(\Tr(\on{id},\Frob),\Tr(\on{Id},G\mmod^{\on{spec.fin}})) @>>> \Tr(\Tr(\on{id},\Frob),\Tr(\on{Id},G\mmod)),
\endCD
$$
where:

\medskip

\begin{itemize}

\item The top horizontal arrow is obtained by applying $\Tr(\on{Id},-)$ to the functor
$$\Tr(\Frob,G\mmod^{\on{spec.fin}})\to \Tr(\Frob,G\mmod^{\on{spec.fin}})$$
of \eqref{e:Tr Frob of incl}.

\medskip

\item We identify
$$\Tr(\on{Id},G\mmod^{\on{spec.fin}})\simeq \ul\Shv^{\on{ch}}(G/\on{Ad}(G)) \text{ and }
\Tr(\on{Id},G\mmod)\simeq \ul\Shv(G/\on{Ad}(G)),$$
so that $\Tr(\on{id},\Frob)$ in both cases is given by $\Frob^!$; hence the bottom horizontal arrow
is the map \eqref{e:Tr Frob emb ch}.

\end{itemize}

\medskip

Now, the top horizontal arrow is an equivalence by \thmref{t:Frob restr}. Hence, so is the bottom horizontal arrow.

\qed[\thmref{t:Tr Frob on ch}]

\ssec{Trace of Frobenius and induction}

In this subsection we will deduce the key down-to-earth (i.e., numerical) consequence
of the theory we developed: that the Deligne-Lusztig functor \emph{at the level of characters}
equals the trace of Frobenius on the Grothendieck-Springer functor, see \corref{c:ind and Frob restr}
for the precise assertion.

\sssec{}

Recall the 1-morphism
$$\bind_P:M\mmod\to G\mmod.$$

By \secref{sss:ind is comp}, it induces a 1-morphism
$$\bind^{\on{spec.fin}}_P:M\mmod^{\on{spec.fin}}\to G\mmod^{\on{spec.fin}}$$
in $\tAGCat$
and also a 1-morphism
$$\bind^{\on{restr}}_P:M\mmod^{\on{restr}}\to G\mmod^{\on{restr}}$$
in $\tDGCat$.

\sssec{}

In \secref{sss:constr DL}, we have supplied $\bind_P$ with a datum of compatibility
with the endomorphisms $\Frob_M$ and $\Frob_G$, acting on the source and target,
respectively:
$$\bind_P\circ \Frob_M \overset{\delta}\to  \Frob_G\circ \bind_P$$

\medskip

This datum automatically induces a datum of compatibility for $\bind^{\on{spec.fin}}_P$
and also for $\bind^{\on{restr}}_P$:
$$\bind^{\on{spec.fin}}_P\circ \Frob_M \overset{\delta^{\on{spec.fin}}}\to  \Frob_G\circ \bind^{\on{spec.fin}}_P$$
and
$$\bind^{\on{restr}}_P\circ \Frob_M \overset{\delta^{\on{restr}}}\to  \Frob_G\circ \bind^{\on{restr}}_P.$$

\medskip

Hence, we obtain the functor
\begin{equation} \label{e:DL spec fin}
\Tr(\Frob_M,M\mmod^{\on{spec.fin}})\overset{\Tr(\delta^{\on{spec.fin}},\bind^{\on{spec.fin}}_P)}\longrightarrow \Tr(\Frob_G,G\mmod^{\on{spec.fin}}),
\end{equation}
which makes the diagram
\begin{equation} \label{e:DL spec fin diag}
\CD
\Tr(\Frob_M,M\mmod^{\on{spec.fin}}) @>{\Tr(\delta^{\on{spec.fin}},\bind^{\on{spec.fin}}_P)}>>  \Tr(\Frob_G,G\mmod^{\on{spec.fin}}) \\
@V{\text{\thmref{t:Frob restr}}}V{\sim}V @V{\sim}V{\text{\thmref{t:Frob restr}}}V \\
\Tr(\Frob_M,M\mmod) @>{\Tr(\delta,\bind_P)=\text{\eqref{e:DL functor}}}>>  \Tr(\Frob_G,G\mmod) \\
@VV{\sim}V  @V{\sim}VV \\
\Rep(M(\BF_q)) @>{\on{DL}_P}>>\Rep(G(\BF_q))
\endCD
\end{equation}
commute, and similarly for ``restr" instead of ``spec.fin".

\begin{rem}

Since the 1-morphism $\bind^{\on{spec.fin}}_P$ is 2-adjointable, $\Frob_M$ and $\Frob_G$ are equivalences, and
$\delta^{\on{spec.fin}}$ is also an equivalence, we can apply \propref{p:adj Tr}. We obtain that the 1-morphism
$$\bjacq^{\on{spec.fin}}_P:G\mmod^{\on{spec.fin}}\to M\mmod^{\on{spec.fin}},$$
equipped with the natural transformation $((\delta^{\on{spec.fin}})^{\on{BC}})^{-1}$
$$((\delta^{\on{spec.fin}})^{\on{BC}})^{-1}:\bjacq_P\circ \Frob_G\to \Frob_M\circ \bjacq_P$$
induces a functor
\begin{equation} \label{e:adj to DL}
\Rep(G(\BF_q)) \simeq
\Tr(\Frob_G,G\mmod^{\on{spec.fin}})\to \Tr(\Frob_M,M\mmod^{\on{spec.fin}})\simeq \Rep(M(\BF_q)),
\end{equation}
which is right adjoint to $\on{DL}_P$.

\medskip

However, this does not give us anything new: unwinding, we obtain that the functor \eqref{e:adj to DL}
is given by the bimodule
$$\on{C}^\cdot_c(\mathcal{DL}_P),$$
which is dual to the bimodule
$$\on{C}^\cdot(\mathcal{DL}_P,\omega_{\mathcal{DL}_P}),$$
which defines the Deligne-Lusztig functor $\on{DL}_P$.

\end{rem}

\sssec{}

By \propref{p:intertwiner spec finite}, the 2-morphism $\delta^{\on{spec.fin}}$
is actually an isomorphism, and hence is adjointable. Hence, by \secref{sss:higher funct}
it induces a natural transformation (in fact, an isomorphism)
\begin{equation} \label{e:Frob Groth-Spr}
\on{GrothSpr}_P^{\on{spec.fin}}\circ \Frob_M^! \overset{\Tr(\on{id},\delta^{\on{spec.fin}})}\Rightarrow
\Frob_G^! \circ \on{GrothSpr}_P^{\on{spec.fin}}
\end{equation}
as functors
$$\ul\Shv^{\on{ch}}(M/\on{Ad}(M))\to \ul\Shv^{\on{ch}}(G/\on{Ad}(G)).$$

\medskip

By \eqref{e:2-categ trace}, from \eqref{e:Frob Groth-Spr} we obtain a map
\begin{equation} \label{e:Frob Groth-Spr chat}
\Tr(\Frob_M^!,\ul\Shv^{\on{ch}}(M/\on{Ad}(M))) \overset{\Tr(\Tr(\on{id},\delta^{\on{spec.fin}}),\on{GrothSpr}_P^{\on{spec.fin}})}\longrightarrow
\Tr(\Frob_G^!,\ul\Shv^{\on{ch}}(G/\on{Ad}(G))).
\end{equation}

Similarly, we obtain a natural transformation (in fact, an isomorphism)
\begin{equation} \label{e:Frob Groth-Spr restr}
\on{GrothSpr}_P^{\on{restr}}\circ \Frob_M^! \overset{\Tr(\on{id},\delta^{\on{restr}})}\Rightarrow
\Frob_G^! \circ \on{GrothSpr}_P^{\on{restr}}
\end{equation}
as functors
$$\Shv^{\on{ch}}(M/\on{Ad}(M))\to \Shv^{\on{ch}}(G/\on{Ad}(G))$$
and a map
\begin{equation} \label{e:Frob Groth-Spr chat restr}
\Tr(\Frob_M^!,\Shv^{\on{ch}}(M/\on{Ad}(M))) \overset{\Tr(\Tr(\on{id},\delta^{\on{restr}}),\on{GrothSpr}_P^{\on{restr}})}\longrightarrow
\Tr(\Frob_G^!,\Shv^{\on{ch}}(G/\on{Ad}(G))).
\end{equation}

\begin{rem} \label{r:delta expl}

Unwinding, we obtain that the natural transformation $\delta^{\on{spec.fin}}$ is the composition
$$\bind^{\on{spec.fin}}_P\circ \Frob_M \overset{I^{\on{spec.fin}}_{P,P'}}\simeq
\bind^{\on{spec.fin}}_{P'}\circ \Frob_M \simeq \Frob_G\circ \bind^{\on{spec.fin}}_P.$$

From here, we obtain that the natural transformation $\Tr(\on{id},\delta^{\on{spec.fin}})$ in \eqref{e:Frob Groth-Spr} is the composition
$$\on{GrothSpr}_P^{\on{spec.fin}}\circ \Frob_M^! \overset{i_{P,P'}}\simeq
\on{GrothSpr}_{P'}^{\on{spec.fin}}\circ \Frob_M^! \simeq \Frob_G^! \circ \on{GrothSpr}_P^{\on{spec.fin}}.$$

\end{rem}

\sssec{}

Thus, we can apply \corref{c:2-dual} and, taking into account \eqref{e:ident char} and \eqref{e:DL spec fin diag}, we obtain:

\begin{cor} \label{c:ind and Frob spec fin}
The following diagram commutes
$$
\CD
\Tr(\on{Id},\Rep(M(\BF_q)))  @>{\Tr(\on{id},\on{DL}_P)}>> \Tr(\on{Id},\Rep(G(\BF_q)))  \\
@V{\rm{taking\,\, characters}}V{\sim}V @V{\sim}V{\rm{taking\,\, characters}}V  \\
\sFunct((M/\on{Ad}(M))(\BF_q),\ol\BQ_\ell) & &  \sFunct((G/\on{Ad}(G))(\BF_q),\ol\BQ_\ell) \\
@A{\text{\thmref{t:Tr Frob on ch}}}A{\sim}A @A{\sim}A{\text{\thmref{t:Tr Frob on ch}}}A \\
\Tr(\Frob_M^!,\ul\Shv^{\on{ch}}(M/\on{Ad}(M))) @>{\Tr(\Tr(\on{id},\delta^{\on{spec.fin}}),\on{GrothSpr}_P^{\on{spec.fin}})}>>  \Tr(\Frob_G^!,\ul\Shv^{\on{ch}}(G/\on{Ad}(G))).
\endCD
$$
\end{cor}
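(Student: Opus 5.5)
The plan is to apply \corref{c:2-dual} to the object
$$(M\mmod^{\on{spec.fin}}\overset{\bind^{\on{spec.fin}}_P}\longrightarrow G\mmod^{\on{spec.fin}})\in \on{Arr}(\tAGCat),$$
equipped with the endomorphism $(\Frob_M,\Frob_G,\delta^{\on{spec.fin}})$. The first step is to check the hypotheses listed in \secref{sss:cond for adj}. For 2-dualizability of the object, \propref{p:2-dual} reduces it to three inputs:
\begin{itemize}
\item $M\mmod^{\on{spec.fin}}$ is 2-dualizable, by the corollary in \secref{ss:inherited};
\item $G\mmod^{\on{spec.fin}}$ is 2-dualizable, by the same corollary;
\item $\bind^{\on{spec.fin}}_P$ is 2-adjointable, by \corref{c:restr adj 2-adj}.
\end{itemize}
For adjointability of the endomorphism, \lemref{l:adj in Arr} requires that $\Frob_M$ and $\Frob_G$ be adjointable, which holds since they are equivalences. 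It also requires that $\delta^{\on{spec.fin}}$ be adjointable. By Remark \ref{r:delta expl}, $\delta^{\on{spec.fin}}$ is the composite of $I^{\on{spec.fin}}_{P,P'}$ with an isomorphism, and $I^{\on{spec.fin}}_{P,P'}$ is an isomorphism by \propref{p:intertwiner spec finite}. This is exactly why we pass from $G\mmod$ to the spectrally finite version.

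With the hypotheses in place, \corref{c:2-dual} gives a commutative square in $\Omega^2(\tAGCat)\simeq \Vect$. Its top arrow is $\Tr(\on{Id},-)$ applied to $\Tr(\delta^{\on{spec.fin}},\bind^{\on{spec.fin}}_P)$, and its bottom arrow is $\Tr(\Tr(\on{id},\delta^{\on{spec.fin}}),\Tr(\on{id},\bind^{\on{spec.fin}}_P))$. The vertical arrows are the double-trace isomorphisms of \corref{c:2 traces} for $M$ and for $G$. We then identify each corner and arrow with the terms in the statement:
\begin{itemize}
\item The top row: by diagram \eqref{e:DL spec fin diag}, $\Tr(\delta^{\on{spec.fin}},\bind^{\on{spec.fin}}_P)$ corresponds to $\on{DL}_P$ (via \thmref{t:Frob restr}), so the top arrow becomes $\Tr(\on{id},\on{DL}_P)$.
\item The bottom row: by \thmref{t:ch sheaves}, $\Tr(\on{id},\bind^{\on{spec.fin}}_P)=\on{GrothSpr}^{\on{spec.fin}}_P$ and $\Tr(\on{Id},G\mmod^{\on{spec.fin}})\simeq\ul\Shv^{\on{ch}}(G/\on{Ad}(G))$. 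Together with $\Tr(\on{id},\Frob)=\Frob^!$, this gives the bottom arrow of the statement.
\item The vertical arrows: composing the double-trace isomorphisms with "taking characters" and \thmref{t:Tr Frob on ch} is precisely the content of the commutative diagram \eqref{e:ident char}, applied once for $M$ and once for $G$.
\end{itemize}
Pasting these identifications onto the square from \corref{c:2-dual} yields the claimed diagram.

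The main obstacle is checking that the identifications are compatible, not merely termwise equal. Specifically, one must show that the isomorphism of \thmref{t:Frob restr} intertwines $\Tr(\on{Id},\Tr(\delta^{\on{spec.fin}},\bind^{\on{spec.fin}}_P))$ with $\Tr(\on{Id},\on{DL}_P)$. The plan is to derive this from the functoriality of the trace functor \eqref{e:2-cat Tr funct} with respect to the commutative square \eqref{e:DL spec fin diag}, using that its vertical arrows are equivalences. In the same way, one must check that \eqref{e:ident char} is natural with respect to the Levi $M$. Both diagrams arise from \corref{c:iterated trace Frob}, which is stated for an arbitrary connected reductive group, so this naturality should follow.
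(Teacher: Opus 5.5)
Your proposal is correct and is essentially the paper's argument. The paper applies \corref{c:2-dual} to $\bind^{\on{spec.fin}}_P$ with the endomorphism $(\Frob_M,\Frob_G,\delta^{\on{spec.fin}})$, using \propref{p:intertwiner spec finite} to see that $\delta^{\on{spec.fin}}$ is invertible (hence adjointable), and then pastes on \eqref{e:DL spec fin diag} together with \eqref{e:ident char} for $M$ and for $G$. Your concern about ``naturality of \eqref{e:ident char} in the Levi'' is unnecessary: the two vertical columns are handled separately, so you only need \eqref{e:ident char} to commute for $M$ and for $G$ individually, which it does.
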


\medskip

Hence, we also obtain:
\begin{cor} \label{c:ind and Frob restr}
The following diagram commutes
\begin{equation} \label{e:ind and Frob restr}
\CD
\Tr(\on{Id},\Rep(M(\BF_q)))  @>{\Tr(\on{id},\on{DL}_P)}>> \Tr(\on{Id},\Rep(G(\BF_q)))  \\
@V{\rm{taking\,\, characters}}V{\sim}V @V{\sim}V{\rm{taking\,\, characters}}V  \\
\sFunct((M/\on{Ad}(M))(\BF_q),\ol\BQ_\ell) & &  \sFunct((G/\on{Ad}(G))(\BF_q),\ol\BQ_\ell) \\
@A{\text{\corref{c:Tr Frob on ch}}}A{\sim}A @A{\sim}A{\text{\corref{c:Tr Frob on ch}}}A \\
\Tr(\Frob_M^!,\Shv^{\on{ch}}(M/\on{Ad}(M))) @>{\Tr(\Tr(\on{id},\delta^{\on{restr}}),\on{GrothSpr}_P^{\on{restr}})}>>  \Tr(\Frob_G^!,\Shv^{\on{ch}}(G/\on{Ad}(G))).
\endCD
\end{equation}
\end{cor}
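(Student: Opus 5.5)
The plan is to deduce \corref{c:ind and Frob restr} from \corref{c:ind and Frob spec fin} by transporting everything along the functor $\bi:\DGCat\to \AGCat$, using the equivalences established for the restricted theory. The source of the commutative diagram \corref{c:ind and Frob spec fin} is itself obtained by applying \corref{c:2-dual} to the 2-dualizable object $(M\mmod^{\on{spec.fin}}\overset{\bind^{\on{spec.fin}}_P}\to G\mmod^{\on{spec.fin}})$ of $\on{Arr}(\tAGCat)$, equipped with the adjointable endomorphism $(\Frob_M,\Frob_G,\delta^{\on{spec.fin}})$. Here $\delta^{\on{spec.fin}}$ is an isomorphism by \propref{p:intertwiner spec finite}, and 2-adjointability comes from \corref{c:restr adj 2-adj}. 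The same argument applies verbatim in $\tDGCat$ to $\bind^{\on{restr}}_P$, because \corref{c:restr adj 2-adj}, \corref{c:intertwiner restr} and the 2-dualizability of $G\mmod^{\on{restr}}$ all hold.

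The steps are as follows.

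First, apply \corref{c:2-dual} in $\tDGCat$ to $(\bind^{\on{restr}}_P,\Frob,\delta^{\on{restr}})$. This gives a commutative square relating $\Tr(\on{Id},\Tr(\Frob,-))$ and $\Tr(\Tr(\on{id},\Frob),\Tr(\on{Id},-))$ of the source and target.

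Second, identify the vertices of this square:
\begin{itemize}
\item By \corref{c:Frob restr}, $\Tr(\Frob,G\mmod^{\on{restr}})\simeq\Rep(G(\BF_q))$. Under this identification, $\Tr(\delta^{\on{restr}},\bind^{\on{restr}}_P)$ becomes $\on{DL}_P$; this is the restr-version of \eqref{e:DL spec fin diag}.
\item By \corref{c:Tr id restr}, $\Tr(\on{Id},G\mmod^{\on{restr}})\simeq\Shv^{\on{ch}}(G/\on{Ad}(G))$. Under this identification, $\Tr(\on{id},\Frob)=\Frob^!$, and the bottom arrow is \eqref{e:Frob Groth-Spr chat restr}.
\end{itemize}

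Third, glue this square to the second diagram in \secref{sss:ident char}, for both $M$ and $G$. That diagram identifies the two vertical double-trace isomorphisms with ``taking characters'' and with the isomorphism of \corref{c:Tr Frob on ch}, respectively. This yields \eqref{e:ind and Frob restr}.

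The main obstacle is the compatibility of the identifications under $\bi$. Concretely, the restr-versions of $\on{DL}_P$ and of $\Tr(\on{id},\bind^{\on{restr}}_P)=\on{GrothSpr}^{\on{restr}}_P$ must agree with the spec.fin ones after applying $\bi$. I will handle this via \thmref{t:restr main}(b), which says that $G\mmod^{\on{spec.fin}}\simeq\AGCat\underset{\DGCat}\otimes G\mmod^{\on{restr}}$. Since the tensoring-up functor $\tDGCat\to\tAGCat$ is symmetric monoidal, traces, 2-morphisms, and the isomorphism of \corref{c:2 traces} all commute with it. Full faithfulness of $\bi$ then recovers the statement in $\DGCat$ from \corref{c:ind and Frob spec fin}.

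This gives an alternative route: apply $\bi$ to the claimed diagram \eqref{e:ind and Frob restr} and compare it with \corref{c:ind and Frob spec fin}. For the lower vertical arrows, I use the proof of \corref{c:Tr Frob on ch}, which shows that the restricted trace map factors through the $\AGCat$ one via \propref{p:ch is restr} and \cite[Theorem 5.4.12]{GRV}.
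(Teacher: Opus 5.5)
Your proposal is correct. The paper's own proof is your ``alternative route''. It first obtains \corref{c:ind and Frob spec fin} by applying \corref{c:2-dual} in $\tAGCat$ to $\bind^{\on{spec.fin}}_P$, equipped with the endomorphism $(\Frob_M,\Frob_G,\delta^{\on{spec.fin}})$, and using \eqref{e:ident char} and \eqref{e:DL spec fin diag}. It then passes to the restricted statement with a one-word ``hence''. That word implicitly uses exactly the compatibilities you spell out: full faithfulness of $\bi$, \thmref{t:restr main}(b), \propref{p:ch is restr}, and the factorization through $\on{LT}^{\on{AG}}$ from the proof of \corref{c:Tr Frob on ch}.

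Your main line (Steps 1--3) differs only in where the 3-categorical argument is run: you apply \corref{c:2-dual} directly in $\tDGCat$ to $\bind^{\on{restr}}_P$. This is legitimate, since every input you need is stated in the paper:
\begin{itemize}
\item 2-dualizability of $G\mmod^{\on{restr}}$;
\item \corref{c:restr adj 2-adj} and \corref{c:intertwiner restr};
\item \corref{c:Frob restr} and \corref{c:Tr id restr};
\item the restr-version of \eqref{e:DL spec fin diag};
\item the second diagram of \secref{sss:ident char}.
\end{itemize}
It also has the conceptual merit of keeping the whole argument inside ordinary DG categories.

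It does not save any work, though. Several of those restricted inputs are themselves derived in the paper from their spec.fin or $\AGCat$ counterparts via $\bi$: notably \corref{c:Frob restr}, \corref{c:Tr id restr}, \corref{c:intertwiner restr}, the restr-version of \eqref{e:DL spec fin diag}, and the restricted diagram in \secref{sss:ident char}. So the $\bi$-compatibility you correctly flag as the main obstacle is relocated rather than avoided. That is why the paper proves the spec.fin statement once and then descends.
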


\begin{rem}

As was mentioned in the Introduction, a particular case of \corref{c:ind and Frob restr} for $q\gg 0$ was established 
in \cite{Lu3}, and in {\it loc. cit} it was conjectured to hold without that assumption.

\end{rem}

\ssec{What does \corref{c:ind and Frob restr} actually say?}

\sssec{}

Let
$$\on{ch}(\on{DL}_P):\sFunct((M/\on{Ad}(M))(\BF_q),\ol\BQ_\ell)\to \sFunct((G/\on{Ad}(G))(\BF_q),\ol\BQ_\ell)$$
denote the map arising from the \emph{upper} portion of \eqref{e:ind and Frob restr}.

\medskip

This is the map on $\ol\BQ_\ell\underset{\BZ}\otimes K^0(-)$ induced by the Deligne-Lusztig functor
$$\on{DL}_P:\Rep(M(\BF_q))\to \Rep(G(\BF_q)).$$

\sssec{}

Let us denote by $\Tr(\Frob,\on{GrothSpr}_P)$ the map
\begin{equation} \label{e:Tr Frob Groth Spr}
\sFunct((M/\on{Ad}(M))(\BF_q),\ol\BQ_\ell)\to \sFunct((G/\on{Ad}(G))(\BF_q),\ol\BQ_\ell)
\end{equation}
arising from the \emph{lower} portion of \eqref{e:ind and Frob restr}. (Below,
we will characterize the map $\Tr(\Frob,\on{GrothSpr}_P)$ concrete terms, see \eqref{e:concrete}.)

\medskip

\corref{c:ind and Frob restr} says that
\begin{equation} \label{e:char DL}
\on{ch}(\on{DL}_P)=\Tr(\Frob,\on{GrothSpr}_P)
\end{equation}
as maps
$$\sFunct((M/\on{Ad}(M))(\BF_q),\ol\BQ_\ell)\to \sFunct((G/\on{Ad}(G))(\BF_q),\ol\BQ_\ell).$$

\sssec{}

Let $\CY$ be a \emph{Verdier-compatible}\footnote{See \cite[Sect. 4.6.1]{GRV} for what this means.}
quasi-compact algebraic stack and $\phi$ its endomorphism. Let $\CF'$ be a compact object of $\Shv(\CY)$
equipped with a map
$$\alpha': \CF'\to \phi^!(\CF').$$

\medskip

To this datum, we can attach an element
$$\on{cl}(\CF',\alpha')\in \Tr(\phi^!,\Shv(\CY)).$$

\medskip

For a point $y$
$$\on{pt}\overset{i_y}\to \CY^\phi,$$
we obtain an endomorphism $\alpha'_y$ of $i_y^!(\CF')$ and we can consider
\begin{equation} \label{e:! trace}
\Tr(\alpha'_y,i_y^!(\CF'))\in \ol\BQ_\ell.
\end{equation}

Denote $\CF:=\BD(\CF')$. Then $\alpha'$ induces a map
$$\wt\alpha:\CF\to \phi_\blacktriangle(\CF)\simeq \phi_*(\CF),$$
and hence a map
$$\alpha:\phi^*(\CF) \to \CF.$$

To the above data, we can attach an element
$$\on{cl}(\CF,\wt\alpha)\in \Tr(\phi_\blacktriangle,\Shv(\CY)).$$

\medskip

For $y$ as above, the map $\alpha$ induces an endomorphism $\alpha_y$ of $i_y^*(\CF)$, and we can consider
\begin{equation} \label{e:* trace}
\Tr(\alpha_y,i_y^*(\CF))\in \ol\BQ_\ell.
\end{equation}

It is easy to see, however, that \eqref{e:! trace} equals \eqref{e:* trace}
and
\begin{equation} \label{e:weird Frob}
\on{cl}(\CF',\alpha')=\on{cl}(\CF,\wt\alpha)
\end{equation} 
as elements of
$$\Tr(\phi^!,\Shv(\CY))\simeq \Tr((\phi^!)^\vee,\Shv(\CY)^\vee)\simeq \Tr(\phi_\blacktriangle,\Shv(\CY)),$$
see \secref{sss:trace and duality}. 

\sssec{} \label{sss:recall GV}

We will apply this to $\phi$ being the geometric Frobenius of $\CY$. Denote the element
of $\sFunct(\CY(\BF_q),\ol\BQ_\ell)$ given by \eqref{e:! trace} (equialently, \eqref{e:* trace}) by
$$\sfunct(\CF',\alpha')=\sfunct(\CF,\alpha).$$

According to \cite[Theorem 0.8]{GaVa}, the above function equals to the image of
$$\on{cl}(\CF',\alpha')=\on{cl}(\CF,\wt\alpha)$$
under the canonical map
$$\Tr_{\DGCat}(\Frob_\blacktriangle,\Shv(\CY))\to \Tr_{\AGCat}(\Frob_\blacktriangle,\ul\Shv(\CY))\overset{\on{LT}^{\on{AG}}}\simeq
\sFunct(\CY(\BF_q),\ol\BQ_\ell).$$

\sssec{}

Let $\CF'_M$ be a compact object of $\Shv^{\on{ch}}(M/\on{Ad}(M))$, equipped with a map
$$\alpha'_M:\CF'_M\to \Frob_M^!(\CF'_M).$$

Set
$$\CF'_G:=\on{GrothSpr}_P(\CF'_M).$$

By \eqref{e:Frob Groth-Spr}, the data of $\alpha'_M$ induces a data
$$\alpha'_G:\CF'_G\to \Frob_G^!(\CF'_G).$$

\medskip

By \secref{sss:recall GV} we have
\begin{equation} \label{e:concrete}
\Tr(\Frob,\on{GrothSpr}_P)(\sfunct(\CF'_M,\alpha'_M))=\sfunct(\CF'_G,\alpha'_G),
\end{equation}
where $\Tr(\Frob,\on{GrothSpr}_P)$ is the map \eqref{e:Tr Frob Groth Spr}.

\medskip

This is the promised concrete characterization of the map $\Tr(\Frob,\on{GrothSpr}_P)$.

\sssec{}

Thus, from \eqref{e:char DL} we obtain:

\begin{cor} \label{c:char DL}
Let $\rho_M$ be a compact object of $\Rep(M(\BF_q))$ and let $(\CF'_M,\alpha'_M)$ be as above.
Assume that
\begin{equation} \label{e:eq char M}
\on{ch}(\rho_M)=\sfunct(\CF'_M,\alpha'_M)
\end{equation}
as elements of $\sFunct((M/\on{Ad}(M))(\BF_q),\ol\BQ_\ell)$. Then for
$$\rho_G:=\on{DL}_P(\rho_M) \text{ and } (\CF'_G,\alpha'_G) \text{ as in above},$$
we have
$$\on{ch}(\rho_G)=\sfunct(\CF'_G,\alpha'_G)$$
as elements of $\sFunct((G/\on{Ad}(G))(\BF_q),\ol\BQ_\ell)$.
\end{cor}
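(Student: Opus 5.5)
The plan is to read Corollary \ref{c:char DL} off \eqref{e:char DL} (i.e.\ \corref{c:ind and Frob restr}) combined with the concrete description \eqref{e:concrete} of $\Tr(\Frob,\on{GrothSpr}_P)$. The proof is a chain of equalities of class functions on $(G/\on{Ad}(G))(\BF_q)\simeq G(\BF_q)/\on{Ad}(G(\BF_q))$:
$$\on{ch}(\rho_G)=\on{ch}(\on{DL}_P)(\on{ch}(\rho_M))=\on{ch}(\on{DL}_P)(\sfunct(\CF'_M,\alpha'_M))=\Tr(\Frob,\on{GrothSpr}_P)(\sfunct(\CF'_M,\alpha'_M))=\sfunct(\CF'_G,\alpha'_G).$$

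The individual steps are as follows.
\begin{enumerate}
\item The first equality holds by definition of $\on{ch}(\on{DL}_P)$. This is the map on $\ol\BQ_\ell\otimes K^0$ induced by $\on{DL}_P$, computed through the upper half of \eqref{e:ind and Frob restr}. We need $\rho_M$ compact, so that its class in $\Tr(\on{Id},\Rep(M(\BF_q)))$ is defined. We also need that $\Tr(\on{id},\on{DL}_P)$ sends the class of $\rho_M$ to the class of $\on{DL}_P(\rho_M)$. This is the functoriality of classes under adjointable functors in \secref{sss:class of obj}: $\on{DL}_P$ has a continuous right adjoint, as in the remark around \eqref{e:adj to DL}.
\item The second equality is the hypothesis \eqref{e:eq char M}.
\item The third equality is \eqref{e:char DL}, which is \corref{c:ind and Frob restr}.
\item The last equality is \eqref{e:concrete}.
\end{enumerate}

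The step needing care is the last one. Equation \eqref{e:concrete} rests on three facts:
\begin{itemize}
\item The class $\on{cl}(\CF'_M,\alpha'_M)\in\Tr(\Frob_M^!,\Shv^{\on{ch}}(M/\on{Ad}(M)))$ maps under $\Tr(\Tr(\on{id},\delta^{\on{restr}}),\on{GrothSpr}^{\on{restr}}_P)$ to $\on{cl}(\CF'_G,\alpha'_G)$. This is the compatibility of $\eqref{e:2-categ trace}$ with composition of 1-morphisms in $\bL$, i.e.\ functoriality \eqref{e:2-cat Tr funct} applied to $\Vect\to\Shv^{\on{ch}}(M/\on{Ad}(M))\to\Shv^{\on{ch}}(G/\on{Ad}(G))$. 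It uses that $\on{GrothSpr}_P$ preserves compactness, so $\CF'_G$ is compact. It also uses that $\alpha'_G$ is by definition obtained from $\alpha'_M$ via \eqref{e:Frob Groth-Spr}.
\item The maps of \corref{c:Tr Frob on ch} send classes to functions via $\on{LT}$. This requires checking that the class of $\CF'$ in $\Tr(\Frob^!,\Shv(\CY))$ corresponds under \eqref{e:weird Frob} to the class of $\BD\CF'$ for $\Frob_\blacktriangle$.
\item We then apply \cite[Theorem 0.8]{GaVa} as recalled in \secref{sss:recall GV}, with $\CY=M/\on{Ad}(M)$ and $\CY=G/\on{Ad}(G)$, both Verdier-compatible.
\end{itemize}

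The main obstacle I expect is bookkeeping between the !-version ($\Frob^!$, $\CF'$) used in \eqref{e:ind and Frob restr} and the $\blacktriangle$-version to which \cite{GaVa} applies. I would resolve it by invoking \secref{sss:trace and duality} together with \eqref{e:weird Frob}, and the equality of \eqref{e:! trace} with \eqref{e:* trace}.
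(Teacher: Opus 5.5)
Your proposal is correct and follows the paper's route: the paper deduces the corollary directly from \eqref{e:char DL} (that is, \corref{c:ind and Frob restr}), combined with the concrete characterization \eqref{e:concrete} of $\Tr(\Frob,\on{GrothSpr}_P)$, which rests on \cite[Theorem 0.8]{GaVa} as recalled in \secref{sss:recall GV}. Your additional checks are exactly the ingredients left implicit in that one-line deduction: compactness of $\rho_M$ and of $\CF'_G$, functoriality of classes under adjointable functors, and the $!$-versus-$\blacktriangle$ bookkeeping via \eqref{e:weird Frob}.
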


\ssec{An application: characters of Deligne-Lusztig representations}

In this subsection we will re-derive the (well-known) result that says that characters of the
(usual) Deligne-Lusztig representations are given by pointwise traces of Frobenius on the
Springer sheaf.

\sssec{}

Let $T$ be a rational Cartan subgroup in $G$, and let $\chi$ be a 1-dimensional character sheaf on it,
equipped with a Weil structure
$$\alpha_T:\Frob_T^*(\chi)\simeq\chi.$$

\medskip

Let
$$\chi^0:=\sfunct(\chi,\alpha_T)$$
be the corresponding character on $T(\BF_q)$, cf. \secref{sss:DL ab}, and let
$\ol\BQ_\ell^{\chi^0}$ denote the corresponding 1-dimensional representation of $T(\BF_q)$.

\medskip

Choose a (not necessarily rational) Borel $B$ containing $T$. Consider
$$\on{DL}_B(\ol\BQ_\ell^{\chi^0})\in \Rep(G(\BF_q)),$$
and consider
$$\on{ch}(\on{DL}_B(\ol\BQ_\ell^{\chi^0}))\in \sFunct((G/\on{Ad}(G))(\BF_q),\ol\BQ_\ell).$$

\sssec{}

Consider now the Grothendieck-Springer sheaf
$$\on{GrothSpr}_B(\chi)\in \Shv(G/\on{Ad}(G)).$$

We equip it with the Weil structure
$$\alpha_G:\Frob^*_G(\on{GrothSpr}_B(\chi))\simeq \on{GrothSpr}_B(\chi)$$
as follows.

\sssec{} \label{sss:Weil on Gr Spr}

Since $\on{GrothSpr}_B(\chi)$ (and $\Frob^*_G(\on{GrothSpr}_B(\chi))$) are Goresky-MacPherson extensions
of their respective restrictions to $G^{\on{rs}}/\on{Ad}(G)$, it suffices to specify the data of $\alpha_G$ over
this locus.

\medskip

Denote
$$B':=\Frob_G^{-1}(B).$$

\medskip

A priori, we have
$$\Frob^*_G(\on{GrothSpr}_B(\chi))\simeq \on{GrothSpr}_{B'}(\Frob^*_T(\chi))\overset{\alpha_T}\simeq \on{GrothSpr}_{B'}(\chi).$$

We now identify
$$\on{GrothSpr}_{B'}(\chi)\simeq \on{GrothSpr}_{B}(\chi)$$
using \lemref{l:Springer w}.

\sssec{}

We will now reprove the following result (originally due to Lusztig, \cite{Lus}):

\begin{thm} \label{t:Srpinger chars}
The functions
$$\on{ch}(\on{DL}_B(\ol\BQ_\ell^{\chi^0})) \text{ and } \sfunct(\on{GrothSpr}_B(\chi),\alpha_G)$$
are equal.
\end{thm}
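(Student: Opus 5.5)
The plan is to deduce the theorem from \corref{c:char DL}, applied with $M=T$ and $P=B$. Since $\chi$ is a character sheaf on $T$, it lies in $\Shv^{\on{ch}}(T/\on{Ad}(T))=\Shv(T)^{\on{mon}}$; in fact it is compact there.

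\textbf{Step 1: the input on $T$.} Take $\CF'_T:=\BD(\chi)$ together with the map $\alpha'_T$ obtained from $\alpha_T$ by Verdier duality. Equivalently, by \eqref{e:weird Frob}, we may work with $(\chi,\alpha_T)$ directly. We then have $\sfunct(\CF'_T,\alpha'_T)=\sfunct(\chi,\alpha_T)=\chi^0$. Since $\ol\BQ_\ell^{\chi^0}$ is one-dimensional, its character is $\chi^0$, so hypothesis \eqref{e:eq char M} holds. \corref{c:char DL} then gives
$$\on{ch}(\on{DL}_B(\ol\BQ_\ell^{\chi^0}))=\sfunct(\CF'_G,\alpha'_G),$$
where $\CF'_G=\on{GrothSpr}_B(\CF'_T)$ and $\alpha'_G$ is induced via \eqref{e:Frob Groth-Spr}.

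\textbf{Step 2: pass to $\on{GrothSpr}_B(\chi)$.} The functor $\on{GrothSpr}_B$ commutes with Verdier duality, so $\BD(\CF'_G)\simeq\on{GrothSpr}_B(\chi)$. By \eqref{e:weird Frob} and the equality of \eqref{e:! trace} with \eqref{e:* trace}, we get $\sfunct(\CF'_G,\alpha'_G)=\sfunct(\on{GrothSpr}_B(\chi),\alpha)$, where $\alpha$ is the Verdier-dual Weil structure. (One could instead rerun the argument with $\chi^{-1}$ to sidestep the dualization; this is harmless bookkeeping either way.)

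\textbf{Step 3: identify the Weil structure.} It remains to show that $\alpha$ coincides with the Weil structure $\alpha_G$ of \secref{sss:Weil on Gr Spr}. This is the main obstacle. By Remark \ref{r:delta expl}, $\Tr(\on{id},\delta^{\on{spec.fin}})$ is the composition of $i_{B,B'}$ with the tautological isomorphism $\on{GrothSpr}_{B'}\circ\Frob_T^!\simeq\Frob_G^!\circ\on{GrothSpr}_B$, followed by $\alpha_T$. The tautological pieces match those in \secref{sss:Weil on Gr Spr} directly. For the remaining piece we invoke \thmref{t:Springer action}: $i_{B,B'}$ evaluated on $\chi$ is the Goresky--MacPherson extension of the isomorphism of \lemref{l:Springer w} on $G^{\on{rs}}/\on{Ad}(G)$, which is exactly how $\alpha_G$ was defined.

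Some care is needed to check that passing to Verdier duals and to the $*$-versions respects these identifications. This reduces to the compatibility of $i_{B,B'}$ with Verdier duality, which we check on the regular semisimple locus, where everything is given by the $W$-torsor $\widetilde{G}^{\on{rs}}\to G^{\on{rs}}$ and is therefore manifestly self-dual. Since both objects are Goresky--MacPherson extensions, agreement on $G^{\on{rs}}/\on{Ad}(G)$ forces $\alpha=\alpha_G$, and the theorem follows.
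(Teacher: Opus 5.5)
Your proposal is correct and follows the paper's proof. You apply \corref{c:char DL} with $M=T$, $P=B$, $\CF'_M=\BD(\chi)$ and $\rho_M=\ol\BQ_\ell^{\chi^0}$, pass to $\on{GrothSpr}_B(\chi)$ via the compatibility of $\on{GrothSpr}_B$ with Verdier duality and \eqref{e:weird Frob}, and then identify the Weil structure from \eqref{e:Frob Groth-Spr} with the one of \secref{sss:Weil on Gr Spr} using \thmref{t:Springer action}; your Step 3 only spells out the duality compatibility on $G^{\on{rs}}/\on{Ad}(G)$ that the paper leaves implicit.
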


\begin{proof}

Denote $\chi'=\BD(\chi)$, so that
$$\on{GrothSpr}_B(\chi')\simeq \BD(\on{GrothSpr}_B(\chi)).$$

We will apply \corref{c:char DL} to
$$M=T,\,\,P=B,\,\, \CF'_M:=\chi',\,\, \rho_M:=\ol\BQ_\ell^{\chi^0}.$$

Note that \eqref{e:eq char M} holds by construction. Hence, the equality
$$\on{ch}(\on{DL}_B(\ol\BQ_\ell^{\chi^0})) = \sfunct(\on{GrothSpr}_B(\chi'),\alpha'_G)=\sfunct(\on{GrothSpr}_B(\chi),\alpha_G)$$
holds with the following caveat:

\medskip

The datum of
$$\alpha'_G:\on{GrothSpr}_B(\chi') \simeq \Frob_G^!(\on{GrothSpr}_B(\chi'))$$
is constructed by \eqref{e:Frob Groth-Spr} rather than by the procedure in \secref{sss:Weil on Gr Spr}.

\medskip

However, the two agree by \thmref{t:Springer action}.

\end{proof}

\ssec{Independence of Deligne-Lusztig characters of the choice of a parabolic}

\sssec{}

In this subsection we will prove the following result:

\begin{thm} \label{t:indep}
Let $M$ be a Levi subgroup of $G$ defined over $\BF_q$. Then the map
$$\on{ch}(\on{DL}_P):\sFunct((M/\on{Ad}(M))(\BF_q),\ol\BQ_\ell)\to \sFunct((G/\on{Ad}(G))(\BF_q),\ol\BQ_\ell)$$
does \emph{not} depend on the choice of the parabolic $P$.
\end{thm}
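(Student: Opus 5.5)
By \corref{c:ind and Frob restr}, in the form \eqref{e:char DL}, we have $\on{ch}(\on{DL}_P)=\Tr(\Frob,\on{GrothSpr}_P)$. Moreover, by the vertical identifications of \corref{c:Tr Frob on ch}, which are independent of $P$, the right-hand side is the map
$$\Tr(\Tr(\on{id},\delta^{\on{restr}}_P),\on{GrothSpr}^{\on{restr}}_P):\Tr(\Frob_M^!,\Shv^{\on{ch}}(M/\on{Ad}(M)))\to \Tr(\Frob_G^!,\Shv^{\on{ch}}(G/\on{Ad}(G))).$$
So it suffices to fix two parabolics $P_1,P_2$ with Levi $M$ and show that the two 2-categorical traces agree. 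The key principle is that the construction \eqref{e:2-categ trace} of $\Tr(\alpha,t)$ depends only on the isomorphism class of the pair $(t,\alpha)$. Here $(t,\alpha)$ is a 1-morphism in the category $\bL$, i.e. an object of $\on{Arr}$ equipped with a lax intertwining. An isomorphism $t_1\simeq t_2$ that intertwines $\alpha_1$ with $\alpha_2$ therefore gives equal maps on traces.

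\emph{Step 1.} We take $t_i=\on{GrothSpr}^{\on{ch}}_{P_i}$ and use the isomorphism $i_{P_1,P_2}$ of \thmref{t:intertw char}. We then need to check that the square
$$\beta_{P_2}\circ \Frob_G^!(i_{P_1,P_2}) \;=\; i_{P_1,P_2}(\Frob_M^!)\circ \beta_{P_1}$$
commutes as isomorphisms $\on{GrothSpr}^{\on{ch}}_{P_1}\circ\Frob_M^!\to \Frob_G^!\circ \on{GrothSpr}^{\on{ch}}_{P_2}$, where $\beta_P=\Tr(\on{id},\delta^{\on{spec.fin}}_P)$. Given this square, the equality of traces is formal from the invariance of \eqref{e:2-cat Tr funct} under isomorphisms in $\bL$.

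\emph{Step 2.} Unwind $\beta_P$ using Remark \ref{r:delta expl}: it is $i_{P,P'}$ followed by the tautological Frobenius transport isomorphism $\on{GrothSpr}^{\on{ch}}_{P'}\circ\Frob_M^!\simeq\Frob_G^!\circ\on{GrothSpr}^{\on{ch}}_P$, where $P'=\Frob^{-1}(P)$. The transport isomorphism is natural in the geometric data. In particular, it carries the intertwiner $i_{P'_1,P'_2}$ (the one defined via $\Frob^{-1}$ of the diagram \eqref{e:intertw diagram}) to $\Frob_G^!(i_{P_1,P_2})$. This holds because $I_{P'_1,P'_2}$ is the Frobenius pullback of $I_{P_1,P_2}$, and $\Tr(\on{id},-)$ is functorial. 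The square in Step 1 therefore reduces to the Frobenius-free identity
$$i_{P'_1,P'_2}\circ i_{P_1,P'_1}\simeq i_{P_2,P'_2}\circ i_{P_1,P_2}$$
as natural transformations $\on{GrothSpr}^{\on{ch}}_{P_1}\to\on{GrothSpr}^{\on{ch}}_{P'_2}$.

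\emph{Step 3.} Apply \thmref{t:intertwiner trans} twice: each side of the identity in Step 2 is isomorphic to $i_{P_1,P'_2}$. This is the point where the failure of the analogous diagram \eqref{e:bad intertwiners} for $I$ itself is circumvented, because we only need the statement after passing to traces.

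The main obstacle I expect is that \thmref{t:intertwiner trans} only asserts the existence of \emph{an} isomorphism \eqref{e:intertwiner trans}. What we need is an equality of 2-morphisms, i.e. that the composites of invertible natural transformations literally coincide. I would handle this by reading the isomorphism as an equality of morphisms in the homotopy category: the maps $i$ are morphisms of functors, and ``$i_{P_2,P_3}\circ i_{P_1,P_2}\simeq i_{P_1,P_3}$'' means they are equal as points of the mapping space up to homotopy. Equality up to homotopy is all that is needed, since the trace on $\pi_0$ of the space of such squares only sees homotopy classes. If this reading fails, the fallback is to prove the square directly. One would run the induction of \lemref{l:rel Bruhat} simultaneously for $(P_1,P_2)$ and its Frobenius translate, reducing via \secref{sss:add up} (where the isomorphism \eqref{e:big intertw compose} comes from an actual 2-categorical identity and is hence coherent) to the adjacent case handled by \thmref{t:involut}.
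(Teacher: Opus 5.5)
Your proposal is correct and follows the paper's proof essentially step for step. You reduce via \corref{c:ind and Frob restr} to the commutativity of the square \eqref{e:intertw and Frob}, unwind $\delta^{\on{restr}}_{P_i}$ via Remark \ref{r:delta expl} to reach \eqref{e:intertw and Frob 2}, and conclude by \thmref{t:intertwiner trans}. Your reading of ``an isomorphism'' as equality up to homotopy is exactly what the paper uses tacitly, and it suffices because the traces are identified with spaces of functions on finite sets. One small slip: the Step 1 square should read $\Frob_G^!(i_{P_1,P_2})\circ\beta_{P_1}\simeq\beta_{P_2}\circ i_{P_1,P_2}(\Frob_M^!)$, since as you wrote it the composites are not composable.
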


The rest of this subsection is devoted to the proof of this theorem.

\sssec{}

By \corref{c:ind and Frob restr}, it suffices to show that the map $\Tr(\Tr(\on{id},\delta^{\on{restr}}),\on{GrothSpr}_P^{\on{restr}})$
does not depend on the choice of the parabolic.

\medskip

Let $P_1$ and $P_2$ be two such parabolics. Consider the intertwining operator
$$i_{P_1,P_2}:\on{GrothSpr}_{P_1}^{\on{restr}}\simeq \on{GrothSpr}_{P_2}^{\on{restr}}.$$

It suffices to show that the diagram
\begin{equation} \label{e:intertw and Frob}
\CD
\on{GrothSpr}_{P_1}^{\on{restr}}\circ \on{Frob}_M^!  @>{i_{P_1,P_2}}>> \on{GrothSpr}_{P_2}^{\on{restr}}\circ \on{Frob}_M^!  \\
@V{\Tr(\on{id},\delta^{\on{restr}}_{P_1})}VV @VV{\Tr(\on{id},\delta^{\on{restr}}_{P_2})}V \\
\Frob_G^! \circ \on{GrothSpr}_{P_1}^{\on{restr}} @>{i_{P_1,P_2}}>> \Frob_G^! \circ \on{GrothSpr}_{P_2}^{\on{restr}}
\endCD
\end{equation}
as functors
$$\Shv^{\on{ch}}(M/\on{Ad}(M))\to \Shv^{\on{ch}}(G/\on{Ad}(G))$$
commutes.

\sssec{}

Using Remark \ref{r:delta expl}, we can rewrite the above diagram as
\begin{equation} \label{e:intertw and Frob 1}
\CD
\on{GrothSpr}_{P_1}^{\on{restr}}\circ \Frob_M^!  @>{i_{P_1,P_2}}>> \on{GrothSpr}_{P_2}^{\on{restr}}\circ \Frob_M^!   \\
@V{i_{P_1,P'_1}}VV @VV{i_{P_2,P'_2}}V \\
\on{GrothSpr}_{P'_1}^{\on{restr}}\circ \Frob_M^!  & & \on{GrothSpr}_{P'_2}^{\on{restr}}\circ \Frob_M^!  \\
@V{\sim}VV @VV{\sim}V \\
\Frob_G^! \circ \on{GrothSpr}_{P_1}^{\on{restr}} @>{i_{P_1,P_2}}>> \Frob_G^! \circ \on{GrothSpr}_{P_2}^{\on{restr}},
\endCD
\end{equation}
where $P'_i=\Frob_G^{-1}(P_i)$.

\medskip

Note that the diagram
$$
\CD
\on{GrothSpr}_{P'_1}^{\on{restr}}\circ \Frob_M^!  @>{i_{P'_1,P'_2}}>> \on{GrothSpr}_{P'_2}^{\on{restr}}\circ \Frob_M^!  \\
@V{\sim}VV @VV{\sim}V \\
\Frob_G^! \circ \on{GrothSpr}_{P_1}^{\on{restr}} @>{i_{P_1,P_2}}>> \Frob_G^! \circ \on{GrothSpr}_{P_2}^{\on{restr}}
\endCD
$$
commutes tautologically.

\medskip

Hence, in order to establish the commutativity of \eqref{e:intertw and Frob 1}, it suffices to show that the diagram
$$
\CD
\on{GrothSpr}_{P_1}^{\on{restr}}\circ \Frob_M^!  @>{i_{P_1,P_2}}>> \on{GrothSpr}_{P_2}^{\on{restr}}\circ \Frob_M^!   \\
@V{i_{P_1,P'_1}}VV @VV{i_{P_2,P'_2}}V \\
\on{GrothSpr}_{P'_1}^{\on{restr}}\circ \Frob_M^!  @>{i_{P'_1,P'_2}}>> \on{GrothSpr}_{P'_2}^{\on{restr}}\circ \Frob_M^!
\endCD
$$
commutes.

\medskip

The latter is equivalent to the commutativity of
\begin{equation} \label{e:intertw and Frob 2}
\CD
\on{GrothSpr}_{P_1}^{\on{restr}} @>{i_{P_1,P_2}}>> \on{GrothSpr}_{P_2}^{\on{restr}}  \\
@V{i_{P_1,P'_1}}VV @VV{i_{P_2,P'_2}}V \\
\on{GrothSpr}_{P'_1}^{\on{restr}}  @>{i_{P'_1,P'_2}}>> \on{GrothSpr}_{P'_2}^{\on{restr}}.
\endCD
\end{equation}

\sssec{}

However, the required assertion follows from \thmref{t:intertwiner trans}: indeed, both circuits in \eqref{e:intertw and Frob 2}
identify with $i_{P_1,P'_2}$.

\qed[\thmref{t:indep}]

\appendix

\section{Sheaves on connected groups} \label{s:shv on gr}

In this section we let $G$ be connected. We will supply proofs for some technical results from \secref{s:mon}.

\ssec{The categories \texorpdfstring{$\Shv(G)^{\on{q-const}}$}{q} and \texorpdfstring{$\Shv(G)^{\on{alm-const}}$}{alm}}

In this subsection we will provide an explicit description of the categories $\Shv(G)^{\on{q-const}}$ and $\Shv(G)^{\on{alm-const}}$.

\sssec{} \label{sss:replace by red}

Note that the categories $\Shv(G)^{\on{q-const}}$ and $\Shv(G)^{\on{alm-const}}$ do not change when we replace
$G$ by its maximal reductive quotient.

\medskip

Hence, for the rest of this subsection we will assume that $G$ is reductive.

\sssec{} \label{sss:replace by isogeny}

Let $p:\wt{G}\to G$ be an isogeny. It is again easy to see that the functor
$$p^!:\Shv(G)\to \Shv(\wt{G})$$
induces equivalences
$$\Shv(G)^{\on{q-const}}\simeq \Shv(\wt{G})^{\on{q-const}} \text{ and } \Shv(G)^{\on{alm-const}}\simeq \Shv(\wt{G})^{\on{alm-const}}.$$

Hence, we can assume that $G$ is a product of a torus and a semi-simple group.

\sssec{}

We claim:

\begin{lem} \label{l:prod by torus}
For a torus $T$ and any (smooth, connected) scheme $X$, the functor
\begin{equation} \label{e:prod by torus}
\Shv(T)^{\on{alm-const}}\otimes \Shv(X)^{\on{q-const}}\overset{\text{\propref{p:q mon torus}}}\simeq 
\Shv(T)^{\on{q-const}}\otimes \Shv(X)^{\on{q-const}}\to \Shv(T\times X)^{\on{q-const}}
\end{equation} 
is an equivalence.
\end{lem}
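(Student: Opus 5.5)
We will prove that the functor \eqref{e:prod by torus} is fully faithful and then that it is essentially surjective.

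\textbf{Fully faithfulness.} We use the description of \secref{sss:alm const torus expl}: $\Shv(T)^{\on{alm-const}}$ is compactly generated by $\on{const}_T$, so
$$\Shv(T)^{\on{alm-const}}\simeq \on{C}^\cdot(T)\mod.$$
Hence the source of \eqref{e:prod by torus} is $\on{C}^\cdot(T)\mod(\Shv(X)^{\on{q-const}})$. The functor sends the pair $(\on{C}^\cdot(T),\CF)$ to $\on{const}_T\boxtimes\CF$. Since $\on{const}_T$ is compact, Künneth gives
$$\CHom(\on{const}_T\boxtimes \CF_1,\on{const}_T\boxtimes\CF_2)\simeq \on{C}^\cdot(T)\otimes \CHom(\CF_1,\CF_2).$$
This Künneth formula holds for arbitrary $\CF_2\in\Shv(X)$, because $\Shv(T)\otimes\Shv(X)\to\Shv(T\times X)$ is fully faithful and $\on{const}_T$ is compact in $\Shv(T)$. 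This reduces fully faithfulness to a formal statement about modules over $\on{C}^\cdot(T)$, with the functor commuting with colimits. So \eqref{e:prod by torus} is fully faithful.

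\textbf{Essential surjectivity.} We first reduce to the heart. Both sides carry t-structures: on the left we use the tensor product of the t-structures, which by \secref{sss:q-const T} corresponds to $\QCoh(V)_{\{0\}}$ tensored with $\Shv(X)^{\on{q-const}}$. The functor is t-exact up to shift. The target $\Shv(T\times X)^{\on{q-const}}$ is left-complete, because $\Shv(T\times X)$ is left-complete by \cite[Theorem 1.1.6]{AGKRRV1}. As in the proof of \propref{p:q mon torus}, it therefore suffices to treat bounded objects, and by dévissage to treat objects of the heart. To pass to unbounded objects we also need the source to be left-complete. We plan to get this by identifying the source with $\QCoh(V)_{\{0\}}\otimes\Shv(X)^{\on{q-const}}$, i.e. objects of $\Shv(X)^{\on{q-const}}$ with a locally nilpotent action of $\Sym(V^*)$ placed in degree $0$ via Koszul duality. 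This is left-complete because $\Shv(X)^{\on{q-const}}$ is.

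\textbf{The heart.} A perverse sheaf in the heart of $\Shv(T\times X)^{\on{q-const}}$ is an extension of shifted constant sheaves. A finite-length such object is a local system on $T\times X$ with unipotent monodromy, and since it is an iterated extension of constants, its monodromy factors through $\pi_1(T)\times\pi_1(X)$. Through $\pi_1(T)$ the monodromy is unipotent and abelian, so it is a module over $\BZ_\ell[[\pi_1(T)]]$ on which the augmentation ideal acts nilpotently. Under Koszul duality, such modules are exactly the objects of $\QCoh(V)_{\{0\}}$, i.e. the objects of $\Shv(T)^{\on{alm-const}}$ tensored with an object of $\Shv(X)^{\on{q-const},\heartsuit}$. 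For infinite extensions we pass to filtered colimits.

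The main obstacle is this last identification. One must show that an object of $\Shv(T\times X)$ which is quasi-constant, hence extended from constants, is generated by exterior products $\on{const}_T\boxtimes\CF$. Concretely, we must show that the $\pi_1(T)$-monodromy, which commutes with the $X$-direction, can be split off as a $\Sym(V^*)$-action. We will do this by using the commuting action of $T$ by translation: a $T$-monodromic object is recovered from its $!$-restriction to $\{1\}\times X$ together with the nilpotent logarithm of monodromy. This $!$-restriction lands in $\Shv(X)^{\on{q-const}}$, which yields the required preimage.
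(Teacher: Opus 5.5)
Your fully faithfulness step is fine in substance. The paper does it more formally: since $\Shv(T)^{\on{alm-const}}$ and $\Shv(X)$ are dualizable, both arrows in $\Shv(T)^{\on{alm-const}}\otimes\Shv(X)^{\on{q-const}}\to\Shv(T)^{\on{alm-const}}\otimes\Shv(X)\to\Shv(T)\otimes\Shv(X)$ are fully faithful, and one concludes by the K\"unneth embedding.

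The gap is in essential surjectivity, at the step you yourself call the main obstacle, which is never carried out.

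\emph{The heart argument.} You claim that a unipotent local system on $T\times X$ has monodromy factoring through $\pi_1(T)\times\pi_1(X)$, and abelian along $T$. This does not follow from its being an iterated extension of constants. In positive characteristic the \'etale fundamental group of a product of non-proper varieties need not be the product. What you actually need is a K\"unneth statement for $\ell$-adic unipotent completions, which requires its own argument in $H^1$ and $H^2$. You would also have to check that the abstract equivalence $\Shv(T)^{\on{alm-const}}\simeq\QCoh(V)_{\{0\}}$, tensored with $\Shv(X)^{\on{q-const}}$ and followed by $\boxtimes$, sends a locally nilpotent $\Sym(V^*)$-module to the local system with that logarithm of monodromy.

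\emph{The translation-action resolution is circular.} Recovering $\CF$ from its $!$-restriction to $\{1\}\times X$ presupposes that $\CF$ is $T$-monodromic, i.e.\ that $\on{act}^!(\CF)$ lies in $\Shv(T)^{\on{alm-const}}\otimes\Shv(T\times X)$. That is essentially the lemma itself, with $T\times X$ in place of $X$.

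The t-structure on the source, its left-completeness, and the t-exactness of the functor are likewise only asserted.

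\emph{The missing idea} is that none of this is needed.
\begin{enumerate}
\item Since the functor is fully faithful and continuous, its essential image is closed under colimits. It contains $\pi^*(\CF)\simeq\on{const}_T\boxtimes\CF$ for all $\CF\in\Shv(X)^{\on{q-const}}$, where $\pi:T\times X\to X$ is the projection.
\item $\pi_*$ maps $\Shv(T\times X)^{\on{q-const}}$ to $\Shv(X)^{\on{q-const}}$, so it is right adjoint to $\pi^*$ between these subcategories. Hence it suffices to show that $\pi_*$ is conservative on $\Shv(T\times X)^{\on{q-const}}$.
\item For a point $i_x:\on{pt}\to X$, base change gives $i_x^!\circ\pi_*\simeq\on{C}^\cdot(T,-)\circ(\on{id}\times i_x)^!$.
\item The object $(\on{id}\times i_x)^!(\CF)$ lies in $\Shv(T)^{\on{q-const}}=\Shv(T)^{\on{alm-const}}$ by \propref{p:q mon torus}. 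There $\on{C}^\cdot(T,-)=\CHom(\on{const}_T,-)$ is conservative, because $\on{const}_T$ generates.
\item A q-constant object on $T\times X$ with $X$ connected vanishes as soon as its restriction to $T\times\{x\}$ does.
\end{enumerate}
This is the paper's argument. The torus case enters only through \propref{p:q mon torus}, with no t-structure on the tensor product and no fundamental groups.
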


\begin{proof}

First we claim that \eqref{e:prod by torus} is fully faithful. This follows from the commutative diagram
$$
\CD
\Shv(T)^{\on{alm-const}} \otimes \Shv(X)^{\on{q-const}} @>{\text{\eqref{e:prod by torus}}}>>  \Shv(T\times X)^{\on{q-const}} \\
@VVV  @VVV \\
\Shv(T)^{\on{alm-const}} \otimes \Shv(X) @>>> \Shv(T\times X),
\endCD
$$
in which:

\begin{itemize}

\item The right vertical arrow is fully faithful tautologically;

\item The left vertical arrow is fully faithful because $\Shv(T)^{\on{alm-const}}$ dualizable;

\item The bottom horizontal arrow is fully faithful, since it can be decomposed as
$$\Shv(T)^{\on{alm-const}} \otimes \Shv(X) \to \Shv(T) \otimes \Shv(X) \to \Shv(T\times X)$$
with both arrows fully faithful (the first one since $\Shv(X)$ is dualizable).

\end{itemize} 

\medskip

Thus, it remains to show that the essential image of \eqref{e:prod by torus} generates the target. It suffices to show that the
pullback functor 
$$\pi^*: \Shv(X)^{\on{q-const}} \to  \Shv(T\times X)^{\on{q-const}}$$
generates the target.

\medskip

It is easy to see that the functor
$$\pi_*:\Shv(T\times X)\to \Shv(X)$$ sends 
$\Shv(T\times X)^{\on{q-const}}\to  \Shv(X)^{\on{q-const}}$.

\medskip

Hence, it suffices to show that $\pi_*$ is conservative on $\Shv(T\times X)^{\on{q-const}}$. Taking the !-fiber
at some/any point $x\in X$, we reduce the assertion to one about $T$ itself, in which case it follows from 
\propref{p:q mon torus}.

\end{proof}

\sssec{} \label{sss:replace by semisimple}

Thus, using \lemref{l:prod by torus}, and given the description of
$$\Shv(T)^{\on{alm-const}}\simeq \Shv(T)^{\on{q-const}}$$
in \secref{sss:alm const torus expl}, we can focus on the case when $G$ is semi-simple.

\sssec{}

First, as in \secref{sss:alm const torus expl}, we have:
$$\Shv(G)^{\on{alm-const}}\simeq \on{C}^\cdot(G)\mod,$$
where the functor $\leftarrow$ sends
$$\on{C}^\cdot(G)\in \on{C}^\cdot(G)\mod$$
to
$$\omega_G\in \Shv(G)^{\on{alm-const}}.$$

\medskip

Since $G$ is semi-simple, we have
$$\on{C}^\cdot(G) \simeq \Sym(W),$$
where $W$ is a finite-dimensional cohomologivally graded vector space concentrated in odd degrees $\geq 3$.

\medskip

The fiber of !-fiber at $1\in G$ identifies with the functor
\begin{equation} \label{e:fiber G}
(-)\underset{\Sym(W)}\otimes \ol\BQ_\ell, \quad \Sym(W)\mod\to \Vect.
\end{equation}

\sssec{} \label{sss:Koszul}

Koszul duality identifies
$$\Sym(W)\mod \simeq \Sym(W^\vee[-1])\mod_{\{0\}},$$
where the subscript $\{0\}$ corresponds to the condition that the generators of $W^\vee[-1]$ act locally nilpotently.

\medskip

In terms of this equivalence, the functor \eqref{e:fiber G} corresponds to the functor
$$\Sym(W^\vee[-1])\mod_{\{0\}} \to \Sym(W^\vee[-1])\mod \to  \Vect,$$
where the second arrow is the natural forgetful functor.

\medskip

In particular, the t-structure on $\Shv(G)^{\on{alm-const}}$ corresponds to the t-structure on the category
$\Sym(W^\vee[-1])\mod_{\{0\}}$, induced by the natural t-structure on $\Sym(W^\vee[-1])\mod$.

\begin{rem}

We should think of the vector space $W^\vee[-1]$ as the abelian Lie algebra that governs the rational
homotopy type of $G$.

\end{rem}

\sssec{}   \label{sss:descr q const ss}

Note now that as in \secref{sss:q-const T}, the category $\Shv(G)^{\on{q-const}}$ identifies with the
left-completion of $\Shv(G)^{\on{alm-const}}$ with respect to its t-structure.

\medskip

From here, we obtain:
$$\Shv(G)^{\on{q-const}}\simeq \Sym(W^\vee[-1])\mod.$$

\sssec{} \label{sss:q vs ind}

Note that the above description of $\Shv(G)^{\on{q-const}}$ implies that it is compactly generated.
In particular, this implies \lemref{l:q-cnst comp gen}.

\medskip

Note, however,
that the natural embedding
$$\Shv(G)^{\on{q-const}}\to \Shv(G)$$
does \emph{not} preserve compactness. Indeed, it sends the compact generator
$$\Sym(W^\vee[-1])\in \Sym(W^\vee[-1])\mod$$
to an unbounded complex, denoted
$$\on{q-const}_G\in \Shv(G)^{\on{q-const}},$$
whose fiber at $1\in G$ is $\Sym(W^\vee[-1])$.

\medskip

It is also
makes it manifest that the embedding
$$\Shv(G)^{\on{alm-const}}\subset \Shv(G)^{\on{q-const}}$$
is \emph{not} an equivalence.

\ssec{The left adjoint for semi-simple groups}

In this subsection we let $G$ be semi-simple.

\sssec{}

We will prove the following assertion:

\begin{prop} \label{p:left adj ss}
The embedding
$$\on{emb}^{\on{q-const}}:\Shv(G)^{\on{q-const}}\to \Shv(G)$$
admits a left adjoint.
\end{prop}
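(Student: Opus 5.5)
The plan is to construct the left adjoint explicitly on compact generators. By \secref{sss:descr q const ss} we have $\Shv(G)^{\on{q-const}}\simeq \Sym(W^\vee[-1])\mod$. Set $A:=\Sym(W^\vee[-1])$; it is compactly generated by the single object $A$, which corresponds to $\on{q-const}_G$ (see \secref{sss:q vs ind}). A continuous left adjoint $L$ to $\on{emb}^{\on{q-const}}$ exists if and only if the functor
$$\CF\mapsto \CHom_{\Shv(G)}(\CF,\on{emb}^{\on{q-const}}(-))$$
is corepresentable in $\Shv(G)^{\on{q-const}}$ for every $\CF$. Since $\on{emb}^{\on{q-const}}$ is continuous, it suffices to check this for compact $\CF\in\Shv(G)$, i.e.\ for constructible sheaves, and then extend $L$ by colimits. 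Because $\on{emb}^{\on{q-const}}$ is continuous and commutes with limits, the adjoint functor theorem already guarantees existence, provided it also preserves limits. So the first step is to prove that $\Shv(G)^{\on{q-const}}\subset\Shv(G)$ is closed under limits. Having the explicit formula is still useful afterwards.

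For closure under limits, I will use the t-structure: $\Shv(G)$ is left-complete (\cite[Theorem 1.1.6]{AGKRRV1}), and $\Shv(G)^{\on{q-const}}$ is defined by a condition on cohomologies. Products are t-exact in $\Shv(G)$, since products of sheaves on a scheme of finite type are exact in the perverse t-structure up to bounded amplitude. Moreover, the condition ``each cohomology is an extension of the constant sheaf'' is stable under products and kernels of maps between such objects, because the constant perverse sheaf generates a Serre subcategory of the heart when $G$ is simply connected. To ensure this, I will reduce to simply connected $G$ via \secref{sss:replace by isogeny}; with $\pi_1=0$, extensions of constant sheaves in the heart form a Serre subcategory consisting of objects that are locally constant with unipotent (hence trivial) monodromy. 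Fibers and products of objects with cohomologies in a Serre subcategory again have cohomologies in it, provided cohomology commutes with products. That holds in the perverse t-structure on $\Shv(G)$ up to the bounded amplitude issue, which left-completeness handles via Postnikov towers.

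Concretely, I would then describe $L$ as follows. For constructible $\CF$, take $L(\CF)$ to be the object of $A\mod$ representing $\CHom(\CF,-)$ on the generator. This amounts to $\CHom(\CF,\on{q-const}_G)^\vee$ suitably, computed via the $!$-fiber at $1$ combined with Koszul duality from \secref{sss:Koszul}. This gives a useful cross-check but is not logically needed.

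The main obstacle I anticipate is the step that cohomology (in the perverse t-structure) commutes with infinite products in $\Shv(G)$, so that the Serre-subcategory argument applies to limits. I would first try to deduce it from the fact that $!$-restriction to points is conservative on q-constant objects and commutes with limits, which lets me check membership fiberwise at $1$. Failing that, I would bound the amplitude of products using finite cohomological dimension of $G$.
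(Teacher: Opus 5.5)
Your reduction is correct. Since $\on{emb}^{\on{q-const}}$ is a continuous fully faithful functor between presentable categories, the adjoint functor theorem makes the proposition \emph{equivalent} to $\Shv(G)^{\on{q-const}}$ being closed under limits in $\Shv(G)$. Closure under finite limits also holds, because in $\Shv(G)^\heartsuit$ the objects $V\otimes\on{const}_G[\dim G]$ are closed under kernels, cokernels and extensions.

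The whole content of the proposition is therefore closure under infinite products, and that is the step your argument does not supply. Your justifications do not hold up:
\begin{itemize}
\item You assert, without proof, that products in $\Shv(G)$ are t-exact up to bounded amplitude, and you later flag this yourself as the main obstacle.
\item Even granting that cohomology commutes with products, a Serre subcategory of the heart need not be closed under infinite products. Torsion groups form a Serre subcategory of abelian groups, yet $\prod_n \mathbb{Z}/n$ is not torsion. So ``stable under products'' is a non sequitur.
\item The first fallback cannot work. Conservativity of $i_1^!$ on $\Shv(G)^{\on{q-const}}$ gives no criterion for an arbitrary object of $\Shv(G)$ to be q-constant; for instance $\delta_g$ with $g\neq 1$ has zero $!$-fiber at $1$.
\item The second fallback (bounded amplitude) would still leave you needing to identify the cohomologies of the product.
\end{itemize}
A smaller point: over $\ol\BF_q$ a simply connected group does not have trivial \'etale fundamental group (Artin--Schreier covers exist), so ``$\pi_1=0$'' is not the right reason. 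What you need is $H^1(G,\ol\BQ_\ell)=0$ together with simplicity of $\on{const}_G[\dim G]$. Both hold for every connected semi-simple $G$, so the isogeny reduction is unnecessary.

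The missing ingredient is a direct computation of products on the relevant class. For compact $c\in\Shv(G)$ the complex $\CHom(c,\on{const}_G)$ is finite-dimensional, so for any $V_i\in\Vect$
\[
\CHom\Bigl(c,\prod_i (V_i\otimes\on{const}_G)\Bigr)\simeq\prod_i \bigl(V_i\otimes\CHom(c,\on{const}_G)\bigr)\simeq\Bigl(\prod_i V_i\Bigr)\otimes\CHom(c,\on{const}_G)\simeq\CHom\Bigl(c,\Bigl(\prod_i V_i\Bigr)\otimes\on{const}_G\Bigr),
\]
whence $\prod_i(V_i\otimes\on{const}_G)\simeq(\prod_i V_i)\otimes\on{const}_G$. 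From here the argument closes in two steps:
\begin{itemize}
\item $\Shv(G)^{\geq n}$ is closed under products. Combining this with the product formula and inducting along the finite Postnikov filtration of $\tau^{\leq m}\CF_i$ gives $\tau^{\leq m}(\prod_i\CF_i)\simeq\prod_i\tau^{\leq m}(\CF_i)$ for uniformly bounded-below q-constant families.
\item Postnikov towers and left-completeness of $\Shv(G)$ then give $H^k(\prod_i\CF_i)\simeq\prod_i H^k(\CF_i)$ for arbitrary q-constant $\CF_i$.
\end{itemize}

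With this repair your route is a genuine alternative to the paper's. The paper does not use the adjoint functor theorem; it writes the left adjoint down as $\CF\mapsto\on{C}^\cdot_c(G,\CF)$ under the renormalized Koszul duality $\Sym(W)\mod^{\on{ren}}\simeq\Sym(W^\vee[-1])\mod$. It checks the adjunction by writing $\on{q-const}_G\simeq\lim_i\sP_i^\vee\underset{\on{C}^\cdot(G)}\otimes\omega_G$. Both arguments rest on the same finiteness of $\on{C}^\cdot_c(G,c)$ for constructible $c$. Yours yields existence only, while the paper's gives an explicit formula.
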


The rest of the subsection is devoted to the proof of the proposition.

\sssec{}

Note that the Koszul duality in \secref{sss:Koszul} extends to an \emph{equivalence}
$$\Sym(W)\mod^{\on{ren}}\to \Sym(W^\vee[-1])\mod, \quad M \mapsto \CHom_{\Sym(W)}(\ol\BQ_\ell,M),$$
where $\Sym(W)\mod^{\on{ren}}$ is a renormalized version of the category $\Sym(W)\mod$, in which we declare
finite-dimensional modules as compacts.

\medskip

We will show that in terms of this equivalence, the left adjoint of $\on{emb}^{\on{q-const}}$ is given by
$$\CF\mapsto \on{C}^\cdot_c(G,\CF),$$
where $\on{C}^\cdot_c(G,\CF)$ is viewed naturally as a module over $\on{C}^\cdot(G)$.

\sssec{}

To prove this, we need to establish an equivalence
\begin{equation} \label{e:cohomology with compact supports}
\CHom_{\Sym(W)\mod}(\on{C}^\cdot_c(G,\CF),\ol\BQ_\ell) \simeq \CHom_{\Shv(G)}(\CF,\on{q-const}_G).
\end{equation}
as $\Sym(W^\vee[-1])$-modules, where the $\Sym(W^\vee[-1])$-action on the left-hand side is via
$$\Sym(W^\vee[-1])\simeq \CHom_{\Sym(W)\mod}(\ol\BQ_\ell,\ol\BQ_\ell).$$

\sssec{}

Let us describe the object $\on{q-const}_G\in \Shv(G)^{\on{q-const}}$ explicitly. Write
$$\ol\BQ_\ell\simeq \underset{i}{\on{colim}}\, \sP_i,$$
where $\sP_i$ are perfect modules over $\on{C}^\cdot(G)$, and such that for every $n$, the system
$$i\mapsto \tau^{\leq n}(\sP_i)$$
stabilizes.

\medskip

Then
$$\on{q-const}_G \simeq \underset{i}{\on{lim}}\, \sP_i^\vee\underset{\on{C}^\cdot(G)}\otimes \omega_G,$$
where we note that for every $n$ the system
$$i\mapsto \tau^{\geq -n}\left(\sP_i^\vee\underset{\on{C}^\cdot(G)}\otimes \omega_G\right)$$
stabilizes.

\sssec{}

Thus, we rewrite the right-hand in \eqref{e:cohomology with compact supports} as
\begin{multline*}
\underset{i}{\on{lim}}\, \CHom_{\Shv(G)}(\CF,\sP_i^\vee\underset{\on{C}^\cdot(G)}\otimes \omega_G)\simeq
\underset{i}{\on{lim}}\, \sP_i^\vee\underset{\on{C}^\cdot(G)}\otimes \CHom_{\Shv(G)}(\CF,\omega_G) \simeq \\
\simeq \underset{i}{\on{lim}}\, \sP_i^\vee\underset{\on{C}^\cdot(G)}\otimes \CHom_{\Vect}(\on{C}^\cdot_c(G,\CF),\ol\BQ_\ell)\simeq
\underset{i}{\on{lim}}\,  \CHom_{\on{C}^\cdot(G)\mod}(\sP_i,\CHom_{\Vect}(\on{C}^\cdot_c(G,\CF),\ol\BQ_\ell))\simeq \\
\simeq \CHom_{\on{C}^\cdot(G)\mod}(\ol\BQ_\ell,\CHom_{\Vect}(\on{C}^\cdot_c(G,\CF),\ol\BQ_\ell))\simeq
\CHom_{\Vect}(\ol\BQ_\ell\underset{\on{C}^\cdot(G)}\otimes \on{C}^\cdot_c(G,\CF),\ol\BQ_\ell)\simeq \\
\simeq \CHom_{\on{C}^\cdot(G)}(\on{C}^\cdot_c(G,\CF),\ol\BQ_\ell),
\end{multline*}
which is the left-hand in \eqref{e:cohomology with compact supports}.

\qed[\propref{p:left adj ss}]

\ssec{Proof of \lemref{l:alm const}} \label{ss:alm const}

\sssec{}

As in \secref{sss:replace by red}, we can assume that $G_1$ and $G_2$ are reductive. Furthermore, as in
Sects. \ref{sss:replace by isogeny} and \ref{sss:replace by semisimple}, we can assume that both $G_1$ and $G_2$
are semi-simple.

\sssec{}

We now use the description of $\Shv(G_i)^{\on{q-const}}$ as $\Sym(W_i^\vee[-1])\mod$ in \secref{sss:descr q const ss}.
We note that the corresponding vector space $W$ for $G:=G_1\times G_2$ equals $W_1\oplus W_2$. This follows
from the fact that
$$\on{C}^\cdot(G_1)\otimes \on{C}^\cdot(G_2)\simeq \on{C}^\cdot(G_1\times G_2).$$

\medskip

Under the above identifications, the functor
$$\Shv(G_1)^{\on{q-const}}\otimes \Shv(G_2)^{\on{q-const}}\to \Shv(G_1\times G_2)^{\on{q-const}}$$
corresponds to
$$\Sym(W_1^\vee[-1])\mod \otimes \Sym(W_1^\vee[-1])\mod \to \Sym((W_1\oplus W_2)^\vee[-1])\mod,$$
which is manifestly an equivalence.

\qed[\lemref{l:alm const}]

\ssec{Proof of \propref{p:ff mon}} \label{ss:ff mon}

%
%
%

\sssec{}

Let $G\overset{p}\to G_{\on{red}}$ be the maximal reductive quotient of $G$. We have a
commutative diagram
$$
\CD
\bC\otimes \Shv(G)^{\on{q-mon}} @>>>  \bC\otimes \Shv(G) \\
@A{\sim}A{\on{Id}\otimes p^*}A  @AA{\on{Id}\otimes p^*}A \\
\bC\otimes \Shv(G_{\on{red}})^{\on{q-mon}} @>>> \bC\otimes \Shv(G_{\on{red}}),
\endCD
$$
in which the right vertical arrow is fully faithful (since $p^*$ is fully faithfful and admits a
right adjoint).

\medskip

This allows us to assume that $G$ is reductive.

\sssec{} \label{sss:covers}

Set
$$\wt{G}:=G'\times Z^0(G) \text{ and } \Gamma:=G'\cap Z^0(G),$$
where $G'$ is the derived group and $Z^0(G)$ is the connected center of $G$.

\medskip

We have
$$\bC\otimes \Shv(G)^{\on{q-mon}} \simeq (\bC\otimes \Shv(\wt{G})^{\on{q-mon}})^\Gamma \text{ and }
\bC\otimes \Shv(G)\simeq (\bC\otimes \Shv(\wt{G}))^\Gamma.$$

This reduces the assertion to the case when $G$ is a product of
a semi-simple group and a torus.

\sssec{}

Thus, let $G$ be semi-simple and $T$ a torus. We factor the functor
$$\bC\otimes  \Shv(T\times G)^{\on{q-mon}} \to \bC\otimes \Shv(T\times G)$$
as
\begin{multline*}
\bC\otimes \Shv(T\times G)^{\on{q-mon}}\simeq \bC\otimes \Shv(T)^{\on{q-mon}}\otimes \Shv(G)^{\on{q-mon}} \simeq
\bC\otimes \Shv(T)^{\on{mon}}\otimes \Shv(G)^{\on{q-mon}} \to \\
\to \bC\otimes \Shv(T)\otimes \Shv(G)^{\on{q-mon}} \to \bC\otimes \Shv(T)\otimes \Shv(G) \to \bC\otimes \Shv(T\times G),
\end{multline*}
where:

\begin{itemize}

\item The third arrow is fully faithful since $\Shv(T)^{\on{mon}}\to \Shv(T)$ is fully faithful and admits a right adjoint;

\medskip

\item The fifth arrow is fully faithful since the functor $\Shv(T)\otimes \Shv(G) \to \Shv(T\times G)$
is fully faithful and admits a right adjoint.

\end{itemize}

\medskip

Thus, it remains to show that the fourth arrow is fully faithful.
This reduces to the assertion of the proposition to the case when $G$ is semi-simple.

\sssec{}

It is easy to reduce the statement to the case when $G$ is simply-connected (the assertion survives taking a finite
cover as in \secref{sss:covers}).

\medskip

When $G$ is simply connected, we have $\Shv(G)^{\on{q-mon}}=\Shv(G)^{\on{q-const}}$, so it is enough to show that
$$\Shv(G)^{\on{q-const}}\otimes \bC\to \Shv(G)\otimes \bC$$
is fully faithful.

\medskip

However, this is the case since
$$\Shv(G)^{\on{q-const}}\to \Shv(G)$$
admits a \emph{left} adjoint, see \propref{p:left adj ss}.

\qed[\propref{p:ff mon}]

\ssec{Proof of \thmref{t:decomp}} \label{ss:decomp}

The proof presented below was explained to us by S.~Raskin.

\sssec{}

Let $\CF$ be an object of $\Shv(G)^{\on{mult-decomp}}$. We claim that in this case its
perverse cohomologies sheaves $h^i(\CF)$ also belong to $\Shv(G)^{\on{mult-decomp}}$.

\medskip

Indeed, this follows from the fact that the functor $\on{mult}^!$ is t-exact up to a cohomological
shift, and the functor
$$\Shv(G)\otimes \Shv(G) \overset{\boxtimes}\to \Shv(G\times G)$$
is t-exact, so its essential image is preserved by the cohomological truncations.

\sssec{}

Similarly, if $\CF\in \Shv(G)^{\on{mult-decomp}}$ is perverse, then its irreducible subquotients
also belong to $\Shv(G)^{\on{mult-decomp}}$.

\medskip

Hence, we can assume that $\CF$ is irreducible.

\sssec{}

The (cohomologically shifted) functor $\on{mult}^!$ sends irreducible perverse sheaves to irreducible perverse sheaves. Irreducible objects
of $(\Shv(G)\otimes \Shv(G))^\heartsuit$ have the form
$$\CF_1\boxtimes \CF_2,$$

Hence,
\begin{equation} \label{e:split LS}
\on{mult}^!(\CF)\simeq \CF_1\boxtimes \CF_2.
\end{equation}

Let $U\subset G$ (resp., $U_1$, $U_2$) be the maximal open subset over which $\CF$ (resp., $\CF_1$, $\CF_2$)
is lisse.

\medskip

We obtain that
$$\on{mult}^{-1}(U)=U_1\times U_2.$$

However, it is easy to see that this forces $U=U_1=U_2=G$. Hence, $\CF$ (and hence $\CF_1$ and $\CF_2$)
are irreducible local systems.

\sssec{}

Restricting \eqref{e:split LS} to $1_G\times G$, we obtain
$$\CF\simeq (\CF_1)_{1_G}\otimes \CF_2.$$

Since $\CF$ is irreducible, we obtain that $\CF_1$ is one-dimensional, and $\CF\simeq \CF_2$.

\medskip

Symmetrically, we obtain that $\CF_2$ is one-dimensional, and $\CF\simeq \CF_1$.

\sssec{}

Thus, $\CF$ is a one-dimensional local system, and we have an isomorphism
\begin{equation} \label{e:split LS bis}
\on{mult}^!(\CF)\simeq \CF\boxtimes \CF.
\end{equation}

I.e., $\CF$ is an abelian character sheaf on $G$.

\qed[\thmref{t:decomp}]

\ssec{Another proof of \thmref{t:decomp} for \texorpdfstring{$G=\BG_a$}{Ga}}

\sssec{}

In this subsection we will present another proof of \thmref{t:decomp}, specific to
$G=\BG_a$. This proof will have a feature that it shows that the functor
$$\bC\otimes \Shv(\BG_a)^{\on{q-mon}}\to \bC\otimes \Shv(\BG_a)^{\on{mult-decomp}}$$
is an equivalence for \emph{any} $\bC\in \DGCat$.

\medskip

Hence, by the same logic as in the proof of \thmref{t:mon cat rep}, this would
imply the assertion of \conjref{c:mon cat rep} for $\BG_a$.

%
%
%
%
%
%
%

\sssec{}

Let $X$ be an algebraic variety. Let
$$\Shv(X)^{\on{diag-decomp}}\subset \Shv(X)$$
be the full subcategory consisting of objects $\CF$, for which
$$(\Delta_X)_*(\CF)\in \Shv(X)\otimes \Shv(X).$$

We will prove:

\begin{thm} \label{t:diag decomp}
The subcategory $\Shv(X)^{\on{diag-decomp}}$ equals the full subcategory
$\Shv^{\on{ponct}}(X)\subset \Shv(X)$
generated under colimits by the objects $\delta_x$ for closed points $x\in X$.
\end{thm}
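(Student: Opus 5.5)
The plan is to prove the two inclusions separately. The inclusion $\Shv^{\on{ponct}}(X)\subset \Shv(X)^{\on{diag-decomp}}$ is formal. For a closed point $x$ we have $(\Delta_X)_*(\delta_x)\simeq \delta_{(x,x)}\simeq \delta_x\boxtimes \delta_x$, which lies in $\Shv(X)\otimes\Shv(X)$. The functor $(\Delta_X)_*$ is continuous, and the subcategory $\Shv(X)\otimes\Shv(X)\subset \Shv(X\times X)$ is closed under colimits (the external product functor is fully faithful and continuous). Hence $\Shv(X)^{\on{diag-decomp}}$ is closed under colimits and contains all $\delta_x$.

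For the converse, I would first reduce to compact objects via a right adjoint. The embedding $\boxtimes:\Shv(X)\otimes\Shv(X)\to\Shv(X\times X)$ has a continuous right adjoint $\boxtimes^R$; it is the dual of $\boxtimes$ under Verdier self-duality, as used in the proof of \lemref{l:triple}. Then $\CF$ lies in $\Shv(X)^{\on{diag-decomp}}$ iff $(\Delta_X)_*\CF\simeq \boxtimes\circ\boxtimes^R((\Delta_X)_*\CF)$. Write $\CF'\in\Shv(X)\otimes\Shv(X)$ for the object with $\boxtimes\CF'\simeq(\Delta_X)_*\CF$. Applying $\Delta_X^!$ gives the expression $\CF\simeq \Delta_X^!(\boxtimes\CF')\simeq \sotimes(\CF')$, where $\sotimes:\Shv(X)\otimes\Shv(X)\to\Shv(X)$ is the tensor-product functor. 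Writing $\CF'$ as a colimit of $\CF_{1,\alpha}\boxtimes\CF_{2,\alpha}$ with $\CF_{i,\alpha}$ compact presents $\CF$ as a colimit of $\CF_{1,\alpha}\sotimes\CF_{2,\alpha}$. Since the support of $(\Delta_X)_*\CF$ is the diagonal, I would use this constraint to kill the non-punctual part.

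The key geometric step is to show that $\CF$ must have zero-dimensional support, stratum by stratum. Choose a dense open $j:U\hookrightarrow X$ of positive dimension and apply $(j\times j)^!$. Decomposability is preserved, because $\Shv(X)\otimes \Shv(X)\to\Shv(U)\otimes\Shv(U)$ is compatible with $\boxtimes$. So we may assume $\CF|_U$ is diag-decomp and aim to show it is zero unless $\dim U=0$.

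Next restrict further to a small open $V\subset U$ and test by pairing against compact objects $\CG_1\boxtimes\CG_2$ supported in $V\times V$. On one hand, $\CHom((\Delta)_*\CF,\ \CG_1\boxtimes \CG_2)$ depends only on $\CG_1\sotimes\CG_2$ restricted to the diagonal. On the other hand, for objects in $\Shv(X)\otimes\Shv(X)$ this Hom is computed as $\CHom(\CF',\CG_1\boxtimes\CG_2)$. I would take $\CG_1,\CG_2$ to be extensions by zero of constant sheaves on disjoint small opens $V_1,V_2$ whose closures meet. Diagonal support forces the pairing to vanish when $\overline{V_1}\cap\overline{V_2}$ avoids $\on{supp}\CF$. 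Decomposability, by contrast, would force the pairing to be "bilinear", i.e.\ to factor through the individual $\CG_i$. I expect this to yield that $\CF'$ is supported on $\bigsqcup_x \{x\}\times\{x\}$ at the level of compact generators.

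An alternative concrete route for this step: $\CF'$ is a colimit of compacts $\boxtimes$-products, and a compact of $\Shv(X)\otimes\Shv(X)$ maps to an object whose singular support (or characteristic cycle) is a union of products $\Lambda_1\times\Lambda_2$. The diagonal condition then forces each $\Lambda_i$ to be a union of conormals to points. To make this work for non-compact $\CF$, I would use t-exactness of $\boxtimes$ and $(\Delta_X)_*$, as in the proof of \thmref{t:decomp}, to pass to perverse cohomologies and then to irreducible subquotients. Irreducible objects of $(\Shv(X)\otimes\Shv(X))^\heartsuit$ are of the form $\CF_1\boxtimes\CF_2$. So $(\Delta_X)_*\IC$ irreducible with diagonal support forces $\on{supp}\CF_1\times\on{supp}\CF_2\subset\Delta$, hence both supports are a single point.

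The main obstacle is the passage from the heart back to arbitrary unbounded objects. In the paper's setting, $\Shv(X)\otimes\Shv(X)$ need not be left-complete, and infinite extensions of $\delta_x$'s must be shown to lie in the colimit-closure. I would handle this using left-completeness of $\Shv(X)$ \cite[Theorem 1.1.6]{AGKRRV1} together with the fact that $\Shv^{\on{ponct}}(X)\simeq\bigoplus_x\Vect$ is left-complete. This shows the truncations converge inside $\Shv^{\on{ponct}}(X)$.
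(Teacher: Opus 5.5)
Your argument is correct, but it does not follow the paper's route.

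\emph{What in your proposal actually does the work.} The pairing sketch (``bilinear'', ``I expect this to yield'') and the singular-support remark are not arguments as written, and you can drop them. The hard inclusion is proved by your third route, which runs as follows.
\begin{enumerate}
\item Diag-decomposability is preserved by perverse truncations and by passing to subquotients in the heart, using $t$-exactness of $(\Delta_X)_*$ and of $\boxtimes$.
\item An irreducible $P$ with $(\Delta_X)_*P\simeq\CF_1\boxtimes\CF_2$ must be a $\delta_x$, for support reasons.
\item An ind-perverse sheaf whose composition factors are all $\delta_x$'s is punctual, since $\Ext^1(\delta_x,\delta_y)=0$.
\item Unbounded objects are handled by left-completeness of $\Shv(X)$ and of $\Shv^{\on{ponct}}(X)\simeq\bigoplus_x\Vect$, together with $t$-exactness of the inclusion.
\end{enumerate}
This is in effect the proof the paper gives for \thmref{t:decomp}, which for $X=\BG_a$ is equivalent to the present statement via Fourier--Deligne. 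It rests on the same structural facts about $\Shv(X)\otimes\Shv(X)$ that the paper asserts there.

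\emph{The paper's route.} The paper's proof of this statement is pointwise and avoids $t$-structures altogether.
\begin{enumerate}
\item For a closed point $i$, the map $i^!\CF\to i^!i_*i^*\CF\simeq i^*\CF$ is identified with the base-change transformation $i^*\circ(\on{id}\times i)^!\to i^!\circ(i\times\on{id})^*$ evaluated on $(\Delta_X)_*\CF$.
\item This transformation is invertible on external products, hence on diag-decomposable $\CF$.
\item The same map vanishes at a smooth point near which $\CF$ is lisse.
\item Base-changing to the geometric generic point of a maximal-dimensional subvariety on which $\CF$ is generically non-zero then forces that generic fiber to vanish, a contradiction.
\end{enumerate}

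\emph{What each route buys.} The paper's argument uses no irreducible objects, no truncations and no completeness. It is built from a natural transformation that is invertible on the image of $\boxtimes$. The paper therefore claims it adapts to $\bC\otimes\Shv(X)$ for arbitrary $\bC\in\DGCat$, and this is its stated reason for including it: it gives the $\bC$-linear version of \thmref{t:decomp} for $\BG_a$, and hence \conjref{c:restr torus} for $\BG_a$ without a dualizability hypothesis. Your $t$-structure argument proves the theorem as stated, but it does not extend to that setting, because a general $\bC$ carries no $t$-structure.
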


\sssec{}

Note that the statement of \thmref{t:decomp} for $\BG_a$ is equivalent to that of
\thmref{t:diag decomp} via Fourier-Deligne transform.

\medskip

The rest of this subsection is devoted to the proof of \thmref{t:diag decomp}.

\sssec{} \label{sss:diag decomp fib}

For a closed point
$$i:\Spec(k) \to X,$$
consider the canonical map
\begin{equation} \label{e:! to *}
i^!(\CF)\to i^!\circ i_*\circ i^*(\CF) \simeq i^*(\CF).
\end{equation}

\sssec{} \label{sss:! to * zero}

Note that the map \eqref{e:! to *} is zero when $x$ is a smooth point of (a positive-dimensional) $X$ and $\CF$ is lisse near $x$
(i.e., the (perverse) cohomologies of $\CF$
are colimits of perverse sheaves that are lisse on a neighborhood of $x$).

\medskip

Indeed, if $x$ is a smooth point, the map \eqref{e:! to *} is zero for $\CF=\on{const}_X$. Now, we have a commutative square
$$
\CD
i^!(\on{const}_X)\otimes i^*(\CF) @>>> i^!(\CF) \\
@V{\text{\eqref{e:! to *}}\otimes \on{id}}VV @VV{\text{\eqref{e:! to *}}}V \\
i^*(\on{const}_X)\otimes i^*(\CF) @>{\sim}>> i^*(\CF),
\endCD
$$
where the top horizontal arrow is an isomorphism for $\CF$ lisse near $x$.

\sssec{}

We claim that if $\CF\in \Shv(X)^{\on{diag-decomp}}$, then the map \eqref{e:! to *}
is an isomorphism. Indeed, we can interpret \eqref{e:! to *} as the natural transformation
\begin{equation} \label{e:two restrictions}
i^* \circ (\on{id}\times i)^! \to i^!\circ (i\times \on{id})^*,
\end{equation}
corresponding to the diagram
$$
\CD
X\times \on{pt} @>{\on{id}\times i}>> X \times X \\
@A{i}AA @AA{i\times \on{id}}A \\
\on{pt} @>{i}>> \on{pt}\times X
\endCD
$$
evaluated on $(\Delta_X)_*(\CF)$.

\medskip

However, \eqref{e:two restrictions} is an isomorphism on the essential image of $\Shv(X)\otimes \Shv(X)$.

\sssec{}

Let $\CF\notin \Shv^{\on{ponct}}(X)$. Let $Y\subset X$ be the subvariety of maximal dimension such that the
*-fiber of $\CF$ at the generic point of $Y$ is non-zero. Up to replacing $X$ by $Y$ and $\CF$ by its restriction to
$Y$, we can assume that $\CF$ does not vanish at the generic point of (a positive-dimensional) $X$.

\medskip

Let $k'$ denote the algebraic closure of the field of fractions of $X$. Let $X'$, $\CF'$ denote the
corresponding base-changed objects. Note that
$$\CF'\in \Shv(X')^{\on{diag-decomp}}.$$

\medskip

Let
$$i':\Spec(k')\to X'$$
denote the canonical point. However, it is easy to see to the map \eqref{e:! to *}
$$(i')^!(\CF')\to (i')^*(\CF')$$
is zero for any $\CF'$ that comes by base change from $X$ (see \secref{sss:! to * zero}).

\medskip

Combining with \secref{sss:diag decomp fib},
we obtain that $(i')^*(\CF')=0$, which is a contradiction.

\qed

\end{document}